\newtheorem{thm}{Theorem}[section]
\newtheorem{conj}[thm]{Conjecture}
\newtheorem{cor}[thm]{Corollary}
\newtheorem{lem}[thm]{Lemma}
\newtheorem{prop}[thm]{Proposition}
\newtheorem{hypo}[thm]{Hypothesis}
\newtheorem{heur}[thm]{Heuristic}
\theoremstyle{definition}
\newtheorem{defn}[thm]{Definition}
\newtheorem{ex}[thm]{Example}
\theoremstyle{remark}
\newtheorem{rem}[thm]{Remark}
\newtheorem*{claim}{Claim}
\theoremstyle{plain}
\newtheorem*{proposition*}{Proposition}
\newtheorem*{lemma*}{Lemma}
\newtheorem*{corollary*}{Corollary}
\newtheorem{criterion}[thm]{Criterion}
\newtheorem{hypothesis}[thm]{Hypothesis}
\theoremstyle{definition}
\numberwithin{equation}{section}
\renewcommand{\le}{\leqslant}
\renewcommand{\ge}{\geqslant}
\newcommand{\key}[1]{{\sf #1}}
\newcommand{\Lquote}[1]{``#1"}
\newcommand {\pnorm}[1]   {\left\lVert #1 \right\rVert}
\newcommand{\Mdemi}{\frac{1}{2}}
\newcommand{\qtext}[1]{\quad\text{#1}}
\newcommand{\bigp}[1]{\bigl(#1\bigr)}
\newcommand{\Bigp}[1]{\Bigl(#1\Bigr)}
\newcommand{\SB}{\backslash}
\newcommand{\set}[1]{\left\{#1\right\}}
\newcommand{\cusp}{\mathrm{cusp}}
\newcommand{\new}{\mathrm{new}}
\DeclareMathOperator {\GL} {GL}
\DeclareMathOperator {\Res}  {res}
\DeclareMathOperator {\Frob} {Frob}
\DeclareMathOperator {\vol}  {vol}
\DeclareMathOperator {\Gal}  {\TmG\Tma\Tml}
\DeclareMathOperator {\Hom}  {\TmH\Tmo\Tmm}
\DeclareMathOperator {\MSym} {Sym}
\DeclareMathOperator {\MHom} {Hom}
\DeclareMathOperator {\MTr}  {Tr}
\DeclareMathOperator {\Mtr}  {tr}
\DeclareMathOperator {\MRe}  {\Re e}
\DeclareMathOperator {\Mim}  {\Im m}
\newcommand{\BmA}{\mathbb{A}}
\newcommand{\BmC}{\mathbb{C}}
\newcommand{\BmH}{\mathbb{H}}
\newcommand{\BmN}{\mathbb{N}}
\newcommand{\BmQ}{\mathbb{Q}}
\newcommand{\BmR}{\mathbb{R}}
\newcommand{\BmZ}{\mathbb{Z}}
\newcommand{\CmA}{\mathcal{A}}
\newcommand{\CmF}{\mathcal{F}}
\newcommand{\CmG}{\mathcal{G}}
\newcommand{\CmH}{\mathcal{H}}
\newcommand{\CmL}{\mathcal{L}}
\newcommand{\CmS}{\mathcal{S}}
\newcommand{\CmV}{\mathcal{V}}
\newcommand{\FmF}{\mathfrak{F}}
\newcommand{\Fmf}{\mathfrak{f}}
\newcommand{\Fmn}{\mathfrak{n}}
\newcommand{\SmC}{\mathscr{C}}
\newcommand{\FmS}{\mathfrak{S}}
\newcommand{\Q}{\mathbb{Q}}
\newcommand{\Z}{\mathbb{Z}}
\newcommand{\A}{\mathbb{A}}
\newcommand{\B}{\mathbb{B}}
\newcommand{\C}{\mathbb{C}}
\newcommand{\N}{\mathbb{N}}
\newcommand{\abs}[1]{\ensuremath{\left|#1\right|}}
\newcommand{\Mun}{\mathds{1}}
\newcommand{\sumv}{\sum_{v\in\CmV_F}}
\newcommand{\Tpl}{\text{pl}}
\newcommand{\ur}{\mathrm{ur}}
\newcommand{\temp}{\text{temp}}
\newcommand{\gen}{\text{gen}}
\newcommand{\cut}{\text{cut}}
\newcommand{\main}{\text{main}}
\newcommand{\pol}{\text{pol}}
\newcommand{\scusp}{\text{sc}}
\newcommand{\fka}{{\mathfrak a}}
\newcommand{\fkf}{{\mathfrak f}}
\newcommand{\fkn}{{\mathfrak n}}
\newcommand{\fkp}{{\mathfrak p}}
\newcommand{\fkD}{{\mathfrak D}}
\newcommand{\fkG}{{\mathfrak G}}
\newcommand{\mB}{{\mathscr B}}
\newcommand{\mC}{{\mathscr C}}
\newcommand{\mL}{{\mathscr L}}
\newcommand{\mF}{{\mathscr F}}
\newcommand{\mS}{{\mathscr S}}
\newcommand{\mT}{{\mathscr T}}
\newcommand{\mP}{{\mathscr P}}
\newcommand{\mX}{{\mathscr X}}
\newcommand{\mY}{{\mathscr Y}}
\newcommand{\F}{\mathbb{F}}
\newcommand{\G}{\mathbb{G}}
\newcommand{\bF}{\mathbf{F}}
\newcommand{\bG}{\mathbf{G}}
\newcommand{\bN}{\mathbf{N}}
\newcommand{\bB}{\mathbf{B}}
\newcommand{\bM}{\mathbf{M}}
\newcommand{\bT}{\mathbf{T}}
\newcommand{\R}{\mathbb{R}}
\newcommand{\T}{\mathbb{T}}
\newcommand{\Stab}{\mathrm{Stab}}
\newcommand{\semis}{\mathrm{ss}} % ss for semisimple
\newcommand{\pl}{\hat{\mu}^{\mathrm{pl}}}
\newcommand{\plur}{\hat{\mu}^{\mathrm{pl,ur}}}
\newcommand{\plurtemp}{\hat{\mu}^{\mathrm{pl,ur,temp}}}
\newcommand{\ST}{\hat{\mu}^{\mathrm{ST}}}
\newcommand{\Spec}{\mathrm{Spec}\,}
\newcommand{\bs}{\backslash}
\newcommand{\Apt}{{\rm Apt}}
\newcommand{\Sym}{{\rm Sym}}
\newcommand{\ch}{{\rm ch}}
\newcommand{\Fr}{{\rm Fr}}
\newcommand{\ssconj}{{\rm ss-conj}}
\newcommand{\im}{{\rm Im\,}}
\newcommand{\coker}{{\rm coker\,}}
\newcommand{\rank}{{\rm rk}}
\newcommand{\gal}{{\rm Gal}}
\def\Gal{{\rm Gal}}
\newcommand{\aut}{{\rm Aut}}
\newcommand{\ind}{{\rm Ind}}
\newcommand{\nind}{{\rm n\textrm{-}ind}}
\newcommand{\id}{{\rm id}}
\newcommand{\Std}{{\rm Std}}
\newcommand{\sgn}{{\rm sgn}}
\newcommand{\tr}{{\rm tr}\,}
\renewcommand{\Hom}{{\rm Hom}}
\newcommand{\Aut}{{\rm Aut}}
\newcommand{\Out}{{\rm Out}}
\newcommand{\disc}{{\rm disc}}
\newcommand{\spec}{{\rm spec}}
\newcommand{\geom}{{\rm geom}}
\newcommand{\cont}{{\rm cont}}
\newcommand{\Tama}{{\rm Tama}}
\newcommand{\can}{{\rm can}}
\newcommand{\EP}{{\rm EP}}
\newcommand{\Mot}{{\rm Mot}}
\renewcommand{\GL}{{\rm GL}}
\newcommand{\Lie}{{\rm Lie}\,}
\newcommand{\diag}{{\rm diag}}
\newcommand{\level}{\mathrm{lv}}
\newcommand{\weight}{\mathrm{wt}}
\newcommand{\triv}{{\mathbf{1}}}
\newcommand{\cA}{{\mathcal{A}}}
\newcommand{\cAR}{{\mathcal{AR}}}
\newcommand{\cM}{{\mathcal{M}}}
\newcommand{\cT}{{\mathcal{T}}}
\newcommand{\cN}{{\mathcal{N}}}
\newcommand{\cH}{{\mathcal{H}}}
\newcommand{\cS}{{\mathcal{S}}}
\newcommand{\cF}{{\mathcal{F}}}
\newcommand{\cB}{{\mathcal{B}}}
\newcommand{\cO}{\mathcal{O}}
\newcommand{\cV}{\mathcal{V}}
\newcommand{\cha}{{\rm char}\,}
\newcommand{\ad}{{\rm ad}}
\newcommand{\Rep}{{\rm Irr}}
\newcommand{\der}{^{\mathrm{der}}}
\renewcommand{\Res}{\mathrm{Res}}
\newcommand{\supp}{{\rm supp}\,}
\newcommand{\spl}{{\rm spl}}
\newcommand{\Ram}{{\rm Ram}}
\newcommand{\bad}{{\rm bad}}
\def\hat{\widehat}
\def\lg{\langle}
\def\rg{\rangle}
\def\hra{\hookrightarrow}
\def\ra{\rightarrow}
\def\isom{\stackrel{\sim}{\ra}}
\def\ol{\overline}
\def\tilde{\widetilde}
\def\MU{\mbox{\boldmath{$\mu$}}}
\def\benu{\begin{enumerate}}
\def\eenu{\end{enumerate}}
\def\beq{\begin{equation}}
\def\eeq{\end{equation}}
\def\bit{\begin{itemize}}
\def\eit{\end{itemize}}
\def\CC{{\mathbb C}}
\def\ZZ{{\mathbb Z}}
\newcommand{\ri}{{\mathcal O}}
\newcommand{\Gl}{\mathbf {GL}}
\newcommand{\fg}{{\mathfrak g}}
\newcommand{\Ad}{\operatorname{Ad}}
\newcommand{\sep}{\mathrm{sep}}
\newcommand{\sem}{\mathrm{ss}}
\newcommand{\oi}{{\bf \mathrm{O}}}
\newcommand{\rf}{k}
\newcommand\ldpo{{\mathcal L}_{\ri}}
\newcommand\ord{\mathrm{ord}}
\newcommand\ac{\overline{\mathrm{ac}}}
\newcommand\cC{{\mathcal C}}
\newcommand\mot{\mathrm{mot}}
\begin{document}

\title[]{Sato-Tate theorem for families and low-lying zeros of automorphic $L$-functions}

%\markleft{RIEN}
%--------Information for second author------%
\author[]{Sug Woo Shin}
\address{Department of Mathematics, MIT, Cambridge, MA 02139-4307, USA; School of Mathematics, KIAS, Seoul 130-722,
Republic of Korea}
%\curraddr{...}
\email{prmideal@gmail.com}
%\urladdr{...}
%\thanks{Support information for the second author.}
%------Information for first author-------%
\author[]{Nicolas Templier}   %le [] est pour ne pas avoir de running head
\address{Department of Mathematics, Fine Hall, Washington Road, Princeton, NJ 08544-1000.}
\email{templier@math.princeton.edu}
%\curraddr{}
%\thanks will become a 1st page footnote.
%\thanks{The first author was supported in part by NSF Grant \#000000.}

%-------- General info -------%
%\dedicatory{...}
\date{\today}
%\translator{...}
\keywords{Automorphic forms, $L$-functions, trace formula, Sato--Tate measure, Plancherel measure, harmonic analysis, reductive groups}
%\subjclass[2000]{11F55;11F67;11F70;11F72;11F75;14L15;20G30;22E30;22E35}
%-----------------------------%

%----------Abstract----------------------
\begin{abstract}
We consider certain families of automorphic representations over number fields arising from the principle of functoriality of Langlands. Let $G$ be a reductive group over a number field $F$ which admits discrete series representations at infinity. Let $^{L}G=\widehat G \rtimes \Gal(\bar F/F)$ be the associated $L$-group and $r:{}^L G\to \GL(d,\BmC)$ a continuous homomorphism which is irreducible and does not factor through $\Gal(\bar F/F)$. The families under consideration consist of discrete automorphic representations of $G(\BmA_F)$ of given weight and level and we let either the weight or the level grow to infinity.

We establish a quantitative Plancherel and a quantitative Sato-Tate equidistribution theorem for the Satake parameters of these families. This generalizes earlier results in the subject, notably of Sarnak [Progr. Math. 70 (1987), 321--331.] and Serre [J. Amer. Math. Soc. 10 (1997), no. 1, 75--102.].

As an application we study the distribution of the low-lying zeros of the associated family of $L$-functions $L(s,\pi,r)$, assuming from the principle of functoriality that these $L$-functions are automorphic. We find that the distribution of the $1$-level densities coincides with the distribution of the $1$-level densities of eigenvalues of one of the Unitary, Symplectic and Orthogonal ensembles, in accordance with the Katz-Sarnak heuristics.

We provide a criterion based on the Frobenius--Schur indicator to determine this Symmetry type. If $r$ is not isomorphic to its dual $r^\vee$ then the Symmetry type is Unitary. Otherwise there is a bilinear form on $\BmC^d$ which realizes the isomorphism between $r$ and $r^\vee$. If the bilinear form is symmetric (resp. alternating) then $r$ is real (resp. quaternionic) and the Symmetry type is Symplectic (resp. Orthogonal).
\end{abstract}
%-----------------------------------------

\maketitle

\thispagestyle{empty}

{\small \tableofcontents}

\section{Introduction}\label{sec:intro}

The non-trivial zeros of automorphic $L$-functions are of central significance in modern number theory. Problems on individual zeros, such as the Riemann Hypothesis (GRH), are elusive. There is however a beautiful theory of the statistical distribution of zeros in families. The subject has a long and rich history. A unifying modern viewpoint is that of a comparison with a suitably chosen model of random matrices: the \key{Katz--Sarnak heuristics}. There are both theoretical and numerical evidences for this comparison. Comprehensive results in the function field case~\cite{book:KS} have suggested an analogous picture in the number field case as explained in~\cite{KS:bams}. In a large number of cases, and with high accuracy, the distribution of zeros of automorphic $L$-functions coincide with the distribution of eigenvalues of random matrices. See~\cites{DFK:vanishingunitary,Rubin:computational} for numerical investigations and conjectures and see~\cites{DM06,Guloglu:lowlying,HM07,ILS00,KST:Sp4,RR:low-lying,Rubin01} and the references therein for theoretical results.

The concept of \key{families} is central to modern investigations in number theory. We want to study in the present paper certain families of automorphic representations over number fields in a very general context. The families under consideration are obtained from the discrete spectrum by imposing constraints on the local components at archimedean and non-archimedean places and by applying Langlands global functoriality principle.

Our main result is a Sato--Tate equidistribution theorem for these families (Theorem~\ref{t:intro:error-bound}). An application of this main result we can give some evidence towards the Katz-Sarnak heuristics~\cite{KS:bams} in general and establish a criterion for the random matrix model attached to families, i.e. for the symmetry type.

\subsection{Sato-Tate theorem for families}\label{sec:intro:ST}

  The original Sato-Tate conjecture is about an elliptic curve $E$, assumed to be defined over $\Q$
  for simplicity. The number of points in $E(\F_p)$ for almost all primes $p$ (with good reduction)
  gives rise to an angle $\theta_p$ between $-\pi$ and $\pi$. The conjecture, proved in~\cite{BLGHT11}, asserts that
  if $E$ does not admit complex multiplication then $\{\theta_p\}$ are equidistributed according to
  the measure $\frac{2}{\pi} \sin^2\theta d\theta$.
  In the context of motives a generalization of the Sato-Tate conjecture was formulated by Serre~\cite{Ser94}.

  To speak of the automorphic version of the Sato-Tate conjecture, let $G$ be a connected split reductive group over $\Q$
  with trivial center
  and $\pi$ an automorphic representation of $G(\A)$. Here $G$ is assumed to be split for simplicity
  (however we stress that our results are valid
  without even assuming that $G$ is quasi-split; see \S\ref{s:Sato-Tate} below for details). The triviality of center is not serious
  as it essentially amounts to fixing central character.
  Let $T$ be a maximal split torus of $G$. Denote by $\hat{T}$ its dual torus and $\Omega$ the Weyl group.
  As $\pi=\otimes'_v \pi_v$ is unramified at almost all places $p$, the Satake isomorphism identifies
  $\pi_p$ with a point on $\hat{T}/\Omega$. The automorphic Sato-Tate conjecture should be a prediction
  about the equidistribution of $\pi_p$ on $\hat{T}/\Omega$ with respect to a natural measure
  (supported on a compact subset of $\hat{T}/\Omega$). It seems nontrivial
  to specify this measure in general. The authors do not know how to do it
  without invoking the (conjectural) global $L$-parameter for $\pi$.
  The automorphic Sato-Tate conjecture is known in the limited cases of (the restriction of scalars of)
  $\GL_1$ and $\GL_2$ (\cite{BLGHT11}, \cite{BLGG11}). In an ideal world the conjecture should be closely related to Langlands functoriality.
% follow as a consequence of the Langlands functoriality conjecture.

  In this paper we consider the Sato-Tate conjecture for a \emph{family} of automorphic representations, which
  is easier to state and prove but still very illuminating.
  Our working definition of a family $\{\mathcal{F}_k\}_{k\ge 1}$ is that each $\mathcal{F}_k$ consists of
  all automorphic representations $\pi$ of $G(\A)$ of level $N_k$ with $\pi_\infty$ cohomological of weight $\xi_k$,
  where $N_k\in\Z_{\ge1}$ and $\xi_k$ is an irreducible algebraic representation of $G$, such that either
\begin{enumerate}
  \item (level aspect) $\xi_k$ is fixed, and $N_k\rightarrow \infty$ as $k\rightarrow\infty$ or
  \item (weight aspect) $N_k$ is fixed, and $m(\xi_k)\rightarrow\infty$ as $k\rightarrow\infty$,
\end{enumerate}
   where $m(\xi_k)\in\R_{\ge 0}$ should be thought of as the minimal distance of the highest weight of $\xi_k$ to root hyperplanes. (See \S\ref{sub:st-disc} below for the precise definition.)
    Note that each $\mathcal{F}_k$ has finite cardinality and $|\mathcal{F}_k|\rightarrow \infty$
    as $k\rightarrow\infty$. (For a technical reason $\mathcal{F}_k$ is actually allowed to be a multi-set. Namely
    the same representation can appear multiple times, for instance more than its automorphic multiplicity.)
    In principle we could let $\xi_k$ and $N_k$ vary simultaneously but decided not to do so in the current paper
    in favor of transparency of arguments. For instance families of type (i) and (ii) require somewhat different ingredients of proof in establishing the Sato-Tate theorem for families, and the argument would be easier to understand if we separate them.
    It should be possible to treat the mixed case (where both $N_k$ and $\xi_k$ vary) by combining techniques in the
    two cases (i) and (ii).

    Let $\hat{T}_c$ be the maximal compact subtorus of the complex torus $\hat{T}$.
    The quotient $\hat{T}_c/\Omega$ is equipped with a measure $\ST$, to be called
	the \key{Sato-Tate measure}, coming from the Haar measure on a maximal compact subgroup of $\hat{G}$
    (of which $\hat{T}_c$ is a maximal torus).
    The following is a rough version of our result on the Sato-Tate conjecture for a family.

\begin{thm}\label{Sato-Tate}
  Suppose that $G(\R)$ has discrete series representations.
  Let $\{\mathcal{F}_k\}_{k\ge 1}$ be a family in the level aspect (resp. weight aspect) as above.
  Let $\{p_k\}$ be a strictly increasing sequence of
  primes such that  $N_k$ (resp. $\xi_k$) grows faster than any polynomial in $p_k$
	in the sense that for every $\dfrac{\log p_k}{\log N_k} \to 0$ (resp. $\dfrac{\log p_k}{\log m(\xi_k)} \ra 0$) as $k\ra\infty$.
  Assume that the members of $\mathcal{F}_k$ are unramified at $p_k$ for every $k$.
  Then the Satake parameters $\{\pi_{p_k}: \pi\in \mathcal{F}_k\}_{k\ge 1}$
  are equidistributed with respect to $\ST$.

\end{thm}

  To put things in perspective, we observe that there are three kinds of statistics about the Satake parameters
  of $\{\pi_{p_k}: \pi\in \mathcal{F}_k\}_{k\ge 1}$ depending on how the arguments vary.
\begin{enumerate}
  \item Sato-Tate: $\mathcal{F}_k$ is fixed (and a singleton) and $p_k\rightarrow \infty$.
  \item Sato-Tate for a family: $|\mathcal{F}_k|\rightarrow \infty$ and $p_k\rightarrow \infty$.
  \item Plancherel: $|\mathcal{F}_k|\rightarrow \infty$ and $p_k$ is a fixed prime.
\end{enumerate}
The Sato-Tate conjecture in its original form is about equidistribution in case (i) whereas
our Theorem \ref{Sato-Tate} is concerned with case (ii).
The last item is marked as Plancherel since
the Satake parameters are expected to be equidistributed with respect to the Plancherel measure
(again supported on $\hat{T}_c/\Omega$) in case (iii). This has been shown to be true under the assumption
that $G(\R)$ admits discrete series in \cite{Shi-Plan}.
We derive Theorem \ref{Sato-Tate} from an error estimate (depending on $k$) on the difference between
the Plancherel distribution at $p$ and the actual distribution of the Satake parameters at $p_k$
in $\mathcal{F}_k$. This estimate (see Theorem \ref{t:intro:error-bound} below) refines the main result of \cite{Shi-Plan}
and is far more difficult to prove in that several nontrivial bounds in
harmonic analysis on reductive groups need to be justified.

\subsection{Families of $L$-functions}\label{sec:intro:familyL}
An application of Theorem~\ref{Sato-Tate} is to families of $L$-functions. We are able to verify to some extent the heuristics of Katz--Sarnak~\cite{KS:bams} and determine the symmetry type, see~\S\ref{sec:intro:criterion} below. In this subsection we define the relevant families of $L$-functions and record some of their properties.

Let $r:{}^{L} G \to \GL(d,\BmC)$ be a continuous $L$-homomorphism. We assume the Langlands functoriality principle: for all $\pi \in \CmF_k$ there exists an isobaric automorphic representation $\Pi=r_* \pi$ of $\GL(d,\BmA)$ which is the functorial lift of the automorphic representation $\pi$ of $G(\BmA)$, see~\S\ref{sec:pp:isobaric} for a review of the concept of isobaric representations and \S\ref{s:langlands} for the precise statement of the hypothesis. This hypothesis is only used in Theorem~\ref{th:low-lying}, \S\ref{sec:zeros} and \S\ref{sec:pf}. By the strong multiplicity one theorem $\Pi$ is uniquely determined by all but finitely many of its local factors $\Pi_v=r_* \pi_v$.

To an automorphic representation $\Pi$ on $\GL(d,\BmA)$ we associate its principal $L$-function $L(s,\Pi)$. By definition $L(s,\pi,r)=L(s,\Pi)$. By the theory of Rankin--Selberg integrals or by the integral representations of Godement--Jacquet, $L(s,\Pi)$ has good analytic properties: analytic continuation, functional equation, growth in vertical strips. In particular we know the existence and some properties of its non-trivial zeros, such as the Weyl's law (\S\ref{sec:pp:explicit}).

We denote by $\FmF_k=r_*\CmF_k$ the set of all such $\Pi=r_* \pi$ for $\pi \in \CmF_k$.
Since the strong multiplicity one theorem implies that $\Pi$ is uniquely determined by  its $L$-function $L(s,\Pi)$. We simply refer to $\FmF=r_*\CmF$  as a \key{family of $L$-functions}. %There is no reason to distinguish between families of automorphic representations on $\GL(d,\BmA)$ and families of associated $L$-functions hence

In general there are many ways to construct interesting families of $L$-functions. In a recent manuscript~\cite{Sarn:family}, Sarnak attempts to sort out these constructions into a comprehensive framework and proposes a working definition (see also~\cite{Kowalski:family-survey}).\footnote{Sarnak and the authors gave a more refined and updated framework in \cite{SST} while our paper was under review.}  The families of $L$-functions under consideration in the present paper fit well into that framework. Indeed they are \key{harmonic families} in the sense that their construction involves inputs from local and global harmonic analysis. Other types of families include geometric families constructed as Hasse--Weil $L$-functions of arithmetic varieties and Galois families associated to families of Galois representations.

\subsection{Criterion for the symmetry type}\label{sec:intro:criterion} Katz--Sarnak~\cite{KS:bams} predict that one can associate a \key{symmetry type} to a family of $L$-functions. By definition the symmetry type is the random matrix model which is conjectured to govern the distribution of the zeros. There is a long and rich history for the introduction of this concept.

Hilbert and P\'olya suggested that there might be a spectral interpretation of the zeros of the Riemann zeta function. Nowadays strong evidence for the spectral nature of the zeros of $L$-functions comes from the function field case: zeros are eigenvalues of the Frobenius acting on cohomology. This is exemplified by the equidistribution theorem of Deligne and the results of Katz--Sarnak~\cite{book:KS} on the distribution of the low-lying eigenvalues in geometric families.

In the number field case the first major result towards a spectral interpretation is the pair correlation of high zeros of the Riemann zeta function by Montgomery. Developments then include Odlyzko's extensive numerical study and the determination of the $n$-level correlation by Hejhal and Rudnick--Sarnak~\cite{RS96}. The number field analogue of the Frobenius eigenvalue statistics of~\cite{book:KS} concerns the statistics of low-lying zeros. %There have been results in many cases and we shall review some of them in~\S\ref{sec:intro:ex} below.

More precisely~\cite{KS:bams} predicts that the low-lying zeros of families of $L$-functions are distributed according to a determinantal point process associated to a random matrix ensemble. This will be explained in more details in~\S\ref{sec:intro:rdm} and~\S\ref{sec:intro:lowlying} below. We shall distinguish between the three determinantal point processes associated to the Unitary, Symplectic and Orthogonal ensembles.\footnote{In this paper we do not distinguish in the Orthogonal ensemble between the $O$ $SO(odd)$ and $SO(even)$ Symmetries. We will return to this question in a subsequent work.} Accordingly the symmetry type associated to a family $\FmF$ is defined to be Unitary, Symplectic or Orthogonal (see~\S\ref{sec:intro:lowlying} for typical results).

%Conversely, as we shall see in detail in~\S\ref{sec:intro:lowlying}, the symmetry type is determined from the densities of the low-lying zeros. There are further interpretations of the symmetry type that we shall discuss in~\S\ref{sec:intro:consequences} below.

Before entering into the details of this theory in~\S\ref{sec:intro:rdm} below, we state here our criterion for the symmetry type of the harmonic families $r_*\CmF$ defined above. We recall in~\S\ref{sec:b:fs} the definition of the Frobenius--Schur indicator $s(r)\in \set{-1,0,1}$ associated to an irreducible representation. We shall prove that the symmetry type is determined by $s(r)$. This is summarized in the following which may be viewed as a refinement of the Katz--Sarnak heuristics.

\begin{criterion}\label{criterion} Let $r:{}^LG\to \GL(d,\BmC)$ be a continuous $L$-homomorphism which is irreducible and non-trivial when restricted to $\widehat G$. Consider the family $r_*\CmF$ of automorphic $L$-functions of degree $d$ as above.

(i) If $r$ is not isomorphic to its dual $r^\vee$ then $s(r)=0$ and the symmetry type is Unitary.

(ii) Otherwise there is a bilinear form on $\BmC^d$ which realizes the isomorphism between $r$ and $r^\vee$. By Schur lemma it is unique up to scalar and is either symmetric or alternating. If it is symmetric then $r$ is real, $s(r)=1$ and the symmetry type is Symplectic. If it is alternating then $r$ is quaternionic, $s(r)=-1$ and the symmetry type is Orthogonal.
\end{criterion}

We note that the conditions that $r$ be irreducible and non-trivial when restricted to $\widehat G$ are optimal. If $r$ were trivial when restricted to $\widehat G$ then $L(s,\pi,r)$ would be constant and equal to an Artin $L$-function and the low-lying zeros would correspond to the eventual vanishing of this Artin $L$-function at the central point (which is a different problem). Also the universality exhibited in our criterion may be compared with the GUE universality of the high zeros of~\cite{RS96}.
%Our criterion says that conversely under these conditions on $r$ the Katz--Sarnak heuristics hold to some extent.

If $r$ were reducible then the $L$-functions would factor as a product $L(s,\pi,r_1)L(s,\pi,r_2)$. Suppose that both $r_1$ and $r_2$ are irreducible and non-trivial when restricted to $\widehat G$. If $r_1=r_2$ then clearly the distribution of zeros will be as before but with multiplicity two. If $r_1\not \simeq r_2$ then we expect that the zeros will follow the distribution of the \emph{independent} superposition of the two random matrix ensembles attached to $r_1$ and $r_2$. In other words the zeros of $L(s,\pi,r_1)$ are uncorrelated to the zeros of $L(s,\pi,r_2)$, and one could verify this using the methods of this paper to some extent. In particular we expect no repulsion between the respective sequences of zeros.
% from independent families and their zeros would be distributed according to the superposition of independent point processes.

It would be interesting to study families of automorphic representations over a function field $k=\F_q(X)$ of a curve $X$. To our knowledge the Katz-Sarnak heuristics for such families are not treated in the literature, except in the case of $G=GL(1)$ where harmonic families coincide with the geometric families treated by Katz--Sarnak (e.g. Dirichlet $L$-series with quadratic character are the geometric families of hyperelliptic curves in~\cite{book:KS}*{\S10}). Over function fields our criterion has the following interpretation. We consider families of automorphic representations $\pi$ of $G(\BmA_k)$; For simplicity we suppose that each automorphic representations $\pi$ of $G(\BmA_k)$ in the family $\CmF$ is attached to an irreducible $\ell$-adic representation $\rho: \Gal(k^{sep}/k) \to {}^LG$. Then $r_*\pi$ is attached to the Galois representation $r\circ \rho$, and corresponds to a constructible $\ell$-adic sheaf $F$ of dimension $d$ on the curve $X$. The zeros of the $L$-function $L(s,\pi,r)$ are the eigenvalues of Frobenius on the first cohomology, more precisely the numerator of the $L$-function $L(s,\pi,r)$ is
\[
\det(1- q^{-s}\Fr | H^1(X,F)).
\]
If $s(r)=-1$ (resp. $s(r)=1$) then there is an alternating (resp. symmetric) pairing on the sheaf $F$. The natural pairing on $H^1(X,F)$ induced by the cup product is symmetric (resp. alternating) and invariant by the action of Frobenius. Thus the zeros of $L(s,\pi,r)$ are the eigenvalues of an orthogonal (resp. symplectic) matrix. This is in agreement with the assertion (ii) of our Criterion~\ref{criterion}. We also note the related situation~\cite{Katz:ubiquity}.

Known analogies between $L$-functions and their symmetries over number fields and function fields are discussed in~\cite{KS:bams}*{\S4}. Overall we would like propose Criterion~\ref{criterion} and its analogue for geometric families as an answer to the question mark in the entry 6-A of Table~2 in~\cite{KS:bams}.

\subsection{Automorphic Plancherel density theorem with error bounds}\label{sec:intro:aut-Plan}

  We explain a more precise version of the theorem and method of proof for the Sato-Tate theorem
  for families (\S\ref{sec:intro:ST}). The key is to bound the error terms when
  we approximate the distribution of local components of automorphic representations in a family
  with the Plancherel measure.

  For simplicity of exposition let us assume that $G$ is a split reductive group over $\Q$ with trivial center
  as in \S\ref{sec:intro:ST}.
  A crucial hypothesis is that $G(\R)$ admits an $\R$-anisotropic maximal torus (in which case
  $G(\R)$ admits discrete series representations).
  Let $\mathcal{A}_{\mathrm{disc}}(G)$ denote the set of isomorphism classes of discrete automorphic
  representations of $G(\A)$. We say that
  $\pi\in \mathcal{A}_{\mathrm{disc}}(G)$ has level $N$ and weight $\xi$ if
  $\pi$ has a nonzero fixed vector under the adelic version of the full level $N$ congruence subgroup
  $K(N)\subset G(\A^\infty)$ and if $\pi_\infty\otimes \xi$ has nonzero Lie algebra cohomology.
  In this subsection we make a further simplifying hypothesis that $\xi$ has regular highest weight, in which case
  $\pi_\infty$ as above must be a discrete series representation. (In the main body of this paper, the latter assumption on $\xi$ is necessary only for the results in \S\S\ref{sub:Plan-density}-\ref{sub:general-functions-S_0}, where more general test functions are considered.)

  Define $\mathcal{F}=\mathcal{F}(N,\xi)$ to be the finite multi-set consisting of $\pi\in \mathcal{A}_{\mathrm{disc}}(G)$
  of level $N$ and weight $\xi$, where each such $\pi$ appears in $\mathcal{F}$
  with multiplicity $$a_{\mathcal{F}}(\pi):= \dim (\pi^\infty)^{K(N)}\in \Z_{\ge 0}.$$
  This quantity naturally occurs as the dimension of the $\pi$-isotypical subspace in the cohomology
  of the locally symmetric space for $G$ of level $N$ with coefficient defined by $\xi$.
  The main motivation for allowing $\pi$ to appear $a_{\mathcal{F}}(\pi)$ times is to enable us
  to compute the counting measure below with the trace formula.

  Let $p$ be a prime number. Write $G(\Q_p)^{\wedge}$ for the unitary dual of
  irreducible smooth representations of $G(\Q_p)$. The unramified (resp. unramified and tempered)
  part of $G(\Q_p)^{\wedge}$ is denoted $G(\Q_p)^{\wedge,\mathrm{ur}}$
   (resp. $G(\Q_p)^{\wedge,\mathrm{ur},\mathrm{temp}}$). There is a canonical isomorphism
\begin{equation}
\label{e:intro:spec-satake} G(\Q_p)^{\wedge,\mathrm{ur},\mathrm{temp}}\simeq \hat{T}_c/\Omega.
\end{equation}
  The unramified Hecke algebra of $G(\Q_p)$ will be denoted $\mathcal{H}^{\mathrm{ur}}(G(\Q_p))$.
  There is a map from $\mathcal{H}^{\mathrm{ur}}(G(\Q_p))$ to the space of continuous functions
  on $\hat{T}_c/\Omega$:
  $$\phi\mapsto \hat{\phi}\quad \mbox{determined by}\quad \hat{\phi}(\pi)=\mathrm{tr}\, \pi(\phi),
  ~\forall \pi\in G(\Q_p)^{\wedge,\mathrm{ur},\mathrm{temp}}.$$
  There are two natural measures supported on
  $ \hat{T}_c/\Omega$.
  The Plancherel measure $\hat{\mu}^{\mathrm{pl,\mathrm{ur}}}_p$, dependent on $p$, is
  defined on $G(\Q_p)^{\wedge,\mathrm{ur}}$ and naturally arises
   in local harmonic analysis. The Sato-Tate measure
  $\hat{\mu}^{\mathrm{ST}}$ on $ \hat{T}_c/\Omega$ is independent of $p$ and may be
  extended to $G(\Q_p)^{\wedge,\mathrm{ur}} $ by zero.
   Both $\hat{\mu}^{\mathrm{pl,\mathrm{ur}}}_p$
  and $\hat{\mu}^{\mathrm{ST}}$ assign volume 1 to  $ \hat{T}_c/\Omega$.
  There is yet another measure $\hat{\mu}^{\mathrm{count}}_{\mathcal{F},p}$
  on $G(\Q_p)^{\wedge,\mathrm{ur}}$, which is the averaged counting measure for the $p$-components of members of $\mathcal{F}$.
  Namely
  \begin{equation}
  \label{e:count-meas}\hat{\mu}^{\mathrm{count}}_{\mathcal{F},p}:=
  \frac{1}{|\mathcal{F}|} \sum_{\pi\in \mathcal{F}}  \delta_{\pi_p}
\end{equation}
  where $\delta_{\pi_p}$ denotes the Dirac delta measure supported at $\pi_p$. (Each
	$\pi\in \mathcal{A}_{\mathrm{disc}}(G)$ contributes $a_{\mathcal{F}}(\pi)$ times to the above sum.)
  Our primary goal is to bound the difference between $\hat{\mu}^{\mathrm{pl},\mathrm{ur}}_p$
  and $\hat{\mu}^{\mathrm{count}}_{\mathcal{F},p}$. (Note that our definition
  of $\hat{\mu}^{\mathrm{count}}_{\mathcal{F},p}$ in the main body will be a little different from
  \eqref{e:count-meas} but asymptotically the same, see Remark \ref{r:|F|}.)

  In order to quantify error bounds, we introduce a filtration
  $\{\mathcal{H}^{\mathrm{ur}}(G(\Q_p))^{\le \kappa}\}_{\kappa\in\Z_{\ge0}}$
  on $\mathcal{H}^{\mathrm{ur}}(G(\Q_p))$ as a complex vector space. The filtration is increasing, exhaustive
  and depends on a non-canonical choice.
  Roughly speaking, $\mathcal{H}^{\mathrm{ur}}(G(\Q_p))^{\le \kappa}$ is like the span of
  all monomials of degree $\le \kappa$ when $\mathcal{H}^{\mathrm{ur}}(G(\Q_p))$ is identified with
  (a subalgebra of) a polynomial algebra.
  For each $\xi$, it is possible to assign a positive integer $m(\xi)$ in terms of the highest weight of $\xi$.
  When we say that weight is going to infinity, it means that $m(\xi)$ grows to $\infty$ in the usual sense.

  The main result on error bounds alluded to above is the following.
  (See Theorems \ref{t:level-varies} and \ref{t:weight-varies} for the precise statements and Remarks \ref{r:explicit-const-level} and \ref{r:explicit-const-wt} for an explicit choice of constants.) A uniform bound on orbital integrals, cf. \eqref{e:intro-unif-bound} below, enters the proof of (ii) (but not (i)).% and requires the assumption $p\gg 1$ in (ii) to ensure that $G(\BmQ_p)$.

\begin{thm}\label{t:intro:error-bound}
  Let $\mathcal{F}=\mathcal{F}(N,\xi)$ be as above.
  Consider a prime $p$, an integer $\kappa\ge 1$, and a function $\phi_p\in \mathcal{H}^{\mathrm{ur}}(G(\Q_p))^{\le \kappa}$
  such that $|\phi_p|\le 1$ on $G(\Q_p)$.
\begin{enumerate}
  \item (level aspect) Suppose that $\xi$ remains fixed. There exist constants $A_{\level},B_{\level},C_{\level}>0$ depending only on $G$ such that for any $p$, $\kappa$, $\phi_p$  as above and for any $N$ coprime to $p$,
  $$ \hat{\mu}^{\mathrm{count}}_{\mathcal{F},p}(\hat{\phi}_p)-\hat{\mu}^{\mathrm{pl,\mathrm{ur}}}_p(\hat{\phi}_p)
  = O(p^{A_{\level}+B_{\level}\kappa}N^{-C_{\level}}).$$
    \item (weight aspect) Fix a level $N$. There exist constants
    $A_{\weight},B_{\weight},C_{\weight}>0$ and a lower bound $c>0$ depending only on $G$ such that for any $p\ge c$, $\kappa$, $\phi_p$ as above with $(p,N)=1$ and for any $\xi$,
  $$ \hat{\mu}^{\mathrm{count}}_{\mathcal{F},p}(\hat{\phi}_p)-\hat{\mu}^{\mathrm{pl,\mathrm{ur}}}_p(\hat{\phi}_p)
  = O(p^{A_{\weight}+B_{\weight}\kappa}m(\xi)^{-C_{\weight}}).$$
\end{enumerate}

\end{thm}

  Let $\{ \mathcal{F}_k=\mathcal{F}(N_k,\xi_k)\}_{k\ge1}$ be
  either kind of family in \S\ref{sec:intro:ST}, namely either $N_k\rightarrow \infty$
  and $\xi_k$ is fixed or $N_k$ is fixed and $\xi_k\rightarrow \infty$.
    When applied to $\{ \mathcal{F}_k\}_{k\ge1}$, Theorem \ref{t:intro:error-bound} leads to
  the equidistribution results in the following corollary (cf.
  cases (ii) and (iii) in the paragraph below Theorem \ref{Sato-Tate}).
  Indeed, (i) of the corollary is immediate. Part (ii) is easily derived from the fact
  that $\hat{\mu}^{\mathrm{pl,\mathrm{ur}}}_p$ weakly converges to $\hat{\mu}^{\mathrm{ST}}$
  as $p\rightarrow \infty$. Although the unramified Hecke algebra at $p$ gives rise to only
  regular functions on the complex variety $\hat{T}_c/\Omega$, it is not difficult to extend
  the results to continuous functions on $\hat{T}_c/\Omega$. (See \S\S\ref{sub:Plan-density}-\ref{sub:general-functions-S_0} for details.)

\begin{cor}\label{c:intro:Plan-ST} Keep the notation of Theorem \ref{t:intro:error-bound}.
Let $\hat{\phi}$ be a continuous function on $\hat{T}_c/\Omega$.
  In view of \eqref{e:intro:spec-satake} $\hat{\phi}$ can be extended by zero to a function
  $\hat{\phi}_p$ on $G(\Q_p)^{\wedge,\mathrm{ur}}$ for each prime $p$.
\begin{enumerate}
  \item (Automorphic Plancherel density theorem~\cite{Shi-Plan}) $$\lim_{k\rightarrow \infty} \hat{\mu}^{\mathrm{count}}_{\mathcal{F}_k,p}(\hat{\phi}_p)
  = \hat{\mu}^{\mathrm{pl,\mathrm{ur}}}_p(\hat{\phi}_p).$$
  \item (Sato-Tate theorem for families) Let $\{p_k\}_{k\ge1}$ be a sequence of primes tending to $\infty$.
		Suppose that $\dfrac{\log p_k}{\log N_k}  \rightarrow 0$ (resp. $\dfrac{\log p_k}{\log m(\xi_k)} \rightarrow 0$) as $k\rightarrow \infty$ if $\xi_k$ (resp. $N_k$) remains fixed as $k$ varies. Then
  $$\lim_{k\rightarrow \infty} \hat{\mu}^{\mathrm{count}}_{\mathcal{F}_k,p_k}(\hat{\phi}_{p_k})
  = \hat{\mu}^{\mathrm{ST}}(\hat{\phi}).$$
\end{enumerate}
\end{cor}

  Theorem \ref{t:intro:error-bound} and Corollary \ref{c:intro:Plan-ST} remain valid if any finite number of primes are simultaneously considered in place of $p$ or $p_k$. Moreover (i) of the corollary holds true for more general (and possibly ramified) test functions $\hat \phi_p$ on $G(\Q_p)^{\wedge}$ thanks to Sauvageot's density theorem. It would be interesting to quantify the error bounds in this generality. Finally the above results should be compared with the proposition~4 in~\cite{Serre:pl} and the theorem~1 in \cite{Nag06} for modular forms on $\GL(2)$. We also note~\cite{Sarnak87} for Maass forms (which are not considered in the the present paper).

\subsection{Random matrices}\label{sec:intro:rdm} We provide a brief account of the theory of random matrices. The reader will find more details in \S\ref{sec:zeros:matrix} and extensive treatments in~\cites{book:Mehta,book:KS}.

The Gaussian Unitary Ensemble and Gaussian Orthogonal Ensemble were introduced by Wigner in the study of resonances of heavy nucleus. %The GUE (resp. GOE) ensemble consists of the vector space of $N\times N$ hermitian (resp. orthogonal) matrices $A$ endowed with the Gaussian probability proportional to $e^{-\Mtr(A^2)}dA$.
The Gaussian Symplectic Ensemble was introduced later by Dyson together with his Circular Ensembles. % In all cases one studies the limiting distribution of the eigenvalues of $A$ as $N\to \infty$.
In this paper we are concerned with the ensembles attached to compact Lie groups which are introduced by Katz-Sarnak and occur in the statistics of $L$-functions. (See~\cite{Duen:RMT} for the precise classification of these ensembles attached to different Riemannian symmetric spaces.)

One considers eigenvalues of matrices in compact groups $\CmG(N)$ of large dimension endowed with the Haar probability measure. We have three symmetry types $\CmG=SO(even)$ (resp. $\CmG=U$, $\CmG=USp$); the notation says that for all $N\ge 1$, the groups are $\CmG(N)=SO(2N)$ (resp. $\CmG(N)=U(N)$ and $\CmG(N)=USp(2N)$).

For all matrices $A \in \CmG(N)$ we have an associated sequence of \key{normalized angles}
%(see~\S\ref{sec:zeros:matrix})
\begin{equation}\label{intro:vartheta}
0\le \vartheta_1 \le \vartheta_2 \le \cdots \le \vartheta_N \le N.
\end{equation}
For example in the case $\CmG=U$, the eigenvalues of $A\in U(N)$ are given by $e(\tfrac{\vartheta_j}{N})=e^{2i\pi \vartheta_j/N}$ for $1\le j\le N$.
The normalization is such that the mean spacing of the $(\vartheta_i)$ in~\eqref{intro:vartheta} is about one.

For each $N\ge 1$ these angles $(\vartheta_i)_{1\le i\le N}$ are correlated random variables (a point process). By the Weyl integration formula their joint density is  proportional to
\begin{equation}\label{ginibre}
  \prod_{1\le i<j\le N} \abs{\sin\left( \frac{\pi(\vartheta_i-\vartheta_j)}{N}\right)}^\beta d\vartheta_1 \cdots d\vartheta_N.
\end{equation}
The parameter $\beta$ is a measure of the repulsion between nearby eigenvalues. We have that $\beta=1$ (resp. $\beta=2$, $\beta=4$) for $\CmG=SO(even)$ (resp. $\CmG=U$, $\CmG=USp$).

A fundamental result of Gaudin--Mehta and Dyson, which has been extended to the above ensembles by Katz--Sarnak, is that when $N\to \infty$ the distribution of the angles $(\vartheta_i)_{1\le i\le N}$ converges to a determinantal point process.\footnote{
For other values of $\beta\neq 1,2,4$, the limiting statistics attached to~\eqref{ginibre} has been determined recently by Valk\'o--Vir\'ag in terms of the Brownian carousel.}
The kernel of the limiting point process when $\CmG=U$ is given by the Dyson sine kernel
\begin{equation*}
K(x,y)=\frac{\sin\pi(x-y)}{\pi(x-y)}, \quad x,y\in \BmR_+
\end{equation*}
The kernel for $\CmG=SO(even)$ is $K_+(x,y)=K(x,y)+K(-x,y)$ and the kernel for $\CmG=USp$ is $K_-(x,y)=K(x,y)-K(-x,y)$.

In particular this means that there is a \key{limiting $1$-level density} $W(\CmG)$ for the angles $(\vartheta_i)_{1\le i \le N}$ as $N\to \infty$ (see also Proposition~\ref{prop:KS}). It is given by the following formulas:
\begin{equation}\label{intro:WG}
\begin{aligned}
W(SO(even))(x)&=K_+(x,x)=1+\frac{\sin 2\pi x}{2\pi x},\\
W(U)(x)&=K(x,x)= 1,\\
W(USp)(x)&=K_-(x,x)=1-\frac{\sin 2\pi x}{2\pi x}.
\end{aligned}
\end{equation}

\subsection{Low-lying zeros}\label{sec:intro:lowlying} We can now state more precisely our results on families of $L$-functions. Let $\FmF=r_*\CmF$ be a family of $L$-functions as defined above in~\S\ref{sec:intro:ST}-\ref{sec:intro:familyL}.

For all $\Pi\in \FmF_k$ we denote by $\rho_j(\Pi)$, the zeros of the completed $L$-function $\Lambda(s,\Pi)$, where $j\in \BmZ$. We write $\rho_j(\Pi)=\Mdemi+i\gamma_j(\Pi)$ and therefore $-\frac12 < \mathrm{Re} \gamma_j(\Pi) < \frac12$ for all $j$. By the functional equation $\Lambda(\frac12 + i\gamma,\Pi)=0$ if and only if $\Lambda(\frac12 + i\overline{\gamma},\Pi)=0$. We do not assume the GRH that would further imply $\gamma_j(Pi)\in \R$ for all $j$.

In the case that $\Pi$ is self-dual the zeros occur in complex pairs, namely $L(\frac12+i\gamma,\Pi)=0$ if and only if $\Lambda(\frac12 -i\gamma,\Pi)=0$.

Following Iwaniec--Sarnak we associate an \key{analytic conductor} $C(\FmF_k)\ge 1$ to the family, see~\S\ref{sec:pp:Lfn} and~\S\ref{sec:zeros:cond}.  We assume from now that the family is in the weight aspect, so that for each $k\ge 1$, all  of $\Pi\in \FmF_k$ share the same archimedean factor $\Pi_\infty$ and we can set $C(\FmF_k):=C(\Pi_\infty)$.
(For families in the level aspect we obtain similar results, see Section~\ref{sec:zeros}).
Note that $C(\FmF_k)\to \infty$  and furthermore we shall make the assumption that $\log C(\FmF_k) \asymp \log m(\xi_k)$ as $k\to \infty$.

For a given $\Pi\in \FmF_k$ the number of zeros $\gamma_j(\Pi)$ of bounded height is $\asymp \log C(\FmF_k)$. The \key{low-lying zeros} of $\Lambda(s,\Pi)$ are those within distance $O(1/\log(C(\FmF_k))$ to the central point ; heuristically there are a bounded number of low-lying zeros for a given $\Pi\in \FmF_k$, although this can only be proved on average over the family. For a technical reason related to the fact that the explicit formula counts both the zeros and poles of $\Lambda(s,\Pi)$ (\S\ref{sec:pp:explicit}), we make an hypothesis on the occurrence of poles of $\Lambda(s,\Pi)$ for $\Pi\in \FmF_k$, see~Hypothesis~\ref{hyp:poles}.

%\begin{equation}
%\sum_j \Phi(\frac{\gamma_j(\Pi)}{2\pi} \log C(\FmF_k))
%\end{equation}
%where $\Phi\in \cS(\R)$ is a Schwartz function.

The statistics of low-lying zeros of the family are studied via the functional
\begin{equation} \label{intro:1-level}
D(\FmF_k;\Phi)=\frac{1}{\abs{\FmF_k}}\sum_{\Pi\in\FmF_k} \sum_j \Phi\left(\frac{\gamma_j(\Pi)}{2\pi} \log C(\FmF_k)\right),
\end{equation}
where $\Phi$ is a Paley--Wiener function. This is the $1$-level density for the family $\FmF_k$.
Choosing $\Phi$ as a smooth approximation of the characteristic function of an interval $[a,b]$, the sum~\eqref{intro:1-level} should be thought as
a weighted count of all the zeros  of the family lying in $[a,b]$:
\begin{equation*}
  \frac{2 a \pi}{\log C(\FmF_k)} \le \gamma_j(\Pi) \le \frac{2 b \pi}{\log C(\FmF_k)},
  \quad
(j\in \Z, \Pi \in \cF_k).
\end{equation*}
We want to compare the asymptotic as $k\to \infty$ with the limiting $1$-level density of normalized angles~\eqref{intro:vartheta} of the random matrix ensembles described in~\S\ref{sec:intro:rdm} above.

\begin{thm}\label{th:low-lying} Let $r:{}^LG:\to \GL(d,\BmC)$ be a continuous $L$-homomorphism which is irreducible and non-trivial when restricted to $\widehat G$.
  There exists $\delta>0$ depending on $\FmF$ such that the following holds. Let $\FmF=r_*\CmF$ be a family of $L$-functions in the weight aspect as in~\S\ref{sec:intro:familyL}, assuming the functoriality conjecture as in Hypothesis~\ref{hypo:functorial-lift}. Assume Hypothesis~\ref{hyp:poles} concerning the poles of $\Lambda(s,\Pi)$ for $\Pi\in\FmF_k$. Then for all Paley-Wiener functions $\Phi$ whose Fourier transform $\widehat \Phi$ has support in $(-\delta,\delta)$:

(i) there is a limiting $1$-level density for the low-lying zeros, namely there is a density $W(x)$ such that
\begin{equation*}%\label{eq:th:low-lying}
\lim_{k\to \infty} D(\FmF_k;\Phi) = \int_{-\infty}^{\infty}
\Phi(x)W(x)dx;
\end{equation*}

(ii) the density $W(x)$ is determined by the Frobenius--Schur indicator of the irreducible representation $r$. Precisely,
\begin{equation}\label{W:th:low-lying}
W=\begin{cases}
W(SO(even)),& \text{if $s(r)=-1$,}\\
W(U),& \text{if $s(r)=0$,}\\
W(USp),& \text{if $s(r)=1$.}
\end{cases}
\end{equation}
\end{thm}

The constant $\delta>0$ depends on the family $\FmF$, in other words it depends on the group $G$, the $L$-morphism $r:{}^LG \to \GL(d,\BmC)$ and the limit of the ratio $\dfrac{\log C(\FmF_k)}{\log m(\xi_k)}$. Its numerical value is directly related to the numerical values of the exponents in the error term occurring in Theorem~\ref{t:intro:error-bound}. Although we do not attempt to do so in the present paper, it is interesting to produce a value of $\delta$ that is as large as possible, see~\cite{ILS00} for the case of $\GL(2)$. This would require sharp bounds for orbital integrals as can be seen from the outline below. A specific problem would be to optimize the exponents $a,b,e$ in~\eqref{e:intro-unif-bound}. (In fact we can achieve $e=1$, see \S\ref{sec:intro:outline} below).

Our proofs of Theorems~\ref{t:intro:error-bound} and~\ref{th:low-lying} are effective in the sense that each constant and each exponent in the statements of the estimates could, in principle, be made explicit. Finally we note that~\cite{Cogdell2004} studied following E.~Royer the related question of $L$-values at the edge in the case of symmetric powers of $\GL(2)$ and has noted the relevance of the indicator $s(r)$.

\subsection{Outline of proofs}\label{sec:intro:outline}

A wide range of methods are used in the proof. Among them are the Arthur-Selberg trace formula, the analytic theory of
$L$-functions, representation theory and harmonic analysis on $p$-adic and real groups, and random matrix theory.

The first main result of our paper is Theorem \ref{t:intro:error-bound}, proved in Section~\ref{s:aut-Plan-theorem}.
We already pointed out after stating the theorem that the Sato-Tate equidistribution
for families (Corollary \ref{c:intro:Plan-ST})
is derived from Theorem \ref{t:intro:error-bound}
and the fact that the Plancherel measure tends to the Sato-Tate measure as the residue characteristic
is pushed to $\infty$.

Let us outline the proof of the theorem.
In fact we restrict our attention to part (ii), as (i) is handled by a similar method and only simpler to deal with.
Thus we consider $\cF$ with fixed level and weight $\xi$, where $\xi$ is regarded
as a variable.
Our starting point is to realize that for $\phi_p\in C^\infty_c(G(\Q_p))$,
we may interpret $\hat{\mu}^{\mathrm{count}}_{\cF,p}(\hat{\phi}_p)$ in terms of the spectral side of the trace formula for $G$
evaluated against the function $\phi_p\phi^{\infty,p}\phi_\infty\in C^\infty_c(G(\A))$ for a suitable
$\phi^{\infty,p}$ (depending on $\cF$ and $p$; note that $p$ is allowed to vary) and an Euler-Poincar\'{e} function $\phi_\infty$ at $\infty$ (depending on $\xi$).
Applying the trace formula, which has a simple form thanks to $\phi_\infty$, we get a
geometric expansion for $\hat{\mu}^{\mathrm{count}}_{\cF,p}(\hat{\phi}_p)$:
\beq\label{e:sketch-of-pf-1stThm} \hat{\mu}^{\mathrm{count}}_{\cF,p}(\hat{\phi}_{p})
=\sum_{M\subset G\atop \textrm{cusp. Levi}}
\sum_{\gamma\in M(\Q)/\sim\atop \R\textrm{-ell}}
a'_{M,\gamma}\cdot O^{M(\A^\infty)}_\gamma(\phi^\infty_{M})\frac{\Phi^G_M(\gamma,\xi)}{\dim \xi}.
\eeq
where $a'_{M,\gamma}\in \C$ is a coefficient encoding a certain volume associated with
the connected centralizer of $\gamma$ in $M$
and $\phi^\infty_{M}$ is the constant term of $\phi^\infty$ along (a parabolic subgroup associated with) $M$.
The Plancherel formula identifies the term for $M=G$ and $\gamma=1$ with
$\pl_p(\hat{\phi}_p)$, which basically dominates the right hand side.

The proof of Theorem \ref{t:intro:error-bound}.(ii) boils down to bounding
the other terms on the right hand side of \eqref{e:sketch-of-pf-1stThm}.
Here is a rough explanation of how to analyze each component there.
The first summation is finite and controlled by $G$, so we may as well look at the formula
for each $M$. There are finitely many conjugacy classes in the second summation for which the summand is nonzero.
The number of such conjugacy classes may be bounded by a power of $p$ where the exponent of $p$ depends only on $\kappa$
(measuring the ``complexity'' of $\phi_p$).
The term $a'_{M,\gamma}$, when unraveled, involves a special value of some Artin $L$-function.
We establish a bound on the special value which suffices to deal with $a'_{M,\gamma}$.
The last term $\frac{\Phi^G_M(\gamma,\xi)}{\dim \xi}$ can be estimated by using a character formula
for the stable discrete series character $\Phi^G_M(\gamma,\xi)$ as well as the dimension formula
for $\xi$.
It remains to take care of $O^{M(\A^\infty)}_\gamma(\phi^\infty_{M})$.
This turns out to be the most difficult task since
Theorem \ref{t:intro:error-bound}
asks for a bound that is \emph{uniform as the residue characteristic varies}.

We are led to prove that there exist $a,b,e> 0$, depending only on $G$, such that for almost all $q$,
\beq\label{e:intro-unif-bound}|O^{M(\Q_q)}_\gamma(\phi_q)|\le q^{a+b \kappa} D^M(\gamma)^{-e/2}\eeq
for all semisimple $\gamma$ and all $\phi_q$ with $\phi_q\in \cH^{\ur}(M(\Q_q))^{\le \kappa}$ and $|\phi_q|\le 1$, where $D^M(\cdot)$ denotes the Weyl discriminant.
The justification of \eqref{e:intro-unif-bound} takes up the whole of Section~\ref{s:app:unif-bound}.
 The problem already appears to be deep for the unit elements of unramified Hecke algebras
 in which case one can take $\kappa=0$. (By a different argument based on arithmetic motivic integration, Cluckers, Gordon, and Halupczok establish a stronger uniform bound with $e=1$. This work is presented in Appendix~\ref{s:app:B}.)
 At the (fixed) finite set of primes where wild ramification occurs, the problem comes down to bounding
the orbital integral $|O^{M(\Q_q)}_\gamma(\phi_q)|$ for fixed $q$ and $\phi_q$. It is deduced
from the Shalika germ theory that the orbital integral is bounded by a constant, if normalized
by the Weyl discriminant $D^M(\gamma)^{1/2}$, as $\gamma$ runs over the set of semisimple elements.
See Appendix \ref{s:app:Kottwitz} by Kottwitz for details.

We continue with Theorem~\ref{th:low-lying}. The proof relies heavily on Theorem~\ref{t:intro:error-bound}. The connection between the two statements might not be immediately apparent.

A standard procedure based on the explicit formula (see~Section~\ref{sec:pp}) expresses the sum~\eqref{intro:1-level} over zeros of $L$-function as a sum over prime numbers of Satake parameters. The details are to be found in Section~\ref{sec:pf}, and the result is that $D(\FmF_k,\Phi)$ can be approximated by
\begin{equation} \label{intro:sumprime}
  \sum_{\text{prime } p}
\hat{\mu}^{\mathrm{count}}_{\mathcal{F}_k,p}(\hat{\phi}_p)
\Phi(\frac{ \log p}{\pi \log C(\FmF_k)}).
\end{equation}
Here $\phi_p\in \mathcal{H}^{\mathrm{ur}}(G(\Q_p))^{\le \kappa}$ is suitably chosen such that $\hat \phi_p(\pi_p)$ is a sum of powers of the Satake parameters of $r_*\pi$ (see Sections~\ref{s:Satake-trans} and~\ref{s:Plancherel}). The integer $\kappa$ may be large but it depends only on $r$ so should be considered as fixed. Also the sum is over unramified primes. We have $\log C(\CmF_k) \asymp \log m(\xi_k)$ (see Sections~\ref{s:langlands} and~\ref{sec:zeros}). We deduce that the sum is supported on those primes $p\le m(\xi_k)^{A\delta}$ where $A$ is a suitable constant and $\delta$ is as in Theorem~\ref{th:low-lying}.

We apply Theorem~\ref{t:intro:error-bound} which has two components: the main term and the error term. We begin with the main term which amounts to substituting
$\hat{\mu}^{\mathrm{pl,\mathrm{ur}}}_p(\hat{\phi}_p)$ for $\hat{\mu}^{\mathrm{count}}_{\mathcal{F}_k,p}(\hat{\phi}_p)$ in~\eqref{intro:sumprime}.
Unlike $\hat{\mu}^{\mathrm{count}}_{\mathcal{F}_k,p}$, this term is purely local, thus simpler. Indeed $\hat{\mu}^{\mathrm{pl,\mathrm{ur}}}_p(\hat{\phi}_p)$ can be computed explicitly for low rank groups, see e.g.~\cite{Gro98} for all the relevant properties of the Plancherel measure. However we want to establish Theorem~\ref{th:low-lying} in general so we proceed differently.

Using certain uniform estimates by Kato~\cite{Kat82}, we can approximate $\hat{\mu}^{\mathrm{pl,\mathrm{ur}}}_p(\hat{\phi}_p)$ by a much simpler expression that depends directly on the restriction of $r$ to $\widehat G \rtimes W_{\BmQ_p}$. Then a pleasant computation using the Cebotarev equidistribution theorem, Weyl's unitary trick and the properties of the Frobenius--Schur indicator shows that the sum over primes of this main term contribute $\frac{-s(r)}{2}\Phi(0)$ to~\eqref{intro:sumprime}. This exactly reflects the identities~\eqref{W:th:low-lying} in the statement (ii) of Theorem~\ref{th:low-lying}.

We continue with the error term $O(p^{A_\weight+B_\weight\kappa}m(\xi_k)^{-C_\weight})$ which we need to insert in~\eqref{intro:sumprime}. We can see the reasons why the proof of Theorem~\ref{th:low-lying} requires the full force of Theorem~\ref{t:intro:error-bound} and its error term: the polynomial control by $p^{A_\weight+B_\weight\kappa}$ implies that the sum over primes is at most $m(\xi_k)^{D\delta}$ for some $D>0$; the power saving $m(\xi_k)^{-C_\weight}$ is exactly what is needed to beat $m(\xi_k)^{D\delta}$ when $\delta$ is chosen small enough.

\subsection{Notation}\label{sub:notation}  We distinguish the letter $\CmF$ for families of automorphic representations on general reductive groups and $\FmF=r_*\CmF$ for the families of automorphic representations on $\GL(d)$.

%The notation $f \ll g$ or $f=O(g)$ means that $f\le cg$ for some constant $c$; we write   and $o()$ have their usual meaning.

Let us describe in words the significance of various constants occurring in the main statements. We often use the convention to write multiplicative constants in lowercase letters and constants in the exponents in uppercase or greek letters.
\begin{itemize}
  \item The exponent $\beta$ from Lemma~\ref{l:bound-degree-Satake} is such that for all $\phi\in \cH^{\ur}(\GL_d)$ of degree at most $\kappa$, the pullback $r^*\phi$ is of degree at most $\le \beta \kappa$.
  \item The exponent $b_G$ from Lemma~\ref{l:bounding-phi-on-S_0} controls a bound for the constant term $\abs{\phi_M(1)}$ for all Levi subgroups $M$ and $\phi \in  \cH^{\ur}(G)$ of degree at most $\kappa$.
 % \item The constant $B_5$ in Lemma~\ref{l:bounding1-alpha(gamma)}.
  \item The exponent $0<\theta<\frac12$ is a nontrivial bound towards Ramanujan-Petersson for $\GL(d,\BmA)$.
  \item The integer $i\ge 1$ in Corollary~\ref{c:vanishing-ram-subgroup} is an upper-bound for the ramification of the Galois group $\Gal(E/F)$.
  %\item The constant $C$ in Lemma~\ref{l:dim-and-trace} and~\ref{l:bound-for-st-ds-char}. Decay of Euler-Poincar\'e functions.
%  \item The integer $s \ge 1$ is a bound on the degree of certain extensions of $F$. *to be revised*
  \item The constants $B_\Xi$ and $c_\Xi$ in Lemma~\ref{l:forcing-unipotent} and $A_3,B_3$ in Proposition~\ref{p:bound-number-of-conj} control the number of rational conjugacy classes intersecting a small open compact subgroup.
%  \item The integer $\kappa \ge 1$, and the finite set $S_1$ of unramified nonarchimedean places. Test function $\phi_{S_1}$ in the unramified Hecke algebra $ \cH^{\ur}(G(F_{S_1})^{\le \kappa}$.
  \item The integer $u_G\ge 1$ in Lemma~\ref{l:bounding-conj-in-st-conj} is a uniform upper bound for the number of $G(F_v)$-conjugacy classes in a stable conjugacy class.
  \item The integer $n_G\ge 0$ is the minimum value for the dimension of the unipotent radical of a proper parabolic subgroup of $G$ over $\ol{F}$.
  \item The constant $c>0$ is a bound for the number of connected components $\pi_0(Z(\hat{I}_\gamma)^{\Gamma})$ in Corollary~\ref{c:bounding-pi0-general}.
  \item The exponents $A_\level,B_\level,C_\level>0$ in Theorem~\ref{t:level-varies} (see also Theorem~\ref{t:intro:error-bound}) and $A_\weight,B_\weight,C_\weight>0$ in Theorem~\ref{t:weight-varies}.
  \item For families in the weight aspect, the constant $\eta>0$ which may be chosen arbitrary small enters in the condition~\eqref{dimxik} that the dominant weights attached to $\xi_k$ stay away from the walls.
  \item The exponent $C_{\text{pole}}>0$ in the Hypothesis~\ref{hyp:poles} concerning the density of poles of $L$-functions.
  \item The exponents $0<C_1 < C_2$ control the analytic conductor $C(\FmF_k)$ of the families in the weight aspect (Inequality~\eqref{xik-CFk}) and $0<C_3 < C_4$ in the level aspect (Hypothesis~\ref{hyp:cond}).
  \item The constant $\delta>0$ in Theorem~\ref{th:onelevel} controls the support of the Fourier transform $\widehat \Phi$ of the test function $\Phi$.
  \item The constant $c(f)>0$ depending on the test function $f$ is a uniform upper bound for normalized orbital integrals $D^G(\gamma)^{\frac12} O_\gamma(f)$ (Appendix~\ref{s:app:Kottwitz}).
\end{itemize}
%We adopt the following notation for \Lquote{constants}. We often write multiplicative constants in lowercase and exponents in uppercase or greek letter.

Several constants are attached directly to the group $G$ such as the dimension $d_G=\dim G$, the rank $r_G=\rank G$, the order of the Weyl group $w_G=\abs{\Omega}$, the degree $s_G$ of the smallest extension of $F$ over which $G$ becomes split. Also in Lemma~\ref{l:bounding-phi-on-S_0} the constant $b_G$ gives a bound for the constant terms along Levi subgroups. The constants $a_G,b_G, e_G$ in Theorem~\ref{t:appendeix2} gives a uniform bound for certain orbital integrals.
In general we have made effort to keep light and consistent notation throughout the text.

In Section~\ref{s:bg} we will choose a finite extension $E/F$ which splits maximal tori of subgroups of $G$. The degree $s_G^{\spl}=[E:F]$ will be controlled by $s_G^{\spl}\le s_Gw_G$ (see Lemma~\ref{l:torus-splitting}), while the ramification of $E/F$ will vary.
In Section~\ref{s:Sato-Tate} we consider the finite extension $F_1/F$ such that $\Gal(\ol{F}/F)$ acts on $\hat{G}$ through the faithful action of $\Gal(F_1/F)$.%
%\begin{equation*}
%\Gal(\ol{F}/F)\twoheadrightarrow \Gal(F_1/F)\hra \Out(\hat{G}).
%\end{equation*}
For example if $G$ is a non-split inner form of a split group then $F_1=F$. In Section~\ref{sec:pf} we consider a finite extension $F_2/F_1$ such that the representation $r$ factors through $\hat{G}\rtimes \Gal(F_2/F)$. For a general $G$, there might not be any direct relationship between the extensions $E/F$ and $F_2/F_1/F$.

\subsection{Structure of the paper}

For a quick tour of our main results and the structure of our arguments, one could start reading from Section \ref{s:aut-Plan-theorem} after familiarizing oneself with basic notation, referring to earlier sections for further notation and basic facts as needed.

The first Sections~\ref{s:Satake-trans} and~\ref{s:Plancherel} are concerned with harmonic analysis on reductive groups over local fields, notably the Satake transform, $L$-groups and $L$-morphisms, the properties of the Plancherel measure and the Macdonald formula for the unramified spectrum. We establish bounds for truncated Hecke algebras and for character traces that will play a role in subsequent chapters. In Section~\ref{sec:pp} we recall various analytic properties of automorphic $L$-functions on $\GL(d)$ and notably isobaric sums, bounds towards Ramanujan--Petersson and the so-called explicit formula for the sum of the zeros.
Section~\ref{s:Sato-Tate} introduces the Sato--Tate measure for general groups and Sato--Tate equidistribution for Satake parameters and for families.
The next Section~\ref{s:bg} gathers various background materials on orbital integral, the Gross motive and Tamagawa measure, discrete series characters and Euler--Poincar\'e functions, and Frobenius--Schur indicator. We establish bounds for special values of the Gross motive which will enter in the geometric side of the trace formula.

In Section~\ref{s:app:unif-bound} we establish a uniform bound for orbital integrals of the type~\eqref{e:intro-unif-bound}.
In Section~\ref{s:conj} we establish various bounds on conjugacy classes and level subgroups. How these estimates enter in the trace formula has been detailed in the outline above.

Then we are ready in Section~\ref{s:aut-Plan-theorem} to establish our main result, an automorphic Plancherel theorem for families with error terms and its application to the Sato-Tate theorem for families. The theorem is first proved for test functions on the unitary dual coming from Hecke algebras by orchestrating all the previous results in the trace formula. Then our result is improved to allow more general test functions, either in the input to the Sato-Tate theorem or in the prescribed local condition for the family, by means of Sauvageot's density theorem.

The last three Sections~\ref{s:langlands}, \ref{sec:zeros} and~\ref{sec:pf} concern the application to low-lying zeros. In complete generality we need to rely on Langlands global functoriality and other hypothesis that we state precisely. These unproven assumptions are within reach in the context of endoscopic transfer and we will return to it in subsequent works.

Appendix~\ref{s:app:Kottwitz} by Kottwitz establishes the boundedness of normalized orbital integrals from the theory of Shalika germs.  Appendix~\ref{s:app:B} by Cluckers--Gordon--Halupczok establishes a strong form of~\eqref{e:intro-unif-bound} with $e=1$ by using recent results in arithmetic motivic integration.

\subsection{Acknowledgments} We would like to thank Jim Arthur, Joseph Bernstein, Laurent Clozel, Julia Gordon, Nicholas Katz, Emmanuel Kowalski, Erez Lapid, Philippe Michel, Peter Sarnak, Kannan Soundararajan and Akshay Venkatesh for helpful discussions and comments. We would like to express our gratitude to Robert Kottwitz and Bao Ch\^{a}u Ng\^{o} for helpful discussions regarding Section~\ref{s:app:unif-bound}, especially about the possibility of a geometric approach. We appreciate Brian Conrad for explaining us about the integral models for reductive groups. We thank the referee for a very careful reading.

Most of this work took place during the AY2010-2011 at the Institute for Advanced Study and some of the results have been presented there in March. We thank the audience for their helpful comments and the IAS for providing excellent working conditions. S.W.S. acknowledges support from the National Science Foundation during his stay at the IAS under agreement No. DMS-0635607 and thanks Massachusetts Institute of Technology and Korea Institute for Advanced Study for providing a very amiable work environment. N.T. is supported by a grant \#209849 from the Simons Foundation.

\section{Satake Transforms}\label{s:Satake-trans}

\subsection{$L$-groups and $L$-morphisms}\label{sub:L-groups}

  We are going to recall some definitions and
  facts from \cite[\S1,\S2]{Bor79} and \cite[\S1]{Kot84a}.
  Let $F$ be a local or global field of characteristic 0 with an algebraic closure $\ol{F}$, which we fix.
  Let $W_F$ denote the Weil group of $F$ and set $\Gamma:=\Gal(\ol{F}/F)$.
  Let $H$ and $G$ be connected reductive groups over $F$.
  Let $(\hat{B},\hat{T},\{X_{\alpha}\}_{\alpha\in \Delta^{\vee}})$ be a
  splitting datum fixed by $\Gamma$, from which the $L$-group
  $${}^L G=\hat{G}\rtimes W_F$$ is constructed.
  An $L$-morphism $\eta:{}^L H\ra {}^L G$ is a continuous map commuting with
  the canonical surjections ${}^L H\ra W_F$ and ${}^L G\ra W_F$ such that
  $\eta|_{\hat{H}}$ is a morphism of complex Lie groups.
  A representation of ${}^L G$ is by definition
  a continuous homomorphism ${}^L G\ra \GL(V)$ for some $\C$-vector space $V$ with $\dim V<\infty$
  such that $r|_{\hat{G}}$ is a morphism of complex Lie groups. Clearly giving a representation
  ${}^L G\ra \GL(V)$ is equivalent to giving an $L$-morphism ${}^L G\ra {}^L \GL(V)$.

  Let $f:H\ra G$ be a normal morphism, which means that $f(H)$ is a normal subgroup of $G$.
  Then it gives rise to an $L$-morphism ${}^L G \ra {}^L H$
  as explained in \cite[2.5]{Bor79}. In particular, there is a
  $\Gamma$-equivariant map $Z(\hat{G})\ra Z(\hat{H})$, which is
  canonical (independent of the choice of splittings).
  Thus an exact sequence of connected reductive groups over $F$
  $$1\ra G_1\ra G_2\ra G_3\ra 1$$ gives rise to
  a $\Gamma$-equivariant exact sequence of $\C$-diagonalizable groups
 $$1\ra Z(\hat{G}_3)\ra Z(\hat{G}_2)\ra Z(\hat{G}_1)\ra 1.$$

\subsection{Satake transform}\label{sub:Satake-trans}

  From here throughout this section,
  let $F$ be a finite extension of $\Q_p$ with integer ring $\cO$ and a uniformizer
  $\varpi$. Set $q:=|\cO/\varpi\cO|$. Let $G$ be an \emph{unramified} group over $F$ and
  $B=TU$ be a Borel subgroup decomposed into the maximal torus and the unipotent radical in $B$.
  Let $A$ denote the maximal $F$-split torus in $T$.
  Write $\Phi_F$ (resp. $\Phi$) for the set of all $F$-rational roots (resp. all roots over $\ol{F}$)
  and $\Phi_F^+$ (resp. $\Phi^+$) for the subset of positive roots.
  Choose a smooth reductive model of $G$ over $\cO$
  corresponding to a hyperspecial point on the apartment for $A$.
  Set $K:=G(\cO)$. Denote by $X_*(A)^+$ the subset of $X_*(A)$ meeting the closed Weyl
  chamber determined by $B$, namely
  $\lambda\in X_*(A)^+$ if $\alpha(\lambda)\ge 0$ for all $\alpha\in \Phi_F^+$.
  Denote by $\Omega_F$ (resp. $\Omega$) the $F$-rational Weyl group for $(G,A)$
  (resp. the absolute Weyl group for $(G,T$)), and $\rho_F$ (resp. $\rho$) the half sum
  of all positive roots in $\Phi^+_F$ (resp. $\Phi^+$). A partial order $\le$ is defined on $X_*(A)$ (resp. $X_*(T)$) such that $\mu\le \lambda$ if $\lambda-\mu$ is a linear combination of $F$-rational positive coroots (resp. positive coroots) with nonnegative coefficients. The same order extends to a partial order $\le_{\R}$ on $X_*(A)\otimes_\Z \R$ and $X_*(T)\otimes_\Z \R$ defined analogously.

 Let $F^{\ur}$ denote the maximal unramified extension of $F$. Let $\Fr$ denote the
 geometric Frobenius element of $\Gal(F^{\ur}/F)$.
  Define $W_F^{\ur}$ to be the unramified Weil group, namely the subgroup
  $\Fr^{\Z}$ of  $\Gal(F^{\ur}/F)$.
  Since $\Gal(\ol{F}/F)$ acts on $\hat{G}$ through a finite quotient
  of $\Gal(F^{\ur}/F)$, one can make sense of
   ${}^L G^{\ur}:=\hat{G}\rtimes W_F^{\ur}$.

  Throughout this section we write $G$, $T$, $A$ for $G(F)$, $T(F)$, $A(F)$ if there is no confusion.
  Define $\cH^{\ur}(G):=C^\infty_c(K\bs G / K)$ and $\cH^{\ur}(T):=C^\infty_c(T(F)/T(F)\cap K)$.
  The latter is canonically isomorphic to $\cH^{\ur}(A):=C^\infty_c(A(F)/A(\cO))$ via the inclusion
  $A\hra T$. We can further identify
  $$\cH^{\ur}(T)\simeq \cH^{\ur}(A)\simeq \C[X_*(A)]$$
  where the last $\C$-algebra isomorphism
  matches $\lambda\in X_*(A)$ with $\triv_{\lambda(\varpi)(A\cap K)}\in \cH^{\ur}(A)$.
  Let $\lambda\in X_*(A)$. Write $$\tau^G_\lambda:=\triv_{K\lambda(\varpi)K}\in \cH^{\ur}(G),\quad
  \tau^A_\lambda:=\frac{1}{|\Omega_F|}\sum_{w\in \Omega_F}
  \triv_{w\lambda(\varpi)(A\cap K)}\in \cH^{\ur}(A)^{\Omega_F}.$$
  The sets $\{\tau^G_\lambda\}_{\lambda\in X_*(A)^+}$ and
  $\{\tau^A_\lambda\}_{\lambda\in X_*(A)^+}$ are bases for
  $\cH^{\ur}(G)$ and $\cH^{\ur}(A)^{\Omega_F}$ as $\C$-vector spaces, respectively. Consider the map
  \beq\label{e:Satake-formula}\cH^{\ur}(G)\ra \cH^{\ur}(T),
  \quad f\mapsto \left(t\mapsto \delta_B(t)^{1/2} \int_{U} f(tu)du\right)\eeq
  composed with $\cH^{\ur}(T)\simeq \cH^{\ur}(A)$ above.
  The composite map induces a $\C$-algebra isomorphism \beq\label{e:Satake}\cS^G:\cH^{\ur}(G)\isom
  \cH^{\ur}(A)^{\Omega_F}\eeq called the Satake isomorphism. We often
  write just $\cS$ for $\cS^G$. We note that in general
  $\cS$ does not map $\tau^G_\lambda$ to $\tau^A_{\lambda}$.

  Another useful description of $\cH^{\ur}(G)$ is through representations of ${}^L G^{\ur}$. (The latter notion is defined as in \S\ref{sub:L-groups}.  Write
  $(\hat{G}\rtimes \Fr)_{\ssconj}$ for the set of $\hat{G}$-conjugacy classes of
  semisimple elements in $\hat{G}\rtimes \Fr$.
  Consider the set $$\ch({}^L G^{\ur}):=\{\tr r : (\hat{G}\rtimes \Fr)_{\ssconj}\ra \C\,|\,
  r~\mathrm{is~a~representation~of~} {}^L G^{\ur}\}.$$
  Define $\C[\ch({}^L G^{\ur})]$ to be the $\C$-algebra generated by $\ch({}^L G^{\ur})$
  in the space of functions on $(\hat{G}\rtimes \Fr)_{\ssconj}$.
  For each $\lambda\in X_*(A)^+$ define the quotient
  \beq\label{e:chi_lambda}\chi_\lambda:=\frac{\sum_{w\in \Omega_F} \sgn(w) w(\lambda+\rho_F)}{\sum_{w\in \Omega_F} \sgn(w)w\rho_F},\eeq
  which exists as an element of $\C[X_*(A)]^{\Omega_F}$ and is unique.
(One may view $\chi_\lambda$ as
the analogue in the disconnected case
of the irreducible character of highest weight $\lambda$, cf. proof of Lemma \ref{l:Kostant} below.)
 Then $\{\chi_\lambda\}_{\lambda\in X_*(A)^+}$
  is a basis for $\C[X_*(A)]^{\Omega_F}$ as a $\C$-vector space, cf. \cite[p.465]{Kat82}.
  (Another basis was given by $\tau^A_\lambda$'s above.)
  There is a canonical $\C$-algebra isomorphism
  \beq\label{e:Satake2}  \cT:\C[\ch({}^L G^{\ur})] \isom \cH^{\ur}(A)^{\Omega_F},\eeq
  determined as follows (see \cite[Prop 6.7]{Bor79} for detail):
  for each irreducible $r$, $\tr r|_{\hat{T}}$ is shown to factor through
  $\hat{T}\ra \hat{A}$ (induced by $A\subset T$). Hence $\tr r|_{\hat{T}}$ can be viewed as an element
  of $\C[X^*(\hat{A})]=\C[X_*(A)]$, which can be seen to be invariant under $\Omega_F$. Define $\cT(\tr r)$ to be the latter element.

  Let $r_0$ be an irreducible representation of $\hat{G}$ of highest weight
  $\lambda_0\in X^*(\hat{T})^+=X_*(T)^+$. The group $W_F^{\ur}$ acts on $X^*(\hat{T})^+$.
  Write $\Stab(\lambda_0)\subset W^{\ur}_F$ for the stabilizer subgroup for $\lambda_0$,
which has finite index (since a finite power of $\Fr$ acts trivially on $\hat{G}$ and thus also on $\hat{T}$).
  Put $r:=\ind_{\hat{G}\rtimes \Stab(\lambda_0)}^{^L G^{\ur}} r_0$
  and $\lambda:=\sum_{\sigma\in W^{\ur}_F/\Stab(\lambda_0)} \sigma\lambda_0
  \in X_*(A)^+$.
  Clearly $r$ and $\lambda$ depend only on the $W_F^{\ur}$-orbit of $\lambda_0$.
  Put $i(\lambda_0):=[ W^{\ur}_F:\Stab(\lambda_0)]$.

\begin{lem}\label{l:Kostant}
\benu
\item Suppose that $r$ and $\lambda$ are obtained from $r_0$ and $\lambda_0$ as above. Then
 \beq\label{e:T(tr-r)}\cT(\tr r)=\chi_\lambda.\eeq
\item In general for any irreducible representation $r':{}^L G^{\ur}\ra \GL_d(\C)$
such that $r'(W^{\ur}_F)$ has relatively compact image,
let $r_0$ be any irreducible subrepresentation of $r'|_{\hat{G}}$.
 Let $r$ be obtained from $r_0$ as above.
Then for some $\zeta\in \C^\times$ with $|\zeta|=1$,
$$\tr r'=\zeta\cdot \tr r.$$
\eenu
\end{lem}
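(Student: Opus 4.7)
My plan is to treat the two parts separately: part~(i) reduces to an algebraic identity between Weyl-type characters on $\hat T$ pushed down to $\hat A$, while part~(ii) is an application of Clifford theory for the normal inclusion $\hat G\lhd {}^L G^{\ur}$ combined with the compactness hypothesis.

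For part~(i), I would first apply Mackey's restriction formula, using that $\hat G$ is normal in ${}^L G^{\ur}$ and is contained in $H:=\hat G\rtimes\Stab(\lambda_0)$: this yields
\[
r|_{\hat G} \;=\; \bigoplus_{\sigma\in W_F^{\ur}/\Stab(\lambda_0)} r_0^\sigma,
\]
where $r_0^\sigma$ is the irreducible representation of $\hat G$ with highest weight $\sigma(\lambda_0)$. Expanding each $r_0^\sigma$ on $\hat T$ by the Weyl character formula for $\hat G$ and summing over $\sigma$ gives
\[
\cT(\tr r)\;=\;\frac{\sum_{\sigma}\sum_{w\in\Omega}\sgn(w)\,w(\sigma\lambda_0+\rho)}{\sum_{w\in\Omega}\sgn(w)\,w\rho},
\]
viewed after projection from $\hat T$ to $\hat A$. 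The decisive step is to rewrite this using $\Omega_F$ and $\rho_F$ in order to match~\eqref{e:chi_lambda}. Since $G$ is unramified, $\Omega_F=\Omega^{\Gamma}$ and the Weyl denominator factors as a product over $F$-rational positive roots (giving the $\Omega_F$-alternating sum of $\rho_F$) times a product over the remaining positive roots; on the numerator side, grouping $(\sigma,w)\in (W_F^{\ur}/\Stab(\lambda_0))\times\Omega$ by the induced action of $\Omega\rtimes\Gamma$ and using $\lambda=\sum_\sigma\sigma\lambda_0$ collapses everything to the $\Omega_F$-alternating sum of $\lambda+\rho_F$. This is the standard twisted-Weyl-denominator manipulation in the unramified $p$-adic setting, implicit in~\cite[Prop.~6.7]{Bor79} and made explicit in~\cite{Kat82}.

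For part~(ii), I would apply Clifford theory to $\hat G\lhd {}^L G^{\ur}$, whose quotient $W_F^{\ur}=\Fr^{\BmZ}$ is cyclic. Since $r'$ is irreducible and the relevant second cohomology vanishes (the quotient being cyclic), Clifford's theorem yields $r'|_{\hat G}=\bigoplus_\sigma r_0^\sigma$ without multiplicity and $r'\cong \ind_{H}^{{}^L G^{\ur}}(\tilde r_0\otimes\chi)$ for some character $\chi:\Stab(\lambda_0)\to\C^\times$, where $\tilde r_0$ is the extension used to construct $r$ in part~(i). Computing the induced character by the Mackey formula, I expect to find
\[
\tr r'(x\Fr)\;=\;\chi(\Fr)\cdot\tr r(x\Fr)\qquad\text{when }\Fr\in\Stab(\lambda_0),
\]
while both traces vanish identically when $\Fr\notin\Stab(\lambda_0)$, since in that case no $W_F^{\ur}$-conjugate of $x\Fr$ lands in the inducing subgroup $H$. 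This proves $\tr r'=\zeta\cdot\tr r$ with $\zeta:=\chi(\Fr)$ in the first case and $\zeta:=1$ trivially in the second. The relative compactness of $r'(W_F^{\ur})$ then delivers $|\zeta|=1$: normalising $\tilde r_0$ so that $\tilde r_0(\Fr^N)=\MId$ for $N$ the order of the $\Fr$-action on $\hat G$ (possible because $\tilde r_0(\Fr^N)$ is a scalar by Schur, the action of $\Fr^N$ being trivial on $\hat G$) forces the eigenvalues of $r(\Fr)$ onto the unit circle, and the same must then hold for $r'(\Fr)=\chi(\Fr)\cdot r(\Fr)$ by compactness, giving $|\chi(\Fr)|=1$.

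The hardest step will be the bookkeeping in part~(i): matching the Weyl-character sum $\sum_\sigma\chi^{\hat G}_{\sigma\lambda_0}$, restricted to $\hat A$, with the $F$-rational expression $\chi_\lambda$. This requires tracking the absolute Weyl data $(\Omega,\rho)$ against the relative ones $(\Omega_F,\rho_F)$ and a careful use of the twisted Weyl denominator identity. By contrast, part~(ii) will follow routinely from the Clifford-theoretic setup once it is observed that the compactness hypothesis only serves to bound the modulus of the scalar $\zeta$ produced by the character twist.
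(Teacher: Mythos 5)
Your part (i) computes the wrong object. By the paper's definitions in \S\ref{sub:Satake-trans}, $\tr r$ is regarded as a function on $(\hat G\rtimes\Fr)_{\ssconj}$, so $\cT(\tr r)$ is the function $t\mapsto \tr r(t\rtimes\Fr)$ on $\hat T$, which descends to $\hat A\simeq\hat T/(\Fr-\id)\hat T$ --- that is, a \emph{twisted} character evaluated on the non-identity component. Your Mackey step only gives the restriction $r|_{\hat G}=\oplus_\sigma r_0^\sigma$, and the sum of ordinary Weyl characters $\sum_\sigma\chi^{\hat G}_{\sigma\lambda_0}$ on $\hat T$ is the character of $r$ on the \emph{identity} component; it is not $\cT(\tr r)$, and in general it does not even factor through $\hat T\to\hat A$, so the phrase ``viewed after projection from $\hat T$ to $\hat A$'' is not defined. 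For instance, take $\hat G=\GL_2(\C)\times\GL_2(\C)$ with $\Fr$ swapping the two factors and $r_0=\Std\boxtimes\Std$ (so $\lambda_0$ is $\Fr$-stable): the ordinary character at $(A,B)\in\hat T$ is $\tr(A)\,\tr(B)$, which is not a function of the twisted class of $(A,B)\rtimes\Fr$ (equivalently, of the conjugacy class of $AB$), whereas the twisted character is $\tr(AB)$; similarly for $\Res_{E/F}\GL_1$ with $E/F$ unramified quadratic the restricted character $z_1+z_2$ does not factor through $(z_1,z_2)\mapsto z_1z_2$. The ``decisive step'' you defer --- the manipulation relating $(\Omega,\rho)$ to $(\Omega_F,\rho_F)$ --- is exactly the missing ingredient: what is required is a character formula on the $\Fr$-component of a disconnected group, and it cannot be recovered from the restriction to $\hat G$ by any projection. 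This is precisely why the paper reduces to the finite $L$-group ${}^LG_{i(\lambda_0)}=\hat G\rtimes\Gal(F_{i(\lambda_0)}/F)$, observes that $r=\ind_{\hat G}^{{}^LG_{i(\lambda_0)}}r_0$ with $r(\Fr^{i(\lambda_0)})$ trivial, and invokes Kostant's character formula for a disconnected group \cite[Thm 7.5]{Kos61}; that twisted character formula is the whole content of (i), not a bookkeeping afterthought, and your proposal does not supply it.

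Your part (ii), by contrast, is essentially the paper's argument: the paper twists $r'$ by an unramified unitary character so that it factors through a finite quotient ${}^LG_j$, notes that $r$ and $r'$ are then both irreducible constituents of $\ind_{\hat G}^{{}^LG_j}r_0$, and concludes that $r'$ is a twist of $r$ by a finite-order character of $W_F^{\ur}$; your Clifford-theoretic formulation for the normal subgroup $\hat G$ with cyclic quotient, together with the compactness argument pinning down $|\zeta|=1$, amounts to the same reasoning and is fine. As written, however, the proposal does not prove (i).
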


\begin{proof}
  Let us prove (i).
  For any $i\ge 1$, let ${}^L G_{i}$ denote the finite $L$-group
  $\hat{G}\rtimes \Gal(F_{i}/F)$ where $F_{i}$ is the degree $i$ unramified
  extension of $F$ in $\ol{F}$.
It is easy to see that $r(\Fr^{i(\lambda_0)})$ is trivial and that
  $r=\ind^{{}^L G_{i(\lambda_0)}}_{\hat{G}} r_0$.
  Then \eqref{e:T(tr-r)} amounts to
Kostant's character formula for a disconnected group (\cite[Thm 7.5]{Kos61}) applied to
${}^L G_{i(\lambda_0)}$.
  As for (ii), let $\lambda_0$ and $\lambda$ be as in the paragraph
 preceding the lemma. Let $j\ge 1$ be such that $G$ becomes split over a degree $j$
unramified extension of $F$. (Recall that $G$ is assumed to be unramified.)
  By twisting $r'$ by a unitary character of $W^{\ur}_F$ one may assume that
  $r'$ factors through ${}^L G_j$.
  Then both $r$ and $r'$ factor through ${}^L G_j$
  and are irreducible constituents of
  $\ind^{{}^L G_{j}}_{\hat{G}} r_0$.
  From this it is easy to deduce that $r'$ is a twist of $r$ by
  a finite character of $W^{\ur}_F$ of order dividing $j$. Assertion (ii) follows.
\end{proof}

  Each $\lambda\in X_*(A)^+$ determines $s_{\lambda,\mu}\in \C$ such that
  \beq\label{e:s-lambda-mu}\cS^{-1}(\chi_\lambda)=
  \sum_{\mu\in X_*(A)^+} s_{\lambda,\mu} \tau^G_{\mu}\eeq
  where only finitely many $ s_{\lambda,\mu}$ are nonzero. In fact Theorem 1.3 of \cite{Kat82} identifies $s_{\lambda,\mu}$ with $K_{\lambda,\mu}(q^{-1})$ defined in (1.2) of that paper, cf. \S4 of \cite{Gro98}. In particular $s_{\lambda,\lambda}\neq 0$ and $s_{\lambda,\mu}\neq 0$ unless $\mu\le \lambda$. The following information will be useful in \S\ref{sub:test-functions}.

\begin{lem}\label{l:bound-on-s}
  Let $\lambda,\mu\in X_*(A)^+$.
  Suppose that $\lambda\star_w \mu:=w(\lambda+\rho_F)- (\mu+\rho_F)$ is nontrivial for all $w\in \Omega_F$.
  For $\kappa\in X_*(A)$ let $p(\kappa)\in \Z_{\ge 0}$ be
  the number of tuples $(c_{\alpha^\vee})_{\alpha^\vee\in (\Phi^{\vee}_F)^+}$
  with $c_{\alpha^\vee}\in \Z_{\ge 0}$ such that $\sum_{\alpha^\vee} c_{\alpha^\vee}\cdot \alpha^\vee = \kappa$. Then
  $$|s_{\lambda,\mu}|\le q^{-1}|\Omega_F| \max_{w\in \Omega_F} p(\lambda\star_w \mu)  .$$
\end{lem}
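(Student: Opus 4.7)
The plan is to invoke Kato's explicit formula for the Kostka--Foulkes polynomials, which was already identified with $s_{\lambda,\mu}$ in the discussion preceding the lemma. By Theorem 1.3 of \cite{Kat82}, the coefficient $s_{\lambda,\mu}=K_{\lambda,\mu}(q^{-1})$ is the value at $t=q^{-1}$ of
$$K_{\lambda,\mu}(t)=\sum_{w\in\Omega_F}\sgn(w)\,\mathcal P_t\bigl(w(\lambda+\rho_F)-(\mu+\rho_F)\bigr),$$
where $\mathcal P_t$ is the $t$-analogue of Kostant's partition function for the $F$-rational coroot system, i.e.\
$$\mathcal P_t(\kappa)=\sum_{(c_{\alpha^\vee})} t^{\sum_{\alpha^\vee}c_{\alpha^\vee}},$$
the sum running over tuples $(c_{\alpha^\vee})_{\alpha^\vee\in(\Phi_F^\vee)^+}\in\Z_{\ge0}^{(\Phi_F^\vee)^+}$ with $\sum c_{\alpha^\vee}\alpha^\vee=\kappa$. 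In particular $\mathcal P_{q^{-1}}(\kappa)\ge 0$ is a nonnegative real number for every $\kappa\in X_*(A)$, and $\mathcal P_{q^{-1}}(\kappa)=0$ unless $\kappa$ is a nonnegative integral combination of elements of $(\Phi_F^\vee)^+$.

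Once this formula is in hand, the lemma is a short estimate. First I would apply the triangle inequality to get
$$|s_{\lambda,\mu}|\le \sum_{w\in\Omega_F}\mathcal P_{q^{-1}}(\lambda\star_w\mu).$$
Then I would use the hypothesis that $\lambda\star_w\mu\ne 0$ for every $w\in\Omega_F$: this forces every tuple $(c_{\alpha^\vee})$ contributing to $\mathcal P_{q^{-1}}(\lambda\star_w\mu)$ to satisfy $\sum c_{\alpha^\vee}\ge1$, so each summand $q^{-\sum c_{\alpha^\vee}}$ is at most $q^{-1}$. Since the total number of such tuples is exactly $p(\lambda\star_w\mu)$, one obtains
$$\mathcal P_{q^{-1}}(\lambda\star_w\mu)\le q^{-1}\,p(\lambda\star_w\mu).$$
Bounding each Weyl summand by the maximum over $w\in\Omega_F$ then yields the desired inequality
$$|s_{\lambda,\mu}|\le q^{-1}|\Omega_F|\max_{w\in\Omega_F}p(\lambda\star_w\mu).$$

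There is no serious obstacle here: all the real content is packaged in Kato's formula, and the deduction is a one-line estimate using the hypothesis to harvest the crucial factor $q^{-1}$. The only point demanding a bit of care is the convention under which Kato's formula is being applied to the unramified (but possibly non-split) reductive group $G$: the relevant objects are the $F$-rational Weyl group $\Omega_F$, the $F$-rational positive coroots $(\Phi_F^\vee)^+$, and the half-sum $\rho_F$ of $F$-rational positive roots, exactly as they appear in the definition \eqref{e:chi_lambda} of $\chi_\lambda$ and in the statement of the lemma.
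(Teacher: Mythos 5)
Your proof is correct and follows essentially the same route as the paper: it uses the identification $s_{\lambda,\mu}=K_{\lambda,\mu}(q^{-1})$ together with Kato's alternating-sum expression of $K_{\lambda,\mu}$ in terms of the $t$-analogue $\hat{\mP}$ of the partition function (Kato's (1.1)--(1.2)), then the triangle inequality over $\Omega_F$ and the observation that $\lambda\star_w\mu\neq 0$ forces each monomial to have degree $\ge 1$, giving $\hat{\mP}(\kappa;q^{-1})\le p(\kappa)q^{-1}$. The only cosmetic slip is attributing the explicit formula to Kato's Theorem 1.3 (which is the identification with $s_{\lambda,\mu}$) rather than to his (1.2), but this does not affect the argument.
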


\begin{proof}
  It is easy to see from the description of $K_{\lambda,\mu}(q^{-1})$ in \cite[(1.2)]{Kat82} that
  $$|K_{\lambda,\mu}(q^{-1})|\le |\Omega_F| \max_{w\in \Omega_F} \hat{\mP}(w(\lambda+\rho_F)-(\mu+\rho_F);q^{-1}).$$
  The definition of $\hat{\mP}$ in \cite[(1.1)]{Kat82} shows that
  $0\le \hat{\mP}(\kappa;q^{-1})\le p(\kappa) q^{-1}$ if $\kappa\neq 0$.
\end{proof}

\subsection{Truncated unramified Hecke algebras}\label{sub:trun-unr-Hecke}

  Set $n:=\dim T$ and $X_*(T)_{\R}:=X_*(T)\otimes_\Z \R$.
  Choose an $\R$-basis $\cB=\{e_1,...,e_{n}\}$ of $X_*(T)_\R$. For each $\lambda\in X_*(T)_\R$,
  written as $\lambda=\sum_{i=1}^{n} a_i(\lambda) e_i$ for unique $a_i(\lambda)\in \R$,
  define $$|\lambda|_{\cB}:=\max_{1\le i\le n} |a_i( \lambda)|,
  \quad \|\lambda\|_{\cB}:=\max_{\omega\in \Omega}( |\omega \lambda|_{\cB}).$$
  When there is no danger of confusion,
  we will simply write $|\cdot|_{\cB}$ or even $|\cdot|$ instead of $|\cdot|_{\cB}$,
  and similarly for $\|\cdot\|_{\cB}$.
  It is clear that $\|\cdot\|_{\cB}$ is $\Omega$-invariant and
  that $|\lambda_1+\lambda_2|_{\cB}\le |\lambda_1|_{\cB}+|\lambda_2|_{\cB}$
  for all $\lambda_1,\lambda_2\in X_*(T)$.
  When $\kappa\in \Z_{\ge 0}$, define \beq\label{e:trun-Hecke}\cH^{\ur}(G)^{\le \kappa,\cB}:=
  \{ \C\mbox{-subspace~of~} \cH^{\ur}(G)
  \mbox{~generated~by~}\tau^G_\lambda,~\lambda\in X_*(A)^+,~\|\lambda\|_{\cB}\le \kappa\}.\eeq
  It is simply written as $\cH^{\ur}(G)^{\le \kappa}$ when the choice of $\cB$ is clear.

%\begin{ex}\label{ex:truncation-GL_d}
%  When $G=\GL_d$, $T$ is the diagonal maximal torus and $\{e_1,...,e_d\}$ is the
%  standard $\Z$-basis of $X_*(T)$, the above definition of $\cH^{\ur}(\GL_d)^{\le \kappa}$
%  coincides with the one in \S\ref{sub:case-of-GL_d}.
%\end{ex}

\begin{lem}\label{l:norm-and-bases}
 Let $\cB$ and $\cB'$ be two $\R$-bases of $X_*(T)_\R$. Then
there exist constants $c_1,c_2,B_1,B_2,B_3,B_4>0$ such that
for all $\lambda\in X_*(T)_\R$,
\benu
\item $c_1 |\lambda|_{\cB'} \le |\lambda|_{\cB}
\le c_2 |\lambda|_{\cB'}$,
\item $B_1 |\lambda|_{\cB} \le \|\lambda\|_{\cB}
\le B_2 |\lambda|_{\cB}$ for all $ \lambda\in X_*(T)_\R$,
\item $B_3 \|\lambda\|_{\cB'} \le \|\lambda\|_{\cB}
\le B_4 \|\lambda\|_{\cB'}$ for all $ \lambda\in X_*(T)_\R$ and
\item $\cH^{\ur}(G)^{\le B_4^{-1}\kappa,\cB'}\subset \cH^{\ur}(G)^{\le \kappa,\cB}\subset \cH^{\ur}(G)^{\le B_3^{-1}\kappa,\cB'}$.
\eenu
\end{lem}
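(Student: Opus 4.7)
The statement is essentially an unpacking of the standard fact that any two norms on a finite-dimensional real vector space are equivalent, together with the observation that this equivalence descends to the $\Omega$-invariant ``norms'' $\|\cdot\|_{\cB}$. My plan is to prove (i)--(iv) in that order, with each part feeding into the next.

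For (i), I would let $P$ be the change-of-basis matrix expressing the elements of $\cB$ in terms of $\cB'$. Writing $\lambda = \sum a_i(\lambda)e_i = \sum a_j'(\lambda)e_j'$, the vector $(a_j'(\lambda))_j$ is obtained by applying $P$ to $(a_i(\lambda))_i$, so an elementary estimate against the row sums of $P$ and $P^{-1}$ yields constants $c_1,c_2>0$ with $c_1|\lambda|_{\cB'}\le |\lambda|_{\cB}\le c_2|\lambda|_{\cB'}$.

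For (ii), the lower bound is immediate by taking $\omega=1$ in the definition $\|\lambda\|_{\cB}=\max_{\omega\in \Omega}|\omega\lambda|_{\cB}$, so $B_1=1$ works. For the upper bound, each $\omega\in \Omega$ acts $\R$-linearly on $X_*(T)_{\R}$ and is therefore bounded in the norm $|\cdot|_{\cB}$ by some constant $C_\omega>0$; since $\Omega$ is finite we may take $B_2:=\max_{\omega\in\Omega}C_\omega$ and conclude $\|\lambda\|_{\cB}\le B_2|\lambda|_{\cB}$. Part (iii) then follows by chaining (i) and (ii) applied to both bases: from $\|\lambda\|_{\cB}\le B_2|\lambda|_{\cB}\le B_2 c_2|\lambda|_{\cB'}\le (B_2c_2/B_1')\|\lambda\|_{\cB'}$ (with $B_1'$ the analogue of $B_1$ for $\cB'$) we get one inequality, and the symmetric argument gives the other, so we may take suitable $B_3,B_4>0$.

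Finally for (iv), recall that $\cH^{\ur}(G)^{\le \kappa,\cB}$ is the span of the $\tau^G_\lambda$ with $\lambda\in X_*(A)^+$ and $\|\lambda\|_{\cB}\le \kappa$, so the claim reduces to an inclusion of index sets. If $\|\lambda\|_{\cB}\le \kappa$, then by (iii) $\|\lambda\|_{\cB'}\le B_3^{-1}\kappa$, yielding $\cH^{\ur}(G)^{\le \kappa,\cB}\subset \cH^{\ur}(G)^{\le B_3^{-1}\kappa,\cB'}$; similarly if $\|\lambda\|_{\cB'}\le B_4^{-1}\kappa$ then $\|\lambda\|_{\cB}\le B_4\cdot B_4^{-1}\kappa=\kappa$, giving the other inclusion. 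There is no real obstacle here; the only mild subtlety is book-keeping the constants so that the same $B_3,B_4$ serve simultaneously for (iii) and (iv), but this is automatic from the chain of inequalities.
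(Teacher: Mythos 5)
Your proposal is correct and follows essentially the same route as the paper: norm equivalence for (i), finiteness of $\Omega$ for (ii), chaining (i)--(ii) for (iii), and translating (iv) into an inclusion of index sets of $\lambda$'s. The only difference is cosmetic — you justify (i)/(ii) by explicit change-of-basis matrix bounds and boundedness of the linear action of each $\omega\in\Omega$, whereas the paper uses a compactness/continuity argument for (i) and deduces (ii) by applying (i) to the bases $\omega\cB$ — and both yield the same constants and conclusions.
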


\begin{proof}
  Let us verify (i).
  As the roles of $\cB$ and $\cB'$ can be changed,
  it suffices to prove the existence of $c_2$.
   For this, it suffices to take $c_2=\sup_{|\lambda|_{\cB}\le 1} |\lambda|_{\cB'}$.
   The latter is finite since $|\cdot|_{\cB'}$ is a continuous function
  on the set of $\lambda$ such that $|\lambda|_{\cB}\le 1$, which
  is compact.
  Part (ii) is obtained by
 applying the lemma to the bases $\cB'=\omega \cB$ for all $\omega\in \Omega$.
 Let us check (iii). Let $B_1,B_2>0$ (resp. $B'_1,B'_2>0$)
 be the constants of (ii) for the basis $\cB$ (resp. $\cB'$).
 Then
 $$  c_1B_1(B'_2)^{-1} \|\lambda\|_{\cB'} \le c_1B_1 |\lambda|_{\cB'}  \le B_1 |\lambda|_{\cB} \le \|\lambda\|_{\cB}$$
 and similarly $\|\lambda\|_{\cB}\le c_2B_2(B'_1)^{-1}\|\lambda\|_{\cB'}$.
  Finally (iv) immediately follows from (iii).
\end{proof}

  It is natural to wonder whether the definition of truncation in \eqref{e:trun-Hecke} changes if a different basis $\{\tau^G_\lambda\}$ or $\{\chi_\lambda\}$ is used. We assert that it changes very little in a way that the effect on $\kappa$ is bounded by a $\kappa$-independent constant. To ease the statement define $\cH_i^{\ur}(G)^{\le \kappa,\cB}$ for $i=1$ (resp. $i=2$) to be the $\C$-subspace of $\cH^{\ur}(G)$ generated by $\cS^{-1}(\tau^A_\lambda)$ (resp. $\cS^{-1}(\chi_\lambda)$) for $\lambda\in X_*(A)^+$ with $\|\lambda\|_{\cB}\le \kappa$.

\begin{lem}\label{l:three-truncations}
  There exists a constant $C\ge 1$ such that for every $\kappa\in \Z_{\ge 0}$ and
  for any $i,j\in \{\emptyset, 1,2\}$,
  $$\cH_i^{\ur}(G)^{\le \kappa,\cB}\subset \cH_j^{\ur}(G)^{\le C\kappa,\cB}.$$
\end{lem}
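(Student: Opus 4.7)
The plan is to establish that all pairwise change-of-basis matrices between the three bases $\{\tau^G_\lambda\}$, $\{\cS^{-1}(\tau^A_\lambda)\}$, $\{\cS^{-1}(\chi_\lambda)\}$ of $\cH^{\ur}(G)$ (indexed by $\lambda \in X_*(A)^+$) are triangular with respect to the partial order $\le$, with nonzero diagonal entries. Combined with a monotonicity property of $\|\cdot\|_\cB$ under $\le$ on dominant coweights, this will yield the desired inclusions.

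For the triangularity, formula \eqref{e:s-lambda-mu} together with Kato's identification $s_{\lambda,\mu} = K_{\lambda,\mu}(q^{-1})$ directly gives $\cS^{-1}(\chi_\lambda) = \sum_{\mu \le \lambda} s_{\lambda,\mu} \tau^G_\mu$ with $s_{\lambda,\lambda} \ne 0$. The analogous triangularity of $\cS(\tau^G_\lambda) = \sum_{\mu \le \lambda} c_{\lambda,\mu} \tau^A_\mu$ with $c_{\lambda,\lambda} \ne 0$ is the classical Macdonald formula for the Satake transform, which in particular gives $\cS^{-1}(\tau^A_\lambda)$ as a $\le$-triangular combination of $\tau^G_\mu$'s. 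A $\le$-triangular matrix with nonzero diagonal admits a $\le$-triangular inverse with nonzero diagonal (by induction on the partial order, starting from minimal elements), so inverting and composing these two relations produces $\le$-triangular transitions, with nonzero diagonal, between any pair among the three bases.

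For the monotonicity, I want $\|\mu\|_\cB \le \|\lambda\|_\cB$ whenever $\lambda, \mu \in X_*(A)^+$ satisfy $\mu \le \lambda$. By construction $\|\cdot\|_\cB$ is $\Omega$-invariant, and it is convex on $X_*(T)_\R$ as the pointwise maximum of the convex family $\{|\omega(\cdot)|_\cB\}_{\omega \in \Omega}$. A classical fact in Lie theory says that the set of dominant $\mu$ with $\mu \le \lambda$ lies inside the convex hull $\mathrm{Conv}(\Omega_F \cdot \lambda)$. On this compact convex set, a convex $\Omega$-invariant function attains its maximum at an extreme point, necessarily some $w\lambda \in \Omega_F \cdot \lambda$, where by invariance the value equals $\|\lambda\|_\cB$.

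Combining these: if $\phi \in \cH_i^{\ur}(G)^{\le \kappa, \cB}$ and we expand $\phi$ in the $j$-basis using the $\le$-triangular transition, then only indices $\mu$ with $\mu \le \lambda$ for some $\lambda$ satisfying $\|\lambda\|_\cB \le \kappa$ can appear, whence $\|\mu\|_\cB \le \|\lambda\|_\cB \le \kappa$. Thus $\phi \in \cH_j^{\ur}(G)^{\le \kappa, \cB}$ and the lemma holds with $C=1$, which is stronger than required. The main obstacle is the careful bookkeeping of the triangularity: while Kato's and Macdonald's formulas are classical, one must thread the $F$-rational Weyl group $\Omega_F$, the half-sum $\rho_F$, and the $F$-rational coroot order used in the paper consistently through \eqref{e:chi_lambda} and the expansion of $\cS(\tau^G_\lambda)$.
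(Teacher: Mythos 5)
Your proposal is correct, and its skeleton is the same as the paper's: show the change-of-basis matrices among $\{\tau^G_\lambda\}$, $\{\cS^{-1}(\tau^A_\lambda)\}$, $\{\cS^{-1}(\chi_\lambda)\}$ are triangular with nonzero diagonal for the order $\le$, and combine this with monotonicity of the truncation norm along $\le$. The differences are in the implementation. The paper invokes Lemma~\ref{l:norm-and-bases} to reduce to one particular basis $\cB$ extending the simple coroots, where the monotonicity $\mu\le\lambda\Rightarrow|\mu|_{\cB}\le|\lambda|_{\cB}$ is immediate for dominant $\mu,\lambda$, and then the constant $C$ simply absorbs the norm comparisons; it also reads the triangularity off from the pairs (i)$\leftrightarrow$(iii) (Kato, via \eqref{e:s-lambda-mu}) and (ii)$\leftrightarrow$(iii) (directly from \eqref{e:chi_lambda}). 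You instead prove the monotonicity for $\|\cdot\|_{\cB}$ itself and for an arbitrary basis, using convexity of $\|\cdot\|_{\cB}$ and the classical fact that a dominant $\mu\le\lambda$ lies in $\mathrm{Conv}(\Omega_F\lambda)$, which buys the sharper conclusion $C=1$; and you route the triangularity through (i)$\leftrightarrow$(ii) (Satake/Macdonald triangularity) composed with (i)$\leftrightarrow$(iii), using that $\le$-triangular matrices with nonzero diagonal invert triangularly (legitimate, since $\{\mu\in X_*(A)^+:\mu\le\lambda\}$ is finite). Two small points worth making explicit: the convex-hull fact must be applied in the relative root system (dominance, the order $\le$, and the orbit are all taken for $\Phi_F$ and $\Omega_F$, with the $\Omega_F$-fixed central component split off), and the $\Omega$-invariance of $\|\cdot\|_{\cB}$ restricts to $\Omega_F$-invariance on $X_*(A)_\R$ because $\Omega_F=\Omega^{\Fr}$ acts there by restriction of the $\Omega$-action; with these noted, your argument is complete and slightly stronger than the stated lemma.
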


\begin{proof}
  It is enough to prove the lemma for a particular choice of $\cB$ by Lemma \ref{l:norm-and-bases}.
  So we may assume that $\cB$ extends the set of simple coroots in $\Phi^\vee$ by an arbitrary basis of $X_*(Z(G))_{\R}$. Again by Lemma \ref{l:norm-and-bases} the proof will be done if we show that each of the following generates the same $\C$-subspace:
\benu
\item the set of $\tau^G_{\lambda}$ for $\lambda\in X_*(A)^+$ with $|\lambda|_{\cB}\le \kappa$,
\item the set of $\cS^{-1}(\tau^A_\lambda)$ for $\lambda\in X_*(A)^+$ with $|\lambda|_{\cB}\le \kappa$,
\item the set of $\cS^{-1}(\chi_\lambda)$ for $\lambda\in X_*(A)^+$ with $|\lambda|_{\cB}\le \kappa$.
\eenu
  It suffices to show that the matrices representing the change of bases are ``upper triangular'' in the sense that the $(\lambda,\lambda)$ entries are nonzero and $(\lambda,\mu)$ entries are zero unless $\lambda\ge \mu$. (Note that $\lambda\ge \mu$ implies $|\lambda|_{\cB}\ge |\mu|_{\cB}$ by the choice of $\cB$.) We have remarked below \eqref{e:chi_lambda} that $s_{\lambda,\mu}$'s have this property, accounting for (i)$\leftrightarrow$(iii). For (ii)$\leftrightarrow$(iii) the desired property can be seen directly from \eqref{e:chi_lambda} by writing $\chi_\lambda$ in terms of $\tau^A_\mu$'s.

\end{proof}

\subsection{The case of $\GL_d$}\label{sub:case-of-GL_d}

  The case $G=\GL_d$ is considered in this subsection. Let $A=T$ be the diagonal maximal torus and $B$ the group of upper triangular matrices. For $1\le i\le d$,
  take $Y_i\in X_*(A)$ to be $y\mapsto \diag(1,...,1,y,1,...,1)$ with $y$ in the $i$-th place.
  One can naturally identify $X_*(A)\simeq \Z^d$ such that the images of
  $Y_i$ form the standard basis of $\Z^d$.
  Then $\Omega_F$ is isomorphic to $\mS_d$, the symmetric group in $d$ variables acting on $\{Y_1,...,Y_d\}$ via permutation of indices.
  We have the Satake isomorphism
  $$\cS:\cH^{\ur}(\GL_d)\isom \cH^{\ur}(T)^{\Omega_F}\simeq \C[Y_1^{\pm},...,Y_d^{\pm}]^{\mS_d}.$$
  For an alternative description let us introduce standard symmetric polynomials $X_1,...,X_d$ by the equation in a formal $Z$-variable $(Z-Y_1)\cdots (Z-Y_d)=Z^d-X_1Z^{d-1}+\cdots + (-1)^{d} X_d$. Then $$\C[Y_1^{\pm},...,Y_d^{\pm}]^{\mS_d}=\C[X_1,...,X_{d-1},X_d^{\pm}].$$
  Let $\kappa\in \Z_{\ge 0}$. Define $\cH^{\ur}(\GL_d)^{\le \kappa}$, or simply $\cH^{\le \kappa}_d$, to be
  the preimage under $\cS$ of the $\C$-vector space generated by $$\{\sum_{\sigma\in \mS_d} Y_{\sigma(1)}^{a_1}Y_{\sigma(2)}^{a_2}\cdots Y_{\sigma(d)}^{a_d}:  a_1,...,a_d\in [-\kappa,\kappa]\}.$$ The following is standard (cf. \cite{Gro98}).

\begin{lem} Let $r\in \Z_{\ge 1}$. Let $\lambda_r:=(r,0,0,...,0)\in X_*(A)^+$. Then
  $$\cS^{-1}(Y_1^r+\cdots+Y_d^r)=\sum_{\mu\in X_*(A)^+\atop\mu\le \lambda_r} c_{\lambda_r,\mu}\cdot \tau^G_\mu $$
  for $c_{\lambda_r,\mu}\in \C$ with $c_{\lambda_r,\lambda_r}=q^{r(1-d)/2}$,
  where the sum runs over the set of $\mu\in X_*(T)^+$ such that $\mu\le \lambda_r$.
  In particular,
  \begin{eqnarray}
    \cS^{-1}(Y_1+\cdots+Y_d)&=&q^{(1-d)/2}\tau^{G}_{(1,0,...,0)},\nonumber\\
    \cS^{-1}(Y_1^2+\cdots+Y_d^2)&=&q^{1-d}(\tau^{G}_{(2,0,...,0)}+(1-q)\tau^G_{(1,1,0,...,0)}).
    \nonumber
  \end{eqnarray}

%  (Could try to be explicit for all $r\ge 1$. This lemma is about the test function whose trace computes
%  the sum of the $r$-th powers of Satake parameters.)
\end{lem}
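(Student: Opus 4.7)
The plan is to exploit the triangular structure of the Satake transform in the dominance order on $X_*(A)^+$, combined with bookkeeping via Newton's identity.

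First, I observe that $Y_1^r+\cdots+Y_d^r$ is precisely the monomial symmetric polynomial $m_{\lambda_r}$ attached to $\lambda_r=(r,0,\ldots,0)$. By Macdonald's formula for the spherical Satake transform (cf.\ \S4 of \cite{Gro98}, or a direct computation using the Iwasawa decomposition), for every dominant $\mu\in X_*(A)^+$ one has a triangular expansion
\[
\cS(\tau^G_\mu)=q^{\langle\rho,\mu\rangle}\,m_\mu+\sum_{\nu\in X_*(A)^+,\ \nu<\mu}c_{\mu,\nu}\,m_\nu,
\]
where $\rho=(\tfrac{d-1}{2},\tfrac{d-3}{2},\ldots,-\tfrac{d-1}{2})$, so that $\langle\rho,\lambda_r\rangle=r(d-1)/2$. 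Inverting this unitriangular system yields
\[
\cS^{-1}(m_{\lambda_r})=q^{-\langle\rho,\lambda_r\rangle}\tau^G_{\lambda_r}+\sum_{\mu<\lambda_r,\ \mu\in X_*(A)^+}c_{\lambda_r,\mu}\,\tau^G_\mu,
\]
which proves both the support statement $\mu\le\lambda_r$ and the identity $c_{\lambda_r,\lambda_r}=q^{-\langle\rho,\lambda_r\rangle}=q^{r(1-d)/2}$.

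For the explicit cases, one enumerates the dominant $\mu\le\lambda_r$. When $r=1$, no nonzero nonnegative combination of the simple coroots can be subtracted from $\lambda_1$ while preserving dominance, so $\lambda_1$ is the only such $\mu$ and the sum collapses to $q^{(1-d)/2}\tau^G_{(1,0,\ldots,0)}$. When $r=2$, the only candidates are $\lambda_2=(2,0,\ldots,0)$ and $(1,1,0,\ldots,0)$, so the inverse has exactly two terms. To pin down the remaining coefficient I would invoke Newton's identity $p_2=\chi_{(2,0,\ldots,0)}-\chi_{(1,1,0,\ldots,0)}$ in $\C[X_*(A)]^{\Omega_F}$ and apply Kato's formula recalled in the paragraph preceding Lemma~\ref{l:bound-on-s} term by term: the Kostka--Foulkes input $K_{(2),(1,1)}(q^{-1})=q^{-1}$ combines with the diagonal normalizations $q^{-\langle\rho,\mu\rangle}$ from above to produce $q^{1-d}(1-q)$ as the coefficient of $\tau^G_{(1,1,0,\ldots,0)}$.

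The main technical obstacle is the triangular Macdonald formula in the first step, since it requires a careful analysis of Iwasawa double cosets, but this is classical and widely documented. Once that is granted, everything else reduces to routine bookkeeping with partitions of size $r$ in the dominance order together with Newton's identity in the $r=2$ case.
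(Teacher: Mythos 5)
Your argument is correct; the paper gives no proof of this lemma (it is declared standard with a citation to Gross), and your route --- the unitriangular expansion $\cS(\tau^G_\mu)=q^{\langle\rho,\mu\rangle}m_\mu+\text{(lower terms)}$, which yields the support condition $\mu\le\lambda_r$ and the leading coefficient $q^{-\langle\rho,\lambda_r\rangle}=q^{r(1-d)/2}$, followed by $Y_1^2+\cdots+Y_d^2=\chi_{(2,0,\ldots,0)}-\chi_{(1,1,0,\ldots,0)}$ and Kato's formula $\cS^{-1}(\chi_\lambda)=\sum_{\mu\le\lambda}q^{-\langle\rho,\mu\rangle}K_{\lambda\mu}(q^{-1})\,\tau^G_\mu$ --- is precisely the standard computation that citation has in mind and that the paper's own discussion around \eqref{e:s-lambda-mu} relies on. Your insertion of the diagonal factor $q^{-\langle\rho,\mu\rangle}$ is the correct reading of Kato's normalization, and the resulting coefficients $q^{1-d}$ and $q^{1-d}(1-q)$ are right (for instance they reproduce, at $d=2$, the classical relation $\tau^G_{(1,0)}\ast\tau^G_{(1,0)}=\tau^G_{(2,0)}+(q+1)\tau^G_{(1,1)}$).
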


\subsection{$L$-morphisms and unramified Hecke algebras}\label{sub:L-mor-unr-Hecke}

  Assume that $H$ and $G$ are unramified groups over $F$.
  Let $\eta:{}^L H\ra {}^L G$ be an unramified $L$-morphism, which means that
  it is inflated from some $L$-morphism ${}^L H^{\ur}\ra {}^L G^{\ur}$
  (the notion of $L$-morphism for the latter is defined as in \S\ref{sub:L-groups}).
  There is a canonically induced
  map $\ch({}^L G^{\ur})\ra \ch({}^L H^{\ur})$.
  Via \eqref{e:Satake} and \eqref{e:Satake2}, the latter map gives rise to a $\C$-algebra map
  $\eta^*: \cH^{\ur}(G)\ra \cH^{\ur}(H)$.

  We apply the above discussion to an unramified representation
  $$r:{}^L G\ra \GL_d(\C).$$ Viewing $r$ as an $L$-morphism ${}^L G\ra {}^L \GL_d$,
  we obtain
%  $r^*:\cH^{\ur}(\GL_d)\ra \cH^{\ur}(G)$ as we now explain.
%  Let $\hat{\T}\subset \GL_d$ be a maximal torus containing $r(\hat{T})$. Then
%  $r|_{\hat{T}}: \hat{T}\ra \hat{\T}$ induces a group homomorphism
%  $X^*(\hat{\T})\ra X^*(\hat{T})$, whose image lies in the $\Gamma$-invariant subgroup.
%  Thereby we obtain $X_*(\T)\ra X_*(T)^{\Gamma}=X_*(A)$, inducing a $\C$-algebra map
%  $\cH^{\ur}(\T)\ra \cH^{\ur}(A)$. Via the Satake isomorphisms for $G$ and $\GL_d$, we finally arrive at
  $$r^*:\cH^{\ur}(\GL_d)\ra \cH^{\ur}(G).$$

\begin{lem}\label{l:bound-degree-Satake}
  Let $\cB$ be an $\R$-basis of $X_*(T)_\R$.
  There exists a constant $\beta>0$ (depending on $\cB$, $d$ and $r$) such that for all $\kappa\in \Z_{\ge 0}$,
  $r^*(\cH^{\ur}(\GL_d)^{\le \kappa})\subset \cH^{\ur}(G)^{\le \beta\kappa,\cB}$ .
\end{lem}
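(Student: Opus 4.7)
My plan is to transport the estimate to the Satake side, where $r^*$ becomes an explicit substitution of weights, and then run a direct degree count. By construction $r^*$ arises from the pullback of characters $\rho \mapsto \rho \circ r$ via the isomorphisms $\cT$ and $\cS$ of \eqref{e:Satake} and \eqref{e:Satake2}. Unraveling the definitions yields a commutative square in which $r^*$ corresponds to a $\C$-algebra map
$$\eta^*\colon \C[Y_1^{\pm},\ldots,Y_d^{\pm}]^{\mS_d}\longrightarrow \C[X_*(A)]^{\Omega_F}.$$

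The first key step will be to compute $\eta^*$ explicitly. Decompose $r|_{\hat T}=\bigoplus_{i=1}^{d}\nu_i$ as a direct sum of weight spaces, where each $\nu_i\in X^*(\hat T)=X_*(T)$ (with multiplicity). I claim that $\eta^*(P)=P(\nu_1,\ldots,\nu_d)$, where each $\nu_i$ is viewed as the monomial $[\nu_i]$ in the group ring $\C[X_*(T)]\supset \C[X_*(A)]$. Indeed, by the formula $\cT(\tr \rho)=\tr \rho|_{\hat T}$ of \eqref{e:Satake2}, for $t\in\hat T$ the element $r(t)\in\GL_d(\C)$ is semisimple with eigenvalues $\nu_1(t),\ldots,\nu_d(t)$, so $\tr(\rho\circ r)(t)=P(\nu_1(t),\ldots,\nu_d(t))$ as a function on $\hat T$.

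Set $M:=\max_{1\le i\le d}\|\nu_i\|_{\cB}$, a constant depending only on $r$ and $\cB$. For $\phi\in \cH^{\ur}(\GL_d)^{\le \kappa}$, by \S\ref{sub:case-of-GL_d} the transform $\cS(\phi)$ is a $\C$-linear combination of symmetric monomials $\sum_{\sigma\in \mS_d}Y_{\sigma(1)}^{a_1}\cdots Y_{\sigma(d)}^{a_d}$ with $|a_j|\le \kappa$ for every $j$. Applying $\eta^*$ produces a linear combination of monomials $[\mu]$ with $\mu=a_1\nu_{\sigma(1)}+\cdots+a_d\nu_{\sigma(d)}\in X_*(T)$; sub-additivity together with the $\Omega$-invariance of $\|\cdot\|_{\cB}$ gives
$$\|\mu\|_{\cB}\le \sum_{i=1}^{d}|a_i|\cdot \|\nu_{\sigma(i)}\|_{\cB}\le dM\kappa.$$

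By commutativity of the Satake diagram, $\cS(r^*\phi)\in \C[X_*(A)]^{\Omega_F}$, and its support in $X_*(T)$ consists of elements of $\|\cdot\|_{\cB}$-norm at most $dM\kappa$. Regrouping by $\Omega_F$-orbits shows that $\cS(r^*\phi)$ is a linear combination of $\tau^A_\lambda$ for $\lambda\in X_*(A)^+$ with $\|\lambda\|_{\cB}\le dM\kappa$; in the notation preceding Lemma \ref{l:three-truncations} this says $r^*\phi\in \cH_1^{\ur}(G)^{\le dM\kappa,\cB}$. Applying that lemma yields a constant $C$ with $\cH_1^{\ur}(G)^{\le dM\kappa,\cB}\subset \cH^{\ur}(G)^{\le CdM\kappa,\cB}$, so $\beta:=CdM$ works. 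The main obstacle will be the first step, establishing the identification $\eta^*(P)=P(\nu_1,\ldots,\nu_d)$ from the definitions; once it is in hand, the remaining degree estimate is a straightforward triangle-inequality bound combined with the base-change lemma.
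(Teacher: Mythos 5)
Your overall route is genuinely different from the paper's: you transport everything to the Satake side and bound the truncation degree \emph{additively}, via the weights of the composite representation, whereas the paper writes $\phi$ in the algebra generators $X_1,\dots,X_{d-1},X_d^{\pm}$, uses that $r^*$ is an algebra homomorphism, and controls supports \emph{multiplicatively} through a claim on convolution products $\tau^G_{\lambda_1}*\tau^G_{\lambda_2}$ with respect to a basis extending the simple coroots. Your route, if correct, would even avoid the need for a special choice of $\cB$. However, the pivotal identity of your Step 1 is false in general. The isomorphism $\cT$ of \eqref{e:Satake2} is \emph{not} given by restricting characters to $\hat T\subset\hat G$: elements of $\ch({}^LG^{\ur})$ are functions on $(\hat G\rtimes\Fr)_{\ssconj}$, so $\cT(\tr\rho)$ is the twisted restriction $t\mapsto \tr\rho(t\rtimes\Fr)$, descended to $\hat A$. (This is exactly what Lemma \ref{l:Kostant} records: the induced-representation structure and the scalar $\zeta$ there have no explanation under naive restriction.) Consequently $\eta^*(P)\neq P(\nu_1,\dots,\nu_d)$ as soon as $\Fr$ acts nontrivially on $\hat G$, or $r$ carries a nontrivial unramified twist. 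A concrete counterexample: $G=\Res_{E/F}\GL_1$ with $E/F$ unramified quadratic and $r$ the induced $2$-dimensional representation of ${}^LG$; then $\eta^*(Y_1+Y_2)=0$ (the Frobenius swaps the two weight lines, so the twisted trace vanishes — the familiar $a_p=0$ at inert primes), while your substitution gives $[e_1]+[e_2]$, which does not even lie in $\C[X_*(A)]$, so the subsequent regrouping into $\tau^A_\lambda$'s breaks down. Since the lemma is stated for general unramified (possibly non-split) $G$, this is a genuine gap, and it sits precisely at the step you yourself flag as the crux.

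The good news is that the gap is repairable without changing your estimates. The correct statement is: writing $V=\bigoplus_\mu V_\mu$ for the $\hat T$-weight decomposition of $\rho\circ r$ (where $\rho$ is the virtual representation of $\GL_d$ with character $P$), the operator $(\rho\circ r)(1\rtimes\Fr)$ permutes the $V_\mu$, so $\cT(\tr(\rho\circ r))$ is supported on the $\Fr$-fixed weights $\mu\in X_*(T)^{\Fr}=X_*(A)$ of $\rho\circ r|_{\hat T}$, with coefficients given by traces of the Frobenius operator on those weight spaces. Every such $\mu$ is still of the form $a_1\nu_{\sigma(1)}+\cdots+a_d\nu_{\sigma(d)}$ with $|a_i|\le\kappa$, so your bound $\|\mu\|_{\cB}\le dM\kappa$ (using $\Omega$-invariance and subadditivity of $\|\cdot\|_{\cB}$) survives, and the remainder of your argument — regrouping the $\Omega_F$-invariant element into $\tau^A_\lambda$ with $\|\lambda\|_{\cB}\le dM\kappa$ and invoking Lemma \ref{l:three-truncations} to pass to the $\tau^G_\lambda$-truncation — goes through verbatim. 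With Step 1 replaced by this twisted-character support bound, your proof is a valid alternative to the paper's.
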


\begin{proof}
  Thanks to Lemma \ref{l:norm-and-bases}, it is enough to deal with a particular choice of
  $\cB$. Choose $\cB$ by extending the set $\Delta^{\vee}$ of simple coroots,
  and write $\cB=\Delta^{\vee}\coprod \cB_0$.
  We begin by proving the following claim: let $\lambda_1,\lambda_2\in X_*(A)^+$ and expand
  the convolution product
  \begin{equation*}
    % \label{e:trun}
  \tau^G_{\lambda_1}*\tau^G_{\lambda_2}=\sum_{\mu} a^{\mu}_{\lambda_1,\lambda_2}
  \tau^G_\mu
  \end{equation*}
  where only $\mu\in X_*(A)^+$ such that $\mu\le_\R \lambda_1+\lambda_2$ contribute (cf. \cite[p.148]{Car79}).
  Only finitely many terms are nonzero.
  Then the claim is that $$|\mu|_{\cB}\le |\lambda_1+\lambda_2|_{\cB}, \quad\mbox{whenever}~a^{\mu}_{\lambda_1,\lambda_2}\neq 0.$$
  To check the claim,   consider $\mu=\sum_{e\in \cB} a_e(\mu) \cdot e $ and $\lambda_1+\lambda_2
  = \sum_{e\in \cB} a_e(\lambda_1+\lambda_2) \cdot e$, where the coefficients are in $\R$.
  The conditions $\mu\le_{\R} \lambda_1+\lambda_2$ and $\mu\in X_*(T)_{\R,+}$
  imply that $a_e(\mu)=a_e(\lambda_1+\lambda_2)$ if $e\in \cB_0$ and
  $0\le a_e(\mu)\le a_e(\lambda_1+\lambda_2)$ if $e\in \Delta^{\vee}$.
  Hence $|\mu|_{\cB}\le |\lambda_1+\lambda_2|_{\cB}$.

  We are ready to prove the lemma. It is explained in Lemma \ref{l:three-truncations} and the remark below it that there exists a constant $\beta_1>0$ which is independent of $\kappa$ such that every $\phi\in \cH^{\ur}(\GL_d)^{\le \kappa}$ can be written as a $\C$-linear combination of
    $$\sum_{\sigma\in \mS_d} Y_{\sigma(1)}^{a_1}Y_{\sigma(2)}^{a_2}\cdots Y_{\sigma(d)}^{a_d},\quad  a_1,...,a_d\in [-\beta_1\kappa,\beta_1\kappa].$$
  Each element above can be rewritten in terms of the symmetric polynomials $X_i$'s of \S\ref{sub:case-of-GL_d}: First, $X_d^{\beta_1\kappa}$ times $\sum_{\sigma\in \mS_d} Y_{\sigma(1)}^{a_1}Y_{\sigma(2)}^{a_2}\cdots Y_{\sigma(d)}^{a_d}$ is a symmetric polynomial of degree $\le 2\beta_1\kappa$, which in turn is a polynomial in $X_1,...,X_d$ of degree $\le  2\beta_1\kappa$. We conclude that every $\phi\in \cH^{\ur}(\GL_d)^{\le \kappa}$ is in the span of monomials
  \beq\label{e:monomials}X_1^{b_1}X_1^{b_2}\cdots X_d^{b_d},\quad b_1,...,b_d\in [-2\beta_1\kappa,2\beta_1\kappa].\eeq
  For each $1\le i\le d$, write $r^*(X_i)$ (resp. $r^*(X_i^{-1})$) as a linear combination of $\tau^G_{\lambda_{i,j}}$ (resp. $\tau^G_{\lambda^-_{i,j}}$) with nonzero coefficients. Define $\beta_0$ to be the maximum among all possible $|\lambda_{i,j}|$ and $|\lambda^-_{i,j}|$. The above claim  $r^*(X_1^{b_1}X_1^{b_2}\cdots X_d^{b_d})$ as in \eqref{e:monomials} is in the $\C$-span of $\tau^G_\mu$ satisfying
    $$|\mu|_{\cB}\le (|b_1|+\cdots + |b_d|)\beta_0\le  2d\beta_0\beta_1\kappa.$$
  So the above span contains $r^*(\phi)$ for $\phi\in \cH^{\ur}(\GL_d)^{\le \kappa}$. By Lemma
  \ref{l:norm-and-bases} there exists a constant $B_2>0$ such that $\|\mu\|_{\cB}\le B_2|\mu|_{\cB}$ for every $\mu\in X_*(T)$. Hence the lemma holds true with $\beta:=2B_2d\beta_0\beta_1$.

\end{proof}

  The map $r$ also induces a functorial transfer for unramified representations
\beq\label{e:r_*} r_*:\Rep^{\ur}(G(F))\ra \Rep^{\ur}(\GL_d(F))\eeq
 uniquely characterized by $\tr r_*(\pi)(\phi)=\tr \pi(r^*\phi)$ for all $\pi\in \Rep^{\ur}(G(F))$ and
$\phi\in \cH^{\ur}(\GL_d(F))$.

\subsection{Partial Satake transform}

  Keep the assumption that $G$ is unramified over $F$.
 Let $P$ be an $F$-rational parabolic subgroup of $G$ with Levi $M$
 and unipotent radical $N$ such that $B=TU$ is contained in $P$.
 Let $\Omega_M$ (resp. $\Omega_{M,F}$) denote the absolute
 (resp. $F$-rational) Weyl group for $(M,T)$.
  A partial Satake transform is defined as (cf. \eqref{e:Satake-formula})
 \[
  %\label{e:parital-Satake}
 \cS^G_M:\cH^{\ur}(G)\ra \cH^{\ur}(M),
  \quad f\mapsto \left(m\mapsto \delta_P(m)^{1/2} \int_{N} f(mn)dn\right)
 \]
  It is well known that $\cS^G=\cS^M\circ \cS^G_M$. More concretely, $\cS^G_M$ is the
 canonical inclusion $\C[X_*(A)]^{\Omega_{M,F}}\hra \C[X_*(A)]^{\Omega_{F}}$ if
 $\cH^{\ur}(M)$ and $\cH^{\ur}(G)$ are identified with the source and the target via $\cS^G$ and $\cS^M$, respectively.
 Since $T$ is a common maximal torus of $M$ and $G$, an $\R$-basis $\cB$ of $X_*(T)_\R$
 determines truncations on $\cH^{\ur}(M)$ and $\cH^{\ur}(G)$.

\begin{lem}
  For any $\kappa\in \Z_{\ge 0}$,
  $\cS^G_M(\cH^{\ur}(G)^{\le \kappa,\cB})\subset \cH^{\ur}(M)^{\le \kappa,\cB}$.
\end{lem}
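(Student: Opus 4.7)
The plan is to reduce by linearity to the basis case $f = \tau^G_\lambda$ with $\lambda \in X_*(A)^+$ and $\|\lambda\|_{\cB} \le \kappa$, and then exploit the identity $\cS^G = \cS^M \circ \cS^G_M$ together with the identification (under Satake) of $\cS^G_M$ as the canonical inclusion $\C[X_*(A)]^{\Omega_F} \hookrightarrow \C[X_*(A)]^{\Omega_{M,F}}$ recorded just before the statement of the lemma.

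The first step would be to bound the support of $\cS^G(\tau^G_\lambda) \in \C[X_*(A)]^{\Omega_F}$, viewed as a $\C$-linear combination of monomials $e^\nu$ with $\nu \in X_*(A)$. Combining the triangular relationships between the bases $\{\tau^G_\mu\}$, $\{\cS^{-1}(\tau^A_\mu)\}$ and $\{\cS^{-1}(\chi_\mu)\}$ of Lemma~\ref{l:three-truncations} with the classical fact that any dominant $\mu \le \lambda$ lies in the convex hull of the Weyl orbit $\Omega_F \cdot \lambda$, one obtains that the support of $\cS^G(\tau^G_\lambda)$ is contained in $\mathrm{conv}(\Omega_F \cdot \lambda) \cap X_*(A)$. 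Since $\|\cdot\|_{\cB}$ is a maximum over $\Omega$ of absolute coordinate values, it is convex and $\Omega$-invariant; every such $\nu$ therefore satisfies $\|\nu\|_{\cB} \le \|\lambda\|_{\cB} \le \kappa$.

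The second step is to expand $\cS^G_M(\tau^G_\lambda) = \sum_\mu d_\mu \tau^M_\mu$ over $M$-dominant cocharacters and apply $\cS^M$, yielding $\sum_\mu d_\mu \cS^M(\tau^M_\mu) = \cS^G(\tau^G_\lambda)$, whose support lies in the $\|\cdot\|_{\cB}$-ball $B_\kappa$ of radius $\kappa$ by Step~1. Each $\cS^M(\tau^M_\mu)$ is triangular in the $M$-dominance order, having a nonzero $e^\mu$-coefficient with its remaining monomials supported in $\mathrm{conv}(\Omega_{M,F} \cdot \mu)$ whose unique $M$-dominant maximum is $\mu$. A descending induction over the finite set $\{\mu : d_\mu \ne 0\}$ peels off one term at a time: the current $M$-dominant maximum $\mu^*$ must appear as a monomial in the remaining Satake transform (which stays inside $B_\kappa$ throughout), forcing $\|\mu^*\|_{\cB} \le \kappa$; subtracting $d_{\mu^*}\tau^M_{\mu^*}$ maintains the containment in $B_\kappa$, and iteration exhausts the expansion.

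The principal obstacle is the clean identification of the support of $\cS^G(\tau^G_\lambda)$ with a subset of $\mathrm{conv}(\Omega_F \cdot \lambda)$, which is essentially a form of Kostant's convexity theorem for the Satake transform; once this is in hand, the convexity and $\Omega$-invariance of $\|\cdot\|_{\cB}$ line up automatically with the norm truncation $\cH^{\ur}(-)^{\le \kappa,\cB}$, and the triangular inductive peeling is routine linear algebra requiring no further estimates on the matrices $s_{\lambda,\mu}$ beyond what is already recorded in the paper.
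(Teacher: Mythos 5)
Your proof is correct, but it takes a more elaborate route than the paper, whose entire recorded argument is the single observation that the absolute Weyl group $\Omega_M$ of $(M,T)$ is contained in $\Omega$, so the $\Omega_M$-orbit of any $\mu$ sits inside its $\Omega$-orbit and hence $\|\mu\|_{\cB,M}\le\|\mu\|_{\cB,G}$; the triangularity of the constant term (only $\tau^M_\mu$ with $\mu\le\lambda$ occur in $\cS^G_M(\tau^G_\lambda)$, a standard fact the paper invokes elsewhere from Cartier) is left implicit. What you do instead is make that implicit support control fully explicit: you bound the monomial support of $\cS^G(\tau^G_\lambda)$ by $\mathrm{conv}(\Omega_F\cdot\lambda)$, use convexity and $\Omega$-invariance of $\|\cdot\|_{\cB}$ to keep everything in the $\kappa$-ball for an arbitrary basis $\cB$, and then extract the coefficients of $\cS^G_M(\tau^G_\lambda)=\sum_\mu d_\mu\tau^G_\mu$ (in the $\tau^M_\mu$-basis) by a triangular peeling induction inside $\C[X_*(A)]^{\Omega_{M,F}}$. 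This buys a self-contained argument that does not treat the support of the constant term as a black box, at the cost of length; the paper's one-liner buys brevity by leaning on standard facts. One point you should state explicitly: throughout Step~2 you bound $\|\mu^*\|_{\cB}$ with the $G$-norm $\max_{\omega\in\Omega}|\omega\mu^*|_{\cB}$, whereas membership in $\cH^{\ur}(M)^{\le\kappa,\cB}$ is defined via the $M$-norm $\max_{\omega\in\Omega_M}|\omega\mu^*|_{\cB}$; you need the comparison $\|\mu^*\|_{\cB,M}\le\|\mu^*\|_{\cB,G}$, which holds precisely because $\Omega_M\subset\Omega$ (this is the one step the paper does record), and fortunately the inequality goes in the favorable direction, so your conclusion follows a fortiori.
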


\begin{proof}
  It is enough to note that $\|\lambda\|_{\cB,M}\le \|\lambda\|_{\cB,G}$ for all $\lambda\in X_*(A)$, which holds
  since the $\Omega_{M}$-orbit of $\lambda$ is contained in the $\Omega$-orbit of $\lambda$.
\end{proof}

\begin{rem}
  Let $\eta:{}^L M\ra {}^L G$ be the embedding of \cite[\S3]{Bor79}, well defined up to $\hat{G}$-conjugacy.
  Then $\cS^G_M$ coincides with $\eta^*:\cH^{\ur}(G)\ra \cH^{\ur}(M)$ of \S\ref{sub:L-mor-unr-Hecke}
\end{rem}

\subsection{Some explicit test functions}\label{sub:test-functions}

  Assume that $r:{}^L G=\hat{G}\rtimes W_F\ra \GL_d(\C)$ is
  an \emph{irreducible} representation arising from
  an unramified $L$-morphism ${}^L G^{\ur} \ra {}^L \GL_d^{\ur}$
  such that $r(W_F)$ is relatively compact.
  For later applications it is useful to study the particular element
  $r^*(Y_1+\cdots+Y_d)$ in $\cH^{\ur}(G)$.

%  Let $\phi=r^*(Y_1+\cdots+Y_d)$. By definition of $r^*$ we may identify $\phi=\tr r$
%  in $\C[\ch({}^L G)]$ (\S\ref{sub:L-mor-unr-Hecke}). Using \eqref{e:Satake2}
%  we can write $$\cT(\tr r)=\sum_{\lambda\in X_*(A)^+} a_{r,\lambda} \chi_\lambda$$
%  with unique $a_{r,\lambda}\in \C$. Write $|r|\in \Z_{\ge 1}$ for the number of
%  nonzero coefficients in the above finite sum.
%

\begin{lem}\label{l:bound-1st-moment}
  Let $\phi=r^*(Y_1+\cdots+Y_d)$. Then
\benu
\item Suppose that $r:{}^L G^{\ur}\ra \GL_d(\C)$ does not factor through
 $W^{\ur}_F$ (or equivalently that $r|_{\hat{G}}$ is not the trivial representation).
 Then $$|\phi(1)|\le
|\Omega_F| \max_{w\in \Omega_F} p(\lambda\star_w 0)\cdot  q^{-1}.$$
\item Suppose that $r|_{\hat{G}}$ is trivial. Then
 $\phi(1)=r(\Fr)$.
\eenu
\end{lem}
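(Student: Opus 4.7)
The plan is to compute $\phi(1)$ explicitly by tracking $Y_1+\cdots+Y_d$ through the chain of isomorphisms $\cH^{\ur}(\GL_d)\simeq \C[\ch({}^LGL_d^{\ur})]\to \C[\ch({}^LG^{\ur})]\simeq \cH^{\ur}(G)$ afforded by $\cS$, $\cT$, and $r^*$. Under the identification $\cH^{\ur}(\GL_d)\simeq \C[Y_1^\pm,\ldots,Y_d^\pm]^{\mS_d}$, the element $Y_1+\cdots+Y_d$ is precisely $\cT(\tr \mathrm{Std})$, where $\mathrm{Std}: {}^L\GL_d^{\ur}\to \GL_d$ is the standard representation (trivial on $W_F^{\ur}$). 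Since $r^*$ corresponds at the level of characters to $\tr\sigma \mapsto \tr(\sigma\circ r)$, I obtain
\[
\cS(\phi)=\cT(\tr r).
\]
Applying Lemma~\ref{l:Kostant}(ii) to the irreducible representation $r'=r$ (which has relatively compact image on $W_F$ by hypothesis), there exists $\zeta\in \C^\times$ with $|\zeta|=1$ and an explicit $\lambda\in X_*(A)^+$ constructed from the highest weight $\lambda_0$ of an irreducible summand of $r|_{\hat G}$, such that $\cS(\phi)=\zeta \cdot \chi_\lambda$. Combining with \eqref{e:s-lambda-mu} gives $\phi=\zeta\sum_\mu s_{\lambda,\mu}\tau^G_\mu$. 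Since $\tau^G_\mu(1)=\triv_{K\mu(\varpi)K}(1)$ vanishes for $\mu\in X_*(A)^+\setminus\{0\}$ (as $\mu(\varpi)\notin K$) and equals $1$ for $\mu=0$, I conclude
\[
\phi(1)=\zeta\cdot s_{\lambda,0}.
\]

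For part (i), the assumption that $r|_{\hat G}$ is nontrivial forces $\lambda_0\neq 0$ and hence $\lambda\neq 0$. I need to verify the hypothesis of Lemma~\ref{l:bound-on-s}, namely that $\lambda\star_w 0=w(\lambda+\rho_F)-\rho_F$ is nonzero for every $w\in \Omega_F$. This is where a short argument is required: both $\lambda+\rho_F$ and $\rho_F$ lie in the interior of the closed dominant Weyl chamber (since $\rho_F$ is regular and $\lambda$ is dominant), so the only element of the $\Omega_F$-orbit of $\lambda+\rho_F$ that could coincide with $\rho_F$ is $\lambda+\rho_F$ itself under $w=1$, forcing $\lambda=0$, contradiction. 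Lemma~\ref{l:bound-on-s} with $\mu=0$ then yields $|s_{\lambda,0}|\le q^{-1}|\Omega_F|\max_w p(\lambda\star_w 0)$, and since $|\zeta|=1$ this is the desired bound.

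For part (ii), if $r|_{\hat G}$ is trivial then $r$ factors through $W_F^{\ur}$. Because $W_F^{\ur}=\Fr^{\Z}$ is abelian and $r$ is irreducible, we must have $d=1$ and $r$ is a character; in particular $\tr r$ is the constant function $r(\Fr)$ on $(\hat G\rtimes \Fr)_{\ssconj}$. Therefore $\cT(\tr r)=r(\Fr)\in \C[X_*(A)]^{\Omega_F}$ is the constant $r(\Fr)$, equivalently $r(\Fr)\cdot \chi_0$. Thus $\phi=r(\Fr)\cdot \cS^{-1}(\chi_0)=r(\Fr)\cdot\triv_K$, which evaluated at $1$ gives $\phi(1)=r(\Fr)$.

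The main obstacle is the verification for part (i) that $w(\lambda+\rho_F)\neq \rho_F$ for all $w\in \Omega_F$, which enables the application of Lemma~\ref{l:bound-on-s}; everything else is bookkeeping between the various incarnations of the Satake isomorphism and applications of already-established results.
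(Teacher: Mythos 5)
Your proof is correct and follows essentially the same route as the paper: reduce via Lemma~\ref{l:Kostant}(ii) to $\cS(\phi)=\zeta\chi_\lambda$ with $|\zeta|=1$, identify $\phi(1)$ with $\zeta\,s_{\lambda,0}$ from \eqref{e:s-lambda-mu}, and invoke Lemma~\ref{l:bound-on-s} for (i) and the degenerate $\lambda=0$ computation for (ii). The only difference is that you spell out the parenthetical observation $\lambda\star_w 0\neq 0$ (via uniqueness of the dominant representative in a Weyl orbit), which the paper asserts without proof.
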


%\begin{rem}
%  When $G$ is split and $r|_{\hat{G}}$ has highest weight $\mu\in X_*(A)^+$,
%  \eqref{e:T(tr-r)} tells us that $a_{r,0}=0$ unless $\mu=0$, in which case
%  $r$ is the trivial representation, $\cS^{-1}(\phi)=\triv_{K}$ and $\cS^{-1}(\phi)(1)=1$.
%\end{rem}

\begin{proof}
  Let us do some preparation. By twisting $r$ by an unramified unitary character of $W_F$ (viewed as a character of $^L G$) we may assume that $r=\ind^{^L G_j}_{\hat{G}} r_0$ for some irreducible representation $r_0$ of $\hat{G}$, cf. the proof of Lemma \ref{l:Kostant}.(ii). Let $\lambda_0$ be the highest weight of $r_0$ and define $\lambda\in X_*(A)^+$ as in the paragraph preceding Lemma \ref{l:Kostant}.
  The lemma tells us that $\cS(\phi)=\zeta \chi_\lambda\in \C[X_*(A)]^{\Omega_F}$ with $|\zeta|=1$.

In the case of (ii), $r$ is just an unramified unitary character of $W_F$ (with $d=1$), and it is easily seen that $\chi_\lambda=\tau^A_0$, $\zeta=r(\Fr)$, and so $\phi(1)=r(\Fr)$.
  Let us put ourselves in the case (i) so that $\lambda\neq 0$.
Note that $\phi(1)$ is just the coefficient of $\tau^G_0$
when $\phi=\zeta \cS^{-1}(\chi_\lambda)$
is written with respect to the basis $\{\tau^G_\mu\}$.
 Such a coefficient equals $\zeta \cdot s_{\lambda,0}$ according to \eqref{e:s-lambda-mu}, so
 $|\phi(1)|= | s_{\lambda,0}|$.
Now Lemma \ref{l:bound-on-s} concludes the proof. (Observe
that $\lambda\star_w 0\neq 0$ whenever $0\neq \lambda\in X_*(A)^+$.)
\end{proof}

\subsection{Examples in the split case}
When $G$ is split,
  it is easy to see that $\C[\ch({}^L G^{\ur})]$ is canonically identified with $\C[\ch(\hat{G})]$
  which is generated by finite dimensional characters in the space of functions on $\hat{G}$.
  So we may use $\C[\ch(\hat{G})]$ in place of $\C[\ch({}^L G^{\ur})]$.

\begin{ex}\label{ex:functions-Sp_2n}(When $G=Sp_{2n}$, $n\ge1$)

  Take $r:\hat{G}=SO_{2n+1}(\C)\hra \GL_{2n+1}(\C)$ to be the standard representation.
  Then $$Y_1+\cdots+Y_{2n+1}=\tr(\Std)\in \C[\ch(\GL_{2n+1})]$$
  is mapped to $\tr(r)\in \C[\ch(SO_{2n+1})]$ and
   $$Y^2_1+\cdots+Y^2_{2n+1}=\tr(\Sym^2(\Std)-\wedge^2(\Std))\in \C[\ch(\GL_{2n+1})]$$
  is mapped to $\tr(r)\in \C[\ch(SO_{2n+1})]$.
  Then $\Sym^2(V)$ breaks into $\C$ and an irreducible representation of
  $\hat{G}$ of highest weight $(2,0,...,0)$ in the standard parametrization.
  When $n>1$, $\wedge^2(V)$ is irreducible of highest weight $(1,1,0,...,0)$.
  When $n=1$, $\wedge^2(V)\simeq V^\vee$, i.e. isomorphic to $(\Std)^\vee$. (See
  \cite[\S19.5]{FH91}.) Let us systematically write $\Lambda_\lambda$ for the irreducible representation
  of $SO_{2n+1}$ with highest weight $\lambda$. Then
  \begin{eqnarray}
   r^*(Y_1+\cdots+Y_{2n+1})&=&\tr \Lambda_{(1,0,...,0)},\label{e:Satake-Sp2n}\\
    r^*(Y_1^2+\cdots+Y_{2n+1}^2)
   & =&\tr( \C+\Lambda_{(2,0,...,0)}-\Lambda_{(1,1,0,...,0)}).\nonumber
  \end{eqnarray}
  if $n\ge 2$. If $n=1$, the same is true if $\Lambda_{(1,1,0,...,0)}$ is replaced with
  $\Lambda_{(-1)}$.
  For $i=1,2$, define
  $$\phi^{(i)}:=\cS^{-1}(r^*(Y^i_1+\cdots+Y^i_{2n+1})).$$
  Then one computes
  \begin{eqnarray}
   \phi^{(1)}&=& q^{\frac{1-2n}{2}} \triv_{K \mu_{(1,0,...,0)}(\varpi_v)K} ,\nonumber\\
   \phi^{(2)}
   & =& \triv_K+q^{1-2n} \triv_{K\mu_{(2,0,...,0)}(\varpi_v)K}
   - q^{1-2n}(q-1)\triv_{K\mu_{(1,1,0,...,0)}(\varpi_v)K}.\nonumber
  \end{eqnarray}
  where $\mu_{\lambda}$ is the cocharacter of a maximal torus given by $\lambda$ in the standard parametrization.
  In particular, $\phi^{(1)}(1)=0$ and $\phi^{(2)}(1)=1$.
\end{ex}

\begin{ex}(When $G=SO_{2n}$, $n\ge 2$)

  Take $r:\hat{G}=SO_{2n}(\C)\hra \GL_{2n}(\C)$ to be the standard representation.
 Similarly as before, $\Sym^2(V)$ breaks into $\C$ and an irreducible representation of
  $\hat{G}$ of highest weight $(2,0,...,0)$.
  When $n>1$, $\wedge^2(V)$ is irreducible of highest weight $(1,1,0,...,0)$.
  When $n=1$, $\wedge^2(V)\simeq \C$. (See \cite[\S19.5]{FH91}.) The same
  formulas as \eqref{e:Satake-Sp2n} hold in this case.
  Defining \beq\label{e:Satake-SO2n}\phi^{(i)}:=\cS^{-1}(r^*(Y^i_1+\cdots+Y^i_{2n})),\eeq we
  can compute $\phi^{(1)}$, $\phi^{(2)}$ and see that
   $\phi^{(1)}(1)=0$ and $\phi^{(2)}(1)=1$.
\end{ex}

\begin{ex}(When $G=SO_{2n+1}$)

  Take $r:\hat{G}=Sp_{2n}(\C)\hra \GL_{2n}(\C)$ to be the standard representation.
  Then $$Y_1+\cdots+Y_{2n}=\tr(\Std)\in \C[\ch(\GL_{2n})]$$
  is mapped to $\tr(r\circ \Std)\in \C[\ch(Sp_{2n})]$ and
  Then $$Y^2_1+\cdots+Y^2_{2n}=\tr(\Sym^2(\Std)-\wedge^2(\Std))\in \C[\ch(\GL_{2n})]$$
  is mapped to $\tr(r\circ \Std)\in \C[\ch(Sp_{2n})]$.
    If $n\ge 2$ then $\wedge^2(V)$ breaks into $\C$ and an irreducible representation of
  $\hat{G}$ of highest weight $(1,1,0,...,0)$.
 (See \cite[\S17.3]{FH91}.) We have
 \begin{eqnarray*}
   r^*(Y_1+\cdots+Y_{2n+1})&=&\tr \Lambda_{(1,0,...,0)},\label{e:Satake-SO2n+1}\\
    r^*(Y_1^2+\cdots+Y_{2n+1}^2)
   & =&\tr( \Lambda_{(2,0,...,0)}-\Lambda_{(1,1,0,...,0)}-\C ).\nonumber
  \end{eqnarray*}
  As in Example \ref{ex:functions-Sp_2n}, $\Lambda$ designates a highest weight representation
  (now of $Sp_{2n}$).
  Define $\phi^{(i)}$ as in \eqref{e:Satake-SO2n}.
  By a similar computation as above, $\phi^{(1)}(1)=0$, $\phi^{(2)}(1)=-1$.
\end{ex}

%  As $r|_{\hat{G}}$ may be reducible (which may happen if
%  the finite Galois action on $\hat{G}$ is not trivial), we may write
%  $r|_{\hat{G}}=\oplus_{i=1}^s r_i$ for the decomposition into irreducibles.
%  For each $i$ let $\mu_i\in X^*(\hat{T})^+=X_*(T)^+$ be the highest weight for $r_i$.
%  The multiplicity of the trivial character in
%  $r_i|_{\hat{T}}$ is denoted $m(\triv,r_i)$. Put $m(\triv,r)=\sum_{i=1}^s m(\triv,r_i)$.
%
%\begin{lem} Let $\phi=r^*(Y_1+\cdots+Y_d)$.
%\benu
%\item If $\mu_i\neq \sum_{\alpha^{\vee}\in (\Phi^\vee)^+} c_{\alpha^{\vee}} \alpha^\vee$
%for any collection of $c_{\alpha^{\vee}}\in \Z_{\ge 0}$ then $m(\triv,r_i)=0$.
%\item $\cS^{-1}(\phi)(1)=m(\triv,r)+O(q^{-1})$.
%(Note that $O(q^{-1})$ comes from $$\le \sum_{1\neq \lambda'\in X_*(T)^+}
%|\delta_B(\lambda'(\varpi))^{-1/2} \mu(K\lambda'(\varpi)K\cap NK)|$$
%where $\lambda'$ is $\mu$ minus a nonnegative real combination of $(\Phi^\vee_F)^+$.
%Use the inverse matrix for the Satake transform,
%cf. \cite[p.148]{Car79}, \cite{Rap00} - use his method to obtain a precise bound?
%In fact integral combination is enough according to Rapoport
%A similar consideration appears in Step 2 of Lemma \ref{l:dbl-coset-distance}:
%one needs $s$ for $K'\lambda K'\cap N \subset N_{\ol{x}_0',s}$ there.)
%\eenu
%\end{lem}

\subsection{Bounds for truncated unramified Hecke algebras}

  Let $F$, $G$, $A$, $T$ and $K$ be as in \S\ref{sub:Satake-trans}.
  Throughout this subsection,
  an $\R$-basis $\cB$ of $X_*(T)_{\R}$ will be fixed once and for all.
  Denote by $\rho\in X^*(T)\otimes_\Z \frac{1}{2}\Z$
  half the sum of all $\alpha\in \Phi^+$.

\begin{lem}\label{l:double-coset-volume}
  For any $\mu\in X_*(A)$,
  $[K\mu(\varpi)K:K]\le q^{d_G+r_G+\lg \rho,\mu\rg}$.
\end{lem}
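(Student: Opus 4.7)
My plan is to bound the index $[K\mu(\varpi)K:K]$ by reducing to dominant $\mu$ and then carrying out an explicit coset count using the Iwahori factorization of $K$ coming from the hyperspecial integral model.

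First I would reduce to $\mu \in X_*(A)^+$. Since $K$ is hyperspecial, it contains representatives of the rational Weyl group $\Omega_F$, so $K\mu(\varpi)K = Kw\mu(\varpi)K$ for every $w\in \Omega_F$; thus we may replace $\mu$ by its dominant translate $\mu^+$. Because $\rho$ is itself $\Omega_F$-dominant, the pairing $\langle \rho,\cdot\rangle$ is maximized on the orbit at $\mu^+$, so this reduction is compatible with the target inequality.

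Next I would use the identity $[K\mu(\varpi)K:K] = [K: K\cap \mu(\varpi)^{-1}K\mu(\varpi)]$, and exploit the smooth reductive model of $G$ over $\cO$ associated to the hyperspecial vertex. This yields an Iwahori-type factorization of $K$ as a product of root subgroups $U_\alpha(\cO)$ (with $\alpha$ running over the absolute roots, organized by Frobenius orbits in the non-split unramified case) and the integral torus $T(\cO)$. Conjugation by $\mu(\varpi)$ acts on $U_\alpha(\cO)$ by the scaling $U_\alpha(\varpi^{-\langle \alpha,\mu\rangle}\cO)$, so for dominant $\mu$ the stabilizer $K\cap \mu(\varpi)^{-1}K\mu(\varpi)$ contains the subgroup generated by $T(\cO)$, by $U_\alpha(\cO)$ for $\alpha\in\Phi^-$, and by $U_\alpha(\varpi^{\langle\alpha,\mu\rangle}\cO)$ for $\alpha\in\Phi^+$. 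Multiplying local contributions gives an index bound of the form $\prod_{\alpha\in\Phi^+} q^{\langle\alpha,\mu\rangle}$, whose exponent is controlled by a quantity of the order $\langle\rho,\mu\rangle$ once the standard Macdonald/Cartan normalization is accounted for.

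The remaining additive constant $d_G + r_G$ in the exponent absorbs the ancillary contributions: the Weyl group index $|\Omega_F|$ (bounded by $q^{r_G}$ since $|\Omega|\le 2^{r_G}\le q^{r_G}$ for $q\ge 2$), the boundary terms from the torus $T(\cO)$ upon reduction mod $\varpi$, and the open-cell contribution $|G(\F_q)|\le q^{d_G}$ from the integral model. The hard part will be (a) making the Iwahori factorization effective over $F$-rational points in the non-split unramified case, where one must track carefully the $F$-structure on the root data and the Frobenius action on $\Phi$, and (b) pinning down the precise exponent of $q$ on the nose via a clean volume comparison between $K$ and its pro-$p$ radical, so that the final constants match $d_G+r_G$ and $\langle\rho,\mu\rangle$ exactly as stated.
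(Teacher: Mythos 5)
Your route differs from the paper's. The paper does not attempt a direct coset count: it inserts an Iwahori subgroup $I\subset K$, uses the Iwahori factorization $I=(I\cap U)(I\cap T)(I\cap \ol{U})$ together with Waldspurger's volume argument to get $\vol(K\mu(\varpi)K)\le [K:I]^2\vol(I\mu(\varpi)I)\le [K:I]^2 q^{\langle\rho,\mu\rangle}\vol(I)$, and then bounds $[K:I]\le |G(\F_q)|\le q^{d_G}(1+\tfrac1q)^{r_G}\le q^{d_G+r_G}$ by Steinberg's formula. Measured against this, your sketch has two genuine gaps. First, the hyperspecial group $K$ does \emph{not} admit the product decomposition into $T(\cO)$ and the $U_\alpha(\cO)$ that you invoke: already in $SL_2(\cO)$ the Weyl element $\left(\begin{smallmatrix}0&1\\-1&0\end{smallmatrix}\right)$ lies outside $\ol{U}(\cO)T(\cO)U(\cO)$. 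An Iwahori factorization is available only for $I$ (and deeper filtration subgroups), which is exactly why the paper descends to $I$ and pays the index $[K:I]$; that index is where the additive constant $d_G+r_G$ really comes from, and your alternative accounting is not correct in general (e.g.\ $|\Omega|\le 2^{r_G}$ fails for $\GL_n$, $n\ge 4$, and $|G(\F_q)|\le q^{d_G}$ fails for nonsplit unramified groups, whence the extra $r_G$ in the paper).

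Second, and more seriously, your root-group bookkeeping honestly yields $\prod_{\alpha\in\Phi^+}q^{\langle\alpha,\mu\rangle}=q^{\langle 2\rho,\mu\rangle}$, and the closing claim that this "is of the order $\langle\rho,\mu\rangle$ after the standard Macdonald/Cartan normalization" is precisely the step you cannot make: the factor $2$ is real. For $G=\GL_2$ and dominant $\mu=(n,0)$ one has $[K\mu(\varpi)K:K]=q^{n-1}(q+1)$, which is of size $q^{\langle 2\rho,\mu\rangle}$, while $q^{\langle\rho,\mu\rangle}=q^{n/2}$; so $\langle 2\rho,\mu\rangle$ is the true order of growth and no renormalization can halve it. In other words, what your method proves (once routed through $I$) is the bound with exponent $d_G+r_G+\langle 2\rho,\mu\rangle$ --- which is in fact the form in which the estimate is used later, e.g.\ in the proof of Proposition \ref{p:appendix2} --- whereas the exponent $\langle\rho,\mu\rangle$ in the statement is exactly what the paper delegates to the cited Waldspurger argument ($\vol(I\mu(\varpi)I)\le q^{\langle\rho,\mu\rangle}\vol(I)$), and your sketch does not supply it. A minor further point: your reduction to dominant $\mu$ is argued backwards, since $\langle\rho,\cdot\rangle$ is \emph{maximized} at the dominant representative, so proving the bound for $\mu^+$ gives only a weaker bound for the other elements of the orbit; the estimate should simply be proved for dominant $\mu$ directly (as the paper does via Waldspurger's chamber condition), which is the only case in which it is used.
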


\begin{proof}
  Let $\vol$ denote the volume for the Haar measure on $G(F)$ such that
  $\vol(K)=1$.
  Let $I\subset K$ be an Iwahori subgroup of $G(F)$.
  Then $I=(I\cap U)(I\cap T)(I\cap \ol{U})$.
  We follow the argument of \cite[pp.241-242]{Wal03}, freely using Waldspurger's notation.
  Our $I$, $U$, $\ol{U}$, and $T$ will play the roles of his $H$, $U_0$, $\ol{U}_0$ and $M_0$, respectively.
  For all $m\in \ol{M}_0^+$ (in his notation), it is not hard to verify that
  $c'_{U_0}(m)=c_{\ol{U}_0}(m)=c_{M_0}(m)=1$.
  Then Waldspurger's argument shows
  $$\vol(K\mu(\varpi)K)\le [K:I]^2 \vol(I\mu(\varpi)I)
  \le [K:I]^2 q^{\lg \rho,\mu\rg} \vol(I) = [K:I] q^{\lg \rho,\mu\rg} .$$
  Finally observe that $[K:I]\le |G(\F_q)|\le q^{d_G}(1+\frac{1}{q})^{r_G}\le q^{d_G+r_G}$.
(The middle inequality is easily derived from Steinberg's formula. cf. \cite[(3.1)]{Gro97}.)
\end{proof}

  The following lemma will play a role in studying the level aspect in Section \ref{s:aut-Plan-theorem}.

\begin{lem}\label{l:bounding-phi-on-S_0}
  Let $M$ be an $F$-rational Levi subgroup of $G$.
 There exists a constant $b_G> 0$ (depending only on $G$)
 such that for all $\kappa\in \Z_{>0}$
  and all $\phi\in \cH^{\ur}(G)^{\le \kappa,\cB}$ such that
  $|\phi|\le 1$, we have
  $|\phi_{M}(1)|= O(q^{d_G+r_G+b_G\kappa})$ (the implicit constant being independent of
  $\kappa$ and $\phi$), where we put $\phi_M:=\cS^G_M(\phi)$.
\end{lem}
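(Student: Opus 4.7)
The plan is to expand $\phi$ in the basis $\{\tau^G_\lambda\}$ of $\cH^{\ur}(G)^{\le \kappa,\cB}$, bound $(\tau^G_\lambda)_M(1)$ for each $\lambda$ using Lemma~\ref{l:double-coset-volume} together with a short coset-translation argument, and sum over the finitely many $\lambda$ that appear. First I would write $\phi = \sum_\lambda c_\lambda \tau^G_\lambda$ with $\lambda$ running over $\{\lambda \in X_*(A)^+ : \|\lambda\|_\cB \le \kappa\}$. By the Cartan decomposition the double cosets $K\lambda(\varpi)K$ are pairwise disjoint, so the $\tau^G_\lambda$ are characteristic functions of disjoint sets and the pointwise bound $|\phi| \le 1$ forces $|c_\lambda| \le 1$. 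A standard lattice-point count bounds the number of admissible $\lambda$ by $O(\kappa^{r_G})$, with implicit constant depending only on $G$ and $\cB$.

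Next, for each such $\lambda$ I would compute
\[
(\tau^G_\lambda)_M(1) = \int_N \triv_{K\lambda(\varpi)K}(n)\,dn = \vol_N\bigl(K\lambda(\varpi)K \cap N\bigr).
\]
Writing $K\lambda(\varpi)K = \bigsqcup_i g_i K$ as a finite disjoint union of left $K$-cosets, if $g_i K \cap N \ne \emptyset$ and $n_0 \in g_i K \cap N$, then $g_i K = n_0 K$ and $g_i K \cap N = n_0 (K \cap N)$ since $N$ is a subgroup containing $n_0$. With the standard normalization $\vol_N(K \cap N) = 1$, each nonempty intersection contributes exactly $1$, hence
\[
\vol_N\bigl(K\lambda(\varpi)K \cap N\bigr) \le [K\lambda(\varpi)K : K] \le q^{d_G + r_G + \langle \rho, \lambda\rangle}
\]
by Lemma~\ref{l:double-coset-volume}.

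Finally, since $\langle \rho, \cdot\rangle$ is a linear form on the finite-dimensional space $X_*(T)_\R$, it is dominated by the norm $\|\cdot\|_\cB$: there is $C_\rho > 0$ (depending only on $\rho$ and $\cB$) with $\langle \rho, \lambda\rangle \le C_\rho \|\lambda\|_\cB \le C_\rho \kappa$. Combining these estimates,
\[
|\phi_M(1)| \le \sum_\lambda |c_\lambda|\cdot q^{d_G + r_G + C_\rho \kappa} = O(\kappa^{r_G}) \cdot q^{d_G + r_G + C_\rho \kappa},
\]
and the elementary inequality $\kappa^{r_G} \le q^{r_G \kappa}$ (valid for $\kappa \ge 1$ and $q \ge 2$, since $\log \kappa / \kappa \le \log 2$) absorbs the polynomial factor into the exponent, yielding $|\phi_M(1)| = O(q^{d_G + r_G + b_G\kappa})$ with $b_G := C_\rho + r_G$ depending only on $G$ and $\cB$. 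The only non-routine ingredient is the volume comparison in the middle paragraph, where the short coset-translation argument bridges Lemma~\ref{l:double-coset-volume} (a statement about $K$-coset indices in $G$) and the $N$-integral defining the constant term; all remaining steps are routine bookkeeping of constants and standard absorption of polynomial factors.
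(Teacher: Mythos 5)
Your proof is correct and follows essentially the same route as the paper's: expand $\phi$ in the $\tau^G_\lambda$ basis with coefficients bounded by $1$, bound each $N$-integral by the left $K$-coset count via Lemma~\ref{l:double-coset-volume} together with $\vol(K\cap N)=1$, dominate $\lg\rho,\lambda\rg$ by a constant times $\kappa$, and absorb the polynomial count of admissible $\lambda$ into the exponent. The only cosmetic differences are that you spell out the coset-translation step the paper asserts and absorb the lattice count slightly differently; both are fine.
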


\begin{proof}
  When $M=G$, the lemma is obvious (with $b_G=0$). Henceforth we assume
  that $M\subsetneq G$. In view of Lemma \ref{l:norm-and-bases},
  it suffices to treat one $\R$-basis $\cB$. Fix a $\Z$-basis $\{e_1,...,e_{\dim A}\}$ of $X_*(A)$,
  and choose any $\cB$ which extends that $\Z$-basis.
   It is possible to write
  $$\phi=\sum_{\|\mu\|\le \kappa} a_\mu\cdot \triv_{K\mu(\varpi)K}$$
  for $|a_\mu|\le 1$. Thus
  $$|\phi_{M}(1)|=\left|\int_{N(F)} \phi(n)dn\right|
  \le  \sum_{\|\mu\|\le \kappa}
    \left| \int_{N(F)}\triv_{K\mu(\varpi)K}(n)dn \right|.$$
  For each $\mu$, $K\mu(\varpi)K$ is partitioned into left $K$-cosets.
  On each coset $\gamma K$, $$\left|\int_{N(F)} \triv_{\gamma K}(n) dn\right|\le \vol(K
  \cap N(F))=1.$$ Hence, together with Lemma \ref{l:double-coset-volume},
  $$|\phi_{M}(1)|\le \sum_{\|\mu\|\le \kappa} [K\mu(\varpi)K:K]\le  \sum_{\|\mu\|\le \kappa}q^{d_G+r_G+\lg \rho,\mu\rg} .$$
  Write $b_{0}$ for the maximum of $|\lg \rho,e_i\rg|$ for $i=1,...,\dim A$.
  Take $b_G:=b_{0}\dim A+2 \dim A$.
  If $\|\mu\|\le \kappa$ then
 $\mu=\sum_{i=1}^{\dim A} a_ie_i$ for $a_i\in\Z$ with $-\kappa\le a_i\le \kappa$.
  Hence the right hand side is bounded by
  $(2\kappa+1)^{\dim A} q^{d_G+r_G+b_{0}\kappa\dim A}\le q^{d_G+r_G+b_G\kappa}$
  since $2\kappa+1\le 2^{2\kappa}\le q^{2\kappa}$.

% Let $\iota_M:{}^L M\ra {}^L G$ be a natural embedding (canonical up to $\hat{G}$-conjugacy).
%  The induced map $\iota^*_M:\cH(G)^{\ur}\ra \cH(M)^{\ur}$ is none other than the partial
%  Satake transform. Set $r_M:=r\circ \iota_M$. Then
%  $r_M^*=\iota_M^* r^*$. We see that
%  $\phi_{S_1,M}=\iota_M^*(\phi_{S_1})\in r_M^*(\cH^{\le \kappa})$.
%  Hence the above argument (with $\phi_{S_1,M}$ and $r_M$ in place of
%  $\phi_{S_1}$ and $r$) shows that $|\phi_{S_1,M}(1)|=O(q_{S_1}^{C_4})$.

\end{proof}

  An elementary matrix computation shows the bound below, which will be used several times.

\begin{lem}\label{l:control-eigenvalue}
  Let $s=\diag(s_1,...,s_m)\in \GL_m(\ol{F}_v)$
  and $u=(u_{ij})_{i,j=1}^{m}\in \GL_m(\ol{F}_v)$.
  Define $v_{\min}(u):=\min_{i,j} v(u_{ij})$ and
similarly $v_{\min}(u^{-1})$.
  Then for any eigenvalue $\lambda$ of $su\in \GL_m(F_v)$,
  $$v(\lambda)\in [v_{\min}(u)+\min_{i} v(s_i),-v_{\min}(u^{-1})+\max_{i} v(s_i)].$$
\end{lem}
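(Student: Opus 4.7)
The plan is to extract both bounds from the eigenvector equation by playing the ultrametric inequality against the coordinate of maximal absolute value (i.e., minimal valuation). Concretely, let $\lambda \in \ol{F}_v$ be an eigenvalue of $su$ and let $x=(x_1,\ldots,x_m)^T\in \ol{F}_v^m$ be a nonzero eigenvector, extending $v$ to $\ol{F}_v$ in the usual way. Pick an index $j_0$ that achieves $v(x_{j_0})=\min_k v(x_k)$; this is where the ultrametric estimate is tightest.

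For the lower bound, read off the $j_0$-th coordinate of $sux=\lambda x$:
\[
\lambda x_{j_0}=s_{j_0}\sum_{k} u_{j_0,k} x_k.
\]
Applying the ultrametric inequality and the definition of $v_{\min}(u)$ together with $v(x_k)\ge v(x_{j_0})$ for every $k$, the right-hand side has valuation at least $v(s_{j_0})+v_{\min}(u)+v(x_{j_0})$, so $v(\lambda)\ge v(s_{j_0})+v_{\min}(u)\ge \min_i v(s_i)+v_{\min}(u)$, which is the desired lower bound.

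For the upper bound I would reduce to the lower bound by inverting. The eigenvalues of $(su)^{-1}=u^{-1}s^{-1}$ are the $\lambda^{-1}$'s, and the similarity $u^{-1}s^{-1}=s^{-1}(s^{-1}u^{-1})s$ shows that $u^{-1}s^{-1}$ and $s^{-1}u^{-1}$ have the same eigenvalues. Since $s^{-1}u^{-1}$ is again of the form ``diagonal times matrix,'' the previous paragraph applied to it gives
\[
v(\lambda^{-1})\ge \min_i v(s_i^{-1})+v_{\min}(u^{-1})=-\max_i v(s_i)+v_{\min}(u^{-1}),
\]
which upon negation is exactly $v(\lambda)\le \max_i v(s_i)-v_{\min}(u^{-1})$.

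There is essentially no obstacle: the only subtlety is the passage from the assumption $su\in\GL_m(F_v)$ to eigenvalues that a priori live in $\ol{F}_v$ and eigenvectors with entries in $\ol{F}_v$, but this is standard once $v$ is extended to $\ol{F}_v$. The argument uses nothing beyond the ultrametric inequality and the fact that $vs'$ and $s'v$ are conjugate when $s'$ is invertible.
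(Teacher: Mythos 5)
Your proof is correct, but it takes a genuinely different route from the paper. You argue directly with an eigenvector: extending $v$ to $\ol{F}_v$, picking a coordinate $x_{j_0}$ of minimal valuation and applying the ultrametric inequality to the $j_0$-th row of $sux=\lambda x$ gives $v(\lambda)\ge v(s_{j_0})+v_{\min}(u)\ge \min_i v(s_i)+v_{\min}(u)$, and the upper bound follows by passing to $(su)^{-1}=u^{-1}s^{-1}$ and using that this has the same eigenvalues as $s^{-1}u^{-1}$, which is again of the form ``diagonal times matrix.'' The paper instead never touches eigenvectors: it bounds $v(\tr(su\,|\,\wedge^j V))$ from below for each $j$ via the exterior-power expansion, observes that these traces are (up to sign) the coefficients of the characteristic polynomial of $su$, and reads off the lower bound for $v(\lambda)$ from the Newton polygon; the upper bound is likewise obtained by running the same argument for $s^{-1}$ and $u^{-1}$. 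Your approach is more elementary (only the strong triangle inequality plus the $AB$-versus-$BA$ similarity), while the paper's stays at the level of characteristic-polynomial coefficients over $F_v$ and so packages the estimate in a form (Newton polygon) that makes the dependence on all coefficients transparent; both handle the inversion step in essentially the same way, and you are in fact more careful than the paper in justifying why $u^{-1}s^{-1}$ may be replaced by $s^{-1}u^{-1}$. One small slip: the displayed similarity should read $u^{-1}s^{-1}=s\,(s^{-1}u^{-1})\,s^{-1}$ (conjugation by $s$, not by $s^{-1}$); as written the identity is false, but this is harmless since $AB$ and $BA$ always have the same eigenvalues, which is all you use.
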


\begin{rem}
 The lemma will be typically applied when $u\in \GL_m(\ol{\cO}_v)$ where
 $\ol{\cO}_v$ is the integer ring of $\ol{F}_v$. In this case
 $v_{\min}(u)=v_{\min}(u^{-1})=0$.
\end{rem}

\begin{proof}
 Let $V$ be the underlying $\ol{F}_v$-vector space with
 standard basis $\{e_1,...,e_m\}$.
  Let $\cB_j=\{\vec{i}=(i_1,...,i_j)|1\le i_1<\cdots<i_j\le m\}$.
   Then $\wedge^jV$ has a basis $\{e_{i_1}\wedge \cdots \wedge e_{i_j}\}_{\vec{i}\in \cB_j}$.
  We claim that $$v(\tr(su|\wedge^j V))\ge  j\cdot \min_{i} v(s_i).$$
  Let us verify this.
  Let $(u_{\vec{i},\vec{i}'})_{\vec{i},\vec{i}'\in \cB_j}$ denote the matrix entries for the $u$-action
  on $\wedge^j V$ with respect to the above basis.
  Observe that $v(u_{\vec{i},\vec{i}'})
  \ge j\cdot v_{\min}(u)$ for all $\vec{i},\vec{i}'\in \cB_j$.
  Then
  $$v(\tr(su|\wedge^j V))=v\left(\sum_{\vec{i}\in \cB_j} s_{i_1}s_{i_2}\cdots s_{i_j}\cdot u_{\vec{i},\vec{i}}\right)$$
  $$\ge \min_{\vec{i}} v(s_{i_1}s_{i_2}\cdots s_{i_j}\cdot u_{\vec{i},\vec{i}})
  \ge j\cdot\min_{i} v(s_i) + \min_{\vec{i}} v(u_{\vec{i},\vec{i}})
  \ge j(\min_{i} v(s_i) +v_{\min}(u)) .$$

  The coefficients of the characteristic polynomial for $su\in \GL_m(F_v)$ are given by
  $\tr(su|\wedge^j V)$ up to sign. The above claim and an elementary argument with the Newton polygon show that
  any root $\lambda$ satisfies $v(\lambda)\ge v_{\min}(u)+ \min_{i} v(s_i)$.
  Finally, applying the argument so far to $s^{-1}$ and $u^{-1}$, we obtain
  the upper bound for $v(\lambda)$.

\end{proof}

  As before, the smooth reductive model for $G$ over $\cO$ such that $G(\cO)=K$ will still be denoted $G$.

\begin{lem}\label{l:conj-image-in-diag}
  Let $\Xi:G\hra \GL_m$ be an embedding of algebraic groups over $\cO$. Then there exists a $GL_m(\cO)$-conjugate of $\Xi$ which maps $A$ (a fixed maximal split torus of $G$) into the diagonal maximal torus of $\GL_m$.
\end{lem}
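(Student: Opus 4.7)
The plan is to diagonalize the torus $A$ by decomposing the standard representation $\cO^m$ into weight spaces under the action of $A$ via $\Xi$, and then choosing a basis of $\cO^m$ adapted to this decomposition.

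First I observe that since we have fixed a smooth reductive model for $G$ over $\cO$ coming from a hyperspecial point on the apartment of $A$, the maximal $F$-split torus $A$ extends to a split $\cO$-torus inside $G_{/\cO}$; call it again $A$. Thus $\Xi$ restricts to an embedding $A \hookrightarrow \GL_m$ of $\cO$-group schemes, where $A \cong \mathbb{G}_{m,\cO}^{\dim A}$.

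Next I invoke the weight decomposition for representations of split tori over a ring. Concretely, the $\cO$-linear action of $A$ on $V := \cO^m$ via $\Xi$ corresponds to an $\cO$-algebra comodule structure $V \to V \otimes_\cO \cO[X^*(A)]$, and by the usual argument (picking off isotypic components character by character) this induces a grading
\[
V = \bigoplus_{\chi \in X^*(A)} V_\chi,
\]
where $V_\chi \subset V$ is the $\cO$-submodule on which $A$ acts through the character $\chi$. Only finitely many $V_\chi$ are nonzero, each is a direct summand of the free $\cO$-module $V$, and hence each $V_\chi$ is finitely generated projective over $\cO$. Since $\cO$ is a local ring, every finitely generated projective module is free, so each $V_\chi$ admits an $\cO$-basis.

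Finally I conclude by a change of basis. Concatenating the chosen $\cO$-bases of the nonzero $V_\chi$ yields an $\cO$-basis $\mathcal{E}$ of $V = \cO^m$. The change-of-basis matrix $g$ from the standard basis to $\mathcal{E}$ lies in $\GL_m(\cO)$. In the basis $\mathcal{E}$ every element of $\Xi(A)$ acts diagonally (by $\chi$ on $V_\chi$), so $g^{-1}\Xi(A)g$ is contained in the diagonal maximal torus of $\GL_m$, as desired.

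The only step requiring some care is the weight decomposition over $\cO$ (as opposed to over $F$): the essential input is that $A$ is split as an $\cO$-torus, so that $\cO[A] = \cO[X^*(A)]$ and the comodule structure yields an honest $X^*(A)$-grading on $V$ whose graded pieces are $\cO$-direct summands; then local freeness of each $V_\chi$ is automatic. Everything else is routine linear algebra over the local ring $\cO$.
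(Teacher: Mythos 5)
Your proof is correct and follows essentially the same route as the paper: extend $A$ to a split torus over $\cO$, decompose $\cO^m$ into weight submodules (free, since finitely generated projective over the local ring $\cO$), and conjugate by the change-of-basis matrix in $\GL_m(\cO)$ to put $\Xi(A)$ in the diagonal torus. Your added justification of the $\cO$-level weight decomposition via the comodule grading is just a more explicit version of the step the paper states briefly.
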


\begin{proof}
  Note that the maximal $F$-split torus $A$ naturally extend to $A\subset G$ over $\cO$, cf. \cite[\S3.5]{Tit79}. The representation of $A$ on a free $\cO$-module of rank $m$ via $\Xi$ defines a weight decomposition of $\cO^m$ into free $\cO$-modules. Choose any refinement of the decomposition to write $\cO^m=L_1\oplus \cdots \oplus L_m$, as the direct sum of rank 1 free $\cO$-submodules. Let $v_i$ be an $\cO$-generator of $L_i$ for $1\le i\le m$. Conjugating $\Xi$ by the matrix representing the change of basis from $\{v_1,...,v_m\}$ to the standard basis for $\cO^m$, one can achieve that $\Xi(A)$ lies in the diagonal maximal torus.
\end{proof}

  Let $\gamma\in G(F)$ be a semisimple element and choose a maximal torus $T_\gamma$ of $G$ defined over $F$ such that $\gamma\in T_\gamma(F)$. Denote by $\Phi(G,T_\gamma)$ the set of roots for $T_\gamma$ in $G$.

\begin{lem}\label{l:bounding1-alpha(gamma)}
  Suppose that there exists an embedding of algebraic groups $\Xi:G\hra \GL_m$ over $\cO$.
  There exists a constant $B_5>0$ such that
  for every $\kappa\in \Z_{\ge 0}$, every $\mu\in X_*(A)$ satisfying $\|\mu\|\le \kappa$,
  every semisimple $\gamma\in K\mu(\varpi)K$ and every $\alpha\in \Phi_\gamma$ (for any choice of $T_\gamma$ as above), we have
  $-B_5\kappa\le v(\alpha(\gamma))\le B_5\kappa$. In particular,
  $|1-\alpha(\gamma)|\le q^{B_5 \kappa}$. % if $\alpha(\gamma)\neq 1$.
\end{lem}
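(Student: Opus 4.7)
The plan is to transfer the statement from $G$ to $\GL_m$ via the embedding $\Xi$, so that the valuations of the relevant quantities can be read off from eigenvalues of matrices, and then apply Lemma~\ref{l:control-eigenvalue}. By Lemma~\ref{l:conj-image-in-diag} we may replace $\Xi$ by a $\GL_m(\cO)$-conjugate and thus assume that $\Xi(A)$ lies in the diagonal torus of $\GL_m$. Let $\chi_1^\Xi,\dots,\chi_m^\Xi \in X^*(A)$ denote the weights of $A$ on $\cO^m$ through $\Xi$ (with multiplicity). Since $X^*(A)$ is finitely generated and $\|\cdot\|_\cB$ is a norm on $X_*(A)_\R$, there is a constant $C>0$, depending only on $\Xi$ and the basis $\cB$, such that $|\langle \chi_i^\Xi,\mu\rangle|\le C\|\mu\|_\cB$ for every $1\le i\le m$ and every $\mu\in X_*(A)$.

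Write $\gamma=k_1\mu(\varpi)k_2$ with $k_1,k_2\in K$, so that
\[
\Xi(\gamma)=\Xi(k_1)\cdot s\cdot \Xi(k_2),\qquad s:=\Xi(\mu(\varpi))=\diag\bigl(\varpi^{\langle\chi_1^\Xi,\mu\rangle},\dots,\varpi^{\langle\chi_m^\Xi,\mu\rangle}\bigr).
\]
The eigenvalues of $\Xi(\gamma)$ coincide with those of $s\cdot u$, where $u:=\Xi(k_2k_1)\in\GL_m(\cO)$. In particular $v_{\min}(u)=v_{\min}(u^{-1})=0$, so by Lemma~\ref{l:control-eigenvalue} every eigenvalue $\lambda$ of $\Xi(\gamma)$ satisfies
\[
\min_{i}\langle\chi_i^\Xi,\mu\rangle \;\le\; v(\lambda) \;\le\; \max_{i}\langle\chi_i^\Xi,\mu\rangle,
\]
which combined with the previous paragraph gives $|v(\lambda)|\le C\kappa$ whenever $\|\mu\|_\cB\le\kappa$.

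It remains to relate $v(\alpha(\gamma))$ to eigenvalues of $\Xi(\gamma)$. This is the step that requires the most care, but it is essentially formal once one recognizes that roots are adjoint eigenvalues: for a maximal torus $T_\gamma\subset G$ defined over $F$ with $\gamma\in T_\gamma(F)$ and a root $\alpha\in \Phi_\gamma$, the value $\alpha(\gamma)$ is the eigenvalue of $\Ad(\gamma)$ on the root space $\Lie(G)_\alpha \subset \Lie(G)\otimes_F\ol F$. Via $\Xi$ we have $\Lie(G)\hookrightarrow \Mat_m$ and $\Ad(\gamma)=\Ad(\Xi(\gamma))|_{\Lie(G)}$, whose eigenvalues on $\Mat_m\otimes\ol F$ are exactly the ratios $\lambda_i/\lambda_j$ of eigenvalues of $\Xi(\gamma)$. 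Hence $\alpha(\gamma)=\lambda_i/\lambda_j$ for some pair of eigenvalues of $\Xi(\gamma)$, and the previous paragraph yields
\[
|v(\alpha(\gamma))|=|v(\lambda_i)-v(\lambda_j)|\le 2C\kappa.
\]
Setting $B_5:=2C$ establishes the two-sided bound, and the final inequality follows from the ultrametric estimate $|1-\alpha(\gamma)|\le\max(1,|\alpha(\gamma)|)=\max(1,q^{-v(\alpha(\gamma))})\le q^{B_5\kappa}$.
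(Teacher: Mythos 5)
Your proof is correct, and its first half (reducing to $\Xi(A)$ diagonal via Lemma~\ref{l:conj-image-in-diag}, using the Cartan decomposition $\gamma=k_1\mu(\varpi)k_2$ and Lemma~\ref{l:control-eigenvalue} to bound the valuations of the eigenvalues of $\Xi(\gamma)$ by $C\kappa$) is exactly the paper's argument. Where you genuinely diverge is in relating $v(\alpha(\gamma))$ to those eigenvalues. The paper picks a maximal torus $\T_\gamma\subset \GL_m$ over $\ol F$ containing $\Xi(T_\gamma)$, lifts each root $\alpha\in\Phi_\gamma$ along the surjection $X^*(\T_\gamma)\twoheadrightarrow X^*(T_\gamma)$ to a character $\tilde\alpha$ with bounded exponents $|a_i|\le c_1$, and computes $v(\alpha(\gamma))=\sum_i a_i v(\lambda_i)$, ending with $B_5=mc_1c_2$; this requires choosing $\T_\gamma$ and the lifts, and one must still convince oneself that $c_1$ can be taken uniform in $\gamma$ (the paper fixes the lifts "once and for all" but these choices a priori depend on $T_\gamma$). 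You instead observe that $\alpha(\gamma)$ is an eigenvalue of $\Ad(\gamma)$ on the root space $\fg_\alpha$, that $d\Xi$ embeds $\fg$ equivariantly into $\gl_m$, and that $\Ad(\Xi(\gamma))$ on $\gl_m\otimes\ol F$ has eigenvalues exactly the ratios $\lambda_i/\lambda_j$ (using that $\Xi(\gamma)$ is semisimple, which holds since $\gamma$ is and $\cha F=0$), so $\alpha(\gamma)=\lambda_i/\lambda_j$ and $|v(\alpha(\gamma))|\le 2C\kappa$. This buys a cleaner constant $B_5=2C$, avoids any choice of torus or character lift inside $\GL_m$, makes uniformity in $\gamma$ automatic, and is just as compatible with the global uniformity noted in Remark~\ref{r:Lem2.18-indep}; the paper's route, on the other hand, stays entirely within character/cocharacter bookkeeping with the norms $\|\cdot\|_{\GL_m}$ already set up in that section. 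Your final ultrametric step $|1-\alpha(\gamma)|\le\max(1,|\alpha(\gamma)|)\le q^{B_5\kappa}$ is the same as the paper's concluding observation.
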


\begin{rem}
  Later $\Xi$ will be provided by Proposition \ref{p:global-integral-model}.
\end{rem}

\begin{proof}
  We may assume that $\Xi(A)$ is contained in the diagonal torus of $\GL_m$, denoted by $\T$, thanks to Lemma \ref{l:conj-image-in-diag}. Write $T$ for the maximal torus of $G$ which is the centralizer of $A$ so that $\Xi(T)\subset \T$. We have a surjection $X^*(\T)\twoheadrightarrow X^*(T)$ induced by $\Xi$. For each $\alpha$ in the set of roots $\Phi(G,T)$, we fix a lift $\tilde\alpha\in X^*(\T)$ once and for all. Set $c_1:=\max_{\alpha\in \Phi(G,T)} \|\tilde{\alpha}\|_{\GL_m}$.

    Let $c_2:=\max_{\|\mu\|\le 1} \|\Xi\circ \mu\|_{\GL_m}$ where $\mu\in X_*(A)_{\R}$
  runs over elements with $\|\mu\|\le 1$. Then
  for any $\kappa\in\Z_{\ge 0}$, $\|\mu\|\le \kappa$ implies
  $\|\Xi\circ \mu\|_{\GL_m}\le c_2\kappa$. Hence $\Xi(\mu(\varpi))$ is a diagonal matrix in which
  each entry $x$ satisfies $-c_2\kappa \le v(x)\le c_2\kappa$.

  We can write $\gamma=k_1 \mu(\varpi)k_2$ for some $k_1,k_2\in G(\cO)$. Then
  $\Xi(\gamma)=k'_1 \Xi(\mu(\varpi)) k'_2$ for $k'_1,k'_2\in \GL_m(\cO)$, and
  $\Xi(\gamma)$ is conjugate to $\Xi(\mu(\varpi))k'_2(k'_1)^{-1}$. It follows from
  Lemma \ref{l:control-eigenvalue} that
  for every eigenvalue $\lambda$ of $\Xi(\gamma)$, we have
  $-c_2\kappa\le v(\lambda)\le c_2\kappa$.

  Choose any $T_\gamma$ as above. There exists an isomorphism $T\simeq T_\gamma$ over $\ol F$ induced by a conjugation action $t\mapsto g t g^{-1}$ given by some $g\in G(\ol F)$. The isomorphism is well defined only up to the Weyl group action but induces a bijection from $\Phi(G,T)$ onto $\Phi(G,T_\gamma)$. Put $\T_\gamma:=\Xi(g)\T \Xi(g)^{-1}$. The conjugation by $\Xi(g)$ induces an isomorphism $\T\simeq \T_\gamma$ over $\ol F$ and a bijection from $\Phi(\GL_m,\T)$ onto $\Phi(\GL_m,\T_\gamma)$. Let $\alpha_\gamma\in \Phi(G,T_\gamma)$ (resp. $\tilde \alpha_\gamma\in \Phi(\GL_m,\T_\gamma)$) denote the image of $\alpha$ (resp. $\tilde \alpha$) under the bijections. By construction, the composition $T_\gamma\simeq T\stackrel{\Xi}{\ra} \T\simeq \T_\gamma$ coincides with the restriction of $\Xi$ to $T_\gamma$. Hence the induced map $X^*(\T_\gamma)\ra X^*(T_\gamma)$ maps $\tilde\alpha_\gamma$ to $\alpha_\gamma$.

  Using the isomorphisms $\T_\gamma(\ol{F})\simeq \T(\ol{F})\simeq (\ol{F}^\times)^m$,  let $(\lambda_1,...,\lambda_m)\in (\ol{F}^\times)^m$
  be the image of $\Xi(\gamma)$ under the composition isomorphism. We may write $\tilde{\alpha}_\gamma$ as
  a character $(\ol{F}^\times)^m\ra \ol{F}^\times$ given by $(t_1,...,t_m)\mapsto t_1^{a_1}\cdots t_m^{a_m}$
  with $a_1,...,a_m\in \Z$ such that $-c_1\le a_i\le c_1$ for every $1\le i\le m$.
  We have $$\alpha_\gamma(\gamma)=\tilde{\alpha}_\gamma(\Xi(\gamma))=\lambda_1^{a_1}\cdots \lambda_m^{a_m},$$
  so $v(\alpha_\gamma(\gamma))=\sum_{i=1}^m a_i v(\lambda_i)$. Hence $-m c_1c_2\kappa\le v(\alpha_\gamma(\gamma))\le m c_1c_2\kappa$,
proving the first assertion of the lemma. From this the last assertion is obvious.

\end{proof}

\begin{rem}\label{r:Lem2.18-indep}
  Suppose that $F$ runs over the completions of a number field $\bF$ at non-archimedean places $v$, that $G$ over $F$ comes from a fixed reductive group $\bG$ over $\bF$, and that $\Xi$ comes from an embedding $\bG\hra GL_m$ over the integer ring of $\bF$ (at least for every $v$ where $\bG$ is unramified). Then $B_5$ of the lemma can be chosen to be independent of $v$ (and dependent only on the data over $\bF$). This is easy to see from the proof.
\end{rem}

\subsection{A lemma on semisimple elements}\label{sub:easy}

  As before let $F$ be a finite extension of $\Q_p$ with multiplicative norm $|\cdot|:F^\times\ra \R^\times_{>0}$ normalized such that a uniformizer is sent to the inverse of the residue field cardinality, and let $G$ be an unramified group over $F$ with a smooth reductive model over $\cO$. The notation for $T_\gamma$ and $\Phi_\gamma$ for a semisimple $\gamma\in G(F)$ is as in the previous subsection.

\begin{lem}\label{l:alpha(gamma)-is-integral}
  Suppose that a semisimple $\gamma\in G(F)$ is conjugate to an element
  of $G(\cO)$ and that $\alpha(\gamma)\neq 1$ and
  $|1-\alpha(\gamma)|\neq 1$ for $\alpha\in \Phi_\gamma$ (for any choice of $T_\gamma$).
  Then $|1-\alpha(\gamma)|\le q^{-1}$.
\end{lem}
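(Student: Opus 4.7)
The plan is to argue in two steps: reduce to the case $\gamma\in G(\cO)$ and then use integrality of $\mathrm{Ad}(\gamma)$. First, since the hypothesis is invariant under simultaneous $G(F)$-conjugation of $\gamma$ and $T_\gamma$ (which leaves each $\alpha(\gamma)$ unchanged), I would assume $\gamma\in G(\cO)$ directly. The adjoint representation $\mathrm{Ad}:G\to \GL(\mathfrak g)$ is defined over $\cO$, so $\mathrm{Ad}(\gamma)$ preserves the lattice $\mathfrak g(\cO)$ and its characteristic polynomial therefore has coefficients in $\cO$ with unit constant term. All eigenvalues of $\mathrm{Ad}(\gamma)$ consequently lie in $\bar\cO^\times$; since these eigenvalues (on $\mathfrak g\otimes\bar F$) are exactly the $\alpha(\gamma)$ for $\alpha\in\Phi_\gamma$, together with zeros on the Cartan part, this yields $v(\alpha(\gamma))=0$.

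Consequently $1-\alpha(\gamma)\in\bar\cO$ and $|1-\alpha(\gamma)|\le 1$. The hypothesis $|1-\alpha(\gamma)|\ne 1$ says that $1-\alpha(\gamma)$ lies in the maximal ideal of the integer ring of the splitting field $E\subset\bar F$ of $T_\gamma$ over $F$; writing $v_E$ for the $\Z$-valued normalized valuation on $E$, this reads $v_E(1-\alpha(\gamma))\ge 1$, so $|1-\alpha(\gamma)|\le q_E^{-1}\le q^{-1}$ (the last inequality because the residue field of $E$ contains that of $F$, so $q_E\ge q$). The main --- and mild --- obstacle is simply to keep track of the absolute-value normalization across the extension $E/F$; the two essential ingredients, namely the integrality of $\alpha(\gamma)$ obtained via $\mathrm{Ad}$ and the $\Z$-valuedness of the normalized valuation on any finite extension of $F$, are both standard and combine at once to give the claim.
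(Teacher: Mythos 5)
Your reduction to $\gamma\in G(\cO)$ and the integrality argument via $\mathrm{Ad}$ are fine in substance (one slip: $\mathrm{Ad}(\gamma)$ acts on $\Lie(T_\gamma)$ with eigenvalue $1$, not $0$ -- if the Cartan eigenvalues were zero the constant term of the characteristic polynomial could not be a unit; with eigenvalue $1$ your conclusion that every $\alpha(\gamma)$ is a unit in $\ol{\cO}$ stands). The genuine gap is in your last step, and it sits exactly where the real content of the lemma lies. The absolute value in the statement is the norm on $F$ normalized by $|\varpi|=q^{-1}$, extended canonically to $\ol{F}$; this is the normalization in force in the surrounding text (compare Lemma~\ref{l:bounding1-alpha(gamma)} and the Weyl discriminant $D^G$). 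For that absolute value one has $|x|=q^{-v(x)}$ with $v$ extending $v_F$, and on a finite extension $E$ the value group of $v$ is $\tfrac{1}{e_{E/F}}\Z$. Hence "$1-\alpha(\gamma)\in\cO_E$ and not a unit'' only yields $v(1-\alpha(\gamma))\ge 1/e_{E/F}$, i.e.\ $|1-\alpha(\gamma)|\le q^{-1/e_{E/F}}$, which is strictly weaker than $q^{-1}$ whenever $E/F$ is ramified -- and the splitting field of an elliptic maximal torus $T_\gamma$ can certainly be ramified, already for groups of rank one. Your inequality "$v_E(1-\alpha(\gamma))\ge 1$, so $|1-\alpha(\gamma)|\le q_E^{-1}$'' silently switches to the $E$-normalized absolute value $|x|_E=q_E^{-v_E(x)}=|x|^{[E:F]}$, which is not the $|\cdot|$ of the statement; the normalization issue you call "mild'' is in fact the whole difficulty.

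What is missing is precisely the input the paper's proof supplies: one must show that $\alpha(\gamma)$ lies in an \emph{unramified} extension of $F$, so that $v(1-\alpha(\gamma))$ is an integer and "non-unit'' forces $v\ge 1$. The paper uses the hypothesis that $\gamma$ is conjugate into $G(\cO)$ far more strongly than through $\mathrm{Ad}$-integrality: working with the reductive model over $\cO$, it places $\gamma$ in a maximal torus $T$ defined over $\cO_{F^{\ur}}$ (maximal tori exist after \'etale localization), notes that such a $T$ is already split over $\cO_{F^{\ur}}$ since the residue field is separably closed, and concludes that $\alpha(\gamma)\in\cO_{F^{\ur}}^\times$, hence $1-\alpha(\gamma)\in\cO_{F^{\ur}}$; a non-unit of $\cO_{F^{\ur}}$ is divisible by $\varpi$, and because $F^{\ur}/F$ is unramified this gives $|1-\alpha(\gamma)|\le|\varpi|=q^{-1}$. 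Unit-integrality of the eigenvalues of $\mathrm{Ad}(\gamma)$ is a consequence of this torus argument but is strictly weaker: by itself it can only upgrade the hypothesis to "$|1-\alpha(\gamma)|<1$'', never to "$\le q^{-1}$''.
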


\begin{proof}

  By the assumption we may assume $\gamma\in G(\cO)$.
  Choose a maximal torus $T$ in the centralizer of $\gamma$ in $G$ over $\cO_{F^{\ur}}$, the ring of integers in
  $F^{\ur}$, so that $\gamma\in T(\cO)$.
  Such a $T$ exists since a reductive group scheme admits a maximal torus under \'etale localization (\cite[Cor 3.2.7]{Conrad-reductive}).
  Since $T$ should split over a finite \'etale cover of $\Spec \cO_{F^{\ur}}$, it should be that $T$ splits over $\cO_{F^{\ur}}$ already. Then $\alpha$ defines a character $T(\cO_{F^{\ur}})\ra \cO_{F^{\ur}}^{\times}$ and
  $1-\alpha(\gamma)\in \cO_{F^{\ur}}$. As $1-\alpha(\gamma)$ is not a $\varpi$-adic unit,
  $\varpi$ divides $1-\alpha(\gamma)$. The lemma follows.
\end{proof}

%\begin{cor}\label{c:alpha(gamma)-is-integral}
%  Let $\alpha\in \Phi$ and $\gamma$ be as in Lemma \ref{l:alpha(gamma)-is-integral}.
%  If $\alpha(\gamma)\neq 1$ and
%  $|1-\alpha(\gamma)|\neq 1$ then $|1-\alpha(\gamma)|\le q^{-1/w'_G}$.
%\end{cor}
%
%\begin{proof}
%  Under the assumption it is clear that $|1-\alpha(\gamma)|\le |\varpi'|$
%  where $\varpi'$ is a uniformizer of $F'$ as in the preceding lemma.
%  Note that $|\cdot|$ assigns $q^{-1}$ to a uniformizer of $F$.
%  The ramification index $e(F'/F)$ is at most $[F':F]\le w'_G$, so
%  $|\varpi'|=(q^{-1})^{1/e(F'/F)}\le q^{-1/w'_G}$.
%\end{proof}
%

\section{Plancherel measure on the unramified spectrum}\label{s:Plancherel}

\subsection{Basic setup and notation}\label{sub:notation-Plancherel}
  Let $F$ be a finite extension of $\Q_p$.
  Suppose that $G$ is unramified over $F$.
  Fix a hyperspecial subgroup $K$ of $G$.
  Recall the notation from the start of \S\ref{sub:Satake-trans}.
  In particular $\Omega$ (resp. $\Omega_F$) denotes the Weyl group for $(G_{\ol{F}},T_{\ol{F}})$
  (resp. $(G,A)$). There is a natural $\Gal(\ol{F}/F)$-action on $\Omega$,
  under which $\Omega^{\Gal(\ol{F}/F)}=\Omega_F$. (See \cite[\S6.1]{Bor79}.)
  Since $G$ is unramified, $\Gal(\ol{F}/F)$ factors through a finite unramified Galois group.
  Thus there is a well-defined action of $\Fr$ on $\Omega$, and
  $\Omega^{\Fr}=\Omega_F$.

The unitary dual $G(F)^{\wedge}$ of $G(F)$, or simply $G^\wedge$ if there is no danger of ambiguity, is equipped with Fell topology.
  (This notation should not be confused with the dual group $\hat{G}$).
  Let $G^{\wedge,\ur}$ denote the unramified spectrum in
  $G^{\wedge}$, and
 $G^{\wedge,\ur,\temp}$ its tempered sub-spectrum.
 The Plancherel measure $\pl$ on $G^{\wedge}$ is supported on the tempered spectrum $G^{\wedge,\temp}$.
 The restriction of
 $\pl$ to $G^{\wedge,\ur}$ will be written as $\plur$.
 The latter is supported on $G^{\wedge,\ur,\temp}$.
  Harish-Chandra's Plancherel formula (cf. \cite{Wal03}) tells us that
  $\pl(\hat{\phi})=\phi(1)$ for all $\phi\in \cH(G(F))$. In particular,
  $\plur(\hat{\phi})=\phi(1)$ for all $\phi\in \cH^{\ur}(G(F))$.

  \subsection{The unramified tempered spectrum}\label{s:unramified-spectrum}

 % EXPLAIN GENERAL UNRAMIFIED CASE.
 An unramified $L$-parameter $W^{\ur}_F\ra {}^L G^{\ur}$
  is defined to be an $L$-morphism ${}^L H^{\ur}\ra {}^L G^{\ur}$ (\S\ref{sub:L-mor-unr-Hecke})
  with $H=\{1\}$.
  Two such parameters $\varphi_1$ and $\varphi_2$ are considered equivalent
  if $\varphi_1=g\varphi_2 g^{-1}$ for some $g\in \hat{G}$.
  Consider the following sets:
%  Some of them require explanation.

\benu
\item irreducible unramified representations $\pi$ of $G(F)$ up to isomorphism.
\item group homomorphisms $\chi:T(F)/T(F)\cap K \ra \C^\times$
up to $\Omega_{F}$-action.
\item unramified $L$-parameters $\varphi:W^{\ur}_F\ra {}^L G^{\ur}$
up to equivalence.
\item elements of $(\hat{G}\rtimes \Fr)_{\ssconj}$;
  this set was defined in \S\ref{sub:Satake-trans}.
\item $\Omega^{\Fr}$-orbits in $\hat{T}/(\Fr-\id)\hat{T}$.
\item $\Omega_F$-orbits in $\hat{A}$.
\item $\C$-algebra morphisms $\theta:\cH^{\ur}(G)\ra \C$.

\eenu
  Let us describe canonical maps among them in some directions.
\bit
\item[(i)$\rightarrow$(vii)] Choose any $0\neq v\in \pi^K$.
Define $\theta(\phi)$ by $\theta(\phi)v=\int_{G(F)} \phi(g)\pi(g)vdg$.
\item[(ii)$\rightarrow$(i)]
$\pi$ is the unique unramified subquotient of $\nind^{G(F)}_{B(F)} \chi$.
\item[(ii)$\leftrightarrow$(vi)] Induced by
$\Hom(T(F)/T(F)\cap K , \C^\times)\simeq
\Hom(A(F)/A(F)\cap K , \C^\times)$
\beq\label{e:ii-vi}\simeq \Hom(X_*(A),\C^\times)\simeq\Hom(X^*(\hat{A}),\C^\times)\simeq
 X_*(\hat{A})\otimes_\Z \C^\times\simeq \hat{A}\eeq
 where the second isomorphism is induced by $X_*(A)\ra A(F)$ sending $\mu$ to $\mu(\varpi)$.
\item[(iii)$\rightarrow$(iv)] Take $\varphi(\Fr)$.
\item[(v)$\rightarrow$(iv)] Induced by
the inclusion $t\mapsto t\rtimes \Fr$ from $\hat{T}$ to $ \hat{G}\rtimes\Fr$.

\item[(v)$\rightarrow$(vi)] Induced by the surjection $\hat{T}\twoheadrightarrow \hat{A}$,
which is the dual of $A\hra T$. (Recall $\Omega^{\Fr}=\Omega_F$.)
\item[(vii)$\rightarrow$(vi)]
  Via $\cS:\cH^{\ur}(G)\simeq \C[X^*(\hat{A})]^{\Omega_F}$,
  $\theta$ determines an element of (cf. \eqref{e:ii-vi})
  $$\Omega_F\bs\Hom(X^*(\hat{A}),\C^\times)\simeq \Omega_F\bs \hat{A}.$$
\eit

\begin{lem}\label{l:unr-spec}
  Under the above maps, the sets corresponding to (i)-(vii) are in bijection with each other.
\end{lem}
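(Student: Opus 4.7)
The strategy is to verify each of the indicated arrows is well-defined and bijective, so that the seven sets are forced into a consistent system of bijections. I organize the verification around the central set (vi), since most of the others map directly to it.

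First I would handle the ``torus-level'' identifications. The bijection (ii)$\leftrightarrow$(vi) is essentially the definition: the chain \eqref{e:ii-vi} is a sequence of $\Omega_F$-equivariant isomorphisms of abelian groups, and passing to $\Omega_F$-orbits gives a bijection. For (v)$\leftrightarrow$(vi), I would note that the map $\hat T\twoheadrightarrow \hat A$ dual to $A\hookrightarrow T$ has kernel equal to the annihilator of $X_*(A)\subset X_*(T)$. Since $G$ is unramified the inertia acts trivially on $X_*(T)$, so $X_*(A)=X_*(T)^{\Gamma}=X_*(T)^{\Fr}$; dualizing yields $\ker(\hat T\to\hat A)=(\Fr-\id)\hat T$ (using that $X_*(T)$ is torsion-free). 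This descends to an $\Omega^{\Fr}$-equivariant isomorphism $\hat T/(\Fr-\id)\hat T\isom \hat A$ compatible with the action of $\Omega^{\Fr}=\Omega_F$.

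Next I would do (iv)$\leftrightarrow$(v). Every semisimple element of $\hat G\rtimes \Fr$ is $\hat G$-conjugate to an element of $\hat T\rtimes \Fr$ (apply the standard theorem on maximal tori in the connected centralizer, as in \cite[\S1]{Kot84a}). If $t\rtimes\Fr$ and $t'\rtimes\Fr$ are $\hat G$-conjugate, say $g(t\rtimes\Fr)g^{-1}=t'\rtimes\Fr$, a routine argument using the Bruhat decomposition reduces to the case $g\in N_{\hat G}(\hat T)$, which shows that $t'\equiv w(t)\bmod (\Fr-\id)\hat T$ for some $w\in\Omega$ satisfying $\Fr\circ w=w\circ\Fr$ modulo the indeterminacy already accounted for; i.e.\ $w\in\Omega^{\Fr}$. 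This is exactly the equivalence relation defining (v). The map (iii)$\to$(iv) is a bijection because $W_F^{\ur}=\Fr^{\Z}$ is generated by $\Fr$, so an $L$-parameter is determined by $\varphi(\Fr)$; continuity together with the definition of $L$-morphism forces $\varphi(\Fr)$ to be semisimple (or one imposes this as part of the definition, as is standard), and equivalence of parameters is by definition $\hat G$-conjugacy.

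Finally I would deal with the Hecke-algebra side. The Satake isomorphism \eqref{e:Satake} identifies $\cH^{\ur}(G)$ with $\C[X^*(\hat A)]^{\Omega_F}$, and the points of the spectrum of this coordinate ring are precisely the $\Omega_F$-orbits on $\hat A=\Hom(X^*(\hat A),\C^\times)$; this gives (vii)$\leftrightarrow$(vi). For (i)$\to$(vii): if $\pi$ is irreducible unramified then $\pi^K$ is one-dimensional (by the standard theorem of Borel--Casselman on the structure of unramified representations), so $\cH^{\ur}(G)$ acts on $\pi^K$ by a character $\theta_\pi$, and the map is well-defined. The map (ii)$\to$(i), sending $\chi$ to the unique unramified irreducible subquotient of $\nind_{B(F)}^{G(F)}\chi$, is surjective because every irreducible unramified $\pi$ embeds in such an induced representation (Jacquet's theorem), and descends to $\Omega_F$-orbits because intertwining operators provide isomorphisms between principal series whose inducing characters are Weyl-conjugate. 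Tracing through the definitions, the composition (ii)$\to$(i)$\to$(vii)$\to$(vi) agrees with the direct map (ii)$\to$(vi), so all maps in the diagram are bijections.

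The main obstacle is verifying that all the different arrows land in the correct equivalence classes and that the diagram commutes. The trickiest single step is (iv)$\leftrightarrow$(v): controlling the ambiguity of $g\in \hat G$ conjugating one element of $\hat T\rtimes\Fr$ to another, and in particular justifying that one may reduce to $g\in N_{\hat G}(\hat T)$, requires the structure theory of twisted conjugacy classes in disconnected reductive groups. Once (iv)$\leftrightarrow$(v) is in place, commutativity of the remaining diagram is a direct unwinding of definitions, using that under Satake the Hecke operator $\tau_\lambda^A$ corresponds to evaluation at a Weyl orbit in $\hat A$.
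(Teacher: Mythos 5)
Your outline is correct, and it is essentially the paper's own route: the paper gives no argument for this lemma beyond citing \S6, \S7 and \S10.4 of \cite{Bor79}, and the steps you verify (the Satake spectrum description of characters of $\cH^{\ur}(G)$, one-dimensionality of $\pi^K$ for irreducible unramified $\pi$, the identification $\hat{T}/(\Fr-\id)\hat{T}\simeq\hat{A}$, and the parametrization of semisimple $\hat{G}$-conjugacy classes in $\hat{G}\rtimes\Fr$ by $(\hat{T}/(\Fr-\id)\hat{T})/\Omega^{\Fr}$) are exactly the content of those sections. The two points you flag — semisimplicity of $\varphi(\Fr)$ being part of the convention for (iii), and the reduction to $N_{\hat G}(\hat T)$ in (iv)$\leftrightarrow$(v) — are handled in the cited reference just as you indicate, so there is no gap.
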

\begin{proof}
  See \S6, \S7 and \S10.4 of \cite{Bor79}.
\end{proof}

  Let $F'$ be the finite unramified extension of $F$ such that
  $\Gal(\ol{F}/F)$ acts on $\hat{G}$ through the faithful action of $\Gal(F'/F)$.
  Write ${}^L G_{F'/F}:=\hat{G}\rtimes\Gal(F'/F)$.
    Let $\hat{K}$ be a maximal compact subgroup of $\hat{G}$ which is $\Fr$-invariant.
  Denote by $\hat{T}_c$ (resp. $\hat{A}_c$) the maximal compact subtorus
  of $\hat{T}$ (resp. $\hat{A}$).

\begin{lem}\label{l:unr-temp-spec}
  The above bijections restrict to the bijections among the sets consisting
  of the following objects.
\benu
\item[(i)$_t$] irreducible unramified tempered representations $\pi$ of $G(F)$
up to isomorphism.
\item[(ii)$_t$] unitary group homomorphisms $\chi:T(F)/T(F)\cap K \ra U(1)$ up to $\Omega_{F}$-action.
\item[(iii)$_t$] unramified $L$-parameters $\varphi:W^{\ur}_F\ra {}^L G^{\ur}$ with bounded image
up to equivalence.
\item[(iv)$_t$] $\hat{G}$-conjugacy classes in $\hat{K}\rtimes \Fr$ (viewed in ${}^L G_{F'/F}$).
\item[(iv)$'_t$] $\hat{K}$-conjugacy classes in $\hat{K}\rtimes \Fr$ (viewed in $\hat{K}\rtimes\Gal(F'/F)$).
\item[(v)$_t$] $\Omega^{\Fr}$-orbits in $\hat{T}_c/(\Fr-\id)\hat{T}_c$.
\item[(vi)$_t$] $\Omega_F$-orbits in $\hat{A}_c$.
\eenu
(The boundedness in (iii)$_t$ means that the projection of $\im \varphi$ into
${}^L G_{F'/F}$ is contained in a maximal compact subgroup of ${}^L G_{F'/F}$.)
\end{lem}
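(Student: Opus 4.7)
My plan is to trace the tempered/bounded/unitary/compact conditions through each of the bijections already established in Lemma~\ref{l:unr-spec}, checking that they match up. The only genuinely new content is the additional set (iv)$'_t$, whose equivalence with (iv)$_t$ requires a compactness-of-conjugacy argument that I isolate at the end.

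First I would handle (ii)$_t$$\leftrightarrow$(v)$_t$$\leftrightarrow$(vi)$_t$, which is a formal consequence of the chain of isomorphisms \eqref{e:ii-vi} together with the dual of $A\hookrightarrow T$. A character $\chi:T(F)/T(F)\cap K\to\C^\times$ factors through $U(1)$ if and only if, after passing through $\Hom(X^*(\hat{A}),\C^\times)\simeq X_*(\hat{A})\otimes\C^\times\simeq\hat{A}$, it lands in the maximal compact $\hat{A}_c$; similarly the preimage in $\hat{T}/(\Fr-\id)\hat{T}$ lies in $\hat{T}_c/(\Fr-\id)\hat{T}_c$. Next, for (i)$_t$$\leftrightarrow$(ii)$_t$ I invoke Casselman's criterion: the unramified subquotient of $\nind_B^G\chi$ is tempered if and only if $\chi$ is a unitary character of $T(F)/T(F)\cap K$. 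For (iii)$_t$$\leftrightarrow$(iv)$_t$ and (iv)$_t$$\leftrightarrow$(v)$_t$, an unramified parameter $\varphi$ is determined by $\varphi(\Fr)\in\hat{G}\rtimes\Fr$, and boundedness of the image in ${}^LG_{F'/F}$ is equivalent to $\varphi(\Fr)$ being $\hat{G}$-conjugate to an element of $\hat{K}\rtimes\Fr$; on the torus side, conjugating into $\hat{T}\rtimes\Fr$ and projecting, this is exactly the condition that the representative in $\hat{T}/(\Fr-\id)\hat{T}$ lies in $\hat{T}_c/(\Fr-\id)\hat{T}_c$.

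To add (iv)$'_t$, the task is to show that for $x,y\in \hat{K}\rtimes\Fr$, the existence of $g\in\hat{G}$ with $gxg^{-1}=y$ is equivalent to the existence of such $g$ in $\hat{K}$. The nontrivial direction proceeds as follows. Both $x$ and $y$ are semisimple in ${}^LG_{F'/F}$ (any element of a compact group is semisimple after conjugation to the unitary form). Suppose $gxg^{-1}=y$ with $g\in\hat{G}$. Let $Z_x,Z_y$ be the centralizers of $x$ and $y$ in $\hat{G}$; these are reductive, and both contain a maximal compact subgroup of $\hat{G}^{\langle x\rangle}$ and $\hat{G}^{\langle y\rangle}$ respectively, namely $\hat{K}\cap Z_x$ and $\hat{K}\cap Z_y$ (using that $\hat{K}$ is $\Fr$-stable and $x,y\in\hat{K}\rtimes\Fr$). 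Any two maximal compact subgroups of the reductive group $Z_y$ are conjugate in $Z_y$, so after replacing $g$ by $zg$ for some $z\in Z_y$ we may arrange that $g(\hat{K}\cap Z_x)g^{-1}=\hat{K}\cap Z_y$. But then $g$ carries a maximal compact of $\hat{G}$ to a maximal compact of $\hat{G}$ (after suitable enlargement, using that $\hat{K}\cap Z_x$ and $\hat{K}\cap Z_y$ extend to compatible maximal compacts of $\hat{G}$), and conjugacy of maximal compacts in the complex reductive $\hat{G}$ shows $g$ can be adjusted within the coset $Z_y\cdot g$ to lie in $\hat{K}$.

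The main obstacle is the last descent-of-conjugacy step: the careful verification that the adjustment of $g$ inside $Z_y g$ can indeed be made to land in $\hat{K}$ without disturbing the identity $gxg^{-1}=y$. The rest is bookkeeping with the identifications already built in Lemma~\ref{l:unr-spec}.
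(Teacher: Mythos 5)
There is a genuine gap, and it sits exactly at the one point of the lemma that needs a new idea: the passage between (iv)$_t$ and (iv)$'_t$. Your direct ``fusion'' argument — conjugating centralizers and then invoking conjugacy of maximal compact subgroups of $\hat{G}$ — does not close. Even after arranging $g(\hat{K}\cap Z_x)g^{-1}=\hat{K}\cap Z_y$, the element $h\in\hat{G}$ produced by conjugacy of maximal compacts (to move $g\hat{K}g^{-1}$ back to $\hat{K}$) has no reason to lie in $Z_y$, so replacing $g$ by $hg$ destroys the identity $gxg^{-1}=y$; moreover $N_{\hat{G}}(\hat{K})$ need not equal $\hat{K}$ once $Z(\hat{G})$ has positive dimension, so even ``$g$ normalizes $\hat{K}$'' would not give $g\in\hat{K}$. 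You flag this yourself as ``the main obstacle,'' but it is not a routine verification left to the reader — it is the substance of the step. A second, smaller gap: in your treatment of (iii)$_t$/(iv)$_t\leftrightarrow$(v)$_t$ you simply assert that a bounded class, once twisted-conjugated into $\hat{T}\rtimes\Fr$, has its representative in $\hat{T}_c$ modulo $(\Fr-\id)\hat{T}$. That surjectivity is not formal: the paper proves it by a compactness argument, using the canonical splitting $\hat{T}=\hat{T}_c\times\hat{T}_{nc}$ and the fact that $\hat{T}_{nc}/(\Fr-\id)\hat{T}_{nc}$ is a quotient of a real vector space, so no subgroup containing a nontrivial element of it can sit inside a compact set.

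The paper's route avoids your centralizer gymnastics entirely and you may want to adopt it: the map (iv)$'_t\ra$(iv)$_t$ is trivially surjective, and (v)$_t\ra$(iv)$'_t$ is surjective by the standard fact that every element of $\hat{K}\rtimes\Fr$ is $\hat{K}$-conjugate to an element of $\hat{T}_c\rtimes\Fr$ (the twisted maximal torus theorem, as used in the twisted Weyl integration formula for $\hat{K}\rtimes\Fr$). Once (iv)$_t\leftrightarrow$(v)$_t$ is established (this is where the compactness argument above is needed), the composite (v)$_t\ra$(iv)$'_t\ra$(iv)$_t$ is a bijection with both factors surjective, which forces (iv)$'_t\ra$(iv)$_t$ to be injective as well. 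In other words, injectivity of the fusion map is obtained for free from the torus parametrization rather than proved head-on; your remaining bookkeeping through Lemma~\ref{l:unr-spec} (and the Casselman-type criterion for (i)$_t\leftrightarrow$(ii)$_t$, which the paper simply calls standard) is fine.
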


\begin{proof}
  (i)$_t$$\leftrightarrow$(ii)$_t$ is standard and (iii)$_t$$\leftrightarrow$(iv)$_t$ is obvious.
  Also straightforward is (ii)$_t$$\leftrightarrow$(vi)$_t$ in view of \eqref{e:ii-vi}.

  Let us show that (v)$_t$$\leftrightarrow$(vi)$_t$. Choose a topological isomorphism of complex tori
  $\hat{T}\simeq (\C^\times)^{d}$ with $d=\dim T$.
  Using $\C^\times \simeq U(1)\times \R^\times_{>0}$, we can decompose $\hat{T}=\hat{T}_c\times \hat{T}_{nc}$
  such that $\hat{T}_{nc}$ is carried over to $(\R^\times_{>0})^d$ under the isomorphism.
  The decomposition of $\hat{T}$ is canonical in that it is preserved under
  any automorphism of $\hat{T}$. By the same reasoning, there is a canonical decomposition
  $\hat{A}=\hat{A}_c\times \hat{A}_{nc}$ with $\hat{A}_{nc}\simeq (\R^\times_{>0})^{\dim A}$.
  The canonical surjection $\hat{T}\ra \hat{A}$ carries $\hat{T}_c$ onto $\hat{A}_c$
  and $\hat{T}_{nc}$ onto $\hat{A}_{nc}$. (This reduces to the assertion in the case of $\C^\times$,
  namely that any maps $U(1)\ra \R^\times_{>0}$ and $\R^\times_{>0}\ra U(1)$ induced by
  an algebraic map $\C^\times\ra \C^\times$ of $\C$-tori are trivial. This is easy to check.)
  Therefore the isomorphism $\hat{T}/(\Fr-\id)\hat{T}\ra \hat{A}$ of Lemma \ref{l:unr-temp-spec}
  induces an isomorphism
  $\hat{T}_c/(\Fr-\id)\hat{T}_c\ra \hat{A}_c$ (as well as
  $\hat{T}_{nc}/(\Fr-\id)\hat{T}_{nc}\ra \hat{A}_{nc}$).

  Next we show that (iv)$_t$$\leftrightarrow$(v)$_t$.
  It is clear that $t\mapsto t\rtimes \Fr$ maps (v)$_t$ into (iv)$_t$. Since (v)$_t$ and (iv)$_t$ are
  the subsets of (v) and (iv), which are in bijective correspondence,
  we deduce that (v)$_t$$\ra$(iv)$_t$ is injective. To show surjectivity, pick any
  $k\in \hat{K}$. There exists $t\in \hat{T}$ such that the image of $t$ in (iv) corresponds
  under (iv)$\leftrightarrow$(v) to the $\hat{G}$-conjugacy class of $\hat{k}\rtimes \Fr$.
  It is enough to show that we can choose $t\in \hat{T}_c$.
  Consider the subgroup $\hat{T}_c(t)$ of $$\hat{T}/(\Fr-\id)\hat{T}
  = \hat{T}_c/(\Fr-\id)\hat{T}_c~ \times ~\hat{T}_{nc}/(\Fr-\id)\hat{T}_{nc}$$ generated by $\hat{T}_c/(\Fr-\id)\hat{T}_c$
  and the image of $t$.
  The isomorphism (iv)$\leftrightarrow$(v) maps $\hat{T}_c(t)$ into (v)$_t$ by
  the assumption on $t$. Since (v)$_t$ form a compact set, the group
  $\hat{T}_c(t)$ must be contained in a compact subset of $\hat{T}/(\Fr-\id)\hat{T}$.
  This forces the image of $t$ in $\hat{T}_{nc}/(\Fr-\id)\hat{T}_{nc}$ to be trivial.
  (Indeed, the latter quotient is isomorphic as a topological group
  to a quotient of $\R^{\dim T}$ modulo an $\R$-subspace via the exponential map. So
  any subgroup generated by a nontrivial element is not contained in a compact set.)
  Therefore $t$ can be chosen in $\hat{T}_c$.

    It remains to verify that (iv)$_t$, (iv)$'_t$ and (v)$_t$ are in bijection.
    Clearly (iv)$'_t$$\ra$(iv)$_t$ is onto. As we have just seen that
    (iv)$_t$$\leftrightarrow$(v)$_t$, it suffices to observe that (v)$_t$$\ra$(iv)$'_t$ is onto,
   which is a standard fact (for instance in the context of
    the (twisted) Weyl integration formula for $\hat{K}\rtimes \Fr$).

%    This amounts to proving that the continuous map
%$$f:\hat{K}\times (\hat{T}_c\rtimes \sigma) \ra \hat{K}\rtimes \sigma,
%\qquad (k,t\sigma)\mapsto kt\sigma k^{-1}$$
%  is onto. The image of $f$ is closed as the source is compact. On the other hand,

\end{proof}

\subsection{Plancherel measure on the unramified spectrum}\label{sub:plan-unramified}

 Lemma \ref{l:unr-temp-spec} provides a bijection $G^{\wedge,\ur,\temp}\simeq
 \Omega_F\bs \hat{A}_c$, which is in fact a topological isomorphism.
 The Plancherel measure $\plur$ on $G^{\wedge,\ur}$ is supported on $G^{\wedge,\ur,\temp}$.
 We would like to describe its pullback measure on $\hat{A}_c$,
 to be denoted $\plurtemp_{0}$. Note that $\hat{A}_c$ is topologically
 isomorphic to $\hat{T}_c/(\Fr-\id)\hat{T}_c$. (This is induced by
 the natural surjection $\hat{T}_c \twoheadrightarrow\hat{A}_c$.)
  Fix a measure $d\ol{t}$ on the latter which is a push forward from a Haar measure on $\hat{T}_c$.

\begin{prop}\label{p:unr-plan-meas}

  The measure $\plurtemp_{0}$ pulled back to $\hat{T}_c/(\Fr-\id)\hat{T}_c$ is
  $$\plurtemp_{0}(\ol{t})= C\cdot
  \frac{\det(1-\ad(t\rtimes \Fr)|\Lie(\hat{G})/\Lie(\hat{T}^\Fr))}
  {\det(1-q^{-1}\ad(t\rtimes \Fr)|\Lie(\hat{G})/\Lie(\hat{T}^\Fr))}
   d\ol{t}
  $$ for some constant $C\in \C^\times$,
  depending on the normalization of Haar measures.
  Here $t\in \hat{T}_c$ is any lift of $\ol{t}$. (The
  right hand side is independent
  of the choice of $t$.)
\end{prop}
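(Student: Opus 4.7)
The strategy is to combine Harish-Chandra's Plancherel formula for the unramified tempered spectrum with the Macdonald--Casselman explicit formula for the $c$-function, and then translate the resulting product over $F$-rational roots of $(G,A)$ into the claimed determinant on the dual side by decomposing $\Lie(\hat{G})$ according to $\Fr$-orbits of absolute root spaces.

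First, every $\pi \in G^{\wedge,\ur,\temp}$ is the unramified constituent of $\nind_{B(F)}^{G(F)}\chi$ for a unitary unramified character $\chi$ of $T(F)/T(F)\cap K$, unique up to $\Omega_F$-action (Lemma \ref{l:unr-temp-spec}). Harish-Chandra's Plancherel theorem then gives $\plur$ as the $|\Omega_F|^{-1}$-pushforward to $\Omega_F\backslash\hat{A}_c$ of $\mu_G(\chi)\,d\chi$ on $\hat{A}_c$, where the density equals $|c(\chi)|^{-2}$ up to a constant depending only on Haar normalizations. The Macdonald--Casselman formula, valid for any unramified $G$, reads
\[
c(\chi) = C_0\prod_{\alpha \in \Phi_F^+}\frac{1 - q_\alpha^{-1}\chi(\alpha^\vee(\varpi))}{1 - \chi(\alpha^\vee(\varpi))},
\]
with $q_\alpha = q^{f_\alpha}$ and $f_\alpha$ the common length of any $\Fr$-orbit of absolute roots restricting to $\alpha$. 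Unitarity of $\chi$ gives $\overline{\chi(x)} = \chi(x)^{-1}$, so $|c(\chi)|^{-2}$ is the same product extended over all of $\Phi_F$, up to a $\chi$-independent scalar.

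Next, I would identify $\chi$ with $t\rtimes\Fr \in \hat{T}_c\rtimes \Fr$ via Lemma \ref{l:unr-temp-spec}, so that $\chi(\mu(\varpi)) = \mu(t)$ for $\mu \in X_*(A) \simeq X^*(\hat{A})$, and focus on a single $\Fr$-orbit $O = \{\beta,\Fr\beta,\ldots,\Fr^{f-1}\beta\}$ in $\Phi$ restricting to $\alpha \in \Phi_F$. On the $\ad(t\rtimes\Fr)$-stable subspace $V_O := \bigoplus_{\beta'\in O}\Lie(\hat{G})_{\beta'}$, the operator $\ad(t\rtimes\Fr)$ is a cyclic permutation composed with diagonal scaling, whose $f$ eigenvalues are the $f$-th roots of $\prod_{i=0}^{f-1}(\Fr^i\beta)(t) = \chi(\alpha^\vee(\varpi))$. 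Consequently
\[
\det(1 - \ad(t\rtimes\Fr)|V_O) = 1 - \chi(\alpha^\vee(\varpi)),\qquad \det(1 - q^{-1}\ad(t\rtimes\Fr)|V_O) = 1 - q_\alpha^{-1}\chi(\alpha^\vee(\varpi)).
\]
Decomposing $\Lie(\hat{G})/\Lie(\hat{T}^\Fr) = \Lie(\hat{T})/\Lie(\hat{T})^\Fr \oplus \bigoplus_O V_O$ and observing that on the Cartan summand $\ad(t\rtimes\Fr)$ acts by $\Fr$ alone (since $\Ad(t)$ is trivial on $\Lie(\hat{T})$), one recovers the claimed ratio up to a $t$-independent factor absorbed into $C$. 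Well-definedness on $\hat{T}_c/(\Fr-\id)\hat{T}_c$ follows from conjugation invariance of the determinant, since replacing $t$ by $t\cdot s\Fr(s)^{-1}$ amounts to $\hat{T}_c$-conjugation of $t\rtimes\Fr$.

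The main difficulty is the careful bookkeeping at three points: tracking the constant $C$ through Macdonald's formula and the Haar normalizations on the character group of $T(F)/T(F)\cap K$; handling the Cartan determinant $\det(1-\Fr|\Lie(\hat{T})/\Lie(\hat{T})^\Fr)$ as a $\chi$-independent contribution; and verifying that the exponent $f_\alpha$ in $q_\alpha^{-1}$ matches $\dim V_O = f$, which follows because the $f$ eigenvalues of $q^{-1}\ad(t\rtimes\Fr)$ on $V_O$ are the eigenvalues of $\ad(t\rtimes\Fr)$ each rescaled by $q^{-1}$, so their product picks up exactly $q^{-f}$. All three are routine once the bijection between $\Phi_F$ and $\Fr$-orbits on $\Phi$ is spelled out.
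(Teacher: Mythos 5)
Your overall route is the same as the paper's: both arguments rest on Macdonald's formula for the spherical Plancherel density. The difference is that the paper does not redo the root-by-root computation. It quotes the formula in the Langlands--Shahidi form
$\plurtemp_{0}(\ol{t})= C'\,\frac{L(1,\sigma^{-1}(\ol{t}),r)}{L(0,\sigma(\ol{t}),r)}\frac{L(1,\sigma(\ol{t}),r)}{L(0,\sigma^{-1}(\ol{t}),r)}\,d\ol{t}$,
with $r$ the adjoint action of ${}^L T$ on $\Lie({}^L U)$ (known in the unramified case, i.e.\ Macdonald), then simply unravels the unramified $L$-factors --- which are by definition determinants of $1-q^{-s}\ad(t\rtimes \Fr)$ --- and finally absorbs the $t$-independent Cartan factor $\det(1-q^{-s}\ad(t\rtimes \Fr)|\Lie(\hat{T})/\Lie(\hat{T}^{\Fr}))$ into the constant, exactly as you do at the end. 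Your treatment of well-definedness modulo $(\Fr-\id)\hat{T}_c$ and of the Cartan summand agrees with the paper.

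The place where your hand computation is not correct as stated is the intermediate bookkeeping over relative roots. The factor $\frac{1-q_\alpha^{-1}\chi(\alpha^\vee(\varpi))}{1-\chi(\alpha^\vee(\varpi))}$ for each $\alpha\in\Phi_F^+$ with $q_\alpha=q^{f_\alpha}$ is Macdonald's $c$-function only when the relative root system is reduced (and even then one must track how $\Fr$ acts on the root vectors within an orbit); for a general unramified group, e.g.\ a quasi-split odd unitary group, the relative system is of type $BC$ and the factors attached to the non-reduced roots have a different shape. Correspondingly, your identity $\prod_i(\Fr^i\beta)(t)=\chi(\alpha^\vee(\varpi))$ fails there: for the short relative roots of $BC$ the orbit-sum of the absolute coroots equals $\tfrac12\alpha^\vee$, not $\alpha^\vee$, and for an $\Fr$-fixed root forming an orbit of size one the pinned action of $\Fr$ can act by $-1$ on the root line, so the eigenvalue is $-\beta^\vee(t)$ rather than $\beta^\vee(t)$ (this already happens for the highest root of $\mathfrak{sl}_3$ under the outer automorphism, i.e.\ for quasi-split $SU(3)$). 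These two discrepancies have the same source and do cancel if you index everything by $\Fr$-orbits of weights of $\hat{T}$ in $\Lie(\hat{G})$ from the outset --- which is precisely what the $L$-factor formulation does automatically --- but as written your matching of the Macdonald product over $\Phi_F^+$ with the orbit determinant is off in the $BC$ and sign cases, so that step needs to be repaired or replaced by a citation of the adjoint $L$-factor form of the formula, as in the paper.
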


\begin{proof}
  The formula is due to Macdonald (\cite{Mac71}). For our purpose, it is more convenient to
  follow the formulation as in the conjecture of \cite[p.281]{Sha90} (which also discusses the general conjectural formula
  of the Plancherel measure due to Langlands). By that conjecture (known in the unramified case),
  $$\plurtemp_{0}(\ol{t})= C'\cdot \frac{L(1,\sigma^{-1}(\ol{t}),r)}{L(0,\sigma(\ol{t}),r)}
  \frac{L(1,\sigma(\ol{t}),r)}{L(0,\sigma^{-1}(\ol{t}),r)} d\ol{t}$$
  where $C'\in \C^\times$ is a constant,
  $\sigma(\ol{t}):T(F)\ra \C^\times$ is the character corresponding to $\ol{t}$
  (via (ii)$\leftrightarrow$(v) of Lemma \ref{l:unr-spec}), and $r:{}^L T
  \ra \GL(\Lie({}^L U))$ is the adjoint representation. Here ${}^L U$ is the $L$-group of $U$
  (viewed in ${}^L B$). By unraveling the local $L$-factors, obtain
   \beq\label{e:pf-unr-plan}\plurtemp_{0}(\ol{t})= C'\cdot
  \frac{\det(1-\ad(t\rtimes \Fr)|\Lie(\hat{G})/\Lie(\hat{T}))}
  {\det(1-q^{-1}\ad(t\rtimes \Fr)|\Lie(\hat{G})/\Lie(\hat{T}))}
   d\ol{t}. \eeq
   Finally, observe that $\det(1-q^{-s}\ad(t\rtimes \Fr)|\Lie(\hat{T})/\Lie(\hat{T}^{\Fr}))$
   is independent of $\ol{t}$ (and $t$). Therefore the right hand sides are
   the same up to constant in \eqref{e:pf-unr-plan} and the proposition.
\end{proof}

\begin{rem}
  Note that the choice of a Haar measure on $G(F)$ determines the measure $\plurtemp_{0}$. For example
  if the Haar measure on $G(F)$ assigns volume 1 to $K$ then
  $G^{\wedge,\ur,\temp}$ has total volume 1
  with respect to $\plurtemp_{0}(\ol{t})$ as implied by the Plancherel formula for $\triv_K$.
  Hence the product $C\cdot d\ol{t}$.
\end{rem}

\section{Automorphic L-functions}\label{sec:pp}

According to Langlands conjectures, the most general $L$-functions should be expressible as products of the principal $L$-functions $L(s,\Pi)$ associated to cuspidal automorphic representations $\Pi$ of $\GL(d)$ over number fields (for varying $d$). The analytic properties and functional equation of such $L$-functions were first established by Godement--Jacquet for general $d\ge 1$. This involves the Godement--Jacquet integral representation. The other known methods are the Rankin--Selberg integrals, the doubling method and the Langlands--Shahidi method. The purpose of this section is to recall these analytic properties and to set-up notation. More detailed discussions may be found in~\cites{cong:Lfunc04:cogd,Jacq79:principal,cong:park:mich},~\cite{RS96}*{\S2} and~\cite{book:IK04}*{\S5}.

In this section and some of the later sections we use the following notation.
\bit
\item $F$ is a number field, i.e. a finite extension of $\Q$.
\item $G$ is a connected reductive group over $F$ (not assumed to be quasi-split).
\item $Z=Z(G)$ is the center of $G$.
\item $\cV_F$ (resp. $\cV_F^\infty$) is the set of all (resp. all finite) places of $F$.
\item $S_\infty:=\cV_F\bs \cV_F^\infty$.
\item $A_{G}$ is the maximal $F$-split subtorus in the center of $\Res_{F/\Q} G$, and $A_{G,\infty}:=A_G(\R)^0$.
\eit

\subsection{Automorphic forms}\label{sec:pp:autforms}
  Let $\chi:A_{G,\infty}\ra \C^\times$ be a continuous homomorphism.
  Denote by $L^2_\chi(G(F)\bs G(\A_F))$
  the space of all functions $f$ on $G(\A_F)$ which are
  square-integrable modulo $A_{G,\infty}$ and satisfy
  $f(g\gamma z)=\chi(z)f(\gamma)$ for all $g\in G(F)$, $\gamma \in G(\A_F)$ and
  $z\in A_{G,\infty}$. There is a spectral decomposition into discrete and continuous parts
$$L^2_{\chi}(G(F)\bs G(\A_F))=L^2_{\disc,\chi}\oplus L^2_{\cont,\chi},
\qquad L^2_{\disc,\chi}=\hat{\bigoplus_{\pi}}\, m_{\disc,\chi}(\pi)\cdot \pi$$
where the last sum is a Hilbert direct sum running over the set of
all irreducible representations
of $G(\A_F)$ up to isomorphism.
Write $\cAR_{\disc,\chi}(G)$ for the set of isomorphism classes of
all irreducible representations $\pi$
of $G(\A_F)$ such that $m_{\disc,\chi}(\pi)>0$.
Any $\pi\in \cAR_{\disc,\chi}(G)$ is said to
be a discrete automorphic representation of $G(\A_F)$. If $\chi$ is trivial (in particular if $A_{G,\infty}=\{1\}$) then we write $m_{\disc}$ for $m_{\disc,\chi}$.

The above definitions allow a modest generalization. Let $\mathfrak{X}_G$ be a closed subgroup of $Z(\A_F)$ containing $A_{G,\infty}$ and $\omega:Z(\A_F)\cap \mathfrak{X}_G\bs \mathfrak{X}_G\ra \C^\times$ be a continuous (quasi-)character. Then $L^2_{\omega}$, $L^2_{\disc,\omega}$, $m_{\disc,\omega}$ etc can be defined analogously. In fact the Arthur-Selberg trace formula applies to this setting. (See \cite[Ch 3.1]{Arthur}.)

For the rest of Section~\ref{sec:pp} we are concerned with $G=\GL(d)$. Take $\mathfrak{X}_G=Z(\A_F)$ so that $\omega$ is a quasi-character of $Z(F)\SB Z(\BmA_F)$. Note that $A_{G,\infty}=Z(F_\infty)^\circ$ in this case. We denote by $\CmA_\omega(\GL(d,F))$ the space consisting of automorphic functions on $\GL(d,F)\SB \GL(d,\BmA_F)$ which satisfy $f(zg)=\omega(z)f(g)$ for all $z\in Z(\BmA_F)$ and $g\in \GL(d,\BmA_F)$ (see Borel-Jacquet~\cite{BJ79} for the exact definition and the growth condition). We denote by $\CmA_{\cusp,\omega}(\GL(d,F))$ the subspace of cuspidal functions (i.e. the functions with vanishing period against all nontrivial unipotent subgroups).

An automorphic representation $\Pi$ of $\GL(d,\BmA_F)$ is by definition an irreducible admissible representation of $\GL(d,\BmA_F)$ which is a constituent of the regular representation on $\CmA_\omega(\GL(d,F))$. Then $\omega$ is the central character of $\Pi$. The subspace $\CmA_{\cusp,\omega}(\GL(d,F))$ decomposes discretely and an irreducible component is a cuspidal automorphic representation. The notion of cuspidal automorphic representations is the same if the space of cuspidal functions in $L^2_{\omega}(GL(d,F)\bs GL(d,\A_F))$ is used in the definition in place of $\CmA_{\cusp,\omega}(\GL(d,F))$, cf. \cite[\S4.6]{BJ79}.

When $\omega$ is unitary we can work with the completed space $L_\omega^2(\GL(d,F)\SB \GL(d,\BmA_F))$ of square-integrable functions modulo $Z(\BmA_F)$ and with unitary automorphic representations. Note that a cuspidal automorphic representation is unitary if and only if its central character is unitary. We recall the Langlands decomposition of $L_\omega^2(\GL(d,F)\SB \GL(d,\BmA_F))$ into the cuspidal, residual and continuous spectra. What will be important in the sequel is the notion of isobaric representations which we review in~\S\ref{sec:pp:isobaric}.

In the context of $L$-functions, the functional equation involves the contragredient representation $\widetilde \Pi$. An important fact is that the contragredient of a unitary automorphic representation of $\GL(d,\BmA_F)$ is isomorphic to its complex conjugate.

\subsection{Principal $L$-functions}\label{sec:pp:Lfn}
 Let $\Pi=\otimes_v \Pi_v$ be a cuspidal automorphic representation of $\GL(d,\BmA_F)$ with unitary central character. The principal $L$-function associated to $\Pi$ is denoted
\begin{equation*}
L(s,\Pi)=\prod_{v\in \CmV_F^\infty } L(s,\Pi_v).
\end{equation*}
The Euler product is absolutely convergent when $\MRe s>1$. The completed $L$-function is denoted $\Lambda(s,\Pi)$, the product now running over all places $v\in\CmV_F$.
For each finite place $v\in \CmV_F^\infty$, the inverse of the local $L$-function $L(s,\Pi_v)$ is a Dirichlet polynomial in $q_v^{-s}$ of degree $\le d$. Write
\begin{equation*}
L(s,\Pi_v)=\prod^d_{i=1} (1-\alpha_i(\Pi_v)q_v^{-s})^{-1}.
\end{equation*}
Note that when $\Pi_v$ is unramified, $\alpha_i(\Pi_v)$ is non-zero for all $i$ and corresponds to the eigenvalues of a semisimple conjugacy class in $\GL_d(\BmC)$ associated to $\Pi_v$, but when $\Pi_v$ is ramified the Langlands parameters are more sophisticated and we allow some (or even all of) of the $\alpha_i(\Pi_v)$ to be equal to zero. In this way we have a convenient notation for all local $L$-factors.

For each archimedean $v$, the local $L$-function $L(s,\Pi_v)$ is a product of $d$ Gamma factors
\begin{equation}\label{def:Lv-arch}
L(s,\Pi_v)=\prod^d_{i=1} \Gamma_v(s-\mu_i(\Pi_v)),
\end{equation}
where $\Gamma_\BmR(s):=\pi^{-s/2}\Gamma(s/2)$ and $\Gamma_\BmC(s):=2(2\pi)^{-s}\Gamma(s)$. Note that $\Gamma_{\BmC}(s)=\Gamma_\BmR(s)\Gamma_\BmR(s+1)$ by the doubling formula, so when $v$ is complex, $L(s,\Pi_v)$ may as well be expressed as a product of $2d$ $\Gamma_\BmR$ factors.

The completed $L$-function $\Lambda(s,\Pi):=L(s,\Pi)\prod_{v|\infty}L(s,\Pi_v)$ has the following analytic properties. It has a meromorphic continuation to the complex plane. It is entire except when $d=1$ and $\Pi=\abs{.}^{it}$ for some $t\in \BmR$, in which case $L(s,\Pi)=\zeta_F(s+it)$ is (a shift of) the Dedekind zeta function of the ground field $F$ with simple poles at $s=-it$ and $s=1-it$. It is bounded in vertical strips and satisfies the functional equation
\begin{equation}\label{fneq}
\Lambda(s,\Pi)=\epsilon(s,\Pi) \Lambda(1-s,\widetilde \Pi),
\end{equation}
where $\epsilon(s,\Pi)$ is the epsilon factor and $\widetilde \Pi$ is the contragredient automorphic representation. The epsilon factor has the form
\begin{equation}
\epsilon(s,\Pi)=\epsilon(\Pi) q(\Pi)^{\Mdemi-s}
\end{equation}
for some positive integer $q(\Pi)\in \BmZ_{\ge 1}$ and root number $\epsilon(\Pi)$ of modulus one.

Note that $q(\Pi)=q(\widetilde \Pi)$,
$\epsilon(\widetilde \Pi)=\overline{\epsilon(\Pi)}$ and for all $v\in \CmV_F$, $L(s,\widetilde \Pi_v)=\overline{L(\overline{s},\Pi_v)}$. For instance this follows from the fact~\cite{GK75} that $\widetilde \Pi$ is isomorphic to the complex conjugate $\overline\Pi$ (obtained by taking the complex conjugate of all forms in the vector space associated to the representation $\Pi$).

The conductor $q(\Pi)$ is the product over all finite places $v\in\CmV^\infty_F$ of the conductor $q(\Pi_v)$ of $\Pi_v$. Recall that $q(\Pi_v)$ equals one whenever $\Pi_v$ is unramified. It is convenient to introduce as well the conductor of admissible representations at archimedean places. When $v$ is real we let $C(\Pi_v)=\prod\limits^d_{i=1} (2+\abs{\mu_i(\Pi_v)})$ and when $v$ is complex we let $C(\Pi_v)=\prod\limits^d_{i=1} (2+\abs{\mu_i(\Pi_v)}^2)$. Then we let $C(\Pi)$ be the analytic conductor which is the product of all the local conductors
\begin{equation*}
C(\Pi):=
\prod_{v\mid \infty} C(\Pi_v)
\prod_{v\in\CmV^\infty_F} q(\Pi_v) = C(\Pi_\infty) q(\Pi).
\end{equation*}
Note that $C(\Pi)\ge 2$ always.

There is $0\le \theta <\frac{1}{2}$ such that
\begin{equation}\label{towardsR}
\MRe \mu_i(\Pi_v) \le \theta
,\qtext{resp.}\
\log_{q_v} \abs{\alpha_i(\Pi_v)}\le \theta
\end{equation}
for all archimedean $v$ (resp. finite $v$) and $1\le i\le d$. When $\Pi_v$ is unramified we ask for
\begin{equation}\label{unrtowardsR}
\abs{\MRe \mu_i(\Pi_v)} \le \theta
,\qtext{resp.}\
\abs{\log_{q_v} \abs{\alpha_i(\Pi_v)}}\le \theta.
\end{equation}
The value $\theta=\Mdemi - \frac{1}{d^2+1}$ is admissible by an argument of Serre and Luo--Rudnick--Sarnak based on the analytic properties of the Rankin-Selberg convolution $L(s,\Pi\times \widetilde \Pi)$. Note that for all $v$, the local $L$-functions $L(s,\Pi_v)$ are entire on $\MRe s>\theta$ and this contains the central line $\MRe s=\Mdemi$.

The generalized Ramanujan conjecture asserts that all $\Pi_v$ are tempered (see~\cite{Sarnak:GRC} and the references herein). This is equivalent to having $\theta=0$ in the inequalities~\eqref{towardsR} and~\eqref{unrtowardsR}. In particular we expect that when $\Pi_v$ is unramified, $\abs{\alpha_i(\Pi_v)}=1$.

\subsection{Isobaric sums}\label{sec:pp:isobaric} We need to consider slightly more general $L$-functions associated to non-cuspidal automorphic representations on $\GL(d,\BmA_F)$. These $L$-functions are products of the $L$-functions associated to cuspidal representations and studied in the previous \S\ref{sec:pp:Lfn}. Closely related to this construction it is useful to introduce, following Langlands~\cite{cong:auto77:lang}, the notion of isobaric sums of automorphic representations. The concept of isobaric representations is natural in the context of $L$-functions and the Langlands functoriality conjectures.

Let $\Pi$ be an irreducible automorphic representation of $\GL(d,\BmA_F)$. Then a theorem of Langlands~\cite{BJ79} states that there are integers $r\ge 1$ and $d_1,\cdots,d_r\ge 1$ with $d=d_1+\cdots +d_r$ and cuspidal automorphic representations $\Pi_1,\cdots,\Pi_r$ of $\GL(d_1,\BmA_F),\cdots,\GL(d_r,\BmA_F)$  such that $\Pi$ is a constituent of the induced representation of $\Pi_1\otimes \cdots \otimes \Pi_r$ (from the Levi subgroup $\GL(d_1)\times \cdots \times \GL(d_r)$ of $\GL(d)$). A cuspidal representation is unitary when its central character is unitary. When all of $\Pi_j$ are unitary then $\Pi$ is unitary. But the converse is not true: note that even if $\Pi$ is unitary, the representation $\Pi_j$ need not be unitary in general.

We recall the generalized strong multiplicity one theorem of Jacquet and Shalika~\cite{JSI-II}. Suppose $\Pi$ and $\Pi'$ are irreducible automorphic representations of $\GL(d,\BmA_F)$  such that $\Pi_v$ is isomorphic to $\Pi'_v$ for almost all $v\in\CmV_F$ (we say that $\Pi$ and $\Pi'$ are weakly equivalent) and suppose that $\Pi$ (resp. $\Pi'$) is a constituent of the induced representation of $\Pi_1\otimes\cdots \otimes \Pi_r$ (resp. $\Pi'_1\otimes \cdots \otimes \Pi'_{r'}$). Then $r=r'$ and up to permutation the sets of cuspidal representations $\set{\Pi_j}$ and $\set{\Pi'_j}$ coincide. Note that this generalizes the strong multiplicity one theorem of Piatetski-Shapiro which corresponds to the case where $\Pi$ and $\Pi'$ are cuspidal.

Conversely suppose $\Pi_1,\cdots, \Pi_r$ are cuspidal representations of $\GL(d_1,\BmA_F), \cdots, \GL(d_r,\BmA_F)$. Then from the theory of Eisenstein series there is a unique constituent of the induced representation of $\Pi_1\otimes\cdots \otimes \Pi_r$ whose local components coincide at each place $v\in \CmV_F$ with the Langlands quotient of the local induced representation~\cite{cong:auto77:lang}*{\S2}. It is denoted $\Pi_1\boxplus \cdots \boxplus \Pi_r$ and called an isobaric representation (automorphic representations which are not isobaric are called anomalous). The above results of Langlands and Jacquet--Shalika may now be summarized by saying that an irreducible automorphic representation of $\GL(d,\BmA_F)$ is weakly equivalent to a unique isobaric representation.

We now turn to $L$-functions. The completed $L$-function associated to an isobaric representation $\Pi=\Pi_1\boxplus \cdots\boxplus \Pi_r$ is by definition
\begin{equation*}
\Lambda(s,\Pi) = \prod^{r}_{j=1} \Lambda(s,\Pi_j).
\end{equation*}
All notation from the previous subsection will carry over to $\Lambda(s,\Pi)$. Namely we have the local $L$-factors $L(s,\Pi_v)$, the local Satake parameters $\alpha_i(\Pi_v)$ and $\mu_i(\Pi_v)$, the epsilon factor $\epsilon(s,\Pi)$, the root number $\epsilon(\Pi)$, the local conductors $q(\Pi_v)$, $C(\Pi_v)$ and the analytic conductor $C(\Pi)$. The Euler product converges absolutely for $\MRe s$ large enough.

One important difference concerns the bounds for local Satake parameters. Even if we assume that $\Pi$ has unitary central character the inequalities~\eqref{towardsR} may not hold. We shall therefore require a stronger condition on $\Pi$.

\begin{prop}\label{prop:tempered}
Let $\Pi$ be an isobaric representation of $\GL(d,\BmA_F)$. Assume that the archimedean component $\Pi_\infty$ is tempered. Then the bounds towards Ramanujan are satisfied. Namely there is a positive constant $\theta<\Mdemi$ such that for all $1\le i\le d$ and all archimedean (resp. non-archimedean) places $v$,
\begin{equation}\label{eq:prop:tempered}
\MRe \mu_i(\Pi_v) \le \theta
,\qtext{resp. }\
\log_{q_v} \abs{\alpha_i(\Pi_v)}\le \theta.
\end{equation}
\end{prop}

\begin{proof} Let $\Pi=\Pi_1\boxplus \cdots \boxplus \Pi_r$ be the isobaric decomposition with $\Pi_j$ cuspidal. Then we will show that all $\Pi_j$ have unitary central character, which implies Proposition~\ref{prop:tempered}.

By definition we have that $\Pi_\infty$ is a Langlands quotient of the induced representation of $\Pi_{1\infty} \otimes \cdots \otimes \Pi_{r\infty}$. Since $\Pi_\infty$ is tempered, this implies that all $\Pi_{j\infty}$ are tempered, and in particular have unitary central character. Then the (global) central character of $\Pi_j$ is unitary as well.

\end{proof}

\begin{rem} In analogy with the local case, an isobaric representation $\Pi_1 \boxplus \cdots \boxplus \Pi_r$ where all cuspidal representations $\Pi_j$ have unitary central character is called \Lquote{tempered} in~\cite{cong:auto77:lang}. This terminology is fully justified only under the generalized Ramanujan conjecture for $\GL(d,\BmA_F)$. To avoid confusion we use the adjective \Lquote{tempered} for $\Pi=\otimes_v \Pi_v$ only in the strong sense that the local representations $\Pi_v$ are tempered for all $v\in \CmV_F$.
\end{rem}

\begin{rem} In the proof of Proposition~\ref{prop:tempered} we see the importance of the notion of isobaric representations and Langlands quotients. For instance a discrete series representation of $\GL(2,\BmR)$ is a constituent (but not a Langlands quotient) of an induced representation of a non-tempered character of $\GL(1,\BmR)\times \GL(1,\BmR)$.
\end{rem}

\subsection{An explicit formula}\label{sec:pp:explicit} Let $\Pi$ be a unitary cuspidal representation of $\GL(d,\BmA_F)$. Let $\rho_{j}(\Pi)$ denote the zeros of $\Lambda(s,\Pi)$ counted with multiplicities. These are also the non-trivial zeros of $L(s,\Pi)$. The method of Hadamard and de la Vall\'ee Poussin generalizes from the Riemann zeta function to automorphic $L$-functions, and implies that $0<\MRe \rho_j(\Pi)<1$ for all $j$. There is a polynomial $p(s)$ such that $p(s)\Lambda(s,\Pi)$ is entire and of order $1$ ($p(s)=1$ except when $d=1$ and $\Pi=\abs{.}^{it}$, in which case we choose $p(s)=(s-it)(1-it-s)$).

The Hadamard factorization shows that there are $a=a(\Pi)$ and $b=b(\Pi)$ such that
\begin{equation*}
p(s)\Lambda(s,\Pi)=e^{a+bs}
\prod_j \bigl(1-\frac{s}{\rho_j(\Pi)} \bigr)
e^{s/\rho_j(\Pi)}.
\end{equation*}
The product is absolutely convergent in compact subsets away from the zeros $\rho_j(\Pi)$. The functional equation implies that
\begin{equation*}
\sum_j \MRe(\rho_j(\Pi)^{-1})= -\MRe b(\Pi).
\end{equation*}

The number of zeros of bounded imaginary part is bounded above uniformly:
\begin{equation*}
\abs{\set{j,\ \abs{\Mim \rho_j(\Pi)}\le 1}}
\ll \log C(\Pi).
\end{equation*}
Changing $\Pi$ into $\Pi\otimes \abs{.}^{it}$ we have an analogous uniform estimate for the number of zeros with $\abs{\Mim \rho_j(\Pi)-T}\le 1$ (in particular this is $\ll_\Pi \log T$).

Let $N(T,\Pi)$ be the number of zeros with $\abs{\Mim \rho_j(\Pi)}\le T$. Then the following estimate holds uniformly in $T\ge 1$ (Weyl's law):
\begin{equation*}
  %\label{weyls}
N(T,\Pi) = \frac{T}{\pi} \Bigl(d
\log (\frac{T}{2\pi e})
+ \log C(\Pi)
\Bigr)
+O_\Pi(\log T).
\end{equation*}
The error term could be made uniform in $\Pi$, see~\cite{book:IK04}*{\S5.3} for more details\footnote{One should be aware that Theorem~5.8 in~\cite{book:IK04} does not apply directly to our setting because it is valid under certain further assumptions on $\Pi$ such as $\mu_i(\Pi_v)$ being real for archimedean places $v$.}. The main term can be interpreted as the variation of the argument of $C(\Pi)^{s/2}L(s,\Pi_\infty)$ along certain vertical segments.

We are going to discuss an explicit formula (see~\eqref{eq:prop:explicit} below) expressing a weighted sum over the zeros of $\Lambda(s,\Pi)$ as a contour integral. It is a direct consequence of the functional equation~\eqref{fneq} and Cauchy formula. The explicit formula is traditionally stated using the Dirichlet coefficients of the $L$-function $L(s,\Pi)$. For our purpose it is more convenient to maintain the Euler product factorization.

Definte $\gamma_j(\Pi)$ by $\rho_j(\Pi)=\Mdemi+i\gamma_j(\Pi)$. We know that $\abs{\Mim \gamma_j(\Pi)}<\Mdemi$ and under the GRH, $\gamma_j(\Pi)\in \BmR$ for all $j$.

It is convenient to denote by $\Mdemi + ir_j(\Pi)$ the (eventual) poles of $\Lambda(s,\Pi)$ counted with multiplicity. We have seen that poles only occur when $\Pi=\abs{.}^{it}$ in which case the poles are simple and $\set{r_j(\Pi)}=\set{t+\frac{i}{2},-t-\frac{i}{2}}$.

The above discussion applies with little change to isobaric representations. If we also assume that $\Pi_\infty$ is tempered then we have seen in the proof of Proposition~\ref{prop:tempered} that $\Pi=\Pi_1 \boxplus \cdots \boxplus \Pi_r$ with $\Pi_i$ unitary cuspidal representations of $\GL(d_i,\BmA)$ for all $1\le i\le r$. In particular the bounds towards Ramanujan apply and $\abs{\Mim \gamma_j(\Pi)}<\Mdemi$ for all $j$.

Let $\Phi$ be a Paley--Wiener function whose Fourier transform
\begin{equation}\label{def:fourier}
\widehat \Phi(y):=\int_{-\infty}^{+\infty} \Phi(x) e^{-2\pi i xy} dx
\end{equation}
has compact support. Note that $\Phi$ may be extended to an entire function on $\BmC$.
\begin{prop}\label{prop:explicit} Let $\Pi$ be an isobaric representation of $\GL(d,\BmA)$ satisfying the bounds towards Ramanujan~\eqref{towardsR}. With notation as above and for $\sigma>\Mdemi$, the following identity holds
\begin{equation}\label{eq:prop:explicit}
\begin{aligned}
\sum_j &\Phi(\gamma_j(\Pi))=\sum_j \Phi
(r_j(\Pi))
+ \frac{\log q(\Pi)}{2\pi} \hat \Phi(0)
+\\
&+ \frac{1}{2\pi}\int_{-\infty}^{\infty}
\biggl[
\frac{\Lambda'}{\Lambda}(\Mdemi+\sigma+ir,\Pi)
\Phi(r-i\sigma)
+
\overline{
\frac{\Lambda'}{\Lambda}(\Mdemi+\sigma+ir,\Pi)
}
\Phi(r+i\sigma)
\biggr]
dr.
\end{aligned}
\end{equation}
\end{prop}

There is an important remark about the explicit formula that we will use frequently. Therefore we insert it here before going into the proof. The line of integration in~\eqref{eq:prop:explicit} is away from the zeros and poles because $\sigma>1/2$. In particular the line of integration cannot be moved to $\sigma=0$ directly. But we can do the following which is a natural way to produce the sum over primes. First we replace $\Lambda(s,\Pi)$ by its Euler product which is absolutely convergent in the given region ($\MRe s>1$). Then for each of the term we may move the line of integration to $\sigma=0$ because we have seen that $\frac{L'}{L}(s,\Pi_v)$ has no pole for $\MRe s > \theta$. Thus we have
\begin{equation}\label{explicit:switch}
\int_{-\infty}^{\infty}
\frac{\Lambda'}{\Lambda}(\Mdemi+\sigma+ir,\Pi)
\Phi(r-i\sigma)
dr=
\sum_{v\in \CmV_F}
\int_{-\infty}^{\infty}
\frac{L'}{L}
(\Mdemi+ir,\Pi_v)
\Phi(r)
dr.
\end{equation}
The latter expression is convenient to use in practice. The integral in the right-hand side of~\eqref{explicit:switch} is absolutely convergent because $\Phi$ is rapidly decreasing and the sum over $v\in\CmV_F$ is actually finite since the support of $\widehat \Phi$ is compact. \footnote{Note however that it is never allowed to switch the sum and integration symbols in~\eqref{explicit:switch}. This is because the $L$-function is evaluated at the center of the critical strip in which the Euler product does not converge absolutely.}

\begin{proof} The first step is to work with the Mellin transform rather than the Fourier transform. Namely we set
\begin{equation*}
H(\Mdemi + is)=\Phi\bigl(s\bigr),\quad s\in \BmC.
\end{equation*}
Note that $H$ is an entire function which is rapidly decreasing on vertical strips. This justifies all shifting of contours below.

We form the integral
\begin{equation*}
\int_{(2)} \frac{\Lambda'}{\Lambda}(s,\Pi) H(s) \frac{ds}{2i\pi}.
\end{equation*}
We shift the contour to $\MRe s=-1$ crossing zeros and eventual poles of $\dfrac{\Lambda'}{\Lambda}$ inside the critical strip. The sum over the zeros reads
\begin{equation*}
\sum_j H(\rho_j(\Pi)) = \sum_j \Phi(\gamma_j(\Pi))
\end{equation*}
and the sum over the poles reads
\begin{equation*}
-\sum_j \Phi(r_j(\Pi)).
\end{equation*}

Note that since $\epsilon(s,\Pi)=\epsilon(\Pi) q(\Pi)^{\Mdemi-s}$ we have
\begin{equation*}
\frac{\epsilon'}{\epsilon}(s,\Pi)=-\log q(\Pi),\quad s\in\BmC.
\end{equation*}
We obtain as consequence of the functional equation~\eqref{fneq} that
\begin{equation*}
\begin{aligned}
\int_{(-1)}^{} \frac{\Lambda'}{\Lambda}(s,\Pi) H(s) \frac{ds}{2i\pi}&=
\int_{(2)}^{}  \frac{\Lambda'}{\Lambda}(1-s,\Pi) H(1-s) \frac{ds}{2i\pi}\\
&=-\int_{(2)}^{}  \biggl(\log q(\Pi) + \frac{\Lambda'}{\Lambda}(s,\widetilde \Pi)
\biggr) H(1-s) \frac{ds}{2i\pi}.
\end{aligned}
\end{equation*}

Now we observe that
\begin{equation*}
\int_{(2)} H(s) \frac{ds}{2i\pi} = \frac{1}{2\pi} \hat \Phi(0)
\end{equation*}
and also
\begin{equation*}
\int_{(2)}^{} \frac{\Lambda'}{\Lambda}(s,\Pi) H(s) \frac{ds}{2i\pi}
=
\frac{1}{2\pi}\int_{-\infty}^{\infty}
\Phi(r-\frac{3i}{2})
\frac{\Lambda'}{\Lambda}(2+ir,\Pi) dr.
\end{equation*}
and
\begin{equation*}
\begin{aligned}
\int_{(2)}^{} \frac{\Lambda'}{\Lambda}(s,\widetilde \Pi) H(1-s) \frac{ds}{2i\pi}
&=
\frac{1}{2\pi}\int_{-\infty}^{\infty}
\Phi(r+\frac{3i}{2})
\frac{\Lambda'}{\Lambda}(2-ir,\widetilde \Pi) dr\\
&=
\frac{1}{2\pi}\int_{-\infty}^{\infty}
\Phi(r+\frac{3i}{2})
\overline{
\frac{\Lambda'}{\Lambda}(2+ir,\Pi)
}
dr.
\end{aligned}
\end{equation*}

Since $\Lambda(s,\widetilde \Pi)=\overline{\Lambda(\overline{s},\Pi)}$ this concludes the proof of the proposition by collecting all the terms above. Precisely this yields the formula when $\sigma=3/2$, and then we can make $\sigma>1/2$ arbitrary by shifting the line of integration.
\end{proof}

We conclude this section with a couple of remarks on symmetries. The first observation is that the functional equation implies that if $\rho$ is a zero (resp. pole) of $\Lambda(s,\Pi)$ then so is $1-\bar\rho$ (reflexion across the central line). Thus the set $\set{\gamma_j(\Pi)}$ (resp. $\set{r_j(\Pi)}$) is invariant by the reflexion across the real axis (namely $\gamma$ goes into $\overline{\gamma}$). Note that this is compatible with the GRH which predicts that $\MRe \rho_j(\Pi)=\Mdemi$ and $\gamma_j(\Pi)\in \BmR$.

Assuming $\Phi$ is real-valued the explicit formula is an identity between real numbers. Indeed the Schwartz reflection principle gives $\Phi(s)=\overline{\Phi(\overline s)}$ for all $s\in \BmC$. Because of the above remark the sum over the zeros (resp. poles) in~\eqref{eq:prop:explicit} is a real; the integrand is real-valued as well for all $r\in (-\infty,\infty)$.

The situation when $\Pi$ is self-dual occurs often in practice. The zeros $\gamma_j(\Pi)$ satisfy another symmetry which is the reflexion across the origin. Assuming $\Pi$ is cuspidal and non-trivial there is no pole. The explicit formula~\eqref{eq:prop:explicit} simplifies and may be written
\begin{equation*}
\sum_j
\Phi\left( \gamma_j(\Pi) \right)
=\frac{\log q(\Pi)}{2\pi}\widehat \Phi(0)
+\frac{1}{\pi}
\sum_{v\in \CmV_F}
\int_{-\infty}^{\infty}
\frac{L'}{L}(\Mdemi+ir,\Pi_v)
\Phi(r) dr.
\end{equation*}

\section{Sato-Tate equidistribution}\label{s:Sato-Tate}

  Let $G$ be a connected reductive group over a number field $F$ as in the previous section.
  The choice of a $\Gal(\ol{F}/F)$-invariant
  splitting datum $(\hat{B},\hat{T},\{X_{\alpha}\}_{\alpha\in \Delta^{\vee}})$
  as in \S\ref{sub:L-groups} induces
  a composite map $\Gal(\ol{F}/F)\ra \Out(\hat{G})\hra \Aut(\hat{G})$ with open kernel. Let $F_1$ be the unique finite
  extension of $F$ in $\ol{F}$ such that
  $$\Gal(\ol{F}/F)\twoheadrightarrow \Gal(F_1/F)\hra \Aut(\hat{G}).$$

\subsection{Definition of the Sato-Tate measure}\label{sub:ST-meas}

  Set $\Gamma_1:=\Gal(F_1/F)$.
  Let $\hat{K}$ be a maximal compact subgroup of $\hat{G}$ which is $\Gamma_1$-invariant.
  (It is not hard to see that such a $\hat{K}$ exists, cf. \cite{JZ08}.)
  Set $\hat{T}_{c}:=\hat{T}\cap\hat{K}$. (The subscript $c$ stands for ``compact'' as it was in \S\ref{sub:plan-unramified}.)
  Denote by $\Omega_c$ the Weyl group for $(\hat{K},\hat{T}_c)$.

  Let $\theta\in \Gamma_1$.
  Define $\Omega_{c,\theta}$ to be the subset of $\theta$-invariant elements of $\Omega_c$.
  Consider the topological quotient $\hat{K}^\natural_\theta$ of
  $\hat{K}\rtimes \theta$ by the $\hat{K}$-conjugacy equivalence relation.
  Set $\hat{T}_{c,\theta}:=\hat{T}_c/(\theta-\id)\hat{T}_c$.
  Note that the action of $\Omega_{c,\theta}$ on $\hat{T}_c$ induces an action
  on $\hat{T}_{c,\theta}$.
  The inclusion $\hat{T}_c\hra \hat{K}$ induces a canonical topological isomorphism (cf. Lemma \ref{l:unr-temp-spec})
 \beq\label{e:K-T-isom}\hat{K}^\natural_\theta\simeq \hat{T}_{c,\theta}/\Omega_{c,\theta}.\eeq

%  Then $\hat{T}_c=\hat{T}_c^\theta\cdot \hat{T}_{c,\theta}$ and
%  $\hat{T}_c^\theta\cap \hat{T}_{c,\theta}$ is finite (cf. \cite[\S6.3.(2)]{Bor79}).
%  (*** LATTER may not be needed. ***)

  The Haar measure on $\hat{K}$ (resp. on $\hat{T}_c$) with total volume 1 is
  written as $\mu_{\hat{K}}$ (resp. $\mu_{\hat{T}_c}$).
  Then $\mu_{\hat{K}}$ on $\hat{K}\rtimes\theta$ induces the quotient measure
  $\mu_{\hat{K}^\natural_\theta}$ (so that
  for any continuous function $f^\natural$ on $\hat{K}_\theta^\natural$ and its pullback
  $f$ on $\hat{K}$,
  $\int f^\natural \mu_{\hat{K}^\natural_\theta}=\int f \mu_{\hat{K}}$) thus also a measure $\mu_{\hat{T}_c,\theta}$ on $\hat{T}_{c,\theta}$.

%   The quotient measure $\mu_{\hat{T}_c/\Omega}$
%  is induced by $\mu_{\hat{T}_c}$.

\begin{defn}
  The $\theta$-\key{Sato-Tate measure} $\ST_\theta$
  on $\hat{T}_{c,\theta}/\Omega_{c,\theta}$ is the measure
  transported from $\mu_{\hat{K}^\natural_\theta}$ via \eqref{e:K-T-isom}.
%  such that
%  $$\int_{\hat{K}_\theta^\natural} f\cdot \mu_{\hat{K}_\theta^\natural}
%   = \int_{\hat{T}_{c,\theta}/\Omega_{c,\theta}}
%  f\cdot \ST$$
%  for all continuous functions $f$ on $\hat{K}_\theta^\natural$ (which is restricted to $\hat{T}_c/\Omega_c$
%  on the right hand side).
\end{defn}

\begin{lem}\label{l:ST-measure}
  Let $\ST_{\theta,0}$ denote the measure on $\hat{T}_{c,\theta}$
  pulled back from
  $\ST_\theta$ on $\hat{T}_{c,\theta}/\Omega_{c,\theta}$ (so that $\int
   f \ST_{\theta,0}=
  \int \ol{f} \ST_\theta$ for every continuous $\ol{f}$ on $\hat{T}_{c,\theta}/\Omega_{c,\theta}$
  and its pullback $f$). Then
  $$\ST_{\theta,0}=\frac{1}{|\Omega_{c,\theta}|} D_\theta(t) \mu_{\hat{T}_{c,\theta}},$$
  where $D_\theta(t)={\det(1-\ad(t\rtimes \theta)|\Lie(\hat{K})/\Lie(\hat{T}^\theta_c))}$
  and $t$ signifies a parameter on $\hat{T}_{c,\theta}$.
\end{lem}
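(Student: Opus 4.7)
The strategy is to recognise the claimed formula as the twisted Weyl integration formula for the compact group $\hat{K}$ with the automorphism $\theta\in\Gamma_1$ (which preserves $\hat{K}$ and $\hat{T}_c$ by construction). The roadmap has three steps: establish the twisted integration formula, check that both sides descend from $\hat{T}_c$ to $\hat{T}_{c,\theta}$, and then read off the statement from the definition of $\ST_{\theta,0}$.

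First I would prove the twisted Weyl integration formula: for every continuous function $F$ on $\hat{K}\rtimes\theta$ which is invariant under the twisted conjugation action $k\cdot(x\rtimes\theta):= kx\theta(k)^{-1}\rtimes\theta$, one has
\begin{equation*}
\int_{\hat{K}} F(k\rtimes\theta)\,d\mu_{\hat{K}}(k)
=\frac{1}{|\Omega_{c,\theta}|}\int_{\hat{T}_c} F(t\rtimes\theta)\,D_\theta(t)\,d\mu_{\hat{T}_c}(t).
\end{equation*}
The proof goes exactly as in the untwisted case. One considers the surjective map
$\Phi:\hat{K}/\hat{T}_c^\theta\times\hat{T}_c\to \hat{K}\rtimes\theta$, $(k,t)\mapsto k(t\rtimes\theta)k^{-1}$;
surjectivity reflects the existence of $\theta$-stable maximal tori through any element and is a standard consequence of de~Siebenthal's theorem applied to the (possibly disconnected) group $\hat{K}\rtimes\langle\theta\rangle$. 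A direct differential computation at $(1,t)$ shows that the tangent map
$\Lie(\hat{K})/\Lie(\hat{T}_c^\theta)\oplus\Lie(\hat{T}_c)\to\Lie(\hat{K})$
is given by $(X,Y)\mapsto Y+(1-\ad(t\rtimes\theta))X$, whose Jacobian is precisely $D_\theta(t)$; finally the generic fibre of $\Phi$ has cardinality $|\Omega_{c,\theta}|$, identified with the normaliser-modulo-centraliser of $\hat{T}_c$ inside the $\theta$-twisted centraliser of $\hat{T}_c^\theta$. Integrating $\Phi^* F$ (which is constant along the $\hat{K}/\hat{T}_c^\theta$ fibre) against the normalised Haar measures yields the displayed formula.

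Next, both $t\mapsto F(t\rtimes\theta)$ and $D_\theta(t)$ factor through the quotient $\hat{T}_c\twoheadrightarrow\hat{T}_{c,\theta}$: for any $s\in\hat{T}_c$ the identity $s(t\rtimes\theta)s^{-1}=(s\theta(s)^{-1}t)\rtimes\theta$ in $\hat{K}\rtimes\theta$ shows that replacing $t$ by $s\theta(s)^{-1}t$ gives a $\hat{K}$-conjugate element, so class functions and the conjugation-invariant quantity $D_\theta$ depend only on the image in $\hat{T}_{c,\theta}$. Because $\mu_{\hat{T}_{c,\theta}}$ is the push-forward of $\mu_{\hat{T}_c}$ along a surjection with compact kernel, the integration formula becomes
\begin{equation*}
\int_{\hat{K}} F(k\rtimes\theta)\,d\mu_{\hat{K}}(k)
=\frac{1}{|\Omega_{c,\theta}|}\int_{\hat{T}_{c,\theta}} F(t\rtimes\theta)\,D_\theta(t)\,d\mu_{\hat{T}_{c,\theta}}(t).
\end{equation*}

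To conclude, let $\bar f$ be an arbitrary continuous function on $\hat{T}_{c,\theta}/\Omega_{c,\theta}$, let $F$ be the corresponding class function on $\hat{K}\rtimes\theta$ under \eqref{e:K-T-isom}, and let $f$ be the pullback of $\bar f$ to $\hat{T}_{c,\theta}$. By definition of the quotient measures the left-hand side equals $\int \bar f\,d\ST_\theta=\int f\,d\ST_{\theta,0}$, while the right-hand side is $\frac{1}{|\Omega_{c,\theta}|}\int f\,D_\theta\,d\mu_{\hat{T}_{c,\theta}}$. Since both measures are $\Omega_{c,\theta}$-invariant ($D_\theta$ is $\Omega_c$-invariant as a class function) and agree on all $\Omega_{c,\theta}$-invariant continuous test functions, the equality $\ST_{\theta,0}=\frac{1}{|\Omega_{c,\theta}|}D_\theta(t)\,\mu_{\hat{T}_{c,\theta}}$ follows. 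As a consistency check, taking $F\equiv 1$ recovers $\frac{1}{|\Omega_{c,\theta}|}\int D_\theta\,d\mu_{\hat{T}_{c,\theta}}=1$, matching the fact that $\mu_{\hat{K}_\theta^\natural}$ is a probability measure. The only non-routine ingredient is the surjectivity and Jacobian computation underlying the twisted integration formula; everything else is bookkeeping with quotient measures.
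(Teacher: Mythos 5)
Your overall strategy is the same as the paper's: the lemma is the twisted Weyl integration formula for $\hat{K}\rtimes\theta$ evaluated on twisted class functions, and the paper simply invokes that formula (in the form with an inner integral over $\hat{K}_{t\theta}\backslash \hat{K}$, $\hat{K}_{t\theta}$ the twisted centralizer) and then unwinds the definition of $\ST_{\theta,0}$. The gap is in your proof of the integration formula itself. Your parametrizing map $\Phi:\hat{K}/\hat{T}_c^\theta\times\hat{T}_c\to\hat{K}\rtimes\theta$ is not a map between equidimensional manifolds once $\theta$ acts nontrivially on $\hat{T}_c$ (which is the case for every nontrivial $\theta\in\Gamma_1$, since $\theta$ permutes the simple coroots): the source has dimension $\dim\hat{K}+\dim\hat{T}_c-\dim\hat{T}_c^\theta$. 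Its fibres are not finite --- since $s(t\rtimes\theta)s^{-1}=(s\theta(s)^{-1}t)\rtimes\theta$ for $s\in\hat{T}_c$, each fibre contains whole cosets of $(1-\theta)\hat{T}_c$ --- so the claim that the generic fibre has cardinality $|\Omega_{c,\theta}|$ is false, and ``the Jacobian of $\Phi$'' is not defined. The repair is to parametrize by $\hat{K}/\hat{T}_c^\theta\times\hat{T}_{c,\theta}$ (equal dimensions), but then one must decide how the block $\Lie(\hat{T}_c)/\Lie(\hat{T}_c^\theta)$ of $1-\ad(t\rtimes\theta)$ is apportioned between the density, the normalization of $\mu_{\hat{T}_{c,\theta}}$, and the volume given to the twisted centralizer $\hat{K}_{t\theta}\simeq\hat{T}_c^\theta$. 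That is exactly the point your ``bookkeeping'' defers, and it is the only content of the lemma beyond the untwisted case.

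The issue is quantitative, not cosmetic: your closing consistency check fails in genuinely twisted cases. Take $\hat{K}=SU(2)\times SU(2)$ with $\theta$ the swap. Then $|\Omega_{c,\theta}|=2$, $\hat{T}_{c,\theta}\simeq U(1)$ via $(t_1,t_2)\mapsto t_1t_2=e^{i\vartheta}$, and a direct computation gives $D_\theta(t)=2(1-e^{2i\vartheta})(1-e^{-2i\vartheta})=8\sin^2\vartheta$, so with probability Haar measures $\frac{1}{|\Omega_{c,\theta}|}\int_{\hat{T}_c}D_\theta\,d\mu_{\hat{T}_c}=2$, whereas the left-hand side of your intermediate formula with $F\equiv 1$ is $1$; the pushforward of $\mu_{\hat{K}}$ to twisted classes (the class of $(x,y)\theta$ is the conjugacy class of $xy$) pulls back to $2\sin^2\vartheta\,d\vartheta/2\pi$ on $\hat{T}_{c,\theta}$. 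The missing factor is exactly $\det\bigl(1-\theta\mid\Lie(\hat{T}_c)/\Lie(\hat{T}_c^\theta)\bigr)$, i.e.\ precisely the block that the dimension-mismatched Jacobian argument double counts (once inside $D_\theta$ and once in the measure $\mu_{\hat{T}_c}$, along directions in which the integrand is constant). So your first displayed formula over $\hat{T}_c$ with probability Haar measures cannot be literally correct, and the subsequent descent inherits the same normalization ambiguity. If you do not want to carry out this normalization analysis carefully (fixing the measure on $\hat{T}_{c,\theta}$ and the volume of $\hat{T}_c^\theta$ so that the constants match), then do what the paper does: quote the twisted Weyl integration formula in its standard form and only perform the class-function specialization; as written, your derivation of the key formula does not stand.
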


\begin{proof}
  The twisted Weyl integration formula tells us that
  for a continuous $f:\hat{K}\ra \C$,
$$\int_{\hat{K}} f(k)\mu_{\hat{K}}
= \frac{1}{|\Omega_{c,\theta}|} \int_{\hat{T}^{\mathrm{reg}}_{c,\theta}}
D_\theta(t) \int_{\hat{K}_{t\theta}\bs \hat{K}} f(x^{-1} t x^{\theta}) \cdot dx dt.$$
  Notice that $\hat{K}_{t\theta}$ is the twisted centralizer group of $t$ in $\hat{K}$
  (or, the centralizer group of $t\theta$ in $\hat{K}$).
  On the right hand side, $ \mu_{\hat{T}_{c,\theta}}$ is used for integration.
  When $f$ is a pullback from $\hat{K}^\natural_{\theta}$, the formula simplifies as
$$\int_{\hat{K}^\natural_{\theta}} f(k) \mu_{\hat{K}^\natural_{\theta}}
= \frac{1}{|\Omega_{c,\theta}|} \int_{\hat{T}^{\mathrm{reg}}_{c,\theta}}
D_\theta(t) f(t)\cdot \mu_{\hat{T}_{c,\theta}}$$
and the left hand side is equal to
$\int_{\hat{T}_{c,\theta}} f(t) \ST_{\theta,0}$ by definition.
\end{proof}

\subsection{Limit of the Plancherel measure versus the Sato-Tate measure}\label{sub:lim-of-Plan}

  Let $\theta,\tau\in \Gamma_1$.
  Then clearly $\Omega_{c,\theta}=\Omega_{c,\tau\theta\tau^{-1}}$,
  $\hat{K}^\natural_\theta \simeq \hat{K}^\natural_{\tau\theta\tau^{-1}}$
 via $k\mapsto \tau(k)$ and $\hat{T}_{c,\theta}\simeq \hat{T}_{c,\tau\theta\tau^{-1}}$ via $t\mapsto \tau(t)$.
 Accordingly $\ST_\theta$ and $\ST_{\theta,0}$ are identified with
 $\ST_{\tau\theta\tau^{-1}}$ and $\ST_{\tau\theta\tau^{-1},0}$, respectively.

 Fix once and for all a set of representatives $\mC(\Gamma_1)$ for conjugacy classes in $\Gamma_1$.
 For $\theta\in \mC(\Gamma_1)$, denote by $[\theta]$ its conjugacy class. For each finite place $v$
 such that $G$ is unramified over $F_v$, the geometric Frobenius $\Fr_v\in \Gal(F_v^{\ur}/F_v)$
 gives a well-defined conjugacy class $[\Fr_v]$ in $\Gamma_1$.
  The set of all finite places $v$ of $F$ where $G$ is unramified is partitioned into
  $$\{\cV_F(\theta)\}_{\theta\in \mC(\Gamma_1)}$$ such that
  $v\in \cV_F(\theta)$ if and only if $[\Fr_v]=[\theta]$.

   For each finite place $v$ of $F$,
  the unitary dual of $G(F_v)$ and its Plancherel measure are written as $G^{\wedge}_v$
  and $\pl_v$.
  Similarly adapt the notation of \S\ref{sub:notation-Plancherel} by
  appending the subscript $v$.
  Now fix $\theta\in \mC(\Gamma_1)$ and suppose that $G$ is unramified at $v$ and that
  $v\in \cV_F(\theta)$. We choose $\ol{F}\hra \ol{F}_v$ such that $\Fr_v$ has image $\theta$ in $\Gamma_1$
  (rather than some other conjugate). This rigidifies
  the identification in the second map below. (If $\Fr_v$ maps to $\tau\theta\tau^{-1}$ then the second map is twisted by $\tau$.)
 \beq\label{e:local-isom-v}G(F_v)^{\wedge,\ur,\temp}\stackrel{\mathrm{canonical}}{\simeq} \hat{T}_{c,\Fr_v}/\Omega_{c,\Fr_v}= \hat{T}_{c,\theta}/\Omega_{c,\theta}.\eeq
  By abuse of notation let $\plurtemp_{v}$ (a measure on $G(F_v)^{\wedge,\ur,\temp}$)
  also denote the transported measure
  on $\hat{T}_{c,\theta}/\Omega_{c,\theta}$.
  Let $C_v$ denote the constant of Proposition \ref{p:unr-plan-meas}, which
  we normalize such that $\plurtemp_{v,0}$ has total volume 1.
  Note that $\ST_{\theta}$ also has total volume 1.

\begin{prop}\label{p:lim-of-Plancherel} Fix any $\theta\in \mC(\Gamma_1)$.
As $v\ra\infty$ in $\cV_F(\theta)$,
  we have weak convergence $\plurtemp_{v}\ra \ST_{\theta}$ as $v\ra \infty$.
\end{prop}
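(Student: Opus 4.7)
The plan is to compare the densities of $\plurtemp_v$ and $\ST_\theta$ directly, using the explicit formulas of Proposition~\ref{p:unr-plan-meas} and Lemma~\ref{l:ST-measure}. Thanks to the chosen embedding $\overline F \hookrightarrow \overline F_v$ and the identification \eqref{e:local-isom-v}, both measures live on the common compact space $\hat{T}_{c,\theta}/\Omega_{c,\theta}$ as probability measures. It therefore suffices to prove uniform convergence of the densities of their pullbacks $\plurtemp_{v,0}$ and $\ST_{\theta,0}$ to $\hat T_{c,\theta}$, both of which are probability measures by construction.

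Set
\[
N(t) := \det\bigl(1-\ad(t\rtimes\theta)\mid \Lie(\hat{G})/\Lie(\hat{T}^\theta)\bigr),
\qquad
D_v(t) := \det\bigl(1-q_v^{-1}\ad(t\rtimes\theta)\mid \Lie(\hat{G})/\Lie(\hat{T}^\theta)\bigr).
\]
By Proposition~\ref{p:unr-plan-meas}, $\plurtemp_{v,0}$ has density $C_v\, N(t)/D_v(t)$ relative to the Haar probability measure on $\hat T_{c,\theta}$. Since $t\in \hat T_c$ and $\theta$ has finite order, $\ad(t\rtimes\theta)$ preserves the compact real form $\Lie(\hat K)$; combined with $\Lie(\hat G)=\Lie(\hat K)\otimes_{\BmR}\BmC$ and $\Lie(\hat T^\theta)=\Lie(\hat T_c^\theta)\otimes_{\BmR}\BmC$, the identity $\det_{\BmC}(A_{\BmC})=\det_{\BmR}(A)$ for complexifications of $\BmR$-linear operators yields $N(t)=D_\theta(t)$, matching the numerator of the Sato--Tate density in Lemma~\ref{l:ST-measure}. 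Moreover, the eigenvalues of $\ad(t\rtimes\theta)$ on $\Lie(\hat G)/\Lie(\hat T^\theta)$ all lie on the unit circle (as $t$ ranges over the compact torus $\hat T_c$ and $\theta$ has finite order), so $|D_v(t)-1|\le B\, q_v^{-1}$ for some constant $B$ depending only on $G$, uniformly in $t\in \hat T_{c,\theta}$.

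Combining these observations, the ratio of the two pullback densities equals $C_v\,|\Omega_{c,\theta}|\,(1+O(q_v^{-1}))$ uniformly in $t$ as $v\to\infty$ in $\cV_F(\theta)$. Since both $\plurtemp_{v,0}$ and $\ST_{\theta,0}$ are probability measures, equating total masses forces $C_v\,|\Omega_{c,\theta}|\to 1$; consequently the densities converge uniformly to those of $\ST_{\theta,0}$, and hence $\plurtemp_v\to\ST_\theta$ weakly on $\hat T_{c,\theta}/\Omega_{c,\theta}$. The only delicate step is the identification $N(t)=D_\theta(t)$, which hinges on the compatibility of complex and real determinants under the real form $\hat K\subset \hat G$; the rest amounts to a straightforward estimate of the Euler factor at $q_v^{-1}$ as $q_v\to \infty$.
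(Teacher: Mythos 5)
Your proof is correct and follows essentially the same route as the paper: compare the explicit densities from Proposition~\ref{p:unr-plan-meas} and Lemma~\ref{l:ST-measure} on $\hat T_{c,\theta}$, note the Euler factor $\det(1-q_v^{-1}\ad(t\rtimes\theta)|\cdot)\to 1$ uniformly, and pin down the normalizing constant by comparing total masses of the two probability measures. The only difference is that you spell out what the paper treats as "clear from the formula" — namely the identification $\det_{\BmC}(1-\ad(t\rtimes\theta)\,|\,\Lie(\hat G)/\Lie(\hat T^\theta))=D_\theta(t)$ via the real/complex determinant compatibility, and your constant comes out as $C_v|\Omega_{c,\theta}|\to 1$ rather than $C_v\to 1$ only because you fix the reference measure to be the probability Haar pushforward, a harmless change of normalization.
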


\begin{proof}
  It is enough to show that $\plurtemp_{v,0}\ra \ST_{\theta,0}$ on $\hat{T}_{c,\theta}$ as $v$ tends to $\infty$
   in $\cV_F(\theta)$.
   Consider the measure $\plurtemp_{v,1}:=C_v^{-1}\plurtemp_{v,0}$.
   It is clear from the formula of Proposition \ref{p:unr-plan-meas}
   that $\plurtemp_{v,1}\ra \ST_{\theta}$ as $v\ra\infty$ in $\cV_F(\theta)$.
   In particular, the total volume of $\plurtemp_{v,1}$ tends to 1, hence
   $C_v\ra 1$ as $v\ra \infty$  in $\cV_F(\theta)$.
   We conclude that $\plurtemp_{v,0}\ra \ST_{\theta,0}$ as desired.
%  and $\hat{\mu}_0=\prod_{\alpha\in \Phi} (1-t^{-\alpha}) \cdot \mu_{\hat{T}_c}$.
%  It is not hard to see that the limit of $\hat{\mu}_v$ as $v\ra \infty$ is $\hat{\mu}$.
%  In particular, $C_v=\hat{\mu}_{v,0}(\hat{T}_c)^{-1}$ tends to $C:=\hat{\mu}_0(\hat{T}_c)^{-1}$.
%  Thus the limit of $\plurtemp_{v,0}=C_v\hat{\mu}_{v,0}$ as $v\ra \infty$ is
%  $C\hat{\mu}_0$. The comparison with Lemma \ref{l:ST-measure} reveals that
%  $\ST_0=C\hat{\mu}_0$ since they differ by a constant multiple and have total volume 1.
\end{proof}

\begin{rem}
  The above proposition was already noticed by Sarnak for $G=SL(n)$ in \cite[\S4]{Sarnak87}.
\end{rem}

\subsection{The generalized Sato-Tate problem}\label{sub:gen-ST-conj}

Let $\pi$ be a cuspidal\footnote{If $\pi$ is not cuspidal then the hypothesis is never supposed to be satisfied.} tempered automorphic representation of $G(\A_F)$ satisfying

\medskip\noindent
\textbf{Hypothesis.}
The conjectural global $L$-parameter $\varphi_\pi$ for $\pi$ has Zariski dense image in ${}^L G_{F_1/F}$.
\medskip

  Of course this hypothesis is more philosophical than practical. The global Langlands correspondence between ($L$-packets of) automorphic representations and global $L$-parameters of $G(\A_F)$ is far from established. A fundamental problem here is that global $L$-parameters cannot be defined unless the conjectural global Langlands group is defined.  (Some substitutes have been proposed by Arthur in the case of classical groups.  The basic idea is that a cuspidal automorphic representation of $\GL_n$ can be put in place of an irreducible $n$-dimensional representation of the global Langlands group.) Nevertheless, the above hypothesis can often be replaced with another condition, which should be equivalent but can be stated without reference to conjectural objects.  For instance, when $\pi$ corresponds to a Hilbert modular form of weight$\ge 2$ at all infinite places, one can use the hypothesis that it is not a CM form (i.e. not an automorphic induction from a Hecke character over a CM field).

  Let us state a general form of the Sato-Tate conjecture. Let $q_v$ denote the cardinality of the residue field cardinality at a finite place $v$ of $F$. Define $\cV_F(\theta,\pi)^{\le x}:=\{v\in\cV_F(\theta,\pi): q_v\le x\}$ for $x\in \R_{\ge 1}$.

\begin{conj}\label{c:gen-ST}
% (Generalized Sato-Tate conjecture, cf. \cite{Ser94})
  Assume the above hypothesis. For each $\theta\in \mC(\Gamma_1)$, let $\cV_F(\theta,\pi)$ be the subset of $v\in \cV_F(\theta)$ such that $\pi_v$ is unramified. Then $\{\pi_v\}_{v\in \cV_F(\theta,\pi)}$ are equidistributed according to $\ST_\theta$. More precisely $$\frac{1}{|\cV_F(\theta,\pi)^{\le x}|} \sum_{v\in \cV_F(\theta,\pi)^{\le x}} \delta_{\pi_v} \ra \ST_\theta\quad\mbox{as}\quad x\ra\infty.$$
\end{conj}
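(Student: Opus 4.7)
The plan is to follow the classical strategy of attacking Sato--Tate equidistribution via $L$-functions and Tauberian theorems, adapted to the setting of a general reductive $G$ and a fixed Frobenius class $\theta \in \mC(\Gamma_1)$. By the Peter--Weyl theorem applied to the compact group $\hat{K} \rtimes \langle \theta \rangle$, continuous functions on $\hat{K}^\natural_\theta \simeq \hat{T}_{c,\theta}/\Omega_{c,\theta}$ are uniformly approximated by finite linear combinations of characters $\mathrm{tr}\, r|_{\hat{K} \rtimes \theta}$, where $r$ runs over irreducible representations of $\hat{G} \rtimes \langle \theta \rangle$ (equivalently, over representations of ${}^L G$ that factor through a finite extension). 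By the Weyl criterion, it suffices to show that for every such $r$,
\[
\frac{1}{|\cV_F(\theta,\pi)^{\le N}|} \sum_{v \in \cV_F(\theta,\pi)^{\le N}} \mathrm{tr}\, r(\pi_v \rtimes \mathrm{Fr}_v) \;\longrightarrow\; \int_{\hat{T}_{c,\theta}/\Omega_{c,\theta}} \mathrm{tr}\, r \; d\ST_\theta \quad (N \to \infty).
\]

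First I would treat the case where $r$ is trivial on $\hat{G}$, so that $r$ factors through $\Gal(F_1/F)$. Then the left-hand side is the average of $\mathrm{tr}\, r(\mathrm{Fr}_v)$ restricted to $v \in \cV_F(\theta)$, and both sides equal $\mathrm{tr}\, r(\theta)$ by the Cebotarev density theorem (since $\cV_F(\theta)$ has positive density $|[\theta]|/|\Gamma_1|$ and the restriction of $\ST_\theta$ to class functions on $\Gamma_1$ is the Dirac mass at $[\theta]$, in view of Lemma~\ref{l:ST-measure}). For nontrivial $r$ with $r|_{\hat{G}}$ nontrivial, Schur orthogonality on $\hat{K} \rtimes \langle \theta \rangle$ forces the integral against $\ST_\theta$ to vanish, so the task reduces to showing
\[
\sum_{v \in \cV_F(\theta,\pi)^{\le N}} \mathrm{tr}\, r(\pi_v \rtimes \mathrm{Fr}_v) \;=\; o\bigl(|\cV_F(\theta,\pi)^{\le N}|\bigr).
\]

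The next step is to associate the $L$-function $L(s,\pi,r)$ whose Euler factor at a good unramified $v$ is $\det(1 - r(\pi_v \rtimes \mathrm{Fr}_v) q_v^{-s})^{-1}$. Writing $\log L(s,\pi,r) = \sum_v \sum_{k \ge 1} k^{-1} \mathrm{tr}\, r(\pi_v \rtimes \mathrm{Fr}_v)^k q_v^{-ks}$, the logarithmic derivative encodes precisely the sums we need to bound. By Langlands functoriality, the hypothesis of Zariski-density of $\varphi_\pi$ together with the irreducibility and nontriviality of $r|_{\hat{G}}$ should imply that $r \circ \varphi_\pi$ is itself irreducible and nontrivial, and that $L(s,\pi,r)$ coincides with the principal $L$-function $L(s,\Pi)$ of a cuspidal automorphic representation $\Pi$ of $\GL(d,\BmA_F)$ with $d = \dim r$, which is not of the form $|\cdot|^{it}$. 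Then by Jacquet--Shalika the $L$-function is holomorphic and nonvanishing on $\mathrm{Re}(s) = 1$, and a Wiener--Ikehara style Tauberian argument (refined to pick out the Frobenius class $\theta$ by inserting finitely many twists by characters of $\Gal(F_1/F)$ as in Cebotarev) yields $\sum_{v \in \cV_F(\theta,\pi)^{\le N}} \mathrm{tr}\, r(\pi_v \rtimes \mathrm{Fr}_v) \log q_v = o(N)$; partial summation then gives the desired estimate, since higher-power terms $k \ge 2$ contribute absolutely convergent sums using the bounds toward Ramanujan~\eqref{towardsR}.

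The hard part, and indeed the reason the statement remains conjectural in this generality, is precisely the functoriality input: one must know that $L(s, \pi, r)$ is an isobaric automorphic $L$-function on some $\GL(d)$, with the standard analytic continuation, functional equation, and nonvanishing on $\mathrm{Re}(s)=1$. For $G = \GL(1)$ this is Dirichlet--Hecke; for $G = \GL(2)$ and $r$ a symmetric power, it is the deep potential automorphy theorem of~\cite{BLGHT11}. In the generality of Conjecture~\ref{c:gen-ST} no unconditional result is known, and the Zariski-density hypothesis on $\varphi_\pi$ is exactly the input one needs to ensure that the class of $r$'s considered gives rise to nontrivial $L$-functions with the requisite nonvanishing. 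All remaining steps (Peter--Weyl reduction, Cebotarev for the Galois-type component, Tauberian extraction from the Euler product) are then standard.
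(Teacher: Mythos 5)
The statement you are addressing is a conjecture: the paper gives no proof of it, only the remark immediately following it that the equidistribution would be "essentially a consequence of the Langlands functoriality conjecture," since automorphy (and hence nonvanishing on $\mathrm{Re}(s)=1$) of the $L$-functions $L(s,\pi,r)$ for irreducible $r$ would yield the result by Theorem 1 of the appendix A.2 of \cite{Ser68l}. Your conditional argument — Weyl criterion via twisted Peter--Weyl, splitting off the $r$ trivial on $\hat G$ case, and extracting the vanishing of the character sums from automorphy, Jacquet--Shalika nonvanishing and a Tauberian theorem — is exactly this route spelled out, and you correctly identify functoriality as the precise missing ingredient that keeps the statement conjectural.
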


  The above conjecture is deemed plausible in that it is essentially a consequence of the Langlands functoriality conjecture at least when $G$ is (an inner form of) a split group. Namely if we knew that the $L$-function $L(s,\pi,\rho)$ for any irreducible representation ${}^L G\ra \GL_d$ were a cuspidal automorphic $L$-function for $\GL_d$ then the desired equidistribution is implied by Theorem 1 of \cite[App A.2]{Ser68l}.
%  (*** NEED TO BE JUSTIFIED in NON-SPLIT CASE. ***)

\begin{rem}
  In general when the above hypothesis is dropped, it is likely that $\pi$ comes from an automorphic representation on a smaller group than $G$.  (If $\varphi_\pi$ factors through an injective $L$-morphism ${}^L H_{F_1/F}\ra {}^L G_{F_1/F}$ then the Langlands functoriality predicts that $\pi$ arises from an automorphic representation of $H(\A_F)$.) Suppose that the Zariski closure of $\im(\varphi_\pi)$ in ${}^L G_{F_1/F}$ is isomorphic to ${}^L H_{F_1/F}$ for some connected reductive group $H$ over $F$.  (In general the Zariski closure may consist of finitely many copies of ${}^L H_{F_1/F}$.) Then $\{\pi_v\}_{v\in \cV_F(\theta,\pi)}$ should be equidistributed according to the Sato-Tate measure belonging to $H$ in order to be consistent with the functoriality conjecture.
\end{rem}

  One can also formulate a version of the conjecture where $v$ runs over the set of \emph{all} finite places where $\pi_v$ are unramified by considering conjugacy classes in ${}^L G_{F_1/F}$ rather than those in $\hat{G}\rtimes \theta$ for a fixed $\theta$. For this let $\hat{K}^\natural$ denote the quotient of $\hat{K}$ by the equivalence relation coming from the conjugation by $\hat{K}\rtimes \Gamma_1$. Since $\hat{K}^\natural$ is isomorphic to a suitable quotient of $\hat{T}_c$, the Haar measure on $\hat{K}$ gives rise to a measure, to be denoted $\ST$, on the quotient of $\hat{T}_c$. Let $\cV_F(\pi)^{\le x}$ (where $x\in \R_{\ge 1}$) denote the set of finite places of $F$ such that $\pi_v$ are unramified and $q_v\le x$. By writing $v\ra\infty$ we mean that $q_v$ tends to infinity.

\begin{conj}\label{c:gen-ST-2}
   Assume the above hypothesis. Then as $x\ra\infty$ the set $\{\pi_v\,:\,v\in \cV_F(\pi)^{\le x}\}$
  is equidistributed according to $\ST_{\theta}$. Namely
  $$\frac{1}{|\cV_F(\pi)^{\le x}|}
  \sum_{v\in \cV_F(\pi)^{\le x}} \delta_{\pi_v}
  \ra \ST\quad\mbox{as}\quad x\ra\infty.$$
\end{conj}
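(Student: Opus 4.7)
The plan is to adapt the framework of Serre's equidistribution theorem \cite[App.~A.2, Thm.~1]{Ser68l}, which translates a Sato--Tate statement for $\{\pi_v\}$ in $\hat{K}^\natural$ into an analytic statement about the associated $L$-functions. By the Peter--Weyl theorem applied to the compact group $\hat{K}\rtimes \Gamma_1$, continuous conjugation-invariant functions on $\hat{K}$ are uniformly approximated by characters $\tr r$ of finite-dimensional continuous representations $r:{}^LG_{F_1/F} \to \GL_d(\BmC)$ with bounded image. Since $\int \tr r \, d\ST$ equals $1$ when $r$ is trivial and vanishes otherwise (Schur orthogonality), Conjecture~\ref{c:gen-ST-2} reduces to the claim that for every non-trivial irreducible such $r$,
\[
\lim_{N\to \infty} \frac{1}{|\cV_F(\pi)^{\le N}|}\sum_{v\in \cV_F(\pi)^{\le N}} \tr r(\pi_v) = 0,
\]
where $\tr r(\pi_v)$ is obtained from the Satake parameter of $\pi_v$ via Lemma~\ref{l:unr-temp-spec}.

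The next step is to invoke the Langlands functoriality principle to transfer $\pi$ through $r$, producing an isobaric automorphic representation $\Pi := r_*\pi$ of $\GL(d,\BmA_F)$ whose Satake parameters match $r(\pi_v)$ at almost every $v$. I would then use the Zariski density hypothesis on the conjectural global $L$-parameter $\varphi_\pi$, together with the irreducibility and non-triviality of $r$ on ${}^LG_{F_1/F}$, to force $r\circ \varphi_\pi$ to be an irreducible non-trivial representation of the conjectural Langlands group. This guarantees that in the isobaric decomposition $\Pi = \Pi_1 \boxplus \cdots \boxplus \Pi_m$ no summand $\Pi_i$ is a trivial Hecke character $|\cdot|^{it}$, so $L(s,\pi,r) = \prod_i L(s,\Pi_i)$ extends holomorphically to $\MRe s \ge 1$ and is nonvanishing on $\MRe s = 1$ by the standard Jacquet--Shalika nonvanishing theorem for cuspidal $\GL_d$ $L$-functions.

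A standard Wiener--Ikehara Tauberian argument then extracts from the analytic behaviour of $L(s,\pi,r)$ near $s=1$ the estimate
\[
\sum_{q_v \le N} \tr r(\pi_v) = o(N/\log N),
\]
after isolating inside $\log L(s,\Pi)$ the principal Dirichlet series $\sum_v \tr r(\pi_v) q_v^{-s}$ from the higher prime-power contributions, which converge absolutely on $\MRe s > 1/2$ thanks to the bounds towards Ramanujan (Proposition~\ref{prop:tempered}, valid since $\pi$ is assumed tempered and $r$ has bounded image). Combined with $|\cV_F(\pi)^{\le N}| \sim N/\log N$ coming from the ordinary prime number theorem in $F$, this yields the required limit, and dualizing via Peter--Weyl completes the argument.

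The hard part lies entirely in the two assumed ingredients. First, Langlands functoriality is needed to even define $L(s,\pi,r)$ as a well-behaved $\GL_d$-automorphic $L$-function; without it, one only controls finitely many local factors and has no global continuation or functional equation at one's disposal. Second, a rigorous use of the Zariski density hypothesis presupposes a satisfactory construction of the global parameter $\varphi_\pi$, which in turn requires the (unknown) global Langlands group; absent this, one must replace the density hypothesis by an equivalent statement intrinsic to $\pi$ (such as ``$\pi$ is not obtained by functorial transfer from any proper $L$-subgroup''), and carefully verify that the analogue of ``no trivial summand in $\Pi$'' still survives. For these reasons the argument is unconditional only in settings where $r_*\pi$ can be constructed by hand, e.g.\ via cyclic base change, automorphic induction from solvable extensions, or the symmetric power liftings known for $\GL_2$ covered in~\cite{BLGHT11,BLGG11}.
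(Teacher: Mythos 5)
There is an important mismatch of expectations here: the statement you were asked about is labelled a \emph{Conjecture} in the paper, and the paper supplies no proof of it — immediately after stating it the authors say that Conjecture~\ref{c:gen-ST-2} ``will not be considered any more in our paper.'' The only justification offered is the remark following Conjecture~\ref{c:gen-ST}: the equidistribution would be ``essentially a consequence of the Langlands functoriality conjecture,'' since if $L(s,\pi,r)$ were known to be an automorphic $L$-function for $\GL_d$ for every irreducible $r$, the conclusion would follow from Theorem~1 of \cite{Ser68l}*{App.~A.2}. Your proposal is precisely that conditional argument (Peter--Weyl/Weyl criterion, functorial transfer $r_*\pi$, nonvanishing and holomorphy of $L(s,\pi,r)$ on $\MRe s=1$, Wiener--Ikehara), so it matches the paper's own rationale for plausibility; but it is not, and at present cannot be, a proof — as you yourself acknowledge, it is contingent on Hypothesis-level inputs (functoriality, and a rigorous meaning for the global parameter $\varphi_\pi$ and its Zariski density), which is exactly why the statement is a conjecture rather than a theorem.

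Two smaller points to be careful about if you ever write the conditional argument in full. First, since Conjecture~\ref{c:gen-ST-2} averages over \emph{all} unramified places, the Weyl criterion must also be verified for irreducible $r$ that are trivial on $\widehat G$ but nontrivial on $\Gamma_1$; there the relevant input is not automorphic functoriality but the nonvanishing of Artin $L$-functions at $\MRe s=1$ (equivalently Chebotarev), and the test functions are traces on the twisted cosets $\hat K\rtimes\theta$, not just on $\hat K$. Second, Serre's theorem requires $L(s,\Pi)$ to be holomorphic and nonvanishing on the whole line $\MRe s=1$, so one must rule out isobaric summands $\Pi_i\simeq|\cdot|^{it}$ for \emph{every} real $t$ (or handle them directly via $\sum_v q_v^{-it}=o(N/\log N)$), which is slightly more than the ``no trivial summand'' statement that Zariski density of $\varphi_\pi$ most naturally yields; this gap is harmless for the equidistribution conclusion but should be addressed explicitly.
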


\begin{rem}
  Unlike Conjecture \ref{c:gen-ST} it is unnecessary to choose embeddings $\ol{F}\hra \ol{F}_v$ to rigidify \eqref{e:local-isom-v} since the ambiguity in the rigidification is absorbed in the conjugacy classes in ${}^L G_{F_1/F}$. The formulation of Conjecture \ref{c:gen-ST-2} might be more suitable than the previous one in the motivic setting where we would not want to fix $\ol{F}\hra \ol{F}_v$.
   The interested reader may compare Conjecture \ref{c:gen-ST-2}
  with the motivic Sato-Tate conjecture of \cite[13.5]{Ser94}.
%$$\xymatrix{
%\mL_F \ar[d] \ar[r]^-{\varphi} & {}^L G \ar[r]^-{r}  & \GL(V) \\
%\fkG_{\Mot}  \ar[r] & \fkG_M \ar[ur]_-{\mathrm{real.}} }$$
\end{rem}

  The next subsection will discuss the analogue of Conjecture \ref{c:gen-ST} for automorphic families.
    Conjecture \ref{c:gen-ST-2} will not be considered any more in our paper. It is enough to mention that the analogue of the latter conjecture for families of algebraic varieties makes sense and appears to be interesting.

\subsection{The Sato-Tate conjecture for families}\label{sub:ST-families}

  The Sato-Tate conjecture has been proved for Hilbert modular forms in
  (\cite{BLGHT11}, \cite{BLGG11}). Analogous equidistribution
theorems in the function field setting are due to Deligne and Katz. (See \cite[Thm 9.2.6]{book:KS}
for instance.)
  Despite these fantastic developments, we have little unconditional theoretical evidence for
  the Sato-Tate conjecture for general reductive groups over number fields.
  On the other hand, it has been noticed
  that the analogue of the
  Sato-Tate conjecture for families of automorphic representations is more
  amenable to attack. Indeed there was some success in the case of holomorphic modular forms and Maass forms
  (\cite[Thm 2]{CDF97} \cites{Royer:dimension-rang,ILS00,Serre:pl}).
  The conjecture has the following coarse form, which should be thought of
  as a guiding principle rather than a rigorous conjecture. Compare with some precise results
  in \S\ref{sub:ST-theorem}.

\begin{heur}\label{c:ST-families}
   Let $\{\cF_k\}_{k\ge1}$ be a ``general'' sequence of
   finite families of automorphic representations of $G(\A_F)$ such that
  $|\cF_k|\ra \infty$ as $k\ra \infty$. Then $\{\pi_v\in \cF_k\}$
   are equidistributed according to $\ST_{\theta}$ as $k$ and $v$ tend to infinity
   subject to the conditions that $v\in \cV_F(\theta)$ and that
  all members of $\cF_k$ are unramified at $v$.

\end{heur}

  We are not going to make precise what ``general'' means, but merely remark
  that it should be the analogue of the condition that
  the hypothesis of \S\ref{sub:gen-ST-conj} holds for the ``generic fiber''
  of the family when the family has a geometric meaning (see also~\cite{Sarn:family}). In practice one would verify
  the conjecture for many interesting families while simply ignoring the word ``general''.
  %In some sense any family may be considered general if the conjecture holds for that family.
  Some relation between $k$ and $v$ holds when taking limit: $k$ needs to grow fast enough compared to $v$ (or more precisely $\abs{\cF_k}$ needs to grow fast enough compared to $q_v$).

  It is noteworthy that the unpleasant hypothesis of \S\ref{sub:gen-ST-conj}
  can be avoided for families.
  Also note that the temperedness
  assumption is often unnecessary due to the fact that the Plancherel measure is supported on the
  tempered spectrum. This is an indication that most representations in a family are globally tempered, which we will return to in a subsequent work.

  Later we will
  verify the conjecture for
  many families in \S\ref{sub:ST-theorem}
   as a corollary to the automorphic
  Plancherel theorem proved earlier in \S\ref{s:aut-Plan-theorem}.
  Our families arise as the sets of all automorphic representations with increasing level or weight, possibly with prescribed local conditions
  at finitely many fixed places.

\section{Background materials}
\label{s:bg}

  This section collects background materials in the local and global contexts. Subsections \ref{sub:orb-int-const-term} and \ref{sub:lem-ram} are concerned with $p$-adic groups while \S\ref{sub:st-disc}, \S\ref{sub:EP} and \S\ref{sec:b:fs} are with real and complex Lie groups. The rest is about global reductive groups.

\subsection{Orbital integrals and constant terms}\label{sub:orb-int-const-term}
  We introduce some notation in the $p$-adic context.
\bit
\item $F$ is a finite extension of $\Q_p$ with integer ring $\cO$ and multiplicative valuation~$|\cdot|$.
\item $G$ is a connected reductive group over $F$.
\item $A$ is a maximal $F$-split torus of $G$, and put $M_0:=Z_G(A)$.
\item $K$ is a maximal compact subgroup of $G$
corresponding to a special point in the apartment for $A$.
% which is good (\cite[Thm 5]{HC70}, cf. \cite[4.4]{BT72}).
\item $P=MN$ is a parabolic subgroup of $G$ over $F$, with $M$ and $N$ its Levi subgroup and
unipotent radical, such that $M\supset M_0$.

\item $\gamma\in G(F)$ is a semisimple element. (The case of a non-semisimple element
  is not needed in this paper.)
\item $I_\gamma$ is the neutral component
  of the centralizer of $\gamma$ in $G$. Then $I_\gamma$ is a connected reductive group over $F$.
\item $\mu_G$ (resp. $\mu_{I_\gamma}$) is a Haar measure on $G(F)$ (resp. $I_\gamma(F)$).
\item $\frac{\mu_G}{\mu_{I_\gamma}}$ is
  the quotient measure on $I_\gamma(F)\bs G(F)$ induced by $\mu_G$ and $\mu_{I_\gamma}$.
\item $\phi\in C^\infty_c(G(F))$.
\item $D^G(\gamma):=\prod_{\alpha} |1-\alpha(\gamma)|$ for a semisimple $\gamma\in G(F)$, where $\alpha$ runs over the set of roots of $G$ (with respect to any maximal torus in the connnected centralizer of $\gamma$ in $G$) such that $\alpha(\gamma)\neq 1$. Let $M$ be an $F$-rational Levi subgroup of $G$. For a semisimple $\gamma\in G(F)$, we define $D^G_M(\gamma)$ similarly by further excluding those $\alpha$ in the set of roots of $M$.
\eit
  Define the orbital integral
  $$O^{G(F)}_\gamma(\phi,\mu_G,\mu_{I_\gamma}):=
  \int_{I_\gamma(F)\bs G(F)} \phi(x^{-1}\gamma x) \frac{\mu_G}{\mu_{I_\gamma}}.$$
  When the context is clear, we use $O_\gamma(\phi)$ as a shorthand notation.

  We recall the theory of constant terms (cf. \cite[p.236]{vDi72}).
  Choose Haar measures $\mu_K$, $\mu_M$, $\mu_N$,
  on $K$, $M(F)$, $N(F)$, respectively, such that $\mu_G=\mu_K\mu_M\mu_N$ holds
  with respect to $G(F)=KM(F)N(F)$.
  Define the (normalized) constant term $\phi_M\in C^\infty_c(M(F))$ by
  \beq\label{e:const-term-formula}\phi_{M}(m)=\delta_P^{1/2}(m)\int_{N(F)}\int_{K} \phi(kmnk^{-1})\mu_K \mu_N.\eeq
  Although the definition of $\phi_M$ involves not only $M$ but $P$, the following lemma
  shows that the orbital integrals of $\phi_M$ depend only on $M$ by the density of
  regular semisimple orbital integrals, justifying our notation.
\begin{lem}\label{l:orb-int-const-term}
  For all $(G,M)$-regular semisimple $\gamma\in M(F)$,
  $$O_\gamma(\phi_{M},\mu_{M},\mu_{I_\gamma})=D^G_M(\gamma)^{1/2}
  O_\gamma(\phi,\mu_{G},\mu_{I_\gamma}).$$
\end{lem}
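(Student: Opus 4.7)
The plan is to unfold the definition of $\phi_M$ inside the orbital integral and reorganize the iterated integrations using the Iwasawa decomposition $G(F) = K M(F) N(F)$, so that everything collapses to an orbital integral of $\phi$ on $G(F)$ up to an explicit Jacobian. More precisely, starting from
\[
O_\gamma(\phi_M,\mu_M,\mu_{I_\gamma})
= \int_{I_\gamma(F)\backslash M(F)} \phi_M(m^{-1}\gamma m)\,\frac{\mu_M}{\mu_{I_\gamma}},
\]
I would substitute \eqref{e:const-term-formula}, interchange the integrations (the compact supports justify this), and combine the outer $M(F)$-integration with the inner $K$- and $N(F)$-integrations using the decomposition $\mu_G = \mu_K\mu_M\mu_N$. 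This rewrites the integrand as $\phi(k\,m^{-1}\gamma m\, n\, k^{-1})$ integrated against $\mu_K\mu_M\mu_N$, modulo the quotient by $I_\gamma(F)$ on the left.

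The second step is the key change of variable on the unipotent radical. For $(G,M)$-regular semisimple $\gamma\in M(F)$, the connected centralizer $I_\gamma$ is contained in $M$ and moreover $\Ad(\gamma)-\id$ is invertible on $\Lie N$. Hence the map $N(F)\to N(F)$, $n\mapsto (m^{-1}\gamma m)^{-1}\cdot n^{-1}\cdot (m^{-1}\gamma m)\cdot n$ (or more symmetrically, the map computing $m^{-1}\gamma m\cdot n = n'\cdot (n'^{-1}\cdot m^{-1}\gamma m\cdot n'\cdot n'^{-1}n)$) is a diffeomorphism whose Jacobian with respect to $\mu_N$ equals $|\det(\Ad(\gamma)-\id \mid \Lie N(F))|$. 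Combined with the analogous factor on $\Lie\bar N$ and the modular character $\delta_P(m)^{1/2}$, this produces the factor $|D^G_M(\gamma)|^{1/2}$, where
\[
|D^G_M(\gamma)| = \left|\det\bigl(\Ad(\gamma)-\id \mid \Lie G/\Lie M\bigr)\right|.
\]
After this substitution, the iterated integral reassembles, via Iwasawa, into
\[
|D^G_M(\gamma)|^{1/2}\int_{I_\gamma(F)\backslash G(F)} \phi(x^{-1}\gamma x)\,\frac{\mu_G}{\mu_{I_\gamma}},
\]
which is exactly the claimed identity.

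The main technical obstacle is bookkeeping: one must check that the compatibility $\mu_G=\mu_K\mu_M\mu_N$ together with the $(G,M)$-regularity hypothesis really lets one identify $I_\gamma$ as the common centralizer in $M$ and in $G$ (so that no extra factor appears when passing between $I_\gamma(F)\backslash M(F)$ and $I_\gamma(F)\backslash G(F)$), and that the Jacobian of the relevant conjugation map on $N(F)$ produces precisely $|D^G_M(\gamma)|$ after absorbing $\delta_P(m)^{1/2}$. This is a standard but delicate computation; it appears essentially in van Dijk \cite{vDi72} and the lemma statement attributes the formulation to that reference, so once the change of variable is set up carefully the identity follows. Finally, the fact that $\phi_M$ is only well-defined up to choice of $P$ (not merely $M$) is harmless for orbital integrals by the density of regular semisimple orbital integrals on $M(F)$, which justifies the notation $\phi_M$.
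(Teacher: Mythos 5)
Your sketch is correct in substance, but note that the paper does not actually prove this lemma — its "proof" is just the citation to van Dijk \cite{vDi72}*{Lem 9}, together with the remark that Lemma 8 of loc.\ cit.\ lets one relax "regular" to "$(G,M)$-regular"; what you have written is precisely the standard descent computation underlying that citation (Iwasawa decomposition, interchange of integrals, change of variable on $N(F)$), so your route and the paper's are the same in content. One bookkeeping point to tighten: the factor $|D^G_M(\gamma)|^{1/2}$ arises from the \emph{single} substitution $n\mapsto \gamma_m^{-1}n^{-1}\gamma_m n$ on $N(F)$ (with $\gamma_m=m^{-1}\gamma m$), whose Jacobian is $|\det(\Ad(\gamma^{-1})-1\mid \Lie N)|$, combined with $\delta_P(m^{-1}\gamma m)^{1/2}=\delta_P(\gamma)^{1/2}$; there is no separate integration or change of variable on $\bar N$ — the "$\bar N$ contribution" enters only through the identities $|\det(1-\Ad\gamma\mid\Lie N)|\cdot|\det(1-\Ad\gamma\mid\Lie \bar N)|=|D^G_M(\gamma)|$ and $|\det(1-\Ad\gamma\mid\Lie \bar N)|=\delta_P(\gamma)^{-1}|\det(1-\Ad\gamma\mid\Lie N)|$, which is exactly how the modular character cancels.
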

\begin{proof}
  \cite[Lem 9]{vDi72}. (Although the lemma is stated for regular elements $\gamma\in G$,
  it suffices to require $\gamma$ to be $(G,M)$-regular. See Lemma 8 of loc. cit.)
\end{proof}
  It is standard that
  the definition and facts we have recollected above extend to the adelic case. (Use \cite[\S\S7-8]{Kot86},
  for instance). We will skip rewriting the analogous definition in the adelic setting.

  Now we restrict ourselves to the local unramified case.
  Suppose that $G$ is unramified over $F$.
  Let $B\subset P \subset G$ be Borel and parabolic subgroups defined over $F$.
  Write $B=TU$ and $P=MN$ where $T$ and $M$ are Levi subgroups such that $T\subset M$ and $U$ and $N$ are unipotent radicals.

\begin{lem}\label{l:const-term-on-unram-Hecke} Let $\phi\in \cH^{\ur}(G)$.
Then $\cS^G_M(\phi)=\phi_M$, in particular
$\cS^G(\phi)= \cS^M(\cS^G_M \phi)=\phi_T$.
\end{lem}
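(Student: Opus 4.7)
The plan is to exploit the $K$-bi-invariance of $\phi\in\cH^{\ur}(G)$ in order to collapse the inner integration over $K$ in the formula \eqref{e:const-term-formula} for $\phi_M$, after which the remaining expression coincides verbatim with the definition of $\cS^G_M(\phi)$.

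More precisely, first I would observe that for $\phi\in C^\infty_c(K\bs G/K)$ and any $k\in K$, $m\in M(F)$, $n\in N(F)$, the function value $\phi(kmnk^{-1})$ is independent of $k$: by left $K$-invariance $\phi(k(mn)k^{-1})=\phi((mn)k^{-1})$, and by right $K$-invariance this equals $\phi(mn)$. Normalizing $\mu_K$ so that $\mu_K(K)=1$ (which is the convention implicit in requiring $\mu_G=\mu_K\mu_M\mu_N$ to be compatible with the chosen Haar measure on $G(F)$ used throughout this section), the inner integral in \eqref{e:const-term-formula} reduces to $\phi(mn)$, and therefore
\[
\phi_M(m)=\delta_P(m)^{1/2}\int_{N(F)}\phi(mn)\,dn,
\]
which is exactly $\cS^G_M(\phi)(m)$ by the definition in \S5.6. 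This proves the first assertion.

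For the second assertion, the identity $\cS^G=\cS^M\circ\cS^G_M$ was already recalled in \S5.6 (it is essentially the transitivity of the Jacquet module functor for unramified representations, or equivalently the fact that one may integrate out the unipotent radical of $B$ in two stages, first along $N$ and then along $U\cap M$, with matching modulus characters $\delta_P^{1/2}\delta_{B\cap M}^{1/2}=\delta_B^{1/2}$). Combining this identity with the first part applied to $G$ (giving $\cS^G(\phi)=\phi_T$) and to $M$ (giving $\cS^M(\psi)=\psi_{T}$ for $\psi\in\cH^{\ur}(M)$, applied to $\psi=\phi_M=\cS^G_M(\phi)$) yields $\cS^G(\phi)=\cS^M(\cS^G_M\phi)=(\phi_M)_T=\phi_T$, where the last equality is the transitivity of constant terms.

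There is essentially no obstacle here; the only care needed is in matching the Haar measure normalizations on $K$, $M(F)$, $N(F)$ with those implicit in the definitions of $\cH^{\ur}(G)$, $\cH^{\ur}(M)$, and in the Satake transform of \S5.2, so that the identity $\cS^G=\cS^M\circ\cS^G_M$ and the constant-term formula \eqref{e:const-term-formula} are set up with compatible conventions.
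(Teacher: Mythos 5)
Your proof is correct and is essentially the paper's argument: the paper's proof is simply "straightforward from \eqref{e:Satake-formula} and \eqref{e:const-term-formula}", i.e.\ exactly the computation you spell out, collapsing the $K$-integral in \eqref{e:const-term-formula} by bi-$K$-invariance of $\phi$ (with $\mu_K(K)=1$) and comparing with the definition of $\cS^G_M$, the second assertion then following from the known identity $\cS^G=\cS^M\circ\cS^G_M$ together with the same collapse applied to the Borel.
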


\begin{proof}
  Straightforward from \eqref{e:Satake-formula} and \eqref{e:const-term-formula}.
\end{proof}

%\subsection{A bound on orbital integrals}

%  Use canonical measure on $G(\A^\infty_F)$ and
%  its centralizers.

%\begin{prop}
%  Fix $\phi^\infty\in C^\infty_c(G(\A_F^\infty))$.
%  There exists a constant $C>0$ such that
%  for every semisimple $\gamma\in G(\A^\infty_F)$,
%  $$O^{G(\A_F^\infty)}_\gamma(\phi^\infty)<
%  C  \prod_{v|\infty} |D^G(\gamma)|_v^{1/2}.$$

%\end{prop}

%\begin{proof}
%  Use appendix.
%\end{proof}

\subsection{Gross's motives}\label{sub:Gross-motive}

  Now let $F$ be a finite extension of $\Q$ (although Gross's theory applies more generally).
  Let $G$ be a connected reductive group over $F$ and consider its  quasi-split inner form $G^*$.
  Let $T^*$ be the centralizer of a maximal $F$-split torus of $G^*$. Denote by $\Omega$ the Weyl group for $(G^*,T^*)$ over $\ol{F}$.
  Set $\Gamma=\Gal(\ol{F}/F)$.
  Gross (\cite{Gro97}) attaches to $G$ an Artin-Tate motive
  $$\Mot_G=\bigoplus_{d\ge 1} \Mot_{G,d}(1-d)$$ with coefficients in $\Q$.
  Here $(1-d)$ denotes the Tate twist. The Artin motive $\Mot_{G,d}$
  (denoted $V_d$ by Gross) may be thought of as a $\Gamma$-representation on a $\Q$-vector space whose dimension is $\dim \Mot_{G,d}$.
  Define $$L(\Mot_G):=  L(0,\Mot_G)$$
  to be the Artin $L$-value of $L(s,\Mot_G)$ at $s=0$.
  We recall some properties of $\Mot_G$ from Gross's article.

\begin{prop}\label{p:Gross-motives}
  \benu
\item $\Mot_{G,d}$ is self-dual for each $d\ge 1$.
\item $\sum_{d\ge 1} \dim \Mot_{G,d} = r_G=\rank G$.
\item $\sum_{d\ge 1} (2d-1)\dim \Mot_{G,d} = \dim G$.
\item $|\Omega|=\prod_{d\ge 1} d^{\dim  \Mot_{G,d}}$.
\item If $T^*$ splits over a finite extension $E$ of $F$ then
the $\Gamma$-action on $\Mot_G$ factors through $\Gal(E/F)$.
\eenu
\end{prop}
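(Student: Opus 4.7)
The plan is to deduce everything from the explicit construction of $\Mot_G$ recalled in~\cite{Gro97}*{\S1-2}. Recall that Gross associates to $G$ (via its quasi-split inner form $G^*$ and $T^*$) the $\Q$-vector space $V=X^*(T^*)_{\Q}$ carrying commuting actions of the absolute Weyl group $\Omega$ and of $\Gamma$. By Chevalley's theorem the $\Omega$-invariants of the symmetric algebra $S^{\bullet}V$ form a polynomial ring on homogeneous generators of degrees $d_1,\ldots,d_{r_G}$ (the fundamental degrees of $G^*$). For each $d\ge 1$ let $W_d\subset S^d V$ be an $\Omega$- and $\Gamma$-stable complement to the decomposable invariants in degree $d$; then $W_d$ is $\Gamma$-stable and $\Mot_{G,d}$ is by definition the Artin motive corresponding to this $\Gamma$-representation, cf.\ \cite{Gro97}*{(1.7)--(2.1)}.

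Parts (ii), (iii) and (iv) reduce to classical invariant-theoretic identities for the fundamental degrees. Precisely, $\dim \Mot_{G,d}$ equals the number of $d_i$'s which are equal to $d$, so $\sum_d \dim \Mot_{G,d}=r_G$, giving (ii). The identity $\sum_i (2d_i-1)=\dim G^*$ is standard (it is equivalent to the exponents summing to the number of positive roots) and gives (iii) because $\dim G^*=\dim G$. Finally (iv) is the Solomon/Chevalley-Shephard-Todd product formula $|\Omega|=\prod_i d_i$ rewritten by grouping degrees. In each case one simply has to check that Gross's grading matches the fundamental degrees, which is precisely how $\Mot_{G,d}$ is defined.

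For (v), the $\Gamma$-action on $V=X^*(T^*)_{\Q}$ factors through $\Gal(E/F)$ by the definition of a splitting field, hence the induced action on each subquotient $W_d\subset S^d V$ factors through $\Gal(E/F)$ as well, as the symmetric power and the projection onto $W_d$ are $\Gamma$-equivariant. This is immediate and requires no further input.

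The only part requiring extra thought is (i): self-duality of each $\Mot_{G,d}$ as a $\Gamma$-representation. The approach is to exhibit a nondegenerate $\Omega\times\Gamma$-invariant pairing on each $W_d$. One natural source is the perfect pairing between $S^d V$ and $S^d V^{\vee}$; combined with a $\Gamma$-equivariant identification $V\cong V^{\vee}$ (which exists because the lattice $X^*(T^*)$ of any reductive group admits a $\Gamma$-invariant $\Q$-valued inner product, e.g.\ the Killing form restricted to a Cartan, or an $\Omega\times\Gamma$-invariant positive definite form obtained by averaging), one obtains a $\Gamma$-invariant nondegenerate pairing on $S^dV$ that restricts nondegenerately to $W_d$ because $W_d$ is a $\Gamma$-stable direct summand. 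This gives $W_d\cong W_d^{\vee}$ as $\Gamma$-representations, proving (i). The potential subtlety is checking that the restriction to $W_d$ is still nondegenerate; the cleanest way is to choose the decomposition $S^dV=W_d\oplus (\text{decomposables})_d$ to be orthogonal with respect to the pairing, which can always be arranged since both summands are semisimple $\Gamma$-modules. This is the step I expect to be the only one needing genuine (if still routine) verification rather than pure bookkeeping.
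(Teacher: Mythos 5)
The paper offers no proof of this proposition at all: it is quoted from Gross's article \cite{Gro97} ("We recall some properties of $\Mot_G$ from Gross's article"), so there is no in-paper argument to compare yours against step by step. Your derivation from the construction is essentially correct and is in substance what one extracts from \cite{Gro97}: $\dim \Mot_{G,d}$ is the number of fundamental degrees $d_i$ of the invariant theory of $(\Omega, X^*(T^*)_{\Q})$ equal to $d$, so (ii), (iii), (iv) are the classical identities $\#\{d_i\}=r_G$, $\sum_i (2d_i-1)=\dim G^*=\dim G$, and $\prod_i d_i=|\Omega|$, while (v) is immediate because $\Gamma$ acts on $X^*(T^*)$ through $\Gal(E/F)$. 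Two small corrections to your write-up. First, the actions of $\Omega$ and $\Gamma$ on $X^*(T^*)_{\Q}$ do not commute in general; what is true (and all you need) is that $\Gamma$ normalizes the $\Omega$-action, so it acts on the ring of $\Omega$-invariants and hence on the graded pieces of indecomposable invariants defining $\Mot_{G,d}$. Second, for (i) your pairing argument is correct but heavier than necessary: after averaging over the finite group generated by $\Omega$ and the image of $\Gamma$ you have a positive definite invariant form, and the induced form on $\mathrm{Sym}^d$ is again positive definite, so its restriction to any $\Gamma$-stable complement of the decomposables is automatically nondegenerate — the orthogonality arrangement you worry about is not needed. Alternatively, (i) is immediate from the general fact that $\Mot_{G,d}$ is a $\Q$-rational representation of a finite quotient of $\Gamma$, hence has real-valued character and is therefore isomorphic to its dual.
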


  The Artin conductor $\fkf(\Mot_{G,d})$ is defined as follows.
  Let $F'$ be the fixed field of the kernel of the Artin representation
  $\Gal(\ol{F}/F)\ra \GL(V_d)$
  associated to $\Mot_{G,d}$.
  For each finite place $v$ of $F$, let $w$ be any place of $F'$ above $v$.
  Let $\Gamma(v)_i:=\Gal(F'_w/F_v)_i$ ($i\ge 0$) denote the $i$-th ramification subgroups. Set
  \beq\label{e:expo-conductor}
  f(G_v,d):=\sum_{i\in \Z_{\ge 0}} \frac{|\Gamma(v)_i|}{|\Gamma(v)_0|}
  \dim (V_d/V_d^{\Gamma(v)_i}),\eeq
  which is an integer independent of the choice of $w$. Write $\fkp_v$ for the prime ideal of $\cO_F$
  corresponding to $v$. If $v$ is unramified in $E$ then $f(G_v,d)=0$.
  Thus the product makes sense in the following definition.
  $$\fkf(\Mot_{G,d}):=\prod_{v\nmid \infty} \fkp_v^{f(G_v,d)}$$
  Let $E$ be the splitting field of $T^*$ (which is an extension of $F$)
and set $s^{\spl}_G:=[E:F]$.

\begin{lem}\label{l:bounding-conductor}
  For every finite place $v$ of $F$,
  $$f(G_v,d)\le (\dim \Mot_{G,d} )\cdot (s^{\spl}_G(1+e_{F_v/\Q_p} \log_p s^{\spl}_G)-1).$$
\end{lem}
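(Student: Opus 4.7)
The plan is to bound $f(G_v,d)$ by the valuation of the local different of $F'_w/F_v$ times $\dim \Mot_{G,d}$, and then to apply the classical bound on the valuation of the different.

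First I will control the extension $F'/F$. By part (v) of Proposition~\ref{p:Gross-motives}, the $\Gamma$-action on $\Mot_{G,d}$ factors through $\Gal(E/F)$, so $F' \subseteq E$ and $[F':F] \le s_G^{\spl}$. In particular, for a place $w$ of $F'$ above $v$, the decomposition group $\Gamma(v) = \Gal(F'_w/F_v)$ has order dividing $s_G^{\spl}$, and hence so does its inertia subgroup $\Gamma(v)_0$.

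Next, I will prove the key inequality
\begin{equation}\label{eq:plan-termwise}
f(G_v,d) \le (\dim \Mot_{G,d})\cdot v_{F'_w}(\mathfrak{d}_{F'_w/F_v}),
\end{equation}
where $\mathfrak{d}_{F'_w/F_v}$ denotes the different. By the standard formula of Serre (Local Fields, IV.\S1, Cor.~1) one has $v_{F'_w}(\mathfrak{d}_{F'_w/F_v}) = \sum_{i\ge 0}(|\Gamma(v)_i|-1)$. Comparing this expression term by term with \eqref{e:expo-conductor}, whenever $\Gamma(v)_i\neq 1$ we have
$$\tfrac{|\Gamma(v)_i|}{|\Gamma(v)_0|}\dim(V_d/V_d^{\Gamma(v)_i}) \le \dim V_d \le \dim V_d \cdot (|\Gamma(v)_i|-1),$$
while for $\Gamma(v)_i=1$ both sides vanish. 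Summing over $i$ yields \eqref{eq:plan-termwise}.

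Finally, I will apply the classical upper bound for the different of a local extension (Serre, Local Fields, III.\S6, Prop.~4): if $e = |\Gamma(v)_0|$ is the ramification index of $F'_w/F_v$, then
$$v_{F'_w}(\mathfrak{d}_{F'_w/F_v}) \le e - 1 + v_{F'_w}(e) = e - 1 + e\cdot v_{F_v}(e).$$
Since $e\mid s_G^{\spl}$ we get $e\le s_G^{\spl}$, and moreover $v_{F_v}(e) = e_{F_v/\Q_p}\, v_p(e) \le e_{F_v/\Q_p}\log_p s_G^{\spl}$. Substituting gives
$$v_{F'_w}(\mathfrak{d}_{F'_w/F_v}) \le s_G^{\spl}-1 + s_G^{\spl}\cdot e_{F_v/\Q_p}\log_p s_G^{\spl} = s_G^{\spl}(1+e_{F_v/\Q_p}\log_p s_G^{\spl})-1,$$
which combined with \eqref{eq:plan-termwise} yields the lemma. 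The main thing to be careful about is step~\eqref{eq:plan-termwise}, i.e.\ making the termwise comparison between the conductor formula and the different formula; everything else is a direct invocation of standard facts about ramification in local fields together with the bound $[F':F]\le s_G^{\spl}$ furnished by Gross's construction.
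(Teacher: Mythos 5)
Your proof is correct, but it takes a genuinely different route from the paper's. The paper bounds each summand of \eqref{e:expo-conductor} simply by $\dim V_d$ (using $|\Gamma(v)_i|\le |\Gamma(v)_0|$) and then bounds the \emph{number} of indices $i$ with $\Gamma(v)_i\neq 1$ by means of Corollary~\ref{c:vanishing-ram-subgroup}, which is itself deduced from the paper's in-house different estimates (Lemmas~\ref{l:bound-diff-wild} and~\ref{l:bound-diff-gen}, proved via a tame/wild tower and Eisenstein polynomials). You instead compare termwise with the formula $v_{F'_w}(\mathfrak d_{F'_w/F_v})=\sum_{i\ge 0}(|\Gamma(v)_i|-1)$ to get $f(G_v,d)\le (\dim V_d)\, v_{F'_w}(\mathfrak d_{F'_w/F_v})$, and then invoke the classical bound $v_L(\mathfrak d_{L/K})\le e-1+v_L(e)$; unwinding $v_{F'_w}(e)=e\,e_{F_v/\Q_p}v_p(e)$ together with $e\le [F'_w:F_v]\le [F':F]\le s^{\spl}_G$ and $v_p(e)\le \log_p s^{\spl}_G$ yields exactly the stated constant. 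Both arguments ultimately rest on controlling the different of $F'_w/F_v$; yours is more economical in that it bypasses all of \S\ref{sub:lem-ram} by citing the textbook inequality (which, applied to $E/F$, also recovers Lemma~\ref{l:bound-diff-gen}), whereas the paper's route is self-contained and isolates the reusable statement that high ramification groups vanish. Two minor slips in your references: the different-as-sum-over-ramification-groups identity is Proposition~4 of Serre, \emph{Local Fields}, IV.\S 1, and the bound $d\le e-1+v_L(e)$ is Proposition~13 of III.\S 6 (not Cor.~1, resp.\ Prop.~4); the statements you quote are nonetheless correct, and note that you only need $e\le s^{\spl}_G$, not the divisibility $e\mid s^{\spl}_G$ (which does hold anyway since $F\subset F'\subset E$ and $F'/F$ is Galois).
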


\begin{proof}
  Let $F'$, $w$ and $V_d$ be as in the preceding paragraph. Then $F\subset F'\subset E$.
  Set $s_v:=[F'_w:F_v]$ so that $s_v\le s^{\spl}_G$.
  The case $s_v=1$ is obvious (in which case $f(G_v,d)=0$), so we may assume $s_v\ge 2$.
  From \eqref{e:expo-conductor} and Corollary \ref{c:vanishing-ram-subgroup} below,
$$f(G_v,d)\le \sum_{i\ge 0} \dim (V_d/V_d^{\Gamma(v)_i})\le
(\dim V_d)(s_v(1+e_{F_v/\Q_p}\log_p s_v)-1).$$
\end{proof}

  Recall that $w_G=|\Omega|$ is the cardinality of the absolute Weyl group.
  Let $s_G$ be the degree of the smallest extension of $F$ over which $G$ becomes split.
 The following useful lemma implies in particular that $s^{\spl}_G\le w_Gs_G$.

\begin{lem}(\cite[Lem 2.2]{JKZ})\label{l:torus-splitting}
  For any maximal torus $T$ of $G$ defined over $F$, there exists a finite
Galois extension $E$ of $F$ such that $[E:F]\le w_Gs_G$ and $T$ splits over $E$.
\end{lem}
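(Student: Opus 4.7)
The plan is to exploit a non-Galois splitting field $F_0$ of $G$ together with a cocycle computation that lands in the finite group $\Omega$, and then take $E$ to be the (automatically Galois) splitting field of $T$ itself.

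First, fix an extension $F_0/F$ with $[F_0:F]=s_G$ over which $G$ becomes split (not required to be Galois). Choose a split maximal torus $T_0\subset G_{F_0}$ and pick $g\in G(\overline F)$ such that $gT_0g^{-1}=T_{\overline F}$. For $\sigma\in \Gal(\overline F/F_0)$ set $n_\sigma:=g^{-1}\sigma(g)$. Since $T_0$ and $T$ are both defined over $F_0$, $\sigma$ fixes both of them as subgroups of $G_{\overline F}$; the standard computation then shows $n_\sigma\in N_G(T_0)(\overline F)$ and $n_{\sigma\tau}=n_\sigma\,\sigma(n_\tau)$.

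Next, because $T_0$ is split over $F_0$, the finite étale $F_0$-group scheme $\Omega=N_G(T_0)/T_0$ is constant, so $\Gal(\overline F/F_0)$ acts trivially on $\Omega$. Projecting $n_\sigma$ to $\Omega$ therefore yields a continuous \emph{homomorphism} $w:\Gal(\overline F/F_0)\to \Omega$. Let $E_0\subset\overline F$ be the fixed field of $\ker w$; then $E_0/F_0$ is Galois with $[E_0:F_0]\le |\Omega|=w_G$. For $\sigma\in \Gal(\overline F/E_0)$ we have $n_\sigma\in T_0(\overline F)$, and a direct check using that characters of $T_0$ are invariant under conjugation by $T_0$ shows that $\sigma$ acts trivially on $X^*(T_{\overline F})$ via the isomorphism induced by $\mathrm{ad}(g)$. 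Hence $T$ splits over $E_0$.

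Finally, let $E\subset\overline F$ be the field of definition of the characters of $T_{\overline F}$, i.e.\ the fixed field of the kernel of the action of $\Gal(\overline F/F)$ on $X^*(T_{\overline F})$. This kernel is normal in $\Gal(\overline F/F)$, so $E/F$ is Galois, and $E$ is the minimal splitting field of $T$ over $F$. Since $T$ already splits over $E_0$, we have $E\subset E_0$, hence
\[
[E:F]\ \le\ [E_0:F]\ =\ [E_0:F_0]\,[F_0:F]\ \le\ w_G\,s_G,
\]
which is the desired bound. The only conceptually delicate step is the cocycle-to-homomorphism reduction in the second paragraph; everything else is bookkeeping, and no genuine obstacle remains once one observes that the splitting field of $T$ is automatically Galois over $F$, so there is no need to take a Galois closure of the non-Galois $F_0$.
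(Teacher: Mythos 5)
Your argument is correct: the cocycle $n_\sigma=g^{-1}\sigma(g)$ lands in $N_G(T_0)(\overline F)$, its image in $\Omega$ is a genuine homomorphism because the Galois action on the Weyl group of the split torus $T_0$ is trivial, the kernel cuts out $E_0$ with $[E_0:F_0]\le w_G$ over which $T$ splits, and the minimal splitting field of a torus is indeed Galois over $F$, giving $[E:F]\le w_G s_G$. Note that the paper does not prove this lemma at all -- it simply cites \cite[Lem 2.2]{JKZ} -- and your proof is essentially the standard normalizer-cocycle argument underlying that reference, so there is nothing to reconcile beyond observing that your write-up supplies the proof the paper omits.
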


\subsection{Lemmas on ramification}\label{sub:lem-ram}

  This subsection is meant to provide an ingredient of proof
  (namely Corollary \ref{c:vanishing-ram-subgroup}) for Lemma \ref{l:bounding-conductor}.
%  Although the material here is standard, we have not found a convenience reference
%  where the results are stated in the form we desire.
  Fix a prime $p$.
  Let $E$ and $F$ be finite extensions of $\Q_p$ with uniformizers $\varpi_E$ and
  $\varpi_F$, respectively. Normalize valuations $v_E:E^\times\ra \Z$ and
  $v_F:F^\times \ra \Z$ such that $v_E(\varpi_E)=v_F(\varpi_F)=1$.
  Write $e_{E/F}\in \Z_{\ge1}$ for the ramification index
  and $\fkD_{E/F}$ for the different.
  For a nonzero principal ideal $\fka$ of $\cO_E$,
  we define $v_E(\fka)$ to be $v_E(a)$ for any generator $a$ of $\fka$. This is well defined.

\begin{lem}\label{l:bound-diff-wild}
  Let $E$ be a totally ramified Galois extension of $F$ with $[E:F]=p^n$ for $n\ge 0$.
  Then $$v_E(\fkD_{E/F})\le p^n(1+n\cdot e_{F/\Q_p})-1.$$
\end{lem}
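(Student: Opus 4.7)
The plan is to proceed by induction on $n$, with the base case $n=1$ carrying the substantive content and the general case reducing to it by a standard tower argument. For $n=0$ we have $E=F$ and both sides vanish.

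For $n \ge 2$: since $G := \Gal(E/F)$ is a $p$-group of order $p^n$, it admits a normal subgroup $H$ of index $p$, and setting $K := E^H$ gives a tower $F \subset K \subset E$ in which $K/F$ is Galois of degree $p$ and $E/K$ is Galois of degree $p^{n-1}$. Multiplicativity of ramification indices forces each of $K/F$ and $E/K$ to be totally ramified, and $e_{K/\Q_p} = p\cdot e_{F/\Q_p}$. The tower formula
\[
v_E(\fkD_{E/F}) = v_E(\fkD_{E/K}) + p^{n-1} \cdot v_K(\fkD_{K/F}),
\]
combined with the inductive hypothesis on $E/K$ and the $n=1$ case on $K/F$, gives $v_E(\fkD_{E/F}) \le p^n(1 + n\,e_{F/\Q_p}) - 1$ after a short algebraic simplification.

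For the base case $n=1$: write $G = \langle \sigma \rangle$ of order $p$ and let $j \ge 0$ be the unique integer with $G_j = G$ and $G_{j+1} = 1$ in the lower-numbering ramification filtration. Then the standard formula $v_E(\fkD_{E/F}) = \sum_{i \ge 0}(|G_i| - 1)$ gives $v_E(\fkD_{E/F}) = (j+1)(p-1)$. The conductor-discriminant formula, together with the fact that all $p-1$ nontrivial characters of $G$ share a common conductor exponent $f(\chi)$ by symmetry, identifies $j+1 = f(\chi)$. Under local class field theory, $\chi$ corresponds to a character of $F^\times$ of order $p$, hence vanishes on $(F^\times)^p$.

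The remaining bound on $f(\chi)$ uses the $p$-adic logarithm: for any integer $k > e_{F/\Q_p}/(p-1)$, $\log$ induces a topological isomorphism $1 + \varpi_F^k \cO_F \isom \varpi_F^k \cO_F$, yielding $(1 + \varpi_F^k \cO_F)^p = 1 + \varpi_F^{k + e_{F/\Q_p}} \cO_F$. Taking $k = \lfloor e_{F/\Q_p}/(p-1) \rfloor + 1$ gives $f(\chi) \le k + e_{F/\Q_p} \le p\,e_{F/\Q_p}/(p-1) + 1$, and multiplying by $p-1$ concludes $v_E(\fkD_{E/F}) = (p-1)\, f(\chi) \le p(1 + e_{F/\Q_p}) - 1$. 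The main obstacle is precisely this base case: standard ramification-theoretic estimates produce lower bounds on the different, whereas here we need an upper bound on the ramification break $j$; reformulating via the conductor of a multiplicative character of order $p$ is what makes the logarithmic estimate on higher unit groups available.
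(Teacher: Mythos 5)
Your proof is correct, and your tower step (transitivity of the different plus induction through index-$p$ subextensions, with the same arithmetic at the end) is essentially the same reduction the paper performs. Where you genuinely diverge is in the degree-$p$ case. The paper avoids class field theory entirely: it writes $\varpi_E$ as a root of an Eisenstein polynomial $f\in\cO_F[x]$ and uses $v_E(\fkD_{E/F})=v_E(f'(\varpi_E))\le v_E(p\,\varpi_E^{p-1})=e_{E/\Q_p}+p-1$, an argument that does not even require the degree-$p$ step to be Galois — which is why the paper's tower is allowed to pass through possibly non-Galois intermediate fields, whereas you need the normal subgroup chain (available here since $\Gal(E/F)$ is a $p$-group, so no harm done). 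You instead compute $v_E(\fkD_{E/F})=(j+1)(p-1)$ from the ramification filtration, identify $j+1$ with the common conductor exponent of the nontrivial characters via the conductor--discriminant formula, pass to a character of $F^\times$ of order $p$ by local class field theory, and bound its conductor using $(1+\varpi_F^k\cO_F)^p=1+\varpi_F^{k+e_{F/\Q_p}}\cO_F$ for $k>e_{F/\Q_p}/(p-1)$. Both routes give the same degree-$p$ bound $p(1+e_{F/\Q_p})-1$, hence the same final estimate; yours buys a conceptual interpretation of the ramification break in terms of conductors (and makes the sharpness plausible via the unit filtration), at the cost of heavier machinery, while the paper's Eisenstein-polynomial computation is more elementary and slightly more general.
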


\begin{rem} In fact the inequality is sharp. There are totally ramified extensions $E/F$ for which the above equality holds as shown by \"Ore. See also~\cite{Serre:chebotarev}*{\S1} for similar results.
\end{rem}

\begin{proof}
  The lemma is trivial when $n=0$. Next assume $n=1$ but allow $E/F$ to be a non-Galois extension.
  Let $f(x)=\sum_{i=0}^p a_i x^i\in \cO_F[x]$ (with $a_p=1$ and $v_F(a_i)\ge 1$ for $i<p$)
  be the Eisenstein polynomial having $\varpi_E$ as a root.
  By \cite[III.6, Cor 2]{Ser79}, $v_E(\fkD_{E/F})=v_E(f'(\varpi_E))$. The latter equals
  $$v_E\left(\sum_{i=1}^{p} i a_i \varpi_E^{i-1}\right)=\min_{1\le i\le p}v_E(i a_i \varpi_E^{i-1})
    \le v_E(p \varpi_E^{p-1})=e_{E/\Q_p}+p-1.$$

  This prepares us to tackle the case of arbitrary $n$. Choose a sequence of subextensions
  $E=F_0\supset F_1\supset \cdots \supset F_n=F$ such that $[F_m:F_{m+1}]=p$ (where $F_m/F_{m+1}$ may not be a Galois extension).
  By above, $v_{F_m}(\fkD_{F_m/F_{m+1}})\le e_{F_m/\Q_p}+p-1$ for $0\le m\le n-1$.
  Hence
  $$v_{E}(\fkD_{E/F})=\sum_{m=0}^{n-1} v_E(\fkD_{F_m/F_{m+1}})
  \le \sum_{m=0}^{n-1} p^m(e_{F_m/\Q_p}+p-1)
  = n p^n e_{F/\Q_p}+p^n-1.$$

\end{proof}

\begin{lem}\label{l:bound-diff-gen}
  Let $E$ be a finite Galois extension of $F$. Then
  $$v_E(\fkD_{E/F})\le [E:F](1+e_{F/\Q_p} \log_p [E:F])-1.$$
\end{lem}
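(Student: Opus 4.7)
The plan is to reduce the general case to Lemma \ref{l:bound-diff-wild} by filtering $E/F$ through its maximal unramified and maximal tamely ramified subextensions, and then combining the contributions via the tower formula for the different.

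First I would set up the tower $F \subset F^{\ur} \subset F^{\mathrm{t}} \subset E$, where $F^{\ur}$ is the maximal unramified subextension of $E/F$ (of degree $f$ equal to the residue degree), and $F^{\mathrm{t}}$ is the maximal tamely ramified subextension of $E/F^{\ur}$. Writing $[E:F] = ef$ for the ramification index $e$ and factoring $e = e_0 p^a$ with $\gcd(e_0,p) = 1$, we get $[F^{\mathrm{t}}:F^{\ur}] = e_0$ and $[E:F^{\mathrm{t}}] = p^a$, and the extension $E/F^{\mathrm{t}}$ is totally (wildly) ramified while $F^{\mathrm{t}}/F^{\ur}$ is totally tamely ramified. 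Note that $E/F^{\mathrm{t}}$ is Galois because $E/F$ is, so Lemma \ref{l:bound-diff-wild} applies.

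Next I would invoke the standard multiplicativity of the different in towers,
\[
v_E(\fkD_{E/F}) = v_E(\fkD_{E/F^{\mathrm{t}}}) + e_{E/F^{\mathrm{t}}}\, v_{F^{\mathrm{t}}}(\fkD_{F^{\mathrm{t}}/F^{\ur}}) + e_{E/F^{\ur}}\, v_{F^{\ur}}(\fkD_{F^{\ur}/F}),
\]
and evaluate the three terms: the unramified piece contributes $0$; the tame piece contributes $p^a(e_0 - 1)$ by the classical computation $v(\fkD) = e_0 - 1$ for a tamely totally ramified extension of degree $e_0$; and Lemma \ref{l:bound-diff-wild} applied to $E/F^{\mathrm{t}}$ yields
\[
v_E(\fkD_{E/F^{\mathrm{t}}}) \le p^a(1 + a\cdot e_{F^{\mathrm{t}}/\Q_p}) - 1 = p^a(1 + a\, e_0\, e_{F/\Q_p}) - 1,
\]
since $F^{\ur}/F$ is unramified and $F^{\mathrm{t}}/F^{\ur}$ has ramification index $e_0$. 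Adding the three contributions gives
\[
v_E(\fkD_{E/F}) \le p^a e_0\bigl(1 + a\, e_{F/\Q_p}\bigr) - 1 = e\bigl(1 + e_{F/\Q_p}\log_p(p^a)\bigr) - 1.
\]
Finally, since $p^a \le e \le [E:F]$ we have $\log_p(p^a) \le \log_p[E:F]$, and $e \le [E:F]$, so the right-hand side is bounded by $[E:F](1 + e_{F/\Q_p}\log_p[E:F]) - 1$, as desired.

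There is no serious obstacle here; the argument is essentially bookkeeping once Lemma \ref{l:bound-diff-wild} is in place. The only mild subtlety is ensuring that $F^{\mathrm{t}}/F^{\ur}$ (and hence the tame piece) is really totally tamely ramified of degree $e_0$, and that $E/F^{\mathrm{t}}$ is Galois so that Lemma \ref{l:bound-diff-wild} applies --- both facts follow from standard local class field theory and the fact that in a Galois extension the maximal unramified and maximal tame subextensions are themselves Galois over the base.
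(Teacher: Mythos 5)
Your proof is correct and takes essentially the same route as the paper's: the tower $F\subset F^{\ur}\subset F^{\mathrm t}\subset E$ through the maximal unramified and maximal tame subextensions, multiplicativity of the different, the value $e_0-1$ for the tame totally ramified piece (Serre, III.6, Prop.~13), and Lemma~\ref{l:bound-diff-wild} for the wild part, assembled in the same way. The only differences are cosmetic, e.g.\ your explicit factorization $e=e_0p^a$ versus the paper's writing the bound in terms of $[E:E^t]$ and $[E:E^{\ur}]$.
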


\begin{proof}
  Let $E^t$ (resp. $E^{\ur}$) be the maximal tame (resp. unramified)
  extension of $F$ in $E$. Then
  $v_{E^t}(\fkD_{E^t/E^{\ur}})=[E^t:E^{\ur}]-1$ by \cite[III.6, Prop 13]{Ser79}.
  Clearly $v_{E^{ur}}(\fkD_{E^{\ur}/F})=0$.
  Together with Lemma \ref{l:bound-diff-wild}, we obtain
$$v_E(\fkD_{E/F})=v_{E}(\fkD_{E/E^t})+
[E:E^t]v_{E^t}(\fkD_{E^t/E^{\ur}})$$ $$\le
[E:E^t](1+e_{E^t/\Q_p}  \log_p [E:E^t] )-1
+ [E:E^t]([E^t:E^{\ur}]-1)$$
  $$=[E:E^{\ur}](1+e_{F/\Q_p} \log_p [E:E^t])-1
  \le [E:F](1+e_{F/\Q_p} \log_p [E:F])-1. $$

\end{proof}

\begin{cor}\label{c:vanishing-ram-subgroup}
  Let $E$ be a finite Galois extension of $F$. Then
  the $i$-th ramification group
 $\Gal(E/F)_{i}$ is trivial for $i=[E:F](1+e_{F/\Q_p} \log_p [E:F])-1$.
\end{cor}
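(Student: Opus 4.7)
The plan is to combine the bound on the valuation of the different from Lemma~\ref{l:bound-diff-gen} with the standard formula expressing $v_E(\fkD_{E/F})$ as a sum over ramification groups.

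First, I would recall (from Serre, \emph{Local Fields}, IV.1, Prop.~4) the identity
\[
v_E(\fkD_{E/F}) \;=\; \sum_{j\ge 0}\bigl(|\Gal(E/F)_j|-1\bigr),
\]
where all but finitely many terms on the right vanish since the ramification filtration eventually becomes trivial. Next, since the ramification groups are a decreasing filtration, if $\Gal(E/F)_i$ is nontrivial for some index $i$, then $\Gal(E/F)_j\supseteq \Gal(E/F)_i$ for all $0\le j\le i$, so each of the groups $\Gal(E/F)_0,\Gal(E/F)_1,\ldots,\Gal(E/F)_i$ has order at least $2$ and contributes at least $1$ to the sum. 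Consequently
\[
v_E(\fkD_{E/F}) \;\ge\; i+1 \qquad \text{whenever } \Gal(E/F)_i \ne \{1\}.
\]

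Contrapositively, if $i+1 > v_E(\fkD_{E/F})$ then $\Gal(E/F)_i$ is trivial. Setting $M:=[E:F](1+e_{F/\Q_p}\log_p[E:F])-1$, Lemma~\ref{l:bound-diff-gen} gives $v_E(\fkD_{E/F})\le M$, so for $i = M$ we have $i+1 = M+1 > v_E(\fkD_{E/F})$, hence $\Gal(E/F)_M=\{1\}$, which is exactly the claimed vanishing.

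The argument is completely routine once Lemma~\ref{l:bound-diff-gen} is in hand; no real obstacle remains, since the only nontrivial input is the differential bound proved in the preceding lemma, together with the classical conductor–discriminant style identity recalled above. The proof is just a two-line deduction.
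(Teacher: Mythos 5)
Your proof is correct and follows essentially the same route as the paper: both arguments combine Lemma~\ref{l:bound-diff-gen} with Serre's formula from \cite{Ser79}*{IV.1, Prop.~4} relating $v_E(\fkD_{E/F})$ to the ramification filtration (the paper bounds $\max_{s\neq 1} i_G(s)$ by the sum $\sum_{s\neq 1} i_G(s)$, while you use the equivalent form $\sum_{j\ge 0}(|\Gal(E/F)_j|-1)$ and the decreasing filtration, which amounts to the same estimate). No issues.
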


\begin{proof}
  In the notation of section IV.1 of \cite{Ser79},
  we have $\Gal(E/F)_{m}=1$ by definition if
  $m=\max_{1\neq s\in \Gal(E/F)} i_G(s)$.
  But the proposition 4 in that section implies that $m\le v_E(\fkD_{E/F})$,
  so Lemma \ref{l:bound-diff-gen} finishes the proof.
\end{proof}

\subsection{Stable discrete series characters}\label{sub:st-disc}

  In \S\ref{sub:st-disc} and \S\ref{sub:EP} we specialize to the situation
of real groups.

\bit
\item $G$ is a connected reductive group over $\R$.
\item $A_{G,\infty}=A_G(\R)^0$ where $A_G$ is the maximal split torus
in the center of $G$.
% \item $M$ is a cuspidal Levi subgroup of $G$ over $\R$.
\item $K_\infty$ is a maximal compact subgroup of $G(\R)$
and $K'_\infty:=K_\infty A_{G,\infty}$.
\item $q(G):=\frac{1}{2} \dim_{\R} G(\R)/K'_\infty\in \Z_{\ge 0}$.
\item $T$ is an $\R$-elliptic maximal torus in $G$. (Assume that such a $T$ exists.)
\item $B$ is a Borel subgroup of $G$ over $\C$ containing $T$.
\item $I_\gamma$ denotes the connected centralizer of $\gamma\in G(\R)$.
\item $\Phi^+$ (resp. $\Phi$) is the set of positive (resp. all)
roots of $T$ in $G$ over $\C$.
\item $\Omega$ is the Weyl group for $(G,T)$ over $\C$, and $\Omega_c$ is the compact Weyl group.
\item $\rho:=\frac{1}{2}\sum_{\alpha\in \Phi^+} \alpha$.
\item $\xi$ is an irreducible finite dimensional algebraic representation of $G(\R)$.
\item $\lambda_\xi\in X^*(T)$ is the $B$-dominant highest weight for $\xi$.
\item $m(\xi):=\min_{\alpha\in \Phi^+} \lg \lambda_\xi+\rho,\alpha\rg$. We always have $m(\xi)>0$.
\item $\Pi_\disc(\xi)$ is the set of irreducible discrete series representations
of $G(\R)$ with the same infinitesimal character and the same
central character as $\xi$. (This is an $L$-packet for $G(\R)$.)
\item $D_\infty^G(\gamma):=\prod_{\alpha} |1-\alpha(\gamma)|$ for $\gamma\in T(\R)$, where
$\alpha$ runs over elements of $\Phi$ such that $\alpha(\gamma)\neq 1$.
(If $\gamma$ is in the center of $G(\R)$, $D_\infty^G(\gamma)=1$.)
\eit

  If $M$ is a Levi subgroup of $G$ over $\C$ containing $T$,
  the following are defined in the obvious manner as above:
  $\Phi^+_M$, $\Phi_M$, $\Omega_M$, $\rho_M$, $D_\infty^M$. Define  $\Omega^M:=\{\omega\in \Omega: \omega^{-1}\Phi^+_M\subset \Phi^+\}$, which is a set of representatives for $\Omega/\Omega_M$.
   For each \emph{regular} $\gamma\in T(\R)$,
  let us define (cf. \cite[(4.4)]{Art89})
  $$\Phi^G_M(\gamma,\xi):=(-1)^{q(G)}
  D_\infty^G(\gamma)^{1/2} D_\infty^M(\gamma)^{-1/2}\sum_{\pi\in \Pi_\disc(\xi)}
  \Theta_\pi(\gamma)$$
  where $\Theta_\pi$ is the character function of $\pi$.
  It is known that the function $\Phi^G_M(\gamma,\xi)$
  continuously extends to an $\Omega_M$-invariant function on $T(\R)$,
  thus also to a function on $M(\R)$ which is invariant under $M(\R)$-conjugation and
  supported on elliptic elements
  (\cite[Lem 4.2]{Art89}, cf. \cite[Lem 4.1]{GKM97}).
   When $M=G$, simply $\Phi^G_M(\gamma,\xi)= \tr \xi(\gamma)$.

   We would like to have an upper bound for $|\Phi^G_M(\gamma,\xi)|$
   that we will need in \S\ref{sub:weight-varies}.
   This is a refinement of \cite[Lem 4.8]{Shi-Plan}.

\begin{lem}\label{l:dim-and-trace}
\benu
\item $\dim \xi=\prod_{\alpha\in \Phi^+} \frac{\lg \alpha,\lambda_\xi+\rho\rg}{\lg \alpha,\rho\rg}$.
\item There exists a constant $c>0$ independent of $\xi$ such that for every elliptic $\gamma\in G(\R)$
and $\xi$,
  $$\frac{|\tr\xi(\gamma)|}{\dim \xi}
  \le c \frac{D_\infty^G(\gamma)^{-1/2}}{ m(\xi)^{|\Phi^+|-|\Phi^+_{I_\gamma}|}} .$$

\eenu
\end{lem}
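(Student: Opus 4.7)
Part (i) is nothing but Weyl's dimension formula, so the plan is simply to cite it from a standard source (e.g.\ Bourbaki or Humphreys) and verify that $\lambda_\xi$ is defined with the correct sign convention relative to $B$.

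For part (ii) I would conjugate so that $\gamma\in T(\R)$, which is possible since $\gamma$ is elliptic and $T$ is $\R$-elliptic, and then abbreviate $M:=I_\gamma$. Observe that $T\subset M$ is a compact (hence maximal) torus of $M$ and that $\gamma$ is central in $M(\R)$, while $\Phi_M=\{\alpha\in\Phi:\alpha(\gamma)=1\}$ and $\Phi_M^+=\Phi_M\cap\Phi^+$. The strategy is to obtain a regularized Weyl character formula of the shape
\[
\chi_\xi(\gamma)\cdot\Bigl|\prod_{\alpha\in\Phi^+\setminus\Phi_M^+}(1-\alpha(\gamma)^{-1})\Bigr|
=\sum_{w\in\Omega^M}\sgn(w)\,e^{w(\lambda_\xi+\rho)}(\gamma)\cdot P_w(\lambda_\xi+\rho),
\]
where each $P_w$ is a polynomial in $\lambda_\xi+\rho$ of degree exactly $|\Phi_M^+|$. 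To derive this, I would start from the usual Weyl character formula $A_{\lambda_\xi+\rho}(t)=\chi_\xi(t)\,\Delta_G(t)$ for $t$ regular, partition $\Omega=\coprod_{w\in\Omega^M} w\,\Omega_M$, and evaluate at $t=\gamma\exp(sX)$ for a generic $X\in\Lie T$, letting $s\to 0$. The roots in $\Phi^+\setminus\Phi^+_M$ contribute nonzero factors with modulus $\prod_{\alpha\in\Phi^+\setminus\Phi_M^+}|1-\alpha(\gamma)^{-1}|=D_\infty^G(\gamma)^{1/2}$, while the roots in $\Phi^+_M$ produce a joint zero of order $|\Phi^+_M|$ in $s$ in both numerator and denominator; the corresponding L'H\^opital computation (equivalently Kostant's formula applied to the disconnected group $M$-together-with-the-twist-by-$\gamma$) exhibits $P_w$ as a Weyl-dimension-type polynomial for $M$ evaluated at $w(\lambda_\xi+\rho)$.

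With this formula in hand the rest is a one-line estimate. Each exponential $e^{w(\lambda_\xi+\rho)}(\gamma)$ has modulus $1$ because $\gamma$ lies in the compact torus $T(\R)$, and by the Weyl-dimension-for-$M$ interpretation of $P_w$,
\[
|P_w(\lambda_\xi+\rho)|\le c_1\prod_{\alpha\in\Phi_M^+}\bigl|\lg\alpha,w(\lambda_\xi+\rho)\rg\bigr|
=c_1\prod_{\alpha\in\Phi_M^+}\bigl|\lg w^{-1}\alpha,\lambda_\xi+\rho\rg\bigr|,
\]
with $c_1$ depending only on $\rho$ and the fixed inner product on $X^*(T)_\R$. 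Summing over $w\in\Omega^M$ and using part (i) together with $m(\xi)^{|\Phi^+|-|\Phi_M^+|}\le\prod_{\alpha\in\Phi^+\setminus\Phi_M^+}\lg\alpha,\lambda_\xi+\rho\rg$, one obtains
\[
\frac{|\chi_\xi(\gamma)|}{\dim\xi}
\le\frac{c\,D_\infty^G(\gamma)^{-1/2}}{m(\xi)^{|\Phi^+|-|\Phi^+_M|}}
\]
with $c$ depending only on $G$ and the choice of norm, as claimed.

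The main obstacle is the regularization step that produces $P_w$ with the correct degree $|\Phi_M^+|$; once this is written out carefully (either as an explicit L'H\^opital/Taylor computation on $\gamma\exp(sX)$, or by invoking Kostant's formula for the disconnected reductive group whose identity component is $M$), the rest is bookkeeping with the dimension formula. It is worth noting that the analogous assertion in \cite{Shi-Plan}*{Lem.~4.8} was proved with a weaker exponent on $m(\xi)$, so the refinement here comes precisely from tracking that the order of vanishing/polynomial factor is $|\Phi_M^+|$ rather than $|\Phi^+|$.
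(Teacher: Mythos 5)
Your proposal is correct and follows essentially the same route as the paper: the paper obtains your regularized elliptic character bound $|\tr\xi(\gamma)|\le D_\infty^G(\gamma)^{-1/2}\sum_{w\in \Omega^{I_\gamma}}\prod_{\alpha\in \Phi^+_{I_\gamma}}\lg w^{-1}\alpha,\lambda_\xi+\rho\rg/\lg\alpha,\rho_{I_\gamma}\rg$ by simply citing the formula above Corollary 1.12 of [CC09] rather than rederiving it, and then performs exactly your bookkeeping with part (i), $|\alpha(\gamma)|=1$, and $m(\xi)$. Only a cosmetic slip: in the last step the product to be bounded below by $m(\xi)^{|\Phi^+|-|\Phi^+_{I_\gamma}|}$ runs over $\Phi^+\bs w^{-1}\Phi^+_{I_\gamma}$ for each $w\in\Omega^{I_\gamma}$ (not $\Phi^+\bs\Phi^+_{I_\gamma}$), which has the same cardinality, so the estimate stands.
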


\begin{proof}
Part (i) is the standard Weyl dimension formula. Let us prove (ii).
The formula right above the corollary 1.12 in \cite{CC09} implies that
  $$|\tr\xi(\gamma)|\le D_\infty^G(\gamma)^{-1/2} \times
\sum_{\omega\in \Omega^{I_\gamma}} \left(\prod_{\alpha\in \Phi^+_{I_\gamma}}
\frac{ \lg\omega^{-1}\alpha,\lambda_\xi+\rho\rg}{\lg \alpha,\rho_{I_\gamma}\rg}\right).$$
Note that their $M$ is our $I_\gamma$ and that
$|\alpha(\gamma)|=1$ for all $\alpha\in \Phi$ and all elliptic $\gamma\in G(\R)$.
Hence by (i),
$$\frac{|\tr\xi(\gamma)|}{\dim \xi}
  \le D_\infty^G(\gamma)^{-1/2}\sum_{\omega\in \Omega^{I_\gamma}}
  \frac  {\prod_{\alpha\in \Phi^+} \lg \alpha,\rho\rg} {\prod_{\alpha\in \Phi^+_{I_\gamma}} \lg \alpha,\rho_{I_\gamma}\rg}
  \left(\prod_{\alpha\in \Phi^+\bs \omega^{-1}\Phi^+_{I_\gamma}}
  \lg \lambda_\xi+\rho,\alpha\rg \right)^{-1}$$
$$ \le D_\infty^G(\gamma)^{-1/2}|\Omega^{I_\gamma}|  \frac{\prod_{\alpha\in \Phi^+} \lg \alpha,\rho\rg}{\prod_{\alpha\in \Phi^+_{I_\gamma}} \lg \alpha,\rho_{I_\gamma}\rg}
    m(\xi)^{-(|\Phi^+|-|\Phi^+_{I_\gamma}|)}.$$
\end{proof}

\begin{lem}\label{l:bound-for-st-ds-char}
  Assume that $M$ is a Levi subgroup of $G$ over $\R$ containing an elliptic maximal torus.
There exists a constant $c>0$ independent of $\xi$ such that for every elliptic
$\gamma\in M(\R)$,
 $$\frac{|\Phi^G_M(\gamma,\xi)|}{\dim \xi}
 \le c \frac{D_\infty^M(\gamma)^{-1/2}}{ m(\xi)^{|\Phi^+|-|\Phi^+_{I^M_\gamma}|}} .$$

\end{lem}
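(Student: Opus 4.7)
The plan is to reduce the bound to Lemma \ref{l:dim-and-trace}(ii) applied to $M$ in place of $G$, after first rewriting $\Phi^G_M(\gamma,\xi)$ as an alternating sum, over Weyl-coset representatives $\Omega^M$, of traces of irreducible representations of $M$.

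First I would start from Harish-Chandra's formula for discrete series characters. For regular $\gamma\in T(\R)$, stabilizing the sum over $\Pi_\disc(\xi)$ replaces the compact Weyl group $\Omega_c$ by the full Weyl group $\Omega$ in the numerator, giving
\[
\sum_{\pi\in\Pi_\disc(\xi)}\Theta_\pi(\gamma)\;=\;(-1)^{q(G_\infty)}\,\Delta_G(\gamma)^{-1}\sum_{\omega\in\Omega}\sgn(\omega)\,e^{\omega(\lambda_\xi+\rho)}(\gamma),
\]
where $\Delta_G(\gamma)=e^\rho(\gamma)\prod_{\alpha\in\Phi^+}(1-e^{-\alpha}(\gamma))$. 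Splitting $\Omega=\coprod_{\omega\in\Omega^M}\omega\,\Omega_M$ and using that $|\Delta_G(\gamma)|=D^G_\infty(\gamma)^{1/2}$ on the elliptic regular set, the inner sum over $\Omega_M$ combined with $\Delta_M(\gamma)^{-1}$ is exactly the Weyl character formula on $M$. Thus on the elliptic regular set of $M(\R)$,
\[
\Phi^G_M(\gamma,\xi)\;=\;\sum_{\omega\in\Omega^M}\sgn(\omega)\,\tr\xi_{M,\omega}(\gamma),
\]
where $\xi_{M,\omega}$ is the irreducible representation of $M$ of $B\cap M$-dominant highest weight obtained from $\omega(\lambda_\xi+\rho)-\rho_M$ (after conjugating into the $M$-dominant chamber, which only introduces a sign). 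By continuity this formula extends from the regular set to all elliptic $\gamma\in M(\R)$.

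Next I would bound each term. Applying Lemma \ref{l:dim-and-trace}(ii) with $(G,\xi,\gamma)$ replaced by $(M,\xi_{M,\omega},\gamma)$ (noting that $\gamma$ is elliptic in $M$ and that $I^M_\gamma$ is the relevant centralizer in $M$), there is a constant $c>0$ independent of $\xi$ and $\omega$ such that
\[
\frac{|\tr\xi_{M,\omega}(\gamma)|}{\dim\xi_{M,\omega}}\;\le\;c\,\frac{D^M_\infty(\gamma)^{-1/2}}{m(\xi_{M,\omega})^{|\Phi^+_M|-|\Phi^+_{I^M_\gamma}|}}.
\]
The Weyl dimension formula for $G$ and $M$ gives
\[
\frac{\dim\xi_{M,\omega}}{\dim\xi}\;=\;\prod_{\alpha\in \omega^{-1}\Phi^+_M}\frac{\langle\alpha,\rho\rangle}{\langle\alpha,\lambda_\xi+\rho\rangle}\cdot\prod_{\alpha\in\Phi^+\setminus\omega^{-1}\Phi^+_M}\frac{\langle\alpha,\rho\rangle}{\langle\alpha,\lambda_\xi+\rho\rangle}\cdot(\text{bounded combinatorial factor}),
\]
so that $\dim\xi_{M,\omega}/\dim\xi$ is bounded above by $c'\,m(\xi)^{-(|\Phi^+|-|\Phi^+_M|)}$. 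Combining these two estimates for each $\omega$ yields
\[
\frac{|\tr\xi_{M,\omega}(\gamma)|}{\dim\xi}\;\le\;c''\,\frac{D^M_\infty(\gamma)^{-1/2}}{m(\xi)^{|\Phi^+|-|\Phi^+_M|}\,m(\xi_{M,\omega})^{|\Phi^+_M|-|\Phi^+_{I^M_\gamma}|}}.
\]
Since $|\Omega^M|$ is a fixed constant depending only on $G$, summing over $\omega\in\Omega^M$ and combining gives the desired inequality, provided we know $m(\xi_{M,\omega})\gg m(\xi)$ uniformly in $\omega$.

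The main obstacle is precisely this last uniformity: the highest weight of $\xi_{M,\omega}$ is $\omega(\lambda_\xi+\rho)-\rho_M$, and one must check that for each $\alpha\in\Phi^+_M$ the pairing $\langle\omega(\lambda_\xi+\rho),\alpha\rangle$ is bounded below by a positive multiple of $m(\xi)$ independently of $\omega\in\Omega^M$. This follows from the fact that, since $\omega\in\Omega^M$ satisfies $\omega^{-1}\Phi^+_M\subset\Phi^+$, one has $\langle\omega(\lambda_\xi+\rho),\alpha\rangle=\langle\lambda_\xi+\rho,\omega^{-1}\alpha\rangle\ge m(\xi)$ for $\alpha\in\Phi^+_M$ when $\lambda_\xi$ is dominant, and an elementary bound in general. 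Once this is in hand, all factors of $m(\xi)$ combine into $m(\xi)^{-(|\Phi^+|-|\Phi^+_{I^M_\gamma}|)}$, completing the proof.
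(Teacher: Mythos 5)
Your argument is correct and follows the same skeleton as the paper's proof: decompose $\Phi^G_M(\gamma,\xi)$ over $\omega\in\Omega^M$ into characters of the $M$-representations $\xi^M_\omega$ of highest weight $\omega(\lambda_\xi+\rho)-\rho_M$, bound $\dim\xi^M_\omega/\dim\xi$ by $m(\xi)^{-(|\Phi^+|-|\Phi^+_M|)}$ via Lemma~\ref{l:dim-and-trace}.(i), bound $|\tr\xi^M_\omega(\gamma)|/\dim\xi^M_\omega$ via Lemma~\ref{l:dim-and-trace}.(ii), and multiply. The one genuine difference is how you obtain the decomposition: the paper quotes the proof of \cite{GKM97}*{Lem 4.1} (via Arthur's formula (4.8) in \cite{Art89}), which expresses $\Phi^G_M(\gamma,\xi)$ on a closed chamber $\Gamma^+$ with coefficients $c(\omega,\xi)$, and then must argue separately that these coefficients take values in a finite $\xi$-independent set; you instead rederive the expansion directly from the fact that the stable discrete series character on the elliptic regular set is $(-1)^{q(G)}$ times the full Weyl alternating sum over the denominator, split $\Omega$ into $\Omega_M$-cosets, and get coefficients that are literally signs. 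Your route is more self-contained and yields a cleaner constant ($|\Omega^M|$ rather than $c_1|\Omega^M|$); the paper's route is robust on closed chambers, which is the form needed from GKM/Arthur anyway. Two small points to tighten: (a) your displayed identity $\Phi^G_M(\gamma,\xi)=\sum_\omega\sgn(\omega)\tr\xi^M_\omega(\gamma)$ is only true up to the unimodular, chamber-dependent factor $\bigl(\Delta_M(\gamma)/|\Delta_M(\gamma)|\bigr)\bigl(\Delta_G(\gamma)/|\Delta_G(\gamma)|\bigr)^{-1}$ coming from replacing $D_\infty^G(\gamma)^{1/2}D_\infty^M(\gamma)^{-1/2}$ by $\Delta_M(\gamma)/\Delta_G(\gamma)$, so the identity itself does not extend continuously across singular $\gamma$; what does extend by continuity is the inequality $|\Phi^G_M(\gamma,\xi)|\le\sum_{\omega\in\Omega^M}|\tr\xi^M_\omega(\gamma)|$, which is all you use, so this is a matter of phrasing rather than substance; (b) your explicit verification that $m_M(\xi^M_\omega)=\min_{\alpha\in\Phi^+_M}\lg\omega(\lambda_\xi+\rho),\alpha\rg\ge m(\xi)$ for $\omega\in\Omega^M$ (since $\omega^{-1}\Phi^+_M\subset\Phi^+$ and $\lambda_\xi$ is dominant, so the ``elementary bound in general'' hedge is unnecessary) is exactly the point the paper uses implicitly when it states the bound of Lemma~\ref{l:dim-and-trace}.(ii) for $\xi^M_\omega$ with $m(\xi)$ in place of $m(\xi^M_\omega)$, and it is good that you made it explicit.
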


\begin{proof}
  As the case $M=G$ is already proved by Lemma \ref{l:dim-and-trace}.(ii), we assume that $M\subsetneq G$.
  Fix an elliptic maximal torus $T\subset M$. Since every elliptic element has a conjugate
  in $T(\R)$ and both sides of the inequality
  are conjugate-invariant,
  it is enough to verify the lemma for $\gamma\in T(\R)$.
  In this proof we borrow some notation and facts
  from \cite[pp.494-498]{GKM97} as well as \cite[pp.272-274]{Art89}.
  For the purpose of proving Lemma \ref{l:bound-for-st-ds-char}, we may restrict to
  $\gamma\in \Gamma^+$, corresponding to a closed chamber for the root system of $T(\R)$ in $G(\R)$.
  (See page 497 of \cite{GKM97} for the precise definition.)
  The proof of \cite[Lem 4.1]{GKM97} shows that
    $$\Phi^G_M(\gamma,\xi)=\sum_{\omega\in \Omega^{M}} c(\omega,\xi)\cdot \tr \xi^M_\omega(\gamma)$$
  where $\xi^M_\omega$ is the irreducible representation of $M(\R)$ of highest weight
  $\omega(\xi+\rho)-\rho_M$. We claim that there is a constant $c_1>0$  independent of $\xi$
  such that $$|c(\omega,\xi)|\le c_1$$ for all $\omega$ and $\xi$.
  The coefficients $  c(\omega,\xi)$ can be computed by rewriting the right hand
  side of \cite[(4.8)]{Art89}
  as a linear combination of $\tr \xi^M_\omega(\gamma)$ using the Weyl character formula.
  In order to verify the claim, it suffices to point out that $\ol{c}(Q^+_{ys\lambda},R^+_H)$
  in Arthur's (4.8) takes values in a finite set which is independent of $\xi$ (or
  $\tau$ in Arthur's notation). This is obvious: as $Q^+_{ys\lambda}\subset
  \Phi^\vee$ and $R^+_H\subset \Phi$, there are finitely many possibilities for
  $Q^+_{ys\lambda}$ and $R^+_H$.

  Now by Lemma \ref{l:dim-and-trace}.(i),
    $$\frac{\dim \xi^M_\omega}{\dim \xi}= \frac{\prod_{\alpha\in \Phi^+} \lg \alpha,\rho \rg}
   {\prod_{\alpha\in \Phi_M^+} \lg \alpha,\rho_M \rg} \prod_{\alpha\in
   \Phi^+\bs \Phi^+_M} \lg \alpha,\lambda_\xi+\rho\rg^{-1}
   \le c_2  m(\xi)^{-(|\Phi^+|-|\Phi^+_M|)}$$
   with $c_2=(\prod_{\alpha\in \Phi^+} \lg \alpha,\rho \rg)
   (\prod_{\alpha\in \Phi_M^+} \lg \alpha,\rho_M \rg)^{-1}>0$.
    According to Lemma \ref{l:dim-and-trace}.(ii),
    there exists a constant $c_3>0$ such that
    $$\frac{|\tr\xi^M_\omega(\gamma)|}{\dim \xi^M_\omega}
  \le c_3 \frac{D_\infty^M(\gamma)^{-1/2}}{ m(\xi)^{|\Phi^+_M|-|\Phi^+_{I^M_\gamma}|}} .$$
  To conclude the proof, multiply the last two formulas.

\end{proof}

\subsection{Euler-Poincar\'{e} functions}\label{sub:EP}

  We continue to use the notation of \S\ref{sub:st-disc}.
  Let $\ol{\mu}^{\EP}_\infty$ denote the Euler-Poincar\'{e} measure on $G(\R)/A_{G,\infty}$
  (so that its induced measure on the compact inner form has volume 1).
There exists a unique Haar measure $\mu^{\EP}_\infty$ on $G(\R)$
which is compatible with $\ol{\mu}^{\EP}_\infty$ and the standard Haar measure on $A_{G,\infty}$.
Write $\omega_{\xi}$ for the central character of $\xi$ on $A_{G,\infty}$.
  Let $\Pi(\omega_{\xi}^{-1})$ denote the set of irreducible admissible representations of $G(\R)$
  whose central characters on $A_{G,\infty}$ are $\omega_{\xi}^{-1}$.
  For $\pi \in \Pi(\omega_{\xi}^{-1})$,
  define
  $$\chi_{\EP}(\pi\otimes \xi):=\sum_{i\ge 0} (-1)^i \dim H^i(\Lie G(\R),K'_\infty,
  \pi \otimes \xi).$$
  Clozel and Delorme (\cite{CD90}) constructed a bi-$K_\infty$-finite
  function $\phi_\xi\in C^\infty(G(\R))$ which transforms under $A_{G,\infty}$ by $\omega_{\xi}$ and is compactly supported modulo $A_{G,\infty}$,
  such that
  $$\forall \pi \in \Pi(\omega_{\xi}^{-1}),\quad \tr \pi(\phi_\xi,\mu^{\EP}_\infty)=\chi_{\EP}(\pi\otimes\xi).$$
  The following are well-known:
\bit
\item $\chi_{\EP}(\pi\otimes\xi)=0$ unless $\pi\in \Pi(\omega_{\xi}^{-1})$ has the same infinitesimal character as $\xi^\vee$.
\item If the highest weight of $\xi$ is regular then $\chi_{\EP}(\pi\otimes \xi)\neq 0$
if and only if $\pi\in \Pi_{\disc}(\xi^\vee)$.
\item If $\pi \in \Pi(\omega_{\xi}^{-1})$ is a discrete series and $\chi_{\EP}(\pi\otimes \xi)\neq 0$
then $\pi\in \Pi_{\disc}(\xi^\vee)$ and $\chi_{\EP}(\pi\otimes \xi)=(-1)^{q(G)}$.
More precisely, $\dim H^i(\Lie G(\R),K'_\infty,
  \pi \otimes \xi)$ equals 1 if $i=q(G)$ and 0 if not.

\eit

\subsection{Canonical measures and Tamagawa measures}\label{sub:can-measure}

  We return to the global setting so that $F$ and
 $G$ are as in \S\ref{sub:Gross-motive}. Let $G_\infty:=(\Res_{F/\Q}G)\times_\Q \R$, to which
 the contents of \S\ref{sub:st-disc} and \S\ref{sub:EP} apply. In particular we have a measure $\mu^{\EP}_\infty$ on $G_\infty(\R)$.
  For each finite place $v$ of $F$,
  define $\mu^{\can}_v:=\Lambda(\Mot_{G_{v}}^\vee(1))\cdot |\omega_{G_v}|$
  in the notation of \cite{Gro97} where
 $|\omega_{G_v}|$ is the ``canonical'' Haar measure on $G(F_v)$ as in \S11 of that article.
  When $G$ is unramified over $F_v$, the measure
  $\mu^{\can}_v$ assigns volume 1 to a hyperspecial subgroup of $G(F_v)$.
  In particular, $$\mu^{\can,\EP}:=\prod_{v\nmid \infty} \mu^{\can}_v \times \mu^{\EP}_\infty$$ is a well-defined measure
  on $G(\A_F)$.
 % The Lebesgue measure on $A_{G,\infty}$ and $\prod_{v|\infty}\mu^{\can}_v$
 % induce a quotient measure $\ol{\mu}^{\can}_\infty$ on $G(F_\infty)/A_{G,\infty}$.

  Let $\ol{\mu}^{\Tama}$ denote the Tamagawa measure
  on $G(F)\bs G(\A_F)/A_{G,\infty}$, so that its volume is the Tamagawa number (cf. \cite[p.629]{Kot88})
\beq \label{e:Tamagawa}\tau(G):=
  \ol{\mu}^{\Tama}(G(F)\bs G(\A_F)/A_{G,\infty})=|\pi_0(Z(\hat{G})^{\Gal(\ol{F}/F)})\cdot
  |\ker^1(F,Z(\hat{G}))|^{-1}.\eeq
 The Tamagawa measure $\mu^{\Tama}$ on $G(\A_F)$ of \cite{Gro97} is compatible with
 $\ol{\mu}^{\Tama}$ if $G(F)$ and $A_{G,\infty}$ are equipped with
  the point-counting measure and the Lebesgue measure, respectively.
    The ratio of two Haar measures on $G(\A_F)$ is computed as:

\begin{prop}(\cite[(10.5)]{Gro97}
  $$\frac{\mu^{\can,\EP}}{\mu^{\Tama}}
  = \frac{ L(\Mot_G)\cdot |\Omega|/|\Omega_c|}{e(G_\infty) 2^{\rank_{\R} G_\infty}   }.$$
\end{prop}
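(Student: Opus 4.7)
The plan is to verify the identity by factorizing both measures into local pieces and comparing them place by place, since both $\mu^{\can,\EP}$ and $\mu^{\Tama}$ are given as restricted products of local Haar measures on $G(F_v)$. The Tamagawa measure is defined via convergence factors $\lambda_v$ (one for each finite place), while $\mu^{\can,\EP}$ uses no such factors but instead has a fixed archimedean normalization. So I would write $\mu^{\can,\EP}/\mu^{\Tama}$ as a product of purely local ratios times the reciprocal of the global convergence constant, and then recognize the various pieces.

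At a finite place $v$, Gross defines $\mu^{\can}_v = \Lambda(\Mot_{G_v}^\vee(1)) \cdot |\omega_{G_v}|$, where $|\omega_{G_v}|$ is the measure attached to a top-degree invariant form on the canonical smooth model of $G$ at $v$. The Tamagawa local factor $\mu^{\Tama}_v$ is $|\omega_{G_v}|$ times the absolute discriminant of $F_v$ raised to $-\mdemi\dim G$, with convergence factors $\lambda_v = L_v(\Mot_{G_v}^\vee(1))$ inserted so that the infinite product converges. Taking the ratio, the forms $|\omega_{G_v}|$ cancel and one is left essentially with the Euler product $\prod_v L_v(\Mot_G) = L(\Mot_G)$, up to a power of the absolute discriminant $|d_F|$; the discriminant powers are absorbed using the fact that the Tamagawa measure's global normalization involves $|d_F|^{-\mdemi\dim G}$, which pairs off correctly.

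At the archimedean place, one must compare $\mu^{\EP}_\infty$ with the Tamagawa archimedean measure $\mu^{\Tama}_\infty$. Both are Haar measures on $G_\infty(\R)$, so their ratio is a positive real number that can be computed on a conveniently chosen set. Using Harder's formulation (as recalled in Gross~\S3), $\mu^{\Tama}_\infty$ restricted to a compact inner form $G^c$ of $G_\infty$ gives volume $|\omega_{G_\infty}|(G^c)$, whereas $\ol{\mu}^{\EP}_\infty$ was normalized to give volume $1$ on the compact inner form. Computing $|\omega_{G_\infty}|(G^c)$ from Macdonald's formula for the volume of a compact Lie group, and accounting for the factors of $2$ coming from the real rank (disconnectedness of the maximal $\R$-split torus mod its maximal compact subgroup) and the Kottwitz sign $e(G_\infty)$ relating $G_\infty$ to its compact inner form, yields precisely the factor $|\Omega|/(|\Omega_c| \cdot e(G_\infty) \cdot 2^{\rank_\R G_\infty})$.

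The main obstacle is the archimedean bookkeeping: matching all the normalization conventions between the Euler–Poincar\'e measure (defined via the compact inner form with total mass one) and the Tamagawa measure (defined via top-degree differential forms and the Haar measure on $\R$), while correctly tracking the powers of $2$ contributed by $A_{G,\infty}$ and the sign $e(G_\infty)$ that records whether the inner twist is orientation-preserving. Once the archimedean ratio is pinned down, the finite-place comparison is essentially formal from Gross's definitions, and assembling the global formula reduces to verifying that the $L$-value $L(\Mot_G)$ from the convergence factors matches the factor appearing in the final expression.
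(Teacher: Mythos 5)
The paper offers no proof of this statement: the parenthetical citation \emph{is} the proof, the formula being quoted verbatim from Gross [Gro97, (10.5)]. So the comparison is really between your sketch and Gross's argument, and there your outline has the right skeleton (local factorization, $L$-factors from the finite places, volume of the compact inner form at infinity, the sign $e(G_\infty)$ and the powers of $2$), but a genuine gap sits exactly where you declare the finite-place comparison ``essentially formal.''

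The canonical measures $\mu_v^{\can}=\Lambda(\Mot_{G_v}^\vee(1))\cdot|\omega_{G_v}|$ and any convergence factors in the Tamagawa measure produce, after taking the product over finite $v$, the \emph{convergent} Euler product for $L(\Mot_G^\vee(1))$ (Artin values at $s=d\ge 1$). They cannot directly produce $L(\Mot_G)=L(0,\Mot_G)$, which is a product of Artin values at the non-positive integers $1-d$, far outside the half-plane of convergence; so the identity ``$\prod_v L_v(\Mot_G)=L(\Mot_G)$'' in your second paragraph is not available. Passing from $L(\Mot_G^\vee(1))$ to $L(\Mot_G)$ forces the functional equation, which your sketch never invokes, and this step injects two further ingredients that must cancel elsewhere: (a) the epsilon factor $\epsilon(\Mot_G)=|\Delta_F|^{d_G/2}\prod_{d}\N_{F/\Q}(\fkf(\Mot_{G,d}))^{d-\frac{1}{2}}$, which contains not only discriminant powers but also the Artin conductors of $\Mot_{G,d}$; these are nontrivial when $G$ is ramified and are absorbed precisely by the discrepancy between Gross's canonical forms $\omega_{G_v}$ at the ramified places and a single global form (the local canonical forms do not glue, so ``the forms cancel'' is not formal there); and (b) the archimedean ratio $L_\infty(\Mot_G^\vee(1))/L_\infty(\Mot_G)=2^{-[F:\Q]r_G}\prod_d\bigl((d-1)!/(2\pi)^d\bigr)^{\dim \Mot_{G,d}}$ (Gross (7.7), quoted in the proof of Corollary~\ref{c:bound-on-L(1)}), whose factorials and powers of $2\pi$ are exactly what cancel Macdonald's volume formula for the compact inner form. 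Without tracking (a) and (b), the archimedean bookkeeping cannot close up to the clean factor $|\Omega|/|\Omega_c|\cdot e(G_\infty)^{-1}2^{-\rank_{\R}G_\infty}$, and no conductor-free, Gamma-free formula can emerge; these cancellations are the actual content of Gross's (10.5), and they are missing from your proposal.
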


  The following notion will be useful in that the Levi subgroups contributing to the trace formula
  in \S\ref{s:aut-Plan-theorem}
  turn out to be the cuspidal ones.

\begin{defn}
  We say that $G$ is cuspidal if $G_0:=\Res_{F/\Q}G$ satisfies the condition that
  $A_{G_0}\times _\Q \R$ is the maximal split torus in the center of $G_0\times _\Q \R$.
\end{defn}

  Assume that $G$ is cuspidal, so that $G(\R)/A_{G,\infty}$ contains a maximal
  $\R$-torus which is anisotropic. % Its Euler-Poincar\'{e} measure is
%  denoted by $\ol{\mu}^{\EP}_\infty$, which
%  is nonzero (\cite[p.136]{Ser71}, cf. \cite[Lem 7.3]{Gro97}).

%\begin{prop}\label{p:can-EP-at-infty}
%  $$\frac{\ol{\mu}^{\can}_\infty}{\ol{\mu}^{\EP}_\infty}= \frac{(2\pi)^{-\dim A_G}}{|\Omega/\Omega_c|}
%  \prod_{d\ge 1} \left( \frac{(2\pi)^d}{(d-1)!} \right)^{\dim \Mot_{G,d}}.$$
%\end{prop}

%\begin{proof} When $G$ has compact center ($A_G=1$), this follows from
%the proof of the proposition 7.6 in \cite{Gro97} (see (7.4) and (7.7) there).
%In general we reduce to the case of $G/A_G$, which has compact center,
%by noting that $\Mot_{G,d}$ is $\Mot_{G/A_G,d}$ if $d>1$ and
%$\Q^{\dim A_G}+ \Mot_{G/A_G,d}$ if $d=1$.
%\end{proof}

\begin{cor}\label{c:canonical-measure} $$\frac{\ol{\mu}^{\can,\EP}(G(F)\bs G(\A_F)/A_{G,\infty})}{
  \ol{\mu}^{\EP}_\infty (\ol{G}(F_\infty)/A_{G,\infty})}=
 \frac{\tau(G) \cdot  L(\Mot_G)  \cdot |\Omega|/|\Omega_c|}{e(G_\infty) 2^{[F:\Q] r_G} }.
   $$
\end{cor}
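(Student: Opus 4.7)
The plan is to combine the preceding proposition with the Tamagawa number formula. Both $\mu^{\can,\EP}$ and $\mu^{\Tama}$ are Haar measures on $G(\A_F)$, so their ratio is a scalar, which by the preceding proposition equals
\[
  \frac{\mu^{\can,\EP}}{\mu^{\Tama}} = \frac{L(\Mot_G)\cdot |\Omega|/|\Omega_c|}{e(G_\infty)\, 2^{\rank_\R G_\infty}}.
\]
This scalar is preserved when both measures are pushed to the quotient $G(F)\bs G(\A_F)/A_{G,\infty}$. Combined with \eqref{e:Tamagawa}, which identifies $\ol{\mu}^{\Tama}(G(F)\bs G(\A_F)/A_{G,\infty})$ with $\tau(G)$, this yields
\[
  \ol{\mu}^{\can,\EP}(G(F)\bs G(\A_F)/A_{G,\infty}) = \frac{\tau(G)\cdot L(\Mot_G)\cdot |\Omega|/|\Omega_c|}{e(G_\infty)\, 2^{\rank_\R G_\infty}}.
\]

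Next I would divide by $\ol{\mu}^{\EP}_\infty(\ol{G}(F_\infty)/A_{G,\infty})$. The cuspidality assumption recalled just before the corollary ensures that $\ol{G}(F_\infty)/A_{G,\infty}$, the compact inner form of $G_\infty/A_{G,\infty}$, is genuinely compact; and by the very normalization of $\ol{\mu}^{\EP}_\infty$ recorded in \S\ref{sub:EP}, its $\ol{\mu}^{\EP}_\infty$-volume equals $1$. Hence this division leaves the right-hand side unchanged.

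Finally I would reconcile the rank exponents: $\rank_\R G_\infty$ in Gross's formula is the absolute rank of the reductive $\R$-group $G_\infty=(\Res_{F/\Q} G)\times_\Q \R$, and since restriction of scalars from $F$ to $\Q$ multiplies the absolute rank of a connected reductive group by $[F:\Q]$, this quantity equals $[F:\Q]\, r_G$, matching the exponent in the statement. The only essential content of the proof is the invocation of the preceding proposition together with $\tau(G) = \ol{\mu}^{\Tama}(G(F)\bs G(\A_F)/A_{G,\infty})$; no step presents a genuine obstacle, and the argument is essentially bookkeeping with the normalization of Haar measures at archimedean and non-archimedean places.
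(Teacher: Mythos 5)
Your proof is correct and follows the same route as the paper: combine the preceding proposition for the ratio $\mu^{\can,\EP}/\mu^{\Tama}$ with the Tamagawa number identity \eqref{e:Tamagawa}, and observe that the denominator equals $1$ because the Euler--Poincar\'e measure is normalized to give the compact inner form total volume $1$ (this last point is the only thing the paper's own proof spells out). Your remark identifying $\rank_\R G_\infty$ with $[F:\Q]\,r_G$ is just the bookkeeping the paper leaves implicit.
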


\begin{proof}
  It suffices to remark that
  the Euler-Poincar\'{e} measure on a compact Lie group has total volume 1,
  hence  $\ol{\mu}^{\EP}_\infty (\ol{G}(F_\infty)/A_{G,\infty})=1$.
\end{proof}

%\begin{rem}
%  Later the right hand side of Proposition \ref{p:can-EP-at-infty} appears in the trace formula
%  with $Z_M(\gamma)^0$ in place of $G$. It would be useful later that the ratio
%  depends only on $Z_M(\gamma)^0(\R)$, which is anisotropic modulo $A_{M,\infty}$.
%\end{rem}

\subsection{Bounds for Artin $L$-functions} For later use we estimate the $L$-value $L(\Mot_G)$ in Corollary \ref{c:canonical-measure}.

\begin{prop}\label{prop:Brauer}
Let $s \ge 1$ and $E$ be a Galois extension of $F$ of degree $[E:F]\le s$.	
\begin{enumerate}[(i)]
\item  For all $\epsilon>0$ there exists a constant $c=c(\epsilon,s,F)>0$ which depends only on $\epsilon$, $s$ and $F$ such that the following holds: For all non-trivial irreducible representations $\rho$ of $\Gal(E/F)$,
\begin{equation*}
  cd_E^{-\epsilon}\le L(1,\rho) \le c d_E^\epsilon.
\end{equation*}
 \item The same inequalities hold for the residue $\mathrm{Res}_{s=1} \zeta_E(s)$ of the Dedekind zeta function of $E$.
 \item There is a constant $A_1=A_1(s,F)>0$ which depends only on $s$ and $F$ such that for all \textsf{faithful} irreducible representation $\rho$ of $\Gal(E/F)$,
  \begin{equation*}
	d_{E/F}^{A_1} \le \N_{F/\Q}(\Fmf_\rho) \le d_{E/F}^{1/\dim(\rho)},
  \end{equation*}
  where $d_{E/F}=\N_{F/\Q}(\fkD_{E/F})$ is the relative discriminant of $E/F$; recall that $d_E= d_F^{[E:F]} d_{E/F}$.
\end{enumerate}
\end{prop}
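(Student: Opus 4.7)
The plan is to prove parts (i) and (ii) by Brauer induction, reducing bounds on Artin $L$-values to standard estimates for Hecke $L$-functions, and to derive part (iii) from the conductor--discriminant formula combined with an elementary ramification-theoretic comparison.

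For part (i), Brauer's induction theorem allows me to write
$$\rho \,=\, \sum_j n_j\, \Ind_{H_j}^{\Gal(E/F)}\chi_j, \qquad n_j \in \Z,$$
with each $\chi_j$ a one-dimensional character of a subgroup $H_j \le \Gal(E/F)$. Since $|\Gal(E/F)| \le s$, the combinatorial data $(H_j,n_j)$ range over a finite set depending only on $s$. This yields
$$L(s,\rho) \,=\, \prod_j L(s,\chi_j)^{n_j},$$
where each $L(s,\chi_j)$ is a Hecke $L$-function of the intermediate field $K_j := E^{H_j}$, whose discriminant satisfies $d_{K_j} \le d_E$ by the tower formula. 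The upper bound $L(1,\chi_j) \ll_\epsilon d_{K_j}^{\epsilon}$ follows from the standard convexity estimate for Hecke $L$-functions, and the corresponding lower bound $L(1,\chi_j) \gg_\epsilon d_{K_j}^{-\epsilon}$ is a Siegel-type estimate. The poles of individual $L(s,\chi_j)$ at $s=1$, arising from trivial $\chi_j$, cancel in the product because $L(s,\rho)$ is entire ($\rho$ being non-trivial irreducible). Combining the bounds on each factor, with attention to the signs of the exponents $n_j$, yields (i). Part (ii) then follows at once from the factorization $\zeta_E(s)/\zeta_F(s) = \prod_{\rho' \ne \mathbf{1}} L(s,\rho')^{\dim \rho'}$ by applying (i) to each of the $O_s(1)$ many factors and taking residues at $s=1$.

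For part (iii), the conductor--discriminant formula, with the product taken over all irreducible characters $\rho'$ of $\Gal(E/F)$,
$$\fkD_{E/F} \,=\, \prod_{\rho'} \Fmf_{\rho'}^{\dim \rho'},$$
immediately yields $\N_{F/\Q}(\Fmf_\rho)^{\dim \rho} \le d_{E/F}$, hence the upper bound. For the lower bound, fix a finite place $v$ of $F$ that ramifies in $E$, let $w$ be a place of $E$ above $v$, and denote by $G_i = G_i(w) \subset \Gal(E/F)$ the $i$-th ramification group in lower numbering. The standard local formulas read
$$\mathrm{ord}_v(\fkD_{E/F}) \,=\, \frac{[E:F]}{|G_0|}\sum_{i\ge 0}(|G_i|-1), \qquad \mathrm{ord}_v(\Fmf_\rho) \,=\, \sum_{i\ge 0}\frac{|G_i|}{|G_0|}\bigl(\dim \rho - \dim \rho^{G_i}\bigr).$$
Faithfulness of $\rho$ forces $\dim \rho^{G_i} \le \dim \rho - 1$ whenever $G_i \ne 1$, whence
$$\mathrm{ord}_v(\Fmf_\rho) \,\ge\, \frac{1}{|G_0|}\sum_{i\ge 0,\, G_i \ne 1} |G_i| \,\ge\, \frac{1}{|G_0|}\sum_{i\ge 0}(|G_i|-1) \,=\, \frac{\mathrm{ord}_v(\fkD_{E/F})}{[E:F]} \,\ge\, \frac{\mathrm{ord}_v(\fkD_{E/F})}{s}.$$
Multiplying over ramified $v$ gives $\N_{F/\Q}(\Fmf_\rho) \ge d_{E/F}^{1/s}$, so $A_1 := 1/s$ suffices.

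The main obstacle lies in the lower bound of (i): the Siegel-type estimate for $L(1,\chi)$ is ineffective for real characters, depending on potential Landau--Siegel zeros, so the constant $c(\epsilon,s,F)$ appearing in (i) and (ii) cannot be made explicit. This is harmless for our purposes since the proposition only asserts existence.
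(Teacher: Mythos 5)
Your part (iii) is correct, and in fact cleaner than the paper's treatment: the paper obtains the left-hand inequality by combining $v(\Fmf_\rho)\ge 1$ at each ramified place with the crude upper bound on $v(\fkD_{E/F})$ from Lemma~\ref{l:bound-diff-gen}, whereas your ramification-group computation gives the explicit exponent $A_1=1/s$ directly and avoids any wild-ramification estimate. Your overall strategy for (i)--(ii) (Brauer induction plus bounds for Hecke $L$-values at $s=1$) is also the paper's strategy, although the logical order is reversed: the paper quotes the Brauer--Siegel theorem for (ii) and deduces the lower bound in (i) from it, while you deduce (i) from Siegel-type lower bounds for Hecke $L$-functions and then (ii) from (i); both orderings are legitimate.

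The genuine gap is the displayed intermediate bound in (i): $L(1,\chi_j)\ll_\epsilon d_{K_j}^{\epsilon}$ and $L(1,\chi_j)\gg_\epsilon d_{K_j}^{-\epsilon}$, stated in terms of the discriminant of the fixed field $K_j=E^{H_j}$ alone. This is false: convexity and Siegel-type estimates for a Hecke $L$-function are in terms of its analytic conductor, which involves the conductor $\Fmf_{\chi_j}$ of the character and not only $d_{K_j}$. Concretely, take $s=2$, $F=\Q$, $E=\Q(\sqrt{D})$ and $\rho$ the quadratic character: then $H_j=\Gal(E/F)$, $K_j=\Q$, $d_{K_j}=1$, and your inequalities would assert $c^{-1}\le L(1,\chi)\le c$ uniformly in $D$, which fails (the value can be as large as a constant times $\log d_E$, and no lower bound better than $\gg_\epsilon d_E^{-\epsilon}$ is known). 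The missing step is to control the conductor of $\chi_j$ by $d_E$: the conductor--discriminant formula applied to $E/K_j$ gives $d_{K_j}\,\N_{K_j/\Q}(\Fmf_{\chi_j})\le d_E$, so the correct intermediate bound is $d_E^{-\epsilon}\ll_{\epsilon,s,F}L(1,\chi_j)\ll_{\epsilon,s,F}d_E^{\epsilon}$, after which your argument goes through; this is exactly why the paper phrases its convexity bound in terms of the discriminant $d_{E'}$ of the cyclic extension cut out by the character, together with $d_{E'}\mid d_E$. Two smaller points in the same step: do not justify the pole bookkeeping by saying $L(s,\rho)$ is entire (that is Artin's conjecture); all you need is $\langle\rho,\mathbf{1}\rangle=0$, which by Frobenius reciprocity forces the trivial characters among the $\chi_j$ to occur with total multiplicity $\sum n_j=0$, so the orders at $s=1$ cancel. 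And for those trivial $\chi_j$ the contribution to $L(1,\rho)$ is $\prod(\Res_{s=1}\zeta_{K_j})^{n_j}$, so you also need upper and lower bounds for these residues (Landau and Brauer--Siegel), which your write-up uses implicitly but never states.
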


\begin{proof}
  The assertion (ii) is Brauer--Siegel theorem~\cite{Brauer47}*{Theorem~2}. We also note the implication (i) $\Rightarrow$ (ii) which follows from the formula
  \begin{equation} \label{Brauer:product}
	\zeta_E(s)= \prod_{\rho} L(s,\rho)^{\dim \rho}.
  \end{equation}
where $\rho$ ranges over all irreducible representations of $\Gal(E/F)$.

  The proof of assertion (i) is reduced to the $1$-dimensional case by Brauer induction as in~\cite{Brauer47}. In this reduction one uses the fact that if $E'/F'$ is a subextension of $E/F$ then the absolute discriminant $d_{E'}$ of $E'$ divides the absolute discriminant $d_{E}$ of $E$. Also we may assume that $E'/F'$ is cyclic. For a character $\chi$ of $\Gal(E'/F')$ we have the convexity bound $L(1,\chi)\le c d^\epsilon_{E'}$ (Landau). The lower bound for $L(1,\chi)$ follows from (ii) and the product formula~\eqref{Brauer:product}.

  In the assertion (iii) the right inequality follows from the discriminant-conductor formula which implies that $\Fmf^{\dim (\rho)}_\rho\mid \fkD_{E/F}$. The left inequality follows from local considerations. Let $v$ be a finite place of $F$ dividing $\fkD_{E/F}$; since $\rho$ is faithful, its restriction to the inertia group above $v$ is non-trivial and therefore $v$ divides $\Fmf_\rho$. Since $v(\fkD_{E/F})$ is bounded above by a constant $A_1(s,F)$ depending only on $[E:F]\le s$ and $F$ by Lemma~\ref{l:bound-diff-gen}, we have $ v(\fkD_{E/F}) \le  A_1 v(\Fmf_\rho)$ which concludes the proof.
\end{proof}

\begin{cor}\label{c:bound-on-L(1)}
   For all integers $R, D,s\in \Z_{\ge 1}$, and $\epsilon>0$ there is a constant $c_1=c_1(\epsilon,R,D,s,F)>0$ (depending on $R$, $D$, $s$, $F$ and $\epsilon$) with the following property
\benu
\item For any $G$ such that $r_G\le R$, $\dim G\le D$,
  $Z(G)$ is $F$-anisotropic, and $G$ splits over a Galois extension of $F$ of degree $\le s$,
$$|L(\Mot_G)|\le c_1 \prod_{d=1 }^{\lfloor \frac{d_G+1}{2}\rfloor}
  \N_{F/\Q}(\fkf(\Mot_{G,d}))^{d-\Mdemi+\epsilon}.$$
\item  There is a constant $A_{20}=A_{20}(R,D,s,F)$ such that for any $G$ as in (i),
  $$|L(\Mot_G)|\le c_1 \prod_{v\in \Ram(G)}
  q_v^{A_{20}}.$$
\eenu
The choice $A_{20}= \frac{(D+1)Rs}{2} \max\limits_{\text{prime $p$}} (1+e_{F_v/\Q_p}\log s)$ is admissible.
\end{cor}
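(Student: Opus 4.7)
The plan is to factor $L(\Mot_G) = \prod_{d\ge 1} L(1-d, \Mot_{G,d})$, where by Proposition~\ref{p:Gross-motives}(iii) the range of $d$ with $\dim\Mot_{G,d}>0$ is bounded by $\lfloor (d_G+1)/2\rfloor \le (D+1)/2$, and to estimate each factor by transporting the value through the functional equation. Since $\Mot_{G,d}$ is self-dual by Proposition~\ref{p:Gross-motives}(i), the completed Artin $L$-function satisfies
\[
\Lambda(1-d, \Mot_{G,d}) = \varepsilon(\Mot_{G,d})\, A_d^{d-1/2}\, \Lambda(d, \Mot_{G,d}),
\]
with $A_d = d_F^{\dim\Mot_{G,d}}\cdot \N_{F/\Q}(\fkf(\Mot_{G,d}))$ and $|\varepsilon(\Mot_{G,d})|=1$. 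The ratio of archimedean factors $L_\infty(d,\Mot_{G,d})/L_\infty(1-d,\Mot_{G,d})$ involves only finitely many Gamma values and is bounded by a constant depending on $R$, $D$ and $F$; should $L_\infty(1-d,\Mot_{G,d})$ have a pole, then $L(1-d,\Mot_{G,d})$ simply vanishes and the estimate is automatic. Either way one obtains
\[
|L(1-d,\Mot_{G,d})| \le c(R,D,F)\cdot \N_{F/\Q}(\fkf(\Mot_{G,d}))^{d-1/2}\cdot |L(d,\Mot_{G,d})|.
\]

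For $d\ge 2$ we are inside the region of absolute convergence, so $|L(d,\Mot_{G,d})| \le \zeta_F(2)^R$ is bounded in terms of $R$ and $F$. The delicate case is $d=1$: decompose $\Mot_{G,1} = \bigoplus_i \rho_i$ into irreducible constituents; the hypothesis that $Z(G)$ is $F$-anisotropic guarantees that the trivial representation does not occur among the $\rho_i$, so each $L(1,\rho_i)$ is finite. Each $\rho_i$ factors through a faithful representation of some $\Gal(E_i/F)$ with $[E_i:F]\le s$. Proposition~\ref{prop:Brauer}(i) yields $|L(1,\rho_i)| \le c\, d_{E_i}^\epsilon$, while Proposition~\ref{prop:Brauer}(iii) together with $d_{E_i}=d_F^{[E_i:F]}d_{E_i/F}$ converts this into $|L(1,\rho_i)| \le c'\, \N_{F/\Q}(\fkf(\rho_i))^{\epsilon/A_1}$. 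Multiplying over $i$ and using the additivity of the Artin conductor on direct sums gives $|L(1,\Mot_{G,1})| \le c''\,\N_{F/\Q}(\fkf(\Mot_{G,1}))^{\epsilon'}$ where $\epsilon'$ may be taken arbitrarily small. Combining with the previous paragraph proves~(i).

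For~(ii), first note that our hypothesis on $G$ implies $s_G^{\spl}\le s$, since the maximal torus of the quasi-split inner form splits wherever $G$ does. Lemma~\ref{l:bounding-conductor} then yields $f(G_v,d) \le \dim(\Mot_{G,d})\cdot(s(1+e_{F_v/\Q_p}\log_p s)-1)$ and $\fkf(\Mot_{G,d})$ is supported on $\Ram(G)$, so
\[
\prod_d \N_{F/\Q}(\fkf(\Mot_{G,d}))^{d-1/2+\epsilon} = \prod_{v\in\Ram(G)} q_v^{\sum_d (d-1/2+\epsilon)f(G_v,d)}.
\]
Bounding $d-1/2+\epsilon \le (D+1)/2$ and using $\sum_d \dim\Mot_{G,d} = r_G \le R$ from Proposition~\ref{p:Gross-motives}(ii) controls the exponent by the claimed $A_{20} = \frac{(D+1)Rs}{2}\max_p(1+e_{F_v/\Q_p}\log s)$.

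The main obstacle is the $d=1$ case, where the value $L(1,\Mot_{G,1})$ sits precisely on the edge of absolute convergence; the Brauer--Siegel type bounds of Proposition~\ref{prop:Brauer} are exactly the required analytic input, and converting the discriminant bound of Proposition~\ref{prop:Brauer}(i) into a bound in terms of the Artin conductor of an individual constituent (and hence of $\Mot_{G,1}$) requires the conductor--discriminant formula together with Proposition~\ref{prop:Brauer}(iii). The remaining ingredients are bookkeeping with the functional equation.
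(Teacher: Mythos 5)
Your argument is correct and is essentially the paper's: both routes pass through the functional equation to reach the values at $s=d$, control those values via Proposition~\ref{prop:Brauer}(i) and (iii) (decomposing into irreducibles and converting discriminants to conductors), and deduce (ii) from Lemma~\ref{l:bounding-conductor} together with Proposition~\ref{p:Gross-motives}(ii)--(iii), with $\epsilon=\tfrac12$. The only differences are in execution — you apply the functional equation to each piece $\Mot_{G,d}$ separately, bound $L(d,\Mot_{G,d})$ for $d\ge 2$ by absolute convergence, and dispose of possible poles of $L_\infty$ at $s=1-d$ by hand, whereas the paper applies the functional equation to the whole motive and quotes Gross's formula (7.7) for the exact archimedean ratio, and applies the Brauer bounds at every $d$; your explicit observation that $F$-anisotropy of $Z(G)$ excludes trivial constituents of $\Mot_{G,1}$ (so Proposition~\ref{prop:Brauer}(i) applies at the edge point $d=1$) makes precise a point the paper leaves implicit.
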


\begin{proof} The functional equation for $\Mot_G$ reads
  $$L(\Mot_G)=L(\Mot_G^\vee(1))\epsilon(\Mot_G)\cdot \frac{L_\infty(\Mot_G^\vee(1))}{L_\infty(\Mot_G)}$$
where   $ \epsilon(\Mot_G)=|\Delta_F|^{d_G/2}\prod_{d\ge 1} \N_{F/\Q}(\fkf(\Mot_{G,d}))^{d-\frac{1}{2}}$.

The (possibly reducible) Artin representation for $\Mot_{G,d}$
  factors through $\Gal(E/F)$ with $[E:F]\le s$ by the assumption.
  Let $A_1=A_1(s,F)$ be as in (iii) of Proposition \ref{prop:Brauer}.
For all $\epsilon>0$, (i) of Proposition~\ref{prop:Brauer} implies that there is a constant $c=c(\epsilon,s,F)>1$
  depending only on $s$ and $F$ such that
  $$|L(\Mot_G^\vee(1))|\le \prod_{d\ge 1} \left( c \N_{F/\Q}(\fkf(\Mot_{G,d}))^{A_1\epsilon}\right)^{\dim \Mot_{G,d}}
  \le c^{r_G}\prod_{d\ge 1}  \N_{F/\Q}(\fkf(\Mot_{G,d}))^{\epsilon A_1 r_G}.$$
  Formula (7.7) of \cite{Gro97}, the first equality below, leads to the following bound since only
$1\le d \le \lfloor \frac{d_G+1}{2}\rfloor$ can contribute in view of Proposition \ref{p:Gross-motives}.(iii).
\begin{equation*}
\left|\frac{L_\infty(\Mot^\vee_G(1))}{L_\infty(\Mot_G)}\right| = 2^{-[F:\Q]r_G}
\prod_{d\ge 1} \left( \frac{(d-1)!}{(2\pi)^d}\right)^{\dim \Mot_{G,d}}
\le 2^{-[F:\Q]r_G} \left( \lfloor \frac{d_G-1}{2}\rfloor !\right)^{r_ G}.
\end{equation*}
  Set $c_1(R,D,s,F,\epsilon):= |\Delta_F|^{D/2} 2^{-[F:\Q]R} \left(
\lfloor \frac{D-1}{2}\rfloor !\right)^{R}$. Then we see that
\begin{equation*}\begin{aligned}
  |L(\Mot_G)|&\le c_1 \prod_{d=1 }^{\lfloor \frac{d_G+1}{2}\rfloor}
  \N_{F/\Q}(\fkf(\Mot_{G,d}))^{d-\Mdemi+\epsilon} \\
  & = c_1 \prod_{v\in \Ram(G)} \prod_{d=1 }^{\lfloor \frac{d_G+1}{2}\rfloor}
  q_v^{(d-\Mdemi+\epsilon) \cdot f(G_v,d)}.
\end{aligned}\end{equation*}
This concludes the proof of (i).

According to Lemma \ref{l:bounding-conductor}, the exponent in the right hand side is bounded by
\begin{equation*}
  d f(G_v,d) \le \frac{D+1}{2} \dim \Mot_{G,d} \cdot (s(1+e_{F_v/\Q_p}\log s)-1).
\end{equation*}
 (we have chosen $\epsilon=\Mdemi$). The proof of (ii) is concluded by the fact that $$\sum_{d\ge 1}\dim \Mot_{G,d}=r_G\le R,$$ see Proposition \ref{p:Gross-motives}.(ii).
\end{proof}

\begin{cor}\label{c:bound-on-L(1)-2}
  Let $G$ be a connected cuspidal reductive group over $F$ with anisotropic center. Then there exist constants
  $c_2=c_2(G,F)>0$ and $A_2(G,F)>0$ depending only on $G$ and $F$ such that:
  for any cuspidal $F$-Levi subgroup $M$ of $G$ and any semisimple $\gamma\in M(F)$
which is elliptic in $M(\R)$,
 $$|L(\Mot_{I^M_\gamma})|\le c_2 \prod_{v\in \Ram(I_\gamma^M)}
 q_v^{A_2}$$
where $I^M_\gamma$ denote the connected centralizer of $\gamma$ in $M$.
The following choice is admissible: $$A_2= \frac{(d_G+1)r_Gw_G s_G}{2} \max\limits_{\text{prime $p$}} (1+e_{F_v/\Q_p}\log w_Gs_G).$$
\end{cor}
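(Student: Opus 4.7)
The plan is to deduce this bound from Corollary~\ref{c:bound-on-L(1)}(ii) applied to $H := I^M_\gamma$, taking $R = r_G$, $D = d_G$, and $s = w_G s_G$. With these choices the admissible exponent $A_{20} = \tfrac{(D+1)Rs}{2}\max_p\bigl(1 + e_{F_v/\Q_p}\log s\bigr)$ given by that corollary collapses to precisely the prescribed value of $A_2$, while the prefactor $c_1(\tfrac{1}{2}, r_G, d_G, w_Gs_G, F)$ specializes to a constant $c_2 = c_2(G, F)$ depending only on $G$ and $F$.

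Checking the numerical hypotheses is routine. Since $\gamma$ is semisimple, $H$ is a connected reductive $F$-group. Any maximal torus $T$ of $H$ containing $\gamma$ is simultaneously a maximal torus of both $M$ and $G$, so $r_H = r_G \le R$ and $\dim H \le d_G = D$. Applying Lemma~\ref{l:torus-splitting} to the maximal torus $T \subset G$ yields a Galois extension $E/F$ of degree at most $w_G s_G$ over which $T$ splits; since $H$ is generated by $T$ together with the root subgroups of $H$ relative to $T$, $H$ itself splits over $E$.

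The crucial step is to verify the remaining hypothesis, namely that $Z(H)$ is $F$-anisotropic. Since $M$ is a cuspidal Levi of $G$ and $Z(G)$ is $F$-anisotropic, we have $A_M = A_G = 1$, so $A_{M,\infty}$ is trivial. Choosing $T$ to be an $\R$-elliptic maximal torus of $M$ containing $\gamma$ (guaranteed by the ellipticity hypothesis on $\gamma$), the real points $T(F \otimes_\Q \R)$ form a compact group. Since $Z(H) \subset T$, any non-trivial $F$-split subtorus of $Z(H)$ would base-change to a non-trivial $\R$-split, hence non-compact, direction inside $T(F \otimes_\Q \R)$, contradicting compactness. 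Thus $Z(H)$ is $F$-anisotropic, and the conclusion follows from Corollary~\ref{c:bound-on-L(1)}(ii). The principal obstacle is this anisotropy verification, which relies on the combined force of the cuspidality of $M$ and the ellipticity of $\gamma$ at infinity; the rest is a direct substitution into the bound already established.
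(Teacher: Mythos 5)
Your overall route is exactly the paper's: invoke Lemma~\ref{l:torus-splitting} to get the splitting bound $w_Gs_G$ for a maximal torus of $I^M_\gamma$, then apply Corollary~\ref{c:bound-on-L(1)}(ii) with $R=r_G$, $D=d_G$, $s=w_Gs_G$, which gives precisely the stated $A_2$ and a constant depending only on $G$ and $F$; your rank, dimension and splitting checks are fine. The problem is the step you yourself single out as crucial. The assertion ``$A_M=A_G=1$'' is false for a proper Levi: $M=Z_G(S)$ for some nontrivial $F$-split torus $S\not\subset Z(G)$, and $S\subset A_M$, so $A_M\neq 1$ even though $A_G=1$ (for instance $M\cong\GL_2$ is a cuspidal proper Levi of $\Sp_4$ with $A_M\cong\GL_1$; cuspidality of $M$ is a condition on $M/A_M$ and does not force $A_M=A_G$). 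Consequently an $\R$-elliptic maximal torus $T$ of $M$ is anisotropic only modulo $A_{M,\infty}$, so $T(F\otimes_\Q\R)$ is not compact and your compactness argument collapses. Worse, the conclusion you are trying to verify is genuinely false: $A_M$ is a nontrivial $F$-split torus contained in $Z(I^M_\gamma)$ (take $\gamma=1$, so $I^M_\gamma=M$, for the most blatant example), so $Z(I^M_\gamma)$ is never $F$-anisotropic when $M\subsetneq G$. What ellipticity of $\gamma$ in $M(\R)$ actually gives is only $A_{I^M_\gamma}=A_M$, not anisotropy.

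So a literal application of Corollary~\ref{c:bound-on-L(1)} to $I^M_\gamma$ is not licensed for proper $M$, and your proposal has a genuine gap at its central claim. (For comparison: the paper's own proof is just the two citations above and is silent on the anisotropy hypothesis, so your instinct that something needed checking was sound, but the check cannot succeed as stated.) The repair is different from what you attempted: the split central torus $A_{I^M_\gamma}=A_M$ only produces trivial Artin constituents in the degree-one piece $\Mot_{I^M_\gamma,1}$, of multiplicity $\dim A_M\le r_G$; their contribution to $L(\Mot_{I^M_\gamma})$ is a power of a fixed special value of $\zeta_F$, hence a constant depending only on $F$ and $r_G$ that can be absorbed into $c_2$, while the remaining constituents (the nontrivial ones, and the trivial ones in degrees $d\ge 2$, which contribute bounded factors $\zeta_F(d)$ on the dual side of the functional equation) are bounded exactly as in the proof of Corollary~\ref{c:bound-on-L(1)}, with the ramification still supported on $\Ram(I^M_\gamma)$. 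Either argue along these lines, or at least record that the anisotropy hypothesis fails and must be circumvented; as written, the ``crucial step'' of your proof is incorrect.
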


\begin{proof}
    According to Lemma \ref{l:torus-splitting}, %below (which is independent of what is done in this article)
 $s^{\spl}_{ I^M_\gamma}\le w_Gs_G$.
  Apply Corollary \ref{c:bound-on-L(1)} for each $I^M_\gamma$ with $R=r_G$, $D=d_G$ and $s=w_Gs_G$
to deduce the first assertion, which obviously implies the last assertion. Note that
$\rank I^M_\gamma\le r_G$ and that $\dim I^M_\gamma\le d_G$.
\end{proof}

Instead of using the Brauer--Siegel theorem which is ineffective, we could use the estimates by Zimmert~\cite{Zimmert:regulator} for the size of the regulator of number fields. This yields an effective estimate for the constants $c_2$ and $c_3$ above, at the cost of enlarging the value of the exponents $A_1$ and $A_2$.

\subsection{Frobenius--Schur indicator}
\label{sec:b:fs}

The Frobenius--Schur indicator is an invariant associated to an irreducible representation. It may take the three values $1,0,-1$. This subsection gathers several well-known facts and recalls some familiar constructions.

The Frobenius--Schur indicator can be constructed in greater generality but the following setting will suffice for our purpose. We will only consider finite dimensional representations on vector spaces over $\BmC$ or $\BmR$. The representations are continuous (and unitary) from compact Lie groups or algebraic from linear algebraic groups (these are in fact closely related by the classical \Lquote{unitary trick} of Hurwitz and Weyl).

 Let $G$ be a compact Lie group and denote by $\mu$ the Haar probability measure on $G$. Let $(V,r)$ be a continuous irreducible representation of $G$. Denote by $\chi(g)=\MTr(r(g))$ its character.

\begin{defn} The \key{Frobenius--Schur indicator} of an irreducible representation $(V,r)$ of $G$ is defined by
\begin{equation*}
s(r):= \int_G \chi(g^2) d\mu(g).
\end{equation*}
We have that $s(r)\in \set{-1,0,1}$ always.
\end{defn}

\begin{rem}
  More generally if $G$ is an arbitrary group but $V$ is still finite dimensional, then $s(r)$ is defined as the multiplicity of the trivial representation in the virtual representation on $\MSym^2 V - \wedge^2 V$. This is consistent with the above definition.
\end{rem}

\begin{rem}\begin{enumerate}[(i)]
  \item Let $(V^\vee,r^\vee)$ be the dual representation of $G$ in the dual $V^\vee$. It is easily seen that $s(r)=s(r^\vee)$.
  \item If $G=G_1\times G_2$ and $r$ is the irreducible representation of $G$ on $V=V_1\otimes V_2$ where $(V_1,r_1)$ and $(V_2,r_2)$ are irreducible representations of $G_1$ (resp. $G_2$), then $s(r)=s(r_1)s(r_2)$.
\end{enumerate}
\end{rem}

The classical theorem of Frobenius and Schur says that $r$ is a real, complex or quaternionic representation if and only if $s(r)=1,0$ or $-1$ respectively. We elaborate on that dichotomy in the following three lemma.

\begin{lem}[Real representation]\label{lem:real} Let $(V,r)$ be an irreducible representation of $G$.
The following assertions are equivalent:
\begin{enumerate}[(i)]
\item $s(r)=1$;
\item $r$ is self-dual and defined over $\BmR$ in the sense that $V\simeq V_0\otimes_\BmR \BmC$ for some irreducible representation on a real vector space $V_0$. (Such an $r$ is said to be a \key{real representation};)
\item $r$ has an invariant real structure. Namely there is a $G$-invariant anti-linear map $j:V\to V$ which satisfies $j^2=1$.
\item $r$ is self-dual and any bilinear form on $V$ that realizes the isomorphism $r\simeq r^\vee$ is symmetric;
\item $\MSym^2 V$ contains the trivial representation (then the multiplicity is exactly one).
\end{enumerate}
\end{lem}

We don't repeat the proof here (see e.g.~\cite{book:serre:rep}) and only recall some of the familiar constructions. We have a direct sum decomposition
\begin{equation*}
 V\otimes V = \MSym^2 V \oplus \wedge^2 V.
\end{equation*}
The character of the representation $V\otimes V$ is $g\mapsto \chi(g)^2$. By Schur lemma the trivial representation occurs in $V\otimes V$ with multiplicity at most one. In other words the subspace of invariant vectors of $V^\vee\otimes V^\vee$ is at most one. Note that this subspace is identified with $\MHom_G(V,V^\vee)$ which is also the subspace of invariant bilinear forms on $V$.

The character of the representation $\MSym^2 V$ (resp. $\wedge^2 V$) is
\begin{equation*}
\text{%
$\Mdemi(\chi(g)^2+\chi(g^2))$\quad  resp.\quad  $\Mdemi(\chi(g)^2-\chi(g^2))$.
}
\end{equation*}
From that the equivalence of (i) with (v) follows because the multiplicity of the trivial representation in $\MSym^2 V$ (resp. $\wedge^2 V$) is the mean of its character. The equivalence of (iv) and (v) is clear because a bilinear form on $V$ is an element of $V^\vee\otimes V^\vee$ and it is symmetric if and only if it belongs to $\MSym^2 V^\vee$.

The equivalence of (ii) and (iii) follows from the fact that $j$ is induced by complex conjugation on $V_0\otimes_\BmR \BmC$ and conversely $V_0$ is the subspace of fixed points by $j$. Note that a real representation is isomorphic to its complex conjugate representation because $j$ may be viewed equivalently as a $G$-isomorphism $V\to \overline{V}$. Since $V$ is unitary the complex conjugate representation $\overline{r}$ is isomorphic to the dual representation $r^\vee$. In assertion (ii) one may note that the endomorphism ring of $V_0$ is isomorphic to $\BmR$.

\begin{lem}[Complex representation]  Let $(V,r)$ be an irreducible representation of $G$.
The following assertions are equivalent:
\begin{enumerate}[(i)]
\item $s(r)=0$;
\item $r$ is not self-dual;
\item $r$ is not isomorphic to $\overline{r}$; (Such an $r$ is called a \key{complex representation};)
\item $V\otimes V$ does not contain the trivial representation.
\end{enumerate}
\end{lem}

We note that for a complex representation, the restriction $\Res_{\BmC/\BmR} V$ (obtained by viewing $V$ as a real vector space) is an irreducible real representation of twice the dimension of $V$. Its endomorphism ring is isomorphic to $\BmC$.

\begin{lem}[Quaternionic/symplectic representation]\label{lem:quaternionic} Let $(V,r)$ be an irreducible representation of $G$.
The following assertions are equivalent:
\begin{enumerate}[(i)]
\item $s(r)=-1$;
\item $r$ is self-dual and cannot be defined over $\R$.
\item $r$ has an invariant quaternionic structure. Namely there is a $G$-invariant anti-linear map $j:V\to V$ which satisfies $j^2=-1$.
(Such an $r$ is called a \key{quaternionic representation}.)
\item $r$ is self-dual and the bilinear form on $V$ that realizes the isomorphism $r\simeq r^\vee$ is antisymmetric.
 (Such an $r$ is said to be a \key{symplectic representation};)
\item $\bigwedge^2 V$ contains the trivial representation (the multiplicity is exactly one).
\end{enumerate}
\end{lem}

The equivalence of (iii) and (iv) again comes from the fact that $V$ is unitarizable (because $G$ is a compact group). In that context the notion of symplectic representation is identical to the notion of quaternionic representation. Note that for a quaternionic representation, the restriction $\Res_{\BmC/\BmR} V$ is an irreducible real representation of twice the dimension of $V$. Furthermore its ring of endomorphisms is isomorphic to the quaternion algebra $\BmH$. Indeed the endomorphism ring contains the (linear) action by $i$ because $V$ is a representation over the complex numbers and together with $j$ and $k=ij$ this is the standard presentation of $\BmH$.

From the above discussions we see that the Frobenius--Schur indicator can be used to classify irreducible representations over the reals. The endomorphism ring of an irreducible real representation is isomorphic to either $\BmR,\BmC$ or $\BmH$ and we have described a correspondence with associated complex representations.

\section{A uniform bound on orbital integrals}\label{s:app:unif-bound}

  This section is devoted to showing an apparently new result on the uniform bound on orbital integrals
  evaluated at semisimple conjugacy classes and basis elements of unramified Hecke algebras.
  Our bound is uniform in the finite place $v$ of a number field (over which the group is defined),
  the ``size'' of (the support of) the basis element for the unramified Hecke algebra at $v$
  as well as the conjugacy class at $v$.

  The main result is Theorem \ref{t:appendeix2}, which
  is invoked in \S\ref{sub:weight-varies}. The main local input for Theorem \ref{t:appendeix2} is Proposition \ref{p:appendix2}. The technical heart in the proof of the proposition is postponed to \S\ref{sub:app:elliptic}, which the reader may want to skip in the first reading. In Appendix \ref{s:app:B} we discuss an alternative approach to Theorem \ref{t:appendeix2} via motivic integration.

\subsection{The main local result}\label{sub:local-bound-orb-int}

  We begin with a local assertion with a view toward Theorem \ref{t:appendeix2} below.
  Let $G$ be a connected reductive group over a finite extension $F$ of $\Q_p$ with a maximal $F$-split torus $A$.
  As usual $\cO$, $\varpi$, $k_F$ denote the integer ring, a uniformizer and the residue field. Let $\bG$ be the Chevalley group for $G\times_{F} \ol{F}$, defined over $\Z$.
 Let $\bB$ and $\bT$ be a Borel subgroup and a maximal torus of $\bG$ such that
  $\bB\supset \bT$. We assume that
  \bit\item $G$ is unramified over $F$,
  \item $\cha k_F>w_Gs_G$ and $\cha k_F$ does not divide the finitely many constants in the Chevalley commutator relations (namely $C_{ij}$ of \eqref{e:commutator}).
  \eit
  (We assume $\cha k_F>w_Gs_G$ to ensure that any maximal torus of $G$ splits over a finite tame extension, cf. \S\ref{sub:app:elliptic} below. The latter assumption on $\cha k_F$ depends only on $\bG$.) Fix a smooth reductive model over $\cO$ so that $K:=G(\cO)$ is a hyperspecial subgroup of $G(F)$. Fix a Borel subgroup $B$ of $G$ whose Levi factor is the centralizer of $A$ in $G$. Denote by $v:F^\times \ra \Q$ the discrete valuation normalized by $v(\varpi)=1$ and by $D^G$ the Weyl discriminant function, cf. \eqref{e:D^G} below. Set $q_v:=|k_F|$.

  Suppose that there exists a closed embedding of algebraic groups $\Xi^{\spl}:\bG\hra \GL_m$ defined over $\cO$
  such that $\Xi^{\spl}(\bT)$ (resp. $\Xi^{\spl}(\bB)$) lies in the group of diagonal (resp. upper triangular) matrices. This assumption will be satisfied by Lemma \ref{l:conj-image-in-diag} and Proposition \ref{p:global-integral-model}, or alternatively as explained at the start of \S\ref{sub:lem-split}.
  The assumption may not be strictly necessary but is convenient to have for some later arguments.
  In the setup of \S\ref{sub:global-bound-orb-int} such a $\Xi^{\spl}$ will be chosen globally
  over $\Z[1/Q]$ (i.e. away from a certain finite set of primes), which gives rise to an embedding over $\cO$ if $v$ does not divide $Q$.

\begin{prop}\label{p:appendix2}

% Fix $\theta\in \mC(\Gamma_1)$ and
   There exist $a_{G,v},b_{G,v},e_{G,v}\ge0$ (depending on $F$, $G$ and $\Xi^{\spl}$) such that
   \bit
   \item for every semisimple $\gamma\in G(F)$,
   \item for every $\lambda\in X_*(A)$ and $\kappa\in\Z_{\ge0}$ such that $\|\lambda\|\le\kappa$,
   \eit
  \beq\label{e:app:prop} 0\le O_\gamma(\tau^G_\lambda,\mu^{\can}_G,\mu^{\can}_{I_\gamma})
  \le  q_v^{ a_{G,v}+b_{G,v}\kappa}\cdot D^G(\gamma)^{-e_{G,v}/2}.\eeq
\end{prop}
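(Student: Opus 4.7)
The plan is to prove the bound by induction on the semisimple rank of $G$, combining a descent argument for non-elliptic $\gamma$ with a direct geometric analysis in the elliptic case. Throughout I may assume $\gamma\in K\lambda(\varpi)K$, since otherwise the orbital integral vanishes. Lemma~\ref{l:bounding1-alpha(gamma)} then yields $|1-\alpha(\gamma)|\le q_v^{B_5\kappa}$ for every $\alpha\in\Phi_\gamma$, and in particular the Weyl discriminant is polynomially bounded: $D^G(\gamma)\le q_v^{|\Phi|B_5\kappa}$. This polynomial bound is crucial: the factor $D^G(\gamma)^{-e_{G,v}/2}$ in the target inequality accommodates the expected blow-up of $O_\gamma$ near non-regular $\gamma$ (as in Shalika germ theory), and the polynomial upper bound on $D^G(\gamma)$ will let us absorb stray discriminant factors into the $q_v^{O(\kappa)}$ prefactor during the induction.

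For the inductive step, the base case where $G$ is a torus is immediate. Otherwise, first suppose $\gamma$ is not elliptic in $G$, so that $\gamma$ lies in a proper $F$-Levi subgroup $M\subsetneq G$. By Lemma~\ref{l:orb-int-const-term},
\[
O^G_\gamma(\tau^G_\lambda,\mu^{\can}_G,\mu^{\can}_{I_\gamma}) = |D^G_M(\gamma)|^{-1/2}\cdot O^M_\gamma((\tau^G_\lambda)_M,\mu^{\can}_M,\mu^{\can}_{I_\gamma}).
\]
By Lemma~\ref{l:const-term-on-unram-Hecke} combined with the compatibility of truncations under the partial Satake transform (Lemma~\ref{l:three-truncations} and the remark following it), $(\tau^G_\lambda)_M$ is a $\C$-linear combination of at most $q_v^{O(\kappa)}$ basis elements $\tau^M_\mu$ with $\|\mu\|\le\kappa$, whose coefficients are bounded by $q_v^{O(\kappa)}$. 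Using the factorization $D^G(\gamma)=D^M(\gamma)\cdot |D^G_M(\gamma)|$, the product $|D^G_M(\gamma)|^{-1/2}\cdot D^M(\gamma)^{-e_M/2}$ (with $e_M$ denoting the inductive exponent for $M$) rewrites as $|D^G_M(\gamma)|^{(e_M-1)/2}\cdot D^G(\gamma)^{-e_M/2}$; for $e_M\ge 1$ the first factor is absorbed into $q_v^{O(\kappa)}$ by the polynomial discriminant bound, and one concludes by applying the inductive hypothesis to each $O^M_\gamma(\tau^M_\mu)$ after enlarging the constants $a_{G,v},b_{G,v},e_{G,v}$ accordingly.

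The hard part will be the elliptic case, which I expect to occupy most of Section~\ref{s:app:unif-bound}. Here $I_\gamma(F)/A_G(F)$ is compact, and the orbital integral is the volume, under the push-forward of $\mu^{\can}_G/\mu^{\can}_{I_\gamma}$ to the conjugacy class $C_\gamma$, of the intersection $C_\gamma\cap K\lambda(\varpi)K$. I would estimate this in two steps: first, partition $K\lambda(\varpi)K$ into at most $q_v^{O(\kappa)}$ left $K$-cosets using Lemma~\ref{l:double-coset-volume}; second, bound the intersection of $C_\gamma$ with each coset by $q_v^{O(\kappa)}\cdot D^G(\gamma)^{-e_{G,v}/2}$. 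The second step amounts to a uniform estimate for the affine Springer fiber attached to $(\gamma,\lambda)$, and it should follow from an analysis of the fixed locus of $\gamma$ on the Bruhat--Tits building of $G(F)$, whose structure is controlled by the valuations $v(1-\alpha(\gamma))$ through the embedding $\Xi\colon\bG\hookrightarrow\GL_d$. A strategy avoiding the full Kazhdan--Lusztig/Bezrukavnikov--Ng\^o machinery would be to reduce, via the theory of pseudo-Levi subgroups, to the case where $I_\gamma$ contains a maximal torus splitting over a tame extension of $F$ (made possible by the hypothesis $\mathrm{char}\,k_F>w_Gs_G$); in that tame setting an explicit lattice-counting argument in the building should yield the polynomial bound with some exponent $e_{G,v}>1$, weaker than the sharp value $e=1$ obtained independently in Appendix~\ref{s:app:B} via arithmetic motivic integration.
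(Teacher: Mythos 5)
Your overall architecture coincides with the paper's: induction on semisimple rank, the torus base case, descent to a proper Levi via Lemma~\ref{l:orb-int-const-term} together with coefficient and truncation bounds for the constant term in the non-elliptic case, and a building-theoretic count over a tame splitting extension in the elliptic case. The Levi step is essentially the paper's argument (one caveat: the descent formula requires $(G,M)$-regularity, which is why the paper takes $M=Z_G(S)$ for $S$ a split torus in $Z(I_\gamma)$, after first reducing to anisotropic center and treating central $\gamma$ separately), and your bookkeeping with $e_M\ge 1$ is sound because $D_{G/M}(\gamma)\le q_v^{O(\kappa)}$ by Lemma~\ref{l:bounding1-alpha(gamma)}; also note the coefficient bound you assert is not a formal consequence of the truncation lemmas but needs the volume estimate of Lemma~\ref{l:double-coset-volume}, exactly as in the paper.

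The genuine gap is the elliptic case, which is what \S\ref{sub:app:elliptic} of the paper is devoted to, and two quantitative steps there are missing ideas rather than unexecuted details. First, the statement that the fixed locus is ``controlled by the valuations $v(1-\alpha(\gamma))$'' is precisely Lemma~\ref{l:dbl-coset-distance}: one writes a candidate point in Iwasawa form $an\ol{x}'_0$ and solves $\delta^{-1}n^{-1}\delta n\in K'\lambda_0(\varpi')K'$ recursively for the root coordinates of $n$ using the Chevalley commutator relations; your sketch supplies no mechanism for this bound. Second, and more seriously, a ``lattice-counting argument'' does not close as you set it up: over the tame splitting field $F'$ the analysis only places the fixed points within a controlled distance of the apartment of $T_\gamma(F')$, and the set of vertices within bounded distance of an apartment is infinite, so neither the per-coset intersection nor the total fixed set is yet bounded. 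The paper converts this into a finite count only by exploiting ellipticity over $F$: $\Gal(F'/F)$ acts on that apartment with a unique fixed point $\ol{x}_1$, and a midpoint/convexity argument for the building metric (Lemma~\ref{l:B-3}) shows every rational fixed point lies within the same distance of $\ol{x}_1$, after which Lemma~\ref{l:B-4} counts the vertices of a ball. This Galois descent step is absent from your plan. Finally, identifying the orbital integral with a point count in $G(F)/K$ requires comparing $\mu^{\can}_{I_\gamma}$ with the Euler--Poincar\'e measure on the compact group $I_\gamma(F)$; the paper bounds this ratio by $q_v^{r_G(d_G+1)}$ via Gross's motive (see \eqref{e:mu-EP/mu}), and without such a uniform comparison your ``volume of $C_\gamma\cap K\lambda(\varpi)K$'' is off by the canonical volume of $I_\gamma(F)$, which must be controlled uniformly in $\gamma$ and $v$.
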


\begin{rem}
  We chose the notation $a_{G,v}$ etc rather than $a_{G,F}$ etc in anticipating the global setup
  of the next subsection
  where $F$ is the completion of a number field at the place $v$.
\end{rem}

\begin{proof}
  For simplicity we will omit the measures chosen to compute orbital integrals
  when there is no danger of confusion.
  Let us argue by induction on the semisimple rank $r^{\semis}_G$ of $G$.
  In the rank zero case, namely when $G$ is a torus,
  the proposition is true since $O_\gamma(\tau^G_\lambda)$ is equal to 0 or 1.
  Now assume that $r^{\semis}_G\ge 1$ and that the proposition is known for all groups
  whose semisimple ranks are less than $r^{\semis}_G$.
  In the proof we write $a_G$, $b_G$, $e_G$ instead of $a_{G,v}$, $b_{G,v}$, $e_{G,v}$ for simplicity.

\medskip

\noindent\underline{Step 1}. Reduce to the case where $Z(G)$ is anisotropic.

  Let $A_G$ denote the maximal split torus in $Z(G)$. Set $\ol{G}:=G/A_G$.
  The goal of Step 1 is to show that if the proposition for $\ol{G}$ then it also holds
for $G$. We have an exact sequence of algebraic groups over $\cO$: $1\ra A_G\ra G\ra \ol{G}\ra 1$.
By taking $F$-points one obtains an exact sequence of groups
$$1 \ra A_G(F) \ra G(F) \ra \ol{G}(F) \ra 1,$$
where the surjectivity is implied by Hilbert 90 for $A_G$.
(In fact $G(\cO)\ra \ol{G}(\cO)$ is surjective since it is surjective on $k_F$-points and
$G\ra \ol{G}$ is smooth, cf. \cite[p.386]{Kot86}, but we do not need this.)
For any semisimple $\gamma\in G(F)$, denote its image in $\ol{G}(F)$ by $\ol{\gamma}$.
 The connected centralizer of $\ol{\gamma}$ is denoted
$\ol{I}_{\ol{\gamma}}$. There is an exact sequence
$$1\ra A_G(F) \ra I_\gamma(F) \ra \ol{I}_{\ol{\gamma}}(F)\ra 1.$$
We see that $G(F) \ra \ol{G}(F)$ induces a bijection
$I_\gamma(F)\bs G(F) \simeq \ol{I}_{\ol{\gamma}}(F)\bs \ol{G}(F)$.
Let $A$ be a maximal $F$-split torus of $G$, and $\ol{A}$ be its image in $\ol{G}$.
For any $\lambda\in X_*(A)$, denote its image in $X_*(\ol{A})$ by $\ol{\lambda}$.
Then
$$ O^{G(F)}_\gamma(\tau^G_\lambda,\mu^{\can}_G,\mu^{\can}_{I_\gamma})
\le O^{\ol{G}(F)}_{\ol{\gamma}}(\tau^{\ol{G}}_{\ol{\lambda}},\mu^{\can}_{\ol{G}},\mu^{\can}_{\ol{I}_{\ol{\gamma}}}).$$
Indeed, this follows from the fact that $I_\gamma(F)\bs G(F) \simeq \ol{I}_{\ol{\gamma}}(F)\bs \ol{G}(F)$
carries $\frac{\mu^{\can}_G}{\mu^{\can}_{I_\gamma}}$ to $\frac{\mu^{\can}_{\ol{G}}}{\mu^{\can}_{\ol{I}_{\ol{\gamma}}}}$.
%and that $G(\cO)\ra \ol{G}(\cO)$ is onto (so that $\tau^G_{\lambda}(x^{-1}\gamma x)=1$
%if and only if $\tau^{\ol{G}}_{\ol{\lambda}}(\ol{x}^{-1}\ol{\gamma} \ol{x})=1$, where
%$\ol{x}$ is the image of $x$).
As the proposition is assumed to hold for $\ol{G}$,
the right hand side is bounded by $q_v^{ a_{\ol{G}}+b_{\ol{G}}\kappa}\cdot D^{\ol{G}}(\gamma)^{-e_{\ol{G}}/2}=
q_v^{ a_{\ol{G}}+b_{\ol{G}}\kappa}\cdot D^{G}(\gamma)^{-e_{\ol{G}}/2}$. Hence the
proposition holds for $G$ if we set
$a_G=a_{\ol{G}}$, $b_G=b_{\ol{G}}$ and $e_G=e_{\ol{G}}$. This finishes Step 1.

\medskip

\noindent\underline{Step 2}. When $Z(G)$ is anisotropic.

  The problem will be divided into 3 cases depending on $\gamma$. In each case we find a sufficient condition on $a_G$, $b_G$ and $e_G$ for \eqref{e:app:prop} to be true.

\medskip

\noindent\underline{Step 2-1}. When $\gamma\in Z(G)(F)$.

  In this case the proposition holds for any $a_G,b_G,e_G\ge0$ since $O_\gamma(\tau^G_\lambda)=0$ or $1$ and $D^G(\gamma)=1$.

\medskip

\noindent\underline{Step 2-2}. When $\gamma$ is non-central and non-elliptic.

Then there exists a nontrivial split torus $S\subset Z(I_\gamma)$.
  Set $M:=Z_G(S)$, which is an $F$-rational Levi subgroup of $G$. Then $I_\gamma\subset M \subsetneq G$. Note that $\gamma$ is $(G,M)$-regular.
Lemma \ref{l:orb-int-const-term} reads
\beq\label{e:app:G-M}O^{G(F)}_\gamma(\triv_{K\lambda(\varpi)K})=
D^G_M(\gamma)^{-1/2} O^{M(F)}_\gamma((\triv_{K\lambda(\varpi)K})_M).\eeq
By conjugation we may assume without loss of generality that
$\lambda(\varpi)\in M(F)$. (To justify, find $x\in G(F)$ such that
$xMx^{-1}$ contains $A$. Then $\lambda(\varpi)\in xM(F)x^{-1}$ and
$O^M_\gamma= O^{xMx^{-1}}_{x\gamma x^{-1}}$.) Moreover by conjugating $\lambda$ we may assume that $\lambda$ is $B\cap M$-dominant.
We can write \beq\label{e:App-Step3}(\triv_{K\lambda(\varpi)K})_M=\sum_{\mu\le_{\R}\lambda}
c_{\lambda,\mu}\triv_{K_M\mu(\varpi)K_M}.\eeq
 The ordering in the sum is relative to $B\cap M$. For any $m=\mu(\varpi)$, $c_{\lambda,\mu}$ is equal to
$$(\triv_{K\lambda(\varpi)K})_M(m)=\delta_P(m)^{1/2}\int_{N(F)} \triv_{K\lambda(\varpi)K}(mn)dn
=q_v^{\lg \rho_P,\mu\rg} \mu_G^{\can}(mN(F)K\cap K\lambda(\varpi)K).$$
Lemma \ref{l:double-coset-volume} and the easy inequality $\lg \rho_P,\mu\rg \le \lg \rho,\lambda\rg$
 allow us to deduce that
$$0\le c_{\lambda,\mu}\le q_v^{\lg \rho_P,\mu\rg}
\mu^{\can}_G(K\lambda(\varpi)K)\le q_v^{d_G+r_G+2\lg \rho,\lambda\rg}.$$
The sum in \eqref{e:App-Step3} runs over the set of $$\mu=\lambda-\sum_{\alpha\in \Delta^+}
a_\alpha\cdot \alpha~\mbox{with}~a_\alpha\in \frac{1}{\delta_G}\Z,~a_\alpha\ge 0$$
such that $\mu\in (X^*(T)_\R)^+$. Here we need to explain $\delta_G$: If
$\mu\le_{\R} \lambda$ then $\lambda-\mu$ is a linear combination of positive coroots with nonnegative rational
coefficients. The denominators of such coefficients under the constraint $c_{\lambda,\mu}\neq 0$ are uniformly bounded, where the bound depends on the
coroot datum. We write $\delta_G$ for this bound.

The above condition on $\mu$ and $\|\lambda\|\le \kappa$ imply that $a_\alpha\le \kappa$.
We get, by using the induction hypothesis for $O^M_\gamma$, $$ 0\le
O^{M(F)}_\gamma((\triv_{K\lambda(\varpi)K})_M)\le
\sum_{\mu\le_{\R}\lambda} c_{\lambda,\mu} O^M_{\gamma} (\triv_{K_M\mu(\varpi)K_M})
\le \sum_{\mu\le_{\R}\lambda} c_{\lambda,\mu}
 q_v^{ a_M+b_M\kappa}\cdot D^M(\gamma)^{-e_M/2}$$
$$\le
(\delta_G(\kappa+1))^{|\Delta^+|} q_v^{d_G+r_G+2\lg \rho,\lambda\rg}q_v^{ a_M+b_M\kappa}\cdot D^M(\gamma)^{-e_M/2} $$
$$ \le q_v^{d_G+r_G(\delta_G\kappa+\delta_G+1)+2\lg \rho,\lambda\rg+a_M+b_M\kappa}  D^M(\gamma)^{-e_M/2}. $$
Set
\[
%\label{e:c_G}
c_G:=d_G+r_G(\delta_G+1)+2\lg \rho,\lambda\rg \le d_G+r_G(\delta_G+1)+|\Phi^+|\kappa.
\] % Then $c_G$ depends only on $G\times_F \ol{F}$.
In view of \eqref{e:app:G-M} it suffices to find $a_G,b_G,e_G\ge 0$ such that
$$D^G_M(\gamma)^{-1/2}D^M(\gamma)^{-e_M/2} q_v^{a_M+c_G+(b_M+r_G\delta_G)\kappa}
\le D^G(\gamma)^{-e_G/2} q_v^{a_G+b_G\kappa}$$
or equivalently
\beq\label{e:app:suff-cond}
D^G_M(\gamma)^{\frac{e_G-1}{2}}D^M(\gamma)^{\frac{e_G-e_M}{2}}
\le q_v^{a_G-a_M-c_G+(b_G-b_M-r_G\delta_G)\kappa}\eeq
whenever a conjugate of $\gamma$ lies in $K\lambda(\varpi)K$.
For each $\alpha\in \Phi$,
\beq\label{e:app:alpha(gamma)}
\begin{array}{lcc}
v(1-\alpha(\gamma))\ge 0& \mbox{if}& v(\alpha(\gamma))\ge 0,\\
v(1-\alpha(\gamma))= v(\alpha(\gamma))\ge -b_\Xi \kappa & \mbox{if} & v(\alpha(\gamma))<0
\end{array}
\eeq
where $b_\Xi$ is the constant $B_5$ (depending only on $G$ and $\Xi$ and not on $v$) of Lemma \ref{l:bounding1-alpha(gamma)}.
Hence $$D^G_M(\gamma)=
\prod_{\alpha\in \Phi\bs \Phi_M\atop \alpha(\gamma)\neq 1}
|1-\alpha(\gamma)|_v \le q_v^{|\Phi\bs \Phi_M| b_\Xi \kappa/2}$$
and likewise $D^M(\gamma)\le q_v^{|\Phi_M| b_\Xi \kappa/2}$.
(We divide the exponents by 2 because it cannot happen simultaneously that
 $v(\alpha(\gamma))<0$ and $v(\alpha^{-1}(\gamma))<0$.)
Therefore condition \eqref{e:app:suff-cond} on $a_G,b_G,e_G$ is implied by the two conditions
\beq\label{e:app:Levi0} e_G\ge \max(1,e_M),\eeq
\beq\label{e:app:Levi} \begin{aligned}&~ \frac{e_G-1}{2}\frac{|\Phi\bs \Phi_M| b_\Xi \kappa}{2}
+  \frac{e_G-e_M}{2}\frac{|\Phi_M| b_\Xi \kappa}{2}\\ \le & ~ a_G-a_M-(d_G+r_G(\delta_G+1)+|\Phi^+|\kappa)+(b_G-b_M-r_G\delta_G)\kappa.\end{aligned}\eeq
There are only finitely many Levi subgroups $M$ (up to conjugation)
giving rise to the triples $(a_M,b_M,e_M)$. It is elementary to observe that
\eqref{e:app:Levi} holds as long as $a_G$ and $b_G$ are sufficiently large while $e_G$ has any fixed value such that \eqref{e:app:Levi0} holds.
We will impose another condition on $a_G,b_G,e_G$ in Step 2-3.

\medskip

\noindent\underline{Step 2-3}. When $\gamma$ is noncentral and elliptic in $G$.
% When $\gamma$ is contained in a split maximal torus.

This case is essentially going to be worked out in \S\ref{sub:app:elliptic}. Let $Z_1,Z_2\ge 0$ be as in Lemma \ref{l:B-4} below.
By \eqref{e:mu-EP/mu} and Corollary \ref{c:B-1} below,
\eqref{e:app:prop} will hold if
 \beq\label{e:app:Elliptic}q_v^{r_G(d_G+1)}q_v^{1+Z_1\kappa} D^G(\gamma)^{-Z_2}
  \le q_v^{a_G+b_G\kappa} D^G(\gamma)^{-e_G/2}.\eeq
  We have $D^G(\gamma)\le q_v^{|\Phi|b_\Xi\kappa/2}$ thanks to \eqref{e:app:alpha(gamma)} (cf. Step 2-2). So \eqref{e:app:Elliptic} (is not equivalent to but) is implied by the combination of the following two inequalities:
 \beq\label{e:app:Elliptic1} -Z_2+\frac{e_G}{2} \ge 0.\eeq
  \beq\label{e:app:Elliptic2}  r_G(d_G+1)+1+Z_1\kappa+|\Phi|b_{\Xi}\frac{\kappa}{2}(-Z_2+\frac{e_G}{2})
  \le a_G+b_G\kappa.\eeq
  The latter two will hold true, for instance, if $e_G$ has any fixed value greater than or equal to $2Z_2$ and if $a_G$ and $b_G$ are sufficiently large. (We will see in \S\ref{sub:app:elliptic} below that $Z_1$ and $Z_2$ are independent of $\lambda$, $\gamma$ and $\kappa$.)

\medskip

  Now that we are done with analyzing three different cases, we finish Step 2. For this we use the induction on semisimple ranks (to ensure the existence of $a_M$, $b_M$ and $e_M$ in Step 2-2) to find $a_G,b_G,e_G\ge 0$ which satisfy the conditions described at the ends of Step 2-2 and Step 2-3. We are done with the proof of Proposition \ref{p:appendix2}.
\end{proof}

\subsection{A global consequence}\label{sub:global-bound-orb-int}

  Here we switch to a global setup. Let $\bF$ be a number field. For a finite place $v$ of $\bF$, let $k(v)$ denote the residue field and put $q_v:=|k(v)|$.
\bit
\item $G$ is a connected reductive group over $\bF$.
\item $\Ram(G)$ is the set of finite places $v$ of $\bF$ such that $G$ is ramified at $\bF_v$.
\item $\bG$ is the Chevalley group for $G\times_{\bF} \ol{\bF}$,
and $\bB$, $\bT$ are as in \S\ref{sub:local-bound-orb-int}.
\item $\Xi^{\spl}:\bG\hra \GL_m$, fixed once and for all, is a closed embedding defined over $\Z[1/R]$ for a large enough integer $R$ such that $\Xi^{\spl}(\bT)$ (resp. $\Xi^{\spl}(\bB)$) lies in the group of diagonal (resp. upper triangular) matrices of $\GL_m$. The choice of $R$ depends only on $\bG$ and $\Xi^{\spl}$. (We defer to \S\ref{sub:lem-split} more details and the explanation that there exists such a $\Xi^{\spl}$.)

 \item $S_{\bad}$ is the set of finite places $v$ such that either $v\in\Ram(G)$, $\cha k(v)\le w_Gs_G$, $\cha k(v)$ divides $R$, or $\cha k(v)$ divides at least one of the constants for the Chevalley commutator relations for $\bG$, cf. \eqref{e:commutator} below.
\eit

%Once and for all, fix a closed embedding $\Xi^{\spl}:\bG\hra \GL_m$ defined over $\Z[1/R]$ for a large enough integer $R$ such that $\Xi^{\spl}(\bT)$ (resp. $\Xi^{\spl}(\bB)$) lies in the group of diagonal (resp. upper triangular) matrices of $\GL_m$ and such that Lemma \ref{l:app:split2} holds true. The choice of $R$ depends only on $\bG$ and $\Xi^{\spl}$. (We defer to \S\ref{sub:lem-split} more details and the explanation that it is always possible to choose such an $R$.)

%  Let $Q$ be an integer such that $p|Q$ if and only if $p|R$ or some place $v$ of $F$ above $p$ belongs to $S_{\bad}$.
  Examining the dependence of various constants in Proposition \ref{p:appendix2}
  leads to the following main result of this section. For each finite place $v\notin S_{\bad}$, denote by $A_v$ a maximal $\bF_v$-split torus of $G\times_{\bF} \bF_v$.

\begin{thm}\label{t:appendeix2}
%   Let $\kappa\in \Z_{\ge 0}$ and let $\|\lambda\|\le \kappa$.
   There exist $a_G,b_G,e_G\ge 0$ (depending on $\bF$, $G$ and $\Xi^{\spl}$) such that
   \bit
   \item for every finite $v\notin S_{\bad}$, % where $G$ is unramified
   \item for every semisimple $\gamma\in G(\bF_v)$,
   \item for every $\lambda\in X_*(A_v)$ and $\kappa\in\Z_{\ge0}$ such that $\|\lambda\|\le\kappa$,
   \eit
  \[
  %\label{e:app:thm}
  0\le O^{G(\bF_v)}_\gamma(\tau^G_\lambda,\mu^{\can}_{G,v},\mu^{\can}_{I_\gamma,v})
  \le  q_v^{ a_G+b_G\kappa}\cdot D_v^G(\gamma)^{-e_G/2}.
  \]
\end{thm}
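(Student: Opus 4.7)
The plan is to deduce Theorem~\ref{t:appendeix2} by running the inductive argument of Proposition~\ref{p:appendix2} globally and tracking each constant to show it can be chosen independently of $v \notin S_{\bad}$. For every such $v$ the local group $G \times_{\bF} \bF_v$ is unramified, the residue characteristic avoids the finitely many excluded primes, and the fixed global embedding $\Xi^{\spl}:\bG \hookrightarrow \GL_d$ over $\Z[1/R]$ restricts to a local embedding $\Xi_v:\bG_{\cO_v}\hookrightarrow \GL_{d,\cO_v}$. Thus Proposition~\ref{p:appendix2} applies at every $v \notin S_{\bad}$, producing constants $a_{G,v},b_{G,v},e_{G,v}$. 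The task is to bound these uniformly in $v$.

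First I would verify that the inputs to the proof of Proposition~\ref{p:appendix2} which depend \emph{a priori} on $v$ are in fact controlled by data defined globally. The combinatorial invariants $d_G,r_G,|\Phi|,|\Phi^+|,\delta_G$ and $|\Phi_M|$ for Levi subgroups $M$ are intrinsic to $\bG$. The constant $m_\Xi$ from Lemma~\ref{l:bounding1-alpha(gamma)}, governing the bound $|1-\alpha(\gamma)|\le q_v^{m_\Xi\kappa}$, is in fact independent of $v$ by Remark~\ref{r:Lem2.18-indep}, precisely because $\Xi$ comes from the fixed global embedding $\Xi^{\spl}$. Finally, although one must enumerate Levi subgroups of $G \times_{\bF} \bF_v$ up to conjugacy in Step 2-2, there are only finitely many conjugacy classes of $\bF_v$-rational Levi subgroups and, more importantly, the parameters $(a_M,b_M,e_M)$ extracted from them depend only on the root datum of $M$; this gives a global finite list of triples which can be majorized uniformly.

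With these observations the conditions \eqref{e:app:suff-cond}, \eqref{e:app:Levi}, \eqref{e:app:Elliptic1} and \eqref{e:app:Elliptic2} at the end of Steps 2-2 and 2-3 become constraints involving only globally determined quantities. I would therefore fix $e_G = 0$ (as the end of Step~2-3 permits) and then choose $a_G$ and $b_G$ large enough to satisfy simultaneously all the finitely many inequalities coming from the (finitely many) triples $(a_M,b_M,e_M)$ attached to the Levi subgroups and from the elliptic constants $Z_1,Z_2,Z_3$. The inductive step on semisimple rank then carries through at every $v \notin S_{\bad}$ with one and the same triple $(a_G,b_G,e_G)$.

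The main obstacle is the uniform treatment of the elliptic case (Step 2-3), since the constants $Z_1,Z_2,Z_3$ arise from estimating volumes of conjugacy classes meeting $K\lambda(\varpi)K$ (via Lemma~\ref{l:B-4} and Corollary~\ref{c:B-1} to be proved in \S\ref{sub:app:elliptic}). The delicate point is to show that these bounds, which involve splitting the torus $I_\gamma$ over a tamely ramified extension and reducing modulo $\varpi$ to exploit finite-group geometry, depend only on $\bG$ and $\Xi^{\spl}$, not on the individual $v$. This requires the bound on residue characteristics $\cha k(v) > w_G s_G$ built into $S_{\bad}$, which guarantees that maximal tori of $G$ over $\bF_v$ split over tame extensions uniformly, and it requires using the global commutator relations for $\bG$ (whose denominators are absorbed into $S_{\bad}$). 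Once this uniform elliptic bound is established, assembling the global constants $a_G,b_G,e_G$ is a finite linear-arithmetic exercise.
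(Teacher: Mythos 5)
Your proposal takes essentially the same route as the paper: track the constants through the induction of Proposition~\ref{p:appendix2}, observe that the inequalities \eqref{e:app:Levi}, \eqref{e:app:Elliptic1}, \eqref{e:app:Elliptic2} and the elliptic constants $Z_1,Z_2,Z_3$ involve only finitely many group-theoretic types (the paper makes your "depends only on the root datum of $M$" precise by partitioning the places into $\cV_{\bF}(\theta)$ for $\theta\in\mC(\Gamma_1)$, so that the rational Levi subgroups, the affine root data, and hence the inequalities depend only on $\theta$ and not on $v$), and then take $e_G=0$ with $a_G,b_G$ large enough. The one slip is your claim that $\Xi^{\spl}$ restricts to an $\cO_v$-embedding at every $v\notin S_{\bad}$: it is only defined over $\Z[1/R]$, so at the finitely many places of $S_Q\setminus S_{\bad}$ one must instead use a local embedding as in Lemma~\ref{l:conj-image-in-diag}, apply Proposition~\ref{p:appendix2} there, and absorb these places when taking the final maxima, exactly as the paper does.
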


\begin{rem}
  It is worth drawing a comparison between the above theorem and Theorem \ref{t:appendix1} proved by Kottwitz.
  In the latter the test function (in the full Hecke algebra) and the base $p$-adic field are fixed whereas
  the main point of the former is to allow the test function (in the unramified Hecke algebra) and
  the place $v$ to vary. The two theorems are complementary to each other and will
  play a crucial role in the proof of Theorem \ref{t:weight-varies}.
\end{rem}

\begin{rem} In an informal communication Kottwitz and Ng\^{o} pointed out that there might be yet another approach based on a geometric argument involving affine Springer fibers, as in \cite[\S15]{GKM04}, which might lead to a streamlined and conceptual proof, as well as optimized values of the constants $a_G$ and $b_G$. Appendix~\ref{s:app:B} provides an important step in that direction, see Theorem~\ref{thm:transfer-fam} which implies that the constants are transferable from finite characteristic to characteristic zero.
\end{rem}

\begin{proof}
%  As in the proof of Proposition \ref{p:appendix2} we prove by induction on $r^{\semis}_G$.
  Since the case of tori is clear, we may assume that $r^{\semis}_G\ge 1$. %Write $S_Q$ for the places $v$ dividing $Q$.
  Let $\theta\in \mC(\Gamma_1)$. (Recall the definition of $\Gamma_1$ and $\mC(\Gamma_1)$ from \S\ref{sub:ST-meas} and \S\ref{sub:lim-of-Plan}.)
  Our strategy is to find $a_{G,\theta},b_{G,\theta},e_{G,\theta}\ge 0$ which satisfy the requirements \eqref{e:app:Levi}, \eqref{e:app:Elliptic1}, and \eqref{e:app:Elliptic2} on $a_{G,v},b_{G,v},e_{G,v}$ at all $v\in \cV_{\bF}(\theta)\bs S_{\bad}$. As for \eqref{e:app:Levi}, we inductively find $a_{M,\theta},b_{M,\theta},e_{M,\theta}\ge 0$ for all local Levi subgroups $M$ of $G$ as will be explained below. %In fact we will choose $e_{G,\theta}=0$. Once this is done for every $\theta$ in the finite set $\mC(\Gamma_1)$, we will similarly treat the places $v\in S_Q$ not contained in $S_{\bad}$ to finish the proof.

  We would like to explain an inductive choice of $a_{M,\theta},b_{M,\theta},e_{M,\theta}\ge 0$ for a fixed $\theta$. To do so we ought to clarify what Levi subgroups $M$ of $G$ we consider. Let $\Delta$ denote the set of $\bB$-positive simple roots for $(\bG,\bT)$. Via an identification $G\times_{\bF} \ol{\bF}\simeq \bG\times_\Z \ol{\bF}$ we may view $\Delta$ as the set of simple roots for $G$ equipped with an action by $\Gamma_1$, cf. \cite[\S1.3]{Bor79}. Note that $\Frob_v$ acts as $\theta\in \Gamma_1$ on $\Delta$ for all $v\in \cV_{\bF}(\theta)\bs S_{\bad}$. According to \cite[\S3.2]{Bor79}, the $\theta$-stable subsets of $\Delta$ are in bijection with $G(\bF_v)$-conjugacy classes of $\bF_v$-parabolic subgroups of $G$. For each $v\in \cV_{\bF}(\theta)\bs S_{\bad}$, fix a Borel subgroup $B_v$ of $G$ over $\bF_v$ containing the centralizer $T_v$ of $A_v$ in $G$ so that the following are in a canonical bijection with one another.
\bit
\item $\theta$-stable subsets $\Upsilon$ of $\Delta$
\item parabolic subgroups $P_v$ of $G$ containing $B_v$
\eit
  Denote by $P_{\Upsilon,v}$ the parabolic subgroup corresponding to $\Upsilon$ and by $M_{\Upsilon,v}$ its Levi subgroup containing $T_v$. Here is an important observation. The constants $Z_1$, $Z_2$ (see Remark \ref{r:uniformity} below) and the inequalities \eqref{e:app:Levi}, \eqref{e:app:Elliptic1}, and \eqref{e:app:Elliptic2} to be satisfied by $a_{M_{\Upsilon},v},b_{M_{\Upsilon},v},e_{M_{\Upsilon},v}$ depend only on $\theta$ and not on $v\in \cV_{\bF}(\theta)\bs S_{\bad}$. (We consider the case where $G$ and $M$ of those inequalities are $ M_{\Upsilon}$ and a $\bF_v$-Levi subgroup of $M_{\Upsilon}$, respectively.) Hence we will write $a_{M_{\Upsilon},\theta},b_{M_{\Upsilon},\theta},e_{M_{\Upsilon},\theta}\ge 0$ for these constants. What we need to do is to define them inductively according to the semisimple rank of $M$ such that \eqref{e:app:Levi}, \eqref{e:app:Elliptic1}, and \eqref{e:app:Elliptic2} hold true. In particular the desired $a_{G,\theta},b_{G,\theta},e_{G,\theta}$ will be obtained and the proof will be finished (by returning to the first paragraph in the current proof).

  Now the inductive choice of $a_{M_{\Upsilon},\theta},b_{M_{\Upsilon},\theta},e_{M_{\Upsilon},\theta}$ is easy to make once the choice of $a_{M_{\Omega},\theta},b_{M_{\Omega},\theta},e_{M_{\Omega},\theta}$ has been made for all $\Omega\subsetneq \Upsilon$. Indeed, we may choose $e_{M_{\Omega},\theta}\in \Z_{\ge 1}$ to fulfill \eqref{e:app:Elliptic1} and then choose $a_{M_{\Omega},\theta},b_{M_{\Omega},\theta}$ to be large enough to verify \eqref{e:app:Levi} and \eqref{e:app:Elliptic2}. Notice that $Z_1,Z_2,Z_3$ of \eqref{e:app:Elliptic2} (which are constructed in Lemma \ref{l:B-4} below) depend only on the group-theoretic information of $M_{\Upsilon}$ (such as the dimension, rank, affine root data, $\delta_{M_{\Upsilon}}$ of $M_{\Upsilon}$ as well as an embedding of the Chevalley form of $M_{\Upsilon}$ into $GL_d$ coming from $\Xi^{\spl}$) but not on $v$, cf. Remark \ref{r:uniformity}.

\end{proof}

In view of Theorem \ref{t:appendix1} and other observations in harmonic analysis, a natural question is whether it is possible to achieve $e_{G}=1$. This is a deep and difficult question which is of independent interest. It was a pleasant surprise to the authors that the theory of arithmetic motivic integration provides a solution. A precise theorem due to Cluckers, Gordon, and Halupczok is stated in Theorem \ref{thm:main} below. It is worth remarking that their method of proof is significantly different from that of this section and also that they make use of Theorem \ref{t:appendix1}, the local boundedness theorem. Finally it would be interesting to ask about the analogue in the case of twisted or weighted orbital integrals. Such a result would be useful in the more general situation than the one considered in this paper.

\subsection{The noncentral elliptic case}\label{sub:app:elliptic}

  The objective of this subsection is to establish Corollary \ref{c:B-1}, which was used in Step 2-3
  of the proof of Proposition \ref{p:appendix2} above. Since the proof is quite complicated let us guide the reader. The basic idea, going back to Langlands, is to interpret the orbital integral $O^{G(F)}_{\gamma}(\tau^G_{\lambda})$ in question as the number of points in the building fixed ``up to $\lambda$'' under the action of $\gamma$. The set of such points, denoted $X_F(\gamma,\lambda)$ below, is finite since $\gamma$ is elliptic. Then it is shown that every point of $X_F(\gamma,\lambda)$ is within a certain distance from a certain apartment, after enlarging the ground field $F$ to a finite extension. We exploit this to bound $X_F(\gamma,\lambda)$ by a ball of an explicit radius in the building. By counting the number of points in the ball (which is of course much more tractable than counting $|X_F(\gamma,\lambda)|$) we arrive at the desired bound on the orbital integral.
   The proof presented here is inspired by the beautiful exposition of \cite[\S\S3-5]{Kot05} but uses brute force and crude bounds at several places. We defer some technical lemmas and their proofs to \S\ref{sub:lem-split} below and refer to them in this subsection but there is no circular logic since no results of this subsection are used in \S\ref{sub:lem-split}.

  Throughout this subsection the notation of \S\ref{sub:local-bound-orb-int} is adopted and $\gamma$ is assumed to be noncentral and elliptic in $G(F)$. (However $\gamma$ need not be regular.) We assume $Z(G)$ to be anisotropic over $F$ as we did in Step 2 of the proof of Proposition \ref{p:appendix2}.
Then $I_\gamma(F)$ is a compact group, on which the Euler-Poincare measure $\mu^{\EP}_{I_\gamma}$
assigns total volume 1. Our aim is to bound $O^{G(F)}_{\gamma}(\triv_{K\mu(\varpi)K}, \mu^{\can}_G,\mu^{\can}_{I_\gamma})$.
 It follows from \cite[Thm 5.5]{Gro97} (for the equality) and Proposition \ref{p:Gross-motives} that
$$\left|\frac{\mu^{\EP}_{I_\gamma}}{\mu^{\can}_{I_\gamma}}\right|
= \frac{\prod_{d\ge 1} \det\left(1-\Frob_v q_v^{d-1}\left|(\Mot_{I_\gamma,d})^{I_v}\right.\right)}
{|H^1(F,I_\gamma)|}\le \prod_{d\ge 1}(1+q_v^{d-1})^{\dim \Mot_{I_\gamma,d}}$$ \beq\le (1+q_v^{(\dim I_\gamma+1)/2})^{\rank I_\gamma}
\le (1+q_v^{d_G})^{r_G}\le q_v^{r_G(d_G+1)}.\label{e:mu-EP/mu}\eeq
Thus we may as well bound $O^{G(F)}_{\gamma}(\triv_{K\mu(\varpi)K}, \mu^{\can}_G,\mu^{\EP}_{I_\gamma})$.

  Let $T_\gamma$ be a maximal torus of $I_\gamma$ defined over $F$ containing $\gamma$.
  By Lemma \ref{l:torus-splitting}, there exists a Galois extension $F'/F$ with
  \beq\label{e:F':F}[F':F]\le w_G s_G\eeq
such that $T_\gamma$ is a split torus over $F'$.
 Hence $I_\gamma$ and $G$ are split groups over $F'$.
 Note that $F'$ is a tame extension of $F$ under the assumption that $\cha k_F>w_Gs_G$.
Let $A'$ be a split maximal torus of $G$ over $F'$ such that $A\times_F F'\subset A'$. Since $F'$-split maximal  tori are conjugate over $F'$, we find
$$y\in G(F') \quad \mbox{such that}\quad A'=yT_\gamma y^{-1}$$
and fix such a $y$.
Write $\cO'$, $\varpi'$ and $v'$ for the integer ring of $F'$, a uniformizer and the valuation on $F'$ such that $v'(\varpi')=1$.
With respect to the integral model of $G$ over $\cO$ at the beginning of \S\ref{sub:local-bound-orb-int},
we put $K':=G(\cO')$. A point of $G(F)/K$ will be denoted $\ol{x}$ and any of its lift
in $G(F)$ will be denoted $x$. Let $\ol{x}_0\in G(F)/K$ (resp. $\ol{x}'_0\in G(F')/K'$) denote the
element represented by the trivial coset of $K$ (resp. $K'$). Then $\ol{x}_0$ (resp. $\ol{x}'_0$)
may be thought of as a base point of the building $\cB(G(F),K)$ (resp. $\cB(G(F'),K')$) and
its stabilizer is identified with $K$ (resp. $K'$). There exists an injection
\beq\label{e:embed-building}\cB(G(F),K)\hra \cB(G(F'),K')\eeq such that $\cB(G(F),K)$ is the $\Gal(F'/F)$-fixed points of $\cB(G(F'),K')$. (This is the case because $F'$ is tame over $F$.) The natural injection $G(F)/K\hra G(F')/K'$ coincides with the injection induced by \eqref{e:embed-building} on the set of vertices.

Define $\lambda'\in X_*(A')$ by $\lambda':=e_{F'/F} \lambda$ (where $e_{F'/F}$ is the ramification index of $F'$ over $F$) so that $\lambda'(\varpi')=\lambda(\varpi)$ and
\beq\label{e:mu'-abs-val}\|\lambda'\|=e_{F'/F} \|\lambda\|\le e_{F'/F}\kappa.\eeq
For (the fixed $\gamma$ and) a semisimple element $\delta\in G(F')$, set
\begin{eqnarray}
  X_F(\gamma,\lambda)&:=&\{ \ol{x}\in G(F)/K: \ol{x}^{-1} \gamma \ol{x}\in K\lambda(\varpi) K\}\nonumber\\
X_{F'}(\delta,\lambda')&:=&\{ \ol{x}'\in G(F')/K': (\ol{x}')^{-1} \delta \ol{x}'\in K'\lambda'(\varpi') K'\}.\nonumber
\end{eqnarray}
By abuse of notation we write $\ol{x}^{-1} \gamma \ol{x}\in K\lambda(\varpi) K$ for the condition that $x^{-1}\gamma x\in K\lambda(\varpi) K$ for some (thus every) lift $x\in G(F)$ of $\ol{x}$ and similarly for the condition on $\ol{x}'$.
It is clear that $X_F(\gamma,\lambda)\subset X_{F'}(\gamma,\lambda')\cap( G(F)/K)$. By (3.4.2) of \cite{Kot05},
\beq\label{e:orb-via-fixed} O^{G(F)}_{\gamma}(\triv_{K\lambda(\varpi)K}, \lambda_G,\lambda^{\EP}_{I_\gamma})=| X_F(\gamma,\lambda)|.\eeq
Our goal of bounding the orbital integrals on the left hand side can be translated into a problem of bounding $| X_F(\gamma,\lambda)|$.

Let $\Apt(A'(F'))$ denote the apartment for $A'(F')$. Likewise $\Apt(T_\gamma(F))$ and
$\Apt(T_\gamma(F'))$ are given the obvious meanings. We have $\ol{x}'_0\in \Apt(A'(F'))$.
The metrics on $\cB(G(F),K)$ and $\cB(G(F'),K')$ are
chosen such that \eqref{e:embed-building} is an isometry.
The metric on $\cB(G(F'),K')$ is
determined by its restriction to $\Apt(A'(F'))$, which is in turn pinned down by
a (non-canonical choice of) a Weyl-group invariant scalar product on $X_*(A')$, cf. \cite[\S2.3]{Tit79}. Henceforth we fix the scalar product once and for all. Scaling the scalar product does not change our main results of this subsection.
\begin{rem}
For any other tame extension $F''$ of $F$ and a split maximal torus $A''$ of $G$ over $F''$, we can find an isomorphism $X_*(A')$ and $X_*(A'')$ over the composite field of $F'$ and $F''$, well defined up to the Weyl group action. So the scalar product on $X_*(A'')$ is uniquely determined by that on $X_*(A')$. So we need not choose a scalar product again when considering a different $\gamma\in G(F)$.
\end{rem}

We define certain length functions. Consider an $F'$-split maximal torus $A''$ of $G$ (for instance $A''=T_\gamma$ or $A''=A'$) and the associated set of roots $\Phi=\Phi(G,A'')$ and the set of coroots $\Phi^\vee=\Phi^\vee(G,A'')$.
Let $l_{\max}(\Phi)$ denote the largest length of a positive coroot
in $\Phi^\vee$. Note that these are independent of the choice of $A''$ and completely determined by the previous choice of a Weyl group invariant scalar product on $X_*(A')$. It is harmless to assume that we have chosen the scalar product such that the longest positive coroot in each irreducible system of $X_*(A')$ has length $l_{\max}(\Phi)$.
%(Although $l_{\min}(\Phi)$ depends on the choice of metric, the effect of the choice
%will be canceled out along the way of proof.)

  Fix a Borel subgroup $B'$ of $G$ over $F'$ containing $A'$ so that $y^{-1}B'y$ is a Borel subgroup containing $T_\gamma$. Relative to these Borel subgroups we define the subset of positive roots $\Phi^+(G,A')$ and $\Phi^+(G,T_\gamma)$. Let $m_{\Xi^{\spl}}$ be as in Lemma \ref{l:app:split1} below. In order to bound $|X_F(\gamma,\lambda)|$ in \eqref{e:orb-via-fixed}, we control the larger set $X_{F'}(\delta,\lambda')$ by bounding the distance from its points to the apartment for $A'$.

\begin{lem}\label{l:dbl-coset-distance}
  Let $\delta\in A'(F')$ and $\ol{x}'\in G(F')/K'$. Then
there exist constants $C=C(\bG,\Xi)>0$, $c_\bG>0$, and $Y=Y(\bG)\in \Z_{\ge1}$ such that
whenever $(\ol{x}')^{-1} \delta \ol{x}'\in K'\lambda'(\varpi') K'$ (i.e. whenever $\ol{x}'\in X_{F'}(\delta,\lambda')$),
$$ d(\ol{x}',\Apt(A'(F')))\le
l_{\max}(\Phi)\cdot C|\Delta^+|\cdot Y^{|\Phi^+|}w_G s_G\qquad\qquad$$
$$\qquad\qquad\times \sum_{\alpha\in \Phi^+(G,A')} \left(|v(1-\alpha^{-1}(\delta))|+ Y (m_{\bG}m_{\Xi^{\spl}}+ m_\bG c_\bG + m_{\Xi^\spl})\kappa\right),$$
where the left hand side denotes the shortest distance from $\ol{x}'$ to $\Apt(A'(F'))$.
\end{lem}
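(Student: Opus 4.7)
The plan is to combine an Iwasawa-type decomposition of $\ol{x}'$ relative to the apartment $\Apt(A'(F'))$ with the observation that $\delta\in A'(F')$ centralizes $A'$ and therefore acts as a translation on this apartment. First I would fix the Borel $B'=A'U'$ chosen before the lemma and write $x' = u\,a\,k$ with $u\in U'(F')$, $a\in A'(F')$, $k\in K'$. Since $K'$ fixes the base vertex $\ol{x}'_0$, the distance from $\ol{x}'$ to $\Apt(A'(F'))$ is controlled, up to a multiplicative factor $l_{\max}(\Phi)$, by the valuations of the root-group components of $u$: writing $u=\prod_{\alpha\in \Phi^+(G,A')} u_\alpha(x_\alpha)$ in a chosen order, the distance in question is at most $l_{\max}(\Phi)\sum_\alpha \max(-v'(x_\alpha),0)$.

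Next, using the fact that $\delta$ commutes with $a$, I would rewrite
$$(x')^{-1}\delta x' \;=\; k^{-1}a^{-1}u^{-1}\delta u\,a\,k \;=\; k^{-1}\delta\cdot w\cdot k,\qquad w:=a^{-1}(\delta^{-1}u^{-1}\delta u)a\in U'(F'),$$
and expand $\delta^{-1}u^{-1}\delta u$ by the Chevalley commutator relations \eqref{e:commutator}. Because $\delta$ acts on $U_\alpha$ by the character $\alpha(\delta)$ and $a$ acts by $\alpha(a)$, one obtains an explicit expression for each root component $w_\alpha$ of $w$: up to terms coming from iterated commutators of lower-height root contributions, $w_\alpha$ equals $\alpha(a)^{-1}(1-\alpha(\delta)^{-1})x_\alpha$ times a unit involving the Chevalley structure constants. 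The condition that $\cha k(v)$ does not divide those structure constants (part of the definition of $S_{\bad}$) keeps all of this control uniform in $v$; the combinatorial cost of iterating the commutator formula over the $|\Phi^+|$ root-group factors is exactly what produces the factor $Y^{|\Phi^+|}$.

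The condition $(x')^{-1}\delta x' \in K'\lambda'(\varpi')K'$ translates, via Lemma~\ref{l:bounding1-alpha(gamma)} applied through the embedding $\Xi^{\spl}$ (with constant $m_{\Xi^{\spl}}$), into a lower bound $v'(w_\alpha)\ge -m_{\Xi^{\spl}}\|\lambda'\|$ for each $\alpha\in\Phi^+(G,A')$. Inverting the commutator expansion by induction on root height (here the constant $C=C(\bG,\Xi)$ absorbs the structure-constant denominators and the integer ring $\Xi$ identifies root groups with subgroups of $\GL_d$) gives
$$v'(x_\alpha)\;\ge\; v'(\alpha(a))-|v'(1-\alpha^{-1}(\delta))|-C\,m_{\Xi^{\spl}}\kappa$$
for every $\alpha\in\Phi^+(G,A')$. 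A parallel argument, applied either to the opposite unipotent or directly to the $A'$-component $a$ using the same equation, bounds $v'(\alpha(a))$ itself by $C\,m_{\Xi^{\spl}}(m_{\Xi^{\spl}}+1)\kappa$ plus terms $|v'(1-\alpha^{-1}(\delta))|$. Summing the resulting lower bound on $\max(-v'(x_\alpha),0)$ over the $|\Delta^+|$ simple contributions and the full root system, and finally absorbing the ramification factor $e_{F'/F}\le [F':F]\le w_Gs_G$ from \eqref{e:F':F}--\eqref{e:mu'-abs-val} to pass from $v'$ back to $v$, yields a distance estimate exactly of the form displayed in the statement.

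The main obstacle will be making the Chevalley-commutator bookkeeping in the second step fully uniform in the place $v$ and independent of $\delta$, $\lambda$ and $\kappa$. Concretely, one must verify that the structure constants $C_{ij,\alpha,\beta}$ appearing in
$$[u_\alpha(x),u_\beta(y)]=\prod_{i\alpha+j\beta\in\Phi^+} u_{i\alpha+j\beta}\bigl(C_{ij,\alpha,\beta}\,x^iy^j\bigr)$$
remain $\cO'$-units for every $v\notin S_{\bad}$, so that the inductive inversion of the commutator expansion loses only a bounded power of $Y=Y(\bG)$ per root. This is exactly the role of the hypothesis $\cha k(v)>w_Gs_G$ and the divisibility condition on the Chevalley constants built into $S_{\bad}$, and it is what prevents the constants $C$ and $Y$ in the lemma from depending on $v$.
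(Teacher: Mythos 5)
The central gap is in how you extract valuation bounds from the double-coset condition. You claim that $(\ol{x}')^{-1}\delta\ol{x}'\in K'\lambda'(\varpi')K'$ gives $v'(w_\alpha)\ge -m_{\Xi^{\spl}}\|\lambda'\|$ for the root coordinates of the unipotent element $w$ ``via Lemma~\ref{l:bounding1-alpha(gamma)}'', but that lemma bounds $v(\alpha(\gamma))$ for a \emph{semisimple} element $\gamma$ in a controlled coset; it says nothing about the coordinates of a unipotent factor, and your element $\delta w$ is neither semisimple nor unipotent, so neither it nor Lemma~\ref{l:app:split1} applies to it directly. The tool that actually controls unipotent coordinates is Lemma~\ref{l:app:split2}, and to invoke it one must first isolate a genuinely unipotent element in a controlled Cartan coset: after reducing to $\ol{x}'=n\ol{x}'_0$ one multiplies $n^{-1}\delta n\in K'\lambda'(\varpi')K'$ by $\delta^{-1}\in K'\lambda_\delta^{-1}(\varpi')K'$ and uses the convexity inequality of \cite[Prop 4.4.4.(iii)]{BT72} together with Lemma~\ref{l:app:split1} (which gives $\|\lambda_\delta\|\le m_{\Xi^{\spl}}\|\lambda'\|$) to place the commutator $\delta^{-1}n^{-1}\delta n$ in $K'\lambda_0(\varpi')K'$ with $\|\lambda_0\|\le (m_{\Xi^{\spl}}+1)e_{F'/F}\kappa$. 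This intermediate step is entirely missing from your proposal, and it is precisely where the factor $(m_{\Xi^{\spl}}+1)$ in the stated bound comes from; without it the commutator induction has no input. Relatedly, your displayed per-root inequality $v'(x_\alpha)\ge v'(\alpha(a))-|v'(1-\alpha^{-1}(\delta))|-Cm_{\Xi^{\spl}}\kappa$ does not reflect the compounding of the induction: lower-height roots feed into higher ones through a recursion of the form $\cM_i=\sum_{j\le i}Y^{i-j}(\cdots)$, which is why $Y^{|\Phi^+|}$ multiplies the \emph{entire} sum $\sum_\alpha|v(1-\alpha^{-1}(\delta))|$ in the final estimate, not each term separately.

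Two further points. First, you cannot hope to bound $v'(\alpha(a))$ for the Iwasawa torus part $a$: since $\delta$ centralizes $A'$ and $A'(F')$ preserves $\Apt(A'(F'))$ acting by isometries, both the hypothesis and the conclusion are unchanged under left translation of $x'$ by $A'(F')$, so $a$ is completely unconstrained by the data; the correct move is simply to normalize $a=1$ at the outset, as the paper does, and your ``parallel argument'' for $v'(\alpha(a))$ cannot work as stated. Second, your opening assertion that $d(\ol{x}',\Apt(A'(F')))\le l_{\max}(\Phi)\sum_\alpha\max(-v'(x_\alpha),0)$ is not a formality and is never justified: in the paper this conversion from valuation bounds to a distance bound is the content of Steps 3--4, where one constructs an explicit $a\in A'(F')$ on the coroot lattice (solving a small linear system, which is the source of the constants $C$ and $|\Delta^+|$) with $a^{-1}na\in K'$, so that $a\ol{x}'_0$ is a point of the apartment fixed by $n$, and then bounds $d(\ol{x}'_0,a\ol{x}'_0)$ by $l_{\max}(\Phi)\cdot C|\Delta^+|\cdot\cM_{|\Phi^+|}$ using the coroot lengths. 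Some argument of this kind must be supplied to legitimize your distance estimate.
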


\begin{proof}[Proof of Lemma \ref{l:dbl-coset-distance}]
  Write $\ol{x}'=an\ol{x}_0'$ for some $a\in A'(F')$ and $n\in N(F')$ using the Iwahori decomposition. As both sides of the above inequality
are invariant under multiplication by $a$, we may assume that $a=1$.
Let $\lambda_\delta\in X_*(A')$ be such that $\delta\in  \lambda_\delta(\varpi')A'(\cO')$. For each $\lambda_0\in X_*(A')^+$ recall the definition of $n_\bG(\lambda_0)$ from \eqref{e:n(lambda)}. Let $c_\bG>0$ be a constant depending only on $\bG$ such that every $\lambda_0\in X_*(A')$ satisfies the inequality $\lg \alpha,\lambda_0\rg \le c_\bG \|\lambda_0\|$ for all $\alpha\in \Phi^+(G,A')$.

\medskip

\benu

\item[Step 1.] Show that $\delta^{-1} n^{-1}\delta n\in K'\lambda_0(\varpi')K'$
for some $\lambda_0\in X_*(A')^+$ such that $n_\bG(\lambda_0)\le (m_{\Xi^{\spl}}+c_\bG)e_{F'/F}\kappa$.

\medskip

  By Cartan decomposition there exists a $B'$-dominant $ \lambda_0\in X_*(A')$ such that
  $\delta^{-1} n^{-1}\delta n\in K'\lambda_0(\varpi')K'$.
  The condition on $\delta$ in the lemma is unraveled as
  $(x'_0)^{-1} n^{-1} \delta n x'_0\in K'\lambda'(\varpi') K'$. So
$$\delta^{-1}n^{-1} \delta n\in \delta^{-1} K'\lambda'(\varpi') K'
\subset (K'\lambda_\delta^{-1}(\varpi') K')(K'\lambda'(\varpi') K').$$
Let $w$ be a Weyl group element for $A'$ in $G$ such that $w\lambda_\delta^{-1}$ is $B'$-dominant. The fact that $K'\lambda_0(\varpi')K'$ intersects
$(K'\lambda_\delta^{-1}(\varpi') K')(K'\lambda'(\varpi') K')$ implies (\cite[Prop 4.4.4.(iii)]{BT72}) that
$$\lg \alpha,\lambda_0\rg \le \lg \alpha,w\lambda_\delta^{-1}+\lambda'\rg,\quad \alpha\in \Phi^+(G,A').$$
We have $\lg \alpha,\lambda'\rg\le c_\bG\|\lambda'\|$. Note also that
\beq\label{e:alpha(delta)} v'(\alpha(\delta))\in [-m_{\Xi^{\spl}} \|\lambda'\|,m_{\Xi^{\spl}} \|\lambda'\|]\eeq
  by Lemma \ref{l:app:split1} since a conjugate of $\delta$ belongs to $K'\lambda'(\varpi')K'$. This implies that
 $$\lg  \alpha,w\lambda_\delta^{-1}\rg = v'(w\alpha^{-1}(\delta))\le m_{\Xi^{\spl}}\|\lambda'\|.$$
 On the other hand  $\|\lambda'\|\le e_{F'/F}\kappa$ according to \eqref{e:mu'-abs-val}. These inequalities imply the desired bound on $n_\bG(\lambda_0)$, which is the maximum of $\lg \alpha,\lambda_0\rg$ over $\alpha\in \Phi^+(G,A')$.

\medskip

  Before entering Step 2, we notify the reader that we are going to use the convention and notation for the Chevalley basis as recalled in \S\ref{sub:lem-split} below. In particular $n\in N(F')$ can be written as (cf. \eqref{e:y=product})
  \beq\label{e:n=product}n=x_{\alpha_1}(X_{\alpha_1})\cdots x_{\alpha_{|\Phi^+|}}(X_{\alpha_{|\Phi^+|}})\eeq
  for unique $X_{\alpha_1},...,X_{\alpha_{|\Phi^+|}}\in F'$.
\medskip

\item[Step 2.] Show that there exists a constant $\cM_{|\Phi^+|}\ge 0$ (explicitly defined in \eqref{e:cM_i} below) such that
$v'(X_{\alpha_i})\ge -\cM_{|\Phi^+|}$ for all $1\le i\le |\Phi^+|$.

%$n\in G_{\ol{x}_0',-\cM_{|\Phi^+|}}$ for some constant $\cM_{|\Phi^+|}\ge 0$ (defined in \eqref{e:cM_i} below).

\medskip

  In our setting we compute
\begin{eqnarray}
  \delta^{-1}n^{-1}\delta n
&=&  \delta^{-1} \left(\prod_{i=|\Phi^+|}^{1} x_{\alpha_i}(-X_{\alpha_i})\right) \delta
\prod_{i=1}^{|\Phi^+|} x_{\alpha_i}(X_{\alpha_i})
\nonumber\\&= &\left(\prod_{i=|\Phi^+|}^{1} x_{\alpha_i}(-\alpha^{-1}_i(\delta) X_{\alpha_i}) \right)
\prod_{i=1}^{|\Phi^+|} x_{\alpha_i}(X_{\alpha_i})
\nonumber\\&=&\prod_{i=1}^{|\Phi^+|} x_{\alpha_i}\left((1-\alpha^{-1}_i(\delta))X_{\alpha_i}
+ P_{\alpha_i} \right)\label{e:delta-n-delta-n}
\end{eqnarray}
where the last equality follows from the repeated use of \eqref{e:commutator} to rearrange the terms.
Here $P_{\alpha_i}$ is a polynomial (which could be zero) in $\alpha_{j}^{-1}(\delta)$
and $X_{\alpha_j}$ with integer coefficients for $j<i$. It is not hard to observe from \eqref{e:commutator} that
$P_{\alpha_i}$ has no constant term. As $i$ varies in $[1,|\Phi^+|]$,
let $Y$ denote the highest degree for the nonzero monomial term appearing in $P_{\alpha_i}$
viewed as a polynomial in either $\alpha_i^{-1}(\delta)$ or $X_{\alpha_i}$ (but not both).\footnote{For instance
if $P_{\alpha_i}=\alpha_i^{-1}(\delta)^2 X_{\alpha_i}^4 + \alpha_i^{-1}(\delta)^3 X_{\alpha_i}^3$ then $Y=4$.} Set $Y=1$ if $P_{\alpha_i}=0$.
As mentioned above, the positive roots for a given $(\bG,\bB,\bT)$ are ordered once and for all so that $Y$ depends only on $\bG$ in the sense that for any $G$ having $\bG$ as its Chevalley form, $Y$ is independent of the local field $F$ over which $G$ is defined.

 Applying Corollary \ref{c:split2} below, we obtain from \eqref{e:delta-n-delta-n} and the condition $\delta^{-1}n^{-1}\delta n\in K'\lambda_0(\varpi')K'$ that
\beq\label{e:v(N-alpha)}v'\left((1-\alpha^{-1}_i(\delta))X_{\alpha_i}
+ P_{\alpha_i}\right) \ge -m_{\bG} n_\bG(\lambda_0).\eeq
For $1\le i\le |\Phi^+|$, put \beq
\label{e:cM_i}\cM_i:=\sum_{j=1}^i \left(Y^{i-j} (|v'(1-\alpha_j^{-1}(\delta))|+ m_{\bG}n_{\bG}(\lambda_0))\right)+\sum_{j=1}^{i-1}Y^j m_{\Xi^{\spl}} e_{F'/F}\kappa.\eeq
Obviously $0\le\cM_1\le \cM_2\le \cdots\le \cM_{|\Phi^+|}$.
We claim that for every $i\ge 1$,
\beq\label{e:greater-cM_i} v'(X_{\alpha_i}) \ge -\cM_i.\eeq
When $i=1$, this follows from \eqref{e:v(N-alpha)} as $P_{\alpha_1}=0$. (Use the fact that $x_{\alpha_1}(a_1X_{\alpha_1})$ commutes with any other $x_{\alpha_j}(a_jX_{\alpha_j})$ in view of \eqref{e:commutator} since $\alpha_1$ is a simple root.) Now by induction,
suppose that \eqref{e:greater-cM_i} is verified for all $j<i$.
By \eqref{e:v(N-alpha)},
$$v'(X_{\alpha_i})+v'(1-\alpha^{-1}_i(\delta))\ge \min(-m_{\bG} n_\bG(\lambda_0),v'(P_{\alpha_i})).$$
Note that $P_{\alpha_i}$ is the sum of monomials of the form $\alpha_j^{-1}(\delta)^{k_1}X_{\alpha_j}^{k_2}$ with $j,k_1,k_2\in \Z$ such that $1\le j<i$ and $0\le k_1,k_2\le Y$. Each monomial satisfies
$$v'(\alpha_j^{-1}(\delta)^{k_1}X_{\alpha_j}^{k_2})=k_1v'(\alpha_j^{-1}(\delta))+k_2v'(X_{\alpha_j})
\ge -Ym_{\Xi^\spl}e_{F'/F}\kappa-Y\cM_{i-1},$$
where the inequality follows from \eqref{e:alpha(delta)}, \eqref{e:mu'-abs-val}, the induction hypothesis, and the fact that $0\le \cM_{j}\le \cM_{i-1}$.
Hence $$v'(P_{\alpha_i})\ge -Ym_{\Xi^\spl}e_{F'/F}\kappa-Y\cM_{i-1}.$$
Now
\begin{eqnarray}
v'(X_{\alpha_i}) &\ge& \min(-m_{\bG}n_\bG(\lambda_0),v'(P_{\alpha_i}))-v'(1-\alpha^{-1}_i(\delta))\nonumber\\
& \ge & -m_{\bG}n_\bG(\lambda_0) -Ym_{\Xi^\spl}e_{F'/F}\kappa-Y\cM_{i-1}-|v'(1-\alpha^{-1}_i(\delta))|=-\cM_i,\nonumber
\end{eqnarray}
as desired.
%Hence $v'(X_{\alpha_i})+v'(1-\alpha^{-1}_i(\delta))\ge -Y(m_{\bG} n_\bG(\lambda_0)+\cM_{i-1})$, implying \eqref{e:greater-cM_i} by the inductive hypothesis.
 Now that the claim is verified, we have a fortiori
\beq\label{e:bound-v'(X)} v'(X_{\alpha_i})\ge -\cM_{|\Phi^+|}, \quad \forall1\le i\le |\Phi^+| .\eeq %In view of \eqref{e:n=product} (and the fact that $x_{\alpha}(\cO')=U_{\alpha,0}$), we have $n\in G_{\ol{x}'_0,-\cM_{|\Phi^+|}}$.
For our purpose it suffices to use the following upper bound, which is simpler than  $\cM_{|\Phi^+|}$. Note that we used the upper bound on $n_\bG(\lambda_0)$ from Step 1.
\beq\label{e:bound-on-cM}\cM_{|\Phi^+|}\le Y^{|\Phi^+|} \sum_{\alpha\in \Phi^+} \left( |v'(1-\alpha^{-1}(\delta)| + (m_\bG m_{\Xi^{\spl}}+m_\bG c_\bG + m_{\Xi^{\spl}})e_{F'/F}\kappa\right).\eeq

\medskip

\item[Step 3.] Find $a\in A'(F')$ such that $a^{-1}na\in K'$.

\medskip

 We can choose a sufficiently large $C=C(\bG,\Xi)>0$,
  depending only on the Chevalley group $\bG$ and $\Xi$, and integers $a^0_{\alpha}\in [-C,0]$ for
$\alpha\in \Delta^+$ such that
$$1\le \sum_{\alpha\in \Delta^+} (-a^0_\alpha) \lg \beta,\alpha^\vee\rg \le C,\quad \forall \beta\in \Delta^+.$$
(This is possible because the matrix $(\lg \beta,\alpha^\vee\rg )_{\beta,\alpha\in \Delta^+}$ is nonsingular. For instance one finds $a^0_{\alpha}\in \Q$ satisfying the above inequalities for $C=1$ and then eliminate denominators in $a^0_{\alpha}$ by multiplying a large positive integer.)
Now put $a_{\alpha}:=\cM_{|\Phi^+|} a^0_{\alpha}\in [-C\cM_{|\Phi^+|},0]$ and
  $a:=\sum_{\alpha\in \Delta^+} a_\alpha \alpha^\vee(\varpi')\in A'(F')$ so that
\beq\label{e:bound-v(a)}
\cM_{|\Phi^+|}\le -v(\beta(a))\le C\cdot\cM_{|\Phi^+|},
\quad \forall \beta\in \Delta^+.\eeq
In fact \eqref{e:bound-v(a)} implies that the left inequality holds for all $\beta\in \Phi^+$.
Hence
$$a^{-1}na=\prod_{i=1}^{|\Phi^+|}
x_{\alpha_i}(\alpha_i(a)^{-1} X_{\alpha_{i}})$$
$$\in \prod_{i=1}^{|\Phi^+|} U_{\alpha_i,v(X_{\alpha_i})-v(\alpha_i(a))}\subset \prod_{i=1}^{|\Phi^+|} U_{\alpha_i,\cM_{|\Phi^+|}
+v(X_{\alpha_i})}.$$
Here we have written $U_{\alpha,m}$ with $m\in \R$ for the image under the isomorphism $x_\alpha:F\simeq U_\alpha(F)$ of the set $\{a\in F:v(a)\ge m\}$.
In light of \eqref{e:greater-cM_i},
$\cM_{|\Phi^+|}
+v(X_{\alpha_i})\ge 0$. Hence $a^{-1}na\in K'$.

\medskip

\item[Step 4.] Conclude the proof.

\medskip

  Step 3 shows that $a\ol{x}'_0\in \Apt(A'(F'))$ is invariant under the left multiplication
action by $n$ on $\cB(G(F'),K')$, which acts as an isometry. Recalling that $\ol{x}'=n\ol{x}'_0$ we have
\beq\label{e:Step4-1}d(\ol{x}',  \Apt(A'(F')))\le d(n\ol{x}'_0, a\ol{x}'_0)= d(n\ol{x}'_0, na\ol{x}'_0)=
d(\ol{x}'_0, a\ol{x}'_0).\eeq
On the other hand, for any
$\ol{x}'\in \Apt(A'(F'))$ and any positive simple coroot $\alpha^{\vee}$, we have
\beq\label{e:dist-simple-coroot}
%l_{\min}(\Phi)\le
 d(\ol{x}', \alpha^\vee(\varpi')^{-1} \ol{x}')\le l_{\max}(\Phi).\eeq
 Indeed this holds by the definition of $l_{\max}(\Phi)$ as the left hand side is the length of $\alpha^\vee$.
 %(The left inequality holds by definition of $l_{\min}$. The right inequality comes from the
%standard fact
%that the length of the longest coroot is bounded by $6l_{\min}$ in an irreducible system.)
Since $a=\prod_{\alpha\in \Delta^+} (\alpha^\vee(\varpi'))^{a_{\alpha}}$ with
$a_\alpha\in [-C\cM_{|\Phi^+|},0]$, a repeated use of \eqref{e:dist-simple-coroot}, together with
a triangle inequality, shows that
\beq\label{e:Step4-3} d(\ol{x}'_0, a\ol{x}'_0)\le l_{\max}(\Phi)\cdot C\cdot  \cM_{|\Phi^+|}\cdot |\Delta^+|.\eeq
Lemma \ref{l:dbl-coset-distance} follows from
\eqref{e:Step4-1}, \eqref{e:Step4-3}, \eqref{e:bound-v'(X)}, \eqref{e:bound-on-cM}, and $e_{F'/F}\le [F':F]\le w_G s_G$ as we saw in \eqref{e:F':F}.

\eenu

\end{proof}

  Since $\gamma$ is elliptic and $G$ is anisotropic over $F$, $\Apt(T_\gamma(F))$ is a singleton. Let $\ol{x}_1$ denote its only point.
Then the $\Gal(F'/F)$-action on $\Apt(T_\gamma(F'))$ has $\ol{x}_1$ as the unique fixed point.
Motivated by Lemma \ref{l:dbl-coset-distance} we set $\cM(\gamma,\kappa)$ to be
$$l_{\max}(\Phi)\cdot C|\Delta^+|\cdot Y^{|\Phi^+|}w_G s_G$$
$$\times
\sum_{\alpha\in \Phi(G,T_\gamma)}\left( |v(1-\alpha^{-1}(\gamma))| + Y (m_{\bG} m_{\Xi^{\spl}}+ m_\bG c_\bG + m_{\Xi^\spl}) \kappa\right)$$
and similarly $ \cM(\delta,\kappa)$ using $\alpha\in \Phi(G,A')$ in the sum instead.
 %(Useful to know: $v(1-\alpha(\gamma))+v(1-\alpha^{-1}(\gamma))\le 2|v(1-\alpha^{-1}(\gamma))|$
 %for all $\alpha\in \Phi^+$.)
 Note that we are summing over all roots, not just positive roots as in the lemma. This is okay since it will only improve the inequality of the lemma. We do this such that $\cM(\gamma,\kappa)=\cM(\delta,\kappa)$. Indeed the equality is induced by a bijection $\Phi(G,T_\gamma)\simeq \Phi(G,A')$ coming from any element $y'\in G(F')$ such that $A'=y'T_\gamma (y')^{-1}$ (for example one can take $y'=y$).
 Define a closed ball in $G(F)/K$: for
$\ol{z} \in G(F)/K$ and $R\ge 0$,
$$\mathrm{Ball}(\ol{z},R):=\{ \ol{x}\in G(F)/K: d(\ol{x},\ol{z})\le R\}.$$

\begin{lem}\label{l:B-3}
  $X_F(\gamma,\lambda)~\subset~\mathrm{Ball}(\ol{x}_1,\cM(\gamma,\kappa)).$
\end{lem}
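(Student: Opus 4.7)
The plan is to reduce the assertion to an application of Lemma \ref{l:dbl-coset-distance} via conjugation by $y$, and then to pin down the nearest-point projection on the apartment using Galois invariance.

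First, note that by Step 1 in the proof of Proposition \ref{p:appendix2}, we may assume $Z(G)$ is $F$-anisotropic; combined with the ellipticity of $\gamma$, this forces $T_\gamma$ to be $F$-anisotropic, so $\Apt(T_\gamma(F))$ is reduced to the single point $\ol{x}_1$. Set $\delta := y \gamma y^{-1}\in A'(F')$ and let $\ol{x}\in X_F(\gamma,\lambda)$. Define $\ol{x}' := y\cdot \ol{x}\in G(F')/K'$. A direct computation gives
\[
(\ol{x}')^{-1}\delta\, \ol{x}' \;=\; \ol{x}^{-1}\gamma\, \ol{x} \;\in\; K\lambda(\varpi)K \;\subset\; K'\lambda'(\varpi')K',
\]
so $\ol{x}'\in X_{F'}(\delta,\lambda')$. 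Lemma \ref{l:dbl-coset-distance} (applied to $\delta\in A'(F')$ and $\ol{x}'$) therefore yields
\[
d(\ol{x}', \Apt(A'(F')))\;\le\;\cM(\delta,\kappa).
\]
Since left-multiplication by $y$ is an isometry of $\cB(G(F'),K')$ sending $\Apt(T_\gamma(F'))$ onto $\Apt(A'(F'))$, we deduce $d(\ol{x},\Apt(T_\gamma(F')))\le \cM(\delta,\kappa)$. Finally, the bijection $\alpha\mapsto \alpha\circ\Ad(y^{-1})$ between $\Phi^+(G,A')$ and $\Phi^+(G,T_\gamma)$ (with positivity transported from the Borel $B'\supset A'$ to $y^{-1}B'y\supset T_\gamma$) shows $\alpha(\delta)=(\alpha\circ\Ad(y^{-1}))(\gamma)$ and therefore $\cM(\delta,\kappa)=\cM(\gamma,\kappa)$.

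It remains to replace $\Apt(T_\gamma(F'))$ with the single point $\ol{x}_1$. Here we invoke the CAT(0) nature of the building $\cB(G(F'),K')$: the apartment $\Apt(T_\gamma(F'))$ is a closed convex (Euclidean) subset, so the nearest-point projection of $\ol{x}$ onto it exists and is unique. Call it $\ol{z}$. Since $\ol{x}\in G(F)/K$, it is fixed by $\Gal(F'/F)$ acting on $\cB(G(F'),K')$, and since $T_\gamma$ is defined over $F$, the apartment $\Apt(T_\gamma(F'))$ is $\Gal(F'/F)$-stable. Uniqueness of the nearest point then forces $\sigma\ol{z}=\ol{z}$ for all $\sigma\in\Gal(F'/F)$, i.e.\ $\ol{z}\in \Apt(T_\gamma(F'))^{\Gal(F'/F)}=\Apt(T_\gamma(F))=\{\ol{x}_1\}$. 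Hence $\ol{z}=\ol{x}_1$ and $d(\ol{x},\ol{x}_1)=d(\ol{x},\Apt(T_\gamma(F')))\le\cM(\gamma,\kappa)$, proving the lemma.

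The only real obstacle is the combinatorial verification $\cM(\delta,\kappa)=\cM(\gamma,\kappa)$, which is routine once one is careful that the Weyl-invariant scalar product on $X_*(A')$ transports to $X_*(T_\gamma)$ in a canonical way (as noted in the remark preceding the lemma). The rest is a mild adaptation of the $CAT(0)$ projection argument standard in Bruhat--Tits theory.
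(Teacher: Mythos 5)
Your proposal is correct and is essentially the paper's argument: both conjugate by $y$, apply Lemma \ref{l:dbl-coset-distance} to $\delta=y\gamma y^{-1}$, carry the bound back to $\Apt(T_\gamma(F'))$ using $\cM(\delta,\kappa)=\cM(\gamma,\kappa)$, and then combine Galois invariance with nonpositive curvature to see that the point of the apartment closest to $\ol{x}$ must be the unique Galois-fixed point $\ol{x}_1$ --- the paper merely proves this projection fact by hand (convex hull of the Galois orbit, midpoints, and the inequality quoted from \cite{Tit79}), whereas you cite the standard CAT(0) unique nearest-point projection, which is the same idea. Only a cosmetic slip: since $\delta=y\gamma y^{-1}$, the root bijection is $\alpha\mapsto\alpha\circ\Ad(y)$ rather than $\alpha\circ\Ad(y^{-1})$.
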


\begin{proof}
  As we noted above, $X_F(\gamma,\lambda)\subset X_{F'}(\gamma,\lambda')=X_{F'}(y^{-1}\delta y,\lambda')$.
Lemma \ref{l:dbl-coset-distance} tells us that
$$\ol{x}\in X_F(\gamma,\lambda)~\Rightarrow~d(y\ol{x},\Apt(A'(F')))\le \cM(\delta,\kappa)
~\Rightarrow~ d(\ol{x},\Apt(T_\gamma(F')))\le \cM(\delta,\kappa).$$
The last implication uses $\Apt(A'(F'))=y \Apt(T_\gamma(F'))$ (recall $A'=yT_\gamma y^{-1}$).
  We have viewed $\ol{x}$ as a point of $\cB(G(F'),K')$
via the isometric embedding $\cB(G(F),K)\hra \cB(G(F'),K')$.
In order to prove the lemma, it is enough to check that $d(\ol{x},\ol{x}_1)\le d(\ol{x},\ol{x}_2)$ for every
$\ol{x}_2\in \Apt(T_\gamma(F'))$. To this end,
we suppose that there exists an $\ol{x}_2$ with
\beq\label{e:assump-dist}d(\ol{x},\ol{x}_1)> d(\ol{x},\ol{x}_2)\eeq
and will draw a contradiction.

 As $\sigma\in\Gal(F'/F)$ acts on $\cB(G(F'),K')$ by isometry,
$d(\ol{x},\sigma \ol{x}_2)=d(\ol{x},\ol{x}_2)$. As $\Apt(T_\gamma(F'))$
is preserved under the Galois action, $\sigma\ol{x}_2\in \Apt(T_\gamma(F'))$.
According to the inequality of \cite[2.3]{Tit79}, for any $x,y,z\in \cB(G(F'),K')$ and
for the unique mid point $m=m(x,y)\in \cB(G(F'),K')$ such that $d(x,m)=d(y,m)=\frac{1}{2}d(x,y)$,
\beq\label{e:Tits}d(x,z)^2+d(y,z)^2
\ge 2 d(m,z)^2 + \frac{1}{2}d(x,y)^2.\eeq
Consider the convex hull $\mC$ of $\mC_0:=\{\sigma\ol{x}_2\}_{\sigma\in\Gal(F'/F)}$.
Since $\mC_0$ is contained in $\Apt(T_\gamma(F'))$, so is $\mC$.
Moreover $\mC_0$ is fixed under $\Gal(F'/F)$,
from which it follows that
$\mC$ is also preserved under the same action.
(One may argue as follows. Inductively define $\mC_{i+1}$ to be the set consisting of the mid points $m(x,y)$
for all $x,y\in \mC_{i}$. Then it is not hard to see that $\mC_i$ must be preserved under
$\Gal(F'/F)$ and that $\cup_{i\ge 0} \mC_i$ is a dense subset of $\mC$.)
As $\mC$ is a compact set, one may choose $\ol{x}_3\in \mC$ which has the minimal distance
to $\ol{x}$ among the points of $\mC$. By construction \beq\label{e:contrad-dist} d(\ol{x}_3,\ol{x})\le d(\ol{x}_2,\ol{x}).\eeq
Applying \eqref{e:Tits} to $(x,y,z)=(\ol{x}_3,\sigma\ol{x}_3,\ol{x})$, where $\sigma\in \Gal(F'/F)$,
$$2 d(\ol{x}_3,\ol{x})^2= d(\ol{x}_3,\ol{x})^2+d(\sigma\ol{x}_3,\ol{x})^2
\ge 2d(m(\ol{x}_3,\sigma\ol{x}_3),\ol{x})^2+\frac{1}{2}d(\ol{x}_3,\sigma\ol{x}_3)^2.$$
As $\ol{x}_3,\sigma\ol{x}_3\in \mC$, we also have $m(\ol{x}_3,\sigma\ol{x}_3)\in \mC$
by the convexity of $\mC$. The choice of $\ol{x}_3$ ensures that $d(\ol{x}_3,\ol{x})\le d(m(\ol{x}_3,\sigma\ol{x}_3),\ol{x})$, therefore
$d(\ol{x}_3,\sigma\ol{x}_3)=0$, i.e. $\ol{x}_3=\sigma\ol{x}_3$. Hence $\ol{x}_3$ is a $\Gal(F'/F)$-fixed point of $\Apt(T_\gamma(F'))$.
This implies that $\ol{x}_3=\ol{x}_1$, but then \eqref{e:contrad-dist} contradicts \eqref{e:assump-dist}.

\end{proof}

\begin{lem}\label{l:B-4} There exist constants $Z_1,Z_2\ge 0$, independent of $\gamma$ and $\lambda$,
%(determined by $Z$ of \eqref{e:app:Z} below as well as
%other constants depending only on the Chevalley form of $G$ and $\Xi$)
such that
$$| \mathrm{Ball}(\ol{x}_1,\cM(\gamma,\kappa)) |\le q_v^{1+Z_1\kappa} D^G(\gamma)^{-Z_2}.$$
\end{lem}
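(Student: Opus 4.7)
The plan is first to establish an exponential bound on the number of vertices of the affine building $\cB(G(F),K)$ inside a metric ball, then to estimate the radius $\cM(\gamma,\kappa)$ using the elliptic hypothesis.

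I would begin with the local structure of the Bruhat--Tits building. Each vertex has at most $q^{c_G}$ vertex-neighbors in the one-skeleton (the neighbors correspond to chambers in a spherical building over $k_F$, whose cardinality is polynomial in $q$ with exponent depending only on the root datum of $G$), so induction on combinatorial distance gives $|\mathrm{Ball}_{\mathrm{comb}}(v, N)| \le q^{c_G(N+1)}$ for every vertex $v$ and every $N \ge 0$. The Euclidean metric on $\cB(G(F),K)$ induced by the $\Omega$-invariant scalar product on $X_*(A')$ fixed at the beginning of this subsection is comparable to the combinatorial metric up to multiplicative constants depending only on $G$. Combining these observations produces constants $c_1, c_2 > 0$, depending only on $G$ and the scalar product, such that $|\mathrm{Ball}(\ol{x}_1, R)| \le q^{c_1 R + c_2}$ for every $R \ge 0$.

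Next I would exploit the elliptic hypothesis. Since Step 2-3 operates under the assumptions that $Z(G)$ is $F$-anisotropic (as arranged in Step 1) and $\gamma$ is elliptic in $G(F)$, the torus $T_\gamma$ is $F$-anisotropic, hence $T_\gamma(F)$ is compact. For every root $\alpha \in \Phi(G, T_\gamma)$ the image $\alpha(T_\gamma(F))$ lies in the maximal compact subgroup of $\overline{F}^{\times}$, so $|\alpha(\gamma)|_v = 1$, equivalently $v(\alpha(\gamma)) = 0$. Consequently $v(1 - \alpha^{-1}(\gamma)) \ge 0$ whenever $\alpha^{-1}(\gamma) \ne 1$, so the absolute values in the definition of $\cM(\gamma,\kappa)$ are redundant, and
\[
\sum_{\alpha \in \Phi^{+}(G, T_\gamma),\,\alpha(\gamma) \ne 1} |v(1-\alpha^{-1}(\gamma))| \le \sum_{\alpha \in \Phi,\,\alpha(\gamma)\ne 1} v(1-\alpha(\gamma)) = \log_q D^G(\gamma),
\]
the last equality being the definition of the Weyl discriminant (in the convention under which $D^G(\gamma) \ge 1$ for elliptic $\gamma$, consistent with condition \eqref{e:app:Elliptic1} requiring $Z_3 + e_G/2 \ge 0$ and blowing up as $\gamma$ approaches a non-regular semisimple element).

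Unwinding the explicit constant multiple in the definition of $\cM(\gamma,\kappa)$ produces constants $C_3, C_4 > 0$ depending only on $\bG$ and $\Xi$ with $\cM(\gamma,\kappa) \le C_3 \log_q D^G(\gamma) + C_4 \kappa$. Substituting into the ball estimate,
\[
|\mathrm{Ball}(\ol{x}_1, \cM(\gamma,\kappa))| \le q^{c_2 + c_1 C_4 \kappa} \cdot D^G(\gamma)^{c_1 C_3},
\]
establishing the lemma with $Z_1 := c_2$, $Z_2 := c_1 C_4$, and $Z_3 := c_1 C_3 \ge 0$. The hard part will be the first step, namely obtaining the building-theoretic exponential ball bound with effective $G$-dependent constants; once that is in place, the elliptic hypothesis makes the radius estimate transparent because all relevant valuations are automatically non-negative, whereas the general semisimple case would additionally require absorbing contributions $|v(\alpha(\gamma))| \le B_5 \kappa$ coming from non-compact directions of $T_\gamma$ via Lemma \ref{l:bounding1-alpha(gamma)}.
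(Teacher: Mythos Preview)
Your overall strategy is sound and in fact slightly cleaner than the paper's: the paper bounds the ball via chamber combinatorics (estimating $|\mC_i(\ol{x}_1)|$ and $|\mV_i(\ol{x}_1)|$ inductively, obtaining $|\mathrm{Ball}(\ol{x}_1,\cM)|\le (r_G+1)^{\cM'}q^{\cM'(d_G+r_G)}$), and then handles $\sum_{\alpha\in\Phi^+}|v(1-\alpha^{-1}(\gamma))|$ via the general inequality $|v(1-\alpha^{-1}(\gamma))|\le v(1-\alpha^{-1}(\gamma))+v(1-\alpha(\gamma))+2m_\Xi\kappa$ coming from \eqref{e:app:alpha(gamma)}.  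Your observation that in the anisotropic-$T_\gamma$ case one already has $v(\alpha(\gamma))=0$, hence $v(1-\alpha(\gamma))\ge 0$ and the absolute values are redundant, is correct and avoids the extra $2m_\Xi\kappa$ absorbed into $Z_2$.

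There is, however, a genuine sign error in your second step.  By definition $D^G(\gamma)=\prod_{\alpha(\gamma)\ne1}|1-\alpha(\gamma)|_v=q^{-\sum v(1-\alpha(\gamma))}$, so
\[
\sum_{\alpha\in\Phi,\,\alpha(\gamma)\ne1} v(1-\alpha(\gamma)) \;=\; -\log_q D^G(\gamma),
\]
not $+\log_q D^G(\gamma)$.  Correspondingly, under the paper's convention one has $D^G(\gamma)\le 1$ for elliptic $\gamma$ (since $|1-\alpha(\gamma)|_v\le 1$ once $|\alpha(\gamma)|_v=1$), and $D^G(\gamma)\to 0$ as $\gamma$ degenerates toward a singular element; your parenthetical ``convention under which $D^G(\gamma)\ge 1$'' is not the paper's.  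With the corrected sign your argument yields
\[
|\mathrm{Ball}(\ol{x}_1,\cM(\gamma,\kappa))|\;\le\; q^{c_2+c_1C_4\kappa}\,D^G(\gamma)^{-c_1C_3},
\]
i.e.\ a \emph{negative} exponent on $D^G(\gamma)$.  This is exactly what the paper's own proof produces (it arrives at $q^{\cM'}\le q^{Z+1+(Z'_2+2m_\Xi Z'_3)\kappa}D^G(\gamma)^{-Z'_3}$ and then exponentiates); the $+Z_3$ in the lemma's displayed statement is a sign slip in the paper, as you can confirm by tracing through how the bound is used in \eqref{e:app:Elliptic}--\eqref{e:app:Elliptic2}.
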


\begin{rem}\label{r:uniformity}
  A scrutiny into the defining formulas for $Z_1$ and $Z_2$ (as well as $Z'_1$ and $Z'_2$) at the end of the proof reveals that $Z_1$ and $Z_2$ depend only on the affine root data, the group-theoretic constants for $G$ (and its Chevalley form), and $\Xi$. An important point is that, in the situation where local data arise from some global reductive group over a number field by localization, the constants $Z_1$ and $Z_2$ do not depend on the residue characteristic $p$ or the $p$-adic field $F$ as long as the affine root data remain unchanged. This observation is used in the proof of Theorem \ref{t:appendeix2} to establish a kind of uniformity when traveling between places in $\cV(\theta)\bs S_{\bad}$ for a fixed $\theta\in \mC(\Gamma_1)$ in the notation there.
\end{rem}

\begin{proof}
  To ease notation we write $\cM$ for $\cM(\gamma,\kappa)$ in the proof. Let us introduce some quantities and objects of geometric nature for the building $\cB(G(F),K)$.
  Write $e_{\max}>0$ for the maximum length of the edges of $\cB(G(F),K)$.
  For a subset $S$ of $\cB(G(F),K)$, define $\mathrm{Ch}^+(S)$ to be the set of chambers $\mC$ of the building such that $\mC\cap S$ contains a vertex. Let $v\in \cB(G(F),K)$ be a vertex. (We are most interested in the case $v=\ol{x}_1$.) We put $\mC_1(v)$ to be the union of chambers in $\mathrm{Ch}^+(\{v\})$ and define $\mC_{i+1}(v)$ to be the union of chambers in $\mathrm{Ch}^+(\mC_i(v))$ for all $i\in \Z_{\ge1}$ so as to obtain a strictly increasing chain $\{v\}\subsetneq \mC_1(v)\subsetneq \mC_2(v)\subsetneq \mC_3(v)\subsetneq \cdots$. Denote by $\mathrm{V}_i(v)$ (resp. $\mathrm{Ch}_i(v)$) the set of vertices (resp. chambers) contained in $\mC_i(v)$ for $i\in \Z_{\ge1}$.

  Choose any chamber $\mC$ in $\cB(G(F),K)$. Define $\mC^+$ to be the union of all chambers in $\mathrm{Ch}^+(\mC)$. Clearly $\mC^+$ is compact and its interior contains the compact subset $\mC$. Hence there exists a maximal $R_G>0$ such that for every point $y\in \mC$ (which may not be a vertex), the ball centered at $y$ of radius $R_G$ is contained in $\mC^+$. Since the isometric action of $G(F)$ is transitive on the set of chambers, $R_G$ does not depend on the choice of $\mC$. Moreover the ratio $l_{\max}(\Phi)/R_G$ does not depend on the choice of metric on the building.

  From the definitions we have $\mathrm{Ball}(\ol{x}_1,R_G)\subset \mC_1(\ol{x}_1)$ and deduce recursively that
  $$\mathrm{Ball}(\ol{x}_1,i R_G)\subset \mathrm{V}_i(\ol{x}_1) \subset \mC_i(\ol{x}_1),\quad \forall i\in \Z_{\ge1}.$$
  Take $\cM'$ to be the integer such that $\frac{\cM}{R_G}\le \cM' < \frac{\cM}{R_G}+1$ so that in particular
  \beq\label{e:app:union-chambers}\mathrm{Ball}(\ol{x}_1,\cM)\subset \mathrm{V}_{\cM'}(\ol{x}_1).\eeq

  Let us bound $|\mathrm{Ch}_1(v)|$ for every vertex $v\in \cB(G(F),K)$.
 The stabilizer of $v$, denoted by $\Stab(v)$, acts transitively on $\mathrm{Ch}_1(v)$.
 Let $\mC\in \mathrm{Ch}_1(v)$. Then
$$|\mathrm{Ch}_1(v)|= |\Stab(v)/\Stab(\mC)|
\le  |G(\cO)/\mathrm{Iw}| \le |G(k_F)|\le q_v^{d_G+r_G}$$
where $\mathrm{Iw}$ denotes an Iwahori subgroup of $G(\cO)$, which is conjugate to $\Stab(\mC)$. The group $\Stab(v)$ may not be hyperspecial, but the first inequality follows from the fact that the hyperspecial has the largest volume among all maximal compact subgroups \cite[3.8.2]{Tit79}.
See the proof of Lemma \ref{l:double-coset-volume} for the last inequality.

Each chamber contains $\dim A+1$ vertices as a $\dim A$-dimensional simplex.
Hence for each $i\ge 1$,
$$ |\mathrm{V}_i(\ol{x}_1)|\le (\dim A+1)\cdot |\mathrm{Ch}_i(\ol{x}_1)|.$$
On the other hand,
$$ |\mathrm{Ch}_{i+1}(\ol{x}_1)|\le \sum_{v\in \mathrm{V}_i(\ol{x}_1)}|\mathrm{Ch}_{1}(v)| \le q_v^{d_G+r_G} |\mathrm{V}_i(\ol{x}_1)|
\le q_v^{d_G+r_G} (\dim A+1)\cdot |\mathrm{Ch}_i(\ol{x}_1)|.$$
We see that $\mathrm{Ch}_{i}(\ol{x}_1)|\le q_v^{i(d_G+r_G)} (\dim A+1)^{i-1}$ and thus
\beq\label{e:app:bound-vertices}
|\mathrm{V}_{\cM'}(\ol{x}_1)|\le (\dim A+1)^{\cM'}q_v^{\cM'(d_G+r_G)}\le   (r_G+1)^{\cM'}q_v^{\cM'(d_G+r_G)}.\eeq
%We define a constant $Z\ge 0$, independent of $\gamma$ such that
%\[
%\label{e:app:Z}
%Z:=\max\left(\frac{e_{\max}}{e_{\min}}, \frac{l_{\max}(\Phi)}{e_{\min}}\right).
%\]
 Note that
\begin{eqnarray}
%\label{e:app:Z1}
\cM'\le 1+\frac{\cM}{R_G}
&\le& 1+ \frac{l_{\max}(\Phi)}{R_G}C|\Delta^+|\cdot Y^{|\Phi^+|}w_G s_G \nonumber\\
&& \times \left(
\sum_{\alpha\in \Phi} |v(1-\alpha^{-1}(\gamma))| + Y (m_{\bG}m_{\Xi^{\spl}}+m_\bG c_\bG + m_{\Xi^\spl}) \kappa\right),\nonumber
\end{eqnarray}
which can be rewritten in the form
\[
%\label{e:app:Z2}
\cM'\le 1++Z'_1\kappa+ Z'_2\sum_{\alpha\in \Phi} |v(1-\alpha^{-1}(\gamma))|.
\]
Since $|v(1-\alpha(\gamma))|+|v(1-\alpha^{-1}(\gamma))|\le v(1-\alpha(\gamma))+v(1-\alpha^{-1}(\gamma))+2b_\Xi \kappa$
in view of \eqref{e:app:alpha(gamma)},
we have $$q^{\cM'}\le q^{1+(Z'_1+b_{\Xi} Z'_2)\kappa} D^G(\gamma)^{-Z'_2}.$$
Returning to \eqref{e:app:union-chambers} and \eqref{e:app:bound-vertices},
$$ | \mathrm{Ball}(\ol{x}_1,\cM)|
\le |\mathrm{V}_{\cM'}(\ol{x}_1)|
\le q_v^{(r_G+1)\cM'}q_v^{\cM'(d_G+r_G)}$$ $$
\le (q_v^{1+(Z'_1+2b_{\Xi} Z'_2)\kappa} D^G(\gamma)^{-Z'_2})^{d_G+2r_G+1}.$$
The proof of Lemma \ref{l:B-4} is complete once we set $Z_1$ and $Z_2$ as follows, the point being that they
\bit
%\item $Z_1:=(1+Z'_1)(d_G+2r_G+1)$,
\item $Z_1:=(Z'_1+2 b_{\Xi} Z'_2)(d_G+2r_G+1)$, %=CZ|\Delta^+|Y^{|\Phi^+|}m_{\Xi^{\spl}}((m_{\Xi^{\spl}}+1)Y+2)(d_G+2r_G+1)$,
\item $Z_2:=Z'_2(d_G+2r_G+1)$. %=CZ|\Delta^+|Y^{|\Phi^+|}(d_G+2r_G+1)$.
\eit
\end{proof}

\begin{cor}\label{c:B-1}
  $|O^{G(F)}_{\gamma}(\triv_{K\lambda(\varpi)K}, \mu_G,\mu^{\EP}_{I_\gamma})|\le
q_v^{r_G(d_G+1)} q_v^{1+Z_1\kappa} D^G(\gamma)^{-Z_2}$.
\end{cor}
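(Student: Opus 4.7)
The plan is to combine the three preceding lemmas with Kottwitz's formula~\eqref{e:orb-via-fixed} in a direct chain. That formula identifies the orbital integral $O^{G(F)}_{\gamma}(\triv_{K\lambda(\varpi)K}, \mu_G, \mu^{\EP}_{I_\gamma})$ with the cardinality of the set $X_F(\gamma,\lambda)$ of vertices of $G(F)/K$ that $\gamma$ moves into the double coset $K\lambda(\varpi)K$. The non-central elliptic assumption is essential here: it guarantees both the finiteness of this fixed-point set and the normalization $\mu^{\EP}_{I_\gamma}(I_\gamma(F))=1$ required for~\eqref{e:orb-via-fixed}.

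From there, Lemma~\ref{l:B-3} confines $X_F(\gamma,\lambda)$ inside the closed combinatorial ball centered at the unique Galois-fixed vertex $\ol{x}_1$ of $\Apt(T_\gamma(F'))$ with radius $\cM(\gamma,\kappa)$, and Lemma~\ref{l:B-4} estimates the cardinality of that ball by $q^{Z_1 + Z_2\kappa} D^G(\gamma)^{Z_3}$. Concatenating these three steps immediately yields the desired bound, with the extra factor $q^{r_G(d_G+1)}$ in the stated inequality providing harmless slack (it arises in the surrounding applications of the corollary from the measure comparison~\eqref{e:mu-EP/mu}, and can be absorbed into the constants without affecting the argument).

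There is no substantive obstacle at this final step: all the analytic work has already been carried out upstream. The technical heart is Lemma~\ref{l:dbl-coset-distance}, which controls the distance of a point of $X_{F'}(\delta,\lambda')$ to the apartment of $A'(F')$ by means of an explicit inductive analysis built on the Chevalley commutator relations and the explicit bounds on $v(\alpha(\delta))$ afforded by the embedding $\Xi$; Lemma~\ref{l:B-3} then descends this control to the $F$-rational apartment via a convexity argument invoking the uniqueness of the Galois-fixed vertex; and Lemma~\ref{l:B-4} packages the resulting geometric information as a combinatorial volume estimate in the building. Corollary~\ref{c:B-1} is then essentially a one-line consequence, and the only thing worth verifying is that the constants $Z_1, Z_2, Z_3$ produced along the way are independent of $v$, $\gamma$ and $\kappa$ (and depend only on the affine root datum of $G$, the Chevalley form $\bG$, and $\Xi$), as needed for the uniformity required by Theorem~\ref{t:appendeix2}.
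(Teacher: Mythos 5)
Your proof is correct and takes essentially the same route as the paper, whose proof of Corollary~\ref{c:B-1} is precisely the concatenation of \eqref{e:orb-via-fixed}, Lemma~\ref{l:B-3} and Lemma~\ref{l:B-4}. Your remark that the factor $q^{r_G(d_G+1)}$ is harmless slack is accurate (the three ingredients in fact yield the bound without it), and it matches how the corollary is used together with the measure comparison \eqref{e:mu-EP/mu} in Step 2-3 of the proof of Proposition~\ref{p:appendix2}.
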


\begin{proof}
  Follows from \eqref{e:orb-via-fixed}, Lemma \ref{l:B-3} and Lemma \ref{l:B-4}.

\end{proof}

\subsection{Lemmas in the split case}\label{sub:lem-split}

This subsection plays a supporting role for the previous subsections, especially \S\ref{sub:app:elliptic}.
 As in \S\ref{sub:global-bound-orb-int} let $\bG$ be a Chevalley group with a Borel subgroup $\bB$ containing a split maximal torus $\bT$, all over $\Z$. Let $\Xi^{\spl}_{\Q}:\bG\hra \GL_m$ be a closed embedding of algebraic groups over $\Q$. Let $\T$ denote the diagonal maximal torus of $\GL_m$, $\B$ the upper triangular Borel subgroup of $\GL_m$, and $\N$ the unipotent radical of $\B$.

 Extend $\Xi^{\spl}_{\Q}$ to a closed embedding $\Xi^{\spl}:\bG\hra \GL_m$ defined over $\Z[1/R]$ for some integer $R$ such that $\Xi^{\spl}(\bT)$ (resp. $\Xi^{\spl}(\bB)$) lies in the group of diagonal (resp. upper triangular) matrices of $\GL_m$.
  To see that this is possible, find a maximal $\Q$-split torus $\T'$ of $\GL_m$ containing $\Xi^{\spl}_{\Q}(\bT)$. Choose any Borel subgroup $\B'$ over $\Q$ containing $\T$. Then there exists $g\in \GL_m(\Q)$ such that the inner automorphism $\mathrm{Int}(g):\GL_m\ra \GL_m$ by $\gamma\mapsto g \gamma g^{-1}$ carries $(\B',\T')$ to $(\B,\T)$. Then $\Xi^{\spl}_{\Q}$ and $\mathrm{Int}(g)$ extend over $\Q$ to over $\Z[1/R]$ for some $R\in \Z$, namely at the expense of inverting finitely many primes (basically those in the denominators of the functions defining $\Xi^{\spl}_{\Q}$ and $\mathrm{Int}(g)$).

  Now suppose that $p$ is a prime not diving $R$. Let $F$ be a finite extension of $\Q_p$ with integer ring $\cO$
  and a uniformizer $\varpi$. The field
  $F$ is equipped with a unique discrete valuation $v_F$ such that $v_F(\varpi)=1$. Let $\lambda\in X_*(\bT)$.
  We are interested in assertions which work for $F$ as the residue characteristic $p$ varies.
  Lemma \ref{l:app:split1} (resp. Corollary \ref{c:split2}) below is used in Step 1 (resp. Step 2) of the proof of Lemma \ref{l:dbl-coset-distance}.
%  Lemma \ref{l:app:split2}) below is used in Step 2 of the proof of Lemma \ref{l:dbl-coset-distance}. Although Lemma \ref{l:app:split1} is not used in this article, we have kept it thinking that it might be useful on another occasion. Another justification is that the two lemmas share a common component in their proofs.
%  Our convention for $T_\delta$ and $\Phi_\delta$ below is the same as for $T_\gamma$ and $\Phi_\gamma$ in Lemma \ref{l:bounding1-alpha(gamma)}. The proof of the next lemma is little different from that of Lemma \ref{l:bounding1-alpha(gamma)}.

\begin{lem}\label{l:app:split1}
  There exists $m_{\Xi^{\spl}}\in \Z_{>0}$ such that
for every $p$, $F$ and $\lambda$ as above and for every semisimple $\delta\in \bG(\cO)\lambda(\varpi)\bG(\cO)$ (and for any choice of $T_\delta$ containing $\delta$),
$$\forall \alpha\in \Phi_\delta,\quad v_F(\alpha(\delta))\in [-m_{\Xi^{\spl}} \|\lambda\|,m_{\Xi^{\spl}} \|\lambda\|].$$
\end{lem}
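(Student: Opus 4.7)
The plan is to carry out the proof of Lemma \ref{l:bounding1-alpha(gamma)} almost verbatim, with two small simplifications and one substantial observation about uniformity. First, because $\Xi^{\spl}$ is defined globally over $\Z[1/R]$ with $\Xi^{\spl}(\bT)\subset\T$ by construction, one need not invoke Lemma \ref{l:conj-image-in-diag} to place the maximal split torus in diagonal position — this is already arranged globally. Second, for the variable maximal torus $T_\delta\subset\bG$ containing $\delta$, one picks, as in that earlier proof, a maximal torus $\T_\delta\subset\GL_d$ over $\ol F$ containing $\Xi^{\spl}(T_\delta)$ and lifts each $\alpha\in\Phi_\delta$ to a character $\tilde\alpha\in X^*(\T_\delta)$ via the surjection induced by the embedding.

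The crux is to define two constants
\[
c_2:=\max_{\mu\in X_*(\bT)_\R,\ \|\mu\|\le 1}\|\Xi^{\spl}\circ\mu\|_{\GL_d},\qquad c_1:=\max_{\alpha}\|\tilde\alpha\|_{\GL_d},
\]
where $\alpha$ ranges over the (finite) absolute root system of $\bG$, and to check that both depend only on the fixed global data $(\bG,\Xi^{\spl})$, not on $p$ or $F$. For $c_2$ this is transparent, since the morphism $\Xi^{\spl}|_{\bT}:\bT\to\T$ is defined over $\Z[1/R]$. For $c_1$ one exploits that all maximal tori of $\GL_d$ are $\GL_d(\ol F)$-conjugate and that $\|\cdot\|_{\GL_d}$ is Weyl-invariant: after identifying $\T_\delta$ with the standard diagonal torus, each $\tilde\alpha$ becomes a monomial in the standard characters with integer exponents drawn from a finite, $F$-independent set determined by the combinatorial root datum of $\bG$ together with the fixed lift provided by $\Xi^{\spl}$. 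Setting $m_{\Xi^{\spl}}:=\lceil d\,c_1c_2\rceil$ is then the natural candidate.

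Given these constants, the bound is obtained exactly as in Lemma \ref{l:bounding1-alpha(gamma)}: writing $\delta=k_1\lambda(\varpi)k_2$ with $k_1,k_2\in\bG(\cO)$ yields $\Xi^{\spl}(\delta)$ conjugate in $\GL_d(F)$ to $\Xi^{\spl}(\lambda(\varpi))\cdot u$ for some $u\in\GL_d(\cO)$; the diagonal matrix $\Xi^{\spl}(\lambda(\varpi))$ has entries of valuation in $[-c_2\|\lambda\|,c_2\|\lambda\|]$; Lemma \ref{l:control-eigenvalue} (applied with $v_{\min}(u)=v_{\min}(u^{-1})=0$) transfers this control to every eigenvalue $\mu_i$ of $\Xi^{\spl}(\delta)\in\GL_d(F)$; and, after fixing an $\ol F$-isomorphism $\T_\delta(\ol F)\simeq(\ol F^\times)^d$, writing $\tilde\alpha(t_1,\dots,t_d)=t_1^{a_1}\cdots t_d^{a_d}$ with $|a_i|\le c_1$ gives
\[
v_F(\alpha(\delta))=v_F(\tilde\alpha(\Xi^{\spl}(\delta)))=\sum_{i=1}^d a_i\,v_F(\mu_i),
\]
whose absolute value is at most $dc_1c_2\|\lambda\|$, as required.

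The only step deserving real care — and the one I expect to demand the most attention in the write-up — is the uniformity assertion for $c_1$. Once it is established that $\|\tilde\alpha\|_{\GL_d}$ can be bounded from the fixed combinatorial and global data of $(\bG,\Xi^{\spl})$ alone, via Weyl-invariance of the $\GL_d$-norm and the $\ol F$-conjugacy of all maximal tori of $\GL_d$, the remainder is a direct transcription of the local split-case argument in the proof of Lemma \ref{l:bounding1-alpha(gamma)}, now valid simultaneously at all places $v$ with residue characteristic coprime to $R$.
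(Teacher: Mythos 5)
Your proof is correct and follows essentially the same route as the paper's: bound the valuations of the diagonal entries of $\Xi^{\spl}(\lambda(\varpi))$ by a constant times $\|\lambda\|$, apply Lemma \ref{l:control-eigenvalue} to control the eigenvalue valuations of $\Xi^{\spl}(\delta)$, and lift each $\alpha\in\Phi_\delta$ to a character of a maximal torus of $\GL_d$ of uniformly bounded norm so that $v_F(\alpha(\delta))$ is a bounded integer combination of these eigenvalue valuations. The paper's write-up is terser (it simply fixes lifts $\tilde\alpha$, a bound $a_{\Xi^{\spl}}$, and sets $m_{\Xi^{\spl}}=a_{\Xi^{\spl}}c_{\Xi^{\spl}}$), whereas you spell out the $p$- and $F$-independence of the lift-norm bound via Weyl-invariance and conjugacy of maximal tori and keep the factor $d$ explicit, which is only a cosmetic refinement.
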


\begin{proof}
  The argument is the same as in the proof of Lemma \ref{l:bounding1-alpha(gamma)}. The constant $m_{\Xi^{\spl}}$ corresponds to the constant $B_5$ in that lemma. To see that it is independent of $p$, $F$ and $\lambda$, it suffices to examine the argument and see that the constant depends only on $\bG$, $\bB$, $\bT$ (and the auxiliary choice of $\tilde{\alpha}$'s as in the proof of Lemma \ref{l:conj-image-in-diag}, which is fixed once and for all).
%
%
%  There exists $c_{\Xi^{\spl}}>0$ such that $\|\Xi^{\spl}(\lambda)\|_{\GL_m}\le c_{\Xi^{\spl}} \|\lambda\|$
%  for all $\lambda\in X_*(\bT)$.
%  Then every entry $a$
%  of the diagonal matrix $\Xi^{\spl}(\lambda(\varpi))$ has the property that
%  $v_F(a)\in [-c_{\Xi^{\spl}} \|\lambda\|, c_{\Xi^{\spl}} \|\lambda\|]$. Let $\T_\delta$ be a maximal torus of $\GL_m$ (over $\ol{F}$) containing $\Xi^{\spl}(T_\delta)$. The map $\Xi^{\spl}$ induces a surjection
%  $X^*(\T_\delta)\ra X^*(T_\delta)$. We fix a lift $\tilde{\alpha}\in X^*(\T_\delta)$ of each $\alpha\in \Phi_\delta$
%  and choose $a_{\Xi^{\spl}}>0$ such that \beq\label{e:tilde-alpha}
%  \|\tilde{\alpha}\|_{\GL_m}\le a_{\Xi^{\spl}},\quad \forall \alpha\in \Phi_\delta\eeq
%  As $\delta\in \bG(\cO)\lambda(\varpi)\bG(\cO)$, we have
%  $\Xi^{\spl}(\delta)\in \GL_m(\cO)\Xi^{\spl}(\lambda(\varpi))\GL_m(\cO)$. By Lemma \ref{l:control-eigenvalue},
%  every eigenvalue $\mathrm{ev}$ of $\Xi^{\spl}(\delta)$ satisfies
%  $v_F(\mathrm{ev})\in [-c_{\Xi^{\spl}} \|\lambda\|, c_{\Xi^{\spl}} \|\lambda\|]$.
%  This together with \eqref{e:tilde-alpha} shows that
%  $$v_F(\tilde{\alpha}(\Xi^{\spl}(\delta)))\in [-a_{\Xi^{\spl}} c_{\Xi^{\spl}} \|\lambda\|,a_{\Xi^{\spl}} c_{\Xi^{\spl}} \|\lambda\|].$$
%  Since $\alpha(\delta)=\tilde{\alpha}(\Xi^{\spl}(\delta))$, we finish by setting $m_{\Xi^{\spl}}:=a_{\Xi^{\spl}} c_{\Xi^{\spl}}$.
\end{proof}

The unipotent radical of $\bB$ is denoted $\bN$.
For $F$ as above, let $x_0$ be the hyperspecial vertex on the building of $\bG(F)$
corresponding to $\bG(\cO)$. % Denote by $N_{x_0,a}$ the filtration subgroup of $\bN(F)$ for $a\in \R$.
As usual put $\Phi^+:=\Phi^+(\bG,\bT)$ be the set of positive roots with respect to $(\bB,\bT)$.

  Let us recall some facts about the Chevalley basis. For each $\alpha\in \Phi^+$, let $U_\alpha$ denote the corresponding unipotent subgroup equipped with $x_\alpha:\G_a \simeq U_\alpha$. Order the elements of $\Phi^+$ as $\alpha_1,...,\alpha_{|\Phi^+|}$ once and for all such that simple roots appear at the beginning.
  The multiplication map $$\mathrm{mult}:U_{\alpha_1}\times \cdots \times U_{\alpha_{|\Phi^+|}} \ra \bN,
\qquad (u_1,...,u_{|\Phi^+|})\mapsto u_1\cdots u_{|\Phi^+|}$$
  is an isomorphism of schemes (but not as group schemes) over $\Z$. This can be deduced from \cite[Exp XXII, 5.5.1]{SGA3-7}, which deals with a Borel subgroup of a Chevalley group. In particular (since the ordering on $\Phi^+$ is fixed) any $n\in \bN(F)$ can be uniquely written as \beq\label{e:y=product}y=x_{\alpha_1}(Y_{\alpha_1})\cdots x_{\alpha_{|\Phi^+|}}(Y_{\alpha_{|\Phi^+|}})\eeq
  for unique $Y_{\alpha_i}\in \G_a(F)\simeq F$'s. The Chevalley commutation relation (\cite[\S III]{Che55}) has the following form:
for all $1\le i<j\le |\Phi^+|$ and all $Y_{\alpha_i}\in F$'s,
%elements $a_1,...,a_{|\Phi^+|}$ of $\G_m$ (from the functor-of-points perspective),
  \beq\label{e:commutator}
  x_{\alpha_i}( Y_{\alpha_i}) x_{\alpha_j}( Y_{\alpha_j})
 = x_{\alpha_j}( Y_{\alpha_j})x_{\alpha_i}( Y_{\alpha_i})
  \prod_{c,d\ge 1\atop
  \alpha_k=c\alpha_i+d\alpha_j} x_{\alpha_k}(C_{ij} ( Y_{\alpha_i})^c ( Y_{\alpha_j})^d)
\eeq
where $C_{ij}$ are certain integers (depending on $\bG$)
which we need not know explicitly. It suffices to know that, in the cases of $F$ we are interested in, the constants $C_{ij}$ are units in $\cO$ (cf. the assumption in the paragraph preceding Proposition \ref{p:appendix2}).

  We thank Kottwitz for explaining the proof of the following lemma.

\begin{lem}\label{l:app:split2}
  Suppose that the Chevalley group $\bG$ is semisimple and simply connected. Let $\Omega\subset X^*(\bT)$ denote the set of fundamental weights and $\rho^\vee\in X_*(\bT)$ the half sum of all positive coroots. Let $\lambda\in X^*(\bT)$ and define
  $n_0(\lambda):=\max_{\omega\in \Omega} \lg \omega,\lambda\rg$.
  For every prime $p$, every $p$-adic field $F$, and every cocharacter $\lambda\in X_*(\bT)$ as above, the following is true: in terms of the decomposition \eqref{e:y=product}, each $y\in \bG(\cO)\lambda(\varpi)\bG(\cO)\cap \bN(F)$ satisfies the inequality
  $$v_F(Y_i)\ge -2n_0(\lambda)\lg \alpha_i,\rho^\vee\rg,\quad 1\le i\le |\Phi^+|.$$

\end{lem}

\begin{proof}
  It suffices to check that
  \beq\label{e:to-prove-lem7.13}\varpi^{2 n_0(\lambda)\rho^\vee} y \varpi^{-2 n_0(\lambda)\rho^\vee}\in  \bN(\cO).\eeq
  (Here we write $\varpi^{2 n_0(\lambda)\rho^\vee}$ for $(\rho^\vee(\varpi))^{2 n_0(\lambda)}$.)
  Indeed, this implies the desired inequality in the lemma since the decomposition \eqref{e:y=product} is defined over $\cO$.

    Let us introduce some notation. For each $\omega\in \Omega$ let $V_\omega$ denote the irreducible representation of $\bG(F)$ of highest weight $\omega$ on an $F$-vector space. Write $V_\omega=\oplus_{\mu\in X^*(\bT)} V_{\omega,\mu}$ for the weight decomposition. The geometric construction of $V_\omega$ and its weight decomposition by using flag varieties gives us a natural $\cO$-integral structures $V_\omega(\cO)$ in $V_\omega$ such that $V_\omega(\cO)=\oplus_{\mu\in X^*(\bT)} V_{\omega,\mu}(\cO)$, where $V_{\omega,\mu}(\cO):=V_\omega(\cO)\cap V_{\omega,\mu}$.
    Note that each $V_\omega$ receives an action of $\G_m$ via $\G_m\stackrel{\rho^\vee}{\ra}\bT\hra \bG$. We may consider a coarser decomposition $V_\omega=\oplus_{i\in \Z} V_{\omega,i}$, where $V_{\omega,i}:=\oplus_{\lg \mu,2\rho^\vee\rg=i} V_{\omega,\mu}$. For any $\omega\in \Omega$ and $V=V_\omega$, set $V_{\ge i}:=\oplus_{j\ge i} V_j$, $V_{\ge i}(\cO):=V_{\ge i}\cap V(\cO)$, and $V_i(\cO):=V_i\cap V(\cO)$. Observe that $\bB(F)$ preserves the filtration $\{V_{\ge i}\}_{i\in \Z}$ and that $\bN(F)$ acts trivially on $V_{\ge i}/V_{\ge i+1}$.

    As a preparation, suppose that $g\in \bG(\cO)\lambda(\varpi)\bG(\cO)$ and let us prove that $g V_\omega(\cO)\subset \varpi^{-n_0(\lambda)}V_\omega(\cO)$ for all $\omega\in \Omega$. Since $\bG(\cO)$ stabilizes $V_\omega(\cO)$, the latter condition is true if and only if $\lambda(\varpi)V_\omega(\cO)\subset \varpi^{-n_0(\lambda)} V_\omega(\cO)$, which holds if and only if $$\lg \mu,\lambda\rg \ge -n_0(\lambda)$$ for all weights $\mu$ for $V_\omega$ by considering the weight decomposition. The above inequality for all weights $\mu$ is equivalent to that for the lowest weight $\mu$ for $V_\omega$. Since $\mu=w_0\omega_\omega$ for the longest Weyl element $w_0$, the condition is that $\lg -w_0 \omega,\lambda\rg \le n_0(\lambda)$ for all $\omega$. This is verified by the definition of $n_0(\lambda)$ since $-w_0$ preserves the set $\Omega$.

    Now consider $\varpi^{2 n_0(\lambda)\rho^\vee} (y-1) \varpi^{-2 n_0(\lambda)\rho^\vee}$, where $y$ is as in the lemma. Since $\varpi^{2\rho^\vee}$ acts on $V_j$ as $\varpi^j$, we see from this and the last paragraph that for all $\omega\in \Omega$ and $i\in \Z$,
    \begin{eqnarray}
    (\varpi^{2 n_0(\lambda)\rho^\vee} (y-1) \varpi^{-2 n_0(\lambda)\rho^\vee})(V_{\omega,i}(\cO))
    &=& (\varpi^{2 n_0(\lambda)\rho^\vee} (y-1))(\varpi^{- i n_0(\lambda)} V_{\omega,i}(\cO))\nonumber\\
    &\subset& \varpi^{2 n_0(\lambda)\rho^\vee}(\varpi^{- (i+1) n_0(\lambda)}V_{\omega,\ge i+1}(\cO))
    \subset V_{\omega,i}(\cO).\nonumber
    \end{eqnarray}
     It follows that $\varpi^{2 n_0(\lambda)\rho^\vee} y \varpi^{-2 n_0(\lambda)\rho^\vee}$ also preserves $V_{\omega,i}(\cO)$, hence $V_\omega(\cO)$. Therefore the element belongs to $\cN(\cO)=\cN(F)\cap\bG(\cO)$, concluding the proof of \eqref{e:to-prove-lem7.13}.

\end{proof}

  For an arbitrary Chevalley group $\bG$ and $\lambda\in X_*(\bT)^+$, define a nonnegative integer
  \beq\label{e:n(lambda)} n_\bG(\lambda):=\max_{\alpha\in \Phi^+} \lg \alpha,\lambda\rg.\eeq

\begin{cor}\label{c:split2}
  Let $\bG$ be an arbitrary Chevalley group. For every prime $p$, every $p$-adic field $F$, and every cocharacter $\lambda\in X_*(\bT)$, there exists a constant $m_\bG>0$ such that the following is true: each $y\in \bG(\cO)\lambda(\varpi)\bG(\cO)\cap \bN(F)$, uniquely decomposed as in \eqref{e:y=product}, satisfies the inequality
  $$v_F(Y_i)\ge -2m_{\bG} n_\bG(\lambda),\quad 1\le i\le |\Phi^+|.$$
\end{cor}

\begin{proof}
  The corollary is immediate from the lemma if $\bG$ is semisimple and simply connected. Indeed, define $n_1(\lambda)$ to be the maximum of $\lg \alpha,\lambda\rg$ as $\alpha$ runs over $\Delta^+$, the set of simple roots. Observe that both the sets $\Omega$ and $\Delta^+$ are bases for $X^*(\bT)_\Q$. By using the change of basis matrix, it is easy to deduce from Lemma \ref{l:app:split2} that for some constant $c>0$ depending only on $\bG$, we have that
  $$v_F(Y_i)\ge -2c n_1(\lambda)\lg \alpha_i,\rho^\vee\rg$$ for all $p$, $F$, $\lambda$, and $i$. A fortiori the same holds with $n_\bG(\lambda)$ in place of $n_1(\lambda)$. The proof is completed by setting $m_\bG:=c \max_{\alpha\in \Phi^+}\lg \alpha,\rho^\vee\rg$.

  It remains to extend from the simply connected case to the general case. As usual write $\bG_{\ad}$ for the adjoint group of $\bG$ and $\bG_{\scusp}$ for the simply connected cover of $\bG_{\ad}$. The pair $(\bB,\bT)$ induces the Borel pairs $(\bB_{\ad},\bT_{\ad})$ for $\bG_{\ad}$ and $(\bB_{\scusp},\bT_{\scusp})$ for $\bG_{\scusp}$. Write $\Phi^+_{\ad}$ and $\Phi^+_{\scusp}$ for the associated sets of roots. Let $\bN_{\ad}$ and $\bN_{\scusp}$ denote the unipotent radicals of $\bB_{\ad}$ and $\bB_{\scusp}$, respectively. Then the natural maps $\bG\ra \bG_{\ad}$ and $\bG_{\scusp}\ra \bG_{\ad}$ induce isomorphisms $\bN\simeq \bN_{\ad}$ and $\bN_{\scusp}\simeq \bN_{\ad}$ as well as set-theoretic bijections $\Phi^+\ra\Phi^+_{\ad}$ and $\Phi^+_{\scusp}\ra \Phi^+_{\ad}$. In particular the ordering on $\Phi^+$ induces unique orderings on $\Phi^+_{\ad}$ and $\Phi^+_{\scusp}$. With respect to these orderings, the decomposition \eqref{e:y=product} is compatible with the maps $\bG\ra \bG_{\ad}$ and $\bG_{\scusp}\ra \bG_{\ad}$. From all this it follows that the corollary for $\bG_{\scusp}$ implies that for $\bG_{\ad}$, and then for $\bG$.

\end{proof}

\section{Lemmas on conjugacy classes and level subgroups}\label{s:conj}

  This section contains several results which are useful for estimating the geometric side of Arthur's invariant trace formula in the next section.

\subsection{Notation and basic setup}\label{sub:notation-setup} Let us introduce some global notation in addition to that at the start of \S\ref{sec:pp}.

\bit
\item $M_0$ is a minimal $F$-rational Levi subgroup of $G$.
\item $A_{M_0}$ is the maximal split $F$-torus in the center of $M_0$.
\item $\Ram(G):=\{v\in \cV_F^\infty:\, G\mbox{~is~ramified~at~}v\}$.
\item $S\subset \cV_F^\infty$ is a finite subset, often with a partition
$S=S_0\coprod S_1$.
\item $r:{}^L G \ra \GL_d(\C)$ is an irreducible representation.
\item $\Xi:G\ra \GL_m$ is a faithful algebraic representation defined over $F$
(or over $\cO_F$ as explained below)
%\item $S$ is a finite set of finite places
%  which admits a partition $S=S_0\coprod S_1$. (NOT NEEDED HERE.)
% \item $U_{S_0}$ is a fixed open compact subgroup of $ G(F_{S_0})$. (NOT NEEDED HERE.)
% \item $U_\infty$ is a compact subset of $G_\infty$. (NOT NEEDED HERE.)
\item For any $\C$-subspace $\cH'\subset C^\infty_c(G(F_S))$,
define $$\supp\cH'=\cup\, \supp \phi_S$$ where the union is take over $\phi_S\in\cH'$.
\item $q_{S}:=\prod_{v\in S} q_v$ where $q_v$ is the cardinality of the residue field at $v$. (Convention: $q_S=1$ if $S=\emptyset$.)
\eit

  For each finite place $v\in \Ram(G)$ of $F$, fix a special
  point $x_v$ on the building of $G$ once and for all, where $x_v$ is required to belong to
  an apartment corresponding to a maximal $F_v$-split torus $A_v$ containing $A_{M_0}$.
  The stabilizer $K_v$ of $x_v$ is a good special maximal compact subgroup of $G(F_v)$
  (good in the sense of \cite{BT72}).
  Set $K_{M,v}:=K_v\cap M(F_v)$ for each $F_v$-rational Levi subgroup $M$ of $G$ containing $A_v$. Then $K_{M,v}$
  is a good special maximal compact subgroup of $M(F_v)$.
%  When $G$ is unramified at $v$, choose $x_v$ to be hyperspecial, and

  It is worth stressing that this article treats a reductive group $G$ without any hypothesis on $G$
  being split (or quasi-split). To do so, we would like to carefully choose an integral model of $G$ over $\cO_F$
  for convenience and also for clarifying a notion like ``level $\fkn$ subgroups''. We thank Brian Conrad for explaining us
  crucial steps in the proof below (especially how to proceed by using the facts from \cite{BLR90}).

\begin{prop}\label{p:global-integral-model}
  The $F$-group $G$ extends to a group scheme $\fkG$ over $\cO_F$ (thus equipped with an isomorphism
  $\fkG\times_{\cO_F} F\simeq G$) such that \begin{itemize}
    \item $\fkG\times_{\cO_F} \cO_F[\frac{1}{\Ram(G)}]$ is a reductive group scheme (cf. \cite{Conrad-reductive}),
    \item $\fkG(\cO_v)=K_v$ for all $v\in \Ram(G)$ (where $K_v$ are chosen above),
    \item there exists a faithful embedding of algebraic groups $\Xi:\fkG\hra \GL_m$ over $\cO_F$
  for some $m\ge 1$.
  \end{itemize}
\end{prop}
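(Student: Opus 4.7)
The plan is to build $\fkG$ by gluing local Bruhat--Tits integral models at places $v\in \Ram(G)$ with a reductive global model on the unramified locus, and then to produce $\Xi$ via the linear representability theorem for flat affine group schemes over a Dedekind base. On the unramified locus $\Spec \cO_F[1/\Ram(G)]$, the $F$-group $G$ becomes unramified at every residue point, so there exists a reductive group scheme $\fkG^0$ over $\cO_F[1/\Ram(G)]$ with generic fiber $G$: one can spread out over some $\cO_F[1/N]$ a Chevalley group twisted by an unramified cocycle (using that the splitting field of $G$ is unramified away from $\Ram(G)$, cf.~\cite{Conrad-reductive}), and then enlarge to $\cO_F[1/\Ram(G)]$ by gluing in the canonical hyperspecial models at the finitely many primes dividing $N$ but not $\Ram(G)$. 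At each $v\in \Ram(G)$, Bruhat--Tits theory attaches to the special point $x_v$ a smooth affine group scheme $\fkG_v$ over $\cO_v$ with generic fiber $G\times_F F_v$ and with $\fkG_v(\cO_v)=K_v$ by construction of the stabilizer scheme (this uses that $x_v$ lies in an apartment for a maximal $F_v$-split torus, which is where the good behavior $K_v=\fkG_v(\cO_v)$ enters).

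Next I would glue these local pieces. The open subschemes $\Spec \cO_F[1/\Ram(G)]$ and the $\Spec \cO_v$ for $v\in \Ram(G)$ form a faithfully flat cover of $\Spec \cO_F$, and the restrictions of $\fkG^0$ and $\fkG_v$ to $\Spec F_v$ both equal $G\times_F F_v$ canonically. The patching of affine flat group schemes of finite type along such a cover is carried out by the techniques of \cite[Ch.~6]{BLR90}, and produces an affine flat $\cO_F$-group scheme $\fkG$ of finite type whose restrictions recover $\fkG^0$ and each $\fkG_v$. By construction the generic fiber of $\fkG$ is $G$, the restriction of $\fkG$ to $\cO_F[1/\Ram(G)]$ is reductive, and $\fkG(\cO_v)=K_v$ for every $v\in \Ram(G)$, so the first two bullets hold.

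The main obstacle is the third bullet: producing a closed embedding $\Xi:\fkG\hra \GL_m$ defined over all of $\cO_F$. This is an instance of the general fact that a flat affine group scheme of finite type over a Dedekind domain admits a faithful linear representation as a closed subgroup of some $\GL_m$. To execute it I would work inside the Hopf algebra $A:=\cO(\fkG)$, selecting a finitely generated $\cO_F$-submodule $V\subset A$ which is stable under the comultiplication and generates $A$ as an $\cO_F$-algebra; the coaction $V\to V\otimes_{\cO_F} A$ corresponds to an $\cO_F$-group homomorphism $\fkG\to \GL(V)$ which is a closed immersion since $V$ generates $A$. Flatness of $\fkG$ forces $V$ to be projective over $\cO_F$ of some constant rank $r$. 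To replace $\GL(V)$ with a split $\GL_m$ over $\cO_F$, I would adjoin a rank-one projective $\cO_F$-module $V'$ whose class in $\Pic(\cO_F)$ equals $-[\det V]$, equipped with the trivial $\fkG$-coaction; since $\cO_F$ is Dedekind, $V\oplus V'$ has trivial determinant and is therefore free of rank $m=r+1$, and the resulting composite $\fkG\to \GL(V\oplus V')\cong \GL_m$ is still a closed immersion, supplying the required $\Xi$.
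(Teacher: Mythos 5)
Your construction is correct, but it reaches the third bullet by a genuinely different route than the paper. The paper fixes a faithful representation $\Xi_F:G\hra \GL_m$ over $F$ at the outset (via \cite[Prop A.2.3]{CGP10}), takes the model away from a finite set $S\supset \Ram(G)$ to be the schematic closure of $G$ inside $\GL_m$ over $\cO_F[1/S]$ (so that the integral embedding is automatic there, and reductivity is arranged by enlarging $S$ using \cite[Prop 3.1.9]{Conrad-reductive}), extends $\Xi_F$ over each Bruhat--Tits model at $v\in S$ by \cite[Prop 1.7.6]{BT84}, and then glues both the group schemes and the embeddings through the \cite{BLR90}-type patching (first over the local rings $\cO_{(v)}$, then with the open piece). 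You instead decouple the two tasks: you first build $\fkG$ by essentially the same gluing of a spread-out reductive model with hyperspecial and Bruhat--Tits local models (your appeal to ``BLR Ch.~6'' is really the same triple-gluing lemma the paper cites, applied in two steps), and only afterwards produce $\Xi$ by the classical linearity theorem for flat affine group schemes of finite type over a Dedekind base: a finitely generated subcomodule $V\subset \cO(\fkG)$ generating the Hopf algebra gives a closed immersion $\fkG\hra \GL(V)$ with $V$ projective, and the rank-one twist by $(\det V)^{-1}$ with trivial coaction makes the module free, landing in a genuine $\GL_m$. What each approach buys: the paper's avoids the abstract embedding theorem over Dedekind rings but must invoke \cite[1.7.6]{BT84} to extend the chosen embedding across the bad fibers; yours avoids that extension step entirely and is insensitive to how the local models were chosen, at the cost of invoking the general linearization result and the Picard-group correction (which is genuinely needed, since $\cO_F$ need not be a PID). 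Both arguments are complete at the level of detail the paper itself supplies.
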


\begin{rem}
  If $G$ is split then $\Ram(G)$ is empty and the above proposition is standard in the theory of Chevalley groups.
\end{rem}

\begin{proof}

 For any finite place $v$ of $F$, we will write $\cO_{(v)}$ for the localization of $\cO_F$ at $v$ (to be distinguished from
  the completion $\cO_v$).
  As a first step there exists an injective morphism of group schemes $\Xi_F:G\hra GL_{m}$ defined over $F$ for some $m\ge 1$
  (\cite[Prop A.2.3]{CGP10}. The scheme-theoretic closure $\fkG'$ of $G$ in $GL_{m'}$ is a smooth affine scheme over $\Spec \cO_F[1/S]$ for a finite set $S$ of primes of $\cO_F$ by arguing as in the first paragraph of \cite[\S2]{Conrad-reductive}. We may assume that $S\supset \Ram(G)$. By \cite[Prop 3.1.9.(1)]{Conrad-reductive}, by enlarging $S$ if necessary, we can arrange that $\fkG'$ is reductive.
  For $v\in \Ram(G)$ we have fixed special points $x_v$, which give rise to the Bruhat-Tits group schemes $\hat{\fkG}(v)$
  over $\cO_v$.
  Similarly for $v\in S\bs \Ram(G)$, let us choose hyperspecial points $x_v$ so that the corresponding group schemes
  $\hat{\fkG}(v)$ over $\cO_v$ are reductive.

  According to
   \cite[Prop D.4, p.147]{BLR90} the obvious functor from the category of affine $\cO_{(v)}$-schemes
   to that of triples $(X,\hat{\mathfrak{X}}(v),f)$ where $X$ is an affine $F$-scheme, $\hat{\mathfrak{X}}(v)$ is an affine $\cO_v$-scheme and
   $f:X\times_F F_v\simeq \hat{\mathfrak{X}}(v)\times_{\cO_v} F_v$ is an equivalence. (The notion of morphisms is obvious in each category.) Thanks to its functorial nature, the same functor defines
   an equivalence when restricted to group objects in each category. For $v\in \Ram(G)$,
   apply this functor to the Bruhat-Tits group scheme $\hat{\fkG}(v)$ over $\cO_v$ equipped with $G\times_F F_v\simeq \hat{\fkG}(v)\times_{\cO_v} F_v$    to obtain a group scheme $\fkG(v)$ over $\cO_{(v)}$.

   An argument analogous to that on page 14 of \cite{BLR90} shows that the obvious functor between the following categories is
   an equivalence: from
   the category of finite-type $\cO_F$-schemes to that of triples $(X,\{\mathfrak{X}(v)\}_{v\in S}, \{f_v\}_{v\in S})$
   where $X$ is a finite-type $\cO_F[1/S]$-scheme, $\mathfrak{X}(v)$ is a finite-type $\cO_{(v)}$-scheme and
   $f_v:X\times_{\cO_F[1/S]} F\simeq \mathfrak{X}(v)\times_{\cO_{(v)}} F$ is an isomorphism. Again this induces an equivalence when restricted
   to group objects in each category. In particular, there exists a group scheme $\fkG$ over $\cO_F$ with isomorphisms
   $\fkG\times_{\cO_F} \cO_F[1/S]\simeq \fkG'$ and $\fkG\times_{\cO_F} \cO_{(v)}\simeq \fkG(v)$ for $v\in S$
   which are compatible with the isomorphisms between $\fkG'$ and $\fkG(v)$ over $F$.
   By construction $\fkG$ satisfies the first two properties of the proposition.

   We will be done if $\Xi_F:G\hra GL_m$ over $F$ extends to an embedding of group schemes over $\cO_F$. It is evident from the construction
   of $\fkG'$ that $\Xi_F$ extends to $\Xi':G\hra GL_m$ over $\cO_F[1/S]$. For each $v\in S$,
   $\Xi_F$ extends to $\Xi(v):\fkG(v)\hra GL_m$ over $\cO_{v}$ thanks to \cite[Prop 1.7.6]{BT84}, which can be
   defined over $\cO_{(v)}$ using the first of the above equivalences. Then the second equivalence allows us to
   glue $\Xi'$ and $\{\Xi(v)\}_{v\in S}$ to produce an $\cO_F$-embedding $\Xi:G\hra GL_m$.

\end{proof}

  For each finite $v\notin \Ram(G)$, $\fkG$ defines a reductive group scheme over $\cO_v$, so
  $K_v:=\fkG(\cO_v)$ is a hyperspecial subgroup of $G(F_v)$.
  Fix a maximal $F_v$-split torus $A_v$ of $G$ which contains $A_{M_0}$ such that the hyperspecial point for $K_v$ belongs to the apartment of $A_v$. For each Levi subgroup $M$ of $G$ whose center is contained in $A_v$, define a hyperspecial subgroup $K_{M,v}:=K_v\cap M(F_v)$ of $M(F_v)$.
  At such a $v\notin \Ram(G)$
  define $\cH^{\ur}(G(F_v))$
  (resp. $\cH^{\ur}(M(F_v))$). The constant term (\S\ref{sub:orb-int-const-term}) of a function in $C^\infty_c(G(F_v))$
  (resp. $C^\infty_c(M(F_v))$) will be taken relative to $K_v$ (resp. $K_{M,v}$).
  When $P=MN$ is a Levi decomposition, we have Haar measures on $K_v$, $M(F_v)$ and $N(F_v)$
  such that
  the product measure equals $\mu_v^{\can}$ on $G(F_v)$ (cf. \S\ref{sub:orb-int-const-term})
  and the Haar measure on $M(F_v)$ is the canonical measure of \S\ref{sub:can-measure}.
  In particular when $G$ is unramified at $v$,
  \beq\label{e:meas-K-cap-N}\vol(K_v\cap N(F_v))=1\eeq with respect to the measure on $N(F_v)$ .

  Let $\fkn$ be an ideal of $\cO_F$ and $v$ a finite place of $F$.
Let $v(\fkn)\in \Z_{\ge 0}$ be the integer determined by $\fkn \cO_v=\varpi_v^{v(\fkn)} \cO_v$.
Define $K_v(\varpi_v^s)$  to be the Moy-Prasad subgroup $G(F_v)_{x_v,s}$ of $G(F_v)$
by using Yu's minimal congruent filtration as in \cite{Yu-fil} (which is slightly
different from the original definition of Moy and Prasad). Yu has shown that $G(F_v)_{x_v,s}= \ker(\fkG(\cO_v)\ra \fkG(\cO_v/\varpi_v^s))$
in \cite[Cor 8.8]{Yu-fil}. Set $K^{S,\infty}(\fkn):=\prod_{v\notin S\cup S_\infty}\ker(\fkG(\cO_v)\ra \fkG(\cO_v/\fkn))=\prod_{v\notin S\cup S_\infty} K_v(\varpi_v^{v(\fkn)})$,
to be considered the level $\fkn$-subgroup of $G(\A^{S,\infty})$.

  Fix a maximal torus $T_0$ of $G$ over $\ol{F}$ and
  an $\R$-basis $\cB_0$ of $X_*(T_0)_\R$, which induces a function
  $\|\cdot\|_{\cB_0,G}:X_*(T_0)_{\R}\ra \R_{\ge 0}$ as in \S\ref{sub:L-mor-unr-Hecke}.
  For any other maximal torus $T$, there is an inner automorphism
  of $G$ inducing $T_0\simeq T$, so $X_*(T)_\R$ has an $\R$-basis $\cB$ induced from $\cB_0$,
  well defined up to $\Omega(G,T)$. Therefore $\|\cdot\|_{\cB,G}:X_*(T)_\R\ra \R_{\ge 0}$
  is defined without ambiguity. As it depends only on the initial choice of $\cB_0$ (and $T_0$),
  let us write
  $\|\cdot\|$ for $\|\cdot\|_{\cB,G}$ when there is no danger of confusion.

  Let $v$ be a finite place of $G$, and $T_v$ a maximal torus of $G\times_F \ol{F}_v$
  (which may or may not be defined over $F_v$). Then $\|\cdot\|:X_*(T_v)_\R\ra \R_{\ge 0}$
  is defined without ambiguity via $T_v\simeq T_0\times _F F_v$ by a similar consideration
  as above. Now assume that $G$ is unramified at $v$. For any maximal split torus $A\subset G$
  and a maximal torus $T$ containing $A$ over $F_v$, the function $\|\cdot\|_{\cB_0}$ is well defined
  on $X_*(T)_\R$ (resp. $X_*(A)_\R$) and invariant under $\Omega$ (resp. $\Omega_F$).
  Hence for every $v$ where $G$ is unramified, the Satake isomorphism allows us to define
  $\cH^{\ur}(G(F_v))^{\le \kappa}$ as well as
  $\cH^{\ur}(M(F_v))^{\le \kappa}$ for every Levi subgroup $M$ of $G$ over $F_v$.
When $G$ is unramified at $S$, $\cH^{\ur}(G(F_S))^{\le \kappa}$ and
  $\cH^{\ur}(M(F_S))^{\le \kappa}$ are similarly defined.

  For the group $\GL_m$ with any $m\ge 1$, we use
  the diagonal torus and the standard basis to define $\|\cdot\|_{\GL_m}$ on the cocharacter groups
  of maximal tori of $\GL_m$ (cf. \S\ref{sub:case-of-GL_d}).
  For $\Xi:G\hra \GL_m$ introduced above, define
\beq\label{e:def-M(Xi)} B_{\Xi}:=\max_{e\in \cB_0} \|\Xi(e)\|_{\GL_m}.\eeq

\subsection{$z$-extensions}

  A surjective morphism $\alpha:H\ra G$ of connected reductive groups over $F$ is
  said to be a \key{$z$-extension} if the following three conditions are satisfied: $H\der$ is simply connected, $\ker\alpha\subset Z(H)$, and $\ker\alpha$ is isomorphic to a finite product $\prod \Res_{F_i/F}\GL_1$
 for finite extensions $F_i$ of $F$. Writing $Z:=\ker\alpha$, we often represent
  such an extension by an exact sequence of $F$-groups
  $1\ra Z \ra H\ra G\ra 1$. By the third condition and Hilbert 90, $\alpha:H(F)\ra G(F)$
  is surjective.

\begin{lem}\label{l:z-ext}
  For any $G$, a $z$-extension $\alpha:H\ra G$ exists.
  Moreover, if $G$ is unramified outside a finite set $S$, where $S_\infty\subset S\subset \cV_F$,
  then $H$ can be chosen to be unramified outside $S$.
\end{lem}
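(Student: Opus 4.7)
The plan is to adapt the standard construction of a $z$-extension (Langlands, Milne--Shih, Kottwitz) and then verify that every auxiliary choice can be made to respect the ramification set $S$. First I would record the central piece of algebra: the fundamental group $F_0 := \ker(G^{\mathrm{sc}} \twoheadrightarrow G^{\der})$ is a finite central $F$-group scheme of multiplicative type, so its Cartier dual $X^*(F_0)$ is a finite $\Gamma_F$-module, where $\Gamma_F := \Gal(\ol{F}/F)$. Any such module is a quotient of a permutation $\Gamma_F$-module $P$: one takes $P$ to be the free $\BmZ$-module on the $\Gamma_F$-orbits of any generating set. Dualizing produces a closed embedding $F_0 \hookrightarrow Z$ into the induced $F$-torus $Z$ with character module $P$. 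A parallel classical lemma of Ono states that any $F$-torus $T$ admits a surjection $T_1 \twoheadrightarrow T$ from an induced $F$-torus $T_1$ whose kernel is again an induced $F$-torus; its proof uses the same pattern of embedding $\Gamma_F$-lattices into permutation modules.

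Given these two inputs, I would construct $H$ as a pushout: fix a maximal $F$-torus $T \subset G$, let $T^{\mathrm{sc}}$ be its preimage in $G^{\mathrm{sc}}$ (so $T^{\mathrm{sc}} \to T$ has kernel $F_0$), apply Ono's lemma to obtain $T_1 \twoheadrightarrow T$ with induced-torus kernel containing (via the embedding $F_0 \hookrightarrow Z$) the image of $F_0$, and form $H$ from $G^{\mathrm{sc}}$ and $T_1$ by amalgamating along $T^{\mathrm{sc}}$ (i.e.\ $H := (G^{\mathrm{sc}} \times T_1)/T^{\mathrm{sc}}$ via the anti-diagonal embedding). The map $\alpha : H \to G$ is the natural one induced by $G^{\mathrm{sc}} \to G^{\der} \subset G$ and $T_1 \twoheadrightarrow T \subset G$. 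Standard verifications give: $H$ is connected reductive; $H^{\der}$ coincides with (the image of) $G^{\mathrm{sc}}$, hence is simply connected; $\alpha$ is surjective (because $G^{\der}$ and $T$ together generate $G$); and $\ker \alpha$ is the induced torus $\ker(T_1 \twoheadrightarrow T)$, which is central in $H$. The only delicate bookkeeping in this first part is arranging the two embeddings of $T^{\mathrm{sc}}$ on both sides so that they are compatible, allowing the quotient to be well-defined and connected reductive.

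For the second assertion on ramification, the plan is to make every choice above Galois-equivariant for a single finite quotient of $\Gamma_F$ unramified outside $S$. First I would invoke Proposition \ref{p:global-integral-model} to fix a smooth reductive model of $G$ over $\cO_F[1/S]$, and then choose a maximal $F$-torus $T$ of $G$ that extends to a maximal torus of this integral model over $\cO_F[1/S]$; existence of such a $T$ follows from the fact that reductive group schemes admit maximal tori \'etale-locally, combined with spreading out and weak approximation at the places in $S$. Then $T$, $T^{\mathrm{sc}}$, and $F_0$ are all unramified outside $S$ in the sense that their character modules factor through $\Gal(F_T/F)$ for some finite Galois extension $F_T/F$ unramified outside $S$. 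The main remaining step is the formal observation that the permutation module $P$ surjecting onto $X^*(F_0)$, and the auxiliary permutation modules entering the proof of Ono's lemma, can all be constructed as $\Gal(F_T/F)$-permutation modules (take free $\BmZ$-modules on orbits of generators for this finite group). Consequently $Z$, $T_1$, and $\ker(T_1 \twoheadrightarrow T)$ all split over $F_T$, hence are unramified outside $S$, and so $H$ itself is unramified outside $S$. The chief potential obstacle is the initial existence of a maximal torus $T$ unramified outside $S$; once that is secured, the rest of the argument is formal $\Gamma_F$-lattice manipulation.
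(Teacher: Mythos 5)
There is a genuine gap, and it sits exactly where you invoke ``Ono's lemma''. The statement you use -- every $F$-torus $T$ admits a surjection $T_1\twoheadrightarrow T$ from an induced torus with induced kernel -- is false in general: on character groups it asserts that every Galois lattice $X^*(T)$ fits into an exact sequence $0\to X^*(T)\to P\to P'\to 0$ with $P,P'$ permutation modules (i.e.\ is quasi-permutation), and this fails already for the norm-one torus $\ker(\mathrm{N}_{E/F}\colon \Res_{E/F}\G_m\to\G_m)$ of a biquadratic extension (Endo--Miyata: that torus is not stably rational, while a quasi-permutation character lattice forces stable rationality); Ono's actual lemma is a weaker isogeny statement. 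More importantly, even for what you really need -- some extension $1\to Z_1\to T_1\to T\to 1$ with only the kernel induced -- your amalgam $H=(G^{\mathrm{sc}}\times T_1)/T^{\mathrm{sc}}$ is not yet defined: the anti-diagonal copy of $T^{\mathrm{sc}}$ requires a homomorphism $\lambda\colon T^{\mathrm{sc}}\to T_1$ lifting $T^{\mathrm{sc}}\to T$, and $\lambda$ must be injective, since $\ker\lambda$ is precisely $\ker(G^{\mathrm{sc}}\to H)$, so otherwise $H^{\mathrm{der}}$ is a proper central quotient of $G^{\mathrm{sc}}$ and is not simply connected. Your phrase ``induced-torus kernel containing the image of $F_0$'' presupposes this lift rather than producing it, and for an arbitrary extension the obstruction to lifting is a generally nonzero Ext group. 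The step can be repaired -- choose the permutation module $P\twoheadrightarrow X^*(F_0)$ by first lifting generators of $X^*(F_0)$ to $X^*(T^{\mathrm{sc}})$, so that $F_0\hra Z$ extends to $\mu\colon T^{\mathrm{sc}}\to Z$, and take $T_1=Z\times T$ with $\lambda=(\mu,\mathrm{can})$ -- but as written the decisive step of the construction is missing.

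A second, smaller gap is the existence of a maximal $F$-torus $T\subset G$ unramified outside $S$: ``\'etale-local existence plus weak approximation at the places in $S$'' only controls finitely many places, whereas you need the splitting field of $T$ to be unramified at \emph{every} $v\notin S$; for that you would need, say, a maximal torus of the reductive model over $\cO_F[1/S]$, a nontrivial input you do not supply. This is also where your route diverges from the paper, whose proof is essentially a citation: it quotes \cite[Prop 3.1]{MS82}, which gives existence and allows $H$ to be taken split over any Galois splitting field $E$ of $G$, chooses such an $E$ unramified outside $S$, and notes that $H$ is quasi-split wherever $G$ is because the preimage of a Borel subgroup is a Borel subgroup. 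The Milne--Shih construction amalgamates $G^{\mathrm{sc}}$ with $Z(G)^{\circ}\times Z$ along the finite central kernel rather than using a maximal torus, so all auxiliary data split over $E$ and the torus-ramification issue never arises; if you want a self-contained argument, that variant is shorter and avoids both gaps above.
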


\begin{proof}
  It is shown in \cite[Prop 3.1]{MS82} that a $z$-extension exists and
  that if $G$ splits over a finite Galois extension $E$
  of $F$ then $H$ can be chosen to split over $E$. By the assumption on $G$, it is possible to find such an $E$ which is unramified outside $S$.
  Since the preimage of a Borel subgroup of $G$ in $H$ is a Borel subgroup of $H$,
  we see that $H$ is quasi-split outside $S$.

% Precise proof of the last assertion:
%
% 1. G^* is unramified outside $S$.
% 4. Let c denote the class of G in H^1(F,Int(G^*)). There exists Galois L/F such that c comes from H^1(L/F,Int(G^*)) via inflation.
% 5. If L is ramified at v not in S, then take decomposition group D(v) \subset Gal(L/F).
%    Then c is trivial in H^1(L/L^{D(v)},Int(G^*)) = H^1(I(v),Int(G^*)) by assumption.
%    Hence c comes from H^1(L^{D(v)}/F,Int(G^*)).
% 6. By repeating this, we can arrange that L is unramified outside S.
%    Then c becomes trivial over L, which means G splits over L. QED.

\end{proof}

\subsection{Rational conjugacy classes intersecting a small open compact subgroup}\label{sub:conj-intersect}

 Throughout this subsection $S=S_0\coprod S_1$ is a finite subset of $\cV^\infty_F$ and
 it is assumed that $S_0\supset \Ram(G)$. Fix compact subgroups $U_{S_0}$ and $U_{\infty}$ of $G(F_{S_0})$ and $G(F\otimes_\Q\R)$, respectively. Let $\fkn$ be an ideal of $\cO_F$ as before, now assumed to be coprime to $S$, with absolute norm $\N(\fkn)\in \Z_{\ge 1}$.

\begin{lem}\label{l:forcing-unipotent}
   Let $U_{S_1}:=\supp \cH^{\ur}(G(F_{S_1}))^{\le \kappa}$.
    There exists $c_\Xi>0$ independent of $S$, $\kappa$ and $\fkn$ (but depending on $G$, $\Xi$, $U_{S_0}$ and $U_{\infty}$)
  such that for all $\fkn$ satisfying $$\N(\fkn)\ge c_\Xi q_{S_1}^{B_{\Xi}m\kappa},$$ the following holds:
    if $\gamma\in G(F)$ and $x^{-1}\gamma x\in K^{S,\infty}(\fkn) U_{S_0} U_{S_1} U_\infty$
    for some $x\in G(\A_F)$ then $\gamma$ is unipotent.
\end{lem}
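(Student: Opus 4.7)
The plan is to transport the problem to $\GL_m$ via the faithful integral embedding $\Xi:\fkG\hra \GL_m$ of Proposition~\ref{p:global-integral-model} and to control the coefficients of the characteristic polynomial of $\Xi(\gamma)$ place by place. Since $\gamma\in G(F)$, the polynomial $P(T):=\det(T\cdot I-\Xi(\gamma))\in F[T]$ lies in $F[T]$, write $P(T)=T^m+\sum_{i=1}^m a_i T^{m-i}$ and set
\[
b_i := a_i-(-1)^i\binom{m}{i}\in F,\qquad 1\le i\le m.
\]
The element $\gamma$ is unipotent in $G$ if and only if $\Xi(\gamma)$ is unipotent in $\GL_m$, equivalently $b_i=0$ for every $i$. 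So the whole task is to show that the hypothesis forces each $b_i$ to vanish. For this I intend to bound $|b_i|_v$ at every place $v$ of $F$ and then invoke the product formula $\prod_v|b_i|_v=1$ (valid for $b_i\in F^\times$).

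The local bounds split into four regions, all using the $G(\A_F)$-conjugacy invariance of $P(T)$. For every finite $v\notin S\cup S_\infty$, the component of $x^{-1}\gamma x$ at $v$ lies in $K_v(\fkn)=\ker(\fkG(\cO_v)\to \fkG(\cO_v/\fkn))$, hence $\Xi(x_v^{-1}\gamma x_v)\equiv I\pmod{\fkn}$ in $\GL_m(\cO_v)$; consequently $P(T)\equiv (T-1)^m\pmod{\fkn}$ and $|b_i|_v\le q_v^{-v(\fkn)}$. For $v\in S_0$ (resp.\ $v\mid\infty$), the set $\Xi(U_{S_0})$ (resp.\ $\Xi(U_\infty)$) is compact in $\GL_m(F_{S_0})$ (resp.\ $\GL_m(F\otimes_\Q\R)$), so the continuous map sending a matrix to its characteristic polynomial produces uniform bounds $|b_i|_v\le c_{0,v}$ and $|b_i|_v\le c_{\infty,v}$ depending only on $U_{S_0}$, $U_\infty$ and $\Xi$. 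For $v\in S_1$ any element of $U_{S_1}=\supp\cH^{\ur}(G(F_{S_1}))^{\le\kappa}$ lies in some $K_v\mu(\varpi_v)K_v$ with $\|\mu\|\le\kappa$; writing $\Xi(x_v^{-1}\gamma x_v)=k_1'\Xi(\mu(\varpi_v))k_2'$ with $k_i'\in\GL_m(\cO_v)$ and invoking Lemma~\ref{l:control-eigenvalue}, each eigenvalue $\lambda$ of $\Xi(\gamma)$ at $v$ satisfies $v(\lambda)\ge -B_\Xi\kappa$ by the definition~\eqref{e:def-M(Xi)} of $B_\Xi$. Since $a_i$ is $(-1)^i$ times the $i$-th elementary symmetric function of the eigenvalues, we get $|a_i|_v\le q_v^{iB_\Xi\kappa}\le q_v^{mB_\Xi\kappa}$, and the same bound transfers to $|b_i|_v$.

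Multiplying over all places gives
\[
\prod_{v}|b_i|_v \le \N(\fkn)^{-1}\cdot \Bigl(\prod_{v\in S_0}c_{0,v}\Bigr)\Bigl(\prod_{v\mid\infty}c_{\infty,v}\Bigr)\cdot q_{S_1}^{mB_\Xi\kappa}.
\]
Denoting the product of the $c$-constants by $c_\Xi$ (this depends only on $G$, $\Xi$, $U_{S_0}$, $U_\infty$ and is independent of $S_1$, $\kappa$, $\fkn$), the hypothesis $\N(\fkn)\ge c_\Xi q_{S_1}^{B_\Xi m\kappa}$ makes the right-hand side strictly less than $1$, so the product formula forces $b_i=0$ for every $i$. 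Thus $P(T)=(T-1)^m$, i.e.\ $\Xi(\gamma)$, and hence $\gamma$, is unipotent.

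The only delicate step is the uniform eigenvalue bound at places in $S_1$, which is precisely what the constant $B_\Xi$ is designed to encode; once that is set up, everything reduces to the elementary arithmetic of the product formula. The fact that $\cO_F$ may not be a PID and that $b_i$ need not a priori lie in $\cO_F$ is harmless because the argument manipulates only the adelic norms $|b_i|_v$.
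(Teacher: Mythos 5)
Your proposal is correct and follows essentially the same route as the paper: transport $\gamma$ into $\GL_m$ via $\Xi$, measure the deviation of the characteristic polynomial from $(T-1)^m$ (the paper expands $\det(\Xi(\gamma)-(1-X))$ where you use $b_i=a_i-(-1)^i\binom{m}{i}$, which is equivalent), bound these coefficients place by place — congruence at $v\mid\fkn$, integrality at the remaining unramified places, compactness at $S_0$ and $\infty$, and the $B_\Xi\kappa$ valuation bound via Lemma~\ref{l:control-eigenvalue} (after conjugating $\Xi$ so that $A_v$ lands in the diagonal torus, cf.\ Lemma~\ref{l:conj-image-in-diag}) at $S_1$ — and conclude by the product formula. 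The only cosmetic point is that for the final contradiction you should take the archimedean/$S_0$ compactness constants as \emph{strict} upper bounds (as the paper does at $\infty$), since with non-strict bounds and equality $\N(\fkn)=c_\Xi q_{S_1}^{B_\Xi m\kappa}$ the product is only $\le 1$.
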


\begin{proof}
  Let $\gamma'=x^{-1}\gamma x$. We keep using the embedding $\Xi:\mathfrak{G}\hra \GL_m$ over $\cO_F$ of Proposition \ref{p:global-integral-model}.
  (For the lemma, an embedding away from the primes in $S_0$ or dividing $\fkn$ is enough.) At each finite place $v\notin S_0$ and
$v\nmid \fkn$, Lemma \ref{l:conj-image-in-diag} allows us to find $\Xi'_v:\mathfrak{G}\hra \GL_m$ over $\cO_v$ which is $\GL_m(\cO_v)$-conjugate to $\Xi\times_{\cO_F} F_v$ such that $\Xi'_v$ sends $A_v$ into the diagonal torus of $\GL_m$.

  Write $\det(\Xi(\gamma)-(1-X))=X^m+a_{m-1}(\gamma)X^{m-1}+\cdots+ a_0(\gamma)$, where
  $a_i(\gamma)\in F$ for $0\le i\le m-1$.
  Our goal is to show that $a_i(\gamma)=0$ for all $i$.
  To this end, assuming $a_i(\gamma)\neq 0$ for some fixed $i$, we will estimate $|a_i(\gamma)|_v$ at each place $v$ and
  draw a contradiction.

  First consider $v\in S_1$.
  We claim that
  $$v(a_i(\gamma))\ge-B_{\Xi}m\kappa$$ for every $\gamma$ that is conjugate to an element of $\supp \cH^{\ur}(G(F_{v}))^{\le \kappa}$.
   To prove the claim we examine the eigenvalues of $\Xi'_v(\gamma')$, which is conjugate to $\gamma$.
  We know $\gamma'$ belongs to $\supp \cH^{\ur}(G(F_v))^{\le \kappa}$, so
  $\Xi'_v(\gamma')\in \GL_m(\cO_v)\Xi'_v(\mu(\varpi_v))\GL_m(\cO_v)$ for some $\mu\in X_*(A_v)$ with $\|\mu\|\le \kappa$.
  Then $\|\Xi'_v(\mu)\|_{\GL_m}\le B_{\Xi}\kappa$. (A priori this is true for $B_{\Xi'_v}$ defined as in \eqref{e:def-M(Xi)},
but $B_{\Xi'_v}=B_{\Xi}$ as $\Xi'_v$ and $\Xi$ are conjugate.)
  Let $k_1,k_2\in \GL_m(\cO_v)$ be such that $\Xi'_v(\gamma')=k_1\Xi'_v(\mu(\varpi_v))k_2$.
  Lemma \ref{l:control-eigenvalue} shows that
   every eigenvalue $\lambda$ of $\Xi'_v(\mu(\varpi_v)) k_2k_1$ (equivalently of
   $\Xi'_v(\gamma')$) satisfies
  $v(\lambda)\ge -B_{\Xi}\kappa$.
   If $\lambda\neq 1$, we must have $v(1-\lambda)\ge -B_{\Xi}\kappa$.
  This shows that $v(a_{i}(\gamma))\ge -B_{\Xi}i \kappa$ for any $i$ such that $a_i(\gamma)\neq 0$.
  Hence the claim is true.
%  $$|a_i(\gamma)|_{S_1}\le \left(\prod_{v\in S_1} q_v^{-C_v \kappa}\right).$$

  At infinity, by the compactness of $U_\infty$, there exists $c_\Xi>0$ such that
  $$|a_i(\gamma)|_\infty<c_\Xi$$ whenever a conjugate of $\gamma\in G_\infty$ belongs to $U_\infty$.

  Now suppose that $v$ is a finite place such that $v\notin S_1$ and $v\nmid \fkn$. (This includes $v\in S_0$.)
  Then a conjugate of $\Xi(\gamma)$ lies in an open compact subgroup of $\GL_m(F_v)$.
  Therefore the eigenvalues of $\Xi(\gamma)$ are in $\cO_v$ and $$|a_i(\gamma)|_v\le 1.$$

  Finally at $v|\fkn$, we have $\Xi(x^{-1}\gamma x)-1\in \ker(\GL_m(\cO_v)\ra \GL_m(\cO_v/\varpi_v^{v(\fkn)}))$.
  Therefore $$|a_i(\gamma)|_v=|a_i(x^{-1}\gamma x)|_v\le (|\fkn|_v)^{m-i}.$$

  Now assume that $\N(\fkn)\ge  c_\Xi q_{S_1}^{-B_{\Xi}m\kappa}$.
  We assert that $a_i(\gamma)=0$ for all $i$. Indeed, if $a_i(\gamma)\neq 0$ for some $i$ then
  the above inequalities imply that
  $$1=\prod_v |a_i(\gamma)|_v < \left(\prod_{v\in S_1} q_v^{-B_\Xi m \kappa}\right)c_\Xi \prod_{v|\fkn} |\fkn|_v^{m-i}
  \le q_{S_1}^{-B_\Xi m \kappa} c_\Xi \N(\fkn)^{-1}\le 1$$
  which is clearly a contradiction.
  The proof of lemma is finished.

\end{proof}

\subsection{Bounding the number of rational conjugacy classes}

  We begin with a basic lemma, which is a quantitative version of the fact that $F^r$ is discrete
  in $ \A_F^r$.

\begin{lem}\label{l:F-in-A_F}
  Suppose that $\{\delta_v \in \R_{>0}\}_{v\in \cV_F}$ satisfies the following: $\delta_v=1$ for all but finitely many $v$
  and $\prod_v \delta_v<2^{-|S_\infty|}$. Let $\alpha=(\alpha_1,...,\alpha_r)\in \A_F^r$.
  Consider the following compact neighborhood of $\alpha$
   $$\cB(\alpha,\delta):=\{(x_1,...,x_r)\in \A_F^r\,:\, |x_{i,v}-\alpha_{i,v}|_v\le \delta_v,~\forall
   v,~\forall 1\le i\le r\}.$$
  Then $\cB(\alpha,\delta)\cap F^r$ has at most one element.
\end{lem}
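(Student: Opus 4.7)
The plan is to argue by contradiction using the product formula for $F$. Suppose $x=(x_1,\dots,x_r)$ and $y=(y_1,\dots,y_r)$ are two distinct elements of $\cB(\alpha,\delta)\cap F^r$. Then some coordinate differs, say $x_i\neq y_i$, and $x_i-y_i\in F^\times$, so the product formula gives
\[
\prod_{v\in\cV_F}|x_i-y_i|_v=1.
\]

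Next, I would estimate each local factor $|x_i-y_i|_v$ using the triangle inequality applied to $x_i-y_i=(x_i-\alpha_{i,v})-(y_i-\alpha_{i,v})$. At non-archimedean places, the ultrametric inequality yields
\[
|x_i-y_i|_v\le\max(|x_i-\alpha_{i,v}|_v,|y_i-\alpha_{i,v}|_v)\le\delta_v,
\]
while at the (finitely many) archimedean places $v\in S_\infty$ one has $|x_i-y_i|_v\le 2\delta_v$ (under the normalization for which the product formula holds; at complex places one should check that the factor $2$ still suffices under the squared normalization, but in any event one absorbs the constant into the bound $2^{|S_\infty|}$ of the hypothesis).

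Multiplying all local bounds gives
\[
1=\prod_v|x_i-y_i|_v\le 2^{|S_\infty|}\prod_v\delta_v<1
\]
by the assumption $\prod_v\delta_v<2^{-|S_\infty|}$, a contradiction. Hence $x=y$, proving the lemma.

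The proof is essentially one line once the setup is in place, and there is no real obstacle. The only mild subtlety is bookkeeping of the archimedean absolute value normalizations (real versus complex), but this is routine and is already built into the constant $2^{|S_\infty|}$ appearing in the hypothesis.
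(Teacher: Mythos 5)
Your proof is correct and follows essentially the same route as the paper's: bound $|x_i-y_i|_v$ by $\delta_v$ at finite places (ultrametric inequality) and by $2\delta_v$ at archimedean places, then play the product formula off against the hypothesis $\prod_v\delta_v<2^{-|S_\infty|}$. Your caveat about the complex-place normalization is the only point needing care (with the squared modulus the bound is $4\delta_v$ rather than $2\delta_v$), a detail the paper's own proof also elides and which is harmless since the constant $2^{-|S_\infty|}$ is not used sharply in the application.
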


\begin{proof}
  Suppose $\beta=(\beta_i)_{i=1}^r,\gamma=(\gamma_i)_{i=1}^r\in \cB(\alpha,\delta)\cap F^r$. By triangular inequalities,
  $$|\beta_{i,v}-\gamma_{i,v}|_v\le \left\{\begin{array}{cc}
    \delta_v, & v\nmid \infty,\\
    2\delta_v, & v|\infty
  \end{array}\right.$$
  for each $i$.
  Hence $\prod_v |\beta_{i,v}-\gamma_{i,v}|_v<1$. Since $\beta_{i},\gamma_{i}\in F$,
  the product formula forces $\beta_{i}=\gamma_{i}$. Therefore $\beta=\gamma$.
\end{proof}

  The next lemma measures the difference between $G(F)$-conjugacy and $G(\A_F)$-conjugacy.

\begin{lem}\label{l:F-conj-vs-A_F-conj}
  Let $X_G$ (resp. $\mX_G$) be the set of semisimple $G(F)$-(resp. $G(\A_F)$-)conjugacy
  classes in $G(F)$. For any $[\gamma]\in \mX_G$, there exist at most $(w_Gs_G)^{r_G+1}$ elements
  in $X_G$ mapping to $[\gamma]$ under the natural surjection $X_G\ra \mX_G$.
\end{lem}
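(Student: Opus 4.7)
The plan is to translate the counting problem into a Galois cohomology question and then bound the resulting cohomology group. First, I would fix a representative $\gamma \in G(F)$ of $[\gamma] \in \mX_G$, and let $I_\gamma$ denote the connected centralizer of $\gamma$ in $G$. Given another semisimple $\gamma' \in G(F)$ with $[\gamma']_{\A} = [\gamma]_{\A}$, one has in particular that $\gamma$ and $\gamma'$ are $G(F_v)$-conjugate at every place $v$, hence $G(\bar F)$-conjugate; so we may write $\gamma' = h\gamma h^{-1}$ for some $h \in G(\bar F)$. Then $\sigma \mapsto h^{-1}\sigma(h)$ defines a $1$-cocycle with values in the scheme-theoretic centralizer of $\gamma$. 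Passing to the connected component produces a class in $H^1(F, I_\gamma)$, up to a bounded ambiguity coming from the component group $\pi_0(Z_G(\gamma))$ (which has order $\le w_G$, since it embeds into a subquotient of the Weyl group).

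Next, since $\gamma$ and $\gamma'$ are actually locally conjugate at every place, the resulting cohomology class is \emph{locally trivial}, i.e.\ lies in $\Sha^1(F, I_\gamma) := \ker\bigl(H^1(F, I_\gamma) \to \prod_v H^1(F_v, I_\gamma)\bigr)$. Two representatives $\gamma'_1, \gamma'_2$ yield the same $G(F)$-conjugacy class precisely when their associated cocycles are cohomologous. Therefore the fiber of $X_G \to \mX_G$ over $[\gamma]_{\A}$ injects (up to the $\pi_0$-factor above) into $\Sha^1(F, I_\gamma)$, and it suffices to bound
\[
|\pi_0(Z_G(\gamma))| \cdot |\Sha^1(F, I_\gamma)| \le (w_G s_G)^{r_G + 1}.
\]

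For the quantitative bound on $\Sha^1$, I would choose a maximal torus $T \subset I_\gamma$ defined over $F$. By Lemma \ref{l:torus-splitting}, $T$ splits over a finite Galois extension $E/F$ with $[E:F] \le w_G s_G$. The restriction map identifies $H^1(F, T)$ with a subgroup of $H^1(\Gal(E/F), X_*(T) \otimes E^\times)$, and by Tate--Nakayama duality (or direct cocycle manipulation), $\Sha^1(F, T)$ is a finite group whose order divides a fixed power of $[E:F]$ determined by $\rank T \le r_G$; concretely one gets $|\Sha^1(F, T)| \le [E:F]^{r_G} \le (w_G s_G)^{r_G}$. The passage from $T$ to $I_\gamma$ uses that $\Sha^1(F, I_\gamma)$ is controlled by $\Sha^1(F, T)$ together with Weyl-group-type data, contributing at most one extra factor of $w_G s_G$.

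The main obstacle will be to make the last step—bounding $|\Sha^1(F, I_\gamma)|$ explicitly in terms of $|\Sha^1(F, T)|$ and controlling the component-group contribution—while keeping the exponent exactly $r_G + 1$. Everything else is a fairly standard unwinding of the definitions of stable/rational conjugacy and local-global comparison. If needed, one can use Kottwitz's description of $\Sha^1$ of a reductive group in terms of $\pi_0(Z(\hat I_\gamma)^\Gamma)$ and compare to the torus case to deduce the clean bound.
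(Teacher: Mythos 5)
Your reduction of the problem to counting locally trivial Galois cohomology classes attached to $I_\gamma$ is in the same spirit as the paper, but the quantitative core of your argument has a genuine gap. The step you yourself flag as the "main obstacle" — bounding the locally trivial part of $H^1(F,I_\gamma)$ by the locally trivial part of $H^1(F,T)$ for a maximal torus $T\subset I_\gamma$, "together with Weyl-group-type data, contributing at most one extra factor of $w_Gs_G$" — is not a standard fact and does not follow from anything you set up. There is no map in the required direction with controlled fibers: even granting that $H^1(F,T)\ra H^1(F,I_\gamma)$ is surjective for a well-chosen (fundamental) torus, a class of $\ker^1(F,I_\gamma)$ lifts to $H^1(F,T)$ but the lift need not be locally trivial, so $|\ker^1(F,I_\gamma)|$ is not bounded by $|\ker^1(F,T)|$ times any factor extracted from the Weyl group; and conversely the image of $\ker^1(F,T)$ need not exhaust $\ker^1(F,I_\gamma)$. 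A secondary problem is your bookkeeping: you insert an extra factor $|\pi_0(Z_G(\gamma))|\le w_G$ for the disconnectedness of the centralizer, and then budget $(w_Gs_G)^{r_G}$ for the torus and one more factor $w_Gs_G$ for the torus-to-group passage; the product $w_G\cdot(w_Gs_G)^{r_G+1}$ already overshoots the claimed bound $(w_Gs_G)^{r_G+1}$, so even if the missing step were true your exponent would not close.

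The paper proceeds differently at exactly this point, and the difference is what makes the bound work. It identifies the fiber with $\ker\bigl(\ker^1(F,I_\gamma)\ra\ker^1(F,G)\bigr)$ (no component-group correction), then invokes Kottwitz's theorem $|\ker^1(F,I_\gamma)|=|\ker^1(F,Z(\hat{I}_\gamma))|$ to move to the \emph{dual} side. Since $I_\gamma$ splits over a Galois extension $E/F$ with $d:=[E:F]\le w_Gs_G$ (Lemma~\ref{l:torus-splitting}), $\Gal(\ol F/E)$ acts trivially on $Z(\hat{I}_\gamma)$, so $\ker^1(E,Z(\hat{I}_\gamma))=0$ and inflation–restriction puts everything inside $H^1(\Gal(E/F),Z(\hat{I}_\gamma))$. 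This group is killed by $d$, the multiplication-by-$d$ sequence gives a surjection from $H^1(\Gal(E/F),Z(\hat{I}_\gamma)[d])$, and $Z(\hat{I}_\gamma)[d]\hra\hat{T}[d]\simeq(\MU_d)^{\dim T}$, yielding $d\cdot d^{\dim T}\le(w_Gs_G)^{r_G+1}$. In other words, the paper's analogue of your "pass from $T$ to $I_\gamma$" step is the containment $Z(\hat{I}_\gamma)\subset\hat T$ on the dual side, which is immediate — whereas on the group side no such containment or comparison is available. Your closing remark about using Kottwitz's description points in the right direction, but the relevant statement is the global one on $\ker^1$ (not the local $\pi_0(Z(\hat I_\gamma)^{\Gamma})$), and it must be combined with the $d$-torsion argument above rather than with a comparison to $\ker^1(F,T)$.
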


\begin{proof}
  Let $[\gamma]\in \mX_G$ be an element defined by a semisimple $\gamma\in G(F)$.
  Denote by $X_\gamma$ the preimage of
  $[\gamma]$ in $X_G$. There is a natural bijection
  $$X_\gamma \quad\leftrightarrow\quad \ker(\ker^1(F,I_\gamma)\ra \ker^1(F,G)).$$
  Since $|\ker^1(F,I_\gamma)|=|\ker^1(F,Z(\hat{I}_\gamma))|$
  by \cite[\S4.2]{Kot84a}, we have $|X_\gamma|\le |\ker^1(F,Z(\hat{I}_\gamma))|$.

  Lemma \ref{l:torus-splitting} tells us that for every semisimple $\gamma$,
  the group $I_\gamma$ becomes split over a finite extension
  $E/F$ such that $[E:F]\le w_Gs_G$.
  In particular $\Gal(\ol{F}/E)$ acts trivially on
  $Z(\hat{I}_\gamma)$. % Indeed, the map $T\hra I_\gamma$ over $F$ gives rise to
%  a $\Gal(\ol{F}/F')$-equivariant map $Z(\hat{I}_\gamma)\hra \hat{T}$.
  The group $\ker^1(E,Z(\hat{I}_\gamma))$ consists of continuous homomorphisms
  $\Gal(\ol{F}/E)\ra Z(\hat{I}_\gamma)$ which are trivial on all local Galois groups.
  Hence $\ker^1(E,Z(\hat{I}_\gamma))$ is trivial. This and
  the inflation-restriction sequence show that $\ker^1(F,Z(\hat{I}_\gamma))$
  is the subset of locally trivial elements in $H^1(\Gamma_{E/F},Z(\hat{I}_\gamma))$,
  where we have written $\Gamma_{E/F}$ for $\Gal(E/F)$. In particular,
  $$|X_\gamma|\le |H^1(\Gamma_{E/F},Z(\hat{I}_\gamma))|.$$
  Let $d:=|\Gal(E/F)|$ and denote by $[d]$ the
  $d$-torsion subgroup. The long exact sequence arising from $0\ra Z(\hat{I}_\gamma)[d]
  \ra Z(\hat{I}_\gamma) \stackrel{d}{\ra} d(Z(\hat{I}_\gamma))\ra 0$ gives rise to
  an exact sequence
  $$ % 0\ra (Z(\hat{I}_\gamma)^0)^{\Gamma_{E/F}}/ (( Z(\hat{I}_\gamma)^0)^{\Gamma_{E/F}})^d  \ra
H^1(\Gamma_{E/F},Z(\hat{I}_\gamma)[d]) \ra H^1(\Gamma_{E/F},Z(\hat{I}_\gamma))
= H^1(\Gamma_{E/F},Z(\hat{I}_\gamma))[d]\ra 0.$$
  Let $\MU_{d}$ denote the order $d$ cyclic subgroup of $\C^\times$.
  Then $Z(\hat{I}_\gamma)[d]\hra  \hat{T}[d]\simeq (\MU_{d})^{\dim T}$.
  Hence
  $$|X_\gamma|\le |H^1(\Gamma_{E/F},Z(\hat{I}_\gamma)[d] )|
  \le |\Gamma_{E/F}|\cdot |Z(\hat{I}_\gamma)[d]|\le d\cdot (d)^{\dim T}\le
  (w_Gs_G)^{\dim T+1}.$$
\end{proof}

  For the proposition below, we fix a finite subset $S_0\in \cV_F^\infty$.
  Also fix open compact subsets
   $U_{S_0}\subset G(F_{S_0})$
  and $U_\infty \subset G(F_\infty)$.
  As usual we will write $S$ for $S_0\coprod S_1$.

\begin{prop}\label{p:bound-number-of-conj}

  Let $\kappa\in\Z_{\ge 0}$.
  Let $S_1\subset\cV_F^\infty\bs S_0$ be a finite subset such that $G$ is unramified
  at all $v\in S_1$.
  Set $U_{S_1}:=\supp \cH^{\ur}(G(F_{S_1}))^{\le \kappa}$,
  $U^{S,\infty}:=\prod_{v\notin S\cup S_\infty} K_v$ and
  $\mC:=U_{S_0}U_{S_1} U^{S,\infty}U_\infty$.
  Define $\mY_G$ to be the set of semisimple $G(\A_F)$-conjugacy classes of $\gamma\in G(F)$
  which meet $\mC$. Then there exist constants $A_3,B_3>0$ such that for all $S_1$ and $\kappa$ as above,
  $$|\mY_G|=O(q_{S_1}^{A_3+B_3\kappa})$$
 (In other words, the implicit constant for $O(\cdot)$ is independent of
 $S_1$ and $\kappa$.)
\end{prop}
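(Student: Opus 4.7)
The plan is to assign to each class in $\mY_G$ the coefficients of the characteristic polynomial of $\Xi(\gamma)$ under the embedding $\Xi: \fkG \hookrightarrow \GL_m$ from Proposition~\ref{p:global-integral-model}, and then bound separately the image and fibers of this map by quantities uniform in $S_1$ and $\kappa$. Concretely, for $[\gamma] \in \mY_G$ I would set
\[
\Psi : \mY_G \to F^m, \qquad [\gamma] \mapsto (a_0(\gamma), \ldots, a_{m-1}(\gamma)),
\]
where $\det(X \cdot I_m - \Xi(\gamma)) = X^m + \sum_{i=0}^{m-1} a_i(\gamma) X^i$; this is well-defined on $G(\A_F)$-conjugacy classes since the characteristic polynomial is a conjugation invariant.

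For the image bound I would replace each class by a representative $\gamma \in \mC$ and estimate $|a_i(\gamma)|_v$ place by place, exactly as in the proof of Lemma~\ref{l:forcing-unipotent}. Compactness of $U_{S_0}$ and $U_\infty$ gives constants depending only on the fixed data at $v \in S_0$ and at archimedean places. At $v \in S_1$, the containment $\gamma_v \in K_v \mu(\varpi_v) K_v$ with $\|\mu\| \le \kappa$ combined with Lemma~\ref{l:control-eigenvalue} forces every eigenvalue of $\Xi(\gamma_v)$ to have $v$-adic valuation in $[-B_\Xi \kappa, B_\Xi \kappa]$, so $|a_i(\gamma)|_v \le q_v^{B_\Xi m \kappa}$. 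At finite $v \notin S \cup S_\infty$, the integrality $\gamma_v \in K_v = \fkG(\cO_v)$ gives $\Xi(\gamma_v) \in \GL_m(\cO_v)$, hence $|a_i(\gamma)|_v \le 1$. A covering argument extending Lemma~\ref{l:F-in-A_F}, decomposing the resulting adelic box of product $\prod_v \delta_v = O(q_{S_1}^{B_\Xi m \kappa})$ into sub-boxes of product $< 2^{-|S_\infty|}$, each containing at most one $F$-point by the lemma, yields $O(q_{S_1}^{B_\Xi m \kappa})$ choices per coordinate and therefore $|\Psi(\mY_G)| = O(q_{S_1}^{B_\Xi m^2 \kappa})$.

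For the fiber bound I would split into two pieces. The geometric multiplicity of $P \in F^m$, namely the number of $G_{\ol F}$-conjugacy classes of semisimple $\gamma \in G$ with $\det(X \cdot I_m - \Xi(\gamma)) = P$, is bounded uniformly in $P$ by a constant $c_1(G, \Xi)$ coming from the induced morphism of Chevalley spaces $G /\!/ G \to \GL_m /\!/ \GL_m$. The arithmetic multiplicity, namely the number of $G(\A_F)$-conjugacy classes meeting $\mC$ within a fixed stable class of $\gamma$, is parametrized by $\ker(H^1(F, I_\gamma) \to H^1(F, G))$ modulo $\ker^1(F, I_\gamma)$; the hyperspecial constraint $\gamma_v \in K_v$ at every $v \notin S$ restricts the local cocycles to unramified classes, and using Lemma~\ref{l:torus-splitting} (so that $I_\gamma$ splits over an extension of degree $\le w_G s_G$) together with Kottwitz duality and the uniform bound on $\pi_0(Z(\hat I_\gamma)^\Gamma)$ from Corollary~\ref{c:bounding-pi0-general}, one obtains a uniform bound $c_2(G, \Xi, S_0)$ on this count. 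Combining yields $|\mY_G| \le c_1 c_2 \cdot |\Psi(\mY_G)| = O(q_{S_1}^{B_\Xi m^2 \kappa})$, which gives the proposition with $B_3 = B_\Xi m^2$ and $A_3$ absorbing all remaining constants depending on $G$, $\Xi$, $S_0$, $U_{S_0}$, and $U_\infty$.

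The main technical obstacle is the arithmetic part of the fiber bound, since $H^1(F, I_\gamma)$ is not a priori finite over a number field; the uniform finiteness really depends on exploiting the hyperspecial integrality at every $v \notin S$ to cut down to a bounded piece of unramified cohomology, followed by the Pontryagin-dual argument via $\pi_0(Z(\hat I_\gamma)^\Gamma)$.
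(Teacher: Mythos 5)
Your image bound (characteristic polynomial coefficients plus the box-counting via Lemma~\ref{l:F-in-A_F}) and your bound on the geometric multiplicity of a given characteristic polynomial are essentially Step I of the paper's proof and are fine. The gap is in the "arithmetic multiplicity" step. You claim that the number of $G(\A_F)$-conjugacy classes meeting $\mC$ inside a fixed geometric (or stable) class of $\gamma$ is bounded by a constant $c_2(G,\Xi,S_0)$ independent of $S_1$ and $\kappa$, on the grounds that hyperspecial integrality at every $v\notin S$ cuts the relevant cohomology down to "a bounded piece of unramified cohomology." This is not justified, and the uniformity is the whole point. The integrality $\gamma_v\in K_v$ only rigidifies the local conjugacy class at those $v\notin S$ where in addition $|1-\alpha(\gamma)|_v=1$ for every root $\alpha$ with $\alpha(\gamma)\neq 1$ (this is the statement from \cite[p.391]{Kot86} the paper invokes); at the remaining "bad" places $V'(\gamma)$, and at every $v\in S_1$ where $\gamma_v$ lies only in $K_v\mu(\varpi_v)K_v$ rather than $K_v$, nothing forces locally stably conjugate elements to be conjugate, so the count behaves like $u_G^{|V(\gamma)|}$ with $V(\gamma)=V'(\gamma)\cup S\cup S_\infty$. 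The set $V(\gamma)$ is not bounded independently of $\gamma$: by the product formula, using $|1-\alpha(\gamma)|_{S_1}\le q_{S_1}^{B_5\kappa}$ from Lemma~\ref{l:bounding1-alpha(gamma)}, one only gets $|V'(\gamma)|=O(1+\kappa\log q_{S_1})$, and indeed $1-\alpha(\gamma)$ can be divisible by a number of primes growing with $\kappa$ and $q_{S_1}$. Consequently a fiber bound of the shape $q_{S_1}^{O(\kappa)}$ is what one can prove, not a constant; this still suffices for the proposition (absorb it into $B_3$), but your argument as written asserts something stronger than is true, and the mechanism you propose (a fixed finite piece of unramified cohomology, Kottwitz duality plus Corollary~\ref{c:bounding-pi0-general}) cannot deliver it, since $I_\gamma$ itself ramifies at the growing set $V(\gamma)$.

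A second, smaller gap: both the parametrization of rational classes inside a geometric class by $\ker(H^1(F,I_\gamma)\to H^1(F,G))$ and the uniform local bound $u_G$ of Lemma~\ref{l:bounding-conj-in-st-conj} are only available when $G^{\der}$ is simply connected (otherwise centralizers of semisimple elements may be disconnected and the $H^1(F,I_\gamma)$-torsor description fails). The paper handles this by reducing to a $z$-extension $1\to Z\to H\to G\to 1$ with $H^{\der}$ simply connected, which requires the additional Lemma~\ref{l:claim} to lift the truncated Hecke support $U_{S_1}$ to $\supp\cH^{\ur}(H(F_{S_1}))^{\le\beta\kappa}$ with a loss only linear in $\kappa$; your proposal omits this reduction entirely, and without it the cohomological bookkeeping in your fiber bound does not go through as stated.
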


\begin{rem}
  By combining the proposition with Lemma~\ref{l:forcing-unipotent} we can deduce the following. Under the same assumption but with $\mC:= U^{S,\infty}(\fkn) U_{S_0} U_{S_1} U_\infty$ we have
  $$|\mY_G|=1+O(q_{S_1}^{A+B\kappa}\N(\fkn)^{-C}).$$
  for some constants $A,B,C>0$.
\end{rem}

%\begin{rem}
%  A similar argument applies to $|\mY_M|$ in the proof of \cite[Thm 4.10]{Shi-Plan}.
%\end{rem}

\begin{proof}
  Our argument will be a refinement of the proof of \cite[Prop 8.2]{Kot86}.
  \medskip

  \noindent{STEP I. When $G\der$ is simply connected.}

   Choose a smooth reductive integral model $\mathfrak{G}$ over $\cO_F[\frac{1}{S_0}]$ for $G$ and an embedding of algebraic groups
  $\Xi_0:\mathfrak{G}\ra \GL_{m}$ defined over $\cO_F[\frac{1}{S_0}]$ as in Proposition \ref{p:global-integral-model}.
  Consider \beq \label{e:char-poly-map} G(\A_F)\stackrel{\Xi_0}{\ra} \GL_m(\A_F)\ra \A_F^m\eeq
  where the latter map assigns the coefficients of the characteristic polynomial,
  and call the composite map $\Xi'$.
  Set $U':=\Xi'(U)$. Then $|U'\cap F^m|<\infty$
  since it is discrete and compact. We would like to estimate the cardinality.

  Fix $\{\delta_v\}$ which satisfies the assumption of
  Lemma \ref{l:F-in-A_F} and the condition that $\delta_v=1$ for all finite $v$.
  (So $\{\delta_v\}$ depends only on $F$.)
  Since $\Xi_0$ is defined over $\cO_F[\frac{1}{S_0}]$, clearly
  $\Xi_0(U^{S,\infty})\subset \GL_m(\hat{\cO}_F^{S,\infty})$. Thus
  $$\Xi'(U^{S,\infty})\subset (\hat{\cO}_F^{S,\infty})^m=\prod_{v\notin {S\cup S_\infty}}
  \cB_v(0,1).$$
  Set $J^{S,\infty}:=\{0\}\subset (\A_F^{S,\infty})^m$.
  Similarly as above, $\Xi'(U_{S_1})\subset (\cO_{F,S_1})^m$.
  By the compactness of $U_{S_0}$ and $U_\infty$, there exist
  finite subsets $J_{S_0}\subset F_{S_0}$ and $J_{\infty}\subset F_\infty$ such that
  $$\Xi'(U_{S_0})\subset \bigcup_{\beta_{S_0}\in J_{S_0}} \left( \prod_{v\in S_0} \cB_v(\beta_v,1)\right),
  \quad \Xi'(U_\infty)\subset \bigcup_{\beta_\infty\in J_\infty} \left( \prod_{v|\infty} \cB_v(\beta_v,\delta_v)\right).$$
  Now we treat the places contained in $S_1$. Let $T$ be a maximal torus of $G$ over $\ol{F}$. Since the image of the composite map
  $T_{\ol{F}}\hra G_{\ol{F}} \stackrel{\Xi_0}{\hra} (\GL_m)_{\ol{F}}$
  is contained in a maximal torus of $\GL_m$, we can choose $g=(g_{ij})_{i,j=1}^m\in \GL_m(\ol{F})$ such that
  $g\Xi_0(T_{\ol{F}})g^{-1}$ sits in the diagonal maximal torus $\T$ of $\GL_d$.
  Fix the choice of $T$ and $g$ once and for all (independently of $S_1$ and $\kappa$) until the end of Step I.
  Set $v_{\min}(g):=\min_{i,j} v(g_{ij})$ and $v_{\max}(g):=\max_{i,j} v(g_{ij})$.
  There exists $B_6>0$ such that for any $\mu\in X_*(T)$ with $\|\mu\|\le \kappa$, the element
  $g\Xi_0(\mu)g^{-1}\in X_*(\T)$ satisfies $\|g\Xi_0(\mu)g^{-1}\|\le B_6\kappa$.
  Let $\gamma_{S_1}=(\gamma_v)_{v\in S_1}\in U_{S_1}$. Each
  $\gamma_v$ has the form $\gamma_v=k_1\mu(\varpi_{v})k_2$ for some $\|\mu\|\le \kappa$
  and $k_1,k_2\in G(\cO_v)$. Since $\Xi_0(G(\cO_v))\subset \GL_m(\cO_v)$,
  we see that $\Xi_0(\gamma_v)$ is conjugate to $\Xi_0(\mu(\varpi_{v}))k'$ in $\GL_m(F_v)$
  for some $k'\in \GL_m(\cO_v)$.
  Applying Lemma \ref{l:control-eigenvalue} to
  $(g\Xi_0(\mu(\varpi_{v}))g^{-1})( gk'g^{-1})$ with $u=gk'g^{-1}$
  and noting that $v_{\min}(u)\ge v_{\min}(g)+v_{\min}(g^{-1})$,% and $v_{\max}(u)\le v_{\max}(g)-v_{\min}(g)$,
  we conclude that each eigenvalue $\lambda$ of $\Xi_0(\gamma_v)$ satisfies
  $$v(\lambda)\le -B_6\kappa+v_{\min}(g)+v_{\min}(g^{-1}).$$
  Therefore the coefficients of its characteristic polynomial
  lie in $\varpi_v^{-m(B_6\kappa+A_4)}\cO_v$,
  where we have set $A_4:=-(v_{\min}(g)+v_{\min}(g^{-1}))\ge 0$. To put things together, we see that
  $$\Xi'(U_{S_1})\subset \prod_{v\in S_1} (\varpi_v^{-m(B_6\kappa+A_4)}\cO_v)^m.$$
  (A fortiori $\Xi'(U_{S_1})\subset \prod_{v\in S_1} (\varpi_v^{-i(B_6\kappa+A_4)}\cO_v)_{i=1}^m $
  holds as well.) The right hand side is equal to the union of $\prod_{v\in S_1}
  \cB_v(\beta_v,1)$, as $\{\beta_v\}_{v\in S_1}$ runs over $J_{S_1}=\prod_{v\in S_1} J_v$,
  where $J_v$ is a set of representatives for $(\varpi_v^{-m(B_6\kappa+A_4)}\cO_v/\cO_v)^m$.
  Notice that $|J_{S_1}|=q_{S_1}^{m^2(B_6 \kappa+A_4)}$.
  Finally, we see that
  $$U'=\Xi'(U)\subset \bigcup_{\beta\in J} \cB(\beta,\delta)$$
  where $J=J_{S_0}\times J_{S_1}\times J^{S,\infty}\times J_\infty$. Lemma \ref{l:F-in-A_F}
  implies that
  $$U'\cap F^m \le |J| = |J_{S_0}|\cdot |J_{S_1}|\cdot |J_\infty|= O(q_{S_1}^{m^2(B_6 \kappa+A_4) }),$$
  since $|J_{S_0}|\cdot |J_\infty|$ is a constant independent of $\kappa$ and $S_1$.

  For each $\beta\in U'\cap F^m$, we claim that there are at most $m!$ semisimple $G(\ol{F})$-conjugacy classes
  in $G(\ol{F})$ which map to $\beta$ via $G(\ol{F})\ra \GL_m(\ol{F})\ra \ol{F}^m$,
  which is the analogue of \eqref{e:char-poly-map}. Let us verify the claim.
  Let $T'$ and $\T'$ be maximal tori in $G$ and $\GL_m$ over $\ol{F}$,
  respectively, such that $\Xi'(T')\subset \T'$. Then the set of semisimple conjugacy classes in
  $G(\ol{F})$ (resp. $\GL_m(\ol{F})$) is in a natural bijection with $T'(\ol{F})/\Omega$ (resp. $\T'(\ol{F})/\Omega_{\GL_m}$).
  The map $\Xi'|_{T'}: T'\ra \T'$ induces a map $T'(\ol{F})/\Omega\ra \T'(\ol{F})/\Omega_{\GL_m}$. Each fiber of the latter map
  has cardinality at most $m!$, hence the claim follows.

  Fix $\beta\in U'\cap F^m$.
  We would like to bound the number of $G(\A_F)$-conjugacy classes of $\gamma\in G(F)$ which meet $U$ and
  are $G(\ol{F})$-conjugate to $\beta$.
  Fix one such $\gamma$, which we assume to exist. (Otherwise our final bound will only improve.)
  Let $\Phi_\gamma$ denote the set of roots over $\ol{F}$ for any choice of maximal torus of $\gamma$ in $G$.
  Define $V'(\gamma)$ to be the set of places $v$ of $F$ such that $v\notin S\cup S_\infty$ and
 $\alpha(\gamma)\neq 1$ and $|1-\alpha(\gamma)|_v<1$ for at least one $\alpha\in \Phi_\gamma$.
 Put $V(\gamma):=V'(\gamma)\cup S\cup S_\infty$.
Clearly $|V(\gamma)|<\infty$. Moreover we claim that $|V(\gamma)|=O(1)$ (bounded independently of $\gamma$).
Set $$C_{S_0}:=\sup_{\gamma\in U_{S_0} U_\infty}\left(\prod_{\alpha\in \Phi_\gamma}
|1-\alpha(\gamma)|_{S_0}|1-\alpha(\gamma)|_{S_\infty}\right),$$ which is finite since $U_{S_0} U_\infty$ is compact.
Then
$$1= \prod_{v} \prod_{\alpha\in \Phi_\gamma} |1-\alpha(\gamma)|_v
= \left( \prod_{\alpha\in \Phi_\gamma} |1-\alpha(\gamma)|_{V(\gamma)}\right)
\le C_{S_0}\cdot  \left(\prod_{\alpha\in \Phi_\gamma}q_{V'(\gamma)}^{-1} \right)\le C_{S_0} 2^{-|V'(\gamma)|}.$$
Thus $|V'(\gamma)|=O(1)$ and also $|V(\gamma)|=O(1)$.

  We are ready to bound the number of $G(\A_F)$-conjugacy classes in $ G(F)$ which meet $\mC$ and
  are $G(\ol{F})$-conjugate to $\alpha$. For any such conjugacy class of $\gamma'\in G(F)$,
  the first paragraph of \cite[p.391]{Kot86} shows that
  $\gamma'$ is conjugate to $\gamma$ in $G(F_v)$ whenever $v\notin V(\gamma)$. Hence
  the number of $G(\A_F)$-conjugacy classes of such $\gamma'$ is at most $u_G^{|V(\gamma)|}$,
  where $u_G$ is the constant of Lemma \ref{l:bounding-conj-in-st-conj} below.

  Putting all this together, we conclude that
  $|\mY_G|=O(q_{S_1}^{m^2(B_7 \kappa+A_5)})$ as $S_1$ and $\kappa$ vary.
  The lemma is proved in this case.

 \medskip

  \noindent{STEP II. General case.}

  Now we drop the assumption that $G\der$ is simply connected.
  By Lemma \ref{l:z-ext}, choose a $z$-extension
  $$1\ra Z\ra H \stackrel{\alpha}{\ra} G\ra 1.$$
  Our plan is to argue as on page 391 of \cite{Kot86} with a specific choice of
  $\mC_H$ and $\mC_Z$ below (denoted $C_H$ and $C_Z$ by Kottwitz). In order to explain this choice,
  we need some preparation.
  If $v\notin S\cup S_\infty$, choose $K_{H,v}$ to be a hyperspecial subgroup
  of $H(F_v)$ such that $\alpha(K_{H,v})=K_v$. (Such a $K_{H,v}$ exists by
  the argument of \cite[p.386]{Kot86}.) We can find
  compact sets $U_{H,S_0}\subset H(F_{S_0})$ and $U_{H,\infty}$ of $H(F_\infty)$
  such that $\alpha(U_{H,S_0})=U_{S_0}$
  and $\alpha(U_{H,\infty})=U_\infty$.
  Moreover, in Lemma \ref{l:claim} below we prove the following:

  \medskip

 \begin{claim}\label{claim} There exists a constant $\beta>0$ independent of $\kappa$ and $S_1$
  with the following property: for any $\kappa\in \Z_{\ge 0}$, we can choose
  an open compact subset $U_{H,S_1}\subset  \supp \cH^{\ur}(H)^{\le \beta\kappa}$
  such that $\alpha(U_{H,S_1})=U_{S_1}$.
  \end{claim}

%   For each $v\in S_1$, choose $c_v>0$ such that
%  $\alpha(\supp \cH^{\ur}(H(F_{v}))^{\le c_v})$ contains $\mu(\varpi_v)$ for
%  all $\mu\in X_*(T)$ with $|\mu|_{\cB}\le 1$. (Such a $c_v$ exists
%  since there are finitely many $\mu$'s and $\alpha:H(F_v)\twoheadrightarrow G(F_v)$
%  is surjective.)

Now choose $U_{Z,S_1}$ to be the kernel of $\alpha: U_{H,S_1}
  \ra U_{S_1}$, which is compact and open in $Z(F_{S_1})$. Then choose a compact set $U_{Z}^{S_1}$ such that
  $U_{Z,S_1}U_{Z}^{S_1}Z(F)=Z(\A)^1$. (This is possible since $Z(F)\bs Z(\A)^1$
  is compact.\footnote{Choose $U^{S_1}_Z$ to be any open compact subgroup. Then
  $U_{Z,S_1}U_{Z}^{S_1}Z(F)$ has a finite index in $Z(\A)^1$ by compactness.
  Then enlarge $U^{S_1}_Z$ without breaking compactness such that the equality holds.})
  Set $$U_H:= \left(\prod_{v\notin S\cup S_\infty} K_{H,v}\right)
  U_{H,S_0} U_{H,S_1} U_{H,\infty},\qquad
  U_Z:=U_{Z,S_1}U_{Z}^{S_1}$$
  and set $\mC_H:=U_H\cap H(\A_F)^1$, $\mC_Z:=U_Z\cap Z(\A_F)^1$.
  Let $\mY_H$ be defined as in the statement of Proposition \ref{p:bound-number-of-conj} (with $H$ and $\mC_H$ replacing
  $G$ and $\mC$).
  Then page 391 of \cite{Kot86} shows that the natural map $\mY_H\ra \mY$ is a surjection,
  in particular $|\mY|\le |\mY_H|$.
  Since $H\der$ is simply connected, the earlier proof implies that
  $|\mY_H|=O(q_{S_1}^{B_7\beta\kappa+A_5})$ for some $B_7,A_5>0$.
  (To be precise, apply the earlier proof after enlarging $U_{H,S_1}$ to
  $\supp \cH^{\ur}(H)^{\le \beta\kappa}$ in the definition of $U_H$. Such a replacement
  only increases $|\mY_H|$, so the bound on $|\mY_H|$ remains valid.)
  The proposition follows.
\end{proof}

  We have postponed the proof of a claim in the proof of STEP II above, which we justify now. Simple as
  the lemma may seem, we apologize for not having found a simple proof.

\begin{lem}\label{l:claim}
  Claim \ref{claim} above is true.
\end{lem}

\begin{proof}
  As the claim is concerned with places in $S_1$, which (may vary but) are contained
  in the set of places where $G$ is unramified (thus quasi-split),
  we may assume that $H$ and $G$ are quasi-split over $F$
  by replacing $H$ and $G$ with their quasi-split inner forms.

  Choose a Borel subgroup $B_H$ of $H$, whose image $B=\alpha(B_H)$ is a Borel subgroup of $G$.
  The maximal torus $T_H\subset B_H$ maps to a maximal torus $T\subset B$ and there is a short exact sequence
  $$1\ra Z\ra T_H \stackrel{\alpha}{\ra} T\ra 1.$$
  The action of $\Gal(\ol{F}/F)$ on $X_*(T_H)$ factors through a finite quotient.
  Let $\Sigma$ be the quotient of $\Gal(\ol{F}/F)$ which acts faithfully on $X_*(T_H)$.
  If $v\notin S_0$ then $G$ is unramified at $v$, so the geometric Frobenius at $v$
  defines a well-defined conjugacy class, say $\mC_v$, in $\Sigma$.
  Let $A_{H,v}$ (resp. $A_v$) be the maximal split torus in $T_H$ (resp. $T$) over $F_v$.
  Then $A_{H,v}\hra T_H$ and $A_v\hra T$ induce
  $X_*(A_{H,v})\simeq X_*(T_H)^{\mC_v}$ and $X_*(A_{v})\simeq X_*(T)^{\mC_v}$.
  We claim that $X_*(T_H)\ra X_*(T)$ induces a surjective map
  $X_*(A_{H,v})\ra X_*(A_{v})$.
$$\xymatrix{ X_*(T_H) \ar[d] & X_*(T_H)^{\mC_v} \ar[d] \ar[l] & X_*(A_{H,v}) \ar[l]_-{\sim} \simeq  T_H(F_v)/T_H(\cO_v)
\ar@{->>}[d] \\
 X_*(T)  & X_*(T)^{\mC_v}  \ar[l] & X_*(A_{v}) \ar[l]_-{\sim} \simeq T(F_v)/T(\cO_v)}$$
 Indeed, we have an isomorphism $ X_*(A_{H,v})\simeq  T_H(F_v)/T_H(\cO_v)$ via $\mu\mapsto \mu(\varpi_v)$
 and similarly $ X_*(A_{v})\simeq T(F_v)/T(\cO_v)$. Further, $\alpha:T_H(F_v)\ra T(F_v)$ is surjective
 since $H^1(\Gal(\ol{F}_v/F_v),Z(\ol{F}_v))$ is trivial (as $Z$ is an induced torus).

  Denote by $[\Sigma]$ the finite set of all conjugacy classes in $\Sigma$. For
  $\mC\in [\Sigma]$, choose $\Z$-bases $\cB_{H,\mC}$ and $\cB_{\mC}$ for
  $X_*(T_H)^{\mC}$ and $X_*(T)^{\mC}$ respectively.
  (Note that $\Z$-bases $\cB_H$  for $X_*(T)$ and $\cB$ for $X_*(T_H)$ are fixed once and for all.)
   An argument as in the proof of Lemma
  \ref{l:norm-and-bases} shows that there exist constants $c(\cB_{\mC}),c(\cB_{H,\mC})>0$
  such that for all $x\in X_*(T_H)^{\mC}_{\R}$ and $y\in X_*(T)^{\mC}_{\R}$,
  \beq\label{e:for-l:claim}|x|_{\cB_{H,\mC}}\ge c(\cB_{H,\mC})\cdot \|x\|_{\cB_H},\quad
  |y|_{\cB_{\mC}}\le c(\cB_{\mC})\cdot \|y\|_{\cB}.\eeq
  Set $m_\mC:= \max_y (\min_x |x|_{\cB_{H,\mC}})$, where
  $y\in X_*(T)^{\mC}$ varies subject to the condition $|y|_{\cB_{\mC}}\le 1$
  and $x\in  X_*(T_H)^{\mC}$ runs over the preimage of $y$. (It was shown above that
  the preimage is nonempty.)
  Then by construction, for every $y \in X_*(T)^{\mC}$, there exists $x$ in the preimage of $y$
  such that $|x| _{\cB_{H,\mC}} \le m_{\mC} |y|_{\cB_{\mC}}$.

  Recall that $U_{S_1}=\prod_{v\in S_1} U_v$ where $U_v=\cup_{\mu} K_v \mu(\varpi_v) K_v$,
  the union being taken over $\mu\in X_*(T)^{\mC_v}$ such that $\|\mu\|_{\cB}\le \kappa$.
  We have seen that there exists $\mu_H\in X_*(T_H)^{\mC_v}$ mapping to $\mu$
  and $|\mu_H|_{\cB_{H,\mC_v}}\le m_{\mC_v} |\mu|_{\cB_{\mC_v}}$. By \eqref{e:for-l:claim},
  $$\|\mu_H\|_{\cB_{H}}\le m_{\mC_v} c(\cB_{H,\mC_v})^{-1}c(\cB_{\mC_v}) \|\mu\|_{\cB}.$$
  Take $\beta:=\max_{\mC\in [\Sigma]} (m_{\mC} c(\cB_{H,\mC})^{-1}c(\cB_{\mC}))$.
  Clearly $\beta$ is independent of $S_1$ and $\kappa$.
  Notice that $\|\mu_H\|_{\cB_{H}}\le \beta \|\mu\|_{\cB}\le \beta \kappa$.

  For each $\mu\in X_*(T)^{\mC_v}$ such that $\|\mu\|_{\cB}\le \kappa$,
  we can choose a preimage $\mu_H$ of $\mu$ such that $\|\mu_H\|_{\cB_{H}}\le\beta \kappa$.
  Take $U_{H,v}$ to be the union of $K_{H,v} \mu_H(\varpi_v) K_{H,v}$ for those
  $\mu_H$'s. By construction $\alpha(U_{H,v})=U_v$. Hence $U_{H,S_1}:=\prod_{v\in S_1}U_{H,v}$
  is the desired open compact subset in the claim of Lemma \ref{l:claim}.

\end{proof}

\begin{cor}\label{c:bound-number-of-conj}
  In the setting of Proposition \ref{p:bound-number-of-conj},
  let $Y_G$ be the set of all semisimple $G(F)$-conjugacy (rather than $G(\A_F)$-conjugacy)
  classes whose $G(\A_F)$-conjugacy classes intersect $\mC$. Then there exist
   constants $A_6,B_8>0$ such that $|Y_G|=O(q_{S_1}^{B_8\kappa+A_6})$ as $S_1$ and $\kappa$ vary.
\end{cor}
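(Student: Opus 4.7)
The plan is to deduce the corollary directly by combining Proposition \ref{p:bound-number-of-conj} with Lemma \ref{l:F-conj-vs-A_F-conj}. The former provides the bound $|\mY_G|=O(q_{S_1}^{A_3+B_3\kappa})$ on the number of semisimple $G(\A_F)$-conjugacy classes of $\gamma\in G(F)$ whose adelic conjugacy class meets $\mC$, while the latter controls the discrepancy between $F$-conjugacy and $\A_F$-conjugacy by a constant depending only on $G$.

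More precisely, there is a natural surjective map $Y_G \twoheadrightarrow \mY_G$ sending a $G(F)$-conjugacy class to the $G(\A_F)$-conjugacy class it generates. By Lemma \ref{l:F-conj-vs-A_F-conj}, every fiber of this map has cardinality at most $(w_Gs_G)^{r_G+1}$, which is a constant depending only on $G$ and independent of $S_1$ and $\kappa$. Therefore
\[
|Y_G| \le (w_Gs_G)^{r_G+1}\cdot |\mY_G| = O(q_{S_1}^{A_3+B_3\kappa}),
\]
so the statement of the corollary holds with $A_6 := A_3$, $B_8 := B_3$, and an implicit constant obtained by multiplying the one from Proposition \ref{p:bound-number-of-conj} by $(w_Gs_G)^{r_G+1}$.

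There is no real obstacle here since all the work has been done: the hard estimate was the proof of Proposition \ref{p:bound-number-of-conj}, which relied on the product-formula argument (Lemma \ref{l:F-in-A_F}), the characteristic polynomial embedding via $\Xi$, and a reduction to the case where $G^{\der}$ is simply connected via a $z$-extension. The Galois-cohomological fiber bound of Lemma \ref{l:F-conj-vs-A_F-conj} then closes the gap between the two notions of conjugacy at no asymptotic cost.
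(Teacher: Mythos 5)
Your proof is correct and is exactly the paper's argument: the paper also deduces the corollary immediately from Lemma \ref{l:F-conj-vs-A_F-conj} (the fiber bound $(w_Gs_G)^{r_G+1}$ on the map from $G(F)$-classes to $G(\A_F)$-classes) combined with Proposition \ref{p:bound-number-of-conj}. Your write-up just makes the constants explicit, which is fine.
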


\begin{proof}
  Immediate from Lemma \ref{l:F-conj-vs-A_F-conj} and Proposition \ref{p:bound-number-of-conj}.
\end{proof}

  The following lemma was used in Step I of the proof of Proposition \ref{p:bound-number-of-conj}
  and will be applied again to obtain Corollary \ref{c:bounding-pi0-general} below.

\begin{lem}\label{l:bounding-conj-in-st-conj}
  Assume that $G\der$ is simply connected.
  For each $v\in \cV_F$ and each semisimple $\gamma\in G(F)$, let $n_{v,\gamma}$ be the number of $G(F_v)$-conjugacy classes
  in the stable conjugacy class of $\gamma$ in $G(F_v)$.
  Then there exists a constant $u_G\ge1$ (depending only on $F$ and $G$) such that one has the uniform bound $n_{v,\gamma}\le u_G$ for all $v$ and $\gamma$.
\end{lem}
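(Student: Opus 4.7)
The plan is to bound $n_{v,\gamma}$ by the cardinality of a Galois cohomology group and then bound that cohomology group uniformly via the structure of centralizers. Since $G^{\der}$ is simply connected, Steinberg's theorem guarantees that $I_\gamma$ is connected reductive for every semisimple $\gamma\in G(F)$. Hence the standard pointed-set cohomology sequence attached to $I_\gamma \hookrightarrow G$ identifies the set of $G(F_v)$-conjugacy classes inside the stable class of $\gamma$ with the kernel of $H^1(F_v,I_\gamma)\to H^1(F_v,G)$, and in particular $n_{v,\gamma}\le |H^1(F_v,I_\gamma)|$.

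Next I would invoke Kottwitz's duality: for non-archimedean $v$,
\[
H^1(F_v,I_\gamma)\;\cong\;\pi_0\bigl(Z(\hat{I}_\gamma)^{\Gamma_v}\bigr)^{D},
\]
(Pontryagin dual), and for archimedean $v$ the analogous Borovoi/Kottwitz statement gives a bound of the same shape in terms of $\pi_0(Z(\hat{I}_\gamma)^{\Gamma_v})$. In both cases we are reduced to bounding $|\pi_0(Z(\hat{I}_\gamma)^{\Gamma_v})|$ uniformly in $v$ and $\gamma$.

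For the uniform bound, the observation is that $I_\gamma$ is a connected reductive subgroup of $G$ of the same rank as $G$, so its absolute root datum is a sub-root-datum of that of $G$, and in particular belongs to a finite list depending only on $G$. By Lemma~\ref{l:torus-splitting}, any maximal torus of $I_\gamma$ (hence $I_\gamma$ itself after an inner twist) splits over a Galois extension of degree at most $w_Gs_G$, so the action of $\Gamma_v$ on the based root datum of $I_\gamma$ factors through a finite group whose order is bounded solely in terms of $G$. Consequently the pair (root datum of $I_\gamma$, Galois action) takes only finitely many isomorphism classes as $v$ and $\gamma$ vary, and the same is therefore true of $Z(\hat{I}_\gamma)$ as a $\Gamma_v$-module. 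Taking $u_G$ to be the maximum of $|\pi_0(Z(\hat{I}_\gamma)^{\Gamma_v})|$ over this finite list of possibilities yields the desired uniform bound.

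The only delicate point is ensuring that ``finitely many isomorphism classes of Galois modules'' really does translate into a bound independent of $v$; but since $\pi_0$ of the fixed points of a finite group action on a diagonalizable group depends only on the isomorphism class of the module, this is automatic once the preceding finiteness is established. I expect the main (minor) obstacle to be bookkeeping the archimedean case of Kottwitz's duality and checking that the list of possible root data for $I_\gamma$ really is finite, both of which are standard.
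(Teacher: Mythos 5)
Your overall route is the same as the paper's: bound $n_{v,\gamma}$ by $|\ker(H^1(F_v,I_\gamma)\to H^1(F_v,G))|\le |H^1(F_v,I_\gamma)|$, use Kottwitz's duality to reduce to bounding $|\pi_0(Z(\hat I_\gamma)^{\Gamma_v})|$, and invoke Lemma~\ref{l:torus-splitting} to see that the Galois action factors through a group of order at most $w_Gs_G$. The gap is the sentence beginning ``Consequently'': from ``the absolute root datum of $I_\gamma$ lies in a finite list'' and ``the Galois action factors through a group of bounded order'' it does \emph{not} formally follow that the pair (root datum, Galois action) has only finitely many isomorphism classes. The action is a homomorphism from a finite group $\Delta$ of order $\le w_Gs_G$ into the automorphism group of the root datum of $I_\gamma$, and two pairs are isomorphic exactly when the homomorphisms are conjugate in that automorphism group; but that group is infinite as soon as $I_\gamma$ has a positive-dimensional central torus. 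The extreme case is $\gamma$ regular, where $I_\gamma$ is a maximal torus, the root datum is just the lattice $X^*(T)\simeq\Z^{r_G}$ with empty root system, and the automorphism group is $GL_{r_G}(\Z)$: a fixed finite $\Delta$ admits infinitely many embeddings into $GL_{r_G}(\Z)$, and their finiteness up to conjugacy is a genuine theorem, not bookkeeping.

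That finiteness --- there are only finitely many isomorphism classes of $\Z[\Delta]$-modules whose underlying $\Z$-module is free of rank $\le r_G$ (a Jordan--Zassenhaus type result) --- is exactly the input the paper supplies, citing \cite{CR62}. Concretely, the paper first uses the hypothesis that $G^{\der}$ is simply connected to argue that $Z(\hat I_\gamma)$ is connected, hence a complex torus of dimension $\le r_G$, and then bounds $|\pi_0(\hat T^{\Delta})|$ uniformly over all pairs $(\Delta,\hat T)$ with $|\Delta|\le w_Gs_G$ and $\dim\hat T\le r_G$, the whole point being that this collection has finitely many equivalence classes by the lattice-finiteness theorem applied after dualizing. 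So your plan is salvageable, but you must either quote this theorem at the ``Consequently'' step or reproduce the paper's reduction; your closing remark that the delicate point is ``automatic once the preceding finiteness is established'' concedes the conclusion precisely where the real content of the argument lies.
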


\begin{proof}
  Put $\Gamma(v):=\Gal(\ol{F}_v/F_v)$.
  It is a standard fact that $n_{v,\gamma}$ is the cardinality of
  $\ker(H^1(F_v,I_\gamma)\ra H^1(F_v,G))$. By \cite{Kot86},
  $H^1(F_v,I_\gamma)$ is isomorphic to the dual of $\pi_0(Z(\hat{I}_\gamma)^{\Gamma(v)})$. Hence
  $n_{v,\gamma}\le |\pi_0(Z(\hat{I}_\gamma)^{\Gamma(v)})|.$
  It suffices to show that a uniform bound for $|\pi_0(Z(\hat{I}_\gamma)^{\Gamma(v)})|$ exists.

  By Lemma \ref{l:torus-splitting},
  there exists a finite Galois extension $E/F$ with $[E:F]\le w_Gs_G$ such that
  $I_{\gamma}$ splits over $E$. Then $\Gal(\ol{F}/F)$ acts on $Z(\hat{I}_\gamma)$
  through $\Gal(E/F)$. In particular $\Gamma(v)$ acts on $Z(\hat{I}_\gamma)$
  through a group of order $\le w_Gs_G$. Denote the latter group by $\Gamma(v)'$.

  By the assumption, all Levi subgroups of $G$ have simply connected derived subgroups.
  As $I_\gamma$ becomes isomorphic to a Levi subgroup of $G$ over $\ol{F}$,
  $I_\gamma\der$ is also simply connected. Hence $Z(\hat{I}_\gamma)$ is connected
  (\cite[(1.8.3)]{Kot84a}), namely a complex torus. Moreover
  $\dim Z(\hat{I}_\gamma)\le r_G$.

   Now consider the set of pairs $$\mT=\{(\Delta,\hat{T}):
   |\Delta|\le w_Gs_G,~\dim \hat{T}\le r_G\}$$
   consisting of a $\C$-torus $\hat{T}$ with an action by a finite group $\Delta$. Two pairs $(\Delta,\hat{T})$
   and $(\Delta',\hat{T}')$ are equivalent if there are isomorphisms $\Delta\simeq \Delta'$
   and $\hat{T}\simeq \hat{T}'$ such that the group actions are compatible.
   Note that $$(\Gamma(v)',Z(\hat{I}_\gamma))\in \mT$$ and that
   $\mT$ depends only on $G$ and $F$.
   Clearly $|\pi_0(\hat{T}^\Delta)|$ depends only on the equivalence class
   of $(\Delta,\hat{T})\in \mT$.
   Hence the proof will be complete if
   $\mT$ consists of finitely many equivalence classes.

   Clearly there are finitely many isomorphism classes for $\Delta$ appearing in $\mT$.
   So we may fix $\Delta$ and prove the finiteness of isomorphism classes of
   $\C$-tori with $\Delta$-action. By dualizing, it is enough to show that
   there are finitely many isomorphism classes of $\Z[\Delta]$-modules
   whose underlying $\Z$-modules are free of rank $\le r_G$.
   This is a result of \cite[\S79]{CR62}.

% OLD incorrect argument
%
%   There exists a finite extension $F'_v$ of $F_v$ over which $I_\gamma$ becomes
%  a Levi subgroup of $G$. Therefore $Z(\hat{I}_\gamma)^{\Gamma(v)}$ has the form
%  $Z(\hat{M})^{\Gamma(v)}$ for a Levi subgroup $\hat{M}$ of $G$
%  and a certain action $\Gamma(v)\ra \Aut_\C(\hat{M})$ factoring
%  through a finite quotient. Clearly $Z(\hat{M})^{\Gamma(v)}$ depends only on the image of the induced map
%  $\Gamma(v)\ra \Out_\C(\hat{M})$. To sum up, $Z(\hat{I}_\gamma)^{\Gamma(v)}$ belongs to the set
%  $\{ Z(\hat{M})^{\Delta}\}_{\hat{M},\Delta}$ where $\hat{M}$ is a
%  Levi subgroup of $\hat{G}$ up to $\hat{G}$-conjugacy and $\Delta$ is a subgroup of
%  the finite group $\Out_\C(\hat{M})$.
%  The lemma follows from the fact that the latter set is finite and independent of $\gamma$.

\end{proof}

\begin{cor}\label{c:bounding-pi0-general}
  There exists a constant $c>0$ (depending only on $G$) such that
  for every semisimple $\gamma\in G(F)$,
  $|\pi_0(Z(\hat{I}_\gamma)^{\Gamma})|<c$.
  (We do not assume that $G\der$ is simply connected.)
\end{cor}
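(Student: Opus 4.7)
The plan is to extend the finiteness-of-pairs argument used at the end of the proof of Lemma~\ref{l:bounding-conj-in-st-conj} to the setting where $G^{\der}$ need not be simply connected. The only new obstacle is that $Z(\hat{I}_\gamma)$ is now a possibly disconnected diagonalizable $\C$-group, rather than a torus; to handle this, one needs a uniform bound on the torsion of its character group.

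First, by Lemma~\ref{l:torus-splitting} applied to a maximal torus $T_{I_\gamma}$ of $I_\gamma$, there is a Galois extension $E/F$ with $[E:F]\le w_Gs_G$ over which $I_\gamma$ splits. Hence $\Gamma$ acts on $Z(\hat{I}_\gamma)$ through a finite quotient $\Delta$ of order at most $w_Gs_G$.

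Second, identify the character group. Since the roots of $\hat{I}_\gamma$ are the coroots of $I_\gamma$, one has the canonical identification
\[
X^*(Z(\hat{I}_\gamma)) \;=\; X_*(T_{I_\gamma})/\bigl\langle \Phi^\vee(I_\gamma) \bigr\rangle
\]
as a finitely generated $\Z[\Delta]$-module, whose free rank equals $\dim Z(I_\gamma)^\circ\le r_G$ and whose torsion subgroup is the algebraic fundamental group $\pi_1(I_\gamma^{\der})$. Since the root datum of $I_\gamma^{\der}$ over $\ol{F}$ embeds as a sub-root datum of the root datum of $G_{\ol{F}}$ (of rank $\le r_G$), it varies within a finite set of isomorphism classes as $\gamma$ ranges over semisimple elements of $G(F)$. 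Consequently $|\pi_1(I_\gamma^{\der})|$ is bounded above by a constant depending only on $G$.

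Third, the Jordan--Zassenhaus theorem (\cite[\S79]{CR62}), combined with the elementary fact that only finitely many finite abelian groups with $\Delta$-action have bounded order, implies that the collection of isomorphism classes of triples $(\Delta, M)$ with $|\Delta|\le w_Gs_G$, $M$ a finitely generated $\Z[\Delta]$-module of free rank at most $r_G$ and torsion of bounded order, is finite. Since $|\pi_0(Z(\hat{I}_\gamma)^\Gamma)|$ is determined by the isomorphism class of $(\Delta, X^*(Z(\hat{I}_\gamma)))$ via Pontryagin duality, it takes only finitely many values, and one may take $c$ to be any strict upper bound. The main obstacle is the uniform control of the torsion in step two; this is resolved by the observation that, although $I_\gamma^{\der}$ varies with $\gamma$, its root system is constrained to lie in the finite collection of sub-root systems of the root system of $G_{\ol{F}}$.
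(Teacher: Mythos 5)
Your argument is essentially correct, but it takes a genuinely different route from the paper. The paper does not attack $Z(\hat{I}_\gamma)$ directly: it first treats the case $G^{\der}$ simply connected (where $Z(\hat{I}_\gamma)$ is connected, so its character module is $\Z$-free of rank $\le r_G$ and the finiteness of pairs from the proof of Lemma~\ref{l:bounding-conj-in-st-conj}, via \cite[\S79]{CR62}, applies verbatim), and then reduces the general case to this one by choosing a $z$-extension $1\to Z\to H\to G\to 1$ with $H^{\der}$ simply connected, lifting $\gamma$ to a semisimple $\gamma_H$, and comparing $\pi_0(Z(\hat{I}_\gamma)^\Gamma)$ with $\pi_0(Z(\hat{I}_{\gamma_H})^\Gamma)$ through the exact sequence of \cite[Cor 2.3]{Kot84a}; the extra cokernel term is absorbed into $|\pi_0(Z(\hat{G})^\Gamma)|$. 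You instead confront the disconnectedness of $Z(\hat{I}_\gamma)$ head-on: you identify $X^*(Z(\hat{I}_\gamma))$ with $X_*(T_{I_\gamma})/\langle\Phi^\vee(I_\gamma)\rangle$, observe that its torsion is $\pi_1(I_\gamma^{\der})$, and bound that torsion uniformly since only finitely many semisimple root data of rank $\le r_G$ can occur. This is more self-contained (no $z$-extensions, no appeal to Kottwitz's functorial sequences), at the cost of having to extend the module-finiteness argument from lattices to modules with torsion; the paper's route keeps the module theory minimal but imports the $z$-extension formalism.

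One step in your write-up is glossed over and, as stated, does not follow from what you cite. The result of \cite[\S79]{CR62} (Jordan--Zassenhaus) gives finiteness of isomorphism classes of $\Z[\Delta]$-modules that are $\Z$-\emph{free} of bounded rank, and "finitely many finite abelian $\Delta$-groups of bounded order" handles the torsion part; but a finitely generated module $M$ need not split as (torsion)$\oplus$(free), so these two facts alone do not yield finiteness of the isomorphism classes of $M$. You need the additional observation that for fixed $T=M_{\mathrm{tors}}$ and $L=M/T$ the possible $M$ are classified by $\Ext^1_{\Z[\Delta]}(L,T)$, which is finite (it is finitely generated and killed by $|T|$); alternatively, you can bypass isomorphism classes altogether and bound $|\pi_0(Z(\hat{I}_\gamma)^\Gamma)|=|\bigl(X^*(Z(\hat{I}_\gamma))_\Delta\bigr)_{\mathrm{tors}}|$ directly by $|\pi_1(I_\gamma^{\der})|\cdot|\Delta|^{r_G}$ using right-exactness of coinvariants and the fact that the torsion of $L_\Delta$ is killed by $|\Delta|$ and generated by at most $r_G$ elements. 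With either fix the proof is complete.
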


\begin{proof}
  Suppose that $G\der$ is simply connected. The proof of Lemma \ref{l:bounding-conj-in-st-conj}
  shows that $(\Gal(E/F),Z(\hat{I}_\gamma))\in \mT$ in the notation there,
  thus there exists $c>0$ such that $|\pi_0(Z(\hat{I}_\gamma)^{\Gamma})|<c$ for all semisimple $\gamma$.

  In general, let $1\ra Z\ra H\ra G\ra 1$ be a $z$-extension over $F$ so that
  $Z$ is a product of induced tori and $H\der$ is simply connected.
  Since $H(F)\twoheadrightarrow G(F)$, we may choose a semisimple $\gamma_H$ mapping
  to $\gamma$. Let $I_{\gamma_H}$ denote the connected centralizer of $\gamma_H$ in $H$.
  By the previous argument there exists $c_H>0$ such that
  $|\pi_0(Z(\hat{I}_{\gamma_H})^{\Gamma})|<c_H$ for any semisimple $\gamma_H$.
  The obvious short exact sequence $1\ra Z\ra I_{\gamma_H}\ra I_\gamma\ra 1$ gives rise
  (\S\ref{sub:L-groups})
  to a short exact sequence $1\ra Z(\hat{I}_\gamma)\ra Z(\hat{I}_{\gamma_H})\ra\hat{Z}\ra 1$, hence
  by \cite[Cor 2.3]{Kot84a},
  \beq\label{e:bounding-pi0-general}
   0\ra \coker(X_*(Z(\hat{I}_{\gamma_H}))^{\Gamma}\ra X_*(\hat{Z})^{\Gamma})
  \ra \pi_0(Z(\hat{I}_\gamma)^{\Gamma}) \ra \pi_0(Z(\hat{I}_{\gamma_H})^{\Gamma}) \ra
  \pi_0(\hat{Z}^{\Gamma})=0.\eeq
  On the other hand, the inclusions $Z\ra I_{\gamma_H}\ra H$ induces a $\Gamma$-equivariant
  map $Z(\hat{H})\ra Z(\hat{I}_{\gamma_H})\ra \hat{Z}$. The map
  $Z(\hat{H})\ra Z(\hat{I}_{\gamma_H})$ is constructed by \cite[4.2]{Kot86}, whereas
  $Z(\hat{I}_{\gamma_H})\ra \hat{Z}$ and $Z(\hat{H})\ra \hat{Z}$ are
  given by \S\ref{sub:L-groups}. (The distinction comes from the fact that typically
  $I_{\gamma_H}\ra H$ is not normal.) The three maps are compatible in the obvious sense.
  By the functoriality of $X_*(\cdot)^\Gamma$, there is a natural surjection
  $$\coker(X_*(Z(\hat{H}))^{\Gamma}\ra X_*(\hat{Z})^{\Gamma})
  \twoheadrightarrow \coker(X_*(Z(\hat{I}_{\gamma_H}))^{\Gamma}\ra X_*(\hat{Z})^{\Gamma}).$$
  The left hand side is finite because it embeds into the finite group
  $\pi_0(Z(\hat{G})^{\Gamma})$, again by \cite[Cor 2.3]{Kot84a}.
  Going back to \eqref{e:bounding-pi0-general}, we deduce
  $$|\pi_0(Z(\hat{I}_\gamma)^{\Gamma})|\le |\pi_0(Z(\hat{I}_{\gamma_H})^{\Gamma})|\cdot |\coker(X_*(Z(\hat{H}))^{\Gamma}\ra X_*(\hat{Z})^{\Gamma})|< c_H \cdot |\pi_0(Z(\hat{G})^{\Gamma})|.$$
  The proof is complete as the far right hand side is independent of $\gamma$.

\end{proof}

%\begin{rem}
%  Although Lemma \ref{l:bounding-pi0} is enough for applications
%  in our paper, we include the more general Corollary \ref{c:bounding-pi0-general}
%  hoping that it would be suitable for wider applications.
%\end{rem}

  For a cuspidal group and conjugacy classes which are elliptic at infinity,
  a more precise bound can be obtained by a simpler argument, which would be worth
  recording here.

\begin{lem}\label{l:bounding-pi0}
  Let $G$ be a cuspidal $F$-group. For any $\gamma\in G(F)$ such that
  $\gamma\in G(F_\infty)$ is elliptic,
  $$|\pi_0(Z(\hat{I}_\gamma)^{\Gamma})|\le 2^{\rank(G/A_G)}.$$
\end{lem}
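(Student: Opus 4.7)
The plan is to reduce to the quotient $J := I_\gamma/A_G$ and then exploit a complex conjugation at a real place. First, I would apply the functor of \S\ref{sub:L-groups} to the central exact sequence $1 \to A_G \to I_\gamma \to J \to 1$ to obtain the $\Gamma$-equivariant exact sequence of $\C$-diagonalizable groups
$$1 \to Z(\hat{J}) \to Z(\hat{I}_\gamma) \to \hat{A}_G \to 1,$$
in which $\hat{A}_G$ is a connected torus with trivial $\Gamma$-action, since $A_G$ is $F$-split. The connecting map $\hat{A}_G \to H^1(\Gamma, Z(\hat{J}))$ in the long exact sequence is trivial, being a continuous homomorphism from a connected group to a finite discrete group, so
$$1 \to Z(\hat{J})^\Gamma \to Z(\hat{I}_\gamma)^\Gamma \to \hat{A}_G \to 1$$
is exact. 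Applying $\pi_0$, which is right exact on short exact sequences of commutative topological groups, and using that $\pi_0(\hat{A}_G)=1$, yields a surjection $\pi_0(Z(\hat{J})^\Gamma) \twoheadrightarrow \pi_0(Z(\hat{I}_\gamma)^\Gamma)$. Since $\rank(J) = \rank(I_\gamma) - \dim A_G = \rank(G/A_G)$, it suffices to establish $|\pi_0(Z(\hat{J})^\Gamma)| \le 2^{\rank J}$.

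Next, the ellipticity of $\gamma$ at infinity together with the cuspidality of $G$ allows one to choose, by weak approximation on the variety of maximal tori, a maximal $F$-torus $T$ of $I_\gamma$ whose image $T_J := T/A_G$ in $J$ is $\R$-anisotropic over $F_v$ for every real place $v$ of $F$. Fixing one such real place $v$, the complex conjugation $c = c_v \in \Gamma$ acts as $-1$ on $X_*(T_J)$.

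The final step is a character-lattice calculation. Set $X := X^*(Z(\hat{J})) = X_*(T_J)/\Z\Phi_J^\vee$, so that Pontryagin duality for $\C$-diagonalizable groups identifies $\pi_0(Z(\hat{J})^\Gamma)$ with the Pontryagin dual of $(X_\Gamma)^{\mathrm{tors}}$, giving $|\pi_0(Z(\hat{J})^\Gamma)| = |(X_\Gamma)^{\mathrm{tors}}|$. Because $c$ acts as $-1$ on $X_*(T_J)$ and stabilizes $\Z\Phi_J^\vee$, sending each coroot to its negative, it acts as $-1$ on $X$ as well. Hence $X/(c-1)X = X/2X$ is annihilated by $2$, and since $X_\Gamma$ is a quotient of $X_c = X/2X$ it is also $2$-torsion, yielding $(X_\Gamma)^{\mathrm{tors}} = X_\Gamma$ and
$$|X_\Gamma| \le |X/2X| \le |X_*(T_J)/2X_*(T_J)| = 2^{\rank T_J} = 2^{\rank J},$$
where the middle inequality comes from the surjection $X_*(T_J) \twoheadrightarrow X$. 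Combining this with the reduction in the first paragraph completes the proof.

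The main obstacle will be securing the maximal $F$-torus $T$ that is simultaneously $F$-rational and elliptic at every real place; this is a standard application of weak approximation for the variety of elliptic maximal tori, relying on the ellipticity of $\gamma$ at infinity and the cuspidality of $G$. The totally complex case requires minor adjustments, but in that setting cuspidality severely constrains $A_G$ and the corresponding quotient $J$, so the bound can be recovered directly.
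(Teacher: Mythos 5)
Your proposal is correct in its essential mechanism and shares the paper's two-step skeleton: first pass from $I_\gamma$ to $J=I_\gamma/A_G$ via the dual exact sequence $1\ra Z(\hat J)\ra Z(\hat I_\gamma)\ra \hat A_G\ra 1$ and a surjection on $\pi_0$ of $\Gamma$-invariants, then exploit that complex conjugation acts by $-1$ on the cocharacter lattice of an elliptic torus to force $2$-torsion of size at most $2^{\rank}$. The execution differs in two ways. For the first step the paper simply quotes Kottwitz [Kot84a, Cor 2.3], whereas you argue by hand; your argument reaches the right conclusion, but the stated reason for the vanishing of the connecting map $\hat A_G\ra H^1(\Gamma,Z(\hat J))$ (``finite discrete target'') is false for the full global Galois group --- already $H^1(\Gamma,\C^\times)=\Hom_{\cont}(\Gamma,\C^\times)$ is infinite. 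The repair is easy (the image of the divisible group $\hat A_G$ lands in a torsion group, hence is trivial), or one just cites Kottwitz as the paper does. For the second step the paper does \emph{not} need a globally rational torus: it first reduces to $F=\Q$ by restriction of scalars, restricts invariants to the decomposition group at the unique real place, and uses the canonical $\Gamma(\infty)$-equivariant embedding $Z(\hat I_\gamma)^{\Gamma(\infty)}\hra \hat T^{\Gamma(\infty)}\simeq\{\pm1\}^{\rank}$ for a purely local anisotropic torus $T$; you instead manufacture an $F$-rational maximal torus elliptic at the real places by weak approximation and compute the torsion of the coinvariants $X_\Gamma$. Your route works, but the weak-approximation input is heavier than necessary, since restricting to $\Gamma(v)$ first makes a local elliptic torus suffice.

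The one place where your write-up has a real hole rather than a cosmetic one is the archimedean bookkeeping that the paper's restriction-of-scalars step is designed to handle. In the paper, $A_G$, cuspidality and ``elliptic at infinity'' are all formulated through $\Res_{F/\Q}G$, so after reducing to $F=\Q$ there is exactly one archimedean place, it is real, cuspidality guarantees that the maximal $\R$-split central torus is $A_G\times\R$ (so ``elliptic'' really means anisotropic mod $A_G$), and the exponent $\rank(G/A_G)$ matches the lattice being used. Working directly over $F$ with ``real places of $F$'' and an $F$-split $A_G$, as you do, misaligns these notions (and when $F$ is not totally real there may be no real place at all); your closing remark that ``the totally complex case requires minor adjustments'' is precisely the gap that the restriction-of-scalars reduction closes, and it should be carried out rather than asserted.
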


\begin{proof}
  Via restriction of scalars, we may assume that $F=\Q$ without losing generality.
  Let us prove the lemma when $A_G$ is trivial. By assumption there exists an
  $\R$-anisotropic torus $T$ in $G(\R)$ containing $\gamma$. Thus
  $T\simeq U(1)^{\rank(G)}$ and $T\hra I_\gamma$ over $\R$.
  The former tells us that $\hat{T}^{\Gamma(\infty)}\simeq \{\pm 1\}^{\rank(G)}$ and
  the latter gives rise to $ Z(\hat{I}_\gamma)^{\Gamma(\infty)}\hra \hat{T}^{\Gamma(\infty)}$
  (\cite[\S4]{Kot86}). Hence the assertion follows from
  $$Z(\hat{I}_\gamma)^{\Gamma}\hra Z(\hat{I}_\gamma)^{\Gamma(\infty)} \hra \hat{T}^{\Gamma(\infty)}\simeq \{\pm 1\}^{\rank(G)}.$$

  In general when $A_G$ is not trivial, consider the exact sequence of $\Q$-groups
  $1\ra A_G \ra I_\gamma \ra I_\gamma/A_G\ra 1$, whose dual is the $\Gamma$-equivariant
  exact sequence of $\C$-groups
  $$1\ra Z(\hat{I_\gamma/A_G})\ra Z(\hat{I}_\gamma) \ra \hat{A}_G \ra 1.$$
  Thanks to \cite[Cor 2.3]{Kot84a}, we obtain the following exact sequence:
  $$X^*(\hat{A}_G)^\Gamma \ra \pi_0(Z(\hat{I_\gamma/A_G})^\Gamma) \ra \pi_0(Z(\hat{I}_\gamma)^{\Gamma})
  \ra \pi_0(\hat{A}_G)^\Gamma = 1.$$
  Hence $|\pi_0(Z(\hat{I}_\gamma)^{\Gamma})|\le
  |\pi_0(Z(\hat{I_\gamma/A_G})^\Gamma)|$, and the latter is at most
  $ 2^{\rank(G/A_G)}$ by the preceding argument.
\end{proof}

\section{Automorphic Plancherel density theorem with error bounds}\label{s:aut-Plan-theorem}

  The local components of automorphic representations at a fixed finite set of
 primes tend to be equidistributed according to
 the Plancherel measure on the unitary dual, namely the error tends to zero
 in a family of automorphic representations (cf. Corollary \ref{c:Plan-density} below).
  The main result of this section (Theorems \ref{t:level-varies}, \ref{t:weight-varies})
 is a bound
on this error in terms of the primes in the fixed set as well as the varying parameter
(level or weight) in the family. A crucial assumption for us is that the group $G$
is cuspidal (Definition \ref{d:cuspidal}), which allows the use of a simpler version of the trace formula.
For the proof we interpret the problem
as bounding certain expressions on the geometric side of the trace formula and apply various technical results from previous
sections.
One main application is a proof of the Sato-Tate conjecture for families formulated in
\S\ref{sub:ST-families} under suitable conditions on the parameters involved.
In turn the result will be applied to the question on low-lying zeros in later sections.

%The above assumption on $G$ avails us a relatively simple trace formula
%due to Arthur when only cohomological
%representations are considered at infinity.
%Still, this case already presents much difficulty if one tries to obtain a reasonable error bound.

\subsection{Sauvageot's density theorem on unitary
dual}\label{sub:density-thm}

  We reproduce a summary of Sauvageot's result (\cite{Sau97}) from
  \cite[\S2.3]{Shi-Plan} as it can be used to effectively prescribe local conditions
  in our problem.
The reader may refer to either source for more detail.

  Let $G$ be a connected
  reductive group over a number field $F$. Use $v$ to denote a finite place of $F$.
  When $M$ is a Levi subgroup of
  $G$ over $F_v$, write
$\Psi_u(M(F_v))$ (resp. $\Psi(M(F_v))$) for the real (resp. complex) torus whose points parametrize
unitary (complex-valued) characters of $M(F_v)$ trivial on any compact subgroup of $M(F_v)$. The normalized parabolic induction
of an admissible representation $\sigma$ of $M(F_v)$ is denoted $\nind^G_M(\sigma)$.

  Denote by $\mB_c(G(F_v)^{\wedge})$ the space of bounded
  $\pl_v$-measurable functions $\hat{f}_v$ on $G(F_v)^{\wedge}$
whose support has compact image in the Bernstein center, which is
the set of $\C$-points of an (infinite) product of varieties.
  A measure on $G(F_v)^{\wedge}$
  will be thought of as a linear functional on the space $\mF(G(F_v)^{\wedge})$
  consisting of $\hat{f}_v\in \mB_c(G(F_v)^{\wedge})$ such that
for every $F_v$-rational Levi subgroup $M$ of $G$ and every discrete
series $\sigma$ of $M(F_v)$, $$\Psi_u(M(F_v))\ra
\C\quad\mbox{ given by}\quad \chi\mapsto
\hat{f}_v(\nind^G_M(\sigma\otimes \chi))$$ is a function whose
points of discontinuity are contained in a measure zero set. (Here $\nind$ denotes the normalized parabolic induction.)
Now for any finite set $S$ of finite places of $F$,
one can easily extend the above definition to
$\mF(G(F_S)^{\wedge})$ so that $\hat{f}_S(\pi_S)\in \C$ makes sense
for $\hat{f}_S\in \mF(G(F_S)^{\wedge})$ and $\pi_S\in G(F_S)^{\wedge}$.
We have a map
$$C^\infty_c(G(F_S))\ra \mF(G(F_S)^{\wedge}),
\qquad \phi_S\mapsto \hat{\phi}_S:\pi_S\mapsto \tr \pi_S(\phi_S),$$
as follows from Proposition~\ref{l:bounding-function-by-pos-function} below.
Harish-Chandra's Plancherel theorem states that
\[
\label{e:Plan-thm}\pl_S(\hat{\phi}_S)=\phi_S(1).
\]
Our notational convention is that $\hat{\phi}_S$ often signifies an element in the image of the above map
whereas $\hat{f}_S$ stands for a general element of $\mF(G(F_S)^{\wedge})$.
  Sauvageot's theorem allows us to approximate any $\hat{f}_S\in\mF(G(F_S)^{\wedge})$
with elements of $C^\infty_c(G(F_S))$.

\begin{prop}(\cite[Thm 7.3]{Sau97})\label{p:density}
  Let $\hat{f}_S\in \mF(G(F_S)^{\wedge})$. For any $\epsilon>0$,
  there exist $\phi_S,\psi_S\in C^\infty_c(G(F_S))$ such that
$$ \pl_S(\hat{\psi}_S)\le \epsilon  \quad\mbox{and}\quad
\forall
\pi_S\in G(F_S)^{\wedge},~
|\hat{f}_S(\pi_S) - \hat{\phi}_S(\pi_S)|\le \hat{\psi}_S(\pi_S) .$$
Conversely, any $\hat{f}_S\in
  \mB_c(\hat{G(F_S)})$ with the above property belongs to $\mF(G(F_S)^{\wedge})$.
\end{prop}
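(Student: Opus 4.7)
The plan is to reduce Sauvageot's statement to a single place and then glue by tensor products. Since $G(F_S)=\prod_{v\in S}G(F_v)$, the Bernstein center factors as a product and any $\hat f_S \in \mF$ can be treated factor by factor, so I focus on a single finite place $v$; the passage back to $F_S$ is then a matter of choosing tensor-product test functions and combining the errors. At a single place, the starting point is the Bernstein decomposition of the admissible dual $G(F_v)^{\wedge}$ into components $\CmB_{(M,\sigma)}$ indexed by classes of cuspidal pairs $(M,\sigma)$, together with the standard parametrization of the tempered points in each component as a quotient of the compact real torus $\Psi_u(M(F_v))$ by a finite (Knapp-Stein) group. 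On this quotient the Plancherel measure is given by an explicit smooth density (Harish-Chandra), and by the Paley-Wiener theorem of Bernstein-Heiermann-Delorme the spectral transform of $C^\infty_c(G(F_v))$ restricts on each component to the ring of regular functions on the underlying complex torus $\Psi(M(F_v))$, invariant under the relevant finite group.

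The construction of $\phi_v$ proceeds as follows. Because $\hat f_v$ has compact support in the Bernstein center, only finitely many components $\CmB_1,\ldots,\CmB_N$ contribute. On each $\CmB_j$ the function $\chi\mapsto \hat f_v(\nind^G_M(\sigma\otimes\chi))$ is bounded on $\Psi_u(M)$ and, by the defining property of $\mF$, continuous outside a set of Haar measure zero. Applying Lusin's theorem and then Stone-Weierstrass I can find a trigonometric polynomial in $\chi$ that approximates $\hat f_v$ uniformly on $\Psi_u(M)$ outside a ``bad set'' $E_j$ of Haar measure at most $\eta$. Invoking Paley-Wiener inversion, such a trigonometric polynomial is the spectral transform of a smooth, compactly supported function on $G(F_v)$ supported spectrally in $\CmB_j$. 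Summing over $j=1,\ldots,N$ yields $\phi_v\in C^\infty_c(G(F_v))$ such that $\hat\phi_v$ equals $\hat f_v$ up to a uniformly small error on the tempered dual, except on a set $\bigcup_j E_j$ of small Plancherel measure.

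The more delicate task is to construct the majorant $\psi_v$. It must be the spectral transform of a smooth, compactly supported function; it must dominate $|\hat f_v-\hat\phi_v|$ on \emph{all} of $G(F_v)^{\wedge}$, including the non-tempered part where $\pl_v$ sees nothing but where both $\hat f_v$ and $\hat\phi_v$ are defined; and $\pl_v(\hat\psi_v)$ must be at most $\epsilon$. I would take $\psi_v$ of the form $\phi*\phi^*$ with $\phi^*(g):=\overline{\phi(g^{-1})}$ for a suitable $\phi\in C^\infty_c(G(F_v))$, which guarantees smoothness and pointwise positivity of $\hat\psi_v$ on the tempered dual. The Plancherel norm can be made $\le\epsilon$ by concentrating $\hat\phi$ on the union of the bad sets $E_j$ and exploiting the smoothness of the Plancherel density to shrink $\eta$ as needed. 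The hard part will be controlling $\hat\psi_v$ on the non-tempered strata of each Bernstein component: there both $\hat f_v-\hat\phi_v$ and $\hat\psi_v$ extend as bounded respectively meromorphic functions of the complex twist $\chi\in\Psi(M(F_v))$, and the positions of poles and zeros of the reducibility parameters are governed by Arthur's $\mu$-function. This is the technical core of Sauvageot's argument and the main obstacle; it is carried out by combining intertwining-operator calculus with Casselman's subrepresentation theorem to track the behavior of $\phi*\phi^*$ on parabolically induced representations away from the unitary axis.

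The converse direction, which I would establish first as a warm-up, is easier. If $\hat f_S\in \mB_c(G(F_S)^{\wedge})$ can be approximated as in the statement, then $\hat\phi_S$ is automatically in $\mF$ because it is a trace function, hence continuous on each $\Psi_u(M)$-orbit in every Bernstein component. The inequality $|\hat f_S-\hat\phi_S|\le \hat\psi_S$ with $\pl_S(\hat\psi_S)\le\epsilon$ then forces $\hat f_S$ to differ from a continuous function by at most $\hat\psi_S$ outside a set of arbitrarily small Plancherel measure. Since the Plancherel density is strictly positive on the tempered strata, this translates into an arbitrarily small Haar-measure bound on the bad set, so the discontinuity locus of $\chi \mapsto \hat f_S(\nind^G_M(\sigma\otimes\chi))$ has Haar measure zero and $\hat f_S\in \mF$.
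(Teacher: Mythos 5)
First, a point of comparison: the paper does not prove this proposition at all — it is quoted from Sauvageot (\cite[Thm 7.3]{Sau97}), with \cite[\S2.3]{Shi-Plan} cited for a summary — so there is no in-paper argument against which to measure yours; what you have written is an attempted reconstruction of Sauvageot's proof. Judged on its own terms, your text is an outline rather than a proof, and the gap is exactly where you say it is. The construction of $\phi_S$ (Bernstein decomposition, compact support in the center, Lusin plus Stone--Weierstrass on each compact torus $\Psi_u(M(F_v))$, trace Paley--Wiener to realize the approximant as $\hat{\phi}_S$ for some $\phi_S\in C^\infty_c(G(F_S))$) is the routine half. The entire content of the theorem is the existence of the majorant: a single $\psi_S\in C^\infty_c(G(F_S))$ with $\pl_S(\hat{\psi}_S)\le\epsilon$ such that $\hat{\psi}_S\ge|\hat{f}_S-\hat{\phi}_S|$ on \emph{all} of $G(F_S)^{\wedge}$ — in particular on the bad sets where the uniform approximation fails (where the discrepancy is of order one, not $\epsilon$) and on the non-tempered unitary points of each Bernstein component, where the Plancherel measure gives no leverage but where domination is still required. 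Your proposal defers precisely this step ("the technical core \dots the main obstacle") with a gesture toward intertwining operators and Casselman's theorem. The $\phi*\phi^*$ device only guarantees $\hat{\psi}_S\ge 0$ on unitary representations; nothing in the proposal shows that such a positive Paley--Wiener function can simultaneously be made of small Plancherel mass, of size $\gg 1$ on the small bad subsets of the tempered spectrum, and large enough at the relevant complex twists parametrizing the non-tempered unitary spectrum, where its values are not controlled by its restriction to the unitary axis. Since this is the nontrivial assertion of the theorem, the argument as written does not establish it.

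Two secondary points. The reduction to one place is not as immediate as stated: for the majorant one must bound a difference of products, so $\psi_S$ is naturally a finite sum of tensor products (using boundedness of the local factors), not a single product — harmless, but it should be said. And the converse direction is also only sketched: from $|\hat{f}_S-\hat{\phi}_S|\le\hat{\psi}_S$ with $\pl_S(\hat{\psi}_S)$ small one gets, via Chebyshev, uniform approximation by continuous functions off sets of small Plancherel measure; concluding that the discontinuity locus of $\chi\mapsto\hat{f}_S(\nind^G_M(\sigma\otimes\chi))$ is Haar-null requires an additional argument (continuity of the majorant forces it to be large at boundary points of any positive-measure discontinuity set), which you assert but do not give. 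If your goal is merely to use the proposition, the honest course — and the one the paper takes — is to cite Sauvageot; if your goal is to reprove it, the majorant construction must actually be carried out.
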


\begin{rem}
  It is crucial that $\hat{f}_S\in \mF(G(F_S)^{\wedge})$ has the set of discontinuity in a measure zero set.
  Otherwise we could take $\hat{f}_S$ to be the characteristic function on the set of points of $G(F_S)^{\wedge}$
  which arise as the $S$-components of some $\pi\in\cAR_{\disc,\chi}(G)$ with nonzero Lie algebra cohomology. Note that the latter function typically lies outside $\mF(G(F_S)^{\wedge})$. The conclusions of Theorems \ref{t:Sato-Tate-level}, \ref{t:Sato-Tate-weight} and Corollary \ref{c:Plan-density}  are false in general if such an $\hat{f}_S$ is placed at $S_0$. Namely in that case $\hat{\mu}_{\cF_k,S_1}(\hat{\phi}_{S_1})$ is often far from zero but $\pl_S(\hat{\phi}_S)$ always vanishes.
\end{rem}

%  For the statement of the Sato-Tate conjecture, we will take test functions in $C(G(F_S)^{\wedge,\ur,\temp})$, the space of $\C$-valued continuous functions on $G(F_S)^{\wedge,\ur,\temp}$. Note that the Satake isomorphism restricts to a topological isomorphism
%   $G(F_S)^{\wedge,\ur,\temp}\simeq \prod_{v\in S}\hat{T}_{c,\theta_v}/\Omega_{c,\theta_v}$,
%  thereby one can canonically identify $$C(G(F_S)^{\wedge,\ur,\temp}) \simeq C(\prod_{v\in S}\hat{T}_{c,\theta_v}/\Omega_{c,\theta_v}).$$
%   view $\mF(G(F_S)^{\wedge})$ $C(G(F_S)^{\wedge,\ur,\temp})$
%  Extension by zero outside the unramified tempered spectrum  as a subspace of $\mF(G(F_S)^{\wedge})$

  From here until the end of this subsection let us suppose that $G$ is unramified at $S$. It will be convenient to introduce $\cF(G(F_S)^{\wedge,\ur})$ and its subspace $\cF(G(F_S)^{\wedge,\ur,\temp})$
  in order to state the Sato-Tate theorem in \S\ref{sub:ST-theorem}.
  The former (resp. the latter) consists of $\hat{f}_S\in \cF(G(F_S)^{\wedge})$ such that
  the support of $\hat{f}_S$ is contained in $G(F_S)^{\wedge,\ur}$ (resp. $G(F_S)^{\wedge,\ur,\temp}$).
  Denote by $\cF(\hat{T}_{c,\theta}/\Omega_{c,\theta})$ the space of bounded $\ST_\theta$-measurable functions on
  $\hat{T}_{c,\theta}/\Omega_{c,\theta}$ whose points of discontinuity are contained in a $\ST_\theta$-measure zero set.
  Define $\cF(\prod_{v\in S}\hat{T}_{c,\theta_v}/\Omega_{c,\theta_v})$ in the obvious analogous way.
  By using the topological Satake isomorphism for tempered spectrum (cf. \eqref{e:local-isom-v}) $$\prod_{v\in S}\hat{T}_{c,\theta_v}/\Omega_{c,\theta_v}
  \simeq G(F_S)^{\wedge,\ur,\temp}$$
  and extending by zero outside the tempered spectrum, one obtains
  \beq\label{e:Sauvageot}\cF\left(\prod_{v\in S}\hat{T}_{c,\theta_v}/\Omega_{c,\theta_v}\right)\simeq  \cF(G(F_S)^{\wedge,\ur,\temp})
  \hra \cF(G(F_S)^{\wedge,\ur}).\eeq
  Although the first two $\cF(\cdot)$ above are defined with respect to different measures $\prod_{v\in S}\ST_{\theta_v}$
  and $\pl_S$, the isomorphism is justified by the fact that the ratio of the two measures is uniformly bounded above and below by positive constants (depending on $q_S$) in view of Proposition \ref{p:unr-plan-meas} and Lemma \ref{l:ST-measure}.
  Note that the space of continuous functions on $\prod_{v\in S}\hat{T}_{c,\theta_v}/\Omega_{c,\theta_v}$
  (resp. on $G(F_S)^{\wedge,\ur,\temp}$) is contained in the first (resp. second) term of \eqref{e:Sauvageot},
  and the two subspaces correspond under the isomorphism.

\begin{cor}\label{c:density}  Let $\hat{f}_S\in \cF(G(F_S)^{\wedge,\ur})$. For any $\epsilon>0$,
  there exist $\phi_S,\psi_S\in \cH^{\ur}(G(F_S))$ such that (i)
$ \pl_S(\hat{\psi}_S)\le \epsilon$ and (ii) $\forall
\pi_S\in G(F_S)^{\wedge,\ur}$, $|\hat{f}_S(\pi_S) - \hat{\phi}_S(\pi_S)|\le \hat{\psi}_S(\pi_S)$.
\end{cor}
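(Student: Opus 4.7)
My plan is to deduce the corollary from Sauvageot's theorem (Proposition~\ref{p:density}) by restricting to the unramified principal Bernstein component. Since $\hat{f}_S \in \cF(G(F_S)^{\wedge,\ur})$ is supported on a union of Bernstein components and the topological Satake isomorphism identifies the unramified tempered spectrum with the compact space $\prod_{v \in S}\hat{T}_{c,\theta_v}/\Omega_{c,\theta_v}$, extension by zero places $\hat{f}_S$ in $\cF(G(F_S)^\wedge)$ while retaining boundedness, a.e.\ continuity, and compact support in the Bernstein center.

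First, I would apply Proposition~\ref{p:density} to this extension to obtain $\tilde{\phi}_S, \tilde{\psi}_S \in C^\infty_c(G(F_S))$ satisfying $\pl_S(\hat{\tilde\psi}_S) \le \epsilon$ and $|\hat{f}_S(\pi_S) - \hat{\tilde\phi}_S(\pi_S)| \le \hat{\tilde\psi}_S(\pi_S)$ for every $\pi_S \in G(F_S)^\wedge$. The task is then to promote $\tilde\phi_S, \tilde\psi_S$ to members $\phi_S, \psi_S$ of $\cH^{\ur}(G(F_S))$ while preserving both estimates on the unramified locus.

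The main obstacle is that naive bi-averaging by the idempotent $e_{K_S} := (\vol K_S)^{-1}\triv_{K_S}$ only recovers a diagonal matrix coefficient: for $\phi_S := e_{K_S} * \tilde\phi_S * e_{K_S}$ and an unramified $\pi_S$ with spherical unit vector $v$, one computes $\hat\phi_S(\pi_S) = \langle \pi_S(\tilde\phi_S)v, v\rangle$, whereas the Sauvageot inequality involves the full operator trace $\hat{\tilde\phi}_S(\pi_S)$, and these two quantities agree only when $\tilde\phi_S$ is itself $K_S$-bi-invariant. To overcome this I would revisit Sauvageot's construction, in which the approximating test functions arise from smooth partitions of unity on the Bernstein variety, and arrange them to be $K_S$-bi-invariant from the start whenever $\hat f_S$ is supported on the unramified component; under this refinement the bi-averaging is the identity and both inequalities descend directly. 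A self-contained alternative is to use the Satake isomorphism $\cS^G: \cH^{\ur}(G(F_S)) \isom \BmC[X_*(A_S)]^{\Omega_{F,S}}$ together with the Stone--Weierstrass density of its image in $C(\prod_v \hat{T}_{c,\theta_v}/\Omega_{c,\theta_v})$ and a Lusin-type approximation of bounded a.e.\ continuous functions, constructing $\phi_S$ directly and dominating the error by a non-negative $\psi_S$ whose Plancherel mass is controlled via the Harish-Chandra identity $\pl_S(\hat\psi_S) = \psi_S(1)$ together with Proposition~\ref{p:unr-plan-meas}.
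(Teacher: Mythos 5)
Your opening step coincides with the paper's: the corollary is deduced from Proposition~\ref{p:density}. But the paper's entire proof is then precisely the one sentence you reject — take Sauvageot's $\phi_S,\psi_S$ and replace them by $\triv_{K_S}\ast\phi_S\ast\triv_{K_S}$ and $\triv_{K_S}\ast\psi_S\ast\triv_{K_S}$ — and your computation of what this does is correct: for unramified $\pi_S$ the transform of the bi-averaged function is the spherical diagonal coefficient $\langle\pi_S(\phi_S)v,v\rangle=\tr\bigl(\pi_S(\phi_S)P_{\pi_S}\bigr)$ rather than the full trace $\tr\pi_S(\phi_S)$, so neither (ii) nor even (i) descends by pure formality from Sauvageot's inequalities (for instance, with $\hat f_S=0$, $\psi_S=0$ and $\phi_S=a\ast b-b\ast a$ a commutator, all traces vanish while $\tr(\pi_S(\phi_S)P_{\pi_S})$ need not, so the implication fails for general Sauvageot-admissible pairs). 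You have therefore located exactly the point at which the paper's argument is terse, not a spurious difficulty.

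However, neither of your proposed repairs closes the gap as written. Repair (a) is essentially circular: since spherical functions have transforms vanishing off $G(F_S)^{\wedge,\ur}$, the assertion that Sauvageot's construction can be arranged to produce $K_S$-bi-invariant $\phi_S,\psi_S$ \emph{is} the corollary, and you give no argument that his trace Paley--Wiener/partition construction can be carried out inside $\cH^{\ur}(G(F_S))$. Repair (b) is the right kind of argument — it is in effect the method the paper itself uses to prove Proposition~\ref{p:unr-density} (Satake isomorphism, Weierstrass approximation, the identity $\pl_S(\hat{\psi}_S)=\psi_S(1)$ together with Proposition~\ref{p:unr-plan-meas}) — but as sketched it only treats the tempered unramified spectrum $\prod_{v\in S}\hat{T}_{c,\theta_v}/\Omega_{c,\theta_v}$, whereas condition (ii) is required on all of $G(F_S)^{\wedge,\ur}$, including the non-tempered unramified representations, where the Plancherel measure gives no control over $\hat{\psi}_S-|\hat f_S-\hat{\phi}_S|$. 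One needs the extra compactness step carried out in the second half of the proof of Proposition~\ref{p:unr-density}: the whole unramified unitary dual sits in a compact subset $\mathcal{K}$ of the Satake variety (the Borel--Wallach reference there), and one must add to $\hat{\psi}_S$ a nonnegative element of the Hecke algebra of small Plancherel mass dominating the error on $\mathcal{K}$; one should also verify that membership in $\cF(G(F_S)^{\wedge,\ur})$ supplies the $\pl_S$-a.e.\ continuity needed for your Lusin-type squeeze on the tempered part. With those two points supplied, your route (b) yields a complete proof; as it stands, the proposal correctly diagnoses the difficulty but does not yet resolve it.
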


\begin{proof}
  Let $\phi_S,\psi_S\in C^\infty_c(G(F_S))$ be the functions associated to $\hat{f}_S$ as in Proposition \ref{p:density}. Then it is enough to replace $\phi_S$ and $\psi_S$ with their convolution products with the characteristic function on $\prod_{v\in S} K_v$.
\end{proof}

  The following proposition will be used later in \S\ref{sub:ST-theorem}. For each $v\in \cV_F(\theta)$,
  the image of $\hat{f}$ in $\cF(G(F_v)^{\wedge,\ur})$ via \eqref{e:Sauvageot}
  will be denoted $\hat{f}_v$.

\begin{prop}\label{p:unr-density}
  Let $\hat{f}\in \cF(\hat{T}_{c,\theta}/\Omega_{c,\theta})$ and $\epsilon >0$. There exists an integer $\kappa\ge 1$ and for all places $v\in \cV_F(\theta)$, there are bounded functions $\phi_{v},\psi_{v}\in \cH^{\ur}(G(F_{v}))^{\le \kappa}$ such that $\pl_v(\hat{\psi}_v)\le \epsilon$ and $|\hat{f_v}(\pi) - \hat{\phi_v}(\pi)| \le \hat{\psi_v}(\pi)$ for all $\pi\in G(F_v)^{\wedge,\ur}$.
\end{prop}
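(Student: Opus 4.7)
The plan is to carry out the approximation once and for all on the abstract compact space $\hat{T}_{c,\theta}/\Omega_{c,\theta}$, which by~\eqref{e:local-isom-v} is canonically identified with $G(F_v)^{\wedge,\ur,\temp}$ for every $v\in\cV_F(\theta)$, and then to transfer the resulting finite spectral sum to each $v$ via the Satake isomorphism. Because the approximation is performed on a fixed space which does not depend on $v$, the degree of the resulting Hecke element---that is, the truncation level $\kappa$---will be automatically uniform in $v$.

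First I would reduce to the case where $\hat f$ is continuous on $\hat T_{c,\theta}/\Omega_{c,\theta}$. As $\hat f\in\cF(\hat T_{c,\theta}/\Omega_{c,\theta})$ is bounded and continuous off an $\ST_\theta$-null set, a standard argument combining Lusin's theorem with Urysohn's lemma (paralleling the proof of Theorem~6.1 of~\cite{Sau97}) produces a continuous function $\hat f_1$ and a non-negative continuous function $\hat g_1$ with $|\hat f-\hat f_1|\le \hat g_1$ pointwise and $\int \hat g_1\, d\ST_\theta\le \epsilon/4$. Next, by Peter--Weyl applied to the compact group $\hat K$ and passed to the quotient $\hat K^\natural_\theta\simeq \hat T_{c,\theta}/\Omega_{c,\theta}$ of~\eqref{e:K-T-isom}, the characters $\chi_\lambda$ of Lemma~\ref{l:Kostant} span a $*$-closed uniformly dense subalgebra of $C(\hat K^\natural_\theta)$. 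One can therefore choose finite sums
\[
\hat f_2=\sum_{\lambda\in L}c_\lambda \chi_\lambda, \qquad \hat g_2=\sum_{\mu\in L'}c'_\mu \chi_\mu
\]
approximating $\hat f_1$ and $\hat g_1$ uniformly to any prescribed precision. Set $\kappa_0:=\max\{\|\nu\|:\nu\in L\cup L'\}$; this depends on $\hat f$ and $\epsilon$ but not on $v$.

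Next, for each $v\in\cV_F(\theta)$ I transfer via the Satake isomorphism and set $\phi_v:=\cS^{-1}_v(\hat f_2)$ and $\psi_v^0:=\cS^{-1}_v(\hat g_2)$ in $\cH^{\ur}(G(F_v))$. By Lemma~\ref{l:three-truncations} there is a constant $C_1$, depending only on the root datum attached to $\theta$ and hence uniform in $v\in\cV_F(\theta)$, such that $\phi_v,\psi_v^0\in \cH^{\ur}(G(F_v))^{\le C_1\kappa_0}$. Under~\eqref{e:local-isom-v}, the Satake transforms $\hat\phi_v$ and $\hat\psi^0_v$ restrict on the tempered unramified spectrum to $\hat f_2$ and $\hat g_2$ respectively, so the pointwise inequality $|\hat f_v - \hat\phi_v|\le \hat\psi_v^0$ on the tempered part is immediate from $|\hat f-\hat f_2|\le \hat g_2$. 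The weak convergence $\plur_v\to \ST_\theta$ of Proposition~\ref{p:lim-of-Plancherel} yields $\plur_v(\hat\psi_v^0)\to \int \hat g_2\, d\ST_\theta\le \epsilon/2$ as $q_v\to\infty$. For the finitely many $v\in\cV_F(\theta)$ at which $q_v$ is small one invokes Corollary~\ref{c:density} individually, absorbing the resulting finitely many truncation levels into a single $\kappa$ by enlarging $C_1\kappa_0$ once.

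The main obstacle is that the pointwise inequality $|\hat f_v - \hat\phi_v|\le \hat\psi_v$ must hold on \emph{all} of $G(F_v)^{\wedge,\ur}$, including its non-tempered unitary part where $\hat f_v\equiv 0$ by extension but $\hat\phi_v$ is the evaluation of a Laurent polynomial at Satake parameters of absolute value up to $q_v^\theta$ (for a Ramanujan-type $\theta<\tfrac12$), and can thus be of size $q_v^{O(\kappa_0)}$. To dominate this contribution I would add to $\psi^0_v$ a further positive regular term $\cS^{-1}_v(H_v)$ where $H_v$ is an $\Omega_{F,v}$-invariant positive symmetric polynomial in the Satake coordinates of degree controlled only by $\kappa_0$, chosen so that $H_v\ge |\hat f_2|$ on the non-tempered unitary locus while $\plur_v(H_v)$ remains small. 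The construction of such $H_v$ relies on the explicit shape of the Macdonald density of Proposition~\ref{p:unr-plan-meas}, which assigns comparatively little $\plur_v$-mass to the region where $H_v$ is large, together with the fact that the non-tempered unitary locus is a semialgebraic subset of $\hat T/\Omega_{F,v}$ cut out by bounded-degree unitarity inequalities. Setting $\psi_v:=\psi_v^0+\cS^{-1}_v(H_v)$ and taking $\kappa$ to be the maximum of the resulting truncation levels yields the single $\kappa$ and the functions $\phi_v,\psi_v\in\cH^{\ur}(G(F_v))^{\le\kappa}$ required by the proposition.
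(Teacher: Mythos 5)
Your overall strategy---manufacture a single $v$-independent pair of regular functions on $\hat{T}_{c,\theta}/\Omega_{c,\theta}$ (hence a uniform truncation level), transport them to every $v\in\cV_F(\theta)$ through the Satake isomorphism, invoke Proposition~\ref{p:lim-of-Plancherel} for $q_v$ large and Corollary~\ref{c:density} for the finitely many small $q_v$---is exactly the strategy of the paper; the paper merely obtains its $v$-independent regular functions by applying Corollary~\ref{c:density} once at a fixed auxiliary place $w$ rather than by Lusin plus Peter--Weyl, an inessential difference. The argument breaks down, however, at the step you yourself flag as the main obstacle: dominating $|\hat{\phi}_v|$ (and in fact $|\hat{\phi}_v|$ together with the possibly negative values of $\hat{\psi}^0_v$) on the non-tempered part of $G(F_v)^{\wedge,\ur}$. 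Your $H_v$ is only asserted to exist, with degree ``controlled only by $\kappa_0$'' and with $\plur_v(H_v)$ small, and the justification offered---that the Macdonald density of Proposition~\ref{p:unr-plan-meas} assigns little $\plur_v$-mass where $H_v$ is large---does not address the real difficulty: $\plur_v$ is supported on the tempered spectrum and gives zero mass to the non-tempered locus, so there is no tension there to exploit. The genuine problem is that the non-tempered unramified unitary points are Satake parameters that move with $v$ (their coordinates can be as large as a power of $q_v$), so one must produce a positive regular function that dominates the required quantity on this $v$-varying set while keeping both its degree and its integral over the tempered spectrum small \emph{uniformly in $v$}; neither is delivered by the appeal to semialgebraicity, and without a uniform degree bound there is no single $\kappa$, which is the entire content of the proposition.

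The ingredient the paper uses to carry out this step, and which is absent from your sketch, is the fact (cf.\ \cite[Thm XI.3.3]{BW00}) that there is one compact subset $\mathcal{K}\subset\hat{T}_{\theta}/\Omega_{\theta}$ containing $G(F_v)^{\wedge,\ur}$ for \emph{every} $v\in\cV_F(\theta)$. Granting this, one chooses, at the single place $w$ and by Weierstrass approximation, a regular function $\psi''_w$ with $\pl_w(\hat{\psi}''_w)\le\epsilon/8$, with $\hat{\psi}''_w\ge 0$ on the tempered set, and with $\hat{\psi}''_w\ge|\hat{\psi}'_w|+|\hat{\phi}_w|$ on $\mathcal{K}\setminus G(F_w)^{\wedge,\ur,\temp}$; this one function is then transported to all $v$, giving simultaneously the uniform $\kappa$ and, via the two-sided comparison of $\plur_v$ with $\ST_\theta$, the uniform smallness of $\plur_v(\hat{\psi}''_v)$. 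You would need either this compactness input or a substitute for it. Two smaller slips: on the non-tempered locus you need $H_v\ge|\hat f_2|+|\hat g_2|$, not just $H_v\ge|\hat f_2|$, because the regular function $\hat g_2$ can be very negative off the compact torus; and the tempered inequality is not ``immediate from $|\hat f-\hat f_2|\le\hat g_2$''---the uniform approximations only give $|\hat f-\hat f_2|\le\hat g_2+2\delta$, so you must add a small multiple of the unit of the Hecke algebra to $\hat g_2$. Those are fixable; the missing uniform compactness argument is the genuine gap.
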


\begin{proof}
  This is no more than Corollary \ref{c:density} if we only required $\phi_{v},\psi_{v}\in \cH^{\ur}(G(F_{v}))$ without the superscript $\le \kappa$. So we may disregard finitely many $v$ by considering the subset $\cV_F(\theta)^{\ge Q}$ of $\cV_F(\theta)$ consisting of $v$ such that $q_v\ge Q$ for some $Q>0$. In view of Proposition \ref{p:lim-of-Plancherel}, we may choose $Q\in \Z_{>0}$ that \beq\label{e:bound-pl-ST}\forall v\in \cV_F(\theta)^{\ge Q},~\forall \hat{f}\in \cF(\hat{T}_{c,\theta}/\Omega_{c,\theta}),\quad
  \frac{1}{2} \ST_{\theta}(|\hat{f}|)\le \plur_v(|\hat{f}_v|)\le 2\ST_{\theta}(|\hat{f}|).\eeq

  Fix any $w\in \cV_F(\theta)^{\ge Q}$. Corollary \ref{c:density} allows us to find $\phi_w,\psi'_w\in\cH^{\ur}(G(F_{w}))$ such that
  \beq\label{e:phi_wpsi_w}
  \pl_w(\hat{\psi}'_w)\le \epsilon/8  \quad\mbox{and}\quad
\forall \pi_w\in G(F_w)^{\wedge,\ur},~
|\hat{f}_w(\pi_w) - \hat{\phi}_w(\pi_w)|\le \hat{\psi}'_w(\pi_w). \eeq
 Let $\kappa_0\in \Z_{\ge 0}$ be such that $\phi_w,\psi'_w\in\cH^{\ur}(G(F_{w}))^{\le \kappa_0}$. Now recall that for every $v\in \cV_F(\theta)$ there is a canonical isomorphism (cf. \eqref{e:Satake}, Lemma \ref{l:unr-temp-spec})
  between $\cH^{\ur}(G(F_v))$ and the space of regular functions in the complex variety $\hat{T}_{\theta}/\Omega_{\theta}$. Using the latter as a bridge, we may transport $\phi_w,\psi'_w$ to $\phi_v,\psi'_v\in\cH^{\ur}(G(F_{v}))$ for every $v\in  \cV_F(\theta)$. Clearly $\phi_v,\psi'_v\in \cH^{\ur}(G(F_{v}))^{\le \kappa_0}$ from the definition of \S\ref{sub:trun-unr-Hecke}. Moreover \eqref{e:bound-pl-ST} and \eqref{e:phi_wpsi_w} imply that
  for all $v\in \cV_F(\theta)^{\ge Q}$,
  $$  \pl_v(\hat{\psi}'_v)\le \epsilon/2  \quad\mbox{and}\quad
\forall \pi_v\in G(F_v)^{\wedge,\ur,\temp},~
|\hat{f}_v(\pi_v) - \hat{\phi}_v(\pi_v)|\le \hat{\psi}'_v(\pi_v).$$
(Observe that $\pl_v(\hat{\psi}'_v)\le 2 \ST_\theta(\hat{\psi}'_v)=2\ST_\theta(\hat{\psi}'_w)\le 4\pl_w(\hat{\psi}'_w)\le \epsilon/2$ to justify the first inequality.)

  To achieve the latter inequality for non-tempered $\pi_v\in G(F_v)^{\wedge,\ur}$, we would like to perturb $\psi'_v$ in a way independent of $v$ while not sacrificing the former inequality. Since $\hat{f}_v(\pi_v)=0$ for such $\pi_v$,
  what we need to establish is that $|\hat{\phi}_v(\pi_v)|\le \hat{\psi}_v(\pi_v)$ for all non-tempered $\pi_v\in G(F_v)^{\wedge,\ur}$. To this end, we use the fact that there is a compact subset $\mathcal{K}$ of $\hat{T}_{\theta}/\Omega_{\theta}$ such that $G(F_v)^{\wedge,\ur}$ is contained in $\mathcal{K}$ for every $v\in \cV_F(\theta)$ (cf. \cite[Thm XI.3.3]{BW00}). By using the Weierstrass approximation theorem, we find $\psi''_w\in \cH^{\ur}(G(F_{w}))$ such that
  $$\pl_w(\hat{\psi}''_w)\le \epsilon/8,$$
$$\forall \pi_w\in \mathcal{K}\bs G(F_w)^{\wedge,\ur,\temp},~|\hat{\psi}'_w(\pi_w)| + |\hat{\phi}_w(\pi_w)|\le \hat{\psi}''_w(\pi_w),$$
$$\forall \pi_w\in G(F_w)^{\wedge,\ur,\temp},~\hat{\psi}''_w(\pi_w)\ge 0.$$
  Choose $\kappa\ge \kappa_0$ such that $\psi''_w\in \cH^{\ur}(G(F_{w}))^{\le \kappa}$ and put $\psi_w:=\psi'_w+\psi''_w$ so that $\pl_w(\hat{\psi}_w)\le \epsilon/4$ and $\psi_w\in \cH^{\ur}(G(F_{w}))^{\le \kappa}$. For each $v\in \cV_F(\theta)^{\ge Q}$, let $\psi_v$ denote the transport of $\psi_w$ just as $\psi'_v$ was the transport of $\psi'_w$ in the preceding paragraph. Then $\pl_v(\hat{\psi}_v)\le \epsilon$ and $\psi_v\in \cH^{\ur}(G(F_{v}))^{\le \kappa}$ as before. Moreover
  $$\forall \pi_v\in G(F_v)^{\wedge,\ur,\temp},~ |\hat{f}_v(\pi_v) - \hat{\phi}_v(\pi_v)|\le \hat{\psi}'_v(\pi_v)\le \hat{\psi}_v(\pi_v)$$
  and for $\pi_v\in G(F_v)^{\wedge,\ur}\bs G(F_v)^{\wedge,\ur,\temp}$,
 $$ |\hat{f}_v(\pi_v) - \hat{\phi}_v(\pi_v)|=|\hat{\phi}_v(\pi_v)|\le \hat{\psi}''_v(\pi_v)-|\hat{\psi}'_v(\pi_v)|\le \hat{\psi}_v(\pi_v),$$
 the last inequality following from $\hat{\psi}_v=\hat{\psi}'_v+\hat{\psi}''_v$.
\end{proof}

\begin{rem}
  A more direct approach to~\eqref{e:phi_wpsi_w} that wouldn't involve Corollary~\ref{c:density} would be to use Weierstrass approximation to find polynomials $\phi$ and $\psi$ on $\hat T_{c,\theta}/\Omega_{c,\theta}$ of degree $\le \kappa$ such that $|\hat{f} - \hat{\phi} | \le \hat{\psi}$ and then the isomorphism~\eqref{e:Sauvageot} to transport $\phi$ and $\psi$ at the place $v$.
\end{rem}

We note~\cite{Sau97}*{Lemme 3.5} that for any $\phi_v\in C^\infty_c(G(F_v))$ there exists a $\phi'_v\in C^\infty_c(G(F_v))$ such that
$|\hat{\phi}_v(\pi_v)|\le \hat{\phi'}_v(\pi_v)$ for all $\pi_v\in G(F_v)^{\wedge}$. This statement is elementary, e.g. it follows from the Dixmier--Malliavin decomposition theorem. In fact we have the following stronger result due to Bernstein~\cite{Bernstein:typeI}.
\begin{prop}
  [Uniform admissibility theorem]
  \label{l:bounding-function-by-pos-function}
For any $\phi_v \in C^\infty_c(G(F_v))$ there exists $C>0$ such that $|\tr \pi(\phi_v)|\le C$ for all $\pi\in G(F_v)^{\wedge}$.
\end{prop}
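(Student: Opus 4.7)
The plan is to reduce the claim to Bernstein's uniform admissibility theorem, applied to a compact open subgroup that captures the bi-invariance of $\phi_v$. First I would choose a compact open subgroup $K\subset G(F_v)$ such that $\phi_v$ is bi-$K$-invariant, which exists because $\phi_v$ is smooth and compactly supported, and I would normalize the Haar measure so that $\vol(K)=1$. Let $e_K$ denote the characteristic function of $K$, an idempotent in the Hecke algebra. Then $\phi_v=e_K*\phi_v*e_K$, so for any smooth representation $(\pi,V_\pi)$ one has $\pi(\phi_v)=\pi(e_K)\pi(\phi_v)\pi(e_K)$. Since $\pi(e_K)$ is the projector onto the subspace $V_\pi^K$ of $K$-fixed vectors, $\pi(\phi_v)$ vanishes on a complement of $V_\pi^K$ and preserves $V_\pi^K$; in particular it has finite rank and
\[
\tr\pi(\phi_v)=\tr\bigl(\pi(\phi_v)|_{V_\pi^K}\bigr).
\]

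Next, assume $\pi\in G(F_v)^\wedge$, so $\pi$ is unitary on a Hilbert space. The operator $\pi(\phi_v)=\int_{G(F_v)}\phi_v(g)\pi(g)\,dg$ then has operator norm bounded by $\|\phi_v\|_{L^1(G(F_v))}$, hence each eigenvalue of its restriction to the finite-dimensional subspace $V_\pi^K$ is of absolute value at most $\|\phi_v\|_{L^1(G(F_v))}$. It follows that
\[
|\tr\pi(\phi_v)|\le \dim V_\pi^K\cdot \|\phi_v\|_{L^1(G(F_v))}.
\]

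The remaining ingredient is Bernstein's uniform admissibility theorem~\cite{Bernstein:typeI}, which asserts the existence of a constant $c(K)>0$ depending only on $K$ (and $G$) such that $\dim V_\pi^K\le c(K)$ for every irreducible smooth representation $\pi$ of $G(F_v)$. Combining this with the displayed bound gives $|\tr\pi(\phi_v)|\le c(K)\|\phi_v\|_{L^1(G(F_v))}$, which proves the proposition with $C:=c(K)\|\phi_v\|_{L^1(G(F_v))}$. The entire nontrivial content, and the main obstacle had we wished to prove things from scratch, is Bernstein's theorem itself, whose proof rests on the Bernstein decomposition of the category of smooth representations into blocks indexed by the Bernstein center together with an analysis of cuspidal support; since the proposition merely records its consequence on unitary characters, we simply invoke it.
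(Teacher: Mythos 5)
Your proof is correct and follows essentially the same route as the paper, which simply records this statement as a consequence of Bernstein's uniform admissibility theorem \cite{Bernstein:typeI} without writing out the reduction. Your argument (bi-$K$-invariance, the bound $|\tr\pi(\phi_v)|\le \dim V_\pi^K\cdot\|\phi_v\|_{L^1}$ for unitary $\pi$, then Bernstein's uniform bound on $\dim V_\pi^K$) is exactly the standard deduction the paper leaves implicit.
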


%\begin{prop}(\cite[Thm 2.5]{Tad88})
%  Let $M$ be a Levi subgroup of $G$ and $\pi_M$ an irreducible admissible representation of $M(F)$.
%  The set of $\chi\in \Psi(M)$ such that $\nind^G_M(\pi_M\otimes \chi)(\phi)$ has a unitary subquotient is a compact subset of $\Psi(M)$.
%\end{prop}
%
%
%\begin{lem}\label{l:irr-via-induced-rep}
%  Any irred adm (or unitary) rep is a linear combination of induced reps with uniformly bounded coefficients.
%\end{lem}

\subsection{Automorphic representations and a counting measure}\label{sub:aut-rep}

%  Let $G$ be a connected reductive group over a \emph{number} field $F$.
%  Let $\chi:A_{G,\infty}\ra \C^\times$ be a continuous homomorphism.
%  Denote by $L^2_\chi(G(F)\bs G(\A_F))$
%  the space of all functions $f$ on $G(\A_F)$ which are
%  square-integrable modulo $A_{G,\infty}$ and satisfy
%  $f(g\gamma z)=\chi(z)f(\gamma)$ for all $g\in G(F)$, $\gamma \in G(\A_F)$ and
%  $z\in A_{G,\infty}$. There is a spectral decomposition into discrete and continuous parts
%$$L^2_{\chi}(G(F)\bs G(\A_F))=L^2_{\disc,\chi}\oplus L^2_{\cont,\chi},
%\qquad L^2_{\disc,\chi}=\hat{\bigoplus_{\pi}}\, m_{\disc}(\pi)\cdot \pi$$
%where the last sum is a Hilbert direct sum running over the set of
%all irreducible representations
%of $G(\A_F)$ up to isomorphism.
%Write $\cA_{\disc,\chi}(G)$ for the set of isomorphism classes of
%all irreducible representations $\pi$
%of $G(\A_F)$ such that $m_{\disc}(\pi)>0$.
%Any $\pi\in \cA_{\disc,\chi}(G)$ is said to
%be a discrete automorphic representation of $G(\A_F)$.

 Now consider a string of complex numbers
  $$\cF=\{a_{\cF}(\pi)\in \C\}_{\pi\in\cAR_{\disc,\chi}(G)}$$
  such that $a_{\cF}(\pi)=0$ for all but finitely many $\pi$.
  We think of $\cF$ as a multi-set by viewing
  $a_{\cF}(\pi)$ as multiplicity, or more appropriately as a density function with finite support in $\cF$ as $a_{\cF}(\pi)$ is allowed to be in $\C$. There are obvious meanings when we write $\pi\in \cF$ and $|\cF|$ (we could have written $\pi\in \mathrm{supp}\, \cF$ for the former):
$$ \pi\in \cF ~\stackrel{\mathrm{def}}{\Leftrightarrow}~ a_{\cF}(\pi)\neq 0,\qquad
|\cF|:=\sum_{\pi\in \cF} a_{\cF}(\pi).$$

  In order to explain our working hypothesis, we recall a definition.
\begin{defn}\label{d:cuspidal}
  Let $H$ be a connected reductive group over $\Q$. The maximal $\Q$-split torus in $Z(H)$
  is denoted $A_H$. We say $H$ is \emph{cuspidal} if $(H/A_H)\times_\Q \R$ contains a maximal
  $\R$-anisotropic torus.
\end{defn}
  If $H$ is cuspidal then $H(\R)$ has discrete series representations. (We remind the reader that discrete series always mean ``relative discrete series'' for us, i.e. those whose matrix coefficients are square-integrable modulo center.) The converse is true when
  $H$ is semisimple but not in general.
  Throughout this section the following will be in effect:
\begin{hypo}\label{hypo:cuspidal}
  $\Res_{F/\Q} G$ is a cuspidal group.
\end{hypo}
  Let $S=S_0\coprod S_1\subset \cV_F^\infty$ be a nonempty finite subset
  and  $\hat{f}_{S_0}\in \mF(G(F_{S_0})^{\wedge})$. %$\phi_{S_0}\in C^\infty_c(G(F_{S_0}))$.
  (It is allowed that either $S_0$ or $S_1$ is empty.)
  Let
\bit\item (level) $U^{S,\infty}$ be an open compact subset of $G(\A^{S,\infty})$,
\item (weight) $\xi=\otimes_{v|\infty}\xi_v$
be an irreducible algebraic representation of
$$G_\infty\times_\R \C=(\Res_{F/\Q} G)\times_\Q \C=\prod_{v|\infty} G\times_{F,v} \C. $$ \eit
  Denote by $\chi:A_{G,\infty}\ra \C^\times$ the restriction of the
  central character for $\xi^\vee$.
Define $$\cF=\cF(U^{S,\infty},\hat{f}_{S_0},S_1,\xi)\quad\mbox{by}$$
\beq\label{e:a(pi)-general}
a_{\cF}(\pi):=(-1)^{q(G)} m_{\disc,\chi}(\pi)\dim(\pi^{S,\infty})^{U^{S,\infty}}
\hat{f}_{S_0}(\pi_{S_0})\hat{\triv}_{K_{S_1}}(\pi_{S_1})
  \chi_{\EP}( \pi_\infty\otimes \xi) ~\in \C.\eeq
  Note that $\hat{\triv}_{K_{S_1}}(\pi_{S_1})$ equals 1 if $\pi_{S_1}$ is unramified
  and 0 otherwise, and that $\chi_{\EP}( \pi_\infty\otimes \xi)=0$ unless $\pi_\infty$ has
  the same infinitesimal character as $\xi^\vee$.
  The set of $\pi$ such that $a_{\cF}(\pi)\neq 0$ is finite by
  Harish-Chandra's finiteness theorem.
  Let us define measures $\hat{\mu}_{\cF,S_1}$ and $\hat{\mu}^\natural_{\cF,S_1}$ associated with $\cF$
 on the unramified unitary dual $G(F_{S_1})^{\wedge,\ur}$, motivated by the trace formula.
Put $\tau'(G):=\ol{\mu}^{\can,\EP}(G(F)A_{G,\infty}\bs G(\A_F))$.
For any function $\hat{f}_{S_1}$ on $G(F_{S_1})^{\wedge,\ur}$ which is continuous
outside a measure zero set, define
\beq\label{e:defn-of-mu}
\hat{\mu}_{\cF,S_1}(\hat{f}_{S_1})
  := \frac{\mu^{\can}(U^{S,\infty})}{\tau'(G)\dim\xi}\sum_{\pi\in \cAR_{\disc,\chi}(G)} a_{\cF}(\pi) \hat{f}_{S_1}(\pi_{S_1}).\eeq
The sum is finite because $a_{\cF}$ is supported on finitely many $\pi$. Now the key point is that the right hand side can be identified with the spectral side of Arthur's trace formula with the Euler-Poincar\'{e} function
  at infinity as in \S\ref{sub:EP} when $\hat{f}_{S_1}=\hat{\phi}_{S_1}$ for some $\phi_{S_1}\in \cH^{\ur}(G(F_{S_1}))$ (\cite[pp.267-268]{Art89}, cf. proof of \cite[Prop 4.1]{Shi-Plan}). So to speak,
if we write $\phi^{\infty}=\phi_{S_0}\phi_{S_1}\phi^{S,\infty}$,
\beq\label{e:apply-Arthur}\hat{\mu}_{\cF,S_1}(\hat{\phi}_{S_1})= (-1)^{q(G)}\frac{ I_{\spec}(\phi^{\infty}\phi_\xi,\mu^{\can,\EP})}{{\tau'(G)\dim\xi}}
= (-1)^{q(G)}\frac{ I_{\geom}(\phi^{\infty}\phi_\xi,\mu^{\can,\EP})}{{\tau'(G)\dim\xi}}\eeq
where $I_{\spec}$ (resp. $I_{\geom}$) denotes the spectral (resp. geometric)
side Arthur's the invariant trace formula with respect to the measure
$\mu^{\can,\EP}$. Finally if $\hat{f}_{S_0}$ has the property that $\pl_{S_0}(\hat{f}_{S_0})\neq 0$ then
put $$ \hat{\mu}^\natural_{\cF,S_1}:=\pl_{S_0}(\hat{f}_{S_0})^{-1}  \hat{\mu}_{\cF,S_1}.$$

% OLD: definition of mu for only tempered reps at infinity
% For an auxiliary use in later arguments, define $\hat{\mu}^{\tempinf}_{\cF,S_1}(\hat{f}_{S_1})$ to be the right hand side of \eqref{e:defn-of-mu} in which the sum is taken over only those $\pi$ such that $\pi_\infty$ is tempered. Similarly $\hat{\mu}^{\natural,\tempinf}_{\cF,S_1}$ is defined. When $\xi$ has regular highest weight, $\hat{\mu}_{\cF,S_1}=\hat{\mu}^{\tempinf}_{\cF,S_1}$ and $\hat{\mu}^{\natural,\tempinf}_{\cF,S_1}=\hat{\mu}^{\natural}_{\cF,S_1}$, cf. \S\ref{sub:EP}.

\begin{rem}\label{r:|F|} The measure $ \hat{\mu}^\natural_{\cF,S_1}$
  is asymptotically the same as the counting measure $$\hat{\mu}^{\mathrm{count}}_{\cF,S_1}(\hat{f}_{S_1})
  = \frac{1}{|\cF|}\sum_{\pi\in \cAR_{\disc,\chi}(G)} a_{\cF}(\pi) \hat{f}_{S_1}(\pi_{S_1}).$$
associated with the $S_1$-components of $\cF$ (assuming $|\cF|\neq 0$). More precisely if $\{\cF_k\}_{\ge1}$ is a family of
\S\ref{sub:aut-families} below, then $\hat{\mu}^{\mathrm{count}}_{\cF_k,S_1}/
  \hat{\mu}^{\natural}_{\cF_k,S_1}$ is a constant tending to 1 as $k\ra\infty$ by Corollary \ref{c:estimate-F_k}.
\end{rem}

\begin{ex}\label{ex:weight-regular}
  Let $\pi\in \cAR_{\disc,\chi}(G)$. Suppose that the highest weight of $\xi$ is \emph{regular} and that $S_0=\emptyset$. Then
  $\pi$ belongs to $\cF$ if and only if the following three conditions hold:
  $(\pi^{S,\infty})^{U^{S,\infty}}\neq 0$, $\pi$ is unramified at $S$, and
  $\pi_\infty\in \Pi_{\disc}(\xi^\vee)$. When $\pi_\infty\in \Pi_{\disc}(\xi^\vee)$,
  \eqref{e:a(pi)-general} simplifies as
  \[
  %\label{e:acF(pi)}
  a_{\cF}(\pi)=m_{\disc,\chi}(\pi)\dim(\pi^{S,\infty})^{U^{S,\infty}}.
  \]
\end{ex}

\begin{ex}\label{ex:example-for-f_S}
  Let $\hat{f}_{S_0}$ be a characteristic function on some relatively compact $\pl_S$-measurable subset
   $\hat{U}_{S_0}\subset G(F_{S_0})^{\wedge}$.
   Assume that $S_0$ is large enough such that $G$ and all members of
   $\cF$ are unramified outside $S_0$. Take $U^{S_0,\infty}$ to be the product of $K_v$
   over all finite places $v\notin S_0$. Then for each $\pi\in \cAR_{\disc,\chi}(G)$,
   \begin{equation}\label{e:example-for-f_S}
	  a_{\cF}(\pi)=(-1)^{q(G)}\chi_{\EP}(\pi_\infty\otimes \xi)
   m_{\disc,\chi}(\pi)
 \end{equation}
   if $\pi^{S_0,\infty}$ is unramified, $\pi_{S_0}\in  \hat{U}_{S_0}$
   (in which case $a_{\cF}(\pi)\neq0$ if moreover $\chi_{\EP}(\pi_\infty\otimes \xi)\neq 0$; otherwise $a_{\cF}(\pi)=0$).
 If the highest weight of $\xi$ is regular, $\chi_{\EP}(\pi_\infty\otimes \xi)\neq 0$ exactly when $\pi_\infty\in \Pi_{\disc}(\xi^\vee)$, in which case \eqref{e:example-for-f_S} simplifies
   as
   \[
   %\label{e:simplest-a(pi)}
   a_{\cF}(\pi)=m_{\disc,\chi}(\pi).
   \]
   Compare this with Example \ref{ex:weight-regular}. (The analogy in the case of modular forms is that
   $\pi$ as newforms are counted in the current example
   whereas old-forms are also counted in Example \ref{ex:weight-regular}.)
Finally we observe that since the highest weight of $\xi$ is regular and $\pi_\infty\in \Pi_{\disc}(\xi^\vee)$, the discrete automorphic representation $\pi$ is automatically cuspidal~\cite{Wall84}*{Thm.~4.3}. In the present example the discrete multiplicity coincides with the cuspidal multiplicity.
\end{ex}

\begin{rem}\label{r:why-S_0}
  As the last example shows,
  the main reason to include $S_0$ is to prescribe local conditions at finitely many places (namely at $S_0$) on automorphic families.
  For instance one can take $\hat{f}_{S_0}=\hat{\phi}_{S_0}$ where
   $\phi_{S_0}$ is a pseudo-coefficient of a supercuspidal representation
  (or a truncation thereof if the center of $G$ is not anisotropic over $F_{S_0}$).
  Then it allows us to consider a family of $\pi$
  whose $S_0$-components are a particular supercuspidal representation (or an unramified character twist thereof).
  By using various $\hat{f}_{S_0}$
  (which are in general not equal to $\hat{\phi}_{S_0}$ for any
   $\phi_{S_0}\in C^\infty_c(G(F_{S_0}))$) one
   obtains great flexibility in prescribing a local condition
   as well as imposing weighting factors for a family.
\end{rem}

\subsection{Families of automorphic representations}\label{sub:aut-families}

  Continuing from the previous subsection (in particular keeping Hypothesis \ref{hypo:cuspidal})
 let us introduce two kinds of families $\{\cF_k\}_{k\ge 1}$
 which will be studied later on. We will measure the
 size of $\xi$ in the following way. Let $T_\infty$ be a maximal torus of
 $G_\infty$ over $\R$. For a $B$-dominant $\lambda\in X^*(T_\infty)$, set
  $m(\lambda):=\min_{\alpha\in \Phi^+} \lg \lambda,\alpha\rg$.
  For $\xi$ with $B$-dominant highest weight $\lambda_{\xi}$,
  define $m(\xi):=m(\lambda_\xi)$.

   Let $\phi_{S_0}\in C^\infty_c(G(F_{S_0}))$. (More generally we will sometimes prescribe a local condition at $S_0$ by
   $\hat{f}_{S_0}\in \mF(G(F_{S_0})^{\wedge})$ rather than $\phi_{S_0}$.) In the remainder of Section~\ref{s:aut-Plan-theorem} we mostly focus on families in the level or weight aspect, respectively described as the following:

\begin{ex}[\key{Level aspect}: varying level, fixed weight]\label{ex:level-varies}
  Let $\fkn_k\subset \cO_F$ be a nonzero ideal prime to $S$ for each $k\ge 1$ such that $\N(\fkn_k)=[\cO_F:\fkn_k]$ tends to $\infty$
  as $k\ra \infty$.
  Take $$\cF_k:=\cF(K^{S,\infty}(\fkn_k),\hat{\phi}_{S_0},S_1,\xi).$$
  Then $|\cF_k|\ra\infty$ as $k\ra\infty$.
\end{ex}

\begin{ex}[\key{Weight aspect}: fixed level, varying weight]\label{ex:wt-varies}

For our study of weight aspect it is always supposed that $Z(G)=1$ so that $A_{G,\infty}=1$ and $\chi=1$ in order to eliminate the technical problem with central character when weight varies.\footnote{Without the hypothesis that the center is trivial, one should work with fixed central character and apply the trace formula in such a setting. Then our results and arguments in the weight aspect should remain valid without change.} Let $\{\xi_k\}_{k\ge 1}$ be a sequence of irreducible algebraic representations
of $G_\infty\times_\R \C$ such that $m(\xi_k)\ra \infty$
  as $k\ra \infty$. Take $$\cF_k:=\cF(U^{S,\infty},\hat{\phi}_{S_0},S_1,\xi_k).$$
  Then $|\cF_k|\ra\infty$ as $k\ra\infty$.

\end{ex}

\begin{rem}
  Sarnak proposed a definition of families of automorphic representations (or automorphic $L$-functions)
  in \cite{Sarn:family}. The above two examples fit in his definition.

\end{rem}

%\begin{ex}(general case with simplifying local conditions)

%  Drop the previous assumption, and let $G$ be any connected reductive group
%  over any number field $F$.\footnote{For simplicity assume $Z(G)$ is finite, or anisotropic
%  at $v_1$ and $v_2$. Otherwise, twisted by characters should be considered.}
%  Let $S\subset \cV_F$ be a nonempty finite subset (which may or may contain infinite places).
%  Fix two auxiliary finite places $v_1$ and $v_2$ of $F$ not contained in $S$.
%    Let $\pi^0_{v_2}$ be a supercuspidal representation of $G(F_{v_2})$.
%   Set
%  $$a_{\cF}(\pi):= (-1)^{q_{v_1}(G)} m_{\disc}(\pi)\chi_{\EP}(\pi_{v_1}),\quad\mbox{if}~\pi_{v_2}\simeq \pi^0_{v_2}$$
%  and $a_{\cF}(\pi):=0$ otherwise.
%  Let $\phi_{v_1}$ be the Euler-Poincar\'{e} function of \cite{Kot88} and
%  $\phi_{v_2}$ a pseudo-coefficient for $\pi_{v_2}$ such that $\tr \pi_{v_2}(\phi_{v_2})=1$.
%  Then the simple trace formula
%  implies (as in \cite[REFERENCE]{Shi-Plan})  \beq\label{e:mu-hat-general}\hat{\mu}_{\cF,S}(\hat{\phi}_S)
%  = \sum_{\pi\in \cAR_{\disc,\chi}(G)} a_{\cF}(\pi) \hat{\phi}_S(\pi_S)
%  = (-1)^{q(G)} I_{\spec}(\phi_S\phi^{S,v_1,v_2}\phi_{v_1}\phi_{v_2})\eeq
%  for any $\phi^{S,v_1,v_2}\in C^\infty_c(G(\A^{S,v_1,v_2,\infty}))$ and $\phi_S
%  \in C^\infty_c(G(F_S))$.
%\end{ex}

\subsection{Level aspect}\label{sub:level-varies}

  We are in the setting of Example \ref{ex:level-varies}.
  Recall that $\Res_{F/\Q} G$ is assumed to be cuspidal.
  Fix $\Xi:G\hra \GL_m$ as in Proposition \ref{p:global-integral-model} and let $B_\Xi$ and $c_\Xi$ be as in \eqref{e:def-M(Xi)} and
Lemma \ref{l:forcing-unipotent}.
Write $\mL_c(M_0)$ for the set of $F$-rational cuspidal Levi subgroups
of $G$ containing the minimal Levi $M_0$.

\begin{thm}\label{t:level-varies}
  Fix $\phi_{S_0}\in C^\infty_c(G(F_{S_0}))$ and $\xi$.
  Let $S_1\subset \cV_F^\infty$ be a subset where $G$ is unramified.
  Let $\phi_{S_1}\in \cH^{\ur}(G(F_{S_1}))^{\le \kappa}$
  be such that $|\phi_{S_1}|\le 1$ on $G(F_{S_1})$.
   If $\mL_c(M_0)=\{G\}$ (in particular if $G$ is abelian) then $\hat{\mu}_{\cF_k,S_1}(\hat{\phi}_{S_1})=\pl_S(\hat{\phi}_S)$.
   Otherwise
 there exist constants $A_{\level}, B_{\level}>0$ and $C_{\level}\ge 1$ such that
 \beq \label{e:t:level}\hat{\mu}_{\cF_k,S_1}(\hat{\phi}_{S_1})-\pl_S(\hat{\phi}_S)
  = O(q_{S_1}^{A_{\level}+B_{\level} \kappa} \N(\fkn)^{-C_{\level}})\eeq
  as $\fkn,\kappa\in \Z_{\ge 1}$, $S_1$ and $\phi_{S_1}$ vary subject to the following conditions:
  \benu[(i)] \item $\N(\fkn) \ge c_\Xi q_{S_1}^{B_\Xi m \kappa}$,
  \item no prime divisors of $\fkn$ are contained in $S_1$. \eenu
  (The implicit constant in $O(\cdot)$ is independent of $\fkn$, $\kappa$, $S_1$ and $\phi_{S_1}$.)

\end{thm}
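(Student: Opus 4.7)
The plan is to apply Arthur's invariant trace formula to $\phi=\phi_{S_0}\phi_{S_1}\,\triv_{K^{S,\infty}(\fkn)}\,\phi_\xi$, where $\phi_\xi$ is the Clozel--Delorme Euler-Poincar\'e function at infinity (\S\ref{sub:EP}) and all normalizations are as in \S\ref{sub:can-measure}. By \eqref{e:apply-Arthur}, $\hat{\mu}_{\cF_k,S_1}(\hat{\phi}_{S_1})$ is identified with $(-1)^{q(G)}I_{\geom}(\phi,\mu^{\can,\EP})/\tau'(G)\dim\xi$, whose simple form (thanks to $\phi_\xi$) is a finite sum
$$\sum_{M\in\mL_c(M_0)}\sum_{\gamma}a'_{M,\gamma}\cdot O^{M(\A^\infty)}_\gamma(\phi^\infty_M)\cdot\frac{\Phi^G_M(\gamma,\xi)}{\dim\xi},$$
where the inner sum runs over semisimple $\R$-elliptic rational conjugacy classes in $M(F)$.

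The next step is to use Lemma~\ref{l:forcing-unipotent} to collapse the inner sum to $\gamma=1$. Under the hypothesis $\N(\fkn)\ge c_\Xi q_{S_1}^{B_\Xi m\kappa}$, every $\gamma\in M(F)\subset G(F)$ whose $G(\A^\infty)$-conjugate lies in $\supp(\phi^\infty)\subset K^{S,\infty}(\fkn)\cdot\supp(\phi_{S_0})\cdot\supp(\phi_{S_1})$ must be unipotent in $G$; semisimplicity then forces $\gamma=1$. Since the support of $\phi^\infty_M$ is controlled by $G(\A^\infty)$-conjugates of $\supp(\phi^\infty)$, the integral $O^{M(\A^\infty)}_\gamma(\phi^\infty_M)$ vanishes unless $\gamma=1$. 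The contribution of $(M,\gamma)=(G,1)$, unravelled using Corollary~\ref{c:canonical-measure}, the identity $\Phi^G_G(1,\xi)=\dim\xi$, and Harish-Chandra's Plancherel formula $\pl_S(\hat{\phi}_S)=\phi_S(1)$, equals exactly $\pl_S(\hat{\phi}_S)$ by the computation of \cite{Shi-Plan}. When $\mL_c(M_0)=\{G\}$, this is the only surviving term and the theorem holds with zero error.

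It remains to bound the residual contributions from proper cuspidal Levi subgroups $M\subsetneq G$ at $\gamma=1$, each of the shape $a'_{M,1}\cdot\phi^\infty_M(1)\cdot\Phi^G_M(1,\xi)/\dim\xi$. The factor $\Phi^G_M(1,\xi)/\dim\xi$ depends only on $M$, $G$ and (fixed) $\xi$, while $a'_{M,1}$ is controlled by special values of Gross's motivic $L$-functions via Corollary~\ref{c:bound-on-L(1)}; both are constants. The orbital integral factorizes adelically as
$$\phi^\infty_M(1)=\phi_{S_0,M}(1)\cdot\phi_{S_1,M}(1)\cdot\prod_{v\mid\fkn}(\triv_{K_v(\varpi_v^{v(\fkn)})})_M(1),$$
with $\phi_{S_0,M}(1)$ a constant. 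For $v\mid\fkn$, the normality of $K_v(\varpi_v^{v(\fkn)})$ in $K_v$ together with the measure normalization \eqref{e:meas-K-cap-N} gives the explicit value $(\triv_{K_v(\varpi_v^{v(\fkn)})})_M(1)=q_v^{-\dim N_M\cdot v(\fkn)}$, where $N_M$ is the unipotent radical of an associated parabolic, producing the decay $\N(\fkn)^{-\dim N_M}$ with $\dim N_M\ge 1$. Finally, Lemma~\ref{l:bounding-phi-on-S_0} gives $|\phi_{S_1,M}(1)|=O(q_{S_1}^{d_G+r_G+b_G\kappa})$ uniformly in $S_1$ and $\kappa$. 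Combining these estimates yields \eqref{e:t:level} with explicit $A_\level,B_\level,C_\level$.

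The main obstacle is not harmonic-analytic depth but careful bookkeeping: tracking all of the Tamagawa and canonical measure constants, the coefficients $a'_{M,\gamma}$, and the stable discrete series characters, to produce uniform explicit constants. Since the orbital integrals at $S_1$ are evaluated only at the identity (accessed through the constant term formula), the subtle uniform orbital integral bound of Theorem~\ref{t:appendeix2} is not needed here, in contrast to the weight aspect where the base residue characteristics are genuinely varying.
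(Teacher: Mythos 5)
Your proposal is correct and follows essentially the same route as the paper: the trace formula with the Euler--Poincar\'e function at infinity, Lemma~\ref{l:forcing-unipotent} to eliminate all non-identity (semisimple) contributions under the condition $\N(\fkn)\ge c_\Xi q_{S_1}^{B_\Xi m\kappa}$, identification of the $(G,1)$ term with $\pl_S(\hat{\phi}_S)$, Lemma~\ref{l:bounding-phi-on-S_0} for $\phi_{S_1,M}(1)$, and the volume computation $\vol(K_v(\varpi_v^{v(\fkn)})\cap N(F_v))=q_v^{-v(\fkn)\dim N}$ giving the decay $\N(\fkn)^{-C_\level}$ with $C_\level$ the minimal $\dim N$ over proper parabolics with cuspidal Levi. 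The only (harmless) omission is the bounded constant $\prod_{v\in\Ram(G)\setminus S}\vol(K_v\cap N(F_v))$ at the finitely many ramified places not in $S$, which the paper absorbs into the implied constant; your closing remark that Theorem~\ref{t:appendeix2} is not needed in the level aspect matches the paper as well.
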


\begin{rem}\label{r:level-varies}
  When $\pl_{S_0}(\hat \phi_{S_0})\neq0$, \eqref{e:t:level} is equivalent to
$$\hat{\mu}^{\natural}_{\cF,S_1}(\hat{\phi}_{S_1})-\pl_{S_1}(\hat{\phi}_{S_1})
  = O(q_{S_1}^{A_{\level}+B_{\level}  \kappa} \N(\fkn)^{-C_{\level}})$$.
\end{rem}

\begin{rem}\label{r:explicit-const-level}
  One can choose $A_{\level}, B_{\level}, C_{\level}$ to be explicit integers. See the proof below.
  For instance $C_{\level}\ge n_G$ for $n_G$ defined in \S\ref{sub:notation}.
\end{rem}

\begin{proof}
  Put $\phi^{S,\infty}:=\triv_{K^{S,\infty}(\fkn)}$. The right hand side
  of \eqref{e:apply-Arthur} is expanded as in \cite[Thm 6.1]{Art89} as shown by Arthur.
  Arguing as at the start of the proof of \cite[Thm 4.4]{Shi-Plan}, we obtain from Lemma \ref{l:forcing-unipotent} in view of the imposed lower bound on $N(\fkn)$ that
  \beq\label{e:TF-level-aspect}
\hat{\mu}_{\cF,S_1}(\hat{\phi}_{S_1})-\pl_S(\hat{\phi}_S)
  =\sum_{M\in \mL_c(M_0)\bs \{G\}} a_M \cdot \phi_{S_0,M}(1)\phi_{S_1,M}(1) \phi^{S,\infty}_{M}(1) \frac{\Phi^G_M(1,\xi)}{\dim \xi},\eeq
  where the sum runs over proper cuspidal Levi subgroups of $G$ containing
  a fixed minimal $F$-rational Levi subgroup (see \cite[p.539]{GKM97}
  for the reason why only cuspidal Levi subgroups contribute)
  and $a_M\in \C$ are explicit constants depending only on $M$ and $G$.
  A further explanation of \eqref{e:TF-level-aspect} needs to be given.
  Since only semisimple conjugacy classes contribute to Arthur's trace formula for each $M$,
  Lemma \ref{l:forcing-unipotent} tells us that any contribution from non-identity elements
  vanishes. Note that $\pl_S(\hat{\phi}_S)$ comes from the $M=G$ term on the right hand side.

  The first assertion of the theorem follows immediately from
  \eqref{e:TF-level-aspect}. Henceforth we may assume that $\mL_c(M_0)\bs \{G\}\neq \emptyset$.

  Clearly $\phi_{S_0,M}(1)$ and $\Phi^G_M(1,\xi)/\dim \xi$ are constants.
  It was shown in Lemma \ref{l:bounding-phi-on-S_0} that $|\phi_{S_1,M}(1)|=O(q_{S_1}^{d_G+r_G+b_G\kappa})$
  for $b_G> 0$ in that lemma. We take $$A_{\level}:=d_G+r_G\quad\mbox{and}\quad B_{\level}:=b_G.$$
  We will be done if it is checked that $|\phi^{S,\infty}_{M}(1)|= O(\N(\fkn)^{-C_{\level}})$ for some $C_{\level}\ge 1$.
  Let $P=MN$ be a parabolic subgroup with Levi decomposition where $M$ is as above. Then
  $$0\le \phi^{S,\infty}_{M}(1)=\int_{N(\A_F^{S,\infty})} \phi^{S,\infty}(n)dn
  = \prod_{v\notin S\atop v|\fkn~\mathrm{or}~v\in \Ram(G)} \vol(K_v(\varpi_v^{v(\fkn)})\cap N(F_v))$$
 $$ = \prod_{ v\notin S \atop v|\fkn~\mathrm{or}~v\in \Ram(G)} \vol (N(F_v)_{x,v(\fkn)})
  = \left( \prod_{v|\fkn\atop v\notin S} q_v^{-v(\fkn)\dim N}\right)
   \prod_{v\in \Ram(G)\atop v\notin S} \vol(K_v\cap N(F_v)).$$
  The last equality uses the standard fact about the filtration
  that $\vol(N(F_v)_{x,v(\fkn)})=|\varpi_v|^{v(\fkn)\dim N} \vol(N(F_v)_{x,0})$ and the fact \eqref{e:meas-K-cap-N}
  that $\vol(N(F_v)_{x,0}) = \vol(N(F_v)\cap K_v)=1$ when $G$ is unramified at $v$.
  Take $$C_{\level}:=\min_{M\in \mL_c(M_0)\bs \{G\}\atop P=MN}
  (\dim N)$$ to be the minimum dimension of the unipotent radical of a proper parabolic subgroup of $G$ with cuspidal Levi
  part. Then $|\phi^{S,\infty}_{M}(1)|\le \N(\fkn)^{-C_{\level}}\prod_{v\in \Ram(G)} \vol(K_v\cap N(F_v))$
  for every $M$ in \eqref{e:TF-level-aspect}.

\end{proof}

\subsection{Weight aspect}\label{sub:weight-varies}

 We put ourselves in the setting of Example \ref{ex:wt-varies} and exclude the uninteresting case of $G=\{1\}$.
 By the assumption $Z(G)=\{1\}$, for every $\gamma\neq 1\in G(F)$
 the connected centralizer $I_\gamma$ has a strictly smaller set of roots so that $|\Phi_{I_\gamma}|<|\Phi|$.
 Our next task is to prove a similar error bound as in the last subsection.

\begin{thm}\label{t:weight-varies} Fix $\phi_{S_0}\in C^\infty_c(G(F_{S_0}))$
 and $U^{S,\infty}\subset G(\A^{S,\infty})$.
 There exist constants $A_{\weight},B_{\weight}>0$ and $C_{\weight}\ge 1$ satisfying the following: for
 \bit
 \item any $\kappa\in \Z_{>0}$, \item any finite subset $S_1\subset \cV_F^\infty$ disjoint from $S_0$ and $S_{\bad}$
   (\S\ref{sub:global-bound-orb-int}) and \item any $\phi_{S_1}\in \cH^{\ur}(G(F_{S_1}))^{\le \kappa}$ % $r^*(\cH_{d,S_1}^{\le \kappa})$
  such that $|\phi_{S_1}|\le 1$ on $G(F_{S_1})$,\eit
%  we have
 $$\hat{\mu}_{\cF,S_1}(\hat{\phi}_{S_1})-\pl_S(\hat{\phi}_S)
  = O(q_{S_1}^{A_{\weight}+B_{\weight}\kappa} m(\xi)^{-C_{\weight}})$$
where the implicit constant in $O(\cdot)$ is independent of $\kappa$, $S_1$ and $\phi_{S_1}$.
(Equivalently, $\hat{\mu}^\natural_{\cF,S_1}(\hat{\phi}_{S_1})-\pl_{S_1}(\hat{\phi}_{S_1})
  = O(q_{S_1}^{A_{\weight}+B_{\weight}\kappa} m(\xi)^{-C_{\weight}})$ if $\pl_{S_0}(\hat\phi_{S_0})\neq 0$.)

\end{thm}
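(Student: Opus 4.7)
The plan is to interpret $\hat\mu_{\cF,S_1}(\hat\phi_{S_1})-\pl_S(\hat\phi_S)$ via Arthur's invariant trace formula and to extract the decay $m(\xi)^{-C_\weight}$ from the stable discrete series character estimate of Lemma~\ref{l:bound-for-st-ds-char}. Starting from \eqref{e:apply-Arthur} with $\phi^\infty=\phi_{S_0}\phi_{S_1}\phi^{S,\infty}$ and $\phi^{S,\infty}=\mathbf{1}_{U^{S,\infty}}$, the invariant trace formula paired with the Euler--Poincar\'e function $\phi_\xi$ at infinity (as in \cite[Thm 6.1]{Art89}) expands the geometric side as a sum indexed by cuspidal Levi subgroups $M\supset M_0$ (only cuspidal ones survive, since $\phi_\xi$ kills non-cuspidal contributions) and by $M(F)$-conjugacy classes of semisimple $\gamma\in M(F)$ that are elliptic at infinity. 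Schematically,
\[
\hat\mu_{\cF,S_1}(\hat\phi_{S_1})-\pl_S(\hat\phi_S)
=\sum_{M\in\mL_c(M_0)}\ \sideset{}{'}\sum_{[\gamma]} a'_{M,\gamma}\cdot O^{M(\A^\infty)}_\gamma(\phi^\infty_M)\cdot\frac{\Phi^G_M(\gamma,\xi)}{\dim\xi},
\]
where the primed sum excludes $(M,\gamma)=(G,1)$, which contributes $\pl_S(\hat\phi_S)$ thanks to Harish-Chandra's Plancherel formula and the normalizations of Corollary~\ref{c:canonical-measure}, and where $a'_{M,\gamma}$ packages the Tamagawa volume of $I^M_\gamma$ together with a special value $L(\Mot_{I^M_\gamma})$.

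The saving in $\xi$ comes from Lemma~\ref{l:bound-for-st-ds-char}:
\[
\frac{|\Phi^G_M(\gamma,\xi)|}{\dim\xi}\le c\, D^M_\infty(\gamma)^{-1/2}\, m(\xi)^{-(|\Phi^+|-|\Phi^+_{I^M_\gamma}|)}.
\]
Since $Z(G)=\{1\}$, any $(M,\gamma)\neq(G,1)$ appearing in the sum has $I^M_\gamma\subsetneq G$, hence $|\Phi^+|-|\Phi^+_{I^M_\gamma}|\ge 1$, which produces a uniform saving $m(\xi)^{-C_\weight}$ for some integer $C_\weight\ge 1$ depending only on the root system of $G$.

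The remaining factors must be controlled by a polynomial in $q_{S_1}$ of degree linear in $\kappa$, uniformly in $\xi$. The number of contributing conjugacy classes $[\gamma]$ is $O(q_{S_1}^{A+B\kappa})$ by Corollary~\ref{c:bound-number-of-conj} combined with Corollary~\ref{c:bounding-pi0-general}; the coefficients $a'_{M,\gamma}$ are estimated via Corollaries~\ref{c:canonical-measure} and~\ref{c:bound-on-L(1)-2} in terms of the ramification places of $I^M_\gamma$, which are themselves controlled by a power of $q_{S_1}$ via Lemma~\ref{l:bounding1-alpha(gamma)}. The adelic orbital integral factorises as $O^{M(F_{S_0})}_\gamma(\phi_{S_0,M})\cdot\prod_{v\in S_1} O^{M(F_v)}_\gamma(\phi_{v,M})\cdot O^{M(\A^{S,\infty})}_\gamma(\phi^{S,\infty}_M)$; the $S_0$-factor is bounded by Kottwitz's Theorem~\ref{t:appendix1} (after normalising by the Weyl discriminant), the $(S,\infty)$-factor by a constant depending only on the fixed $U^{S,\infty}$, and each $S_1$-factor by the uniform bound of Theorem~\ref{t:appendeix2}, which yields $q_v^{a+b\kappa}D^M_v(\gamma)^{-e/2}$ with constants independent of $v$; the constant term in $\cH^{\ur}(M(F_v))$ is in the correct truncated Hecke algebra by Lemma~\ref{l:const-term-on-unram-Hecke}. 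The Weyl discriminants $D^M_v(\gamma)$ at $v\in S_1$ and $D^M_\infty(\gamma)^{-1/2}$ are again absorbed into $q_{S_1}^{O(\kappa)}$ (for the former, directly from Lemma~\ref{l:bounding1-alpha(gamma)}; for the latter, compactness of the set of $\gamma\in M(F_\infty)$ that are elliptic and bounded by $U^{S,\infty}\phi_{S_0}$-support ensures a $\xi$-independent lower bound).

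The main obstacle is Theorem~\ref{t:appendeix2}: a bound on unramified-Hecke-algebra orbital integrals that is uniform in both the residue characteristic and the truncation parameter $\kappa$. This is the deep input, taking up all of Section~\ref{s:app:unif-bound}. The remaining difficulty is bookkeeping: carefully aggregating all the powers of $q_{S_1}$ from the conjugacy count, the $L$-value estimate, the constant term $\phi_{S_1,M}$, the orbital integrals, and the Weyl discriminants into a single polynomial $q_{S_1}^{A_\weight+B_\weight\kappa}$, while verifying that none of the implicit constants depend on $\xi$, so that the $m(\xi)^{-C_\weight}$ factor genuinely survives.
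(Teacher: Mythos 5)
Your overall route is the same as the paper's: apply the trace formula against the Euler--Poincar\'e function, identify the $(G,1)$ term with $\pl_S(\hat{\phi}_S)$, count the contributing classes via Corollary~\ref{c:bound-number-of-conj}, bound $a_{M,\gamma}$ via Corollaries~\ref{c:canonical-measure} and~\ref{c:bound-on-L(1)-2}, bound the orbital integrals via Theorems~\ref{t:appendix1} and~\ref{t:appendeix2}, and extract $m(\xi)^{-C_\weight}$ with $C_\weight\ge 1$ from Lemma~\ref{l:bound-for-st-ds-char} using $Z(G)=\{1\}$. However, two of your steps fail as stated, and they are precisely where the actual work of the proof lies. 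First, the factor $O^{M(\A^{S,\infty})}_\gamma(\phi^{S,\infty}_M)$ is \emph{not} bounded by a constant depending only on $U^{S,\infty}$. After reducing to $U^{S,\infty}=\prod_{v\notin S\cup S_\infty}K_v$ (by enlarging $S_0$), the local factors $O^{M(F_v)}_\gamma(\triv_{K_{M,v}})$ equal $1$ only away from the set $S_{M,\gamma}$ of places where some $|1-\alpha(\gamma)|_v\neq 1$ (cf. \cite[Cor 7.3]{Kot86}); on $S_{M,\gamma}$ they must be bounded by Theorem~\ref{t:appendeix2} (with $\kappa=0$), producing a factor $q_{S_{M,\gamma}}^{c}$, and since $\gamma$ ranges over a set growing with $S_1$ and $\kappa$, both $|S_{M,\gamma}|$ and this factor grow. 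One needs the product formula together with Lemma~\ref{l:bounding1-alpha(gamma)} and Lemma~\ref{l:alpha(gamma)-is-integral} to show $q_{S_{M,\gamma}}\le \delta\, q_{S_1}^{B_5\kappa}$ (this is \eqref{e:prod-q-S_M,gamma}); this term contributes to $B_\weight$ and cannot be swept into a constant.

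Second, your treatment of $D^M_\infty(\gamma)^{-1/2}$ is wrong: compactness of the archimedean elliptic set gives no positive lower bound on $D^M_\infty(\gamma)$, since $D^M_\infty$ tends to $0$ near singular elements and the rational $\gamma$ in play approach them as $S_1,\kappa$ grow; any lower bound necessarily degrades like $q_{S_1}^{-O(\kappa)}$ and is obtained globally, not locally at infinity. The paper instead cancels it: the finite-place discriminant factors $\prod_{v\nmid\infty}D^M_v(\gamma)^{-1/2}$ coming from Theorems~\ref{t:appendix1} and~\ref{t:appendeix2} equal $\prod_{v\mid\infty}D^M_v(\gamma)^{+1/2}$ by the product formula (see \eqref{e:bound-on-O^M}), and this exactly offsets the $\prod_{v\mid\infty}D^M_v(\gamma)^{-1/2}$ in Lemma~\ref{l:bound-for-st-ds-char}, with the leftover exponents $(1-e_G)/2$ at $S_1\cup S_{M,\gamma}$ again controlled by the product formula as in \eqref{e:bound-D^M}. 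Both gaps are repairable with the tools you already cite, but as written these two steps would not go through, and fixing them is exactly the bookkeeping that produces the exponents $A_\weight=a+A_2+A_6$ and $B_\weight=b+cB_5+B_8$ in the paper.
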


\begin{rem}
  We always assume that $S_0$ and $S_1$ are disjoint. So the condition on $S_1$ is really that it stays away from the finite set $S_{\bad}$. This enters the proof where a uniform bound on orbital integrals from \S\ref{sub:global-bound-orb-int} is applied to the places in $S_1$.
\end{rem}

\begin{rem}\label{r:explicit-const-wt}
  Again $A_{\weight}, B_{\weight}, C_{\weight}$ can be chosen explicitly as can be seen from the proof below. For instance a choice can be made such that $C_{\weight}\ge n_G$ for $n_G$ defined in \S\ref{sub:notation}.
\end{rem}

\begin{proof}
  We can choose a sufficiently large finite set $S'_0\supset S_0\cup \Ram(G)$ in the complement of $S_1\cup S_\infty$ such that $U^{S,\infty}$ is a finite disjoint union of
  groups of the form $(\prod_{v\notin S'_0\cup S_1\cup S_\infty} K_v)\times U_{S'_0\bs S_0}$ for open compact subgroups
  $U_{S'_0\bs S_0}$ of $G(\A_{F,S'_0\bs S_0})$. By replacing $S_0$ with $S'_0$
  (and thus $S$ with $ S'_0\coprod S_1$), we reduce the proof to the case where $U^{S,\infty}=\prod_{v\notin S\cup S_\infty} K_v$.

  For an $F$-rational Levi subgroup $M$ of $G$, let $Y_M$ be as in Proposition \ref{p:bound-number-of-conj},
  where $\kappa$, $S_0$ and $S_1$ are as in the theorem. (So the set $Y_M$ varies as
  $\kappa$ and $S_1$ vary.) Take \eqref{e:apply-Arthur} as a starting point.
  Arthur's trace formula (\cite[Thm 6.1]{Art89}) and the argument in the proof of \cite[Thm 4.11]{Shi-Plan} show
(note that our $Y_M$ contains $Y_M$ of \cite{Shi-Plan} but could be strictly bigger):
 $$ \hat{\mu}_{\cF,S_1}(\hat{\phi}_{S_1})-\pl_S(\hat{\phi}_S)
=
\sum_{\gamma\in Y_G\bs\{1\}} a_{G,\gamma}\cdot
|\iota^G(\gamma)|^{-1}O^{G(\A^\infty_F)}_\gamma(\phi^\infty)\frac{\tr \xi_n(\gamma)}{\dim \xi_n}
$$
\beq\label{e:main-formula-wt-varies} +\sum_{M\in \mL_c\bs \{G\}}
\sum_{\gamma\in Y_M}
a_{M,\gamma}\cdot|\iota^M(\gamma)|^{-1}O^{M(\A^\infty_F)}_\gamma(\phi^\infty_M)\frac{\Phi^G_M(\gamma,\xi_n)}{\dim \xi_n}
\eeq
where $a_{M,\gamma}$ (including $M=G$) is given by
$$a_{M,\gamma}=\tau'(G)^{-1} \frac{\ol{\mu}^{\can,\EP}(I^M_\gamma(F)\bs
I^M_\gamma(\A_F)/A_{I^M_\gamma,\infty})}
 {\ol{\mu}^{\EP}(I^M_\gamma(F_\infty)/A_{I^M_\gamma,\infty})}$$
 $$\stackrel{\mathrm{Cor}~\ref{c:canonical-measure}}{=}
\frac{\tau(I^M_\gamma)}{\tau'(G)} \frac{|\Omega_{I^M_\gamma}|}{|\Omega_{I^M_\gamma,c}|}
\frac{L(\Mot_{I^M_\gamma})}{e(I^M_{\gamma,\infty})2^{[F:\Q] r_G}} .$$

 Let us work with one $M$ at a time.
 Observe that clearly
 $|\Omega_{I^M_\gamma}|/|\Omega_{I^M_\gamma,c}|\le |\Omega|$
 and that $\tau(I^M_\gamma)$ is bounded by a constant
 depending only on $G$ in view of \eqref{e:Tamagawa} and Corollary \ref{c:bounding-pi0-general}
 or Lemma \ref{l:bounding-pi0}.  % (Note that there are only finitely many $M$'s.)
% By (ii) and (iii) of Proposition \ref{p:Gross-motives}, $\dim \Mot_{I^M_\gamma,d}\le r_G$
% and $\Mot_{I^M_\gamma,d}=0$ for $d>(\dim G+1)/2$.
% Hence Corollary \ref{c:canonical-measure} tells us that for some $C_0>0$ depending only on $G$,\footnote{The
% conductor$>1$ (outside $S\cup S_\infty$) only when some $|1-\alpha(\gamma)|>1$ for some $\alpha$.
% This suggests that we may try to prove $\N_{F/\Q}(\fkf(\Mot_{I^M_\gamma,d}))
% \le |1-\alpha(\gamma)|^{\delta}$ for some constant $\delta$. }
By Corollary \ref{c:bound-on-L(1)-2}, there exist constants $c_2,A_2>0$ such that
 \[
 %\label{e:a_M,gamma}
 |a_{M,\gamma}|\le c_2
 \prod_{v\in \Ram(I_\gamma^M)} q_v^{A_2}
   \]

   It is convenient to define the following finite subset of $\cV_F^\infty$ for each $\gamma\in Y_M$. We fix a maximal torus $T^M_\gamma$ in $M$ over $\ol{F}$ containing $\gamma$ and write $\Phi_{M,\gamma}$ for the set of roots of $T^M_\gamma$ in $M$. (A different choice of $T^M_\gamma$ does not affect the argument.)
  $$S_{M,\gamma}:=\{v\in \cV_F^\infty\bs S: \exists \alpha\in \Phi_{M,\gamma},
  ~\alpha(\gamma)\neq 1
  ~\mbox{and}~|1-\alpha(\gamma)|_v\neq 1\}.$$
 (If $\gamma$ is in the center of $M(F)$ then $S_{M,\gamma}=\emptyset$ and $q_{S_{M,\gamma}}=1$.)

  We know that $O^{M(F_v)}_\gamma(\triv_{K_{M,v}})=1$ for $v\notin S\cup S_{M,\gamma}\cup S_\infty$
and that $S_{M,\gamma}\supset \Ram(I^M_\gamma)$
  from \cite[Cor 7.3]{Kot86}. According to Lemma \ref{l:const-term-on-unram-Hecke}
  $\phi_v=\triv_{K_{v}}$ implies $\phi_{v,M}=\triv_{K_{M,v}}$. Hence
 \begin{eqnarray}
   |a_{M,\gamma}|&\le& c_2\cdot (q_{S_{M,\gamma}})^{A_2} \label{e:bound-a_M}\\
  O^{M(\A_F^\infty)}_\gamma(\phi^\infty_M)
 & =& O^{M(F_S)}_\gamma(\phi_{S,M})\prod_{v\in S_{M,\gamma}}
   O^{M(F_{v})}_\gamma(\triv_{K_{M,v}}). \notag
\end{eqnarray}
   By Theorem \ref{t:appendix1}, there exists a constant $c(\phi_{S_0,M})>0$
  such that
   $$O^{M(F_{S_0})}_\gamma(\phi_{S_0,M})
   \le c(\phi_{S_0,M}) \prod_{v\in S_0}
   D^M_v(\gamma)^{-1/2}, \quad \forall \gamma\in Y_M.$$
   By Theorem \ref{t:appendeix2}, there exist $a,b,c,e_G\in \R_{\ge 0}$ (independent of
   $\gamma$, $S_1$, $\kappa$ and $k$) such that
 \begin{eqnarray}\label{e:9.17}
    O^{M(F_{S_1})}_\gamma(\phi_{S_1,M})
 & \le & q_{S_1}^{a+b\kappa} \prod_{v\in S_1} D^M_v(\gamma)^{-e_G/2},  \\
  O^{M(F_v)}_\gamma(\triv_{K_{M,v}})
 & \le & q_{v}^{c}  D^M_v(\gamma)^{-e_G/2}, \quad\forall v\in S_{M,\gamma}. \label{e:9.18}
\end{eqnarray}
 (To obtain \eqref{e:9.17} and \eqref{e:9.18}, apply Theorem \ref{t:appendeix2} to $v\in S_1$ and $v\in S_{M,\gamma}$.)

   Hence
   \begin{eqnarray}
   O^{M(\A_F^\infty)}_\gamma(\phi^\infty_M)&\le&
   c(\phi_{S_0,M})  q_{S_1}^{a+b\kappa} q_{S_{M,\gamma}}^{c}
  \left( \prod_{v\nmid \infty} D^M_v(\gamma)^{-1/2}\right)
\prod_{v\in S_1\cup S_{M,\gamma}} D^M_v(\gamma)^{(1-e_G)/2}\nonumber\\
  & = &  c(\phi_{S_0,M})  q_{S_1}^{a+b\kappa} q_{S_{M,\gamma}}^{c}
   \prod_{v|\infty} D^M_v(\gamma)^{1/2}
\prod_{v\in S_1\cup S_{M,\gamma}} D^M_v(\gamma)^{(1-e_G)/2}\label{e:bound-on-O^M}
\end{eqnarray}

   On the other hand there exist $\delta_{S_0},\delta_{\infty}$, $\delta_{S_1}\ge1$ such that
   for every $\gamma\in Y_M$ with $\alpha(\gamma)\neq 1$,
   \bit
   \item $|1-\alpha(\gamma)|_{S_0}\le \delta_{S_0}$. (compactness of $\supp \phi_{S_0}$)
   \item $|1-\alpha(\gamma)|_{\infty}\le \delta_{\infty}$. (compactness of $U_\infty$)
   \item $|1-\alpha(\gamma)|_{S_1}\le \delta_{S_1}q_{S_1}^{B_5 \kappa}$. (Lemma \ref{l:bounding1-alpha(gamma)}; Remark \ref{r:Lem2.18-indep} explains the independence of $B_1$ of $S_1$).
   \eit
   (When $\alpha(\gamma)=1$, our convention is that $|1-\alpha(\gamma)|_v=1$ for every $v$
   to be consistent with the first formula of Appendix \ref{s:app:Kottwitz}.)
  Hence, together with the product formula for $1-\alpha(\gamma)$,
   $$1=\prod_v |1-\alpha(\gamma)|_v \le\delta_{S_0} \delta_{\infty}
    \delta_{S_1}q_{S_1}^{B_5 \kappa}\prod_{v\in S_{M,\gamma}}|1-\alpha(\gamma)|_v .$$
  If $\gamma\in Z(M)(F)$ then $q_{S_{M,\gamma}}=1$. Otherwise
    for each $v\in S_{M,\gamma}$, we may choose $\alpha\in \Phi_{M,\gamma}$ such that $|1-\alpha(\gamma)|_v\neq 1$.
Set $\delta:=\delta_{S_0} \delta_{\infty}
    \delta_{S_1}$.
    Then $|1-\alpha(\gamma)|_v \le q_v^{-1}$ for $v\in S_{M,\gamma}$ by Lemma \ref{l:alpha(gamma)-is-integral}
    (which is applicable in view of the first paragraph in the current proof)
    so % $\prod_{v\in S_{M,\gamma}} q_v^{-1}\ge  (\delta q_{S_1}^{B_5 \kappa})^{-1}$, or
    \beq\label{e:prod-q-S_M,gamma}
     q_{S_{M,\gamma}}\le \delta q_{S_1}^{B_5 \kappa}.\eeq
  Keep assuming that $\gamma$ is not central in $M$ and that
  $\alpha(\gamma)\neq 1$. Again by the product formula
  $\prod_{v\in S_1\cup S_{M,\gamma}} |1-\alpha(\gamma)|
  = \prod_{v\in S_0\cup S_\infty} |1-\alpha(\gamma)|^{-1} \ge (\delta_{S_0}\delta_\infty)^{-1}$,
  thus
  \beq\label{e:bound-D^M}
\prod_{v\in S_1\cup S_{M,\gamma}} D^M(\gamma)^{-1}\le \delta_{S_0}\delta_\infty.\eeq
  The above holds also when $\gamma$ is central in $M$, in which case the left hand side equals 1.

    Now \eqref{e:bound-on-O^M}, \eqref{e:prod-q-S_M,gamma} and \eqref{e:bound-D^M} imply
   \beq\label{e:bound-on-O^M-2}
   O^{M(\A_F^\infty)}_\gamma(\phi^\infty_M)\le
   c(\phi_{S_0,M}) \delta^{a}(\delta_{S_0}\delta_\infty)^{(e_G-1)/2}
 q_{S_1}^{a+b\kappa+ c B_5 \kappa}   \prod_{v|\infty}
   D^M_v(\gamma)^{1/2}.\eeq

   Lemma \ref{l:bound-for-st-ds-char} gives a bound on the stable discrete series
   character:
 \beq\label{e:bound-Phi^G_M}\frac{|\Phi^G_M(\gamma,\xi)|}{\dim \xi}
  % \frac{c}{ m(\xi)^{|\Phi^+|-|\Phi^+_{I^M_\gamma}|}} \prod_{\alpha\in \Phi_{M,\gamma}\atop \alpha(\gamma)\neq 1} (1-\alpha(\gamma))^{-1/2}=
 \le c \frac{\prod_{v|\infty} D^M_v(\gamma)^{-1/2}}{ m(\xi)^{|\Phi^+|-|\Phi^+_{I^M_\gamma}|}}.\eeq

  Multiplying \eqref{e:bound-a_M}, \eqref{e:bound-on-O^M-2},
  \eqref{e:bound-Phi^G_M} altogether (and noting $|\iota^M(\gamma)|\le 1$),
  the absolute value of the summand for $\gamma$ in \eqref{e:main-formula-wt-varies} (including $M=G$)
  is
$$O\left( m(\xi)^{-(|\Phi^+|-|\Phi^+_{I^M_\gamma}|)}
q_{S_1}^{a+b\kappa+c B_5 \kappa+A_2}\right).$$
   All in all,  $|\hat{\mu}_{\cF,S}(\hat{\phi}_S)-\pl_S(\hat{\phi}_S)|$ is
$$ \left(|Y_G|-1 + \sum_{M\in \mL_c\bs \{G\}} |Y_M|\right)
  O\left( m(\xi)^{-(|\Phi^+|-|\Phi^+_{I^M_\gamma}|)}
  q_{S_1}^{a+b\kappa+ c B_5 \kappa+A_2}\right).$$
 Set (excluding $\gamma=1$ in the second minimum when $M=G$) $$C_{\weight}:= \min_{M\in \mL_c(M_0)}
 \min_{\gamma\in M(F)\atop \mathrm{ell.~in~}M(F_\infty)}%\atop \gamma\neq 1~\mathrm{if}~ M=G}
  (|\Phi^+|-|\Phi^+_{I^M_\gamma}|)$$ % as $M$ runs over
% $\mL_c(M_0)$ and $\gamma$ runs over all semisimple $F_\infty$-elliptic elements of $M(F)$, excluding
% $\gamma=1$ if $M=G$.
 Note that
 $C_{\weight}$ depends only on $G$. It is automatic that
  $|\Phi^+|-|\Phi^+_{I^M_\gamma}|\ge 1$ on $Y_G\bs \{1\}$ and $Y_M$ for $M\in \mL_c(M_0)\bs \{G\}$.
The proof is concluded by invoking Corollary \ref{c:bound-number-of-conj} (applied to $Y_G$ and $Y_M$)
with the choice $$A_{\weight}:=a+A_2+A_6,\quad B_{\weight}:=b+cB_5+B_8.$$

\end{proof}

%\subsection{General case with local conditions}

%(UNDER CONSTRUCTION.) Use the simple trace formula.

% \subsection{Contribution of non-tempered representations at $\infty$}\label{sub:non-temp-infty}

\subsection{Automorphic Plancherel density theorem}\label{sub:Plan-density}

  In the situation of either Example \ref{ex:level-varies} or \ref{ex:wt-varies},
 let us write $\cF_k(\phi_{S_0})$ for $\cF_k$ in order to emphasize the dependence on $\phi_{S_0}$.
Take $S_1=\emptyset$ so that $S=S_0$. Then $\hat{\mu}_{\cF_k(\phi_{S}),\emptyset}$ may be viewed
as a complex number (as it is a measure on a point).
In fact we can consider $\cF_k(\hat{f}_{S})$, a family whose local condition at $S$
is prescribed by $\hat{f}_S\in \mF(G(F_S)^{\wedge})$, even if
$\hat{f}_{S}$ does not arise from any $\phi_{S}$ in $C^\infty_c(G(F_S))$.
Put $\hat{\mu}_k(\hat{f}_{S}):=\hat{\mu}_{\cF_k(\hat{f}_{S}),\emptyset}\in \C$.
We recover the automorphic Plancherel density theorem (\cite[Thm 4.3, Thm 4.7]{Shi-Plan}).

\begin{cor}\label{c:Plan-density} Consider families $\cF_k$ in level or weight aspect as above. In level aspect assume that the highest weight of $\xi$ is regular. (No assumption is necessary in the weight aspect.)
 For any $\hat{f}_S\in \mF(G(F_S)^{\wedge})$,
  $$\lim_{k\ra\infty} \hat{\mu}_k(\hat{f}_{S})=\pl_S(\hat{f}_S).$$

\end{cor}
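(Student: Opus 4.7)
The plan is to reduce from a general measurable test function to a Hecke-type test function by Sauvageot's density theorem (Proposition \ref{p:density}), settle the latter case via Theorems \ref{t:level-varies} and \ref{t:weight-varies} specialized to $S_1=\emptyset$, and bridge the two steps by exploiting a positivity property of the functional $\hat\mu_k$. Concretely, given $\epsilon>0$ and $\hat f_S\in\mF(G(F_S)^\wedge)$, Proposition \ref{p:density} produces $\phi_S,\psi_S\in C^\infty_c(G(F_S))$ such that $\pl_S(\hat\psi_S)\le\epsilon$ and $|\hat f_S(\pi_S)-\hat\phi_S(\pi_S)|\le\hat\psi_S(\pi_S)$ for every $\pi_S\in G(F_S)^\wedge$. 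Writing
$$\hat\mu_k(\hat f_S)-\pl_S(\hat f_S)=\bigl[\hat\mu_k(\hat f_S)-\hat\mu_k(\hat\phi_S)\bigr]+\bigl[\hat\mu_k(\hat\phi_S)-\pl_S(\hat\phi_S)\bigr]+\bigl[\pl_S(\hat\phi_S)-\pl_S(\hat f_S)\bigr],$$
the third bracket is bounded in absolute value by $\pl_S(\hat\psi_S)\le\epsilon$, so only the first two brackets remain to be analyzed.

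I would dispose of the middle bracket by appealing to the main quantitative results of \S\ref{sub:level-varies}--\S\ref{sub:weight-varies}. Take $S_0=S$, $S_1=\emptyset$, and let the unit of $\cH^{\ur}(G(F_\emptyset))$ play the role of $\phi_{S_1}$ (trivially lying in degree $\le 0$). Then Theorem \ref{t:level-varies} in the level aspect gives
$$\hat\mu_k(\hat\phi_S)-\pl_S(\hat\phi_S)=O\bigl(\N(\fkn_k)^{-C_{\level}}\bigr),$$
while Theorem \ref{t:weight-varies} in the weight aspect gives
$$\hat\mu_k(\hat\phi_S)-\pl_S(\hat\phi_S)=O\bigl(m(\xi_k)^{-C_{\weight}}\bigr).$$
Both right-hand sides vanish as $k\to\infty$, and the same is true with $\phi_S$ replaced by $\psi_S$.

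It remains to estimate the first bracket $\hat\mu_k(\hat f_S)-\hat\mu_k(\hat\phi_S)$. Here I would use the following positivity observation. Under the regularity hypothesis imposed on $\xi$ in the level aspect --- and, in the weight aspect, automatically for all sufficiently large $k$ since $m(\xi_k)\to\infty$ forces $\lambda_{\xi_k}$ to be regular --- the sign factor $(-1)^{q(G)}\chi_{\EP}(\pi_\infty\otimes\xi)$ takes values in $\{0,1\}$, by the properties recalled in \S\ref{sub:EP}. Consequently the definition \eqref{e:defn-of-mu} of $\hat\mu_k$, being linear in the local datum at $S$, exhibits it as a positive linear functional
$$\hat g\;\longmapsto\; \sum_{\pi\in\cAR_{\disc,\chi}(G)} c_k(\pi)\,\hat g(\pi_S),\qquad c_k(\pi)\ge 0,$$
on $\mF(G(F_S)^\wedge)$. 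Hence $|\hat\mu_k(\hat f_S-\hat\phi_S)|\le \hat\mu_k(\hat\psi_S)$, and the previous step applied to $\psi_S$ in place of $\phi_S$ gives $\hat\mu_k(\hat\psi_S)\to\pl_S(\hat\psi_S)\le\epsilon$. Combining the three estimates yields $\limsup_k|\hat\mu_k(\hat f_S)-\pl_S(\hat f_S)|\le 2\epsilon$, and letting $\epsilon\to 0$ concludes the proof.

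The essential analytic input, and what I expect to be the only real obstacle, is already encapsulated in Theorems \ref{t:level-varies} and \ref{t:weight-varies}: the present corollary is a soft consequence, whose sole additional ingredients are Sauvageot's density theorem and the mild observation that the spectral counting measure is positive once the weight is regular.
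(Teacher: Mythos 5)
Your proof is correct and follows essentially the same route as the paper's: Sauvageot's density theorem to approximate $\hat f_S$ by $\hat\phi_S$ with error dominated by $\hat\psi_S$, the quantitative Theorems~\ref{t:level-varies} and~\ref{t:weight-varies} (with the test function placed at $S_0=S$ and $S_1=\emptyset$) to handle the Hecke-type case, and positivity of $\hat\mu_k$ coming from $\chi_{\EP}(\pi_\infty\otimes\xi)\in\{0,(-1)^{q(G)}\}$ for regular highest weight to control $|\hat\mu_k(\hat f_S-\hat\phi_S)|\le\hat\mu_k(\hat\psi_S)$. Your explicit remark that in the weight aspect $m(\xi_k)\to\infty$ forces $\lambda_{\xi_k}$ to be regular for $k\gg 0$ is a welcome clarification of a point the paper leaves implicit, but it does not change the argument.
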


\begin{proof}
  Theorems \ref{t:level-varies} and \ref{t:weight-varies} tell us that
  \beq\label{e:aut-plan-phi}\lim_{k\ra\infty} \hat{\mu}_k(\hat{\phi}_S)=\pl_S(\hat{\phi}_S).\eeq
  (Even though there was a condition on $S_1$, note that there was no condition on $S_0$ in either theorem.)

  We would like to improve \eqref{e:aut-plan-phi} to allow more general test functions.
  What needs to be shown (cf. \eqref{e:extending-at-v_j-0} below) is that for every $\epsilon>0$, $$\limsup_{k\ra \infty} |\hat{\mu}_{k}(\hat{f}_{S})- \pl_{S}(\hat{f}_{S})|\le4\epsilon.$$
%    Since the proof is similar to that of Theorem \ref{t:Sato-Tate-level} below and only simpler, we will content ourselves with sketching the argument.
  Thanks to Proposition \ref{p:density} there exist $\phi_{S},\psi_{S}\in \cH^{\ur}(G(F_{S}))$ such that $|\hat{f}_{S}-\hat{\phi}_{S}|\le \hat{\psi}_{S}$ on $G(F_{S})^{\wedge}$ and $\pl_{S}(\hat{\psi}_{S})\le \epsilon$. Then (cf. \eqref{e:extending-at-v_j} below)
  $$
\begin{aligned}
|\hat{\mu}_{k}(\hat{f}_{S})- \pl_{S}(\hat{f}_{S})| & \le  |\hat{\mu}_k(\hat{f}_{S}-\hat{\phi}_{S})|\\ & + |\hat{\mu}_k(\hat{\phi}_{S})- \pl_{S}(\hat{\phi}_{S})| + |\pl_{S}(\hat{\phi}_{S}-\hat{f}_{S})|.
\end{aligned}
 $$
  Now $|\pl_{S}(\hat{f}_{S}-\hat{\phi}_{S})|\le |\pl_S(\hat{\psi}_S)|\le \epsilon$, and $|\hat{\mu}_k(\hat{\phi}_{S})- \pl_{S}(\hat{\phi}_{S})|\le \epsilon$ for $k\gg1$ by \eqref{e:aut-plan-phi}.
  Finally $\hat{\mu}_k$ is a positive measure since the highest weight of $\xi$ is regular (see Example \ref{ex:example-for-f_S}), and we get
   $$
     |\hat{\mu}_k(\hat{f}_{S}-\hat{\phi}_{S})|\le \hat{\mu}_k(|\hat{f}_{S}-\hat{\phi}_{S}|)\le \hat{\mu}_k(\hat{\psi}_S).
   $$
   (To see the positivity of $\hat{\mu}_k$, notice that $\hat{\mu}_k(\hat{f}_{S}-\hat{\phi}_{S})$ is unraveled via \eqref{e:a(pi)-general} and \eqref{e:defn-of-mu} as a sum of $(\hat{f}_{S}-\hat{\phi}_{S})(\pi)$ with coefficients having nonnegative signs. This is because $\chi_{\EP}(\pi_\infty\otimes \xi)$ is either 0 or $(-1)^{q(G)}$ when $\xi$ has regular highest weight, cf. \S\ref{sub:EP}.)
    According to \eqref{e:aut-plan-phi}, $\lim_{k\ra \infty}\hat{\mu}_k(\hat{\psi}_S)=\pl_S(\hat{\psi}_S)\le \epsilon$. In particular $|\hat{\mu}_k(\hat{f}_{S}-\hat{\phi}_{S})|\le 2\epsilon$ for $k\gg1$. The proof is complete.
\end{proof}

\begin{rem}
  If $G$ is anisotropic modulo center over $F$ so that the trace formula for compact quotients is available, or if a further local assumption at finite places is imposed so as to avail the simple trace formula, the regularity condition on $\xi$ can be removed by an argument of De George-Wallach and Clozel (\cite{dGW78}, \cite{Clo86}). The main point is to show that the contribution of ($\xi$-cohomological) non-tempered representations at $\infty$ to the trace formula is negligible compared to the contribution of discrete series. Their argument requires some freedom of choice of test functions at $\infty$, so it breaks down in the general case since one has to deal with new terms in the trace formula which disappear when Euler-Poincar\'{e} functions are used at $\infty$. In other words, it seems necessary to prove analytic estimates on more terms (if not all terms) in the trace formula than we did in order to get rid of the assumption on $\xi$. (This remark also applies to the same condition on $\xi$ in \S\ref{sub:ST-theorem} and \S\ref{sub:general-functions-S_0} for level aspect families.) We may return to this issue in future work.
\end{rem}

\begin{rem}
  In the case of level aspect families, \cite[Thm 4.3]{Shi-Plan} assumes that
  the level subgroups form a chain of decreasing groups whose intersection is the trivial group.
  The above corollary deals with some new cases as it assumes only that $\N(\fkn_k)\ra \infty$.

\end{rem}

\begin{cor}\label{c:estimate-F_k}
  Keep assuming that $S_1=\emptyset$.
 Let $(U_k^{S,\infty},\xi_k)=(K^{S,\infty}(\fkn_k),\xi)$ or $(U^{S,\infty},\xi_k)$
in Example \ref{ex:level-varies} or \ref{ex:wt-varies}, respectively, but prescribe local conditions at $S$
by $\hat{f}_{S}$ rather than $\phi_{S}$.
Then  $$\lim_{k\ra\infty}\frac{\mu^{\can}(U_k^{S,\infty})}{\tau'(G)\dim \xi_k}  |\cF_k| = \pl_{S}(\hat{f}_{S}).$$
\end{cor}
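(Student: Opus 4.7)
The plan is to observe that this corollary is essentially an immediate restatement of Corollary \ref{c:Plan-density} in the case $S_1 = \emptyset$, once the measure $\hat{\mu}_k$ is unwound via its definition. Since nothing deeper than Corollary \ref{c:Plan-density} is involved, there is no genuine obstacle; the content is bookkeeping.

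First I would unpack what $\hat{\mu}_k(\hat{f}_S)$ actually is when $S_1 = \emptyset$. Following the convention fixed at the start of \S\ref{sub:Plan-density}, we have $\hat{\mu}_k(\hat{f}_S) := \hat{\mu}_{\cF_k(\hat{f}_S),\emptyset}\in\C$, where the measure on the right is defined in \eqref{e:defn-of-mu}. When $S_1 = \emptyset$ the space $G(F_{S_1})^{\wedge}$ is a single point and the test function $\hat{f}_{S_1}$ is the constant $1$, so \eqref{e:defn-of-mu} specializes to
$$\hat{\mu}_k(\hat{f}_S) \;=\; \frac{\mu^{\can}(U_k^{S,\infty})}{\tau'(G)\dim\xi_k}\sum_{\pi\in\cAR_{\disc,\chi}(G)} a_{\cF_k}(\pi).$$
By the multi-set convention introduced in \S\ref{sub:aut-rep}, the sum on the right is precisely $|\cF_k|$, so
$$\hat{\mu}_k(\hat{f}_S) \;=\; \frac{\mu^{\can}(U_k^{S,\infty})}{\tau'(G)\dim\xi_k}\,|\cF_k|.$$

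Next I would invoke Corollary \ref{c:Plan-density}, which asserts $\lim_{k\to\infty}\hat{\mu}_k(\hat{f}_S) = \pl_S(\hat{f}_S)$ for any $\hat{f}_S\in\mF(G(F_S)^{\wedge})$. Substituting the identity above gives the claimed limit. In the level aspect one inherits from Corollary \ref{c:Plan-density} the hypothesis that the highest weight of $\xi$ is regular (needed because the extension from smooth compactly supported test functions to general $\hat{f}_S\in\mF(G(F_S)^{\wedge})$ uses Sauvageot's density theorem together with the positivity of $\hat{\mu}_k$, which in turn requires regularity of $\xi$); in the weight aspect no such condition is required.

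The main work—controlling the error between $\hat{\mu}_k(\hat{\phi}_S)$ and $\pl_S(\hat{\phi}_S)$ for smooth test functions via the geometric side of the trace formula, and then passing to general $\hat{f}_S$ via Sauvageot—is already done in Theorems \ref{t:level-varies}, \ref{t:weight-varies} and Corollary \ref{c:Plan-density}. The present corollary is simply the $S_1=\emptyset$ specialization, expressed as an asymptotic for the total mass $|\cF_k|$ rather than a statement about measures.
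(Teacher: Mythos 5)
Your proposal is correct and is essentially identical to the paper's proof: both unwind the definition \eqref{e:defn-of-mu} when $S_1=\emptyset$ to identify $\frac{\mu^{\can}(U_k^{S,\infty})}{\tau'(G)\dim \xi_k}\,|\cF_k|$ with $\hat{\mu}_{\cF_k,\emptyset}(\hat{f}_S)=\hat{\mu}_k(\hat{f}_S)$, and then apply Corollary \ref{c:Plan-density}. Your extra remark about the regularity hypothesis on $\xi$ in the level aspect is a correct observation inherited from that corollary, but the argument is the same.
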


\begin{proof}
  The corollary results from Corollary \ref{c:Plan-density} since
$$ \frac{\mu^{\can}(U_k^{S,\infty})}{\tau'(G)\dim \xi_k} |\cF_k|=
 \frac{\mu^{\can}(U_k^{S,\infty})}{\tau'(G)\dim \xi_k}\sum_{\pi\in \cAR_{\disc,\chi_k}(G)} a_{\cF_k}(\pi) =
 \hat{\mu}_{\cF_k,\emptyset}(\hat{f}_S).$$
\end{proof}

\subsection{Application to the Sato-Tate conjecture for families}\label{sub:ST-theorem}

  As an application of Theorems \ref{t:level-varies} and \ref{t:weight-varies},
  we are about to fulfill the promise of \S\ref{sub:ST-families}
  by showing that the Satake parameters in the automorphic families
  $\{\cF_k\}$ are equidistributed
  according to the Sato-Tate measure in a suitable sense
  (cf. Conjecture \ref{c:ST-families}).

  The notation and convention of \S\ref{s:Sato-Tate} are retained here.
  Let $\theta\in \mC(\Gamma_1)$ and $\hat{f}\in \cF(\hat{T}_{c,\theta}/\Omega_{c,\theta})$.
  For each $v\in \cV_F(\theta)$,
  the image of $\hat{f}$ in $\cF(G(F_v)^{\wedge,\ur})$ via \eqref{e:Sauvageot}
  will be denoted $\hat{f}_v$.
%   For a regular function $\hat{\phi}$ on
%  the complex variety $\hat{T}_{\theta}/\Omega_{\theta}$, write $\phi_v\in \cH^{\ur}(G(F_v))$
%  for its image under the Satake transform. As usual $\hat{\phi}_v\in \cF(G(F_v)^{\wedge,\ur})$
%  is obtained from $\phi_v$. If $\hat{f}$ is the restriction of $\hat{\phi}$ to $\hat{T}_{c,\theta}/\Omega_{c,\theta}$,
%  it is clear that $\hat{f}_v=\hat{\phi}_v$ on the tempered spectrum.

\begin{thm}\label{t:Sato-Tate-level} (level aspect)

  Pick any $\theta\in \mC(\Gamma_1)$ and let $\{v_j\}_{j\ge 1}$ be a sequence in $\cV_F(\theta)$ such that
  $q_{v_j}\ra \infty$ as $j\ra \infty$.   Suppose that \bit\item $\pl_{S_0}(\hat{\phi}_{S_0})\neq 0$ and
  \item $\xi$ has regular highest weight. \eit
  Then for every $\hat{f}\in \cF(\hat{T}_{c,\theta}/\Omega_{c,\theta})$,
  $$\lim_{(j,k)\ra \infty} \hat{\mu}^{\natural}_{\cF_k,v_j}(\hat{f}_{v_j})
  = \ST_{\theta}(\hat{f})$$
  where the limit is taken over $(j,k)$ subject to the following conditions:
 \bit
 \item $\N(\fkn_k) q_{v_j}^{-B_\Xi m \kappa}\ge c_\Xi^{-1}$,
 \item $v_j\nmid \fkn_k$,
 \item $q_{v_j}^{N} \N(\fkn_k)^{-1}\ra 0$ for all $N>0$.
 \eit

\end{thm}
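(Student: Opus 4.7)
The plan is to combine three ingredients: the quantitative error bound of Theorem~\ref{t:level-varies}, the weak convergence $\plur_v \to \ST_\theta$ along $v \in \cV_F(\theta)$ as $q_v \to \infty$ from Proposition~\ref{p:lim-of-Plancherel}, and the uniform Hecke-algebra approximation across places provided by Proposition~\ref{p:unr-density}. The role of the regularity hypothesis on $\xi$ is to make $\hat{\mu}^{\natural}_{\cF_k,v_j}$ a positive measure, so that approximation errors can be controlled by positivity, exactly as in the proof of Corollary~\ref{c:Plan-density}.

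First I would fix $\epsilon > 0$ and invoke Proposition~\ref{p:unr-density} to produce an integer $\kappa \ge 1$ (depending on $\hat{f}$ and $\epsilon$ but \emph{not} on $v$) together with functions $\phi_v, \psi_v \in \cH^{\ur}(G(F_v))^{\le \kappa}$ for every $v \in \cV_F(\theta)$ such that $\plur_v(\hat{\psi}_v) \le \epsilon$ and $|\hat{f}_v(\pi) - \hat{\phi}_v(\pi)| \le \hat{\psi}_v(\pi)$ for all $\pi \in G(F_v)^{\wedge,\ur}$. The uniformity of $\kappa$ across $v_j$ is essential because the exponent of $q_{v_j}$ in the error term of Theorem~\ref{t:level-varies} depends linearly on $\kappa$.

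Next I would perform the standard three-term decomposition at $v = v_j$:
\begin{align*}
\hat{\mu}^{\natural}_{\cF_k,v_j}(\hat{f}_{v_j}) - \ST_{\theta}(\hat{f})
&= \hat{\mu}^{\natural}_{\cF_k,v_j}(\hat{f}_{v_j} - \hat{\phi}_{v_j}) \\
&\quad + \bigl(\hat{\mu}^{\natural}_{\cF_k,v_j}(\hat{\phi}_{v_j}) - \plur_{v_j}(\hat{\phi}_{v_j})\bigr) \\
&\quad + \plur_{v_j}(\hat{\phi}_{v_j} - \hat{f}_{v_j}) + \bigl(\plur_{v_j}(\hat{f}_{v_j}) - \ST_{\theta}(\hat{f})\bigr).
\end{align*}
For the middle term, Theorem~\ref{t:level-varies} together with Remark~\ref{r:level-varies} gives a bound of size $O(q_{v_j}^{A_{\level}+B_{\level}\kappa}\N(\fkn_k)^{-C_{\level}})$, which tends to $0$ under the assumed growth condition $q_{v_j}^N \N(\fkn_k)^{-1} \to 0$ (applied with $N = A_{\level}+B_{\level}\kappa$, noting $C_{\level} \ge 1$). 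For the third term $|\plur_{v_j}(\hat{\phi}_{v_j} - \hat{f}_{v_j})| \le \plur_{v_j}(\hat{\psi}_{v_j}) \le \epsilon$, while $\plur_{v_j}(\hat{f}_{v_j}) \to \ST_{\theta}(\hat{f})$ by Proposition~\ref{p:lim-of-Plancherel} using $q_{v_j} \to \infty$ within $\cV_F(\theta)$.

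The first term is where the regularity of $\xi$ enters. Under that hypothesis, $\chi_{\EP}(\pi_\infty \otimes \xi) \in \{0, (-1)^{q(G)}\}$, so after normalization by $\pl_{S_0}(\hat{\phi}_{S_0})$ the measure $\hat{\mu}^{\natural}_{\cF_k,v_j}$ is a nonnegative combination of Dirac masses on $G(F_{v_j})^{\wedge,\ur}$ (compare the argument in Corollary~\ref{c:Plan-density}). Hence
\[
|\hat{\mu}^{\natural}_{\cF_k,v_j}(\hat{f}_{v_j} - \hat{\phi}_{v_j})| \le \hat{\mu}^{\natural}_{\cF_k,v_j}(\hat{\psi}_{v_j}),
\]
and applying Theorem~\ref{t:level-varies} once more to $\psi_{v_j} \in \cH^{\ur}(G(F_{v_j}))^{\le \kappa}$ shows
\[
\hat{\mu}^{\natural}_{\cF_k,v_j}(\hat{\psi}_{v_j}) \le \plur_{v_j}(\hat{\psi}_{v_j}) + O(q_{v_j}^{A_{\level}+B_{\level}\kappa}\N(\fkn_k)^{-C_{\level}}) \le \epsilon + o(1).
\]
Taking the joint limit $(j,k) \to \infty$ subject to the stated conditions then yields $\limsup |\hat{\mu}^{\natural}_{\cF_k,v_j}(\hat{f}_{v_j}) - \ST_{\theta}(\hat{f})| \le 2\epsilon$, and letting $\epsilon \to 0$ completes the proof.

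The main obstacle I anticipate is precisely the uniformity across places: one needs a single $\kappa$ that works simultaneously for all $v_j$, since otherwise the exponent $B_{\level}\kappa$ would blow up along the sequence and the growth condition $q_{v_j}^N \N(\fkn_k)^{-1} \to 0$ could not absorb it. Proposition~\ref{p:unr-density} is tailored to this need, transporting a single pair $(\phi_w, \psi_w)$ from a reference place $w \in \cV_F(\theta)$ to every $v \in \cV_F(\theta)$ via the common identification $\cH^{\ur}(G(F_v)) \simeq \C[\hat{T}_\theta/\Omega_\theta]$, with an extra Weierstrass perturbation to bound $\phi_v$ on the non-tempered part of the unramified spectrum. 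Once this uniformity is in hand, the remainder of the argument is bookkeeping.
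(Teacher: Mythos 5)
Your overall strategy is the same as the paper's: uniform $\kappa$ via Proposition~\ref{p:unr-density}, the three-term decomposition, Theorem~\ref{t:level-varies} for the middle term, positivity of $\plur_{v_j}$ for the third term, and Proposition~\ref{p:lim-of-Plancherel} to pass from $\plur_{v_j}$ to $\ST_\theta$. However, there is a genuine gap in your treatment of the first term. You assert that, because $\xi$ has regular highest weight, ``after normalization by $\pl_{S_0}(\hat{\phi}_{S_0})$ the measure $\hat{\mu}^{\natural}_{\cF_k,v_j}$ is a nonnegative combination of Dirac masses.'' This is false in general: unravelling \eqref{e:a(pi)-general} and \eqref{e:defn-of-mu}, each summand carries the factor $\hat{\phi}_{S_0}(\pi_{S_0})=\tr\,\pi_{S_0}(\phi_{S_0})$, and for a general $\phi_{S_0}\in C^\infty_c(G(F_{S_0}))$ this trace changes sign (or is complex) as $\pi_{S_0}$ varies; dividing by the scalar $\pl_{S_0}(\hat{\phi}_{S_0})$, which is merely assumed nonzero, does not restore positivity. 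Regularity of $\xi$ only forces $\chi_{\EP}(\pi_\infty\otimes\xi)\in\{0,(-1)^{q(G)}\}$ and the other weights to be nonnegative. The positivity argument you cite from Corollary~\ref{c:Plan-density} works there precisely because the function being approximated sits at the \emph{same} places $S=S_0$ as the prescribed local datum, so no extraneous sign-changing factor occurs; in the present theorem the approximation happens at $v_j$ while the fixed, possibly sign-changing factor $\hat{\phi}_{S_0}$ remains. Consequently your inequality $|\hat{\mu}^{\natural}_{\cF_k,v_j}(\hat{f}_{v_j}-\hat{\phi}_{v_j})|\le \hat{\mu}^{\natural}_{\cF_k,v_j}(\hat{\psi}_{v_j})$ is unjustified, and the whole bound on the first term collapses.

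The paper repairs exactly this point by two extra moves. First, positivity of the remaining weights gives
\[
|\hat{\mu}^{\natural}_{\cF_k,v_j}(\hat{f}_{v_j}-\hat{\phi}_{v_j})|\;\le\;\hat{\mu}^{\natural}_{\cF_k(|\hat{\phi}_{S_0}|),v_j}(\hat{\psi}_{v_j}),
\]
where the family now has local datum $|\hat{\phi}_{S_0}|$ at $S_0$. Second, since $|\hat{\phi}_{S_0}|$ is in general not of the form $\hat{\phi}'_{S_0}$ for any test function, one invokes Proposition~\ref{l:bounding-function-by-pos-function} (uniform admissibility; alternatively the remark citing Sauvageot/Dixmier--Malliavin) to produce $\phi'_{S_0}\in C^\infty_c(G(F_{S_0}))$ with $|\hat{\phi}_{S_0}(\pi_{S_0})|\le \hat{\phi}'_{S_0}(\pi_{S_0})$ for all $\pi_{S_0}$, whence $\hat{\mu}^{\natural}_{\cF_k(|\hat{\phi}_{S_0}|),v_j}(\hat{\psi}_{v_j})\le \hat{\mu}^{\natural}_{\cF_k(\phi'_{S_0}),v_j}(\hat{\psi}_{v_j})$; Theorem~\ref{t:level-varies} \emph{does} apply to this auxiliary family and, combined with $\plur_{v_j}(\hat{\psi}_{v_j})\le\epsilon$, gives $\limsup_{(j,k)}\hat{\mu}^{\natural}_{\cF_k(\phi'_{S_0}),v_j}(\hat{\psi}_{v_j})\le\epsilon$. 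Your argument would be correct as written in the special case where $\hat{\phi}_{S_0}\ge 0$ on the unitary dual (e.g.\ $\phi_{S_0}=\mathbf{1}_{K_{S_0}}$), but for the theorem as stated you need this domination step to handle an arbitrary $\phi_{S_0}$ with $\pl_{S_0}(\hat{\phi}_{S_0})\neq 0$.
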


\begin{proof}
  Fix $\hat{f}$. We are done if $\limsup_{(j,k)\ra \infty} |\hat{\mu}^{\natural}_{\cF_k,v_j}(\hat{f}_{v_j})- \ST_{\theta}(\hat{f})|\le 4\epsilon$ for every $\epsilon>0$. By Proposition \ref{p:lim-of-Plancherel},
  $|\pl_{v_j}(\hat{f}_{v_j})-\ST_{\theta}(\hat{f})|\le\epsilon$ for sufficiently large $j$. So it is enough to show that
   \begin{equation}\label{e:extending-at-v_j-0}\limsup_{(j,k)\ra \infty} |\hat{\mu}^{\natural}_{\cF_k,v_j}(\hat{f}_{v_j})- \pl_{v_j}(\hat{f}_{v_j})|\le3\epsilon.\end{equation}

	 For every $j\ge1$, Proposition \ref{p:unr-density} allows us to find $\phi_{v_j},\psi_{v_j}\in \cH^{\ur}(G(F_{v_j}))^{\le \kappa}$ such that $|\hat{f}_{v_j}-\hat{\phi}_{v_j}|\le \hat{\psi}_{v_j}$ on $G(F_{v_j})^{\wedge}$ and $\pl_{v_j}(\hat{\psi}_{v_j})\le \epsilon$.
%  Choose $\kappa\in \Z_{\ge1}$ such that $\phi_{v_j}\in \cH^{\ur}(GF_{v_j}))^{\le \kappa}$.
  For each $j\ge 1$,
  \begin{equation}\label{e:extending-at-v_j}
\begin{aligned}
|\hat{\mu}^{\natural}_{\cF_k,v_j}(\hat{f}_{v_j})- \pl_{v_j}(\hat{f}_{v_j})| & \le  |\hat{\mu}^{\natural}_{\cF_k,v_j}(\hat{f}_{v_j}-\hat{\phi}_{v_j})|\\ & + |\hat{\mu}^{\natural}_{\cF_k,v_j}(\hat{\phi}_{v_j})- \pl_{v_j}(\hat{\phi}_{v_j})| + |\pl_{v_j}(\hat{\phi}_{v_j}-\hat{f}_{v_j})|.
\end{aligned}
\end{equation}
  Since $\pl_{v_j}$ is a positive measure,
 $$|\pl_{v_j}(\hat{\phi}_{v_j}-\hat{f}_{v_j})|\le  \pl_{v_j}(|\hat{\phi}_{v_j}-\hat{f}_{v_j}|)
 \le \pl_{v_j}(\hat{\psi}_{v_j})\le \epsilon.$$
 Theorem \ref{t:level-varies} and the assumptions of the theorem imply that for sufficiently large $(j,k)$,
$|\hat{\mu}^{\natural}_{\cF_k,v_j}(\hat{\phi}_{v_j})- \pl_{v_j}(\hat{\phi}_{v_j})|\le \epsilon$. So we will be done if for sufficiently large $(j,k)$,
  \begin{equation}\label{e:extending-at-v_j-2}
  |\hat{\mu}^{\natural}_{\cF_k,v_j}(\hat{f}_{v_j}-\hat{\phi}_{v_j})| \le \epsilon.
\end{equation}
 Arguing as in the proof of Corollary \ref{c:Plan-density} we deduce the following: when $\hat{\mu}^{\natural}_{\cF_k,v_j}(\hat{f}_{v_j}-\hat{\phi}_{v_j})$ is unraveled as a sum over $\pi$ (cf. \eqref{e:a(pi)-general} and \eqref{e:defn-of-mu}), each summand is $\hat{\phi}_{S_0}(\pi_{S_0})(\hat{f}_{v_j}-\hat{\phi}_{v_j})(\pi_{v_j})$ times a nonnegative real number. (This uses the regularity assumption on $\xi$. Certainly the absolute value of the sum does not get smaller when every summand is replaced with (something greater than or equal to) its absolute value, i.e.
 $$|\hat{\mu}^{\natural}_{\cF_k,v_j}(\hat{f}_{v_j}-\hat{\phi}_{v_j})|
 \le \hat{\mu}^{\natural}_{\cF_k(|\hat{\phi}_{S_0}|),v_j}(|\hat{f}_{v_j}-\hat{\phi}_{v_j}|)
 \le \hat{\mu}^{\natural}_{\cF_k(|\hat{\phi}_{S_0}|),v_j}(\hat{\psi}_{v_j}).$$
  Now choose $\phi'_{S_0}\in C^\infty_c(G(F_{S_0}))$ according to Lemma \ref{l:bounding-function-by-pos-function} so that $|\phi_{S_0}(\pi_{S_0})|\le \phi'_{S_0}(\pi_{S_0})$ for every $\pi_{S_0}\in G(F_{S_0})^{\wedge}$. Then
$$\hat{\mu}^{\natural}_{\cF_k(|\hat{\phi}_{S_0}|),v_j}(\hat{\psi}_{v_j})\le \hat{\mu}^{\natural}_{\cF_k(\phi'_{S_0}),v_j}(\hat{\psi}_{v_j}).$$
Theorem \ref{t:level-varies} applied to $\hat{\psi}_{v_j}$ and the inequality $\pl_{v_j}(\hat{\psi}_{v_j})\le \epsilon$ imply that $$\limsup_{(j,k)\ra\infty} \hat{\mu}^{\natural}_{\cF_k(\phi'_{S_0}),v_j}(\hat{\psi}_{v_j})\le \epsilon.$$ This concludes the proof of \eqref{e:extending-at-v_j-2}, thus also \eqref{e:extending-at-v_j-0}.

%  When the highest weight of $\xi$ is not regular, $\chi_{\EP}(\pi_\infty\otimes \xi)$ may be nonzero with sign different from $(-1)^{q(G)}$. This can occur only when $\pi_\infty$ is non-tempered. We can prove \eqref{e:extending-at-v_j-0} by applying the above argument to $\hat{\mu}^{\natural,\tempinf}$ together with Lemma \ref{l:non-temp-negligible2} below.

\end{proof}

%
%\begin{lem}\label{l:non-temp-negligible2} In the setting of Theorem \ref{t:Sato-Tate-level},
%  $$\lim_{(j,k)\ra\infty}|\hat{\mu}^{\natural}_{\cF_k,v_j}(\hat{f}_{v_j})-\hat{\mu}^{\natural,\tempinf}_{\cF_k,v_j}(\hat{f}_{v_j})|=0.$$
%\end{lem}
%
%\begin{proof}
%  Imitate the argument of \cite[\S3.3]{Clo86}? (***insert lemma above?***)
%\end{proof}

\begin{thm}\label{t:Sato-Tate-weight} (weight aspect) Let $\theta\in \mC(\Gamma_1)$ and $\hat{\phi}_{S_0}\in C^\infty_c(G(F_{S_0}))$. Suppose that $\{v_j\}_{j\ge 1}$ is a sequence in $\cV_F(\theta)$ such that $q_{v_j}\ra \infty$ as $j\ra \infty$ and that $\pl_{S_0}(\hat \phi_{S_0})\neq 0$.
 Then for every $\hat{f}\in \cF(\hat{T}_{c,\theta}/\Omega_{c,\theta})$,
  $$\lim_{(j,k)\ra \infty} \hat{\mu}^{\natural}_{\cF_k,v_j}(\hat{f}_{v_j})
  = \ST_{\theta}(\hat{f})$$
  if $q_{v_j}^{N} m(\xi_k)^{-1}\ra 0$ as $k\ra \infty$ for any integer $N\ge1$.

\end{thm}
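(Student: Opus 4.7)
The plan is to mimic the proof of Theorem~\ref{t:Sato-Tate-level} with Theorem~\ref{t:weight-varies} playing the role of Theorem~\ref{t:level-varies}. Fix $\hat{f} \in \cF(\hat{T}_{c,\theta}/\Omega_{c,\theta})$ and $\epsilon > 0$. By Proposition~\ref{p:lim-of-Plancherel} we have $|\pl_{v_j}(\hat{f}_{v_j}) - \ST_{\theta}(\hat{f})| \le \epsilon$ for all $j \gg 0$, so it suffices to show that
\[
\limsup_{(j,k)\to\infty} |\hat{\mu}^{\natural}_{\cF_k,v_j}(\hat{f}_{v_j}) - \pl_{v_j}(\hat{f}_{v_j})| \le 3\epsilon.
\]

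Apply Proposition~\ref{p:unr-density} to obtain an integer $\kappa \ge 1$, depending only on $\hat{f}$ and $\epsilon$ but not on $j$, together with $\phi_{v_j}, \psi_{v_j} \in \cH^{\ur}(G(F_{v_j}))^{\le \kappa}$ for each $j$, such that $|\hat{f}_{v_j} - \hat{\phi}_{v_j}| \le \hat{\psi}_{v_j}$ on $G(F_{v_j})^{\wedge,\ur}$ and $\pl_{v_j}(\hat{\psi}_{v_j}) \le \epsilon$. The key uniformity in $j$ is that the degree $\kappa$ is fixed; this is exactly what allows us to absorb the polynomial factor $q_{v_j}^{A_\weight+B_\weight\kappa}$ coming from Theorem~\ref{t:weight-varies}. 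Then split
\[
|\hat{\mu}^{\natural}_{\cF_k,v_j}(\hat{f}_{v_j}) - \pl_{v_j}(\hat{f}_{v_j})| \le |\hat{\mu}^{\natural}_{\cF_k,v_j}(\hat{f}_{v_j} - \hat{\phi}_{v_j})| + |\hat{\mu}^{\natural}_{\cF_k,v_j}(\hat{\phi}_{v_j}) - \pl_{v_j}(\hat{\phi}_{v_j})| + |\pl_{v_j}(\hat{\phi}_{v_j} - \hat{f}_{v_j})|.
\]
The third summand is bounded by $\pl_{v_j}(\hat{\psi}_{v_j}) \le \epsilon$. The second summand is handled by Theorem~\ref{t:weight-varies}, which gives a bound of $O(q_{v_j}^{A_\weight + B_\weight\kappa} m(\xi_k)^{-C_\weight})$; since $q_{v_j}^N m(\xi_k)^{-1} \to 0$ for every $N$, this tends to $0$ as $(j,k) \to \infty$.

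The main subtlety is the first summand, where we need a positivity-type argument to pass absolute values inside the measure $\hat{\mu}^{\natural}_{\cF_k,v_j}$. Here we exploit that $m(\xi_k) \to \infty$ forces the highest weight of $\xi_k$ to be regular for all sufficiently large $k$, because $m(\xi_k) = \min_{\alpha \in \Phi^+}\langle \lambda_{\xi_k}, \alpha\rangle$ and all these pairings must exceed $0$ eventually. For such $k$, the coefficient $\chi_{\EP}(\pi_\infty \otimes \xi_k)$ appearing in \eqref{e:a(pi)-general} is either $0$ or $(-1)^{q(G)}$, which gives the necessary sign control so that
\[
|\hat{\mu}^{\natural}_{\cF_k,v_j}(\hat{f}_{v_j} - \hat{\phi}_{v_j})| \le \hat{\mu}^{\natural}_{\cF_k(|\hat{\phi}_{S_0}|),v_j}(\hat{\psi}_{v_j}).
\]
Now invoke Lemma~\ref{l:bounding-function-by-pos-function} to produce $\phi'_{S_0} \in C_c^\infty(G(F_{S_0}))$ dominating $|\hat{\phi}_{S_0}|$ on $G(F_{S_0})^{\wedge}$, so that $\hat{\mu}^{\natural}_{\cF_k(|\hat{\phi}_{S_0}|),v_j}(\hat{\psi}_{v_j}) \le \hat{\mu}^{\natural}_{\cF_k(\phi'_{S_0}),v_j}(\hat{\psi}_{v_j})$. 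Applying Theorem~\ref{t:weight-varies} to $\phi'_{S_0}$ and $\psi_{v_j}$ (now with the fixed test function $\psi_{v_j}$ of bounded degree $\kappa$), the right-hand side converges to $\pl_{v_j}(\hat{\psi}_{v_j}) \le \epsilon$ up to an error that vanishes thanks to $q_{v_j}^N m(\xi_k)^{-1} \to 0$. Combining all three bounds yields the desired $\limsup$ estimate and completes the proof.

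The main obstacle is the positivity step: unlike the level aspect, where regularity of $\xi$ is assumed outright, in the weight aspect we must observe that eventual regularity is automatic from $m(\xi_k) \to \infty$, and then carefully re-derive the uniformity of the Plancherel approximation (via Proposition~\ref{p:unr-density}) so that the fixed degree $\kappa$ is strong enough to survive multiplication by the polynomial factor $q_{v_j}^{A_\weight + B_\weight \kappa}$ on the geometric side.
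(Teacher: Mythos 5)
Your proposal is correct and follows essentially the same route as the paper, which proves this theorem by repeating the argument of Theorem~\ref{t:Sato-Tate-level} verbatim with Theorem~\ref{t:weight-varies} in place of Theorem~\ref{t:level-varies}. Your added observation that $m(\xi_k)\to\infty$ makes the highest weight of $\xi_k$ regular for $k\gg 0$, so the positivity step goes through without the explicit regularity hypothesis of the level-aspect statement, is exactly the point the paper leaves implicit.
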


\begin{proof}
  Same as above, except that Theorem \ref{t:weight-varies} is used
  instead of Theorem \ref{t:level-varies}.
\end{proof}

\begin{rem}
  As we have mentioned in \S\ref{sub:ST-families},
  Theorems \ref{t:Sato-Tate-level} and \ref{t:Sato-Tate-weight}
  indicate that $\{\cF_k\}_{\ge1}$ are ``general''
  families of automorphic representations in the sense of
  Conjecture \ref{c:ST-families}.
\end{rem}

\begin{cor}
  In the setting of Theorem \ref{t:Sato-Tate-level} or \ref{t:Sato-Tate-weight},
  suppose in addition that $|\cF_k|\neq 0$ for all $k\ge 1$. Then
 $$\lim_{(j,k)\ra \infty} \hat{\mu}^{\mathrm{count}}_{\cF_k,v_j}(\hat{f}_{v_j})
  = \ST_{\theta}(\hat{f}).$$
\end{cor}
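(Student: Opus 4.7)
The plan is to reduce the corollary to the theorem that precedes it by showing that the counting measure $\hat{\mu}^{\mathrm{count}}_{\cF_k,v_j}$ and the normalized measure $\hat{\mu}^{\natural}_{\cF_k,v_j}$ differ asymptotically by a factor tending to $1$. Unraveling the definitions, we have
\[
\frac{\hat{\mu}^{\mathrm{count}}_{\cF_k,v_j}(\hat{f}_{v_j})}{\hat{\mu}^{\natural}_{\cF_k,v_j}(\hat{f}_{v_j})}
= \frac{\pl_{S_0}(\hat{\phi}_{S_0})\cdot\tau'(G)\dim\xi_k}{\mu^{\can}(U^{S,\infty})\cdot|\cF_k|},
\]
so what needs to be shown is that the quantity $\mu^{\can}(U^{S,\infty})|\cF_k|/(\tau'(G)\dim\xi_k)$ tends to $\pl_{S_0}(\hat{\phi}_{S_0})$ along the joint limit in $(j,k)$.

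To see this, first I would observe that evaluating the (unnormalized) measure $\hat{\mu}_{\cF_k,v_j}$ on the unit of the unramified Hecke algebra yields
\[
\hat{\mu}_{\cF_k,v_j}(\hat{\triv}_{K_{v_j}})
= \frac{\mu^{\can}(U^{S,\infty})}{\tau'(G)\dim\xi_k}\sum_{\pi} a_{\cF_k}(\pi)\hat{\triv}_{K_{v_j}}(\pi_{v_j})
= \frac{\mu^{\can}(U^{S,\infty})|\cF_k|}{\tau'(G)\dim\xi_k},
\]
since the factor $\hat{\triv}_{K_{v_j}}(\pi_{v_j})$ is already built into $a_{\cF_k}(\pi)$ from the definition \eqref{e:a(pi)-general}. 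Thus the desired quotient is precisely $\pl_{S_0}(\hat{\phi}_{S_0})/\hat{\mu}_{\cF_k,v_j}(\hat{\triv}_{K_{v_j}})$.

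Next I would invoke Theorem \ref{t:level-varies} or \ref{t:weight-varies} with the test function $\hat{\phi}_{v_j}=\hat{\triv}_{K_{v_j}}$, which lies in $\cH^{\ur}(G(F_{v_j}))^{\le 0}$ (so $\kappa=0$) and satisfies $|\triv_{K_{v_j}}|\le 1$. The theorem yields
\[
\hat{\mu}_{\cF_k,v_j}(\hat{\triv}_{K_{v_j}})-\pl_{S}(\hat{\phi}_{S_0}\otimes\hat{\triv}_{K_{v_j}})
= O\bigl(q_{v_j}^{A_{\level}}\N(\fkn_k)^{-C_{\level}}\bigr)
\]
or the analogous bound with $m(\xi_k)^{-C_{\weight}}$ in the weight aspect. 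Since $\pl_S$ factors as a product and $\pl_{v_j}(\hat{\triv}_{K_{v_j}})=\triv_{K_{v_j}}(1)=1$ by the Plancherel formula, the main term is just $\pl_{S_0}(\hat{\phi}_{S_0})$; moreover the error term tends to zero under precisely the growth conditions imposed in Theorems \ref{t:Sato-Tate-level} and \ref{t:Sato-Tate-weight} (taking $N$ large enough).

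Combining the two steps, the hypothesis $\pl_{S_0}(\hat{\phi}_{S_0})\neq 0$ together with $|\cF_k|\neq 0$ (which guarantees the counting measure is defined) implies
\[
\lim_{(j,k)\to\infty}\frac{\hat{\mu}^{\mathrm{count}}_{\cF_k,v_j}(\hat{f}_{v_j})}{\hat{\mu}^{\natural}_{\cF_k,v_j}(\hat{f}_{v_j})}
= \frac{\pl_{S_0}(\hat{\phi}_{S_0})}{\pl_{S_0}(\hat{\phi}_{S_0})}=1,
\]
so that multiplying by the limit $\hat{\mu}^{\natural}_{\cF_k,v_j}(\hat{f}_{v_j})\to\ST_\theta(\hat{f})$ provided by Theorem \ref{t:Sato-Tate-level} or \ref{t:Sato-Tate-weight} yields the result. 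There is no real obstacle here: the entire argument is a tautological comparison of normalizations, combined with the already proved quantitative trace formula estimates applied to the very particular test function $\hat{\triv}_{K_{v_j}}$.
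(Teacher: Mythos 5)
Your proof is correct and takes essentially the paper's own route: the paper's one-line argument cites Corollary \ref{c:estimate-F_k} together with Remark \ref{r:|F|}, i.e. precisely the observation that $\hat{\mu}^{\mathrm{count}}_{\cF_k,v_j}$ and $\hat{\mu}^{\natural}_{\cF_k,v_j}$ differ by a constant tending to $1$, after which the limit is supplied by Theorems \ref{t:Sato-Tate-level} and \ref{t:Sato-Tate-weight}. The only (harmless) difference is that you establish this normalization limit by applying Theorem \ref{t:level-varies} or \ref{t:weight-varies} directly to the test function $\hat{\triv}_{K_{v_j}}$ (take $\kappa=1$, since the theorems are stated for $\kappa\ge 1$), which handles the mild dependence of $|\cF_k|$ on $v_j$ uniformly in the joint limit, whereas the paper quotes Corollary \ref{c:estimate-F_k}, itself a consequence of the same quantitative theorems via Corollary \ref{c:Plan-density}.
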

\begin{proof}
  Follows from Corollary \ref{c:estimate-F_k} and the two preceding theorems (cf. Remark \ref{r:|F|}).
\end{proof}

\begin{rem}
  The assumption that $|\cF_k|\neq 0$ is almost automatically satisfied.
  Corollary \ref{c:estimate-F_k} and the assumption that $\pl_{S_0}(\hat{\phi}_{S_0})\neq 0$ imply that
  $|\cF_k|\neq 0$ for any sufficiently large $k$.

%  The assumptions $\pl_{S_0}(\hat{\phi}_{S_0})\neq 0$ and $|\cF_k|\neq 0$ are easily satisfied in the case
%  when the highest weight of $\xi$ (or $\xi_k$) is regular and $\phi_{S_0}$ satisfies the conditions that
%  $\hat{\phi}_{S_0}\in \R_{\ge 0}$ on $G(F_{S_0})^{\wedge}$ and that $\hat{\phi}_{S_0}$ is not identically zero.
%  Then $\pl_{S_0}(\hat{\phi}_{S_0})>0$ by the Plancherel formula.
%  Corollary \ref{c:estimate-F_k} implies that $\cF_k\neq \emptyset$ and $|\cF_k|\neq 0$ for any sufficiently large $k$.
%  In fact $a_{\cF}(\pi)\ge 0$ for
%  all $\pi\in \cAR_{\disc,\chi}(G)$, so in fact we have $|\cF_k|>0$ as soon as $\cF_k\neq \emptyset$.
\end{rem}

\subsection{More general test functions at $S_0$}\label{sub:general-functions-S_0}

  So far we worked primarily with families of Examples \ref{ex:level-varies}
  and \ref{ex:wt-varies}.
  We wish to extend Theorems \ref{t:Sato-Tate-level}
  and \ref{t:Sato-Tate-weight} when the local condition at $S_0$
is given by $\hat{f}_{S_0}$,
which may not be of the form $\hat{\phi}_{S_0}$
for any $\phi_{S_0}\in C^\infty_c(G(F_{S_0}))$
(cf. Example \ref{ex:example-for-f_S} and Remark \ref{r:why-S_0}).

\begin{cor}\label{c:extended} Let $\theta\in \mC(\Gamma_1)$ and let $\{v_j\}_{j\ge 1}$ be a sequence of places in $\cV_F(\theta)$ such that
  $q_{v_j}\ra\infty$ as $j\ra\infty$.
  Consider $\hat{\mu}_{\cF_k,v_j}$ where $$\cF_k=\left\{ \begin{array}{cl}
  \cF(K^{S,\infty}(\fkn_k),\hat{f}_{S_0},v_j,\xi) & \mathrm{level~aspect,~or}\\
  \cF(U^{S,\infty},\hat{f}_{S_0}, v_j,\xi_k) & \mathrm{weight~aspect}
  \end{array}\right.$$
  satisfying the conditions of Theorem \ref{t:Sato-Tate-level} or
  Theorem \ref{t:Sato-Tate-weight}, respectively.
 Then  $$\lim_{(j,k)\ra \infty} \hat{\mu}^{\natural}_{\cF_k,v_j}(\hat{f}_{v_j})
  = \ST_{\theta}(\hat{f})$$
  where the limit is taken as in Theorem \ref{t:Sato-Tate-level} (resp.
  Theorem \ref{t:Sato-Tate-weight}).
\end{cor}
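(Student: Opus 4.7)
The plan is to approximate $\hat{f}_{S_0}$ by an honest test function of the form $\hat{\phi}_{S_0}$ with $\phi_{S_0} \in C^\infty_c(G(F_{S_0}))$ via Sauvageot's density theorem (Proposition \ref{p:density}), thereby reducing the corollary to Theorem \ref{t:Sato-Tate-level} (level aspect) or Theorem \ref{t:Sato-Tate-weight} (weight aspect); the error incurred by this approximation will be controlled by a positivity argument that exploits the regularity of the highest weight of $\xi$. This mirrors the strategy used in the proof of Theorem \ref{t:Sato-Tate-level}, where the analogous approximation is performed at $v_j$ rather than at $S_0$.

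Given $\epsilon>0$, Proposition \ref{p:density} furnishes $\phi_{S_0},\psi_{S_0}\in C^\infty_c(G(F_{S_0}))$ such that $|\hat{f}_{S_0}-\hat{\phi}_{S_0}|\leq \hat{\psi}_{S_0}$ on $G(F_{S_0})^{\wedge}$ and $\pl_{S_0}(\hat{\psi}_{S_0})\leq \epsilon$; shrinking $\epsilon$ if necessary, we may arrange $\pl_{S_0}(\hat{\phi}_{S_0})\neq 0$. Linearity of $\hat{h}\mapsto \hat{\mu}_{\cF_k(\hat{h}),v_j}$ in its first argument yields
\begin{align*}
\hat{\mu}_{\cF_k(\hat{f}_{S_0}),v_j}(\hat{f}_{v_j}) - \pl_{S_0}(\hat{f}_{S_0})\ST_\theta(\hat{f})
 &= \hat{\mu}_{\cF_k(\hat{f}_{S_0}-\hat{\phi}_{S_0}),v_j}(\hat{f}_{v_j}) \\
 &\quad + \bigl[\hat{\mu}_{\cF_k(\hat{\phi}_{S_0}),v_j}(\hat{f}_{v_j})-\pl_{S_0}(\hat{\phi}_{S_0})\ST_\theta(\hat{f})\bigr] \\
 &\quad + \pl_{S_0}(\hat{\phi}_{S_0}-\hat{f}_{S_0})\ST_\theta(\hat{f}).
\end{align*}
The last term is bounded in absolute value by $\epsilon\,|\ST_\theta(\hat{f})|$, and the bracketed middle term tends to zero under the prescribed joint limit by Theorem \ref{t:Sato-Tate-level} or \ref{t:Sato-Tate-weight} applied to the family $\cF_k(\hat{\phi}_{S_0})$.

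The first term is the main obstacle. Since the highest weight of $\xi$ (respectively of $\xi_k$ for $k$ large, as $m(\xi_k)\to\infty$) is regular, the description of $\chi_{\EP}$ in \S\ref{sub:EP} shows $\chi_{\EP}(\pi_\infty\otimes\xi)\in\{0,(-1)^{q(G)}\}$, and combined with $\hat{\psi}_{S_0}\geq 0$ this forces every coefficient $a_{\cF_k(\hat{\psi}_{S_0})}(\pi)$ in \eqref{e:defn-of-mu} to be a non-negative real number. Taking absolute values inside the summation gives the pointwise estimate
$$\bigl|\hat{\mu}_{\cF_k(\hat{f}_{S_0}-\hat{\phi}_{S_0}),v_j}(\hat{f}_{v_j})\bigr|
  \leq \hat{\mu}_{\cF_k(\hat{\psi}_{S_0}),v_j}(|\hat{f}_{v_j}|).$$
Since $|\hat{f}|$ again lies in $\cF(\hat{T}_{c,\theta}/\Omega_{c,\theta})$, applying the proof of Theorem \ref{t:Sato-Tate-level} or \ref{t:Sato-Tate-weight} to the family $\cF_k(\hat{\psi}_{S_0})$ and test function $|\hat{f}|$ at $v_j$ (in unnormalized form, which remains valid whether or not $\pl_{S_0}(\hat{\psi}_{S_0})$ vanishes) yields
$$\lim_{(j,k)\to\infty}\hat{\mu}_{\cF_k(\hat{\psi}_{S_0}),v_j}(|\hat{f}_{v_j}|)
  = \pl_{S_0}(\hat{\psi}_{S_0})\ST_\theta(|\hat{f}|)\leq \epsilon\sup|\hat{f}|.$$
Collecting the three estimates, $\limsup_{(j,k)\to\infty}|\hat{\mu}_{\cF_k(\hat{f}_{S_0}),v_j}(\hat{f}_{v_j})-\pl_{S_0}(\hat{f}_{S_0})\ST_\theta(\hat{f})|=O(\epsilon)$. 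Dividing by $\pl_{S_0}(\hat{f}_{S_0})\neq 0$ and letting $\epsilon\to 0$ concludes. The key technical point is that the approximation at $S_0$ uses a \emph{fixed} pair $(\phi_{S_0},\psi_{S_0})$ independent of $(j,k)$, so only Sauvageot's density theorem is required at $S_0$; the uniformity in $v_j$ needed for the inner application of Theorem \ref{t:Sato-Tate-level}/\ref{t:Sato-Tate-weight} is already built into those statements via Proposition \ref{p:unr-density}.
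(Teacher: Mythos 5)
Your proof is correct and follows essentially the same route as the paper: approximate $\hat{f}_{S_0}$ by $\hat{\phi}_{S_0}$ with majorant $\hat{\psi}_{S_0}$ via Sauvageot's density theorem, handle the main term by Theorem \ref{t:Sato-Tate-level} (resp.\ \ref{t:Sato-Tate-weight}) for the family $\cF_k(\hat{\phi}_{S_0})$, and control the error by the nonnegativity of the coefficients $a_{\cF_k}(\pi)$ (regular weight, resp.\ $m(\xi_k)\to\infty$) together with the unnormalized equidistribution applied to $\cF_k(\hat{\psi}_{S_0})$. The only cosmetic differences are that you keep $|\hat{f}_{v_j}|$ in the majorization where the paper first reduces to $\hat{f}\ge 0$ by splitting into nonnegative parts, and you write the splitting as an exact linear decomposition rather than a triangle inequality; neither changes the substance of the argument.
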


\begin{proof}

  The basic strategy is to reduce to the case of $\hat{\phi}$ and $\hat{\phi}_{v_j}$ in place of
$\hat{f}$ and $\hat{f}_{v_j}$ via Sauvageot's density theorem, as in the proof of Theorem
\ref{t:Sato-Tate-level}.   We can decompose $\hat{f}=\hat{f}^+ +\hat{f}^-$ with
  $\hat{f}^+,\hat{f}^-\in  \cF(\hat{T}_{c,\theta}/\Omega_{c,\theta})$ such that $\hat{f}^+$ and $\hat{f}^-$ are nonnegative everywhere.
  The corollary for $\hat{f}$ is proved as soon as it is proved for $\hat{f}^+$ and $\hat{f}^-$. Thus we may assume that $\hat{f}\ge 0$ from now on.

  Fix any choice of $\epsilon>0$.
  Proposition \ref{p:density} ensures the existence of
  $\phi_{S_0},\psi_{S_0}\in C^\infty_c(G(F_{S_0}))$ such that
  $\pl_{S_0}(\hat\psi_{S_0})\le \epsilon$ and
  $|\hat{f}_{S_0}(\pi_{S_0})-\hat{\phi}_{S_0}(\pi_{S_0})|\le \hat{\psi}_{S_0}(\pi_{S_0})$
  for all $\pi_{S_0}\in G(F_{S_0})^\wedge$. Of course we can guarantee in addition that $\pl_{S_0}(\hat{\phi}_{S_0})\neq 0$. Put
$$\cF_k(\hat{\phi}_{S_0}):=\cF(K^{S,\infty}(\fkn_k),\hat{\phi}_{S_0},
  v_j,\xi)\quad (\mbox{resp.}~\cF_k(\hat{\phi}_{S_0})=\cF(U^{S,\infty},\hat{\phi}_{S_0},
  v_j,\xi_k)).$$
   Likewise we define $\cF_k(\hat{\psi}_{S_0})$ and so on. Then (cf. a similar step in the proof of Theorem \ref{t:Sato-Tate-level})
\begin{equation*}\begin{aligned}
| \hat{\mu}_{\cF_k,v_j}(\hat{f}_{v_j})-\pl_{S_0\cup\{v_j\}}(\hat{f}_{S_0}\hat{f}_{v_j})|&\le |\hat{\mu}_{\cF_k(\hat{\phi}_{S_0}),v_j}(\hat{f}_{v_j})-\pl_{S_0\cup\{v_j\}}(\hat{\phi}_{S_0}\hat{f}_{v_j})| \\
& + |\hat{\mu}_{\cF_k(|\hat{f}_{S_0}-\hat{\phi}_{S_0}|)}(\hat{f}_{v_j})|
 + \pl_{S_0\cup\{v_j\}}(|\hat{f}_{S_0}-\hat{\phi}_{S_0}|\hat{f}_{v_j})
 \end{aligned}
 \end{equation*}
  The first term on the right side tends to 0 as $(j,k)\ra \infty$ by Theorems \ref{t:Sato-Tate-level} and \ref{t:Sato-Tate-weight}. The last term is bounded by $\pl_{S_0\cup\{v_j\}}(\hat{\psi}_{S_0}\hat{f}_{v_j})\le \epsilon\pl_{v_j}(\hat{f}_{v_j})$ using the fact that $\pl_{S_0}$ is a positive measure. In order to bound the second term, recall that we are either in the weight aspect, or in the level aspect with regular highest weight for $\xi$. Then $a_{\cF_k(|\hat{f}_{S_0}-\hat{\phi}_{S_0}|)}(\pi)$ is a nonnegative multiple of $|\hat{f}_{S_0}(\pi_{S_0})-\hat{\phi}_{S_0}(\pi_{S_0})|$ as in the proof of Theorem \ref{t:Sato-Tate-level}. Thus
 $$|\hat{\mu}_{\cF_k(|\hat{f}_{S_0}-\hat{\phi}_{S_0}|)}(\hat{f}_{v_j})|=\hat{\mu}_{\cF_k(|\hat{f}_{S_0}-\hat{\phi}_{S_0}|)}(\hat{f}_{v_j})\le \hat{\mu}_{\cF_k(\hat{\psi}_{S_0})}
 (\hat{f}_{v_j})\le \epsilon \hat{\mu}^\natural_{\cF_k(\hat{\psi}_{S_0})}
 (\hat{f}_{v_j}),$$
 the last inequality coming from the bound $\pl_{S_0}(\hat\psi_{S_0})\le \epsilon$.

%  In the remaining case of level aspect with non-regular highest weight for $\xi$, one can still show that $|\hat{\mu}_{\cF_k(|\hat{f}_{S_0}-\hat{\phi}_{S_0}|)}(\hat{f}_{v_j})|\ra 0$ as $(j,k)\ra \infty$ by controlling the contribution from non-tempered representations at $\infty$ exactly as in the proof of Theorems \ref{t:Sato-Tate-level}.
  Hence we have shown that
  $$\limsup_{(j,k)\ra\infty} | \hat{\mu}_{\cF_k,v_j}(\hat{f}_{v_j})-\pl_{S_0\cup\{v_j\}}(\hat{f}_{S_0}\hat{f}_{v_j})| \le \epsilon \limsup_{(j,k)\ra\infty}\left(\hat{\mu}^\natural_{\cF_k(\hat{\psi}_{S_0})}
 (\hat{f}_{v_j})+\pl_{v_j}(\hat{f}_{v_j})\right).$$
  By Theorems \ref{t:Sato-Tate-level} and \ref{t:Sato-Tate-weight} and the fact that $\lim\limits_{j\ra\infty}\pl_{v_j}(\hat{f}_{v_j})=\ST_{\theta}(\hat{f})$, the right hand side is seen to be bounded by $2\epsilon \ST_{\theta}(\hat{f})$. As we are free to choose $\epsilon>0$,
  we deduce that $$\lim\limits_{(j,k)\ra\infty} \hat{\mu}_{\cF_k,v_j}(\hat{f}_{v_j})=\pl_{S_0}(\hat{f}_{S_0})\ST_{\theta}(\hat{f}).$$
\end{proof}

\begin{rem}
  It would be desirable to improve Theorems \ref{t:level-varies} and \ref{t:weight-varies} similarly
  by prescribing conditions at $S_0$ in terms of $\hat{f}_{S_0}$ rather than the less general
  $\hat{\phi}_{S_0}$. Unfortunately the argument proving Corollary \ref{c:extended} does not carry over.
  For instance in the case of Theorem \ref{t:level-varies}, one should know in addition that
  the multiplicative constant implicit in  $O(q_{S_1}^{A_{\level}+B_{\level} \kappa} \N(\fkn_k)^{-C_{\level}})$
  is bounded as a sequence of $\hat{\phi}_{S_0}$
  approaches $\hat{f}_{S_0}$.
\end{rem}

\section{Langlands functoriality}\label{s:langlands}

  Let $r:{}^L G\ra \GL_d(\C)$ be a representation of ${}^L G$. Let $\pi\in \cAR_{\disc,\chi}(G)$ be
such that with $\pi_v \in \Pi_{\disc}(\xi_v^\vee)$ for each $v|\infty$
 (recall the notation from \S\ref{sub:st-disc} and \S\ref{sub:aut-rep}).
The Langlands correspondence for $G(F_v)$ (\cite{Lan88}) associates an $L$-parameter $\varphi_{\xi^\vee_v}:W_\R\ra {}^L G$
to the $L$-packet $\Pi_{\disc}(\xi_v^\vee)$, cf. \S\ref{sub:st-disc}.
%Note that \beq\label{e:L-G-dual-G-ind} {}^L G_\infty=\ind^{\Gal(\ol{\Q}/\Q)}_{\Gal(\ol{\Q}/F)} {}^L G,\qquad
%\hat{G_\infty}=\ind^{\Gal(\ol{\Q}/\Q)}_{\Gal(\ol{\Q}/F)}\hat{G}\eeq by
%\cite[\S5]{Bor79}. In the obvious way $\hat{B}$
%  and $\hat{T}$ extends to a Borel subgroup $\hat{B}_\infty$ and
%  a maximal torus $\hat{T}_\infty$ of $\hat{G_\infty}$ in view of \eqref{e:L-G-dual-G-ind}.
 The following asserts the existence
of the functorial lift of $\pi$ under $r$ as predicted by the Langlands functoriality principle.

\begin{hypo}\label{hypo:functorial-lift}
  There exists an automorphic representation $\Pi$ of $\GL_d(\A_F)$ such that
\benu
\item $\Pi$ is isobaric,
\item $\Pi_v=r_*(\pi_v)$ (defined in \eqref{e:r_*}) when $G$, $r$ and $\pi$ are unramified at $v$,
\item $\Pi_v$ corresponds to $r\varphi_{\xi^\vee_v}$ via the Langlands correspondence for $\GL_d(F_v)$
for all $v|\infty$.
\eenu
\end{hypo}

  If $\Pi$ as above exists then it is uniquely determined by (i) and (ii) thanks to the strong multiplicity one theorem. Moreover
  \begin{lem}\label{lem:tempered}
  Hypothesis \ref{hypo:functorial-lift}.(iii) implies that
  $\Pi_v$ is tempered for all $v|\infty$.
\end{lem}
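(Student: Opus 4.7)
The plan is to trace boundedness through the functoriality map and invoke the characterization of tempered representations at archimedean places in terms of their Langlands parameters. Recall that for a real reductive group, an admissible representation is tempered if and only if its Langlands parameter $\varphi:W_{F_v}\to {}^LG$ is \emph{bounded}, meaning that the projection of $\varphi(W_{F_v})$ to $\hat G$ is relatively compact (equivalently, lands in a maximal compact subgroup of $\hat G$ up to conjugation).

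First I would recall the structure of the parameter $\varphi_{\xi_v^\vee}$ attached to the discrete series $L$-packet $\Pi_{\disc}(\xi_v^\vee)$. By the archimedean Langlands classification, a discrete series $L$-packet corresponds to a parameter whose image in ${}^LG$ does not factor through any proper Levi subgroup and, crucially, is bounded: the restriction to $\C^\times\subset W_\R$ lands in a compact torus $\hat T_c\subset \hat G$ up to conjugation, and the full image $\varphi_{\xi_v^\vee}(W_{F_v})$ lies in a maximal compact subgroup of ${}^LG$. This is a standard property (see e.g.\ \cite{Bor79}) and will be the input we exploit.

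Next, since $r:{}^LG\to \GL_d(\C)$ is a continuous homomorphism of topological groups and $\varphi_{\xi_v^\vee}(W_{F_v})$ is contained in a compact subgroup of ${}^LG$, the image $(r\circ \varphi_{\xi_v^\vee})(W_{F_v})$ is a continuous image of a compact set in $\GL_d(\C)$, hence relatively compact. In other words, the composite parameter $r\circ\varphi_{\xi_v^\vee}:W_{F_v}\to \GL_d(\C)$ is a bounded Langlands parameter for $\GL_d(F_v)$.

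Finally, Hypothesis \ref{hypo:functorial-lift}.(iii) says that $\Pi_v$ corresponds to $r\circ \varphi_{\xi_v^\vee}$ under the archimedean Langlands correspondence for $\GL_d(F_v)$. Invoking the characterization of temperedness recalled above (valid for $\GL_d$ as a special case of the general archimedean theory), the fact that $r\circ \varphi_{\xi_v^\vee}$ is bounded implies that $\Pi_v$ is tempered, which concludes the proof. The only subtlety to check is the boundedness statement for discrete series parameters when $G$ is not necessarily split or quasi-split; this is handled by passing to the quasi-split inner form, since the $L$-group and the Langlands parameter are defined through this inner form, and the notion of discrete series $L$-packet at an archimedean place requires exactly the existence of such a bounded parameter attached to $\xi_v^\vee$.
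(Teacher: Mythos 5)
Your proposal is correct and follows essentially the same route as the paper: the parameter $\varphi_{\xi_v^\vee}$ of the discrete series packet has relatively compact image (the paper deduces this from Langlands's equivalence in \cite[\S3, (vi)]{Lan88}, you assert it directly as a property of discrete-series parameters), continuity of $r$ preserves relative compactness, and a bounded parameter for $\GL_d(F_v)$ corresponds to a tempered representation. The remark about inner forms is a harmless extra precaution; the paper does not need it since the cited equivalence holds for arbitrary real reductive groups.
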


\begin{proof}
  Recall the following general fact from \cite[\S3, (vi)]{Lan88}:
  Let $\varphi$ be an $L$-parameter for a real reductive group and $\Pi(\varphi)$
  its corresponding $L$-packet. Then $\varphi$ has relatively compact image if and only if
  $\Pi(\varphi)$ contains a tempered representation if and only if $\Pi(\varphi)$
  contains only tempered representations. In our case
  this implies that $\varphi_{\xi^\vee_v}$ has relatively compact image for every $v|\infty$, and the continuity
  of $r$ shows that the image of $r\varphi_{\xi^\vee_v}$ is also relatively compact. The lemma follows.
\end{proof}

As before let $(\hat{B},\hat{T},\{X_\alpha\}_{\alpha\in \Delta^\vee})$ denote the $\Gal(\ol{F}/F)$-invariant
  splitting datum for $\hat{G}$.
  Recall that $\lambda_{\xi_v^\vee}\in X^*(\hat{T})^+$ designates the highest weight for
  $\xi_v^\vee$.
  Then $\varphi_{\xi^\vee_v}|_{W_\C}$ is described as
  $$ \varphi_{\xi^\vee_v}(z)=\left((z/\ol{z})^{\rho+\lambda_{\xi_v^\vee}},z\right)
  \in \hat{G}\times W_\C,\quad \forall z\in W_\C=\C^\times.$$
  It is possible to extend $\varphi_{\xi^\vee_v}|_{W_\C}$ to the whole of $W_\R$ but this does not concern us.
 (The interested reader may consult pp.183-184 of \cite{Kot90} for instance.)
  Let $\hat{\T}$ be a maximal torus of $\GL_d(\C)$ containing the image
  $r(\hat{T})$, and $\hat{\B}$ a Borel subgroup containing $\hat{\T}$.
  Write $r|_{\hat{G}}=\oplus_{i\in I} r_i$ as a sum of irreducible $\hat{G}$-representations.
  For each $i\in I$, denote by $\lambda(r_i)\in X^*(\hat{T})$ the $\hat{B}$-positive
  highest weight for $r_i$. Write
$\lambda(r_i)=\lambda_0(r_i)+\sum_{\alpha\in \Delta} a(r_i,\alpha)\cdot \alpha^\vee$ for
$\lambda_0(r_i)\in X_*(Z(G))_{ \Q}$ and $a(r_i,\alpha)\in \Q_{\ge 0}$.
  Put $|\lambda(r_i)|:=\sum_{\alpha\in \Delta}  a(r_i,\alpha)$ and
$$M(\xi_v):=\max_{\alpha\in \Delta} \lg \alpha,\lambda_{\xi^\vee_v}\rg,~
M(r):=\max_{i\in I} |\lambda(r_i)|.$$
Similarly define $m(\xi_v)$ and $m(r)$ by using minima in place of maxima.
We are interested in the case where $\lambda_0(r_i)$ is trivial for every $i\in I$.
This is automatically true if $Z(G)$ is finite. (Recall that we consistently assume $Z(G)=1$
in the weight aspect.)

\begin{lem}\label{l:Cpi} Suppose that $\lambda_0(r_i)$ is trivial for every $i\in I$.
  Hypothesis \ref{hypo:functorial-lift}.(iii) implies that for each $v|\infty$,
  $$ (2+m(r)m(\xi_v))^{|I|}\le C(\Pi_v)\le (3+2M(r)M(\xi_v))^{d}.$$
In particular if $Z(G)$ is finite, then the following holds for any fixed $L$-morphism $r$.
\begin{equation*}
 1+ m(\xi_v)
  \ll_r C(\Pi_v) \ll_r
  M(\xi_v)^{d}
\end{equation*}
\end{lem}

\begin{proof}
  First we recall a general fact about archimedean $L$-factors.
  Let $\varphi:W_\R\ra \GL_N(\C)$ be a tempered $L$-parameter and decompose
  $\varphi|_{W_\C}$ into $\GL_1$-parameters as $\varphi|_{W_\C}=\oplus_{k=1}^N \chi_k$.
    The archimedean $L$-factor associated with $\varphi$ may be written in the form (cf. \eqref{def:Lv-arch})
  \beq\label{e:pf-10.3-1}L(s,\varphi)=\prod_{k=1}^N \Gamma_{\R}(s-\mu_k(\varphi)).\eeq
  For each $k$ assume that $\chi_k(z)=(z/\ol{z})^{a_k}$ for some $a_k\in \frac{1}{2}\Z$.
  Then we have for every $1\le k\le N$,
  $\mu_k(\varphi)\in \frac{1}{2}\Z_{\le 0}$ and, after reordering $\mu_k(\varphi)$'s if necessary,
  \beq\label{e:pf-10.3-2}|a_k|\le |\mu_k(\varphi)| \le |a_k|+1.\eeq
  Indeed this comes from inspecting the definition of local $L$-factors as in of \cite{Tat79}*{3.1,3.3} for instance.
  (Use \cite{Tat79}*{3.1} if $a_k=0$ and \cite{Tat79}*{3.3} otherwise.)

  Returning to the setup of the lemma, we have by definition $L(s,\Pi_v)=L(s,r\varphi_{\xi_v})$. For each $i\in I$ we consider the composite complex $L$-parameter
  $$W_\C\stackrel{\varphi_{\xi_v}|_{W_\C}}{\ra} \hat{G}\times W_\C
  \stackrel{(r_i,1)}{\ra} \GL_{\dim r_i}(\C)$$
  decompose it as $\oplus_{j=1}^{\dim r_i} \chi_{i,j}$.
  We can find $a_{i,j}\in \frac{1}{2}\Z$ such that
  $\chi_{i,j}(z)=(z/\ol{z})^{a_{i,j}}$.
  For each $i$, the highest weight theory tells us that
  $a_{i,j}=\lg \rho+\lambda_{\xi^\vee_v}, \lambda(r_i)\rg\ge 0$ for one $j$
  and $|a_{i,j'}|\le a_{i,j}$ for the other $j'\neq j$.
  By \eqref{e:pf-10.3-1} and \eqref{e:pf-10.3-2}, the analytic conductor for $\Pi_v$ (introduced in \S\ref{sec:pp:Lfn})
  satisfies
  \begin{equation*}
	\begin{aligned}
  C(\Pi_v)&=\prod_{k=1}^{d}(2+|\mu_k(\Pi_v)|)\le
\prod_{i\in I} \prod_{j=1}^{\dim r_i} (3+|a_{i,j}|)\\
&\le \prod_{i\in I} (3+\lg \rho+\lambda_{\xi^\vee_v}, \lambda(r_i)\rg)^{\dim r_i}
.
\end{aligned}
\end{equation*}
Further $\lg \rho+\lambda_{\xi^\vee_v}, \lambda(r_i)\rg=
\lg \rho,\lambda(r_i)\rg+ \lg \lambda_{\xi^\vee_v}, \lambda(r_i)\rg
\le |\lambda(r_i)|+|\lambda(r_i)|M(\xi_v)\le M(r)(1+M(\xi_v))$. Hence
$$C(\Pi_v)\le \prod_{i\in I} ((3+M(r)(1+M(\xi_v)))^{\dim r_i}
=((3+M(r)(1+M(\xi_v)))^{d}.$$

  Now we establish a lower bound for $C(\Pi_v)$. For each $i$, we apply \eqref{e:pf-10.3-2}
  to the unique $j=j(i)$ such that $a_{i,j}=\lg \rho+\lambda_{\xi^\vee_v}, \lambda(r_i)\rg$.
  Then
  \begin{equation*}
	\begin{aligned}
  C(\Pi_v)&\ge \prod_{i\in I}   (2+|a_{i,j(i)}|)=
   \prod_{i\in I} (2+\lg \rho+\lambda_{\xi^\vee_v}, \lambda(r_i)\rg)\\
  &\ge  (2+m(r)(1+m(\xi_v)))^{|I|}.
\end{aligned}
\end{equation*}

\end{proof}

\section{Statistics of low-lying zeros}\label{sec:zeros}

As explained in the introduction an application of the quantitative Plancherel Theorems~\ref{t:level-varies} and~\ref{t:weight-varies} is to the study the distribution of the low-lying zeros of families of $L$-functions $\Lambda(s,\Pi)$. The purpose of this section is to state the main results and make our working hypothesis precise.

\subsection{The random matrix models}\label{sec:zeros:matrix} For the sake of completness we recall briefly the limiting $1$-level density of normalized eigenvalues. We consider the three symmetry types $\CmG(N)=SO(2N), U(N), USp(2N)$. For each integer $N\ge 1$ these groups are endowed with their Haar probability measure. For all matrices $A\in \CmG(N)$ we have a sequence $\vartheta_j=\vartheta_j(A)$ of normalized angles~\cite{book:KS}
\begin{equation}\label{def:theta}
0\le \vartheta_1 \le \vartheta_2 \le \cdots \le \vartheta_N \le N.
\end{equation}
Namely the eigenvalues of $A\in U(N)$ are given by $e(\frac{\vartheta_j}{N})=e^{2i\pi\vartheta_j/N}$. The eigenvalues of $A\in USp(2N)$ or $A\in SO(2N)$ occur in conjugate pairs and are given by $e(\pm \frac{\vartheta_j}{2N})$.

The mean spacing of the sequence~\eqref{def:theta} is one. The $1$-level density is defined by
\begin{equation*}
W_{\CmG(N)}(\Phi):=\int_{\CmG(N)} \sum_{1\le j\le N}^{} \Phi(\vartheta_j(A))dA.
\end{equation*}
The limiting density as $N\to \infty$ is given by the following (\cite{book:KS}*{Theorem~AD.2.2})
\begin{prop}\label{prop:KS}
  Let $\CmG=U,SO(even)$ or $USp$. For all Schwartz functions $\Phi$ on $\BmR_+$,
\begin{equation*}
  \lim_{N\to \infty} W_{\CmG(N)}(\Phi) = \int_{\BmR_+}
\Phi(x) W(\CmG)(x)dx,
\end{equation*}
where the density functions $W(\CmG)$ are given by~\eqref{intro:WG}.
\end{prop}

The density functions $W(\CmG)$ are defined a priori on $\BmR_+$. They are extended to $\BmR_-$ by symmetry, namely $W(\CmG)(x)=W(\CmG)(-x)$ for all $x\in \BmR$.
For a Paley--Wiener function $\Phi$ whose Fourier transform $\widehat \Phi$ has support inside $(-1,1)$, we have the identities
\begin{equation}\label{W(G)}
\int_{-\infty}^\infty
\Phi(x) W(\CmG)(x)dx=
\begin{cases}
\widehat \Phi(0) & \text{if $\CmG=U$,}\\
\widehat \Phi(0)+\Mdemi \Phi(0)& \text{if $\CmG=USp$,}\\
\widehat \Phi(0)-\Mdemi \Phi(0)& \text{if $\CmG=SO(even)$.}
\end{cases}
\end{equation}

\subsection{The $1$-level density of low-lying zeros}\label{sec:zeros:onelevel}
Consider a family $\FmF=(\FmF_k)_{k\ge 1}$ of automorphic representations of $\GL(d,\BmA_F)$.
The $1$-level density of the low-lying zeros is defined by
\begin{equation}\label{def:D1}	
D(\FmF_k;\Phi):=\frac{1}{\abs{\FmF_k}} \sum_{\Pi \in \FmF_k} \sum_j \Phi
\biggl(
\frac{\gamma_{j}(\Pi)}{2\pi}
\log C(\FmF_k)
\biggr)
\end{equation}
Here $\Phi$ is a Paley--Wiener function; we don't necessarily assume $\Phi$ to be even because the automorphic representations $\Pi\in \FmF_k$ might not be self-dual. See also the discussion at the end of~\S\ref{sec:pp:explicit}. The properties of the analytic conductor $C(\FmF_k)\ge 2$ will be described in~\S\ref{sec:zeros:cond}.

Since $\Phi$ decays rapidly at infinity, the zeros $\gamma_j(\Pi)$ of $\Lambda(s,\Pi)$ that contribute to the sum are within $O(1/\log C(\CmF_k))$ distance of the central point. Therefore the sum over $j$ only captures a few zeros for each $\Pi$. The average over the family $\Pi\in \FmF_k$ is essential to have a meaningful statistical quantity.

\subsection{Properties of families of $L$-functions}

Recall that in~\S\ref{sub:aut-families} we have defined two kinds of families $\CmF=(\CmF_k)_{k\ge 1}$ of automorphic representations on $G(\BmA_F)$. The families from Example~\ref{ex:level-varies} are varying in the level aspect: $\BmN(\Fmn_k)\to \infty$ while the families from Example~\ref{ex:wt-varies} are varying in the weight aspect: $m(\xi_k)\ra\infty$. In both cases we assume that $\phi_{S_0}\in C_c^\infty(G(F_{S_0}))$ is normalized such that
\begin{equation} \label{phiS0}
	\hat{\mu}^{\Tpl}_{S_0}(\hat\phi_{S_0})=\phi_{S_0}=1.
\end{equation}
For families in the weight aspect we assume from now the weights are bounded away from the walls. Namely we assume that we are given a fixed $\eta>0$ and that
\begin{equation}\label{dimxik}
  (\dim \xi_k)^{\eta} \le m(\xi_k),\quad \forall k.
\end{equation}

Given the continuous $L$-morphism $r:{}^LG\to \GL(d,\BmC)$ we can construct a family $\FmF=r_*\CmF$ of automorphic $L$-functions. Assuming the Langlands functoriality in the form of Hypothesis~\ref{hypo:functorial-lift}, for each $\pi\in \CmF_k$ there is a unique isobaric automorphic representation $\Pi=r_*\pi$ of $\GL(d,\BmA_F)$. We denote by $\FmF_k=r_*\CmF_k$ the corresponding family of all such $\Pi$. Recall from~\S\ref{sub:aut-rep} that $\CmF_k$ is a weighted set and that the weight of each representation $\pi$ is denoted $a_{\CmF_k}(\pi)$. The same holds for $\FmF_k$ and in particular we have
\begin{equation*}
  \abs{\FmF_k}=\abs{\CmF_k}=\sum_{\pi \in \CmF_k} a_{\CmF_k}(\pi).
\end{equation*}
We have seen in Corollary~\ref{c:estimate-F_k} that $\abs{\FmF_k}\to \infty$ as $k\to \infty$.

By definition (see~\eqref{e:a(pi)-general}), if $\pi\in \CmF_k$ then $\pi_\infty$ has the same infinitesimal character as $\xi_k^{\vee}$, i.e. $\pi \in \Pi_{disc}(\xi_k)$. If $\Pi\in \FmF_k$ then $\Pi_\infty$ corresponds to the composition $r\circ \phi_{\xi_k}$ via the Langlands correspondence for $\GL_d(F_\infty)$ (This is Hypothesis~\ref{hypo:functorial-lift}.(iii)). In particular $\Pi_\infty$ is uniquely determined by $\xi_k$ and $r$. It is identical for all $\Pi\in \FmF_k$.

It is shown in Lemma~\ref{lem:tempered} that $\Pi_\infty$ is tempered. Therefore Proposition~\ref{prop:tempered} applies and the bounds towards Ramanujan~\eqref{eq:prop:tempered} are satisfied for all $\Pi \in \FmF_k$.

To simplify notation throughout this and the next section, we use the convention of omitting the weight when writing a sum over $\FmF_k$. If $l(\Pi)$ is a quantity that depends on $\Pi\in \FmF_k$, we set
\begin{equation*}
  \sum_{\Pi\in \FmF_k} l(\Pi) := \sum_{\pi \in \CmF_k} a_{\CmF_k}(\pi) l(r_*\pi).
\end{equation*}
This convention applied in particular to~\eqref{def:D1} above.

\subsection{Occurrence of poles}\label{sec:zeros:poles}
We make the following hypothesis concerning poles of $L$-functions in our families.
\begin{hypothesis}\label{hyp:poles}
	There is $C_{pole}>0$ such that the following holds  as $k\to \infty$:
\begin{equation*}
	\#\set{\Pi\in \FmF_k,\ \Lambda(s,\Pi) \text{ has a pole}} \ll \abs{\FmF_k}^{1-C_{pole}}.
\end{equation*}
\end{hypothesis}
The hypothesis is natural because it is related to the Functoriality Hypothesis~\ref{hypo:functorial-lift} in many ways. Of course it would be difficult to define the event that \Lquote{$L(s,\Pi) \text{ has a pole}$} without assuming Hypothesis~\ref{hypo:functorial-lift}. Also when Functoriality is known unconditionally it is usually possible to establish the Hypothesis~\ref{hyp:poles} unconditionally as well. We shall return to this question in a subsequent article.

\subsection{Analytic conductors}\label{sec:zeros:cond} As in~\cite{ILS00} we define an analytic conductor $C(\FmF_k)$ associated to the family. The significance of $C(\FmF_k)$ is that each $\Pi\in \FmF_k$ have an analytic conductor $C(\Pi)$ comparable to $C(\FmF_k)$. The hypothesis in this subsection will ensure that $\log |\FmF_k| \asymp \log C(\FmF_k)$. We distinguish between families in the weight and level aspect.

\subsubsection{Weight aspect} For families in the weight aspect we set $C(\FmF_k)$ to be the analytic conductor $C(\Pi_\infty)$ of the archimedean factor $\Pi_\infty$ (recall that $\Pi_\infty$ is the same for all $\Pi\in \FmF_k$).
Then $C(\Pi)\asymp C(\FmF_k)$ for all $\Pi\in \FmF_k$.

From Corollary~\ref{c:estimate-F_k} we have that $\abs{\FmF_k}\asymp \dim \xi_k$ as $k\to \infty$.  It remains to relate the quantities $C(\FmF_k)$, $\dim \xi_k$ and $m(\xi_k)$, which is achieved in~\eqref{dimxik:2} and~\eqref{xik-CFk} below.
\begin{lem} Let $v|\infty$. Let $\xi_v$ be an irreducible finite dimensional algebraic representation of $G(F_v)$. Then
  $ m(\xi_v)^{\abs{\Phi^+}}\ll \dim \xi_v \ll M(\xi_v)^{\abs{\Phi^+}}$. Also $M(\xi_v)\ll \dim \xi_v$.
\end{lem}
\begin{proof}
  This follows from Lemma~\ref{l:dim-and-trace}. Recall the definition of $m(\xi_v)$ in~\S\ref{sub:st-disc} and $M(\xi_v)$ in~\S\ref{s:langlands}.
\end{proof}

Because of~\eqref{dimxik} and the previous lemma we have that
\begin{equation} \label{dimxik:2}
  m(\xi_k)^{\abs{\Phi^+}} \ll  \dim \xi_k \ll m(\xi_k)^{1/\eta}.
\end{equation}
From Lemma~\ref{l:Cpi} we deduce that there are positive constants $C_1,C_2$ such that
\begin{equation} \label{xik-CFk}
  m(\xi_k)^{C_1} \ll C(\FmF_k) \ll m(\xi_k)^{C_2}.
\end{equation}

\subsubsection{Level aspect} For families in the level aspect the situation is more complicated mainly because of the lack of knowledge of the local Langlands correspondence on general groups and the depth preservation under functoriality. We define $C(\FmF_k)$ by the following
\begin{equation*}
\log C(\FmF_k):=
\frac{1}{\abs{\FmF_k}} \sum_{\Pi\in\FmF_k}
\log C(\Pi),
\end{equation*}
and we introduce the following hypothesis.

\begin{hypothesis}\label{hyp:cond} There are constants $C_3, C_4>0$ such that
\begin{equation*}
  \BmN(\Fmn_k)^{C_3} \ll C(\FmF_k) \ll \BmN(\Fmn_k)^{C_4}.
\end{equation*}
\end{hypothesis}

\subsection{Main result}\label{sec:zeros:main}
We may now state our main results on low-lying zeros of the family $\FmF=r_*\CmF$. The following is a precise version of Theorem~\ref{th:low-lying} from the introduction (compare with~\eqref{W(G)}).
\begin{thm}\label{th:onelevel}
  Assume Hypothesis~\ref{hypo:functorial-lift} for individual representations as well as~\ref{hyp:poles} and~\ref{hyp:cond}. There is $0<\delta<1$ such that for all Paley--Wiener functions $\Phi$ whose Fourier transform $\widehat \Phi$ has support in $(-\delta,\delta)$ the following holds:
\begin{equation*}
\lim_{k\to \infty} D(\FmF_k,\Phi)=\widehat \Phi(0) -\frac{s(r)}{2}\Phi(0),
\end{equation*}
where $s(r)\in \set{-1,0,1}$ is the Frobenius--Schur indicator of $r:{}^LG\to \GL_d(\BmC)$.
\end{thm}

\section{Proof of Theorem~\ref{th:onelevel}}\label{sec:pf}

The method of proof of the asymptotic distribution of the $1$-level density of low-lying zeros of families of $L$-functions has appeared at many places in the literature and is by now relatively standard. However we must justify the details carefully as families of $L$-functions haven't been studied in such a general setting before. The advantage of working in that degree of generality is that we can isolate the essential mechanisms and arithmetic ingredients involved.

In order to keep the analysis concise we have introduced some technical improvements which can be helpful in other contexts:
%\item The first observation is that it is better not to expand the Euler product of the $L$-functions $L(s,\Pi)$ into Dirichlet series. We can apply the explicit formula (Proposition~\ref{prop:explicit}) in a way to preserve the Euler product structure.
 we use non-trivial bounds towards Ramanujan in a systematic way to handle ramified places;
 we clarify that it is not necessary to assume that the representation be self-dual or any other symmetry property to carry out the analysis;
 most importantly we exploit the properties of the Plancherel measure when estimating Satake parameters. Previous articles on the subject rely in a way or another on explicit Hecke relations which made the proof indirect and lengthy, although manageable for groups of low rank.

\subsection{Notation}\label{sec:pf:outline}
To formulate the main statements in an elegant way we introduce the following notation
\begin{equation}\label{def:hatL}
\widehat \CmL_{k,v}(y):=\frac{1}{\abs{\FmF_k}}
\sum_{\Pi \in \FmF_k}
\int^\infty_{-\infty} \frac{L'}{L}(\Mdemi+ix,\Pi_v)
e^{2\pi iyx} dx
,\quad v\in \CmV_F,\ y\in \BmR.
\end{equation}
We view $\widehat \CmL_{k,v}$ as a tempered distribution on $\BmR$.
Note that when $v$ is non-archimedean $\widehat \CmL_{k,v}$ is a signed measure supported on a discrete set inside $\BmR_{>0}$.

The proof of the main theorems will follow by a fine estimation of $\widehat \CmL_{k,v}(y)$ as $k\to \infty$. The uniformity in both the places $v\in \CmV_F$ and the parameter $y\in \BmR$ will play an important role. Typically $q_v$ will be as large as $C(\FmF_k)^{O(\delta)}$ and $y$ will be of size proportional to $\log C(\FmF_k)$.

The first step of the proof consists in applying the explicit formula (Proposition~\ref{prop:explicit}). There are terms coming from the poles of $L(s,\Pi)$ which we handle in~\S\ref{sec:pf:poles}. The second term in the right hand-side in Proposition~\ref{prop:explicit} is expressed in terms of the arithmetic conductor $q(\Pi)$ and will yield a positive contribution in the limit for families in the level aspect. When evaluating the $1$-level density $D(\FmF_k,\Phi)$ it remains to consider the following sum over all places
\begin{equation}\label{out:sumv}
\frac{1}{\log C(\FmF_k)}
\sumv
\left \langle
\widehat \CmL_{k,v} (y),
\widehat{\Phi}\Bigp{\frac{2\pi y}{\log C(\FmF_k)}}
\right \rangle,
\end{equation}
plus a conjugate expression, see~\S\ref{sec:pf:explicit}.

Our convention on Fourier transforms is standard. Let $\Phi$ be a Schwartz function on $\BmR$. The Fourier transform is as in~\eqref{def:fourier} and the inverse Fourier transform reads
\begin{equation*}%\label{fourier:inverse}
\Phi(x)=\int_{-\infty}^{+\infty} \widehat\Phi(y) e^{2\pi i xy} dy.
\end{equation*}
Given two Schwartz functions $\Phi$ and $\Psi$ we let
\begin{equation*}
\langle \Phi,\Psi\rangle:=\int_{-\infty}^{\infty}
\Phi(x) \Psi(x)dx.
\end{equation*}
Sometimes we use the notation $\langle \Phi(x),\Psi(x)\rangle$ to put emphasize on the variable of integration. The Plancherel formula reads
\begin{equation}\label{fourier:pl}
\langle \Phi(x),\Psi(x)\rangle=\langle \widehat\Phi(y),\widehat\Psi(-y)\rangle.
\end{equation}
We use the same conventions for tempered distributions. The Fourier transform of the pure phase function $x\mapsto e^{2i\pi ax}$ is the Dirac distribution $\delta(a)$ centered at the point $a$.

To condense notation we write \[\Psi(y):=\widehat{\Phi}\Bigp{\dfrac{2\pi y}{\log C(\FmF_k)}}\]
and shall	express our remainder terms with the quantities $\pnorm{\Psi}_\infty\le \pnorm{\widehat \Phi}_\infty$ and $\pnorm{\widehat \Psi}_1\le \pnorm{\Phi}_1$. Since $\Phi$ is fixed these are uniformly bounded, independent of $k\to \infty$.

There are different kinds of estimates depending on the nature of the place $v\in\CmV_F$. We shall distinguish the following set of places:
\begin{enumerate}[(i)]
\item the archimedean places $S_\infty$, the contribution of which is evaluated in~\S\ref{sec:pf:arch};
\item a fixed set $S_0$ of non-archimedean places. These may be thought of as the \Lquote{ramified places}. Their contribution is negligible as shown in~\S\ref{sec:pf:gn};
\item the set $\set{v\mid \Fmn_k}$ of places that divide the level. These play a role only when the level varies and we show in~\S\ref{sec:pf:error} that their contribution is negligible. We use the convention that for families in the weight aspect this set of places is empty;
\item the generic places $S_{\gen}$ which is the complement in $\CmV_F$ of the above three sets of places. This set will actually be decomposed in two parts: 
	\[S_{\gen} = S_{\main} \sqcup S_{\cut},\] 
\item where the set $S_{\cut}$ is infinite and consists of those non-archimedean places $v\in S_\gen$ such that $\tfrac{\log q_v}{2\pi}$ is large enough to be outside of the support of $\Psi$ (see~\eqref{def:Scut} below for the exact definition of $S_\cut$). Then the pairing in~\eqref{out:sumv} vanishes; 
\item the remaining set $S_\main$ is finite (but growing as $k \to \infty$). It will produce the main contribution of~\eqref{out:sumv}. For all places $v\in S_\main$, each of $G$, $r$ and $\pi$ is unramified over $F_v$. Using the notation of~\S\ref{s:Sato-Tate} we  split $S_\main$ further as the disjoint union of \[S_\main \cap \CmV_F(\theta),\ \theta \in \SmC(\Gamma_1).\]
\end{enumerate}

\subsection{Outline} For non-archimedean places $v\in S_\main$ we study in~\S\ref{sec:pf:moments} various moments of Satake parameters. The quantity $\widehat \CmL_{\Tpl,v}$ in~\eqref{CmL-pl} below will be the analogue of~\eqref{def:hatL} where the average over automorphic representations $\Pi\in \FmF_k$ gets replaced by an average of $\Pi_v$ against the unramified Plancherel measure. Our Plancherel equidistribution theorems for families (Theorems~\ref{t:level-varies} and~\ref{t:weight-varies}) imply that $\widehat \CmL_{k,v}$ is asymptotic to $\widehat \CmL_{\Tpl,v}$ as $k\to \infty$. 

It is essential that our equidistribution theorems are quantitative in a strong polynomial sense. Details on handling the remainder terms are given in~\S\ref{sec:pf:pl}, \S\ref{sec:pf:main} and~\S\ref{sec:pf:error}. 

For the main term we then need need to show the existence of the limit of
\begin{equation} \label{out:main}
  \frac{1}{\log C(\FmF_k)} \sum_{v\in S_\main}
\bigl \langle
\widehat\CmL_{\Tpl,v},\Psi
\bigr\rangle
\end{equation}
as $k\to \infty$. The evaluation of $\widehat\CmL_{\Tpl,v}$ is a nice argument in representation theory, see~\S\ref{sec:pf:M12} where we shall see clearly the role of the two  assumptions on $r$ (that $r$ is irreducible and does not factor through $W_F$). The evaluation of $\widehat\CmL_{\Tpl,v}$ can actually be quite complicated since it depends on the restriction of $r$ to subgroups $\widehat G \rtimes W_{F_v}$ for varying $v\in S_{\main}$ and on the Plancherel measure on $G(F_v)^{\wedge,\ur}$. Fortunately the expression will simplify when summing over all places $v\in S_\main$ and applying the Cebotarev density theorem (see~\S\ref{sec:pf:prime}).

The overall conclusion of the below analysis is that the limit of~\eqref{out:main} as $k\to \infty$ is equal\footnote{A quick explanation for the minus sign is as follows. A local $L$-factor is of the form $(1-\alpha q^{-s})^{-1}$ with three minus signs thus its logarithmic derivative is $-\log q \sum\limits_{\nu\ge 1}\alpha^{\nu}q^{-\nu s}$ with one minus sign.} to $-\frac{s(r)}{2}\Phi(0)$, where $s(r)$ is the Frobenius--Schur indicator of $r$. In the derivation of the one-level density there is an additional term $\widehat \Phi(0)$ which easily comes from the explicit formula and the contribution of the archimedean terms. Thereby we finish the proof of Theorem~\ref{th:onelevel}.

\subsection{Explicit formula}\label{sec:pf:explicit} We apply the explicit formula (Proposition~\ref{prop:explicit}) for each $\Pi\in \FmF_k$ to obtain
\begin{equation}\label{D1explicit}
D(\FmF_k,\Phi)=
D_{\pol}(\FmF_k,\Phi)+
\frac{\widehat \Phi(0)}{\abs{\FmF_k}}
\sum_{\Pi \in \FmF_k}^{} \frac{\log q(\Pi)}{\log C(\FmF_k)} + \sum_{v\in \CmV_F}
D_{v}(\FmF_k,\Phi)+\overline{D_{v}}(\FmF_k,\Phi).
\end{equation}
Here $D_{\pol}(\FmF_k,\Phi)$ denotes the contribution of the eventual poles. Also we have set
\begin{equation*}
D_{v}(\FmF_k,\Phi):=
\frac{1}{2\pi \abs{\FmF_k}}
\sum_{\Pi\in\FmF_k}
\int_{-\infty}^{\infty}
\frac{L'}{L}(\Mdemi+ix,\Pi_v) \Phi
\Bigp{\frac{x}{2\pi}\log C(\FmF_k)}
dx.
\end{equation*}
See also the remark in~\eqref{explicit:switch} explaining how to shift contours. The scaling factor $\tfrac{\log C(\FmF_k)}{2\pi}$ comes from~\eqref{def:D1}.

Applying Fourier duality~\eqref{fourier:pl} and the definition~\eqref{def:hatL} implies the equality\footnote{Note that the exponential in~\eqref{def:hatL} is $e^{2i\pi xy}$ with a plus sign.}
\begin{equation}\label{def:D1v}
D_{v}(\FmF_k,\Phi)=\frac{1}{\log C(\FmF_k)}
\bigl\langle  \widehat\CmL_{k,v}(y),
\widehat \Phi \Bigp{\frac{2\pi y}{\log C(\FmF_k)}}
\bigl\rangle.
\end{equation}
We have made a change of variable so as to make explicit the multiplicative factor $1/\log C(\FmF_k)$ in front of the overall sum.
Similarly we have
\begin{equation*}
\begin{aligned}
\overline{D_{v}}(\FmF_k,\Phi)
&:=
\frac{1}{2\pi \abs{\FmF_k}}
\sum_{\Pi\in\FmF_k}
\int_{-\infty}^{\infty}
\overline{
\frac{L'}{L}(\Mdemi+ix,\Pi_v) \Phi
}
\Bigp{\frac{x}{2\pi}\log C(\FmF_k)}
dx\\
&= \frac{1}{\log C(\FmF_k)}
\bigl \langle
\overline{
\widehat \CmL_{k,v}},
\widehat \Phi
\Bigp{\frac{-2\pi y}{\log C(\FmF_k)}}
\bigr \rangle.
\end{aligned}
\end{equation*}

\subsection{Contribution of the poles}\label{sec:pf:poles}
The contribution of the poles in the explicit formula above is given by
\begin{equation*}
D_{\pol}(\FmF_k,\Phi):=
\frac{1}{\abs{\FmF_k}}
\sum_{\Pi\in \FmF_k}
\sum_j
\Phi\left(
\frac{r_j(\Pi)}{2\pi} \log C(\FmF_k)
\right).
\end{equation*}
We bound the sum trivially and obtain
\begin{equation*} D_{\pol}(\FmF_k,\Phi)
\ll \frac{\#\set{\Pi\in \FmF_k,\ L(s,\Pi) \text{ has a pole}}}
{\abs{\FmF_k}} C(\FmF_k)^{O(\delta)},
\end{equation*}
where the last term comes from the exponential order of growth of $\Phi$ along the real axis because the Fourier transform $\widehat \Phi$ is supported in $(-\delta,\delta)$. %

%Now thanks to Hypothesis~\ref{hyp:poles} the right-hand side goes to zero as $k\to \infty$ for $\delta$ sufficiently small. We also use the fact that $C(\FmF_k)\ll \abs{\FmF_k}^{O(1)}$ which follows from Corollary~\ref{c:estimate-F_k} and the upper bound for $C(\FmF_k)$ from~\eqref{xik-CFk}.

\subsection{Archimedean places}\label{sec:pf:arch} In this subsection we handle the archimedean places $v\in S_\infty$. Recall from Lemma~\ref{lem:tempered} that $\Pi_{\infty}$ is tempered. In fact we shall only need here a bound towards Ramanujan  $0<\theta<\Mdemi$ as in~\S\ref{sec:pp:Lfn}.
\begin{lem}\label{prop:psi-fn} For all $\mu\in \BmC$ with $\MRe \mu \le \theta$, and all Schwartz function $\Psi$, the following holds uniformly
\begin{equation*}
\int_{-\infty}^{\infty}
\frac{\Gamma'}{\Gamma}(\Mdemi-\mu+ix)\Psi(x)dx
=\widehat \Psi(0) \log (\Mdemi-\mu)
+O(\pnorm{\Psi}_1+\pnorm{x\Psi(x)}_1)
\end{equation*}
\end{lem}
\begin{proof}
We have the following Stirling approximation for the Digamma function (traditionally denoted $\psi(z)$):
\begin{equation}\label{psi:asymp}
\frac{\Gamma'}{\Gamma}(z)= \log z + O(1)
\end{equation}
uniformly in the angular region $\abs{\arg z}\le \pi -\epsilon$, see e.g.~\cite{book:Iw}*{Appendix B}.
Since $\theta<1/2$ all points $\Mdemi-\mu+ix$ lie in the interior of the angular region and we can apply~\eqref{psi:asymp}. We note also that uniformly
\begin{equation*}
\log(\Mdemi-\mu+ix)=\log(\Mdemi-\mu) + O(\log(2+\abs{x})),
\end{equation*}
and this conclude the proof of the proposition.
\end{proof}
\begin{rem} Note that the complete asymptotic expansion actually involves the Bernoulli numbers and is of the form
\begin{equation}\label{psi:completeasymp}
\frac{\Gamma'}{\Gamma}(z)=\log z + \frac{1}{2z} - \sum_{n=1}^{N} \frac{B_{2n}}{2n z^{2n}}
+O\Bigl(
\frac{1}{z^{2N+2}}
\Bigr).
\end{equation}
\end{rem}
From~\eqref{psi:completeasymp} we have that
\begin{equation*}
\frac{\Gamma'}{\Gamma}(\sigma+it)+\frac{\Gamma'}{\Gamma}(\sigma-it)=2\frac{\Gamma'}{\Gamma}(\sigma)+O((t/\sigma)^2)
\end{equation*}
holds uniformly for $\sigma$ and $t$ real with $\sigma >0$. As in~\cite{ILS00}*{\S4} this may be used when the test function $\Psi$ is even (e.g. which is the typical case when all representations $\Pi\in \FmF$ are self-dual). We don't make this assumption and therefore use~\eqref{psi:asymp} instead.

\begin{cor} Uniformly for all archimedean places $v\in S_\infty$ and all Schwartz function $\Psi$, the following holds
\begin{equation*}
\Bigl\langle
\widehat \CmL_{k,v},
\Psi
\Bigr\rangle
=\frac{\Psi(0)}
{\abs{\FmF_k}}
\sum_{\Pi\in\FmF_k}
\sum_{i=1}^{d}
\log_v (\Mdemi -\mu_i(\Pi_v))
+ O(\bigl\lVert \widehat \Psi \bigr \rVert_1).
\end{equation*}
Here we have set $\log_v z:=\Mdemi \log z$ when $v$ is real and $\log_v z:= \log z$ when $v$ is complex.
\end{cor}
\begin{proof}
Recall the convention~\eqref{def:Lv-arch} on local $L$-factors at archimedean places $v\in S_\infty$. From Fourier duality~\eqref{fourier:pl} and the definition~\eqref{def:hatL} we have
\begin{equation*}
\Bigl\langle
\widehat \CmL_{k,v},\Psi
\Bigr\rangle
=
\frac{1}{\abs{\FmF_k}}
\sum_{\Pi \in \FmF_k}
\int^\infty_{-\infty} \frac{L'}{L}(\Mdemi+ix,\Pi_v)
\widehat \Psi(x)
dx.
\end{equation*}
Note that
\begin{equation*}
\frac{\Gamma'_v}{\Gamma_v}(s)=
\begin{cases}
-\Mdemi\log \pi +\Mdemi \frac{\Gamma'}{\Gamma}(\frac{s}{2})
,&
\text{when $v$ is real,}\\
-\log (2\pi)+\frac{\Gamma'}{\Gamma}(s)
,&
\text{when $v$ is complex.}
\end{cases}
\end{equation*}
Applying Lemma~\ref{prop:psi-fn}, the estimate in the corollary follows. Recall from Proposition~\ref{prop:tempered} that the bounds towards Ramanujan in~\S\ref{sec:pp:Lfn} apply to all $\Pi\in \FmF_k$.
 \end{proof}

We may continue the analysis of the contribution of the archimedean places to the one-level density. For $v\in S_\infty$, the local $L$-function $L(s,\Pi_v)$ are the same for all $\Pi \in \FmF_k$. We therefore conclude that
\begin{equation}\label{archterms}
\begin{aligned}
\sum_{v\in S_\infty}^{}
D_{v}(\FmF_k,\Phi)
+\overline{D_{v}(\FmF_k,\Phi)}
&=
\frac{\widehat \Phi(0)}{\log C(\FmF_k)}
\left(
\sum_{v\in S_\infty}^{}
\sum_{i=1}^{d}
2 \log_v \abs{\Mdemi-\mu_i(\Pi_v)}
+O(1)
 \right) \\
 &=
\frac{\widehat \Phi(0)}{\log C(\FmF_k)}
\left(
\sum_{v\in S_\infty}^{}
\log C(\Pi_v)
+O(1)
 \right).
\end{aligned}
\end{equation}
In the last line we used the definition of the analytic conductor at archimedean places from~\S\ref{sec:pp:Lfn}.

\subsection{Moments of Satake parameters}\label{sec:pf:moments}
Now let $v\in \CmV_F^\infty$ be a non-archimedean place. A straightforward computation shows that
\begin{equation*}
\frac{L'}{L}(s,\Pi_v)=-\log q_v
\sum_{\nu\ge 1}
\beta^{(\nu)}(\Pi_v)q_v^{-\nu s}
\end{equation*}
where
$\beta^{(\nu)}(\Pi_v):=
\alpha_1(\Pi_v)^{\nu} + \cdots + \alpha_d(\Pi_v)^{\nu}$.
Averaging over the family $\FmF$ we let
\begin{equation*}
\label{def:beta-F}
\beta^{(\nu)}_v(\FmF_k)
:=
\frac{1}{\abs{\FmF_k}}
\sum_{\Pi\in\FmF_k}
\beta^{(\nu)}(\Pi_v),\quad v\in\CmV_F^\infty,\ \nu\ge 1.
\end{equation*}
The formula~\eqref{def:hatL} becomes
\begin{equation}
\label{CmL-beta}
\widehat \CmL_{k,v}=-\log q_v \sum_{\nu\ge 1}
\beta^{(\nu)}_v(\FmF_k) q_v^{-\nu/2}\delta
\bigp{\frac{\nu}{2\pi} \log q_v},
\end{equation}
where $\delta$ is Dirac distribution (see~\S\ref{sec:pf:outline}).

Similarly for all $v\in S_\gen$ we let
\begin{equation}\label{CmL-pl}
\widehat \CmL_{\Tpl,v}:=-\log q_v
\sum_{\nu\ge 1}
\beta^{(\nu)}_{\Tpl,v} q_v^{-\nu/2}
\delta
\bigp{\frac{\nu}{2\pi} \log q_v},
\end{equation}
where the coefficients $\beta^{(\nu)}_{\Tpl,v}$ are defined locally as follows. Since $v\in S_\gen$, the group $G$ is unramified over $F_v$ and that the restriction $r|_{\widehat{G}\rtimes W_{F_v}}$ is an unramified $L$-morphism, i.e. it factors through $\widehat G \rtimes W^{\text{ur}}_{F_v}$. Recall from Section~\ref{s:Plancherel} that $\widehat \mu^{\Tpl,\ur}_v$ is the restriction of the Plancherel measure $\widehat \mu^{\Tpl}_{v}$ to $G(F_v)^{\wedge,\ur}$. Then
\begin{equation}
\label{def:beta-pl}
\beta^{(\nu)}_{\Tpl,v}:= \widehat \mu^{\Tpl,\ur}_v
\Bigl(
r^*\bigl(
Y_1^\nu+\cdots +Y_d^{\nu}
\bigr)
\Bigr),
\end{equation}
where we are using the convention in \S\ref{sub:trun-unr-Hecke} for the $L$-morphism of unramified Hecke algebras $r^*:\CmH^{\ur}(\GL_d(F_v))\to \CmH^{\ur}(G(F_v))$ and the Satake isomorphism with the polynomial algebra in $Y_1,\ldots ,Y_d$ (\S\ref{sub:case-of-GL_d}). 

The supports of both measures $\widehat \CmL_{k,v}$ and $\widehat \CmL_{\Tpl,v}$ are contained in the discrete set $\frac{\log q_v}{2\pi}\BmN_{\ge 1}$. If $q_v$ is large enough this is disjoint from the support of $\Psi$ and thus all sums over places $v\in \CmV_F$ considered below shall be finitely supported.

\subsection{General upper-bounds}\label{sec:pf:gn} Recall from Proposition~\ref{prop:tempered} that the bounds towards Ramanujan apply to every  $\Pi\in \FmF_k$. Thus for every non-archimedean $v\in \CmV_F^{\infty}$, we have the upper bound
$\abs{\alpha_i(\Pi_v)}\le q_v^{\theta}$ from which it follows that for every $\nu\ge 1$,
\begin{equation*}\label{gn-beta}
\abs{\beta^{(\nu)}_v(\FmF_k)}
\le d q_v^{\nu \theta}.
\end{equation*}
\begin{prop}\label{prop:gn-bd}
(i) For all $v\in \CmV^{\infty}_F$ and all continuous function $\Psi$,
\begin{equation*}
\left\langle \widehat \CmL_{k,v},\Psi \right\rangle
\ll q_v^{\theta-\Mdemi}\log q_v \pnorm{\Psi}_{\infty}.
\end{equation*}

(ii) For all $v\in S_{\gen}$ and all continuous function $\Psi$,
\begin{equation*}% \label{gn-pl}
\left\langle \widehat \CmL_{\Tpl,v},\Psi \right\rangle
\ll q_v^{-\Mdemi}\log q_v \pnorm{\Psi}_{\infty}.
\end{equation*}

\end{prop}
\begin{proof} (i) Inserting the above upper bound into~\eqref{CmL-beta} we have
\begin{equation*}
\left\langle \widehat \CmL_{k,v},\Psi \right\rangle
\ll \log q_v \sum_{\nu\ge 1} q_v^{\nu(\theta-1/2)} \abs{\Psi(\frac{\nu}{2\pi}\log q_v)}.
\end{equation*}
Because $0<\theta<\Mdemi$, the conclusion easily follows.

(ii) The Plancherel measure $\widehat \mu^{\Tpl,\ur}$ has total mass one and is supported on the tempered spectrum $\widehat G(F_v)^{\wedge,\ur,\temp}$ (see \S\ref{s:unramified-spectrum}).
We deduce similarly that for every $\nu \ge 1$,
\begin{equation}\label{gn-pl-beta}
\abs{\beta_{\Tpl,v}^{(\nu)}}\le d
\end{equation}
Indeed the image of any unramified $L$-parameter $r\circ \varphi:W_{F_v}^{\ur} \to \GL_d(\BmC)$ is bounded and all Frobenius eigenvalues have therefore absolute value one.  
\end{proof}

\subsection{Plancherel equidistribution}\label{sec:pf:pl} We are in position to apply the Plancherel equidistribution theorem for families established in Section~\ref{s:aut-Plan-theorem}. We shall derive uniform asymptotics as $k\to \infty$ for $\beta^{(\nu)}_{v}(\FmF_k)$.
\begin{prop}\label{prop:F-pl} There exist constants $C_5>0$ and $A_7,B_9<\infty$ such that the following holds uniformly on $\nu \ge 1$ and $v\in S_\gen$
\begin{equation} \label{main-F-pl}
	\beta^{(\nu)}_{v}(\FmF_k)=(1+o(1)) \beta^{(\nu)}_{\mathrm{pl},v}
+O(q_v^{A_7+B_9\nu} C(\FmF_k)^{-C_5}).
\end{equation}
%For families in the level aspect we assume that $q_v \ll_\epsilon C(\FmF_k)^{\epsilon}$ for some positive constant $\epsilon>0$.
\end{prop}

\begin{proof} Let $S_0$ be a sufficiently large set of non-archimedean places which contains all places $v\in\CmV_F^\infty$ where $G$ is ramified and where $r$ is ramified. Let $S_1:=\set{v}$. We set
\begin{equation*}
\widehat \phi_v :=  r^* (Y_1^\nu+\cdots + Y_d^{\nu}) \in \CmH^{\ur}(G(F_v)).
\end{equation*}
The notation for the Satake isomorphism is as in~\S\ref{sub:Satake-trans} and \S\ref{sub:L-mor-unr-Hecke}. By definition we have that $\beta_{\Tpl,v}^{(\nu)}=\widehat \mu^{\Tpl}_{v}(\widehat \phi_v)$. 
Thanks to Lemma~\ref{l:bound-degree-Satake} we have that $\phi_{v}\in \CmH^{\ur}(G(F_v))^{\le \beta \nu}$ and $\abs{\phi_v}\ll 1$. Thus we are in position to apply the respective Theorems~\ref{t:level-varies} (in the level aspect) and~\ref{t:weight-varies} (in the weight aspect).

% Recall that $\hat \phi_{S_0}$ has been fixed earlier (independent of $k$) and normalized by~\eqref{phiS0}.

Using the notation of \S\ref{sub:aut-rep}, we have by construction
\begin{equation*} \begin{aligned}
  \beta_v^{(\nu)}(\FmF_k)&= \frac{1}{\abs{\CmF_k}} \sum_{\pi\in \CmF_k} a_{\CmF_k}(\pi)\widehat \phi_v(\pi_v)\\
  &=\widehat \mu^{\text{count}}_{\CmF_k,v}(\widehat \phi_v)
  =\frac{\tau'(G)\dim \xi_k}{\mu^{\text{can}}(U^{S,\infty}_k) \abs{\CmF_k}}
  \widehat \mu_{\CmF_k,v}(\widehat \phi_v).
\end{aligned}
\end{equation*}
The Corollary~\ref{c:estimate-F_k} shows that $ \dfrac{\tau'(G)\dim \xi_k}{\mu^{\text{can}}(U^{S,\infty}_k) \abs{\CmF_k}}=1+o(1)$ as $k\to \infty$.
We shall now distinguish between the two types of families.

For families in the level aspect, the assumption (ii) in Theorem~\ref{t:level-varies} is satisfied because $v\nmid \Fmn_k$ for all $k$ and all $v\in S_\gen$. If the assumption (i) in Theorem~\ref{t:level-varies} is not satisfied, then
\[
C(\FmF_k)^{\frac{1}{C_4}} \ll
N(\Fmn_k)
< c_{\Xi} q_v^{B_\xi m\kappa}
\]
where the first inequality comes from Hypothesis~\ref{hyp:cond}. Thus the error term in \eqref{main-F-pl} dominates if $A_7$ is chosen large enough.
If the assumption (i) in Theorem~\ref{t:level-varies} is satisfied, then from \eqref{e:t:level} we obtain the main term in~\eqref{main-F-pl} and the error term $O(q_v^{A_{\level}+B_{\level}\kappa} \N(\fkn_k)^{-C_{\level}})$. By Hypothesis~\ref{hyp:cond} we may then choose then $C_5:=C_{\level}/C_4$ to conclude the proof of~\eqref{main-F-pl}.

For families in the weight aspect the assumptions in Theorem~\ref{t:weight-varies} are always satisfied. This yields the main term in~\eqref{main-F-pl} with the error term $O(q_{S_1}^{A_{\weight}+B_{\weight}\kappa} m(\xi_k)^{-C_{\weight}})$. By the estimate~\eqref{xik-CFk} we may choose $C_5:=C_{\weight}/C_2$ to conclude the proof of~\eqref{main-F-pl}.
\end{proof}

\subsection{Main term}\label{sec:pf:main}
We deduce from Proposition~\ref{prop:F-pl} the following estimate for $\widehat \CmL_{k,v}$.
\begin{prop}\label{prop:S1} For all $A>0$ there is $A_8>0$ such that the following holds uniformly for all $v\in S_\gen$ and all continuous function $\Psi$:
\begin{equation*}
\bigl\langle
\widehat\CmL_{k,v},\Psi
\bigr\rangle
=
\bigl\langle
\widehat\CmL_{\mathrm{pl},v},\Psi
\bigr\rangle
(1+o(1))
+O(q_v^{A_8} C(\FmF_k)^{-C_5}
\pnorm{\Psi}_{\infty})
+O(q_v^{-A}\pnorm{\Psi}_\infty),
\end{equation*}
\end{prop}

\begin{proof}
Let $\kappa$ be a large enough integer. We apply the bounds towards Ramanujan in the form~\eqref{gn-beta} to those term in~\eqref{CmL-beta} with $\nu > \kappa$. The contribution of those terms to $\bigl\langle
\widehat\CmL_{k,v},\Psi
\bigr\rangle$ is uniformly bounded by
\begin{equation*}
\ll q_v^{\kappa(\theta-\Mdemi)}\pnorm{\Psi}_{\infty}.
\end{equation*}
We have that $A:=\kappa(\Mdemi-\theta)$ may be chosen as large as we want since $\theta < \Mdemi$ is fixed and $\kappa$ is arbitrary large.

For those terms in~\eqref{CmL-beta} with $\nu \le \kappa$ we apply~\eqref{main-F-pl}. Their contribution to $\bigl\langle
\widehat\CmL_{k,v},\Psi
\bigr\rangle$ is equal to
\begin{equation*}
-\log q_v \sum_{1\le \nu\le \kappa}
\beta^{(\nu)}_{\Tpl,v} q_v^{-\nu/2}
\Psi(\frac{\nu}{2\pi}\log q_v) + O(q_v^{A_7+B_9\kappa}\pnorm{\Psi}_{\infty} C(\FmF_k)^{-C_5}).
\end{equation*}

The next step is now to complete the $\nu$-sum. Applying~\eqref{gn-beta} we see that the terms $\nu >\kappa$ yield another remainder term of the form $q_v^{-A}\pnorm{\Psi}_{\infty}$ with $A$ arbitrary large (again depending on $\kappa$).
\end{proof}

\subsection{Handling remainder terms}\label{sec:pf:error} In this subsection we handle the various remainder terms and show that they don't contribute to $D(\FmF_k,\Phi)$ in the limit when $k\to \infty$. We shall apply the above estimates to the function
\begin{equation}\label{Psi-Phi}
\Psi(y):= \widehat \Phi\left(\frac{2\pi y}{\log C(\FmF_k)}\right),\quad y\in \BmR.
\end{equation}
% We can see that both $\pnorm{\Psi}_{\infty}$ and $\bigl\lVert \widehat \Psi \bigr \rVert_1$ only depend on $\Phi$ which has been fixed. As a consequence these norms are uniformly bounded.
Recall from~\eqref{def:D1v} that $D_v(\FmF_k,\Phi)=\langle \widehat\CmL_{k,v},\Psi \rangle/ \log C(\FmF_k)$.

For archimedean places $v\in S_\infty$ we encountered in~\S\ref{sec:pf:arch} the remainder term $O(\bigl\lVert \widehat \Psi \bigr \rVert_1)$. Because $\log C(\FmF_k)\to \infty$, this remainder term is negligible for $D_v(\FmF_k,\Phi)$ as $k\to \infty$.

For the non-archimedean places $v$ such that $v\mid \Fmn_k$ or $v\in S_0$ we use the general bounds of~\S\ref{sec:pf:gn} that imply
\begin{equation}\label{remainS0}
\sum_{v\in S_0,\text{ and $v\mid \Fmn_k$}}
\Bigl|\langle
\widehat \CmL_{k,v},\Psi
\rangle\Bigr|
\ll
\sum_{v\in S_0,\text{ and $v\mid \Fmn_k$}}
q_v^{\theta-\Mdemi} \log q_v \pnorm{\Psi}_\infty
\ll
1+
\#\set{v\mid \Fmn_k}.
\end{equation}
In the last inequality we used the fact that $S_0$ is fixed and that $\theta < \Mdemi$. Again the multiplication by $1/\log C(\FmF_k)$ shows that these terms are negligible for $D(\FmF_k,\Phi)$ as $k\to \infty$. Indeed it is easy to verify that
\begin{equation}\label{remainlevel}
\#\set{v\mid \Fmn_k} = o(\log \BmN(\Fmn_k)),\qtext{as $k\to \infty$,}
\end{equation}
and we conclude using Hypothesis~\ref{hyp:cond} that this is $o(\log C(\FmF_k))$.

We partition the set of generic non-archimedean places $S_\gen$ into two disjoint sets $S_\main$ and $S_\cut$ where
\begin{equation}\label{def:Scut}
S_\cut := \set{v\in S_\gen:\ q_v>C(\FmF_k)}.
\end{equation}
Since the support of $\widehat \Phi$ is included in $(-\delta,\delta)$ we know that $\Psi(\nu \log q_v/ 2\pi)$ vanishes for all $v\in S_\cut$ and $\nu \ge 1$.

For the generic places $v\in S_\main$ we use the estimate in Proposition~\ref{prop:S1}. The second remainder term yields
\begin{equation}\label{secondremain}
	\frac{1}{\log C(\FmF_k)}
\sum_{v\in S_\main} q_v^{-A} \pnorm{\Psi}_\infty = O(\frac{1}{\log C(\FmF_k)} ).
\end{equation}
This is again negligible as $k\to \infty$. The first remainder term in Proposition~\ref{prop:S1} is negligible as well because
\begin{equation} \label{firstremain}
  \sum_{v\in S_\main}^{}  q_v^{A_8}  \pnorm{\Psi}_{\infty} C(\FmF_k)^{-C_5} \ll
C(\FmF_k)^{\delta(A_8+1) - C_5}
\end{equation}
and $\delta$ is chosen small small enough such that $\delta(A_8+1)<C_5$.

Finally we show that the contribution to $\langle \widehat\CmL_{\Tpl,v} ,\Psi \rangle/ \log C(\FmF_k)$ of the higher moments $\nu\ge 3$ is negligible. Because of the definition~\eqref{CmL-pl} of $\widehat\CmL_{\Tpl,v}$ and the bound~\eqref{gn-pl-beta} for $\beta_{\Tpl,v}^{(\nu)}$, the contribution of the higher moments is uniformly bounded by
\begin{equation}\label{highermom}
\sum_{v\in S_\main}
\log q_v
\sum_{\nu\ge 3}^{}
 q_v^{-\nu/2}
\Psi\left( \frac{\nu \log q_v}{2\pi} \right)
\ll \pnorm{\Psi}_\infty
\sum_{v\in \CmV_F^\infty}^{}
q_v^{-3/2}\log q_v \ll 1.
\end{equation}

Therefore we can write the main contribution to $D(\FmF_k,\Phi)$ as
\begin{equation*}% \label{M1M2}
\frac{1}{\log C(\FmF_k)} \sum_{v\in S_\main}
\bigl\langle
\widehat\CmL_{\Tpl,v},\Psi
\bigr\rangle
=
M^{(1)}+M^{(2)}+O(\frac{1}{\log C(\FmF_k)})
\end{equation*}
where for $\nu=1,2$ we define
\begin{equation}\label{def:Mv}
	M^{(\nu)}:=
	- \sum_{v\in S_\main}^{}
\frac{ \log q_v}{\log C(\FmF_k)}
q_v^{-\nu/2}
\beta_{\Tpl,v}^{(\nu)}
\widehat \Phi\left( \frac{\nu \log q_v}{\log C(\FmF_k)} \right).
\end{equation}
(recall the relation~\eqref{Psi-Phi} between $\Phi$ and $\Psi$)

\subsection{Sum over primes}\label{sec:pf:prime} It remains to estimate the above terms~\eqref{def:Mv} which consist of sums over the places $v\in S_\main$. 
We shall use the prime number theorem and the Cebotarev equidistribution theorem which we now proceed to recall, following e.g.~\cite{book:Narkiewicz}*{Chap.~7}. Let $E/F$ be a finite Galois extension with Galois group $\Gamma=\Gal(E/F)$. For all conjugacy class $\theta \in \SmC(\Gamma)$, recall that $\CmV_F(\theta)$ consists of those unramified places $v\in \CmV_F^\infty$ such that $\Fr_v\in \theta$.
\begin{prop}
 (Prime number theorem) Notation being as above,
\begin{equation*}
\#\set{v\in \CmV_F^{\infty},\ q_v \le x} \sim \frac{x}{\log x}
,\qtext{as $x\to \infty$.}
\end{equation*}

(Cebotarev equidistribution theorem) For any $\theta\in \SmC(\Gamma) $, 
\begin{equation*}
\#\set{v\in \CmV_F(\theta),\ q_v\le x} \sim \frac{x}{\log x}\times \frac{\abs{\theta}}{\abs{\Gamma}}
,\qtext{as $x\to \infty$.}
\end{equation*}

\end{prop}

As a corollary we deduce the following estimate for any  $\theta\in \SmC(\Gamma) $
\begin{equation}\label{pnt}
\sum_{v\in \CmV_F(\theta)}
\frac{\log q_v}{\log C(\FmF_k)}
q_v^{-1}
\widehat \Phi(\frac{\log q_v}{\log C(\FmF_k)})
=(\frac{\abs{\theta}}{\abs{\Gamma}}+o(1))
\int_{0}^{\infty}
\widehat \Phi(y) dy
,\qtext{as $k\to \infty$.}
\end{equation}
This estimate will be used below to evaluate $M^{(2)}$.
%Note that if we replace $\log q_v$ by $2\log q_v$, or any other constant multiple, the asymptotic remains the same. 
Note that if we replace $\log q_v$ by $-\log q_v$, the same estimate holds with the integral on the right-hand side ranging from $-\infty$ to $0$.
 We shall use this observation below when adding the contribution of $\overline{D_{v}}(\FmF_k,\Phi)$ which will then produce produce the integral $\int\limits_{-\infty}^{\infty} \widehat \Phi(y)dy = \Phi(0)$.
 \subsection{Computing the moments $M^{(1)}$ and $M^{(2)}$}\label{sec:pf:M12}   Recall that by assumption \[r:\widehat G \rtimes \Gal(\bar F/F)={}^LG \to \GL_d(\BmC)\] is irreducible and does not factor through $\Gal(\bar F/F)$.
% The following will play an important role to go from global to local $L$-morphism.
\begin{lem}\label{lem:rhatG}
The restriction $r|_{\widehat G}$ does not contain the trivial representation.
\end{lem}
\begin{proof}   If there were a non-zero vector in $\BmC^d$ invariant by $r(\widehat G)$ then all its translates by $\Gal(\bar F/F)$ would still be invariant because $\widehat G$ is a normal subgroup of ${}^LG$. Because $r$ is irreducible these translates generate $\BmC^d$ and thus the restriction $r|_{\widehat G}$ would be trivial%
  \footnote{In the sense that $r|_{\widehat G}$ would be a direct sum of trivial representations. In the sequel we use this slight abuse of notation when saying that a representation is \Lquote{trivial}.}
which yields a contradiction. For an extension of this argument see e.g.~\cite{book:serre:rep}*{Prop.~24, \S~I.8.1}.
\end{proof}
 Since $v\in S_\main$, the group $G$ is unramified over $F_v$ and the restriction $r|_{\widehat{G}\rtimes W_{F_v}}$ is an unramified $L$-morphism which factors through $\widehat{G} \rtimes W^{\ur}_{F_v}$. Note that this restriction might reducible in general.

Let $A$ is a maximal $F_v$-split torus and $\Omega_{F_v}$ the $F_v$-rational Weyl group for $(G(F_v),A)$. Recall from~Section~\ref{s:Satake-trans} the Satake isomorphism
\begin{equation*}
  \CmS:\CmH^{\ur}(G) \xrightarrow{\sim} \CmH^{\ur}(A)^{\Omega_{F_v}}.
\end{equation*}
For the group $\GL_d$ the right hand-side is identified with $\BmC[Y^\pm_1,\cdots Y^\pm_d]^{\FmS_d}$.  
We recall the morphism of unramified Hecke algebras $r^*:\CmH^{\ur}(\GL_d)\to \CmH^\ur(G(F_v))$ and the test functions:
\begin{equation*}
  \phi_v^{(\nu)}:=r^*(Y_1^{\nu}+\cdots+Y_d^{\nu}) \in \CmH^{\ur}(G(F_v)).
\end{equation*}
In view of~\eqref{def:beta-pl} we have
\begin{equation*}
  \beta_{\Tpl,v}^{(\nu)} = \widehat \mu^{\Tpl}_{v}(\widehat \phi^{(\nu)}_v) = \phi^{(\nu)}_v(1).
\end{equation*}

\begin{prop}\label{prop:pf:beta1}
  The following estimate holds uniformly for all $v\in S_\main$
\begin{equation*}
	\beta^{(1)}_{\mathrm{pl},v}=O(q_v^{-1}).
\end{equation*}

\end{prop}

\begin{proof} We decompose the restriction of $r$ to $\widehat{G} \rtimes W^{\ur}_{F_v}$ into a direct sum of irreducible $\oplus_i r_i$. By Lemma~\ref{lem:rhatG} each $r_i|_{\widehat G}$ does not contain the trivial representation. In particular each $r_i$ does not factor through $W_{F_v}^{\ur}$.

  We can now apply Lemma~\ref{l:bound-1st-moment} which shows that
\begin{equation*}
  \abs{\phi^{(1)}_v(1)} \le q_v^{-1} \abs{\Omega_{F_v}} \max\limits_{w\in \Omega_{F_v}}p(\lambda_i \star_{w} 0) .
\end{equation*}
Here $\lambda_i$ is as defined in~\S\ref{sub:Satake-trans}. The two terms $\abs{\Omega_{F_v}}$ and $p(\lambda_i \star_{w} 0)$ are easily seen to be bounded (uniformly with respect to $v\in S_\main$).
\end{proof}

As a consequence  of  Proposition~\ref{prop:pf:beta1} we deduce that
\begin{equation} \label{M1}
	M^{(1)} = O(\frac{1}{\log C(\FmF_k)})
\end{equation}
because  the summand over $v$ in~\eqref{def:Mv} is dominated by $q_v^{-3/2}$.

For the second moment $M^{(2)}$ we shall need a more refined estimate. Recall the finite extension $F_1/F$ from Section~\ref{s:Sato-Tate}. We also choose a finite extension $F_2/F_1$ such that $r$ factors through $\widehat G \rtimes \Gal(F_2/F)$. Let $\Gamma_2:=\Gal(F_2/F)$ and denote by $\SmC(\Gamma_2)$ the set of conjugacy classes in $\Gamma_2$.
\begin{prop}\label{prop:stheta}
	(i) For all $\theta\in \SmC(\Gamma_2)$ there is an algebraic integer $s(r,\theta)$ such that uniformly for all $v\in S_\main$,
\begin{equation}\label{eq:prop:stheta}
	\beta^{(2)}_{\mathrm{pl},v}=s(r,[\Fr_v]) + O(q_v^{-1}).
\end{equation}
Here $[\Fr_v]\in \SmC(\Gamma_2)$ is the conjugacy class of $\Fr_v$ in $\Gamma_2$.

  (ii) The following identity holds
\begin{equation*}
  s(r)= \sum_{\theta \in \SmC(\Gamma_2)} \frac{\abs{\theta}}{\abs{\Gamma_2}} s(r,\theta)
\end{equation*}
where $s(r)\in \{-1,0,1\}$ is the Frobenius-Schur indicator of $r$. 
\end{prop}
\begin{proof}
  (i) We proceed in way similar to the proof of Proposition~\ref{prop:pf:beta1} above. We shall give an explicit formula~\eqref{pf:stheta} for $s(r,\theta)$.

  We decompose $\MSym^2 r=\oplus \rho^+_i$ (resp. $\bigwedge^2 r=\oplus \rho^-_i$) into a direct sum of irreducible representation of $\widehat G \rtimes \Gal(E/F)$. Then we can decompose for each $i$ the restriction $\rho^+_i|_{\widehat G\rtimes W^{\ur}_{F_v}}=\oplus_j \rho^+_{ij}$ as a direct sum of irreducible representations of $\widehat G\rtimes W^{\ur}_{F_v}$. Similarly we let $\rho^-_i|_{\widehat G\rtimes W^{\ur}_{F_v}}=\oplus_j \rho^-_{ij}$.

 Let  $ \widehat \phi^+_{ij} := (\rho^+_{ij})^* (Y^2_1+\ldots +Y^2_{d_i})$
	and similarly for $\phi^{-}_{ij}$. Then it is easily verified that
\begin{equation*}
	  \widehat \phi^{(2)}_v=\sum_{ij} \widehat \phi^+_{ij}
	  - \sum_{ij} \widehat \phi^-_{ij}
\end{equation*}

We now distinguish two cases. In the first case, $i$ is such that $\rho^+_i$ does not factor through $\Gal(E/F)$. Then by Lemma~\ref{lem:rhatG} the restriction $\rho^+_i|_{\widehat G}$ does not contain the trivial representation. Thus for all $j$, $\rho^+_{ij}|_{\widehat G}$ does not contain the trivial representation. In particular $\rho^+_{ij}$ does not factor through $W^{\ur}_{F_v}$. By Lemma~\ref{l:bound-1st-moment} we deduce that $\phi^+_{ij}(1) = O(q_v^{-1})$. These representations $\rho^+_i$ only contribute to the error term in~\eqref{eq:prop:stheta}.

In the second case, $i$ is such that $\rho^+_i$ does factor through $\Gal(E/F)$. Then for all $j$, $\rho^+_{ij}$ factors through $W_F^{\ur}$ (in particular it is $1$-dimensional). We have that $\widehat\phi^+_{ij}(1)=\rho^+_{ij}(\Fr_v)$. By linearity we deduce that $\sum_{j} \phi^+_{ij}(1)=\Mtr \rho^+_i(\Fr_v)$. This is an algebraic integer which depends only on the conjugacy class of $\Fr_v$ in $\Gamma_2$.

We proceed in the same way for $\phi_{ij}^-$. We deduce that~\eqref{eq:prop:stheta} holds with $\theta=[\Fr_v]$ and $s(r,\theta)=s^+(r,\theta)-s^-(r,\theta)$, where
\begin{equation}\label{pf:stheta}
  s^+(r,\theta):= \sum_{
  \substack{
  \rho^+_i \text{ factors}\\
  \text{through $\Gal(E/F)$}}
  }
  \Mtr \rho^+_i(\theta)
\end{equation}
and similarly for the definition of $s^-(r,\theta)$. This concludes the proof of assertion (i).

(ii) By orthogonality of characters we have for each $i$ such that $\rho_i^+$ factors through $\Gal(E/F)$,
\begin{equation*}
  \sum_{\theta \in \SmC(\Gamma_2)}^{}
  \frac{\abs{\theta}}{\abs{\Gamma_2}} \Mtr \rho_i^+(\theta) = \langle\Mun,\rho_i^+\rangle=
  \begin{cases}
	1, & \text{if $\rho_i^+=\Mun$}\\
	0, & \text{otherwise}.
  \end{cases}
\end{equation*}
We deduce that
\begin{equation*}
   \sum_{\theta \in \SmC(\Gamma_2)}^{}
  \frac{\abs{\theta}}{\abs{\Gamma_2}} s^+(r,\theta) =\langle\Mun,\MSym^2 r\rangle,
\end{equation*}
the multiplicity of the trivial representation $\Mun$ in $\MSym^2 r$ (as a representation of $\widehat G \rtimes \Gal(E/F)$). The same identity holds for $s^-(r,\theta)$ and $\bigwedge^2 r$. From the definition of the Frobenius-Schur indicator $s(r)$ in \S\ref{sec:b:fs} we conclude the proof of the proposition.
 \end{proof}

 As a corollary we have the following  estimate for the second moment:
\begin{equation*}\label{pf:M2estimate}
		M^{(2)} 
	= - \sum_{\theta \in \SmC(\Gamma_2)}
	s(r,\theta)
\sum_{v\in S_\main\cap \CmV_F(\theta)}
\frac{\log q_v}{\log C(\FmF_k)} q_v^{-1} \widehat \Phi\left( \frac{\nu \log q_v}{\log C(\FmF_k)} \right) + O(\frac{1}{\log C(\FmF_k)}).
\end{equation*}
 We can extend the sum to $v\in S_\gen \cap \CmV_F(\theta)$ because 
\begin{equation*}
\sum_{v\in \CmV_F^{\infty}-S_\main}
\frac{\log q_v}{\log C(\FmF_k)} q_v^{-1} \ll \frac{\log \log C(\FmF_k)}{\log C(\FmF_k)}=o(1)
\end{equation*}
uniformly as $k\to \infty$.  Applying the Cebotarev equidistribution theorem we deduce that
\begin{equation} \label{M2}
  \begin{aligned}
		M^{(2)}&= - \sum_{\theta \in \SmC(\Gamma_2)}
	s(r,\theta) (\frac{\abs{\theta}}{\abs{\Gamma_2}}+o(1))
\Mdemi \int_{0}^\infty
\widehat \Phi(y)dy\\
&=(-\frac{s(r)}{2}+o(1)) \int_{0}^\infty
\widehat \Phi(y)dy.
  \end{aligned}
\end{equation}
The last line follows from Proposition~\ref{prop:stheta}.(ii) above.

\subsection{Conclusion}\label{sec:pf:ccl} We now gather all the estimates and conclude the proof of Theorem~\ref{th:onelevel}. The explicit formula~\eqref{D1explicit} expresses $D(\FmF_k,\Phi)$ as the sum of four terms. The term $D_{\pol}(\FmF_k,\Phi)$ goes to zero as $k\to \infty$ as consequence of Hypothesis~\ref{hyp:poles}, see~\S\ref{sec:pf:poles}.

The archimedean terms are evaluated in~\eqref{archterms}. In addition with the second term in~\eqref{D1explicit} which involves $\log q(\Pi)$, these contribute
\begin{equation*}
\frac{\widehat \Phi(0)}{\abs{\FmF_k}} \sum_{\Pi\in\FmF_k}
\frac{\log C(\Pi)}{\log C(\FmF_k)} + o(1).
\end{equation*}
This is equal to $\widehat \Phi(0)+o(1)$ (using the Hypothesis~\ref{hyp:cond} for families in the level aspect).

We now turn to the non-archimedean contribution. The places $v\in S_0$ and $v\mid \Fmn_k$ are negligible thanks to~\eqref{remainS0} and~\eqref{remainlevel}, respectively.

It remains the non-archimedean places $v\in S_\gen=S_\main\sqcup S_\cut$. The contribution from $v\in S_\cut$ is zero because the support of $\widehat \Phi$ is included in $(-\delta,\delta)$, see~\eqref{def:Scut}.

For each $v\in S_\main$ we apply Proposition~\ref{prop:S1}. The sum over $v\in S_\main$ of the remainder terms is shown to be negligible in~\eqref{secondremain} and~\eqref{firstremain}. For the main term the estimate~\eqref{highermom} shows that the contribution of the higher moments is negligible. It remains the two terms $M^{(1)}$ and $M^{(2)}$ as defined in~\eqref{def:Mv}.

The asymptotic of $M^{(1)}$ and $M^{(2)}$ are given in~\eqref{M1} and~\eqref{M2} respectively. There is a similar contribution from the conjugate $\overline{D_{v}}(\FmF_k,\Phi)$. Overall this yields
\begin{equation*}
\begin{aligned}
\sum_{v\in S_\main}^{} D_{v}(\FmF_k,\Phi)+
\overline{D_{v}}(\FmF_k,\Phi)
&= -\frac{s(r)}{2} \int_{0}^{\infty} \widehat \Phi(y)dy
-\frac{s(r)}{2} \int_{-\infty}^{0} \widehat \Phi(y) dy
+o(1)\\
&= -\frac{s(r)}{2} \Phi(0)+o(1).
\end{aligned}
\end{equation*}

We can now conclude that
\begin{equation*}
\lim_{k\to \infty} D(\FmF_k,\Phi)=\widehat \Phi(0) - \frac{s(r)}{2} \Phi(0).
\end{equation*}
This is the statement of Theorem~\ref{th:onelevel}. \qed

%\bibliographystyle{plain}
%\bibliography{/home/templier/all,bib}
%\end{document}
%----------------------------------------
\appendix

\section{By Robert Kottwitz}\label{s:app:Kottwitz}

  Let $F$ be a finite extension of $\Q_p$, and $G$ a connected
  reductive group over $F$. For each semisimple $\gamma\in G(F)$, define
  a positive real number
  \begin{equation}\label{e:D^G}D^G(\gamma):=\left|\det(1-Ad(\gamma)|_{\Lie G/\Lie G^0_\gamma})\right|_v=
  \prod_{\alpha\in \Phi\atop \alpha(\gamma)\neq 1} |1-\alpha(\gamma)|_v.\end{equation}
  (In particular if $\gamma$ belongs to the center of $G(F)$ then $D^G(\gamma)=1$.)
  We equip $G(F)$, as well as $I_\gamma(F)$ (the connected centralizer of $\gamma$)
  for each semisimple $\gamma\in G(F)$, with the Haar measures as in \cite[\S4]{Gro97}.
  The quotient measure on $I_\gamma(F)\bs G(F)$ is used to define
  the orbital integral $O_\gamma(f)$.

\begin{thm}\label{t:appendix1}
  For each $f\in C^\infty_c(G(F))$, there exists a constant $c(f)>0$ such that
  for all semisimple $\gamma\in G(F)$,
  $$ |O_\gamma(f)|\le c(f)\cdot D^G(\gamma)^{-1/2}.$$
\end{thm}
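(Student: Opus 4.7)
The plan is to reduce the global bound to a \emph{local} boundedness statement via a compactness argument, and then establish the local statement through Harish-Chandra descent and the Shalika germ expansion. More precisely, I would proceed in two independent steps: (a) show that the set $\Sigma(f)$ of semisimple conjugacy classes $[\gamma]$ with $O_\gamma(f)\neq 0$ has compact image in the Chevalley quotient $G\SB\SB G$; (b) show that for every semisimple $\gamma_0\in G(F)$ there is a neighborhood $U_{\gamma_0}$ of $\gamma_0$ in $G(F)^{\text{ss}}$ and a constant $c_0>0$ such that $|O_\gamma(f)|\le c_0\cdot D^G(\gamma)^{-1/2}$ for every semisimple $\gamma\in U_{\gamma_0}$. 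Together with the fact that the image of $\Sigma(f)$ in $G\SB\SB G$ can be covered by finitely many neighborhoods of the form coming from (b), these will immediately yield the desired uniform bound by taking the maximum of the $c_0$'s. Part (a) is the easier of the two and follows because $f\in C^\infty_c(G(F))$: if $O_\gamma(f)\neq 0$ then the $G(F)$-conjugacy class of $\gamma$ meets $\mathrm{supp}(f)$, so the invariant polynomials of $\gamma$ (in particular its characteristic polynomial under any faithful representation) take values in a bounded set.

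The substance of the argument lies in (b), and I would prove it by Harish-Chandra descent to $I_{\gamma_0}$ combined with Shalika germ theory. Write $\gamma=\gamma_0 u$ with $u\in I_{\gamma_0}(F)$ topologically unipotent and close to $1$ (which is the relevant local picture, since any $\gamma$ near $\gamma_0$ in $G(F)^{\text{ss}}$ is $G(F)$-conjugate to such an element after possibly replacing $\gamma_0$ by a conjugate). For $u$ sufficiently close to $1$ one has the factorization $D^G(\gamma_0 u)=D^G(\gamma_0)\cdot D^{I_{\gamma_0}}(u)$, since roots $\alpha$ with $\alpha(\gamma_0)\neq 1$ contribute the same factor to $D^G(\gamma_0)$ and $D^G(\gamma)$, while roots with $\alpha(\gamma_0)=1$ are precisely the roots of $I_{\gamma_0}$. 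The Harish-Chandra descent attaches to $f$ and $\gamma_0$ a function $f^{\gamma_0}\in C^\infty_c(I_{\gamma_0}(F))$ such that $O^{G(F)}_{\gamma_0 u}(f)=O^{I_{\gamma_0}(F)}_u(f^{\gamma_0})$ for $u$ in a suitable neighborhood of $1$. This reduces the problem to bounding $|O^{I_{\gamma_0}(F)}_u(f^{\gamma_0})|$ by $c_0\cdot D^{I_{\gamma_0}}(u)^{-1/2}$ uniformly for semisimple $u$ near $1$.

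For that last step I would invoke the Shalika germ expansion around the identity in $I_{\gamma_0}(F)$: for $u$ close to $1$,
\[
O^{I_{\gamma_0}(F)}_u(f^{\gamma_0})=\sum_{\mathcal{O}} \Gamma_{\mathcal{O}}^{I_{\gamma_0}}(u)\cdot O_{\mathcal{O}}^{I_{\gamma_0}(F)}(f^{\gamma_0}),
\]
where $\mathcal{O}$ runs over the finite set of unipotent conjugacy classes in $I_{\gamma_0}(F)$. The coefficients $O_{\mathcal{O}}(f^{\gamma_0})$ are fixed complex numbers (depending on $f$ and $\gamma_0$, not on $u$), while the Shalika germs $\Gamma^{I_{\gamma_0}}_{\mathcal{O}}(u)$ satisfy the classical estimate $|D^{I_{\gamma_0}}(u)^{1/2}\Gamma^{I_{\gamma_0}}_{\mathcal{O}}(u)|=O(1)$ locally near $1$ (due to Harish-Chandra; see also the homogeneity properties of Shalika germs). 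Multiplying by $D^{I_{\gamma_0}}(u)^{1/2}$ and taking absolute values yields the desired local bound. The main obstacle is really of bookkeeping nature: verifying that the descent function $f^{\gamma_0}$ can be chosen so as to make the Shalika germ expansion valid on a neighborhood that is uniform in $\gamma$ as $[\gamma]$ varies in a small neighborhood of $[\gamma_0]$ in $G\SB\SB G$, and ensuring that only finitely many $\gamma_0$ (modulo rational conjugacy within a given stable class) are needed to cover $\Sigma(f)$. The latter uses Lemma \ref{l:bounding-conj-in-st-conj} to bound the number of $G(F)$-conjugacy classes inside a stable class and the compactness from (a); the former is handled by standard open-neighborhood arguments for the Chevalley map.
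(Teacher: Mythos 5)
Your overall skeleton (compactness of the support in the adjoint quotient, plus local boundedness near each semisimple point, plus descent to the centralizer and then to the Lie algebra near the identity) is the same reduction the paper makes, but the step you call classical is exactly where the real content lies, and as stated it is a genuine gap. You invoke the estimate $|D^{I_{\gamma_0}}(u)^{1/2}\Gamma^{I_{\gamma_0}}_{\mathcal O}(u)|=O(1)$ near $1$ as "due to Harish-Chandra". Harish-Chandra's boundedness result (for normalized orbital integrals, equivalently for normalized germs) is a statement on the \emph{regular} semisimple set. The theorem you are proving concerns all semisimple $\gamma$, and after descent the element $u\in I_{\gamma_0}(F)$ ranges over a full neighborhood of $1$, including singular semisimple elements of $I_{\gamma_0}$ (e.g. $u=1$, i.e. $\gamma=\gamma_0$ itself, and all the intermediate strata where some roots of $I_{\gamma_0}$ are trivial on $u$). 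At such points two things are not classical: (i) the Shalika germ expansion itself, which in its standard form is an identity on the regular set --- to use it at singular $u$ one must first define the orbital integrals $O_u$ coherently on every stratum (fixing measures on the centralizers $M_u$) and prove the expansion there; and (ii) the boundedness of the normalized germs at singular points, which does not follow formally from boundedness on the regular set, since the singular orbital integral is an integral over a smaller orbit with a different quotient measure, not a limit of the regular values.

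This is precisely what the paper's appendix supplies. After reducing, as you do, to local boundedness near $1$ and passing to the Lie algebra, it defines orbital integrals $O_X$ for \emph{all} $X\in\mathfrak t$ (stratified by the centralizers $M$), proves germ expansions about arbitrary central and then arbitrary semisimple elements, and singles out canonical homogeneous germs satisfying \eqref{norm.hom.germ}, with homogeneity exponent $\dim(G_{X_i})-\dim(M_X)$. The key new ingredient (adapted from Sparling) is Proposition \ref{prop.rele}: on a stratum $\mathfrak z_M'$ only germs attached to nilpotent orbits relevant to $M$ can be nonzero, and for those $\dim G_Y\ge \dim M$, so the homogeneity exponent is nonnegative and the scaling argument gives local boundedness of the normalized germs on all of $\mathfrak t$ instead of possible blow-up; Theorem \ref{bdoi} then yields the bound at singular points. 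Without this input your germ step is circular: at singular semisimple elements you are assuming the very estimate the theorem is designed to establish. (Your steps (a) and the factorization $D^G(\gamma_0 u)=D^G(\gamma_0)D^{I_{\gamma_0}}(u)$ for $u$ close to $1$ are fine, and the covering/bookkeeping issues you flag are indeed only bookkeeping; the missing mathematics is the singular-locus analysis just described.)
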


\begin{proof}
There are only finitely many $G(F)$-conjugacy classes of maximal $F$-tori in
$G$,  so in
proving the theorem we can fix a maximal $F$-torus $T$ in $G$ and
restrict attention to elements $\gamma$ lying in $T(F)$. Then we must show
that the function  $\gamma \mapsto D^G(\gamma)^{1/2}O_\gamma(f)$ is
bounded on $ T(F)$. Harish-Chandra proved that the
restriction of this function to the set of regular elements in $T(F)$ is
bounded, so we just need to check that his methods can be used to treat
 singular $\gamma$ as well.

Since the
function $\gamma \mapsto D^G(\gamma)^{1/2}O_\gamma(f)$ is
compactly supported on $T(F)$, it is enough to show that it
is also locally bounded. Harish-Chandra's method of  semisimple descent
reduces us to proving local boundedness in a neighborhood of $1 \in T(F)$,
and then the exponential map reduces us to the analogous problem on
the Lie algebra $\mathfrak g$ of $G$. The remainder of this appendix
handles $\mathfrak g$, the main result being Theorem \ref{bdoi}.
\end{proof}

\subsection{Notation pertaining to the Lie algebra version of the problem}

We  write $\mathfrak t$ for the
Lie algebra of $T$. We write $R$ for the (absolute) root system of $T$ in
$G$. We often write
$G$ for the group of
$F$-points of
$G$, etc. We will follow closely the exposition of Harish-Chandra's work
given in \cite{Kot05}. Most of the proofs are
just the same as the ones there and will therefore  be omitted. (Instead of
a proof, the reader will find the words ``same as usual.'') However, a
couple of additional ingredients  will be needed; these are  simple
adaptations of  ideas in  J.~Sparling's article \cite{sparling}.

\subsection{Orbital integrals $O_X$ for  $X \in \mathfrak t$}
Let $X \in \mathfrak t$. The centralizer of $X$ in $G$ is a connected
reductive $F$-subgroup of $G$ that we will denote by $M_X$. (The reason for
using the letter $M$ is that this subgroup is a twisted Levi subgroup of
$G$, i.e. an $F$-subgroup that becomes a Levi subgroup after extending
scalars to an algebraic closure of $F$; however this fact is not actually
needed below.) The set $\mathcal M$ of subgroups obtained in this way (as
$X$ varies in
$\mathfrak t$) is finite.

The following notation will be useful.
Let $M \in \mathcal M$. We write $R_M$ for the (absolute) root system of
$M$ (a subset of $R$). We write
$\mathfrak z_M$ for the Lie algebra of the center of $M$; we then have
\[
\mathfrak z_M =\{ X \in \mathfrak t : \alpha (X)=0 \quad \forall \alpha \in
R_M \}
\]

  For $X \in \mathfrak t$ we have $M_X=M$ if
and only if
\[
\{ \alpha \in R: \alpha(X)=0 \}=R_M
\]
or, in  other words, if and only if $X$ lies in the open subset
\[
\mathfrak z_M':=\{ X \in \mathfrak z_M :
\alpha (X)\ne 0 \quad \forall \alpha \in
R \setminus R_M \}
\]
 of $\mathfrak z_M$. Obviously $\mathfrak t$ is the disjoint union of the
locally closed subsets $\mathfrak z_M'$. For example we have
$\mathfrak z_G'=\mathfrak z_G$, while $\mathfrak z_T'$ is the set of
regular  elements in $\mathfrak t$.

We fix a Haar measure $dg$ on $G$. In addition,
for each $M \in \mathcal M$ we fix a Haar measure $dm$ on $M$. For instance
one can use the canonical measures defined by Gross. In any case,
for $X \in
\mathfrak z_M'$ we define the orbital integral $O_X$ by
\begin{equation}\label{oXdef}
O_X(f):=\int_{M\backslash G} f(g^{-1}Xg) \, dg /dm.
\end{equation}
Thus we now have a coherent definition of orbital integrals for all $X \in
\mathfrak t$.

\subsection{Preliminary definition of Shalika germs on $\mathfrak g$}
There are finitely many nilpotent $G$-orbits
$\mathcal O_1,\mathcal O_2,\dots,\mathcal O_r$ in $\mathfrak g$.
We write $\mu_1,\dots,\mu_r$ for the
corresponding nilpotent orbital integrals. The  distributions
$\mu_1,\dots,\mu_r$ are linearly independent.

\begin{thm}\label{thm.sh.germ}
There exist functions $\Gamma_1,\Gamma_2,\dots,\Gamma_r$ on
$\mathfrak t$ having the following property. For every $f \in
C^\infty_c(\mathfrak g)$ there exists an open neighborhood
$U_f$ of
$0$ in
$\mathfrak t$ such that
\begin{equation}
O_X(f)=\sum^r_{i=1} \mu_i(f) \cdot \Gamma_i(X)
\end{equation}
for all $X \in   U_f$. The germs
about
$0 \in \mathfrak t$ of the functions
$\Gamma_1,\dots,\Gamma_r$ are unique. We refer to $\Gamma_i$
 as the provisional
Shalika germ for the nilpotent orbit $\mathcal O_i$.
\end{thm}
\begin{proof}
Same as usual.
\end{proof}
A  Shalika germ is an equivalence class of functions on $\mathfrak
t$.  As we will see next, the homogeneity of Shalika germs makes it
possible to single out one particularly nice function $\Gamma_i$ within its
equivalence class. Once we have done this, $\Gamma_i$ will from then on
denote this function (whose germ about $0$ is the old $\Gamma_i$).

\subsection{Behavior under scaling}
For $\beta \in F^\times$ and $f \in C^\infty_c(\mathfrak g)$ we write
$f_\beta$ for the function on $\mathfrak g$ defined by
\begin{equation}
f_\beta(X):=f(\beta X).
\end{equation}
 Harish-Chandra proved that
\begin{equation}\label{nil.scale}
\mu_\mathcal O(f_{\alpha^2})=|\alpha|^{-\dim\mathcal O}\mu_{\mathcal O}(f)
\end{equation}
for every nilpotent orbit $\mathcal O$ and $\alpha \in F^\times$.
Moreover it is clear from \eqref{oXdef} that
\begin{equation}\label{reg.scale}
O_X(f_\beta)=O_{\beta X}(f)
\end{equation}
for all $X \in \mathfrak t$ and all $\beta \in F^\times$.

\subsection{Partial homogeneity of our provisional Shalika germs
$\Gamma_i$}
Let $\alpha \in F^\times$. Let $\mathcal O_i$ be one of our nilpotent
orbits, let $\mu_i$ be the corresponding nilpotent orbital integral, and
let $\Gamma_i$ be the corresponding Shalika germ. Put $d_i:=\dim \mathcal
O_i$. We claim that
\begin{equation}\label{germ.scale}
\Gamma_i(X)=|\alpha|^{d_i}\Gamma_i(\alpha^2X),
\end{equation}
where the equality means equality of germs about $0$ of functions on
$\mathfrak t$.

Indeed, as in the proof of the Shalika germ expansion on $G$, pick a
function $f_i \in C^\infty_c(\mathfrak g)$ such that
\begin{equation}
\mu_j(f_i)=\delta_{ij}.
\end{equation}
Then $\Gamma_i(X)$ is the germ about $0$ of the function
\begin{equation}\label{sh.hom1}
X \mapsto O_X(f_i)
\end{equation}
on $\mathfrak t$. In fact during the remainder of our discussion of
provisional germs, we will use always use \eqref{sh.hom1} as our choice for
a specific function $\Gamma_i$ having the right germ.

In view of the
homogeneity of nilpotent orbital integrals established above,
$|\alpha|^{d_i}\cdot(f_i)_{\alpha^2}$ can also serve as $f_i$, so that
$\Gamma_i(X)$ is also the germ about $0$ of the function
\begin{equation}\label{sh.hom2}
X \mapsto O_X(|\alpha|^{d_i}\cdot(f_i)_{\alpha^2})=|\alpha|^{d_i} \cdot
O_{\alpha^2X}(f_i)
\end{equation}
on $\mathfrak t$. Comparing \eqref{sh.hom1}, \eqref{sh.hom2}, we
see that the germs of $\Gamma_i(X)$ and $|\alpha|^{d_i}\Gamma_i(\alpha^2
X)$  are equal, as desired.

\subsection{Canonical Shalika germs}
Let $\Gamma_i$ be one of our germs. We are going to replace $\Gamma_i$
by another  function $\Gamma_i^{\new}$ on $
\mathfrak t$ that has the same germ about
$0$ and is at the same time homogeneous.
\begin{lem}
There is a unique function
$\Gamma_i^{\new}$ on $\mathfrak t$ which has the same germ about
$0$ as $\Gamma_i$ and which satisfies
\eqref{germ.scale} for all
$\alpha
\in F^\times$ and all $X \in \mathfrak t$. Moreover $\Gamma_i^{\new}$
is real-valued, translation invariant under the center of
$\mathfrak g$, and invariant under conjugation by elements in the
normalizer of $T$.
\end{lem}
\begin{proof}
Same as usual.
\end{proof}

From now on we replace the germs $\Gamma_i$ by the functions
$\Gamma_i^{\new}$, but we drop the superscript ``new.''

We also need a slight strengthening of the fact that
$\Gamma_i$ is translation invariant under the center $\mathfrak z$ of
$\mathfrak g$. Let $G'$ be the derived group of the algebraic group
$G$. Then $G(\bar F)=G'(\bar F)Z(\bar F)$,  but for $F$-points we have only
that $G'Z$ is a normal subgroup of finite index in $G$. We denote by $D$
the finite group $G/G'Z$. Each $G$-orbit $\mathcal O$ in $\mathfrak
g=\mathfrak g'
\oplus \mathfrak z$ decomposes as a finite union of $G'$-orbits $\mathcal
O'$, permuted transitively by $D$. We normalize the invariant measures on
the orbits in such a way that
\begin{equation}
\int_{\mathcal O}=\sum_{x \in D} \int_{x^{-1}\mathcal O' x}.
\end{equation}
For a nilpotent $G$-orbit $\mathcal O$ (respectively, nilpotent $G'$-orbit
$\mathcal O'$) we denote by $\Gamma^G_{\mathcal O}$ (respectively,
$\Gamma^{G'}_{\mathcal O'}$) the corresponding Shalika germ on $\mathfrak
t$ (respectively, $\mathfrak g'\cap \mathfrak t$).

\begin{lem}\label{red.der}
Let $X \in \mathfrak t$ and decompose $X$ as $X'+Z$ with $X' \in
\mathfrak g'\cap \mathfrak t$ and $Z \in \mathfrak z$. Then
\begin{equation}
\Gamma^G_{\mathcal O}(X)=\sum_{\mathcal O' \subset \mathcal O}
\Gamma^{G'}_{\mathcal O'}(X').
\end{equation}
\end{lem}
\begin{proof} Same as usual, but note that there is a typo in the proof
of the corresponding result in \cite{Kot05}: the functions $f$, $f'$
occurring in formula  (17.8.9) of that article should have a  subscript
$\mathcal O$.
\end{proof}

\subsection{Germ expansions about arbitrary central elements in $\mathfrak
g$}
We have been studying germ expansions about $0 \in \mathfrak t$. These
involve  orbital integrals for the nilpotent orbits $\mathcal O_i$. Now we
consider germ expansions about an arbitrary element $Z$ in the center of
$\mathfrak g$. These will involve orbital integrals $\mu_{Z+\mathcal
O_i}$ for the orbits
$Z+\mathcal O_i$, but will involve exactly the same germs $\Gamma_i$ as
before.

\begin{thm}\label{thm.sh.cent}
Let $Z$ be an element in the center of $\mathfrak g$. For every $f \in
C^\infty_c(\mathfrak g)$
there exists an open
neighborhood
$U_f$ of
$Z$ in
$\mathfrak t$
 such that
\begin{equation}
O_X(f)=\sum^r_{i=1} \mu_{Z+\mathcal O_i}(f) \cdot \Gamma_i(X)
\end{equation}
for all $X \in U_f $.
\end{thm}
\begin{proof}
Same as usual.
\end{proof}

\subsection{Germ expansions about arbitrary semisimple elements in
$\mathfrak g$}\label{sub.germ.desc}
We are going to use Harish-Chandra's theory of semisimple descent
 in order to obtain germ expansions
about an arbitrary  element $S \in \mathfrak t$. We fix such an
element
$S$ and let
$H:=G_S$ denote the centralizer of $S$, a connected reductive subgroup of
$G$.

 Let
$Y_1,\dots,Y_s$ be a set of representatives for the nilpotent $H$-orbits in
$\mathfrak h$. Let
$\mu_{S+Y_i}$ denote the orbital integral on $\mathfrak g$ obtained by
integration over the $G$-orbit of $S+Y_i$. Now $T$ is also a maximal torus
in $H$, so for each  $1
\le i
\le s$ we can consider the canonical Shalika germ
$\Gamma^H_i$ for $H$, $\mathfrak t$ and
 the nilpotent $H$-orbit of $Y_i$.

\begin{thm} \label{thm.ss.germ}
Let $S$, $H$ be as above. For every $f \in C^\infty_c(\mathfrak g)$
there exists an open neighborhood $U_f$ of $S$ in
$\mathfrak t$ such that
\begin{equation}\label{shal''}
O_X(f)=\sum^s_{i=1} \mu_{S+Y_i}(f) \cdot \Gamma^H_i(X)
\end{equation}
for all $X \in U_f $.
\end{thm}

\begin{proof}
Same as usual.
\end{proof}

\subsection{Normalized orbital integrals and Shalika germs}
For $X \in
\mathfrak t$ we put
\[
D^G(X)=\det(\ad(X);\mathfrak g/\mathfrak m_X)
\]
($\mathfrak m_X$ being the Lie algebra of the centralizer $M_X$ of $X$ in
$G$) and define the normalized orbital integral $I_X$ by
\[
I_X=|D^G(X)|^{1/2} O_X.
\]
 When we use $I_X$ instead of $O_X$, we need to use
the normalized Shalika germs $\bar\Gamma_i(X):=|D^G(X)|^{1/2}\Gamma_i(X)$
instead of the usual Shalika germs.

Clearly Theorem \ref{thm.sh.germ} remains valid when $O_X$, $\Gamma_X$ are
replaced by $I_X$, $\bar\Gamma_i$ respectively. Now consider the germ
expansion  about an arbitrary
element $S \in \mathfrak t$. As usual put $H:=G_S$.
There exists a neighborhood of $S$ in $\mathfrak t$ on which
\[
|D^G(X)|^{1/2}=|D^H(X)|^{1/2}
|\det(\ad(S);\mathfrak g/\mathfrak h)|^{1/2}.
\]
  It then follows from Theorem
\ref{thm.ss.germ} that
\begin{equation}\label{shal'''}
I_X(f)=|\det(\ad(S);\mathfrak g/\mathfrak h)|^{1/2}\sum^s_{i=1}
\mu_{S+Y_i}(f) \cdot \bar\Gamma^H_i(X)
\end{equation}
for all  $X$ in some sufficiently small neighborhood of $S$
in $\mathfrak t$.

The homogeneity property
\eqref{germ.scale} of the Shalika germs $\Gamma_i$ implies the following
homogeneity property for the normalized Shalika germs $\bar\Gamma_i$:
\begin{equation}\label{norm.hom.germ}
\bar\Gamma_i(\alpha^2X)=
|\alpha|^{\dim(G_{X_i})-\dim(M_X)}\cdot\bar\Gamma_i(X)
\end{equation}
for all $\alpha \in F^\times$ and all $X \in \mathfrak t$. Here
we have chosen $X_i \in \mathcal O_i$ and introduced its centralizer
$G_{X_i}$.

The next proposition will be needed when we use \eqref{norm.hom.germ}
in the proof of  boundedness of normalized Shalika germs. It is a
simple adaptation of ideas  from Sparling's article \cite{sparling}. To
formulate the proposition we need a definition. Consider the action
morphism $G \times
\mathfrak g \to \mathfrak g$ (given by $(g,X) \mapsto gXg^{-1}$); we are
now thinking of $G$ and $\mathfrak g$ as algebraic varieties over $F$.
For $M \in \mathcal M$ we consider the image $V^0_M \subset \mathfrak g$ of
$G\times \mathfrak z_M'$ under this morphism. Obviously $V^0_M$ is an
irreducible
$G$-invariant subset of the variety $\mathfrak g$, so its Zariski closure
$V_M$ is a $G$-invariant irreducible subvariety of $\mathfrak g$. We say
that a nilpotent orbit $\mathcal O$ is \emph{relevant to} $M$ if
$\mathcal O$ is contained in $V_M$.

\begin{prop}\label{prop.rele}
Let $M \in \mathcal M$ and let $\mathcal O$ be a nilpotent orbit in
$\mathfrak g$. Then the following two statements hold.
\begin{enumerate}
\item If $\mathcal O$ is  relevant to $M$, then for $Y \in \mathcal O$ we
have $\dim G_Y \ge \dim M$, where $G_Y$ denotes the centralizer of $Y$ in
$G$.
\item If $\mathcal O$ is not relevant to $M$, then the normalized Shalika
germ
$\overline\Gamma_\mathcal O$ vanishes identically on $\mathfrak z_M'$.
\end{enumerate}
\end{prop}
\begin{proof}
(1) Over $V_M$ we have the group scheme whose fiber at $X \in V_M$ is the
centralizer of $X$ in $G$. At points in $\mathfrak z_M'$ this centralizer
is $M$ and at points of $V_M^0$ it is some conjugate of $M$. Since $V_M^0$
is dense in $V_M$, we conclude from SGA 3, Tome I, Exp.~VI$_B$, Prop.~4.1
that $\dim G_X \ge \dim M$ for all $X \in V_M$. In particular this
inequality holds when we take $X$ to be $Y \in \mathcal O \subset V_M$.

(2) Let $f \in C^\infty_c \mathfrak g$ and suppose that $\mu_\mathcal
O(f)=0$ for all nilpotent orbits $\mathcal O$ relevant to $M$. Then, as in
the proof of the existence of Shalika germs, there exists an open
neighborhood $U_f$ of $0$ in
$\mathfrak t$ such that
$O_X(f)=0$ for all $X
\in U_f \cap V_M$. In particular $O_X(f)=0$ for all $X \in U_f \cap
\mathfrak z_M'$. Applying this observation to the functions $f_j$ used to
produce our provisional Shalika germs, we conclude that  if
$\mathcal O_j$ is not relevant to $M$, then there is a
neighborhood $U_j$ of $0$ in $\mathfrak t$ such that the provisional
Shalika germ $\Gamma_j$  vanishes on $U_j \cap \mathfrak z_M'$. Looking back at how the true
(homogeneous) Shalika germs were obtained from the provisional ones, we see
that the true Shalika germ
$\Gamma_j$ vanishes identically on $\mathfrak z_M'$ when $\mathcal O_j$ is
not relevant to $M$.
\end{proof}

\subsection{ $\bar\Gamma_i$ is a linear combination of functions
$\bar\Gamma^H_j$ in a neighborhood of $S$}
Again let $S \in\mathfrak t$ and  let $H$ be its
centralizer in $G$. Consider one of
the normalized Shalika germs $\bar\Gamma_i$ for $G$. We are interested in
the behavior of $\bar\Gamma_i$ in a small neighborhood of $S$ in $\mathfrak
t$.

\begin{lem}\label{asy.germ}
There exists a neighborhood $V$ of $S$ in $\mathfrak t$ such that the
restriction of
 $\bar\Gamma_i$  to $V $ is a linear combination of
restrictions of
normalized Shalika germs for $H$.
\end{lem}
\begin{proof}
Same as usual.
\end{proof}

\begin{cor} Let $M \in \mathcal M$.
Each normalized Shalika germ $\bar\Gamma_i$ is locally constant on
$\mathfrak z_M'$.
\end{cor}
\begin{proof}
Same as usual.
\end{proof}

\subsection{Locally bounded functions}
We are going to show that the normalized Shalika germs $\bar\Gamma_i$ are
locally bounded functions on $\mathfrak t$. First let's recall what this
means. Let $f$ be a complex-valued function on a topological space $X$. We
say that $f$ is \emph{locally bounded} on $X$ if every point $x \in X$ has
a neighborhood $U_x$ such that $f$ is bounded on $U_x$. When $X$ is a
locally compact Hausdorff space, $f$ is locally bounded if and only $f$ is
bounded on every compact subset of $X$.

\subsection{Local boundedness of normalized Shalika germs}
Let $\bar\Gamma_i$ be one of our normalized Shalika germs on $\mathfrak t$.
 We are going to show that $\bar\Gamma_i$ is locally bounded
as a function on $\mathfrak t$, slightly generalizing a result of
Harish-Chandra.

\begin{thm}
Every normalized Shalika germ $\bar\Gamma_i$ is  locally bounded
on~$\mathfrak t$.
\end{thm}
\begin{proof}
Same as usual once one takes into account Proposition  \ref{prop.rele}.
\end{proof}
As a consequence of the local boundedness of normalized Shalika germs, we
obtain a slight generalization of another result of Harish-Chandra.

\begin{thm}\label{bdoi}
Let $f \in C^\infty_c(\mathfrak g)$.
Then the function $X \mapsto I_X(f)$ on $\mathfrak t$ is bounded and
compactly supported on
$\mathfrak t$. Moreover, for each $M \in \mathcal M$ this function is
locally constant on $\mathfrak z_M'$.
\end{thm}
\begin{proof}
Same as usual.
\end{proof}

%%%%%%%  APPENDIX B %%%%%%%

\section{By Raf Cluckers, Julia Gordon and Immanuel Halupczok}\label{s:app:B}

In this appendix we use the theory of motivic integration to control bounds for
orbital integrals, normalized by the discriminant, as the place
varies. In Appendix A, the bound for orbital integrals is proved for a fixed local field;
here we show that this bound cannot exceed a power of the cardinality of the residue field, using the tools from model theory. We emphasize that the main result of Appendix A, namely, the fact that the orbital integrals are bounded, is used in our proof.
More specifically, we prove Theorems \ref{thm:main}, and \ref{thm:main2} which are stronger versions of, respectively, 
Theorem \ref{t:appendeix2}  and Proposition \ref{p:appendix2} with $e_G=1$.
We also prove the analogous statement for the
function fields; {moreover, we prove that the optimal exponents can, in some sense, be transferred
between the function field and number field cases, see Theorem \ref{thm:transfer-fam}}.
We expect that the same methods could apply to weighted orbital integrals, provided that one had a statement similar to the Theorem A.1 of Appendix A.

Let $\bF$ be a number field with the ring of integers $\ri$.
Let $G$ be a connected reductive algebraic group defined over
$\bF$, and $\fg$ its Lie algebra.
Let $F=\bF_v$ be a completion of $\bF$. {We denote the ring of integers of $F$ by $\ri_F$, the residue
field by $k_F$, and let $q_F=\# k_F$ be the cardinality of $k_F$}.
For a semisimple element $\gamma\in G(F)$ and a test function $f\in C_c^\infty(G(F))$,
the orbital integral
at $\gamma$ is denoted by $\oi_\gamma(f)$.
As in Appendix A,
$$D^G(\gamma)=
\prod_{\alpha\in \Phi\atop{ \alpha(\gamma)\neq 1}}|1-\alpha(\gamma)|_v,
$$
where $\Phi$ is the {root system} of $G$.

{We keep the set-up of \S\ref{sub:local-bound-orb-int} and \S\ref{sub:global-bound-orb-int}; in particular, 
we first treat the case of a reductive group with a given root datum defined over a local field, and then derive the global statement from it.
Thus, we start with a  reductive group $G$ defined over a local field $F$, and we assume that $G$ is unramified.
In order to get to this setting from the global set-up, where we just have to assume that 
$G={\bG}_v$ where
the place $v$ is finite, and lies outside the set
$\Ram(G)$.}

{Given an unramified reductive group $G$ over a local field $F$ as above,}  we recall the definition of the functions $\tau^G_\lambda$
from \S \ref{sub:Satake-trans}.
We have a Borel subgroup $B=TU$, and let $A$ be the maximal $F$-split torus in $T$.
As in \S \ref{sub:Satake-trans}, choose a smooth reductive
model $\underline{G}$ for $G$ corresponding to
a hyperspecial point in the apartment of $A$, and let $K={\underline G}({\ri_F})$ be a maximal compact subgroup.
For $\lambda\in X_\ast(A)$, $\tau_\lambda^G$ is the characteristic function of
the double coset $K\lambda(\varpi)K$.

We prove
\begin{thm}\label{thm:main}
Let $G$ be a connected reductive algebraic group over $\bF$, {with $\bT$ and $A_v$ as in \S \ref{sub:global-bound-orb-int}}.
There exist constants $a_G$ and $b_G$ that depend only on the
global model of $G$ such that for all {$\lambda\in X_\ast(A_v)$} with $\|\lambda\|\le \kappa$, for
all but finitely many places $v$
$$
|\oi_\gamma(\tau_\lambda^G)|\le q_v^{a_G+b_G\kappa} D^G(\gamma)^{-1/2}$$
{for all semisimple elements $\gamma\in G(\bF_v)$,}
{where $q_v$ is the cardinality of the residue field of $\bF_v$}.
\end{thm}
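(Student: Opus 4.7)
The plan is to combine Kottwitz's pointwise boundedness theorem (Theorem \ref{t:appendix1} of Appendix~\ref{s:app:Kottwitz}) with the uniform-in-residue-characteristic estimates provided by the Cluckers--Loeser motivic integration framework. The goal is to upgrade the non-effective, place-by-place boundedness of the normalized orbital integral $\gamma \mapsto |D^G(\gamma)|^{1/2} \oi_\gamma(\tau_\lambda^G)$ into an effective bound of the form $q_v^{a_G + b_G\kappa}$, uniform over almost all places $v$ of $\bF$ and with polynomial dependence on the complexity parameter $\kappa = \|\lambda\|$.

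First, I would set up the motivic formalism. By the work of Cluckers, Hales, and Loeser, orbital integrals on reductive groups over non-archimedean local fields are motivic: there is a global constructible motivic function, defined in the Denef--Pas language, whose specialization at every non-archimedean completion $\bF_v$ (away from a finite exceptional set depending only on the global model of $G$) recovers $(\gamma, \lambda) \mapsto \oi_\gamma(\tau_\lambda^G)$ on $G(\bF_v) \times X_*(A)$. Here the cocharacter $\lambda$ enters as a definable parameter with complexity bounded by $\kappa$, and the Weyl discriminant factor $|D^G(\gamma)|^{1/2}$ is handled via a doubling trick so that integer powers of the valuation suffice. Next, I would invoke the uniform boundedness principle for motivic functions developed by the authors in earlier joint work: any motivic function that is known to take bounded values pointwise at each specialization admits a uniform bound by $q_v^{a + b\kappa}$, with constants $a, b$ depending only on the motivic data and not on $v$ or $\kappa$. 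The pointwise boundedness hypothesis is exactly what Theorem \ref{t:appendix1} provides, for each fixed $v$ and $\lambda$.

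The main obstacle is to verify that all ingredients entering the orbital integral can be encoded motivically with uniform control in $v$ and $\kappa$: the quotient measure on $I_\gamma(F) \backslash G(F)$ (whose orbit may be noncompact), the choice of a definable section, the integrality of the double coset $K\lambda(\varpi) K$, and the square root $|D^G(\gamma)|^{1/2}$. Noncompactness is handled by integrating over a definable fundamental domain in the family, using general results on definable $p$-adic integration with parameters. The square root is handled either by working with the square of the quantity or by passing to the value-group component of the Denef--Pas structure, where valuations of $|D^G|$ live as integers. Finally, the finitely many exceptional places (bad reduction of the global model, small residue characteristic, or wild ramification of the splitting field of tori involved) are controlled directly by Appendix~\ref{s:app:Kottwitz} and absorbed into the constants $a_G$ and $b_G$. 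A similar mechanism yields the analogous bound over function fields and allows transfer between the two regimes, matching the statement made after the theorem.
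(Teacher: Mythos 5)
Your overall strategy is the one the appendix actually follows: encode the data in the Denef--Pas language, use Kottwitz's Theorem \ref{t:appendix1} as the boundedness input, and then apply the principle that a constructible function which is bounded for each $\lambda$ is automatically bounded by $q_F^{a+b\|\lambda\|}$ (this principle is not taken from earlier work but is proved here as Theorem \ref{thm:presburger-fam}, via cell decomposition and the analysis of Presburger constructible functions). The squaring trick for $|D^G(\gamma)|^{1/2}$ and the reduction of Theorem \ref{thm:main} to finitely many unramified root data as in Theorem \ref{thm:main2} also match the paper.

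However, there is a genuine gap at the crucial step: you assert that, by Cluckers--Hales--Loeser, $(\gamma,\lambda)\mapsto \oi_\gamma(\tau_\lambda^G)$ with the normalization $d\mu_G^{\can}/d\mu_{I_\gamma}^{\can}$ is the specialization of a constructible motivic function for \emph{all} semisimple $\gamma$. That is not available: their result covers (essentially) the regular semisimple case on the Lie algebra, and for singular $\gamma$ the connected centralizer $I_\gamma$ can be a ramified reductive group, for which it is not known that Gross's canonical measure comes from a definable differential form; moreover, forming quotient measures is not automatic in the motivic framework. The appendix circumvents exactly this point: it replaces the canonical orbit measure by the motivic measure $|d\varphi_X^\ast(\eta_X)|$ attached to the symplectic form on the orbit of a parameter $X$ with centralizer $I_\gamma$, shows that the two measures differ by the factor $c(X)/i_M$ with $c(X)$ constructible (Corollary \ref{cor:measures}), and bounds the index $i_M=[M_1:\bM(F)_x]$ by $q^{c}$ with $c$ depending only on the root datum (Lemma \ref{lem:index}, using tameness, the Moy--Prasad filtration and Steinberg's formula); one also needs the definability of the semisimple locus, of the family of centralizers via Galois cocycles, and of the family $\tau_\lambda^G$ (Lemma \ref{lem:tau}). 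So only a two-sided comparison $q^{-c}|H(\gamma,\lambda)|\le |\oi_\gamma(\tau_\lambda^G)|\le q^{c}|H(\gamma,\lambda)|$ with $H$ constructible is obtained, which suffices but must be proved; your "definable fundamental domain" and "definable section" remarks do not address this measure-theoretic obstruction. A minor further point: the finitely many exceptional places need not (and cannot easily) be absorbed into $a_G,b_G$ via Theorem \ref{t:appendix1}, since the constant $c(f)$ there depends on the test function and hence on $\lambda$; the theorem simply excludes those places.
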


In fact, we prove a stronger and more general statement, which does not require $F$ to have characteristic zero.
By an unramified root datum we mean a root datum of an unramified reductive group over a local field $F$, i.e. a quintuple $\xi=(X^\ast, \Phi, X_\ast, \Phi^\vee, \theta)$, where $\theta$
is the action of the Frobenius element of $F^\ur/F$ {on the first four components of $\xi$}.

\begin{thm}\label{thm:main2}
Consider an unramified root datum $\xi$. Then there exist constants $M>0$,  $a_\xi$ and $b_\xi$ that depend only on $\xi$,  such that for each non-Archimedean local field $F$
with residue characteristic at least $M$, the following holds. Let $G$ be a connected reductive algebraic group over
$F$ with the root datum $\xi$. {Let $A$ be a maximal $F$-split torus in $G$, and let $\tau_\lambda^G$ be as above.} Then
for all {$\lambda\in X_\ast(A)$} with $\|\lambda\|\le \kappa$,
$$
|\oi_\gamma(\tau_\lambda^G)|\le {q_F}^{a_\xi+b_\xi\kappa} D^G(\gamma)^{-1/2}$$
{for all semisimple elements $\gamma\in G(F)$.}
\end{thm}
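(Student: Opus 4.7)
The plan is to cast the inequality as a statement about motivic functions and combine a transfer principle with a polynomial boundedness result. First, I would express each of the relevant ingredients---the unramified group $G$ with its smooth reductive model, the hyperspecial subgroup $K$, the double coset $K\lambda(\varpi)K$ and its characteristic function $\tau_\lambda^G$, the semisimple element $\gamma$, its connected centralizer $I_\gamma$, and the quotient measure on $I_\gamma(F)\backslash G(F)$---as objects definable in the Denef-Pas language, parameterized uniformly by the unramified root datum $\xi$, the cocharacter $\lambda \in X_*(A)$, and the element $\gamma$. The crucial point is that all these descriptions depend only on $\xi$ and not on the specific local field $F$, so the orbital integral $O_\gamma(\tau_\lambda^G)$ becomes a motivic function in the sense of Cluckers-Loeser, and likewise for $D^G(\gamma)^{1/2}$.

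The analytic input is Theorem~\ref{t:appendix1} of Appendix~\ref{s:app:Kottwitz} (Kottwitz): for each fixed local field $F$ of characteristic zero and each $\lambda$, the normalized orbital integral $D^G(\gamma)^{1/2}|O_\gamma(\tau_\lambda^G)|$ is bounded by a constant $c(\lambda,F)$. The main step is to upgrade this pointwise qualitative bound to a quantitative bound of the shape $c(\lambda,F)\le q_F^{a_\xi + b_\xi \kappa}$, uniform in both $F$ (with sufficiently large residue characteristic) and in $\lambda$ satisfying $\|\lambda\|\le \kappa$. This requires a boundedness theorem for motivic functions: a definable family of functions that is locally bounded in each characteristic-zero fiber admits a uniform polynomial bound in $q_F$, with the degree of the polynomial controlled by the complexity of the definable parameters. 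Since $\lambda$ with $\|\lambda\|\le \kappa$ is encoded with complexity linear in $\kappa$, this produces the exponential factor $q_F^{b_\xi \kappa}$, while $a_\xi$ absorbs the contributions depending only on $\xi$.

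Once the bound is established uniformly across local fields of characteristic zero with sufficiently large residue characteristic, the transfer principle of Cluckers-Loeser (an arithmetic Ax-Kochen-Ershov theorem for motivic identities, in the form applicable to inequalities via definable witnesses) allows the same bound to extend to local fields of positive characteristic with residue characteristic at least some $M$, possibly after enlarging $M$. The constants $a_\xi, b_\xi, M$ then depend only on the root datum $\xi$, as required. The main obstacle, and the technical heart of the argument, will be formulating the precise motivic boundedness statement that converts Kottwitz's qualitative pointwise boundedness into a quantitative polynomial-in-$q_F$ bound with explicit dependence on the complexity of $\lambda$; this refines the Cluckers-Gordon-Halupczok framework on local integrability of motivic functions and is what produces the improvement $e_G = 1$ relative to Theorem~\ref{t:appendeix2}.
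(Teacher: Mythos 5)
Your overall architecture is the same as the appendix's: encode the data in the Denef--Pas language, feed Kottwitz's per-field boundedness (Theorem~\ref{t:appendix1}) into a uniform boundedness theorem for constructible functions (Theorem~\ref{thm:presburger-fam}) to obtain the $q_F^{a+b\kappa}$ bound, and pass to positive characteristic by a transfer statement (Theorem~\ref{thm:transfer-fam}). The genuine gap is your opening assertion that the quotient measure on $I_\gamma(F)\backslash G(F)$ -- and hence the canonically normalized orbital integral itself -- ``becomes a motivic function''. That is precisely what is \emph{not} known: Gross's canonical measure on $I_\gamma$ is not known to come from a definable differential form when the centralizer is ramified, and quotient measures are not automatically motivic. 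The appendix circumvents this by equipping the orbit with the motivic measure $|d\varphi_X^\ast(\eta_X)|$ attached to the Kostant--Kirillov symplectic form for a definable choice of $X\in\fg$ with centralizer $I_\gamma$, comparing it with the canonical quotient measure through a constructible factor $c(X)$ and the index $i_M=[M_1:\bM(F)_x]$ (Lemma~\ref{lem:quotient}, Corollary~\ref{cor:measures}), and bounding $i_M\le q^c$ with $c$ depending only on the root datum via tameness and the Moy--Prasad/Bruhat--Tits structure of the reductive quotient (Lemma~\ref{lem:index}). Only after this two-sided comparison, which exhibits $O_\gamma(\tau_\lambda^G)$ as a constructible function times $c(X)/i_M$ up to a factor $q^{\pm c}$, is there anything constructible to which Theorem~\ref{thm:presburger-fam} can be applied; as written, your step 1 has no justification and the rest of the argument has no object to act on. One also needs the definability of the possibly ramified centralizers themselves, which requires the Galois-cocycle parametrization of forms carried out in the appendix.

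Two smaller points. First, $D^G(\gamma)^{1/2}$ is not a constructible function (half-integral powers of $q_F$ are not allowed), so one works with the \emph{square} of the normalized orbital integral before invoking Theorem~\ref{thm:presburger-fam}; this is an easy fix but should be said. Second, the boundedness statement you describe as the ``main obstacle'' to be formulated is exactly Theorem~\ref{thm:presburger-fam}, proved in the appendix via cell decomposition, orthogonality of sorts, and Presburger rectilinearization; the linear dependence on $\kappa$ comes from $\lambda$ being a variable of the value-group sort (so the exponent of $q_F$ is piecewise linear in $\lambda$), not from a ``complexity of the formula'' count, and the definable family $(\tau^G_\lambda)_\lambda$ itself needs the (easy) Lemma~\ref{lem:tau}. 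Relatedly, the exponent $e=1$ on $D^G(\gamma)^{-1/2}$ is inherited from Kottwitz's input, not produced by the model-theoretic step. Your use of transfer to reach positive characteristic matches the intended role of Theorem~\ref{thm:transfer-fam}, since Theorem~\ref{t:appendix1} is only available in characteristic zero.
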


The strategy of the proof is to use the theory of motivic integration developed by R. Cluckers and F. Loeser, \cite{cluckers-loeser}. In \cite{cluckers-loeser}, a class of functions called \emph{constructible motivic functions} is defined.
Here, in order to simplify the language, we are working directly with the specializations of constructible motivic functions, which we define below, and we call these ``constructible functions''. %(in the sense of \cite{cluckers-loeser})
These functions are defined by means of formulas in a first-order language of logic, called Denef-Pas language, which we review below.
The key benefit of using logic is that the formulas defining the functions are independent of the field of definition, hence this set-up is perfectly
suited for proving a result that applies uniformly across almost all
completions of a given number field. This method  can be thought of as an extension of a geometric approach -- ``definable'' is a less restrictive notion than ``geometric'', yet it provides a field-independent way of talking about orbital integrals.

The key to our proof is  a general result which, roughly speaking, states that if a constructible function
is bounded {(which is known in our case thanks to Appendix A)}, then its upper bound cannot exceed a fixed power of the cardinality of the residue field (Theorem \ref{thm:presburger-fam} below).
In order to apply this result to orbital integrals, we need to show that they are, in some sense, constructible functions. More precisely, one would like to show that given a constructible
test function $f\in C_c^\infty(G(F))$,
the function  $\gamma\mapsto \oi_\gamma(f)$ is a constructible function
of $\gamma$, {on the set of all semisimple elements}.
For \emph{regular} semisimple elements, the Lie algebra version of
this  statement is essentially proved by
Cluckers, Hales and Loeser \cite{cluckers-hales-loeser}.
For  {general} elements $X$, the Lie algebra version of this statement  {with a particular normalization of the measure on the orbit} is proved
in \cite{CGH-2};   {however, the normalization of the measures used in  \cite{CGH-2} is not the same as the canonical normalization used in Appendix A above.}
%, so we are required to do some additional work}.
For non-regular semisimple elements,  {we show here that the canonical measure differs from the measure used in \cite{CGH-2} by a constant that can be bounded by a fixed power of the cardinality of the residue field,
%and using the canonical normalization of measures on the orbits},  we prove a slightly weaker statement here -- 
and consequently, obtain} that given $f$,
there exists a constructible function $H_f$ and a constant $c$ that depends only on the root datum of the group, such that
$q^{-c}|H_f(\gamma)|\le |O_\gamma(f)|\le q^c|H_f(\gamma)|$.
Taking $f$ to be the characteristic function of the maximal compact subgroup $K$ in this argument,
we obtain the special case of
Theorem \ref{thm:main2} with $\kappa=0$.  The full statement of Theorem \ref{thm:main2} is obtained by a similar argument that allows the test functions to vary in definable families.

Much of the preliminary and introductory material is quoted freely from \cite{cluckers-loeser:ax-kochen}, \cite{cluckers-hales-loeser}, \cite{CGH-1}, \cite{CGH-2}, \cite{gordon-yaffe}, sometimes without mentioning these
ubiquitous citations.

{\bf Acknowledgment.}  We gratefully acknowledge BIRS, and the organizers of the workshop on L-packets, where this appendix was conceived.
J.G. is deeply grateful to Sug-Woo Shin, Nicolas Templier, Loren Spice, Tasho Statev-Kaletha, William Casselman, and Gopal Prasad for helpful conversations. 
R.C. was supported by the European Research
Council under the European Community's Seventh Framework Programme
(FP7/2007-2013) with ERC Grant Agreement nr. 615722
MOTMELSUM and by the Labex CEMPI (ANR-11-LABX-0007-01);
 J.G. was supported by NSERC; I.H. was supported by the SFB~878 of the Deutsche
Forschungsgemeinschaft.

\subsection{Denef-Pas language}\label{sub:DP}
The Denef-Pas language is a first order language of logic designed for working
with valued fields.
We start by defining two sublanguages of the language of
Denef-Pas: the language of rings and the Presburger language.
\subsubsection{The language of rings}
A formula in the first-order language of rings is any syntactically correct formula
built out of the following symbols:
\begin{itemize}
\item constants `$0$', `$1$';
\item binary functions `$\times$', `$+$';
\item countably many symbols for variables $x_1, \dots, x_n,\dots$ running over a ring;
\item the following logical symbols: equality `$=$', parentheses `$($', `$)$', the quantifiers `$\exists$', `$\forall$', and the logical operations conjunction `$\wedge$', negation `$\neg$', disjunction `$\vee$'.
\end{itemize}
If a formula in the language of rings has $n$ free (i.e.\ unquantified) variables then it defines a subset of $R^n$ for any ring $R$.
Note that quantifier-free
formulas in the language of rings define constructible sets (in the sense of algebraic geometry).

\subsubsection{Presburger language}
A formula in Presburger's language is built out of variables running over $\Z$, the logical symbols (as above) and
symbols `$+$', `$\le$', `$0$', `$1$', and for each $d=2,3,4,\dots$, a symbol `$\equiv_d$' to denote the binary
relation $x\equiv y \pmod{d}$.
Note the absence of the symbol for multiplication.

Since multiplication is not allowed, sets defined by formulas in the Presburger language are in fact very basic, cf.~\cite{CPres} or \cite{Presburger}.
{For example, $\{(a,b)\in\Z^2 \mid a\equiv 1\mod 4; a\leq b + 10\}$ is a Presburger subset of $\Z^2$. Since quantifiers are never needed to describe Presburger sets, they all are of a similar, simple form.}

\subsubsection{Denef-Pas language}\label{DP}
The formulas in Denef-Pas language have variables of
three sorts:
the valued field sort, the residue field sort, and the value group sort
(in our setting, the value group is always assumed to be $\Z$, so we will
call this sort the $\Z$-sort). Here is the list of symbols used
to denote operations and binary relations in this language:
\begin{itemize}
\item In the valued field sort:
the language of rings.
\item In the residue field sort: the language of rings.
\item In the $\Z$-sort: the Presburger language.
\item a symbol $\ord(\cdot)$ for the valuation map from the nonzero elements of the valued field sort to the $\Z$-sort, and $\ac(\cdot)$ for the so-called angular component, which is a function from the valued field sort to the residue field sort (more about this function below).
\end{itemize}

On top of the symbols for the constants that are already present (like $0$ and $1$), we will add to the Denef-Pas language all elements of
$\ri[[t]]$ as extra symbols for constants in the valued
field sort. We denote this language by $\ldpo$.

Given a discretely valued field $F$ that is an algebra over $\ri$, together with a choice of 
 {a ring homomorphism} $\iota:\ri\to F$ and {a} choice of a uniformizer $\varpi$ of the valuation, one can interpret
the formulas in $\ldpo$ by letting
the variables range, respectively, over $F$, the residue field $k_F$
of $F$, and
$\Z$ (which is the value group of $F$).
The function symbols $\ord$ and $\ac$ are interpreted as follows.
For $x\in F^\times$, $\ord(x)$ denotes the valuation of $x$.
If $x$ is a unit (that is, $\ord(x)=0$), then $\ac(x)$ is the residue of $x$ modulo $\varpi$ (thus, an element of the residue field). For a general $x\neq 0$
define
$\ac(x)$ as $\ac(\varpi^{-\ord(x)}x)$; thus, $\ac(x)$ is the first non-zero coefficient of the $\varpi$-adic expansion of $x$. Finally we define $\ac(0)=0$.
The elements from $\ri$ are interpreted as elements of $F$ by using $\iota$, the constant symbol $t$ is interpreted as the uniformizer $\varpi$, and thus, by the completeness of $F$, elements of $\ri[[t]]$ can be naturally interpreted in $F$ as well.

\begin{defn}\label{AO}
Let $\cC_\ri$ be the collection of all triples $(F, \iota , \varpi)$, where $F$ is a non-Archimedean local
field which allows at least one ring homomorphism from $\ri$ to $F$,  the map $\iota:\ri\to F$ is such a ring homomorphism, and $\varpi$ is a uniformizer for $F$.
Let $\cA_\ri$ be the collection of those triples $(F, \iota , \varpi)$ in $\cC_\ri$ in which $F$ has characteristic zero, and let $\cB_\ri$ be the collection of those triples $(F, {\iota} , \varpi)$ where $F$ has positive characteristic.

Given an integer $M$, let $\cC_{\ri, M}$ be the collection of $(F,{\iota},\varpi)$ in $\cC_\ri$ such that the residue field of $F$ has characteristic larger than $M$, and similarly for $\cA_{\ri, M}$
and $\cB_{\ri, M}$.
\end{defn}

Since our results and proofs are {independent of} the choices of the {map}  $\iota$ and the
{uniformizer} $\varpi$, we will often just write $F\in \cC_\ri$, instead of naming the whole triple.
For any $F\in \cC_{\ri}$, write $\cO_F$ for the valuation ring of $F$, $k_F$ for its  residue field, and $q_F$ for the cardinality of $k_F$.

In summary, an $\ldpo$-formula $\varphi$ with $n$ free valued-field variables, $m$ free residue-field variables, and $r$ free $\Z$-variables defines naturally, for each $F \in \cC_\ri$, a subset of
$F^n\times \rf_F^m\times \Z^r$ by taking the set of all tuples where $\varphi$ is ``true'' (in the natural sense of first order logic,  {see e.g. \cite{marker}}).

\subsection{Definable sets and constructible functions}\label{subsub:functions}

As mentioned in the introduction, to study dependence on $p$ of various bounds we will need to have a field-independent notion of subsets of $F^n\times k_F^m\times \ZZ^r$ for $F\in \cC_\ri$. 
 {To achieve this, we call} a {collection}
$(X_F)_{F}$ of subsets $X_F\subset F^n\times \rf_F^m\times \Z^r$, where $F$ runs over $\cC_\ri$, which come from an
$\ldpo$-formula $\varphi$ as explained at the end of \S \ref{DP}, \emph{a definable set}.
Thus, for us, a ``definable set'' is actually a collection of sets, namely one for each $F\in \cC_\ri$; in earlier work on motivic integration, the term ``specialization of a definable subassignment'' was used for a similar notion. For an  {integer $r\geq 0$, $\Z^r$} will often denote the definable set $(X_F)_F$ such that $X_F= \Z^r$ for each $F$. More generally, for 
 {non-negative} integers $n,m,r$, the notation $h[n,m,r]$ will stand for the definable set $(F^n\times k_F^m\times \ZZ^r)_F$.

For definable sets $X$ and $Y$, a collection $f = (f_F)_F$ of functions $f_F:X_F\to Y_F$ for $F\in\cC_\ri$ is called a definable function and denoted by $f:X\to Y$ if  the collection of graphs of $f_F$ is a definable set.

Definable functions are {the} building blocks for constructible functions, which are defined as follows.
For a definable set $X$,
a collection $f = (f_F)_F$ of functions $f_F:X_F\to\CC$ is called \emph{a constructible function} if 
there exist integers
$N$, $N'$, and $N''$, such that $f_F$ has the form, for $x\in X_F$, for all $F\in \cC_\ri$,
$$
f_F(x)=\sum_{i=1}^N   q_F^{\alpha_{iF}(x)} \# (p_{iF}^{-1}(x) )    \big( \prod_{j=1}^{N'} \beta_{ijF}(x) \big) \big( \prod_{\ell=1}^{N''} \frac{1}{1-q_F^{a_{i\ell}}} \big),
$$
where:
\begin{itemize}
\item $a_{i\ell}$ with $i=1,\dots, N$, $\ell=1,\dots, N''$ are negative integers;
\item $\alpha_{i}:X\to \ZZ$ with $i=1, \dots N$, and $\beta_{ij}:X\to \ZZ$ with $i=1\, \dots, N$, $j=1,\dots, N'$ are
$\ZZ$-valued definable functions;
\item $Y_i$ are definable sets such that $Y_{iF}\subset k_F^{r_{i}}\times X_F$ for some $r_i\in\ZZ$, and
$p_{i}:Y_{i}\to X$ is the coordinate projection.
\end{itemize}

The motivation for such a definition of a constructible function comes from the theory of integration: namely,
{constructible functions} form a rich class of functions which is stable under integration with respect to parameters (as in Theorem \ref{thm:mot.int.} below). See \cite{cluckers-loeser:ax-kochen} and \cite{gordon-yaffe} for details.

For each $F$ in $\cC_\ri$, let us put the Haar measure on $F$ so that $\cO_F$ has measure $1$, the counting measure on $k_F$ and on $\ZZ$, and the product measure on Cartesian products. {Thus, we get a natural measure on
$h[n, m, r]$.} {Furthermore}, any analytic subvariety of $F^n$, say, everywhere of equal dimension, together with an analytic volume form, carries a natural measure associated to the volume form, cf.~\cite{Bour}.

The notion of a measure associated with a volume form carries over to the definable setting, roughly as follows.
By the piecewise analytic nature of definable sets and definable functions, any definable subset $X$ of $h[n,m, r]$ can
be broken into finitely many pieces $X_i$, such that $X_i(F)$ is a subset of
$V_i\times k_F^m\times \Z^r$ for some $F$-analytic subvariety $V_i$ of $F^n$ of the same dimension as $X_i(F)$,
for each $F$ with large residue characteristic. A definable form on $h[n,0,0]$ in the affine coordinates $x$ is just a finite sum of terms of the form $f(x)dx_{i_1}\wedge \ldots \wedge dx_{i_d}$ where $f$ is a definable function with values in $h[1,0,0]$. If the functions $f$ restrict to $F$-analytic functions on $V_i$ for each such $f$, and if the form is a $d$-form where $d$ is the dimension of $V_i$, then one can use the measure associated to this analytic volume form on $V_i$. This construction yields natural ``motivic'' measures on the definable set $X$, associated to definable differential forms,  {cf. also \cite{CGH-2}*{\S 3.5.1}}. Such a construction of measures
associated with differential forms behaves well in the setting of motivic integrals
because there exists a natural change of variables formula for motivic integrals, see
\S 15 of \cite{cluckers-loeser}. In summary, the measures that arise from definable differential forms
occur naturally in the context of motivic integration and we will call such measures
``motivic'' below. We refer to \cite{cluckers-loeser}*{\S 15} for the definition of the sheaf of definable differential forms on a definable set, and other details. {We note that any algebraic volume form on a variety}  {over $\ri_\bF$}, where $\bF$ is a global field, {is definable in this sense.}
{Note, however, that in this appendix we have to deal with volume forms on orbits of elements of a group defined over a local field, and the resulting measures are not automatically motivic.}

{Let us} recall one of the results of \cite{CGH-1}, the first part of which generalizes a result of \cite{cluckers-loeser:fourier}, and which shows
{that the class of constructible functions is a natural class  to work with for the purposes of integration.

\begin{thm}\cite{CGH-1}*{Theorem 4.3.1}\label{thm:mot.int.}
Let $f$ be a constructible function on $X\times Y$ for some definable sets $X$ and $Y$. Then there exist a constructible function $g$ on $X$ and an integer $M>0$ such that for each $F\in \cC_{\ri,M}$
and for each $x\in X_F$ one has
$$
g_F(x) = \int_{y\in Y_F}f_F(x,y),
$$
whenever the function $Y_F\to\CC:y\mapsto f_F(x,y)$ lies in $L^1(Y_F)$, where, say $Y_F\subset F^n\times k_F^m\times \ZZ^r$.
\end{thm}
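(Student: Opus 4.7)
The plan is to reduce the theorem to the parametric integration theorem in Cluckers--Loeser's theory of motivic constructible functions, and then to invoke an Ax--Kochen--Ershov style transfer principle to pass from the ``motivic'' statement, which a priori is about local fields of characteristic zero with large residue characteristic, to a statement uniformly valid on the class $\cC_{\ri,M}$, i.e. including local fields of positive characteristic.

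First I would perform several reductions. By decomposing $Y$ into finitely many definable pieces and by working component-wise, I may assume that $Y$ is (a definable subset of) a product $h[n,m,r]$ with the natural product of the additive Haar measure on $F^n$, counting measure on $k_F^m$ and counting measure on $\ZZ^r$. By Fubini (valid piecewise as long as the intermediate integrals converge), I can further split the integration variables by sort and treat one sort at a time: integration over a residue field variable, a $\ZZ$-variable, and a valued field variable.

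Next I would dispose of the two ``easy'' sorts. Summation of a $\ZZ$-indexed family of constructible functions is handled by Presburger cell decomposition: using quantifier elimination in Presburger arithmetic, one partitions the $\ZZ$-sort into finitely many Presburger cells on which the summand is a finite combination of terms of the form $q_F^{\alpha(x,y)}\beta(x,y)^j$ and then uses closed-form summation of geometric-polynomial series in $q_F^{-1}$, which remains in the constructible class (this is the point of allowing the factors $(1-q_F^{a_{i\ell}})^{-1}$ in the definition of constructible functions). Summation over a residue field variable is even simpler: it is a counting of points in a definable subset of $k_F^m\times X$, which by definition produces a constructible function via the factors $\#(p_{iF}^{-1}(x))$ in the presentation. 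In both cases the uniformity in $F$ is automatic from Presburger elimination and from the uniform behaviour of counting definable sets in $k_F$.

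The heart of the argument is the remaining case: integration in a single valued field variable. Here I would apply Denef--Pas cell decomposition to the integrand: after finitely refining the parameter space $X$ by definable pieces, one writes the valued field variable $y$ in a ``cell'' parametrized by the residue field and $\ZZ$-sorts, so that $f_F(x,y)$ becomes, on each cell, a product of a definable Jacobian-type factor $q_F^{-\ord(\mathrm{jac})}$ and a function pulled back from the residue field and $\ZZ$-sorts. Integrating out $y$ over such a cell reduces to an explicit sum in the $\ZZ$-sort plus a count in the residue field sort, which by the two previous paragraphs produces a constructible function in the remaining variables. This is precisely the content of the integration theorem of Cluckers--Loeser (\cite{cluckers-loeser}, \S 14), which provides the motivic function $g$ such that $g_F = \int f_F$ for all $F$ of sufficiently large residue characteristic in characteristic zero.

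The main obstacle, and the novelty of the statement over Cluckers--Loeser, is that $\cC_{\ri,M}$ also contains local fields of positive characteristic $p$ with $p > M$. To handle this I would invoke the Ax--Kochen--Ershov transfer principle in the Denef--Pas language: two local fields with isomorphic residue fields and value groups satisfy the same $\ldpo$-sentences, provided the residue characteristic is large compared to the complexity of the sentence. Applying this transfer to the $\ldpo$-formula encoding the identity $g_F(x) = \int_{Y_F} f_F(x,y)$ (together with the $L^1$ hypothesis, which is itself encoded by a definable condition through a definable uniform bound), one obtains a uniform $M$ such that the identity holds for every $F\in\cC_{\ri,M}$, regardless of characteristic. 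I expect the most delicate technical point to be the uniform treatment of the $L^1$ condition and of the exchange of summations in the valued-field cell integration when the residue characteristic is small; overcoming this is what forces the constant $M$ and is the reason the theorem is stated only for $\cC_{\ri,M}$ rather than for all of $\cC_\ri$.
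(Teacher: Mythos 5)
You should first note that the paper does not prove this statement at all: it is quoted verbatim from \cite{CGH-1}*{Theorem 4.3.1} (the paper's Appendix B only recalls it), so there is no in-paper argument to compare against; what can be assessed is whether your sketch would actually yield the theorem. The first part of your plan is sound and does reflect the architecture of the proofs in the literature: split $Y$ by sorts, sum out residue-field variables by point counting (the $\#(p_{iF}^{-1}(x))$ factors), sum out $\ZZ$-variables by Presburger cell decomposition and geometric-series summation (the $(1-q_F^{a_{i\ell}})^{-1}$ factors), and reduce integration in a valued-field variable to the Denef--Pas cell decomposition; in equicharacteristic zero this is essentially Cluckers--Loeser \cite{cluckers-loeser}, Theorem 10.1.1.

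The genuine gap is your final step. The identity $g_F(x)=\int_{Y_F}f_F(x,y)$, the complex values of constructible functions, and the hypothesis ``$f_F(x,\cdot)\in L^1(Y_F)$'' are not expressible as first-order sentences in $\ldpo$: the language has no symbols for measures, integrals, or the archimedean field $\CC$, so the Ax--Kochen--Er\v{s}ov principle for $\ldpo$-sentences simply does not apply to this identity, and one cannot ``encode the $L^1$ hypothesis by a definable condition'' without proof --- the definability of integrability loci is itself one of the main theorems of \cite{CGH-1}, not an input one may assume. The way positive characteristic is actually reached is different: one carries out the cell decomposition and the explicit summation \emph{at the level of formulas and Presburger data}, uniformly over all henselian valued fields of sufficiently large residue characteristic (an ultraproduct/compactness argument reduces the positive-characteristic case to equicharacteristic zero henselian fields, where Pas's theorem applies), so that the same syntactic data computes the integral in every $F\in\cC_{\ri,M}$ at once; alternatively one invokes the dedicated transfer principles for integrals and integrability of Cluckers--Loeser and \cite{CGH-1}, but these are substantial theorems built on exactly this uniform construction, not corollaries of sentence-level AKE. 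As written, your transfer step would not close the argument, and fixing it amounts to reproving the uniform-in-characteristic machinery that \cite{CGH-1} supplies.
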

Note that although the theorem is stated for the affine measure on $F^n$, it also holds for measures given by definable differential forms, by working with charts as is done in \cite{cluckers-loeser}*{\S 15}.

\begin{rem} In the literature on general motivic integration, one often uses a more abstract notion of ``definable subassignments''. Any such definable subassignment $X$ specializes to the sets $X_F$ discussed here for all $F\in \cC_{\ri,M}$ for some $M$, and any motivic integral over $X$ specializes to the corresponding integrals over $X_F$. In this paper it is sufficient and more convenient to work with the above notion of definable sets $(X_F)_F$  directly.
\end{rem}

Let us finally fix our terminology about ``families of definable sets'' and ``families of constructible functions''.
A family of definable sets $X_a$ indexed by a parameter $a\in A$
is a definable subset $X$ of $Y\times A$ for some definable sets $Y$ and $A$, equipped with the canonical projection $p_A:X\to A$, and the family members are $p_A^{-1}(a)=X_a$ for $a\in A$. Similarly, a family of constructible (respectively, definable) functions $f_a$ on the family $X_a$ is a constructible (respectively, definable)
function on $X\subset Y\times A$.
{Whenever we call a specific function $f:X_{F_0}\subset F_0^n\times k_{F_0}^m\times \ZZ^r\to\CC$ (for a specific field $F_0$) constructible, we mean that it appears naturally as $f_{F_0}$ for a constructible
function $(f_F)_F$ for which uniformity in $F$ is clear from the context as soon as the residue field characteristic is large enough; we use a similar convention for calling a specific function definable, and so on.}

\subsection{Boundedness of constructible functions}

The following two theorems are the main results of this section.

\begin{thm}\label{thm:presburger-fam}
Let $H$ be a constructible motivic function on $W \times \Z^n$,
where $W$ is a definable set. Then
there exist integers $a,b$ and $M$ such that for all $F\in \cC_{\ri, M}$ the following holds.

If there exists a (set-theoretical,  {and not necessarily uniform in $F$}) 
function $\alpha^F:\ZZ^n\to\R$ such that
$$
| H_F(w,\lambda) |_{\R} \le \alpha^F(\lambda) \mbox{ on } W_F \times \Z^n,
$$
then one actually has
$$
| H_F(w,\lambda) |_{\R} \le q_F^{a+b \|\lambda\|} \mbox{ on } W_F \times \Z^n,
$$
where $\|\lambda\| = \sum_{i=1}^n \lambda_i$, and where $|\cdot|_\R$ is the usual absolute value 
on $\R$.
\end{thm}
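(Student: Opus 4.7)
The plan rests on the explicit normal form for constructible motivic functions in the $\Z$-sort.  Via parametric Presburger cell decomposition applied to the $\Z^n$-variable, I would refine $W\times\Z^n$ into finitely many definable pieces, on each of which $H$ admits a representation
\[
H_F(w,\lambda) \;=\; \sum_{i=1}^{N} c_{F,i}(w)\, P_i(\lambda)\, q_F^{L_i(\lambda)},
\]
where each $L_i$ is a $\Z$-linear form in $\lambda$, each $P_i\in\mathbb{Q}[\lambda]$, and each $c_{F,i}$ is a constructible function on $W$ alone (the rational numerical factors $(1-q_F^{a_\ell})^{-1}$ from \S\ref{subsub:functions} are harmless for large $M$ since $|1-q_F^{a_\ell}|^{-1}\le 2$).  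Crucially, the data $N$, $\{L_i\}$, $\{P_i\}$, and the cell decomposition are field-independent, produced once and for all from the $\ldpo$-formula defining $H$ via Presburger quantifier elimination.

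With this normal form in hand, the polynomial factors $P_i(\lambda)$ are absorbed into a contribution $q_F^{b_1\|\lambda\|}$ for any $b_1\ge 1$ and $q_F\ge 2$, while the exponential factors $q_F^{L_i(\lambda)}$ contribute $q_F^{b_2\|\lambda\|}$ with $b_2$ the maximum of $\|L_i\|_\infty$ over $i$.  The remaining task is to bound each $c_{F,i}(w)$ by $q_F^{a_0}$, uniformly in $w\in W_F$ and in $F$.  This is where the boundedness hypothesis plays its decisive role.  Evaluating the normal form at $N$ carefully chosen values $\lambda^{(1)},\dots,\lambda^{(N)}$ (taken so that the matrix $(P_i(\lambda^{(j)})\,q_F^{L_i(\lambda^{(j)})})_{ij}$ is of Vandermonde type and nondegenerate for $q_F\ge M$) allows one to solve for the $c_{F,i}(w)$ as an explicit linear combination of the values $H_F(w,\lambda^{(j)})$ with coefficients bounded by a fixed power of $q_F$.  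The assumption $|H_F(w,\lambda^{(j)})|\le\alpha(\lambda^{(j)})$ then yields $|c_{F,i}(w)|\le C_F$ with $C_F$ finite for each $F$; combining this qualitative finiteness with the fact that the $c_{F,i}$ are themselves constructible (hence a priori of size at most $q_F^{\mathrm{poly}}$ in the natural parameters) upgrades the bound to $|c_{F,i}(w)|\le q_F^{a_0}$ for a constant $a_0$ depending only on the defining formula, not on $F$ or $w$.

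The principal obstacle is organising the Vandermonde inversion step uniformly in $F$: the linear forms $L_i$ are field-independent by construction, but their pairwise distinctness on a given cell (needed to invert the Vandermonde matrix with a controlled bound on the inverse) must be forced for every residue characteristic simultaneously.  For small $q_F$ one can have accidental collisions $q_F^{L_i(\lambda)}=q_F^{L_j(\lambda)}$ with $L_i\neq L_j$; taking $M$ larger than any such sporadic prime (a finite set determined by the formula) eliminates this pathology.  This is precisely the role of the constant $M$ in the statement.  Once this uniformity is settled on each of the finitely many cells, taking $a$ to be the maximum of the $a_0$'s and $b$ to be $b_1+b_2$ across all cells gives integers $a,b$ such that $|H_F(w,\lambda)|\le q_F^{a+b\|\lambda\|}$ uniformly on $W_F\times\Z^n$ for all $F\in\mathcal{C}_{\ri,M}$.
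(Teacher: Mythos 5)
There is a genuine gap, and it sits exactly at the step where all the content of the theorem lives. Your proposed normal form $H_F(w,\lambda)=\sum_i c_{F,i}(w)\,P_i(\lambda)\,q_F^{L_i(\lambda)}$ with each $c_{F,i}$ a constructible function \emph{of $w$ alone} is not available globally: Presburger cell decomposition in the $\lambda$-variables produces cells whose description mixes $w$ and $\lambda$ (conditions such as $\lambda_1\ge \ord(w)$ when $W$ carries valued-field variables), and the counting factors $\#\bigl(p_{iF}^{-1}(w,\lambda)\bigr)$ over the residue field depend jointly on $(w,\lambda)$; so $H$ does not split into a finite sum of products of a function of $w$ and a function of $\lambda$ on $W_F\times\Z^n$. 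This is precisely why the paper first \emph{eliminates the valued-field variables altogether} via Lemma~\ref{lem:presburger-fam} (Denef--Pas cell decomposition), then separates residue-field from $\Z$-variables by orthogonality of sorts, and only then works with Presburger data, using Parametric Rectilinearization (Theorem~2.1.9 of~\cite{CGH-1}) to replace your ad hoc separation of variables.

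Second, and more seriously, your concluding step is circular. After the Vandermonde inversion you obtain only the qualitative statement $|c_{F,i}(w)|\le C_F$ with $C_F$ finite for each $F$, and you then assert that constructibility of $c_{F,i}$ makes it ``a priori of size at most $q_F^{\mathrm{poly}}$,'' upgrading to $|c_{F,i}(w)|\le q_F^{a_0}$ uniformly in $F$ and $w$. But a constructible function on $W$ is \emph{not} a priori bounded by a fixed power of $q_F$ (consider $q_F^{\ord(w)}$); the implication ``bounded for each $F$ $\Rightarrow$ bounded by $q_F^{a_0}$ with $a_0$ independent of $F$'' is exactly the $n=0$ instance of Theorem~\ref{thm:presburger-fam}, i.e.\ the statement you are trying to prove. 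In the paper this is where the boundedness hypothesis actually does work: after the reduction to Presburger constructible functions, rectilinearization plus Lemma~2.1.8 of~\cite{CGH-1} control the exponents and possible cancellations among terms, which your evaluation-at-special-points device does not replace. The Vandermonde idea might have some traction for the purely Presburger situation (where your linear forms $L_i$ genuinely are field-independent), but as written the proposal both assumes an unavailable splitting of variables and defers the essential uniform-in-$F$ bound to an ``a priori'' claim that is false in general.
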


We observe that in the case with $n=0$,
the theorem yields that if a constructible function $H$ on $W$ is such that $H_F$ is bounded on $W_F$ for each $F\in \cC_{\ri, M}$, then the bound for $|H_F|_\R$  can be taken to be $q_F^a$ uniformly in $F$ with large residue characteristics, for some $a\geq 0$.

The following statement allows one to transfer bounds, which are known {for local fields of}
characteristic zero, to local fields of positive characteristic, and vice versa.

\begin{thm}\label{thm:transfer-fam}
Let $H$ be a constructible motivic function on $W \times \Z^n$,
where $W$ is a definable set, and let $a$ and $b$ be integers. Then there exists $M$ such that, for any $F\in \cC_{\ri, M}$,
whether the statement
\begin{equation}\label{transfer}
H_F(w,\lambda)\le q_F^{a+b \|\lambda\|} \mbox{ for all } (w,\lambda) \in W_F \times \Z^n
\end{equation}
holds or not, only depends on the isomorphism class of the residue field of $F$.
\end{thm}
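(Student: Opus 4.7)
The plan is to reduce the bound~\eqref{transfer} to a non-negativity statement about a single constructible motivic function and then invoke a transfer principle of Cluckers--Loeser type. Specifically, the truth of~\eqref{transfer} is equivalent to the assertion that the function
\[
G(w,\lambda) := q^{a+b\|\lambda\|} - H(w,\lambda)
\]
is non-negative on all of $W_F \times \Z^n$. Once this reformulation is in place, one appeals to the general principle that, for a constructible motivic function on a definable set, the truth of a universal non-negativity statement over a local field $F \in \cC_{\ri,M}$ of sufficiently large residue characteristic depends only on the isomorphism class of the residue field $k_F$.

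First I would verify that $G$ is constructible on $W \times \Z^n$. The only nontrivial point to check is that $(w,\lambda) \mapsto q^{a+b\|\lambda\|}$ is constructible: the exponent $\alpha(\lambda) := a + b\|\lambda\|$ is a $\Z$-valued Presburger-definable function on $\Z^n$, so $q^{\alpha(\lambda)}$ falls within the defining schema of \S\ref{subsub:functions} (with $N=1$, and with no factors of the form $\#p^{-1}$, $\beta_{ij}$, or $(1-q^a)^{-1}$). Since the class of constructible motivic functions is closed under subtraction, $G$ is constructible, and the bound~\eqref{transfer} reads $G_F(w,\lambda) \ge 0$ for all $(w,\lambda) \in W_F \times \Z^n$.

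The hard part will be the transfer step for this non-negativity statement. The Cluckers--Loeser transfer principle, in its most commonly cited form, concerns equalities of (exponential-)motivic integrals, whereas we must transfer an inequality. One route is to follow the mechanism underlying the proof of Theorem~\ref{thm:presburger-fam}, whose argument, through the $n=0$ case and the structure of constructible functions on $W \times \Z^n$, already demonstrates how sign conditions on specialized values of constructible functions are controlled solely by the residue field for $F \in \cC_{\ri,M}$ with $M$ large. An alternative is to use a definable cell decomposition to reduce $G_F \ge 0$ on each cell to a conjunction of conditions of two kinds: Presburger inequalities on $\Z$-valued definable functions, and conditions on residue-field fibres. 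Both types are preserved under passage between $\cA_{\ri,M}$ and $\cB_{\ri,M}$ with the same residue field, by the Ax--Kochen--Er\v{s}ov principle in the Denef--Pas language $\ldpo$. Either way, the truth value of the non-negativity statement for $F \in \cC_{\ri,M}$ depends only on $k_F$, with $M$ depending only on $H$, $a$, and $b$, as required.
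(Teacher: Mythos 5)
Your reduction to non-negativity of $G=q^{a+b\|\lambda\|}-H$ is harmless but does no real work: the ``general principle'' you then invoke --- that universal non-negativity of a constructible function over $F\in\cC_{\ri,M}$ depends only on the isomorphism class of $k_F$ --- is exactly the statement to be proved (apply the theorem with $a=b=0$ to $1-G$), so everything rests on your two sketched routes, and neither is carried out. Route (b) fails as stated: the condition ``$G_F\ge 0$ on all of $W_F\times\Z^n$'' is not a first-order sentence of $\ldpo$, since constructible functions involve the fibre counts $\#\,p_{iF}^{-1}(x)$ of residue-field definable sets, powers $q_F^{\alpha_i(x)}$ and factors $(1-q_F^{a_{i\ell}})^{-1}$; sign comparisons among such quantities cannot in general be rewritten as a conjunction of Presburger inequalities and ring-language conditions on $k_F$, so the Ax--Kochen--Er\v{s}ov principle (which transfers first-order sentences) does not apply to it. Route (a) points in the right direction but the attribution is inaccurate: the proof of Theorem~\ref{thm:presburger-fam} establishes a uniform polynomial-in-$q_F$ bound and nowhere shows that sign conditions on values of constructible functions are controlled by the residue field.

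What is missing is the short argument that actually closes the gap. By Lemma~\ref{lem:presburger-fam} (applied with $B=\Z^n$) one writes $H_F=G_F\circ f_F$ with $G$ constructible on $h[0,m,r]\times\Z^n$ and $G_F$ vanishing off the range of $f_F$; hence for each $\lambda$ the value set $H_F(W_F\times\{\lambda\})\cup\{0\}$ coincides with $G_F(k_F^m\times\Z^r\times\{\lambda\})\cup\{0\}$, and since $q_F^{a+b\|\lambda\|}>0$ the bound~\eqref{transfer} for $H$ holds if and only if the corresponding bound holds for $G$. For $G$ there are no valued-field variables at all, so for $F$ of sufficiently large residue characteristic all the data building $G_F$ --- the definable sets, the functions $\alpha_i,\beta_{ij}$, the fibre counts --- are interpreted entirely in the two-sorted structure $(k_F,\Z)$ obtained by restricting $\ldpo$ to the residue-field and value-group sorts; therefore the truth value of the bound is literally a function of the isomorphism class of $k_F$, with no transfer of sentences needed (which is also why the theorem is phrased in terms of isomorphism classes rather than elementary equivalence). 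If you make this observation explicit and cite Lemma~\ref{lem:presburger-fam} rather than Theorem~\ref{thm:presburger-fam}, your route (a) becomes the intended proof; as written, the key step is asserted rather than established.
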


Informally speaking, the idea of the proof is to first
eliminate all the valued-field variables, possibly at the cost of introducing more residue-field and $\ZZ$-valued variables. {This step is summarized in Lemma \ref{lem:presburger-fam} below, whose proof relies on the powerful Cell Decomposition Theorem for definable sets in Denef-Pas language.}
Once we have a constructible function that depends only on the residue-field and value-group variables,
we note that residue-field variables {can only play a very minor role} in the matters of boundedness (the so-called ``orthogonality of sorts'' in Denef-Pas language referred to below).
{Finally, the question is reduced to the study of  Presburger constructible functions
of several $\ZZ$-variables, which are similar to constructible functions as defined above in \S\ref{subsub:functions}, but without the factors $\# (p_{iF}^{-1}(x) )$, see \cite{CGH-1}.
Roughly, Presburger constructible functions in $x\in\Z^r$ are sums of products of piecewise linear functions in $x$ and of} {powers of $q_F$, where the power also depends piecewise linearly on $x$}.
{If such a function is bounded, then %each term in such a sum is bounded, 
 {it is a sum of bounded terms as above}, after removing possible redundancy in the sum. Each single term in $x$ can then easily be bounded, by} {a power of $q_F$ that depends linearly on $x$}.
{Since the number of terms is bounded, one obtains an upper bound of the right form.} {The reduction to  single terms} instead of their sum is made precise via the Parametric Rectilinearization (see Theorem 2.1.9 of \cite{CGH-1}) and Lemma 2.1.8 of \cite{CGH-1}.}
{In summary}, {the main tools used to obtain these rather strong results with}
{seeming} {ease are the Cell Decomposition Theorem and the understanding of Presburger constructible
functions. Now we proceed with the detailed proof.}

\begin{prop}\label{lem:presburger-fam}
Let $H$ be a constructible function on $W\times B$ for some definable sets $W$ and $B$. Then there exist a definable function $f : W\times B \to h[0,m,r]\times B$ for some $m\geq 0$ and $r\geq 0$, which makes a commutative diagram with both projections to $B$, and a constructible function $G$ on $h[0,m,r] \times B$ such that, for some $M$ and all $F$ in $\cC_{\ri,M}$, the function $H_F$ equals the function $G_F\circ f_F$, and such that $G_F$ vanishes outside the range of $f_F$.
\end{prop}
\begin{proof}
Let us write $W\subset h[n,a,b]$ for some integers $n$, $a$ and $b$. It is enough to prove the lemma when $n=1$ by a finite recursion argument.
We are done since the case $n=1$ follows from the Cell Decomposition Theorem, in the version of Theorem 7.2.1 from \cite{cluckers-loeser}.
\end{proof}

\begin{proof}[Proof of Theorem \ref{thm:presburger-fam}]
Let us first consider the specific case that, for each $F\in \cC_{\ri,M}$ for some $M$, the set $W_F$ is a subset of $\Z^r$ for some $r\geq0$ and that $H_F$ is of the specific form, mapping $x\in W_F \times \Z^n$ to
$$
\sum_{i=1}^N  s_{iF} \cdot  q_F^{\alpha_{iF}(x)} \big( \prod_{j=1}^{N'} \beta_{ijF}(x) \big)
$$
for some real numbers $s_{iF}$ possibly depending on $F$ but not on $x$, and some definable functions $\alpha_{i}:X\to \ZZ$ and $\beta_{ij}:X\to \ZZ$. Let us moreover assume that $W$ as well as the graphs of the $\alpha_i$ and $\beta_{ij}$ are already definable in the Presburger language (which is a sublanguage of the Denef-Pas language). Let us finally assume that there exists $a_0\geq 0$ such that
$| s_{iF} |_\R \leq q_F^{a_0}$ for each $i$ and $F$. Let us call the specific situation with all these assumptions case (1).
%For case (1), one reduces to the case that $W=\Z^\ell$ by Theorem 2.1.9 of \cite{CGH-1} applied to $X =  S \times W $ with $S = \Z^n$ and $m=r$ in the notation of that theorem. 
 {This case (1) reduces to the case that the $\alpha_i$ and $\beta_{ij}$ are 
restrictions of $\ZZ$-linear functions and that $W = \Lambda_s\times \N^\ell$ for some $\ell\geq 0$ and some finite set $\Lambda_s$ depending on $s\in \ZZ^n$ by Theorem
2.1.9 of \cite{CGH-1} applied to $X = S \times W$ with $S = \ZZ^n$ in the notation of that
theorem.}
%Now the result follows from Lemma 2.1.8 of \cite{CGH-1}.
 {If $\Lambda_s$ is a singleton, then the result follows from Lemma 2.1.8 of \cite{CGH-1}. 
For $\Lambda_s$ with at least two elements, one replaces $H_F$ by the sum of $(H_F+1)^2$ over the elements of $\Lambda_s$ and the proof is completed  by Theorem \ref{thm:mot.int.} and induction on $r$.}

 {The more general} case where $W\subset h[0,m,r]$ for some $m\geq 0$ and some $r\geq 0$ 
%one 
 {can be reduced} to case (1) by the orthogonality between the residue field sort and the value group sort.
{Concretely, the following form of orthogonality, see \cite{vdd2}, is used.}
 For any definable set $A \subset h[0,m,r]$ there exist $M>0$ and finitely many definable sets $B_i$ and $C_i$ such that
$B_i\subset h[0,m,0]$ and $C_i\subset h[0,0,r]$ for each $i$, and $A_F= \bigcup_i B_{iF}\times C_{iF}$ for each $F\in \cC_{\ri, M}$, see (3.5) and (3.7) of \cite{vdd2}.
{It is this form of orthogonality that is applied to all the Denef-Pas formulas that are used to build up $H$ (recall that constructible functions are built up from definable functions, and hence, involve finitely many formulas).}

For the general case of the theorem, let us choose $f:W\times \ZZ^n\to h[0,m,r]\times \ZZ^n$ and $G$ with the properties as in Lemma \ref{lem:presburger-fam} with $B=\ZZ^n$.
For $G$ instead of $H$ and $h[0,m,r]$ instead of $W$, we know that the theorem holds by the above discussion. But then the theorem for $H$ follows. Indeed, by Proposition \ref{lem:presburger-fam}, the set $H_F(W_F\times \{\lambda\}) \cup\{0\}$ equals (as subset of $\R$) the set
$G_F(k_F^m\times \ZZ^r \times \{\lambda\}) \cup\{0\}$ for each $\lambda\in\ZZ^n$ and each $F$ in $\cC_{\ri,M'}$ for some $M'$.
\end{proof}

\begin{proof}[Proof of Theorem \ref{thm:transfer-fam}]
If $W\subset h[0,m,r]$ for some $m\geq 0$ and some $r\geq 0$, then, for some $M$, the function
$$
H_F : {W_F\times \ZZ^n} \to \C
$$
will depend on $F$ only via the two-sorted structure on $(k_F,\Z)$ coming from restricting the Denef-Pas language $\ldpo$ to the sorts $(k_F,\Z)$ (i.e., leaving out the ring language on the valued field sort and the symbols $\ord$ and $\ac$).

Hence, for $W\subset h[0,m,r]$ the theorem follows. Now the general case follows from the case $W\subset h[0,m,r]$ by Proposition \ref{lem:presburger-fam}.
\end{proof}

\subsection{Root data and reductive groups}\label{sub:roots}

\subsubsection{Split reductive groups}\label{subsub:splitgroups}
We start out by following \cite{cluckers-hales-loeser} in the treatment of the root data and
definability of the group $G$ and its Lie algebra $\fg$.
Split reductive groups $G$ are classified by the root data
$\Psi=(X^{\ast}, \Phi, X_{\ast}, \Phi^\vee)$ consisting of the character group of
a split maximal torus $\bT$ in $G$, the set of roots, the cocharacter group, and the set of coroots.
The set of possible root data of this form  {(which we will refer to as \emph{absolute} root data)} is completely field-independent.
Given a root datum $\Psi$, the group $G(F)$ is a definable subset of $\Gl_n(F)$,
given as the image of a definable embedding $\Xi:G\hookrightarrow \Gl_n$, defined over {$\ZZ[1/R]$ for some large enough $R$} (see \S \ref{sub:global-bound-orb-int} of the main article; we note also that in \cite{cluckers-hales-loeser}, such an embedding is denoted by $\rho_D$.)

In order to show that general reductive groups are definable, we will use the fact that
every reductive group splits over the separable closure of $F$, and
the $F$-forms of a group are in one-to-one correspondence with the Galois cohomology set $H^1(F, \mathbf{Aut}(G))$
(see e.g. \cite{springer:lag2}*{\S 16.4.3}).

We start by giving a construction of finite separable field extensions in Denef-Pas language.

\subsubsection{Field extensions}\label{subsub:extensions}
Let $[\Gamma]$ be an isomorphism class of the Galois group of a finite field extension.
We can think of a representative of $[\Gamma]$ explicitly as a finite group determined by its multiplication table.
Given a non-Archimedean local field $F$, we would like  to realize all field
extensions of $F$ with Galois group in the isomorphism class $[\Gamma]$ as elements of a family of definable sets (with finitely many parameters coming from $F$).
Let $m$ be the order of $\Gamma$.
Let $\bar b = (b_0, \dots, b_{m-1}) \in F^m$. The set of tuples $\bar b$ such
that the polynomial $P_{\bar b}(x) = x^m+b_{m-1} x^{m-1} + \dots +b_0$ is irreducible and separable, is definable.
 As in \cite{cluckers-hales-loeser}*{\S 3.1}, one can identify the field extension $F_{\bar b}=F[x]/(P_{\bar b}(x))$
with $F^m$.
Further, the condition that the field extension $F_{\bar b}/F$ is Galois is definable. Indeed, it is given by the requirement that {$P_{\bar b}$ is irreducible over $F$,  the degree of $F_{\bar b}$ over $F$  equals $m$}, and
there exist $m$ distinct roots of $P_{\bar b}(x)$ in
$F_{\bar b}$. Note that the latter condition is expressible in Denef-Pas language using $\bar b$ as parameters, and an existential quantifier.
Similarly to \cite{cluckers-hales-loeser}, we treat the elements
of the Galois group $\gal(F_{\bar b}/F)$ as $m\times m$-matrices of variables ranging over $F$.
More precisely, we introduce  $m\times m$-matrices $\sigma_1, \dots, \sigma_m$  of variables ranging over $F$, and impose the condition that
$\sigma_1, \dots, \sigma_m$ are distinct automorphisms of $F_{\bar b}$ over $F$, and
there exists a bijection $\{\sigma_1,\dots, \sigma_m\}\to \Gamma$  which is a group isomorphism.
Finally, let $S_{[\Gamma]}\subset F^{m+m^3}$  be the definable set of tuples
$(\bar b, \sigma_1, \dots, \sigma_m)$ satisfying the conditions defined above.
Note that every Galois extension of $F$ with the Galois group of the isomorphism class
$[\Gamma]$ will appear as a fibre of $S_{[\Gamma]}$ over $h[m,0,0]$ several times, since $\sigma_1, \dots \sigma_m$ are not unique for each isomorphism type.

\subsubsection{General connected reductive groups}\label{subsub:groups}
Let $\Psi$ be  {an absolute}  root datum as in \S \ref{subsub:splitgroups} above, and let {$\mathbf{G}$} be the corresponding split group (so that we can think of $\mathbf{G}$ as a definable set).
The goal is to construct the sets $G(F)$ for all connected reductive algebraic groups $G$ 
 {with absolute root datum} $\Psi$ as  members in a family of definable sets $G_{zF}$, indexed by a parameter $z$ which, loosely speaking, encodes the information about the the cocycle $\gal(F^\sep/F)\to \aut(G)(F^\sep)$.
More precisely, for every parameter $s=(\bar b, \sigma_1, \dots, \sigma_m)\in S_{[\Gamma]}$ as above,
we consider the groups $G$ with the  {absolute} root datum $\Psi$ that split over the extension $F_{\bar b}$ corresponding to the parameter $\bar b$ ({if such groups exist}).
Such groups are in one-to-one correspondence with the elements of the set
{$H^1(\gal(F_{\bar b}/F),  \aut(\mathbf{G})(F_{\bar b}))$}. Following the approach of \cite{cluckers-hales-loeser}*{\S 5.1},
we work with individual cocycles rather than cohomology classes.
First, observe that the family of sets $Z^1(\gal(F_{\bar b}/F),  \aut(\mathbf{G})(F_{\bar b}))$ of such
cocycles is a family of definable sets, indexed by $s\in S_{[\Gamma]}$. This follows from the fact that $\mathbf{G}$ is definable: indeed, then the group  $\aut(\mathbf{G})(F_{\bar b})$ is definable as well,
{and we have} $\gal(F_{\bar b}/F) \simeq \{ \sigma_1, \dots \sigma_m\}$, and the cocycle condition is, clearly, definable.

\begin{defn}\label{def:Z}
 We denote by $Z_{[\Gamma]}$ the definable set
$Z^1(\Gamma, \aut(\mathbf{G})(F_{\bar b}))$ equipped with the projection to the set $S_{[\Gamma]}$.
\end{defn}
Let us now recall the construction of the group $G_z(F)$ corresponding to the cocycle $z$.
By definition, $G_z(F)$ is the set of fixed points in $G(F_{\bar b})$ under the action of
$\gal(F_{\bar b}/F) \simeq \{ \sigma_1, \dots \sigma_m\}$ given by:
$\sigma\cdot g = z(\sigma)(\sigma g)$, where $g\in {\mathbf G}(F_{\bar b})$, $\sigma \in \gal(F_{\bar b}/F)$, and the action $\sigma g$ is the standard action of the Galois group, where $\sigma$ acts on the coordinates of $g$.
Such a fixed point set is definable (with parameters from $Z_{[\Gamma]}$), since $\sigma_1, \dots, \sigma_m$ are interpreted as matrices of variables with entries in $F$, according to \S \ref{subsub:extensions}.

\subsubsection{Unramified groups}\label{subsub:unram}
In the case $G$ is unramified over $F$, i.e. when it is quasi-split and splits over an unramified extension of $F$, one can think of $G(F)$ as the fixed-point set of the action of the Frobenius element, which substantially simplifies the above construction, see \cite{cluckers-hales-loeser}*{\S 4.2} for detail.
Unramified reductive groups are determined by the root data $\xi=(\Psi, \theta)$, where $\Psi$ is an absolute root datum as in
\S\ref{subsub:splitgroups}, and $\theta$ is the action of the Frobenius automorphism on $\Psi$.

\begin{rem}
{The reason} we are including general reductive groups here even though we can, and will, assume that
$G$ is unramified over $F$, is that we have to deal with the connected centralizers of semisimple elements of
$G(F)$, and these can be quite general reductive groups.
\end{rem}

When we start with a reductive group $G$ over a global field $\bF$,
outside of the set of places {$\Ram(G)$},
the group {$G\times_{\bF}\bF_v$} over $\bF_v$ is unramified and there are finitely many possibilities for its root datum, as described in \S \ref{sub:lim-of-Plan} of the main article.
We recall the notation: the set of finite places $v$ where {$G\times_{\bF}{\bF_v}$} is unramified is partitioned into
the disjoint union of sets $\cV(\theta)$, $\theta\in \mathscr{C}(\Gamma_1)$ (see \S \ref{sub:lim-of-Plan} for the definitions).
Accordingly, for every conjugacy class $[\theta]\in \mathscr{C}(\Gamma_1)$, we have a definable
set, which we denote by $G_{[\theta]}$,  such that  $G_{[\theta]\bF_v}=G(\bF_v)$ for all $v\in \cV(\theta)$.

{We emphasize that $G_{[\theta]F}$, by construction, is a definable subset of $\Gl_n(F_{\bar b})$ for a suitable parameter $\bar b$, as in \cite{cluckers-hales-loeser}*{\S 4.1}. }

\subsection{Orbital integrals} Here we prove the main technical result -- namely, that the orbital integrals are bounded on the both sides by constructible functions.
Throughout this section, we are assuming that we are given an unramified root datum $\xi=(\Psi, \theta)$.
For every local field $F$
of sufficiently large residue characteristic, it defines an unramified reductive group $G$, and also gives rise to a definable set $G_{[\theta]F}=G(F)$, as in \S \ref{subsub:unram} above. Note that we are not assuming that $F$ has characteristic zero.

\subsubsection{Two lemmas}
We start with two easy technical remarks.
\begin{lem} Let {$\xi$ be an unramified root datum as above, $F$ -- a local field of sufficiently large residue characteristic, and $G$ -- the corresponding reductive group over $F$ defined by the root datum $\xi$.}
Then the set of semisimple elements in $G(F)$ is definable.
\end{lem}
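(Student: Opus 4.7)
The plan is to combine the definable embedding of $G$ into a general linear group with the standard linear–algebra characterization of semisimple matrices via separable annihilating polynomials. The main input is that $G(F)$ is already a definable set inside $GL_n(F_{\bar b})$ for a suitable unramified parameter $\bar b$ (as recalled in \S B.3.3--\S B.3.4), so it suffices to define, inside $GL_n(F_{\bar b})$, the locus of semisimple matrices and intersect it with $G(F)$. Recall also that since $F_{\bar b}/F$ is unramified of bounded degree $m$, quantification over $F_{\bar b}$ reduces in Denef--Pas language to quantification over $F^m$, so all formulas below are genuine $\ldpo$-formulas with parameters for $\bar b$.

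First, I would invoke the standard fact (valid in sufficiently large residue characteristic, hence for $F$ of large enough residue characteristic as specified in the appendix) that an element $g\in G(F)$ is semisimple in the sense of reductive groups if and only if $\Xi(g)\in GL_n(F_{\bar b})$ is semisimple in the classical sense, i.e.\ diagonalizable over an algebraic closure of $F_{\bar b}$. This reduces the problem to defining the set of semisimple matrices in $GL_n(F_{\bar b})$.

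Next, I would use the characterization: a matrix $A\in GL_n(F_{\bar b})$ is semisimple if and only if its minimal polynomial is separable, which is equivalent to the existence of a monic polynomial $q(x)\in F_{\bar b}[x]$ of degree $\le n$ with $q(A)=0$ and $\mathrm{disc}(q)\neq 0$. Indeed, if such a $q$ exists, the minimal polynomial $m_A$ divides $q$, hence is itself separable; conversely, if $A$ is semisimple, one can take $q=m_A$ (or multiply $m_A$ by a well–chosen separable polynomial coprime to it to raise the degree to exactly $n$, which is possible because $F_{\bar b}$ is infinite). This condition translates directly into an $\ldpo$-formula: existentially quantify coefficients $c_0,\dots,c_{n-1}\in F_{\bar b}$, impose the polynomial matrix equation $A^n+\sum_{i<n}c_i A^i=0$ (a conjunction of $n^2$ polynomial equalities in the entries of $A$ and the $c_i$), and impose $\mathrm{disc}(x^n+\sum_{i<n}c_i x^i)\neq 0$, which is a single polynomial inequality in the $c_i$.

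Finally, the set of semisimple elements of $G(F)$ is obtained by intersecting $G_{[\theta]F}\subset GL_n(F_{\bar b})$ with the definable locus constructed above; the intersection of two definable sets is definable, which completes the argument. There is no real obstacle here: the only mild care needed is to ensure that all formulas are uniform in $F\in\cC_{\ri,M}$ for $M$ large enough, which is automatic since the degree $n$ and the parameter $\bar b$ are bounded in terms of the fixed root datum $\xi$, and the equivalence between Lie–theoretic semisimplicity and matrix semisimplicity holds once the residue characteristic exceeds a constant depending only on $\xi$.
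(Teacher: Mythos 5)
Your argument is correct, but it takes a different (and in some ways lighter) route than the one in the appendix. The paper disposes of the lemma by citing the proof of Lemma~7.1.1 of Cluckers--Hales--Loeser: semisimplicity of $g\in G(F)\subset \Gl_n(F_{\bar b})$ is expressed as the existence of a degree-$n!$ extension (encoded, as in \S B on field extensions, by an existentially quantified tuple of coefficients of a separable irreducible polynomial) over which $\Xi(g)$ admits a basis of eigenvectors. You instead use the intrinsic characterization ``$g$ is semisimple iff $\Xi(g)$ is annihilated by a monic polynomial of degree $\le n$ with nonzero discriminant,'' which only requires existential quantifiers over the coefficients in $F_{\bar b}\simeq F^m$ and a quantifier-free polynomial condition (the matrix identity $q(\Xi(g))=0$ together with $\mathrm{disc}(q)\neq 0$); no auxiliary splitting field has to be parametrized. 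Both characterizations of matrix semisimplicity are valid in arbitrary characteristic (your hedge about large residue characteristic is unnecessary for the equivalence ``semisimple in $G$ iff semisimple as a matrix,'' which holds for any closed embedding of a linear algebraic group, though the hypothesis is of course still needed elsewhere to have $G(F)$ definable at all), and your padding/coprimality argument for raising the degree of the separable annihilator is fine since $F_{\bar b}$ is infinite. So the two proofs buy essentially the same thing; yours is marginally more self-contained, while the paper's has the advantage of literally reusing an existing lemma, and its eigenvector formulation is the one that generalizes directly to the related definability statements (e.g.\ for centralizers and tori) used elsewhere in that literature.
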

We will denote this definable set by $G^\sem_F$.
\begin{proof}
The proof is, in fact, contained in the proof of \cite{cluckers-hales-loeser}*{Lemma 7.1.1}.
Indeed, the lemma follows from the fact that existence of a basis of eigenvectors is a definable condition: we can write down the conditions stating that there exists a degree {$n!$} extension over which there exists a basis of eigenvectors {for an element $g\in G(F)\subset \Gl_n(F_{\bar b})$ for a suitable prameter $\bar b$}.
\end{proof}

Next, we show that the functions  $\tau_\lambda^G$ (see \S \ref{sub:Satake-trans})
forming the basis of the spherical Hecke algebra are constructible, and depend on $\lambda$ in a definable way.

\begin{lem}\label{lem:tau}
Let $G$ be an unramified reductive group with the root datum $\xi$ as above.
Then there exists $M>0$ (depending only on $\xi$) and a definable family of constructible
functions $T_\lambda$, such that for each $F$ in $\cC_{\cO,M} $ one has that
$$
\tau_\lambda^G  = T_{\lambda,F}.
$$
\end{lem}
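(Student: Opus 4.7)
The plan is to exhibit $\tau_\lambda^G$ as the characteristic function of a definable family of subsets of $G(F)$, parametrized by $\lambda\in X_*(A)\cong\mathbb{Z}^r$. Once this is achieved, $T_\lambda:=\mathbf{1}_{K\lambda(\varpi)K}$ is automatically a definable family of constructible functions (characteristic functions of definable sets are the simplest kind of constructible functions), and the equality $\tau_\lambda^G=T_{\lambda,F}$ will hold for every $F$ of sufficiently large residue characteristic.

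First I would show that the hyperspecial maximal compact subgroup $K=G(\mathcal{O}_F)$ is uniformly definable. Following \S\ref{subsub:unram}, the unramified root datum $\xi=(\Psi,\theta)$ realizes $G(F)$ as a definable subset of $\mathrm{GL}_n(F_{\bar b})$ where $F_{\bar b}$ is the splitting unramified extension. By Proposition~\ref{p:global-integral-model} (or its local variant for groups coming from a fixed root datum), there is an integral embedding $\Xi:\mathfrak{G}\hookrightarrow\mathrm{GL}_n$ defined over $\mathcal{O}_F[1/R]$ for an $R$ depending only on $\xi$; then for residue characteristic prime to $R$, the condition $g\in K$ is equivalent to $\Xi(g)$ having all matrix entries of non-negative valuation and $\det\Xi(g)$ a unit, a condition expressible in Denef--Pas language via the symbol $\operatorname{ord}$. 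The Frobenius-fixed-point description of $G(F)$ inside $G(F_{\bar b})$ ensures this is uniform in $F\in\mathcal{C}_{\mathcal{O},M}$ for $M$ large enough.

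Next I would handle the map $\lambda\mapsto\lambda(\varpi)$. Fix once and for all a $\mathbb{Z}$-basis $e_1,\dots,e_r$ of $X_*(A)$; this choice depends only on the root datum $\xi$. For $\lambda=\sum_i a_i e_i$ with $(a_1,\dots,a_r)\in\mathbb{Z}^r$, after embedding $A$ into a split torus of $\mathrm{GL}_n$ via $\Xi$ (cf.~Lemma~\ref{l:conj-image-in-diag}), the element $\lambda(\varpi)$ becomes a diagonal matrix whose $i$-th diagonal entry is a product of powers $\varpi^{c_i(a_1,\dots,a_r)}$, where $c_i$ are fixed $\mathbb{Z}$-linear forms in the $a_j$ determined by the embedding. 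The equation $x=\varpi^m$ is definable in Denef--Pas language (it is equivalent to $\operatorname{ord}(x)=m$ and $\overline{\operatorname{ac}}(x)=1$), so the assignment $\lambda\mapsto\lambda(\varpi)$ is a definable function $\mathbb{Z}^r\to G(F)$, again uniformly in $F$.

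Combining these two ingredients, the set
\[
X_\lambda:=\{g\in G(F):\exists\,k_1,k_2\in K,\ g=k_1\lambda(\varpi)k_2\}
\]
is a definable subset of $G(F)$ with parameter $\lambda\in\mathbb{Z}^r$, hence the family $\{X_\lambda\}_\lambda$ is a definable set in the sense of \S\ref{sub:DP}. Setting $T_\lambda:=\mathbf{1}_{X_\lambda}$ gives the desired definable family of constructible functions, and by construction $\tau_\lambda^G=T_{\lambda,F}$ for every $F\in\mathcal{C}_{\mathcal{O},M}$. The main technical care required is ensuring that the constant $M$ depends only on $\xi$ and not on any auxiliary choices: this comes down to tracking the finitely many primes inverted in the integral models of $\Xi$ and of the splitting extension, all of which are controlled by $\xi$ alone.
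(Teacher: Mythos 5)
Your proposal is correct and follows essentially the same route as the paper: one shows the hyperspecial subgroup $K$ is uniformly definable (the paper simply cites \cite{CGH-2} for this, where you sketch the integral-embedding argument), identifies $\lambda\in X_*(A)$ with a tuple in $\Z^r$ via a fixed basis, expresses the torus element by valuation conditions in the Denef--Pas language, and takes $T_\lambda$ to be the characteristic function of the definable double coset. The only cosmetic difference is that the paper imposes just the conditions $\ord(t_i)=\lambda_i$ (so $a$ ranges over $\lambda(\varpi)A(\cO)$) rather than pinning down $\lambda(\varpi)$ with $\ac=1$; since $A(\cO)\subset K$ both define the same coset $K\lambda(\varpi)K$.
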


\begin{proof}
For unramified groups, it is proved in \cite{CGH-2} that the hyperspecial maximal compact subgroup
$K$ is definable.
One can identify the parameter $\lambda$ with an $r$-tuple of integers $(\lambda_1, \dots, \lambda_r)$, where
$r$ is the rank of the maximal split torus in $G$. We can fix an isomorphism $\chi_A:A\to (\mathbb {G}_m)^r$ defined over $\Z$.
For $a\in A$, let $\phi_\lambda(a)$ be the formula
stating that there exists  a tuple $(t_1, \dots t_r)\in (F^\times)^r$ with $\ord(t_i)=\lambda_i$  for $i=1, \dots, r$, such that
$\chi_A(a)=(t_1, \dots, t_r)$.
Then the double coset
$K\lambda K$ is defined by the condition on $g$:
$$\exists k_1, k_2 \in K, a\in A \text{ such that } g=k_1ak_2, \phi_\lambda(a)=\text{'true'}.$$
Therefore, we can take $T_{\lambda, F}$ to be the characteristic function of this double coset.

\end{proof}

\subsubsection{The measures}
Recall the normalization of the measures used to define the orbital integrals in the main article and in Appendix A.

Let {$\gamma\in G(F)$} be a semisimple element. Then $I_\gamma$ (the connected component of the centralizer of
$\gamma$) is a connected reductive group, and has a canonical measure $d\mu_{I_\gamma}^\can$
defined by Gross \cite{Gro97}*{\S4}. The $G$-invariant measure on the orbit
$O_\gamma$ is defined as the quotient measure $\frac{d\mu_G^\can}{d\mu_{I_\gamma}^\can}$ of
the canonical measure $d\mu_G^\can$ on $G$ by the canonical measure on $I_\gamma$.
This is the measure that appears in the statement of the main theorem. However, we do not know that this measure is ``motivic''.
Namely, there are two technical difficulties. One general difficulty consists in taking quotient measures: in the context of motivic integration, it is not automatic. The second difficulty comes from the canonical measure on $I_\gamma$ itself in the case $\gamma$ is ramified. We point out that it is explained in \cite{cluckers-hales-loeser}*{\S 7.1} for split groups (and stated for unramified groups), that the canonical measure $d\mu_G^\can$
defined by Gross comes from a definable differential form, and therefore fits into the framework of motivic integration by the construction  of \cite{cluckers-loeser}*{\S 8}. The same statement for ramified
groups is still open. For now,
 we prove a series of technical lemmas that allow us to circumvent both difficulties.

Let $\bM$ be a connected reductive group over $F$ that splits over a tamely ramified extension.
Let $F_1$ be a finite Galois extension over which $\bM$ splits, and let $\Gamma=\gal(F_1/F)$.
Let $x$ be a special point in the building of $\bM$ over $F$, and let $\bM(F)_x$ be the corresponding maximal compact subgroup of {$\bM(F)$}. By definition of the canonical measure,
$\mu_{M}^\can(\bM(F)_x)=1$.
Our first difficulty is that it is not known whether $\bM(F)_x$ is definable, except in the case when the group $\bM$ is unramified over $F$.  For our current purposes, a weaker statement will be sufficient.

In \S \ref{subsub:groups} above, we have constructed $\bM(F)$ as an element of
a family of definable sets (using parameters in $Z_{[\Gamma]}$, {with $\bM$ in place  of $G$}),  by
taking the set of $\Gamma$-fixed points of $\bM(F_1)$, under the action determined by the cocycle $z$.
It follows from \cite{prasad-yu:actions} that $\bM(F)_x\subset \bM(F_1)_x\cap \bM(F)$, see
\cite{adler-debacker:mk-theory}*{Lemma 2.1.2} for the statement precisely in this form.
Let $M_1=\bM(F_1)_x\cap \bM(F)$. Then the subgroup $M_1$ is definable, since
$\bM(F_1)_x$ is definable because $\bM$ is split over $F_1$ (see \cite{CGH-2}).
\begin{defn} We denote by $i_M$ the index $[M_1:\bM(F)_x]$.
\end{defn}

The proof of the next crucial lemma was provided by Sug Woo Shin.
Note that this is the only place where we need to assume that the extension $F_1$ is tamely ramified.
We observe also that a much more precise bound (which we do not need for our present purposes) could have been obtained using the results of E. Kushnirsky, \cite{kushnirsky}.

\begin{lem}\label{lem:index}
With the notation as above, there exists a constant $c$ depending only on the root datum of $G$ such that
$$i_M=[M_1:\bM(F)_x]\le q^c$$
when $F\in \cC_{\ri}$
and $\bM$ runs over all connected centralizers of semisimple elements
of $G(F)$.
\end{lem}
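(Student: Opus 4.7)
The plan is to reduce the estimate on $i_M$ to a bound on component groups of parahoric subgroups that depend only on the root datum. First I would introduce the Iwahori-connected parahoric subgroups $\bM(F)_x^0 \subseteq \bM(F)_x$ and $\bM(F_1)_x^0 \subseteq \bM(F_1)_x$ provided by Bruhat--Tits theory. Kottwitz's description of the component groups yields a finite bound
\[
[\bM(F_1)_x : \bM(F_1)_x^0] \le \abs{\pi_0(Z(\hat{\bM})^{I_{F_1}})},
\]
and since $\bM$ splits over $F_1$, the inertia group $I_{F_1}$ acts trivially on the complex torus $Z(\hat{\bM})$, so the right-hand side depends only on the based root datum of $\bM$. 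The analogous bound holds for $[\bM(F)_x : \bM(F)_x^0]$, again in terms of the root datum of $\bM$ alone.

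Next, because $F_1/F$ is tamely ramified and $x$ is a special point in the building of $\bM$ over $F$, the tame descent theorem of Prasad--Yu supplies the crucial compatibility $\bM(F_1)_x^0 \cap \bM(F) = \bM(F)_x^0$. Combined with the chain $\bM(F)_x^0 \subseteq \bM(F)_x \subseteq M_1 \subseteq \bM(F_1)_x$, this produces an injection
\[
M_1/\bM(F)_x^0 \hookrightarrow \bM(F_1)_x / \bM(F_1)_x^0,
\]
so the left-hand side is bounded by the constant of the previous paragraph. Dividing by $[\bM(F)_x : \bM(F)_x^0]$, which is also root-datum-controlled, yields a uniform bound on $i_M$ depending only on invariants of the root datum of $\bM$.

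Finally, as $\bM$ ranges over the connected centralizers of semisimple elements of $G(F)$, its absolute root datum is a sub-root-datum of that of $G$, and up to isomorphism there are only finitely many such sub-root-data. Taking the maximum of the previous bounds over this finite list produces a constant $c_0$ depending only on the root datum of $G$, with $i_M \le c_0$; this implies $i_M \le q^c$ as soon as $c$ is chosen so that $2^c \ge c_0$, and thus for every $F \in \cC_{\ri}$.

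The main obstacle is verifying the tame descent identity $\bM(F_1)_x^0 \cap \bM(F) = \bM(F)_x^0$ in the required generality: the classical Prasad--Yu formulation assumes $x$ sits in the apartment of a maximally $F$-split maximal torus, while here $\bM$ need not be quasi-split over $F$. This is handled by the Adler--DeBacker refinement (already invoked in the text preceding the lemma), which applies to arbitrary reductive groups that split tamely, at special points; the only cost is that one may have to replace $\bM(F)_x^0$ by the pointwise stabiliser of a slightly larger facet, which changes the final estimate by another root-datum-controlled factor and does not affect the conclusion.
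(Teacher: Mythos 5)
The pivotal step in your argument is the asserted tame-descent identity $\bM(F_1)_x^0 \cap \bM(F) = \bM(F)_x^0$, and this is exactly where the proof breaks down: the inclusion $\bM(F_1)_x^0\cap\bM(F)\subseteq\bM(F)_x^0$, which your injection $M_1/\bM(F)_x^0\hookrightarrow \bM(F_1)_x/\bM(F_1)_x^0$ requires, is false in general for ramified (even tamely ramified) descent. Already for $\bM$ the norm-one torus of a tamely ramified quadratic extension $E=F_1$ of $F$ one has $\bM(F_1)_{x,0}\cap\bM(F)=\mathcal{O}_E^\times\cap E^1=E^1$, whereas the parahoric $\bM(F)_{x,0}$ is the index-two kernel of the Kottwitz homomorphism on $E^1$. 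What Prasad--Yu and Adler--DeBacker actually provide (and what the paper's proof uses) is equality for the \emph{positive-depth} Moy--Prasad groups, $\bM(F_1)_{x,0+}\cap\bM(F)=\bM(F)_{x,0+}$; at depth zero the discrepancy $[\bM(F_1)_{x,0}\cap\bM(F):\bM(F)_{x,0}]$ is essentially the index $i_M$ that the lemma is about, so your step assumes the thing to be proven. The fallback sketched in your last paragraph (passing to the fixer of a slightly larger facet) does not repair this; a $q$-independent bound would require a genuinely different input, for instance the Haines--Rapoport description of parahorics as $\mathrm{Fix}(x)\cap\ker\kappa_{\bM}$ combined with the fact that a bounded subgroup maps under the Kottwitz homomorphism into a torsion group controlled by the root datum, none of which appears in your argument.

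The paper sidesteps the difficulty by not aiming for a $q$-independent constant: it descends to $M_2=\bM(F)_{x,0+}=\bM(F_1)_{x,0+}\cap\bM(F)$ (valid by Adler--DeBacker at positive depth), identifies $M_1/M_2$ with a subgroup of the $k_{F_1}$-points of the reductive quotient $\bar{\bM}_x$ attached to $x$, and bounds that cardinality via Steinberg's formula by $q_1^{d_G+r_G(d_G+1)}$, with $[F_1:F]\le w_Gs_G$; this is precisely why the conclusion is $i_M\le q^c$ rather than an absolute bound. Your final claim that $i_M$ is bounded by a constant depending only on the root datum may well be true, but the argument you give does not establish it, and in any case the weaker $q$-power bound is all the application needs.
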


\begin{proof}
Let $M_2=\bM(F_1)_{x,0+} \cap \bM(F) = \bM(F)_{x,0+}$, where the equality holds by Remark 2.2.2 of \cite{adler-debacker:mk-theory} (note that the field is not assumed to have characteristic zero in \cite{adler-debacker:mk-theory}).
We have $M_2\subset \bM(F)_x\subset M_1$, so
$ [\bM(F)_x:M_1]\le [M_2:M_1]$.
Let $\bar{\bM}_x$ be the maximal reductive quotient of the reduction mod $\mathfrak p$ of the
$\ri_F$-group scheme  associated to the parahoric subgroup $\bM(F)_x$ by Bruhat-Tits, see
\cite{MP2}*{\S 3.2} (where the group is denoted by $G$ and the reductive quotient -- by $\bM$).
Then it follows from  \cite{MP2}*{\S 3.2} that $M_1/M_2$ can be identified with the set of $k_{F_1}$ -points of $\bar \bM_x$, where $k_{F_1}$
is the residue field of $F_1$, and thus we get $i_M\le \#\bar \bM_x(k_{F_1})$.
Since the dimension of $\bar\bM_x$ is at most the dimension of $G$, there is a bound on $\#\bar \bM_x(k_{F_1})$
given by Steinberg's formula (see \cite{Gro97}*{\S 3});
then we carry out the same estimate as done for the numerator in the equation (\ref{e:mu-EP/mu})
in the main article, to obtain
$$ \#\bar \bM_x(k_{F_1})\le q_1^{d_G} q_1^{r_G (d_G+1)},$$
where $r_G$ and $d_G$ stand for rank and dimension of $G$, respectively, and $q_1$ is the cardinality of $k_{F_1}$.
Finally, since the degree of the extension $[F_1:F]$ is bounded by a universal constant, we obtain the desired result.
\end{proof}

\subsubsection{ {A family of invariant measures on the orbits}}
Let $\gamma$ be a semisimple element of $\bG(F)$ with $I_\gamma$ -- the connected component of its centralizer, as above. Then $I_\gamma$ is the set of $F$-points of an algebraic group, which we denote by $\bM$.
Let $\Gamma$ be the Galois group of the finite field extension that splits $\bM$.  
Then $\bM(F)=I_\gamma$ arises in a family of definable sets (with parameters in $Z_{[\Gamma]}$) constructed in 
\S \ref{subsub:groups}. 
Moreover, there is an embedding of algebraic groups $\bM\hookrightarrow \bG$, defined over $F$, 
such that the image of $\bM(F)$ is $I_\gamma$; {this embedding is definable with parameters in $Z_{[\Gamma]}$, 
by an argument similar to those in \S \ref{subsub:groups}}. 
Consider an arbitrary element $X$ of the Lie algebra $\fg(F)$ with the property that 
the centralizer of $X$ is $I_\gamma$. Denote the set of such elements by $\fg_\bM$. Note that it is a definable set, since the embedding $I_\gamma=\bM(F)\hookrightarrow \bG(F)$ is definable.

Let $\langle, \rangle$ be a (definable) non-degenerate, symmetric, $\bG(F)$-invariant  
bilinear form on $\fg$ (which exists when the residue characteristic of $F$ is sufficiently large by 
\cite{adler-roche:intertwining}*{Proposition 4.1}, see \cite{CGH-2} for details).  
We can use this form to identify $\fg$ with its linear dual $\fg^\ast$. 
Then the adjoint orbit $O_X$ of $X$ can be identified with a co-adjoint orbit, and we get a 
non-degenerate symplectic form $\omega_X$ on the orbit of $X$, see \cite{kottwitz:clay}*{\S 17.3}:
this form comes from the bilinear form $\omega_X(Y, Z)=\langle X, [Y, Z]\rangle$ on  $\fg$. 
 {In \cite{CGH-2}*{Proposition 4.3} we prove that} the form $\omega_X$ is definable, and depends on  $X$ in a definable way.
It follows that the orbit of $X$ has even dimension, which we denote by $2d$, and the form 
 {$\eta$ defined by setting its value at $X$ to} $\wedge^d \omega_X$ is a non-degenerate  {definable} volume form on $O_X$. Denote the associated measure by  {$|\eta|$}. 

Let us consider the map  $\varphi_X: I_\gamma\backslash \bG(F)\to O_X$ defined by {$g\mapsto \Ad(g^{-1})X$}. 
This is an isomorphism by the assumption that {the centralizer of $X$ is $I_\gamma$ that} we made above. 
Then we can pull back the measure  {$|\eta|$} to $I_\gamma\backslash \bG(F)$ using 
the map $\varphi_X$. 
Denote the resulting measure by  {$|d\varphi_X^\ast(\eta)|$}. Since it is associated with a definable differential form, this measure is motivic (using $X$ as a parameter). 

Since the right $\bG(F)$-invariant measure on $I_\gamma\backslash \bG(F)$ is unique up to a constant multiple, 
the measure  {$|\varphi_X^\ast(\eta)|$} differs from 
$\frac{d\mu_G^\can}{d\mu_{I_\gamma}^\can}$ by a constant
( {possibly} depending on $X$). 
Our main remaining task is to understand how this constant can depend on the field $F$. 
First, we relate the normalization of the measure on the orbit to the normalization of the measure on
$I_\gamma$. 

\begin{rem}\label{rem:distr}
 In what follows, we would like to think of measures as linear functionals on the space 
$C_c^\infty(\bG(F))$. However, motivic integration techniques apply only to constructible functions. We note that this does not cause any problem, however, since one can construct a family of definable balls, such that 
the space spanned by the characteristic functions of these balls is dense in the space $C_c^\infty(\bG(F))$, and therefore constructible test functions still distinguish between continuous distributions. We refer to \cite{cluckers-cunningham-gordon-spice}*{\S 3} for details of such an argument. 
\end{rem}

\begin{lem}\label{lem:quotient}
Let $X$ be an element of $\fg(F)$ as above, and let $|d\varphi_X^\ast(\eta)|$ be the associated motivic measure 
on $O_X\simeq I_\gamma\backslash \bG(F)$. 
Then there exists {an invariant} motivic measure $d\mu_X^\mot$ on $I_\gamma$ that satisfies 
\begin{equation}\label{eq:qm}
\int_{\bG(F)} f(g) d\mu_G^\can(g) = \int_{I_\gamma\backslash \bG(F)} \int_{I_\gamma} f(hg) d\mu_X^\mot(h)
|d\varphi_X^\ast(\eta)|(g)
\end{equation}
for all constructible functions $f\in C_c^\infty(\bG(F))$. 
\end{lem}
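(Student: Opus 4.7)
The plan is to build $d\mu_X^{\mot}$ as a motivic measure by locally trivializing the principal $I_\gamma$-bundle $\pi: G(F) \to I_\gamma \backslash G(F)$ and then reading off an invariant top form on $I_\gamma$ from the ratio of $d\mu_G^{\can}$ and $|d\varphi_X^\ast(\eta_X)|$. The starting point is that the canonical measure $d\mu_G^{\can}$ arises from a bi-invariant definable top differential form $\omega_G$ on $G$; this is established in \cite{cluckers-hales-loeser}*{\S 7.1} in the split case, and the same construction goes through in the unramified case after choosing a definable embedding of the smooth reductive integral model into $\mathrm{GL}_n$ as in \S\ref{sub:notation-setup}. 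The form $\eta_X = \wedge^d \omega_X$ is visibly definable with $X$ as a parameter, and so $\varphi_X^\ast(\eta_X)$ yields a motivic measure on the definable set $I_\gamma \backslash G(F)$.

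My first step would be to partition $I_\gamma \backslash G(F)$ into finitely many definable pieces $\{U_i\}$ on each of which there is a definable section $s_i: U_i \to G(F)$ of $\pi$. Such a definable partition plus sections can be produced by the Cell Decomposition Theorem for the Denef--Pas language (cf. Theorem~7.2.1 of \cite{cluckers-loeser}), applied to the graph of $\pi$, as was used in Lemma~\ref{lem:presburger-fam} above. Each trivialization $\Phi_i: I_\gamma \times U_i \to \pi^{-1}(U_i)$, $(h,u)\mapsto h\cdot s_i(u)$, is then a definable isomorphism. I would next pull back $\omega_G$ via $\Phi_i$. Using bi-invariance of $\omega_G$ together with the fact that $\varphi_X^\ast(\eta_X)$ is right $G(F)$-invariant (by construction of the symplectic form on the coadjoint orbit), the pullback decomposes as
\[
\Phi_i^\ast(\omega_G)(h,u) \;=\; \omega_{I_\gamma}^{(i)}(h) \wedge \pi^\ast\bigl(\varphi_X^\ast(\eta_X)\bigr)(u),
\]
with $\omega_{I_\gamma}^{(i)}$ a definable top form on $I_\gamma$ which, by bi-invariance of $\omega_G$ and equivariance of $\Phi_i$, must be left-$I_\gamma(F)$-invariant.

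Third, since $I_\gamma$ is a connected reductive group and thus unimodular, the space of left-invariant top forms is one-dimensional, so $\omega_{I_\gamma}^{(i)} = c_i \cdot \omega_{I_\gamma}$ for some fixed definable left-invariant form $\omega_{I_\gamma}$ on $I_\gamma$ and some definable scalar $c_i$. Comparing two sections over an overlap $U_i \cap U_j$ shows that $s_j(u) = h_{ij}(u)\cdot s_i(u)$ for a definable $h_{ij}: U_i\cap U_j\to I_\gamma(F)$, and left-invariance of $\omega_{I_\gamma}$ forces $c_i = c_j$ on the overlap. Absorbing this common constant into $\omega_{I_\gamma}$ defines a single motivic Haar measure $d\mu_X^{\mot} := |\omega_{I_\gamma}|$ on $I_\gamma$; the desired identity \eqref{eq:qm} is then Fubini's theorem for motivic integrals (Theorem~\ref{thm:mot.int.}) applied piecewise through each $\Phi_i$, with the test functions $f\in C_c^\infty(G(F))$ approximated by constructible functions as in Remark~\ref{rem:distr}.

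The main obstacle I anticipate is the production of the definable sections $s_i$. For a general (possibly ramified) reductive group $\bM$ realized as $I_\gamma$ via the cocycle construction of \S\ref{subsub:groups}, the projection $\pi$ is not given by a simple coordinate projection, so one cannot invoke Cell Decomposition verbatim. The workaround is to apply Cell Decomposition to the ambient embedding $I_\gamma \hookrightarrow G \hookrightarrow \mathrm{GL}_n$ and use the smoothness of $\pi$ (valid once the residue characteristic exceeds some bound depending only on $\xi$) to extract a definable section; this forces the constant $M$ in Theorem~\ref{thm:main2} to be enlarged, but does not affect the statement. The dependence of $d\mu_X^{\mot}$ on $X$ is definable throughout because all ingredients $\omega_G$, $\eta_X$, $\Phi_i$, $\omega_{I_\gamma}$ are definable with $X$ as a parameter, so the resulting family $(d\mu_X^{\mot})_{X\in\fg_\bM}$ is itself a motivic family of measures, which is precisely what will be needed in the subsequent reduction to Theorem~\ref{thm:presburger-fam}.
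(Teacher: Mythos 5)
Your argument is correct in substance but takes a genuinely different route from the paper's. The paper never constructs an invariant form on $I_\gamma$: it applies the motivic pushforward of Theorem~\ref{thm:mot.int.} (Theorem 10.1.1 of \cite{cluckers-loeser}) to $\varphi_X\colon G(F)\to O_X$, giving $\int_{G(F)} f\,d\mu_G^\can=\int_{O_X}\varphi_{X!}(f)\,|d\varphi_X^\ast(\eta_X)|$, uses a \emph{single} definable local trivialization $\varphi_X^{-1}(U)\simeq U\times I_\gamma$ near the one point $X$, and then simply \emph{defines} $\int_{I_\gamma}f\,d\mu_X^\mot:=(\varphi_{X!}(\tilde f))(X)$ for product-type characteristic functions $\tilde f$; invariance of the two ambient measures together with Remark~\ref{rem:distr} then yields \eqref{eq:qm}. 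You instead divide the definable form $\omega_G$ by the orbit form over a trivializing definable partition, which buys something the paper's proof leaves implicit: an explicit invariant definable top form on $I_\gamma$ depending definably on $X$, which makes the constructibility of $X\mapsto c(X)$ in Corollary~\ref{cor:measures} quite transparent. The price is two heavier inputs. First, you need finitely many \emph{global} definable sections of $G(F)\to I_\gamma\backslash G(F)$, a nontrivial definable-choice issue in the Denef--Pas setting which you only sketch; the paper's proof shows that one local trivialization at $X$, plus invariance, already suffices, so you could lighten your argument the same way. Second, the displayed wedge decomposition with $\omega_{I_\gamma}^{(i)}$ independent of $u$ is precisely Weil's quotient formula and does not follow from left $I_\gamma$-invariance of $\Phi_i^\ast\omega_G$ alone; it is true, but you should record the short pointwise argument: the density $c(g)$ comparing $\omega_G(g)$ with (right translate of a fixed fibre form at the identity) $\wedge\,\pi^\ast\eta_X$ is invariant under left translation by $I_\gamma$ (left-invariance of $\omega_G$ and of the fibre form) and under right translation by $G$ (bi-invariance of $\omega_G$ and $G$-invariance of the Kirillov--Kostant form $\eta_X$), hence constant on $G(F)$; this also renders your overlap/gluing step automatic, since the constant is global rather than chart-by-chart.
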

\begin{proof} 
The element $X$ defines a map {$\varphi_X: \bG(F) \to O_X$} by 
$ g\mapsto \Ad(g^{-1})X$, with fibres isomorphic to $I_\gamma$ {(since this is essentially the same map as above, we denote it by the same symbol)}. 
Therefore, by \cite{cluckers-loeser}*{Theorem 10.1.1}, there is the associated pushforward map  
$\varphi_{X!}$ from constructible functions on $\bG(F)$ to constructible functions on 
$O_X$, such that the equality 
\begin{equation*}
\int_{\bG(F)} f(g) d\mu_G^\can(g) = \int_{O_X} \varphi_{X!}(f)(Y)
|d\varphi_X^\ast(\eta)|(Y)
\end{equation*}
holds for all constructible $f\in C_c^\infty(\bG(F))$ (here we are using the fact that both measures $d\mu_G^\can$ and 
$|d\varphi_X^\ast(\eta)|$ are motivic). 
We observe that the fibration $\varphi_X$ is locally trivial, and so in particular, 
the point $X\in O_X$ has  a neighbourhood $U$, 
such that $\varphi_X^{-1}(U)\simeq U\times I_\gamma$ as $p$-adic manifolds, {via a definable isomorphism}. 
Now suppose $f$ is a characteristic function of an open definable set $B$ in $I_\gamma$.
 Let $\tilde f$ be the characteristic function of $U\times B\subset \varphi_X^{-1}(U)$.
  Then we define $\int_{I_\gamma}f(h)\, d\mu_X^\mot(h)$ by the formula
 $$\int_{I_\gamma}f(h)\, d\mu_X^\mot(h) := (\varphi_{X!}(\tilde f)) (X).$$
 Note that the function $\varphi_{X!}(\tilde f)(Y)$ is constant on $U$. It follows that for the function 
 $\tilde f$ the equality (\ref{eq:qm}) holds, since $O_X$ is naturally identified with  $I_\gamma\backslash \bG(F)$
 via the map $\varphi_X$. Since the function $f$ was a characteristic function of an arbitrary definable 
 open set in $I_\gamma$, the conclusion follows by Remark \ref{rem:distr} above.
\end{proof} 

\begin{cor}\label{cor:measures}
Let $\gamma$ be any element of $\bG(F)$ such that $I_\gamma$ is isomorphic to $\bM$ as algebraic groups over $F$. 
There exists a constructible function $c(X)$ on $\fg_\bM$  such that we have the equality of measures on
$I_\gamma\backslash G{(F)}$:
$$\frac{d\mu_G^\can}{d\mu_{I_\gamma}^\can} = \frac{c(X)}{i_M} |d\varphi_X^\ast(\eta)|.$$
\end{cor}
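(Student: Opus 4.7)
The plan is to deduce the corollary from Lemma~\ref{lem:quotient} together with a careful comparison between the canonical Haar measure $d\mu_{I_\gamma}^\can$ and the motivic Haar measure $d\mu_X^\mot$ produced there. Both are $I_\gamma$-invariant Haar measures on $I_\gamma$, so by uniqueness they differ by a positive scalar $\lambda(X)$; the real task is to identify this scalar, up to the correction factor $i_M$, as a constructible function of $X$.

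First I would invoke Lemma~\ref{lem:quotient} to extract $d\mu_X^\mot$, and combine its defining integration formula with the standard decomposition $\int_{G(F)} f\,d\mu_G^\can = \int_{I_\gamma\backslash G(F)}\bigl(\int_{I_\gamma} f(hg)\,d\mu_{I_\gamma}^\can(h)\bigr)\,\frac{d\mu_G^\can}{d\mu_{I_\gamma}^\can}(g)$ defining the canonical quotient measure. Equating the two expressions on a sufficiently rich family of constructible test functions (using the density argument in Remark~\ref{rem:distr}) and writing $d\mu_X^\mot = \lambda(X)\,d\mu_{I_\gamma}^\can$ immediately yields
\[
\frac{d\mu_G^\can}{d\mu_{I_\gamma}^\can} \;=\; \lambda(X)\,|d\varphi_X^\ast(\eta_X)|.
\]

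The main obstacle is that $\lambda(X)$ has to be pinned down as a constructible function of $X$, and the only intrinsic normalization of $d\mu_{I_\gamma}^\can$ at our disposal is the condition $\mu_{I_\gamma}^\can(\bM(F)_x)=1$, whereas $\bM(F)_x$ is not known to be definable when $\bM$ is ramified. The key idea is to replace $\bM(F)_x$ with the definable subgroup $M_1 = \bM(F_1)_x \cap \bM(F)$ introduced just before Lemma~\ref{lem:index}: since $[M_1:\bM(F)_x] = i_M$, we get $\mu_{I_\gamma}^\can(M_1)=i_M$, and consequently $\lambda(X) = \mu_X^\mot(M_1)/i_M$.

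Setting $c(X) := \mu_X^\mot(M_1)$ then produces the claimed identity $\frac{d\mu_G^\can}{d\mu_{I_\gamma}^\can} = \frac{c(X)}{i_M}|d\varphi_X^\ast(\eta_X)|$. Constructibility of $c(X)$ on $\fg_\bM$ follows from Theorem~\ref{thm:mot.int.}: the characteristic function of $M_1$ is constructible on $I_\gamma$ and does not depend on $X$, while the family of motivic measures $d\mu_X^\mot$ is built from the definable symplectic form $\eta_X$ and the definable fibration $\varphi_X$, hence depends definably on the parameter $X\in\fg_\bM$. The only subtle point I expect is to verify that Lemma~\ref{lem:quotient}, whose statement is made fibrewise at a single $X$, can be promoted to a family version where $X$ ranges over $\fg_\bM$; this should be done by rerunning its proof with $X$ treated as an additional parameter and appealing to the parametric form of \cite{cluckers-loeser}*{Theorem 10.1.1}.
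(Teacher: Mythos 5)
Your proposal is correct and follows essentially the same route as the paper: both define $c(X)=\mu_X^{\mot}(M_1)$, use $[M_1:\bM(F)_x]=i_M$ together with the normalization $\mu_{I_\gamma}^{\can}(\bM(F)_x)=1$ to get $d\mu_X^{\mot}=\tfrac{c(X)}{i_M}\,d\mu_{I_\gamma}^{\can}$, and then compare the two quotient-measure decompositions from Lemma~\ref{lem:quotient} to conclude, with constructibility of $c(X)$ coming from the definability of $M_1$ and the definable dependence of $\mu_X^{\mot}$ on $X$. Your closing remark about promoting Lemma~\ref{lem:quotient} to a version with $X$ as a parameter is exactly the (implicit) uniformity the paper also relies on, so there is no substantive difference.
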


\begin{proof}
Throughout the proof, we use the notation of Lemma \ref{lem:index}. 
Let $c(X)$ be the volume of the subgroup $M_1$ 
with respect to the measure $d\mu_X^\mot$. Then the volume of $\bM(F)_x$ with respect to this measure equals 
$c(X)/i_M$. 
We observe that $c(X)$ is a constructible function of $X$, since it is the volume of a definable set with respect to a motivic measure $\mu_X^\mot$ that depends on $X$ in a definable way. 
By definition of the measure $\mu_{I_\gamma}^\can$, the volume of $\bM(F)_x$ with respect to this measure is $1$.
Then we have 
\begin{equation}\label{eq:mu_M}
d\mu_X^\mot=\frac{c(X)}{i_M} d\mu_{I_\gamma}^\can.
\end{equation}
By definition, the quotient measure $d\mu_G^\can/d\mu_{I_\gamma}^\can$ satisfies:
\begin{equation*}
\int_{\bG(F)} f(g) d\mu_G^\can(g) = \int_{\bM(F)\backslash \bG(F)} \int_{\bM(F)} f(hg) d\mu_{I_\gamma}^\can(h)
\frac{d\mu_G^\can}{d\mu_{I_\gamma}^\can}(g),
\end{equation*}
for $f\in C_c^\infty(\bG(F))$. 
On the other hand, we also have:
\begin{equation*}
\int_{\bG(F)} f(g) d\mu_G^\can(g) = \int_{I_\gamma \backslash \bG(F)} \int_{I_\gamma} f(hg) d\mu_X^\mot(h)
|d\varphi_X^\ast(\eta)|(g)
\end{equation*}
Therefore, since we know that the measure $|d\varphi_X^\ast(\eta)|$ has to be a constant multiple of 
the measure $\frac{d\mu_G^\can}{d\mu_{I_\gamma}^\can}$, the constant has to equal 
$\frac{i_M}{c(X)}$, by the equality (\ref{eq:mu_M}). 
Finally, we obtain:
$$\frac{d\mu_G^\can}{d\mu_{I_\gamma}^\can}= \frac{c(X)}{i_M} |d\varphi_X^\ast(\eta)|.$$
\end{proof}

\subsection{Proof of the main theorem}
Now we are ready to prove our main theorem.
\subsubsection{Proof of Theorem \ref{thm:main2}}
Let $\gamma\in G^\sem_F$, and let $M=I_\gamma$ be the identity component of the centralizer of $\gamma$, as  above. We assume that the residue characteristic of
$F$ is sufficiently large so that {$I_\gamma$} is automatically tamely ramified.
For a general element $\gamma$, we have several invariant measures on the orbit $O_\gamma$:
the measure $\frac{d\mu_G^\can}{d\mu_{I_\gamma}^\can}$, which is the measure used in this paper to normalize the orbital integrals, and the family of motivic measures  {$|d\varphi_X^\ast(\eta)|$}, with $X\in \fg_\bM$.

For the moment, let $f\mapsto O_\gamma^\mot(X, f)$ be the distribution on
$C_c^\infty(G(F))$ defined as the orbital integral with respect to the measure
$|d\varphi_X^\ast(\eta)|$ on the orbit of $\gamma$.

Let us break up the definable set $G^\sem$ into finitely many pieces
according to the isomorphism class of the centralizer of $\gamma$ (see Appendix A).
Fix a Galois group $\Gamma$, and suppose $\bM$ is an algebraic group that splits over an extension $F_1$
with $\gal(F_1/F)\simeq \Gamma$. Let $Z^\bM_{[\Gamma]}$ be the definable set of Definition \ref{def:Z} with $\bM$ in place of $G$. Let $z\in Z^\bM_{[\Gamma]}$ be a cocycle corresponding to $\bM$.
We observe that the set of
elements $\gamma$ such that $I_\gamma$ is isomorphic to $\bM$, is definable, using $\bar b, \sigma_1, \dots, \sigma_m$ and $z$ as parameters (we are using the notation of \S\ref{subsub:groups}).
For brevity, we denote this definable set by
$G^\sem_{{[\Gamma]}, z}$ (more precisely, we should think of it as an element in a family of definable sets indexed by
$\bar b, \sigma_1, \dots, \sigma_m, z$ as above).
Since the test functions $\tau_\lambda^G$ form a definable family of constructible functions by
Lemma \ref{lem:tau}, the main theorem on motivic integrals, \cite{cluckers-loeser}*{Theorem 10.1.1}
(briefly restated above as Theorem \ref{thm:mot.int.}), implies that
there exists a constructible function {$H_{[\Gamma],z}(X,\lambda, \gamma)$}
on {$\fg_\bM\times \Z^r\times G^\sem_{[\Gamma], z}$}, such that
$$H_{[\Gamma], z}(X, \lambda, \gamma)=O_\gamma^\mot(X, \tau_\lambda^G).$$

Therefore, by Corollary \ref{cor:measures}, we have:
\begin{equation}
O_\gamma(\tau_\lambda^G)={O_\gamma^\mot(X,\tau_\lambda^G)}\frac{c(X)}{i_M} =
H_{[\Gamma], z}(X, \lambda, \gamma) \frac{c(X)}{i_M}.
\end{equation}

By Lemma \ref{lem:index}, we have
$$O_\gamma(\tau_\lambda^{{G}})\le H_{[\Gamma], z}(X, \lambda, \gamma) c(X)
\le q^{c} O_\gamma(\tau_\lambda^{{G}}).$$

{We observe that $D^G(\gamma)$ is a constructible function of  $\gamma$. In order not to deal with its square root, which would require a slight modification to our definition of a constructible function, let us for a moment consider the square of the normalized orbital integral.
By the Theorem A.1, the function
$$O_\gamma(\tau_\lambda^G)^2 D^G(\gamma)$$ is bounded for every $\lambda$. Therefore, the constructible function
$H_{[\Gamma], z}(X, \lambda, \gamma)^2c(X)^2 D^G(\gamma)$} is bounded for every $\lambda$, and now
our Theorem \ref{thm:main2} follows from Theorem \ref{thm:presburger-fam}.

\subsubsection{Proof of Theorem \ref{thm:main}}
As discussed in \S \ref{sub:lim-of-Plan} of the main article, the set of all unramified finite places is partitioned into finitely many families according to the root datum of the group {$G\times_{\bF}{\bF_v}$}. Applying Theorem \ref{thm:main2} to all these families and taking the maximum of the $a_G$ and $b_G$ values, we obtain Theorem \ref{thm:main}.

\begin{rem} Though our method sheds no light on the optimal values of $a_G$ and $b_G$, Theorem \ref{thm:transfer-fam}
allows to transfer these values between positive characteristic and characteristic zero: namely, if, for example, some values $a_G$ and
$b_G$ were obtained in the function fields case by geometric methods, Theorem \ref{thm:transfer-fam} would immediately imply that the same values work for characteristic zero fields of sufficiently large residue characteristic. We also note
that for good orbital integrals, it should be possible to get a bound on $a_G$ in terms of the dimension of $G$, using
\cite{cunningham-hales:good}.
\end{rem}

\def\cprime{$'$} \def\cprime{$'$} \def\cprime{$'$} \def\cprime{$'$}
  \def\cprime{$'$} \def\cprime{$'$}
% \bib, bibdiv, biblist are defined by the amsrefs package.
\begin{bibdiv}
\begin{biblist}

\bib{adler-debacker:mk-theory}{article}{
      author={Adler, Jeffrey~D.},
      author={DeBacker, Stephen},
       title={Murnaghan--{K}irillov theory for supercuspidal representations of
  tame general linear groups, with appendices by {R}eid {H}untsinger and
  {G}opal {P}rasad},
        date={2004},
        ISSN={0075-4102},
     journal={J. Reine Angew. Math.},
      volume={575},
       pages={1\ndash 35},
      review={MR2097545 (2005j:22008)},
}

\bib{adler-roche:intertwining}{article}{
      author={Adler, Jeffrey~D.},
      author={Roche, Alan},
       title={An intertwining result for $p$-adic groups},
        date={2000},
        ISSN={0008-414X},
     journal={Canad. J. Math.},
      volume={52},
      number={3},
       pages={449\ndash 467},
      review={MR1758228 (2001m:22032)},
}

\bib{JZ08}{article}{
      author={An, Jinpeng},
      author={Wang, Zhengdong},
       title={Nonabelian cohomology with coefficients in {L}ie groups},
        date={2008},
        ISSN={0002-9947},
     journal={Trans. Amer. Math. Soc.},
      volume={360},
      number={6},
       pages={3019\ndash 3040},
         url={http://dx.doi.org/10.1090/S0002-9947-08-04278-5},
}

\bib{Art89}{article}{
      author={Arthur, James},
       title={The {$L^2$}-{L}efschetz numbers of {H}ecke operators},
        date={1989},
        ISSN={0020-9910},
     journal={Invent. Math.},
      volume={97},
      number={2},
       pages={257\ndash 290},
         url={http://dx.doi.org/10.1007/BF01389042},
}

\bib{Arthur}{book}{
      author={Arthur, James},
       title={The endoscopic classification of representations: orthogonal and
  symplectic groups},
      series={American Mathematical Society Colloquium Publications},
   publisher={American Mathematical Society, Providence, RI},
        date={2013},
      volume={61},
        ISBN={978-0-8218-4990-3},
}

\bib{SGA3-7}{book}{
      author={Artin, M.},
      author={Bertin, J.~E.},
      author={Demazure, M.},
      author={Gabriel, P.},
      author={Grothendieck, A.},
      author={Raynaud, M.},
      author={Serre, J.-P.},
       title={Sch\'emas en groupes.},
      series={S\'eminaire de G\'eom\'etrie Alg\'ebrique de l'Institut des
  Hautes \'Etudes Scientifiques},
     address={Paris},
        date={1965/1966},
      volume={7: {E}xpos\'es 23 \`a 26},
}

\bib{BLGG11}{article}{
      author={Barnet-Lamb, T.},
      author={Geraghty, D.},
      author={Gee, T.},
       title={The {S}ato-{T}ate conjecture for {H}ilbert modular forms},
        date={2011},
     journal={J. Amer. Math. Soc.},
      volume={24},
       pages={411\ndash 469},
}

\bib{BLGHT11}{article}{
      author={Barnet-Lamb, T.},
      author={Geraghty, D.},
      author={Harris, M.},
      author={Taylor, R.},
       title={A family of {C}alabi-{Y}au varieties and potential automorphy
  {II}},
        date={2011},
     journal={P.R.I.M.S.},
      volume={98},
       pages={29\ndash 98},
}

\bib{Bernstein:typeI}{article}{
      author={Bern{\v{s}}te{\u\i}n, I.~N.},
       title={All reductive {${\germ p}$}-adic groups are of type {I}},
        date={1974},
        ISSN={0374-1990},
     journal={Funkcional. Anal. i Prilo\v zen.},
      volume={8},
      number={2},
       pages={3\ndash 6},
}

\bib{Bor79}{incollection}{
      author={Borel, A.},
       title={Automorphic {$L$}-functions},
        date={1979},
   booktitle={{P}roc. {S}ympos. {P}ure {M}ath., {O}regon {S}tate {U}niv.,
  {C}orvallis, {O}re., {P}art 2},
      series={Proc. Sympos. Pure Math., XXXIII},
   publisher={Amer. Math. Soc.},
     address={Providence, R.I.},
       pages={27\ndash 61},
}

\bib{BJ79}{incollection}{
      author={Borel, A.},
      author={Jacquet, H.},
       title={Automorphic forms and automorphic representations},
        date={1979},
   booktitle={{P}roc. {S}ympos. {P}ure {M}ath., {O}regon {S}tate {U}niv.,
  {C}orvallis, {O}re., {P}art 1},
      series={Proc. Sympos. Pure Math., XXXIII},
   publisher={Amer. Math. Soc.},
     address={Providence, R.I.},
       pages={189\ndash 207},
        note={With a supplement ``On the notion of an automorphic
  representation'' by R. P. Langlands},
}

\bib{BW00}{book}{
      author={Borel, A.},
      author={Wallach, N.},
       title={Continuous cohomology, discrete subgroups, and representations of
  reductive groups},
     edition={2},
      series={Math. Surveys and Monographs},
   publisher={AMS},
        date={2000},
      number={67},
}

\bib{BLR90}{book}{
      author={Bosch, Siegfried},
      author={L{\"u}tkebohmert, Werner},
      author={Raynaud, Michel},
       title={N\'eron models},
      series={Ergebnisse der Mathematik und ihrer Grenzgebiete (3)},
   publisher={Springer-Verlag},
     address={Berlin},
        date={1990},
      volume={21},
        ISBN={3-540-50587-3},
}

\bib{Bour}{book}{
      author={Bourbaki, N.},
       title={Vari\'et\'es diff\'erentielles et analytiques. {F}ascicule de
  r\'esultats},
   publisher={Hermann},
        date={1967},
}

\bib{Brauer47}{article}{
      author={Brauer, Richard},
       title={On the zeta-functions of algebraic number fields},
        date={1947},
        ISSN={0002-9327},
     journal={Amer. J. Math.},
      volume={69},
       pages={243\ndash 250},
}

\bib{BT72}{article}{
      author={Bruhat, F.},
      author={Tits, J.},
       title={Groupes r\'eductifs sur un corps local},
        date={1972},
        ISSN={0073-8301},
     journal={Inst. Hautes \'Etudes Sci. Publ. Math.},
      number={41},
       pages={5\ndash 251},
}

\bib{BT84}{article}{
      author={Bruhat, F.},
      author={Tits, J.},
       title={Groupes r\'eductifs sur un corps local. {II}. {S}ch\'emas en
  groupes. {E}xistence d'une donn\'ee radicielle valu\'ee},
        date={1984},
        ISSN={0073-8301},
     journal={Inst. Hautes \'Etudes Sci. Publ. Math.},
      number={60},
       pages={197\ndash 376},
}

\bib{Car79}{incollection}{
      author={Cartier, P.},
       title={Representations of {$p$}-adic groups: a survey},
        date={1979},
   booktitle={({P}roc. {S}ympos. {P}ure {M}ath., {O}regon {S}tate {U}niv.,
  {C}orvallis, {O}re., {P}art 1},
      series={Proc. Sympos. Pure Math., XXXIII},
   publisher={Amer. Math. Soc.},
     address={Providence, R.I.},
       pages={111\ndash 155},
}

\bib{CC09}{article}{
      author={Chenevier, G.},
      author={Clozel, L.},
       title={Corps de nombres peu ramifi\'es et formes automorphes
  autoduales},
        date={2009},
        ISSN={0894-0347},
     journal={J. Amer. Math. Soc.},
      volume={22},
      number={2},
       pages={467\ndash 519},
         url={http://dx.doi.org/10.1090/S0894-0347-08-00617-6},
}

\bib{Che55}{article}{
      author={Chevalley, C.},
       title={Sur certains groupes simples},
        date={1955},
        ISSN={0040-8735},
     journal={T\^ohoku Math. J. (2)},
      volume={7},
       pages={14\ndash 66},
}

\bib{CD90}{article}{
      author={Clozel, L.},
      author={Delorme, P.},
       title={Le th\'{e}or\`{e}me de {P}aley-{W}iener invariant pour les
  groupes de lie r\'{e}ductifs. {II}},
        date={1990},
     journal={Ann. Sci. \'{E}cole Norm. Sup.},
      volume={23},
       pages={193\ndash 228},
}

\bib{Clo86}{article}{
      author={Clozel, Laurent},
       title={On limit multiplicities of discrete series representations in
  spaces of automorphic forms},
        date={1986},
        ISSN={0020-9910},
     journal={Invent. Math.},
      volume={83},
      number={2},
       pages={265\ndash 284},
         url={http://dx.doi.org/10.1007/BF01388963},
}

\bib{CPres}{article}{
      author={Cluckers, Raf},
       title={Presburger sets and $p$-minimal fields},
        date={2003},
     journal={J. of Symbolic Logic},
      volume={68},
       pages={153\ndash 162},
}

\bib{cluckers-cunningham-gordon-spice}{article}{
      author={Cluckers, Raf},
      author={Cunningham, Clifton},
      author={Gordon, Julia},
      author={Spice, Loren},
       title={On the computability of some positive-depth characters near the
  identity},
        date={2011},
     journal={Represent. Theory},
      volume={15},
       pages={531\ndash 567},
}

\bib{CGH-1}{article}{
      author={Cluckers, Raf},
      author={Gordon, Julia},
      author={Halupczok, Immanuel},
       title={Integrability of oscillatory functions on local fields: transfer
  principles},
        date={2014},
     journal={Duke Math. J.},
      volume={163},
      number={8},
       pages={1549\ndash 1600},
}

\bib{CGH-2}{article}{
      author={Cluckers, Raf},
      author={Gordon, Julia},
      author={Halupczok, Immanuel},
       title={Local integrability results in harmonic analysis on reductive
  groups in large positive characteristic},
        date={2014},
     journal={Ann. Sci. Ecole Norm. Sup.},
      volume={47},
      number={6},
    preprint={arXiv:1111.7057},
        note={to appear},
}

\bib{cluckers-hales-loeser}{article}{
      author={Cluckers, Raf},
      author={Hales, Thomas},
      author={Loeser, Fran{\c{c}}ois},
       title={Transfer principle for the {F}undamental {L}emma},
        date={2011},
     journal={On the Stabilization of the Trace Formula, edited by L. Clozel,
  M. Harris, J.-P. Labesse, B.-C. Ng\^o},
}

\bib{cluckers-loeser:ax-kochen}{article}{
      author={Cluckers, Raf},
      author={Loeser, Fran\c{c}ois},
       title={Ax-{K}ochen-{E}r\v sov theorems for $p$-adic integrals and
  motivic integration},
        date={2005},
       pages={109\ndash 137},
}

\bib{cluckers-loeser}{article}{
      author={Cluckers, Raf},
      author={Loeser, Fran{\c{c}}ois},
       title={Constructible motivic functions and motivic integration},
        date={2008},
     journal={Invent. Math.},
      volume={173},
      number={1},
       pages={23\ndash 121},
}

\bib{cluckers-loeser:fourier}{article}{
      author={Cluckers, Raf},
      author={Loeser, Fran{\c{c}}ois},
       title={Constructible exponential functions, motivic {F}ourier transform
  and transfer principle},
        date={2010},
     journal={Annals of Mathematics},
      volume={171},
      number={2},
       pages={1011\ndash 1065},
}

\bib{Cogdell2004}{article}{
      author={Cogdell, J.},
      author={Michel, Ph.},
       title={On the complex moments of symmetric power {$L$}-functions at
  {$s=1$}},
        date={2004},
        ISSN={1073-7928},
     journal={Int. Math. Res. Not.},
      number={31},
       pages={1561\ndash 1617},
         url={http://dx.doi.org/10.1155/S1073792804132455},
}

\bib{cong:Lfunc04:cogd}{incollection}{
      author={Cogdell, James~W.},
       title={Lectures on {$L$}-functions, converse theorems, and functoriality
  for {${\rm GL}\sb n$}},
        date={2004},
   booktitle={Lectures on automorphic {$L$}-functions},
      series={Fields Inst. Monogr.},
      volume={20},
   publisher={Amer. Math. Soc.},
     address={Providence, RI},
       pages={1\ndash 96},
}

\bib{Conrad-reductive}{article}{
      author={Conrad, B.},
       title={Reductive group schemes},
     journal={Preprint.},
}

\bib{CGP10}{book}{
      author={Conrad, Brian},
      author={Gabber, Ofer},
      author={Prasad, Gopal},
       title={Pseudo-reductive groups},
      series={New Mathematical Monographs},
   publisher={Cambridge University Press},
     address={Cambridge},
        date={2010},
      volume={17},
        ISBN={978-0-521-19560-7},
}

\bib{CDF97}{article}{
      author={Conrey, J.~B.},
      author={Duke, W.},
      author={Farmer, D.~W.},
       title={The distribution of the eigenvalues of {H}ecke operators},
        date={1997},
        ISSN={0065-1036},
     journal={Acta Arith.},
      volume={78},
      number={4},
       pages={405\ndash 409},
}

\bib{cunningham-hales:good}{article}{
      author={Cunningham, Clifton},
      author={Hales, Thomas~C.},
       title={Good orbital integrals},
        date={2004},
     journal={Represent. Theory},
      volume={8},
       pages={414\ndash 457 (electronic)},
}

\bib{CR62}{book}{
      author={Curtis, Charles~W.},
      author={Reiner, Irving},
       title={Representation theory of finite groups and associative algebras},
      series={Pure and Applied Mathematics, Vol. XI},
   publisher={Interscience Publishers, a division of John Wiley \& Sons, New
  York-London},
        date={1962},
}

\bib{DFK:vanishingunitary}{article}{
      author={David, Chantal},
      author={Fearnley, Jack},
      author={Kisilevsky, Hershy},
       title={On the vanishing of twisted {$L$}-functions of elliptic curves},
        date={2004},
        ISSN={1058-6458},
     journal={Experiment. Math.},
      volume={13},
      number={2},
       pages={185\ndash 198},
         url={http://projecteuclid.org/getRecord?id=euclid.em/1090350933},
}

\bib{dGW78}{article}{
      author={de~George, David~L.},
      author={Wallach, Nolan~R.},
       title={Limit formulas for multiplicities in {$L^{2}(\Gamma \backslash
  G)$}},
        date={1978},
        ISSN={0003-486X},
     journal={Ann. of Math. (2)},
      volume={107},
      number={1},
       pages={133\ndash 150},
}

\bib{Duen:RMT}{article}{
      author={Due{\~n}ez, Eduardo},
       title={Random matrix ensembles associated to compact symmetric spaces},
        date={2004},
        ISSN={0010-3616},
     journal={Comm. Math. Phys.},
      volume={244},
      number={1},
       pages={29\ndash 61},
         url={http://dx.doi.org/10.1007/s00220-003-0994-2},
}

\bib{DM06}{article}{
      author={Due{\~n}ez, Eduardo},
      author={Miller, Steven~J.},
       title={The low-lying zeros of a {$\rm GL(4)$} and a {$\rm GL(6)$} family
  of {$L$}-functions},
        date={2006},
        ISSN={0010-437X},
     journal={Compos. Math.},
      volume={142},
      number={6},
       pages={1403\ndash 1425},
         url={http://dx.doi.org/10.1112/S0010437X0600220X},
}

\bib{FH91}{book}{
      author={Fulton, William},
      author={Harris, Joe},
       title={Representation theory},
      series={Graduate Texts in Mathematics},
   publisher={Springer-Verlag},
     address={New York},
        date={1991},
      volume={129},
        ISBN={0-387-97527-6; 0-387-97495-4},
        note={A first course, Readings in Mathematics},
}

\bib{GK75}{incollection}{
      author={Gelfand, I.~M.},
      author={Kazhdan, D.~A.},
       title={Representations of the group {${\rm GL}(n,K)$} where {$K$} is a
  local field},
        date={1975},
   booktitle={Lie groups and their representations ({P}roc. {S}ummer {S}chool,
  {B}olyai {J}\'anos {M}ath. {S}oc., {B}udapest, 1971)},
   publisher={Halsted, New York},
       pages={95\ndash 118},
}

\bib{gordon-yaffe}{article}{
      author={Gordon, Julia},
      author={Yaffe, Yoav},
       title={An overview of arithmetic motivic integration},
        date={2009},
     journal={Ottawa lectures on Admissible Representations of reductive
  $p$-adic groups, {C}. {C}unningham and {M}. {N}evins, Eds.},
      volume={26},
}

\bib{GKM97}{article}{
      author={Goresky, M.},
      author={Kottwitz, R.},
      author={MacPherson, R.},
       title={{D}iscrete series characters and the {L}efschetz formula for
  {H}ecke operators},
        date={1997},
     journal={Duke Math.},
      volume={89},
       pages={477\ndash 554},
}

\bib{GKM04}{article}{
      author={Goresky, Mark},
      author={Kottwitz, Robert},
      author={Macpherson, Robert},
       title={Homology of affine {S}pringer fibers in the unramified case},
        date={2004},
        ISSN={0012-7094},
     journal={Duke Math. J.},
      volume={121},
      number={3},
       pages={509\ndash 561},
         url={http://dx.doi.org/10.1215/S0012-7094-04-12135-9},
}

\bib{Gro97}{article}{
      author={Gross, B.},
       title={On the motive of a reductive group},
        date={1997},
     journal={Invent. Math.},
      volume={130},
       pages={287\ndash 313},
}

\bib{Gro98}{incollection}{
      author={Gross, B.},
       title={On the {S}atake isomorphism},
        date={1998},
   booktitle={Galois representations in arithmetic algebraic geometry
  ({D}urham, 1996)},
      series={London Math. Soc. Lecture Note Ser.},
      volume={254},
   publisher={Cambridge Univ. Press},
     address={Cambridge},
       pages={223\ndash 237},
         url={http://dx.doi.org/10.1017/CBO9780511662010.006},
}

\bib{Guloglu:lowlying}{article}{
      author={G{\"u}lo{\u{g}}lu, Ahmet~Muhtar},
       title={Low-lying zeroes of symmetric power {$L$}-functions},
        date={2005},
        ISSN={1073-7928},
     journal={Int. Math. Res. Not.},
      number={9},
       pages={517\ndash 550},
         url={http://dx.doi.org/10.1155/IMRN.2005.517},
}

\bib{HM07}{article}{
      author={Hughes, C.~P.},
      author={Miller, Steven~J.},
       title={Low-lying zeros of {$L$}-functions with orthogonal symmetry},
        date={2007},
        ISSN={0012-7094},
     journal={Duke Math. J.},
      volume={136},
      number={1},
       pages={115\ndash 172},
         url={http://dx.doi.org/10.1215/S0012-7094-07-13614-7},
}

\bib{book:Iw}{book}{
      author={Iwaniec, Henryk},
       title={Spectral methods of automorphic forms},
     edition={Second},
      series={Graduate Studies in Mathematics},
   publisher={American Mathematical Society},
     address={Providence, RI},
        date={2002},
      volume={53},
        ISBN={0-8218-3160-7},
}

\bib{book:IK04}{book}{
      author={Iwaniec, Henryk},
      author={Kowalski, Emmanuel},
       title={Analytic number theory},
      series={American Mathematical Society Colloquium Publications},
   publisher={American Mathematical Society},
     address={Providence, RI},
        date={2004},
      volume={53},
        ISBN={0-8218-3633-1},
}

\bib{ILS00}{article}{
      author={Iwaniec, Henryk},
      author={Luo, Wenzhi},
      author={Sarnak, Peter},
       title={Low-lying zeros of families of {$L$}-functions},
        ISSN={0073-8301},
     journal={Inst. Hautes \'Etudes Sci. Publ. Math.},
      number={91},
       pages={55\ndash 131 (2001)},
}

\bib{JSI-II}{article}{
      author={Jacquet, H.},
      author={Shalika, J.~A.},
       title={On {E}uler products and the classification of automorphic forms.
  {I} and {II}},
     journal={Amer. J. Math.},
      volume={103},
      number={3--4},
       pages={499\ndash 558 and 777\ndash 815},
}

\bib{Jacq79:principal}{incollection}{
      author={Jacquet, Herv{\'e}},
       title={Principal {$L$}-functions of the linear group},
        date={1979},
   booktitle={{P}ure {M}ath., {O}regon {S}tate {U}niv., {C}orvallis, {O}re.,
  {P}art 2},
      series={Proc. Sympos. Pure Math., XXXIII},
   publisher={Amer. Math. Soc.},
     address={Providence, R.I.},
       pages={63\ndash 86},
}

\bib{JKZ}{article}{
      author={Jouve, F.},
      author={Kowalski, E.},
      author={Zywina, D.},
       title={Splitting fields of characteristic polynomials of random elements
  in arithmetic groups},
        date={2011},
     journal={to appear in Israel J. Math.},
}

\bib{Kat82}{article}{
      author={Kato, Shin-Ichi},
       title={Spherical functions and a {$q$}-analogue of {K}ostant's weight
  multiplicity formula},
        date={1982},
        ISSN={0020-9910},
     journal={Invent. Math.},
      volume={66},
      number={3},
       pages={461\ndash 468},
         url={http://dx.doi.org/10.1007/BF01389223},
}

\bib{Katz:ubiquity}{article}{
      author={Katz, N.},
       title={Frobenius-{S}chur indicator and the ubiquity of
  {B}rock-{G}ranville quadratic excess},
        date={2001},
        ISSN={1071-5797},
     journal={Finite Fields Appl.},
      volume={7},
      number={1},
       pages={45\ndash 69},
         url={http://dx.doi.org/10.1006/ffta.2000.0301},
        note={Dedicated to Professor Chao Ko on the occasion of his 90th
  birthday},
}

\bib{book:KS}{book}{
      author={Katz, N.~M.},
      author={Sarnak, P.},
       title={Random matrices, {F}robenius eigenvalues, and monodromy},
      series={American Mathematical Society Colloquium Publications},
   publisher={American Mathematical Society},
     address={Providence, RI},
        date={1999},
      volume={45},
        ISBN={0-8218-1017-0},
}

\bib{KS:bams}{article}{
      author={Katz, N.~M.},
      author={Sarnak, P.},
       title={Zeroes of zeta functions and symmetry},
        date={1999},
        ISSN={0273-0979},
     journal={Bull. Amer. Math. Soc. (N.S.)},
      volume={36},
      number={1},
       pages={1\ndash 26},
         url={http://dx.doi.org/10.1090/S0273-0979-99-00766-1},
}

\bib{Kos61}{article}{
      author={Kostant, Bertram},
       title={Lie algebra cohomology and the generalized {B}orel-{W}eil
  theorem},
        date={1961},
        ISSN={0003-486X},
     journal={Ann. of Math. (2)},
      volume={74},
       pages={329\ndash 387},
}

\bib{Kot84a}{article}{
      author={Kottwitz, R.},
       title={Stable trace formula: {C}uspidal tempered terms},
        date={1984},
     journal={Duke Math.},
      volume={51},
       pages={611\ndash 650},
}

\bib{Kot86}{article}{
      author={Kottwitz, R.},
       title={Stable trace formula: {E}lliptic singular terms},
        date={1986},
     journal={Math. Ann.},
      volume={275},
       pages={365\ndash 399},
}

\bib{Kot88}{article}{
      author={Kottwitz, R.},
       title={Tamagawa numbers},
        date={1988},
     journal={Annals of Math.},
      volume={127},
       pages={629\ndash 646},
}

\bib{Kot90}{inproceedings}{
      author={Kottwitz, R.},
       title={Shimura varieties and $\lambda$-adic representations},
        date={1990},
      editor={Clozel, L.},
      editor={Milne, J.},
      series={Perspectives in Math.},
      volume={10},
   publisher={Academic Press},
     address={Ann Arbor},
       pages={161\ndash 209 (vol {I})},
}

\bib{Kot05}{incollection}{
      author={Kottwitz, R.},
       title={Harmonic analysis on reductive {$p$}-adic groups and {L}ie
  algebras},
        date={2005},
   booktitle={Harmonic analysis, the trace formula, and {S}himura varieties},
      series={Clay Math. Proc.},
      volume={4},
   publisher={Amer. Math. Soc.},
     address={Providence, RI},
       pages={393\ndash 522},
}

\bib{kottwitz:clay}{article}{
      author={Kottwitz, Robert},
       title={Harmonic analysis on reductive $p$-adic groups and {L}ie
  algebras},
        date={2005},
     journal={Harmonic analysis, the trace formula, and {S}himura varieties},
       pages={393\ndash 522},
}

\bib{Kowalski:family-survey}{article}{
      author={Kowalski, E.},
       title={Families of cusp forms},
     journal={To appear in {P}ublications {M}ath\'ematiques de {B}esan\c{c}on},
}

\bib{KST:Sp4}{article}{
      author={Kowalski, Emmanuel},
      author={Saha, Abhishek},
      author={Tsimerman, Jacob},
       title={Local spectral equidistribution for {S}iegel modular forms and
  applications},
        date={2012},
     journal={Compos. Math.},
      volume={148},
      number={2},
       pages={335\ndash 384},
}

\bib{kushnirsky}{article}{
      author={Kushnirsky, Eugene},
       title={On some {H}aar measures on reductive groups},
        date={2004},
     journal={Amer. J. Math.},
      volume={126},
      number={3},
       pages={649\ndash 670},
}

\bib{cong:auto77:lang}{incollection}{
      author={Langlands, R.},
       title={Automorphic representations, {S}himura varieties, and motives.
  {E}in {M}{\"a}rchen},
        date={1979},
   booktitle={Proc. {S}ympos. {P}ure {M}ath., {O}regon {S}tate {U}niv.,
  {C}orvallis, {O}re., {P}art 2},
      series={Proc. Sympos. Pure Math., XXXIII},
   publisher={Amer. Math. Soc.},
     address={Providence, R.I.},
       pages={205\ndash 246},
}

\bib{Lan88}{book}{
      author={Langlands, R.},
       title={The classification of representations of real reductive groups},
      series={Math. Surverys and Monographs},
   publisher={AMS},
        date={1988},
      number={31},
}

\bib{Mac71}{book}{
      author={Macdonald, I.~G.},
       title={Spherical functions on a group of {$p$}-adic type},
   publisher={Ramanujan Institute, Centre for Advanced Study in
  Mathematics,University of Madras, Madras},
        date={1971},
        note={Publications of the Ramanujan Institute, No. 2},
}

\bib{marker}{book}{
      author={Marker, David},
       title={Model theory},
      series={Graduate Texts in Mathematics},
   publisher={Springer-Verlag, New York},
        date={2002},
      volume={217},
        ISBN={0-387-98760-6},
        note={An introduction},
}

\bib{book:Mehta}{book}{
      author={Mehta, Madan~Lal},
       title={Random matrices},
     edition={Third},
      series={Pure and Applied Mathematics (Amsterdam)},
   publisher={Elsevier/Academic Press, Amsterdam},
        date={2004},
      volume={142},
        ISBN={0-12-088409-7},
}

\bib{cong:park:mich}{incollection}{
      author={Michel, Ph.},
       title={Analytic number theory and families of automorphic
  {$L$}-functions},
   booktitle={Automorphic forms and applications},
      series={IAS/Park City Math. Ser.},
      volume={12},
   publisher={Amer. Math. Soc.},
     address={Providence, RI},
       pages={181\ndash 295},
}

\bib{MS82}{book}{
      author={Milne, James},
      author={Shih, Kuang-Yen},
       title={Conjugates of {S}himura varieties},
      series={Lecture Notes in Mathematics},
   publisher={Springer-Verlag},
     address={Berlin},
        date={1982},
      volume={900},
        ISBN={3-540-11174-3},
}

\bib{MP2}{article}{
      author={Moy, Allen},
      author={Prasad, Gopal},
       title={Jacquet functors and unrefined minimal $k$-types},
        date={1996},
        ISSN={0010-2571},
     journal={Comment. Math. Helv.},
      volume={71},
      number={1},
       pages={98\ndash 121},
      review={MR1371680 (97c:22021)},
}

\bib{Nag06}{article}{
      author={Nagoshi, Hirofumi},
       title={Distribution of {H}ecke eigenvalues},
        date={2006},
        ISSN={0002-9939},
     journal={Proc. Amer. Math. Soc.},
      volume={134},
      number={11},
       pages={3097\ndash 3106 (electronic)},
         url={http://dx.doi.org/10.1090/S0002-9939-06-08709-0},
      review={\MR{2231890 (2007k:11065)}},
}

\bib{book:Narkiewicz}{book}{
      author={Narkiewicz, W{\l}adys{\l}aw},
       title={Elementary and analytic theory of algebraic numbers},
     edition={Second},
   publisher={Springer-Verlag},
     address={Berlin},
        date={1990},
        ISBN={3-540-51250-0},
}

\bib{prasad-yu:actions}{article}{
      author={Prasad, Gopal},
      author={Yu, Jiu-Kang},
       title={On finite group actions on reductive groups and buildings},
        date={2002},
        ISSN={0020-9910},
     journal={Invent. Math.},
      volume={147},
      number={3},
       pages={545\ndash 560},
      review={MR1893005 (2003e:20036)},
}

\bib{Presburger}{article}{
      author={Presburger, M.},
       title={On the completeness of a certain system of arithmetic of whole
  numbers in which addition occurs as the only operation},
        date={1991},
     journal={History and Philosophy of Logic},
      volume={12},
      number={2},
       pages={92\ndash 101},
}

\bib{RR:low-lying}{article}{
      author={Ricotta, Guillaume},
      author={Royer, Emmanuel},
       title={Statistics for low-lying zeros of symmetric power {$L$}-functions
  in the level aspect},
        date={2011},
        ISSN={0933-7741},
     journal={Forum Math.},
      volume={23},
      number={5},
       pages={969\ndash 1028},
         url={http://dx.doi.org/10.1515/FORM.2011.035},
}

\bib{Royer:dimension-rang}{article}{
      author={Royer, Emmanuel},
       title={Facteurs {$\bold Q$}-simples de {$J_0(N)$} de grande dimension et
  de grand rang},
        date={2000},
        ISSN={0037-9484},
     journal={Bull. Soc. Math. France},
      volume={128},
      number={2},
       pages={219\ndash 248},
}

\bib{Rubin01}{article}{
      author={Rubinstein, Michael},
       title={Low-lying zeros of {$L$}-functions and random matrix theory},
        date={2001},
        ISSN={0012-7094},
     journal={Duke Math. J.},
      volume={109},
      number={1},
       pages={147\ndash 181},
         url={http://dx.doi.org/10.1215/S0012-7094-01-10916-2},
}

\bib{Rubin:computational}{incollection}{
      author={Rubinstein, Michael},
       title={Computational methods and experiments in analytic number theory},
        date={2005},
   booktitle={Recent perspectives in random matrix theory and number theory},
      series={London Math. Soc. Lecture Note Ser.},
      volume={322},
   publisher={Cambridge Univ. Press},
     address={Cambridge},
       pages={425\ndash 506},
         url={http://dx.doi.org/10.1017/CBO9780511550492.015},
}

\bib{RS96}{article}{
      author={Rudnick, Ze{\'e}v},
      author={Sarnak, Peter},
       title={Zeros of principal {$L$}-functions and random matrix theory},
        date={1996},
        ISSN={0012-7094},
     journal={Duke Math. J.},
      volume={81},
      number={2},
       pages={269\ndash 322},
         url={http://dx.doi.org/10.1215/S0012-7094-96-08115-6},
        note={A celebration of John F. Nash, Jr.},
}

\bib{Sarn:family}{article}{
      author={Sarnak, P.},
       title={On the definition of families},
      eprint={http://publications.ias.edu/sarnak},
}

\bib{Sarnak:GRC}{incollection}{
      author={Sarnak, P.},
       title={Notes on the generalized {R}amanujan conjectures},
        date={2005},
   booktitle={Harmonic analysis, the trace formula, and {S}himura varieties},
      series={Clay Math. Proc.},
      volume={4},
   publisher={Amer. Math. Soc.},
     address={Providence, RI},
       pages={659\ndash 685},
}

\bib{SST}{article}{
      author={Sarnak, P.},
      author={Shin, S.~W.},
      author={Templier, N.},
       title={Families of {$L$}-functions and their symmetry},
        date={2014},
     journal={preprint},
}

\bib{Sarnak87}{incollection}{
      author={Sarnak, Peter},
       title={Statistical properties of eigenvalues of the {H}ecke operators},
        date={1987},
   booktitle={Analytic number theory and {D}iophantine problems ({S}tillwater,
  {OK}, 1984)},
      series={Progr. Math.},
      volume={70},
   publisher={Birkh\"auser Boston},
     address={Boston, MA},
       pages={321\ndash 331},
}

\bib{Sau97}{article}{
      author={Sauvageot, Fran{\c{c}}ois},
       title={Principe de densit\'e pour les groupes r\'eductifs},
        date={1997},
        ISSN={0010-437X},
     journal={Compositio Math.},
      volume={108},
      number={2},
       pages={151\ndash 184},
         url={http://dx.doi.org/10.1023/A:1000216412619},
}

\bib{Ser68l}{book}{
      author={Serre, Jean-Pierre},
       title={Abelian {$l$}-adic representations and elliptic curves},
      series={McGill University lecture notes written with the collaboration of
  Willem Kuyk and John Labute},
   publisher={W. A. Benjamin, Inc., New York-Amsterdam},
        date={1968},
}

\bib{book:serre:rep}{book}{
      author={Serre, Jean-Pierre},
       title={Repr\'esentations lin\'eaires des groupes finis},
     edition={revised},
   publisher={Hermann},
     address={Paris},
        date={1978},
        ISBN={2-7056-5630-8},
}

\bib{Ser79}{book}{
      author={Serre, Jean-Pierre},
       title={Local fields},
      series={Graduate Texts in Mathematics},
   publisher={Springer-Verlag},
     address={New York},
        date={1979},
      volume={67},
        ISBN={0-387-90424-7},
        note={Translated from the French by Marvin Jay Greenberg},
}

\bib{Serre:chebotarev}{article}{
      author={Serre, Jean-Pierre},
       title={Quelques applications du th\'eor\`eme de densit\'e de
  {C}hebotarev},
        date={1981},
        ISSN={0073-8301},
     journal={Inst. Hautes \'Etudes Sci. Publ. Math.},
      number={54},
       pages={323\ndash 401},
         url={http://www.numdam.org/item?id=PMIHES_1981__54__323_0},
}

\bib{Ser94}{incollection}{
      author={Serre, Jean-Pierre},
       title={Propri\'et\'es conjecturales des groupes de {G}alois motiviques
  et des repr\'esentations {$l$}-adiques},
        date={1994},
   booktitle={Motives ({S}eattle, {WA}, 1991)},
      series={Proc. Sympos. Pure Math.},
      volume={55},
   publisher={Amer. Math. Soc.},
     address={Providence, RI},
       pages={377\ndash 400},
}

\bib{Serre:pl}{article}{
      author={Serre, Jean-Pierre},
       title={R\'epartition asymptotique des valeurs propres de l'op\'erateur
  de {H}ecke {$T_p$}},
        date={1997},
        ISSN={0894-0347},
     journal={J. Amer. Math. Soc.},
      volume={10},
      number={1},
       pages={75\ndash 102},
         url={http://dx.doi.org/10.1090/S0894-0347-97-00220-8},
}

\bib{Sha90}{article}{
      author={Shahidi, Freydoon},
       title={A proof of {L}anglands' conjecture on {P}lancherel measures;
  complementary series for {$p$}-adic groups},
        date={1990},
        ISSN={0003-486X},
     journal={Ann. of Math. (2)},
      volume={132},
      number={2},
       pages={273\ndash 330},
         url={http://dx.doi.org/10.2307/1971524},
}

\bib{Shi-Plan}{article}{
      author={Shin, Sug~Woo},
       title={Automorphic {P}lancherel density theorem},
        date={2012},
        ISSN={0021-2172},
     journal={Israel J. Math.},
      volume={192},
       pages={83\ndash 120},
         url={http://dx.doi.org/10.1007/s11856-012-0018-z},
}

\bib{sparling}{article}{
      author={Sparling, J.},
       title={On the $\theta$-split side of the local relative trace formula},
     journal={arXiv:0905.0034v1 [math.RT]},
}

\bib{springer:lag2}{article}{
      author={Springer, Tonny~A.},
       title={Linear algebraic groups, second edition},
        date={1998},
      volume={9},
       pages={xiv+334},
      review={MR1642713 (99h:20075)},
}

\bib{Tat79}{incollection}{
      author={Tate, J.},
       title={Number theoretic background},
        date={1979},
   booktitle={{P}roc. {S}ympos. {P}ure {M}ath., {O}regon {S}tate {U}niv.,
  {C}orvallis, {O}re., {P}art 2},
      series={Proc. Sympos. Pure Math., XXXIII},
   publisher={Amer. Math. Soc.},
     address={Providence, R.I.},
       pages={3\ndash 26},
}

\bib{Tit79}{incollection}{
      author={Tits, J.},
       title={Reductive groups over local fields},
        date={1979},
   booktitle={({P}roc. {S}ympos. {P}ure {M}ath., {O}regon {S}tate {U}niv.,
  {C}orvallis, {O}re., {P}art 1},
      series={Proc. Sympos. Pure Math., XXXIII},
   publisher={Amer. Math. Soc.},
     address={Providence, R.I.},
       pages={29\ndash 69},
}

\bib{vdd2}{article}{
      author={van~den Dries, L.},
       title={Analytic {A}x-{K}ochen-{E}rsov theorems},
        date={1989},
     journal={Contemp. Math.},
      volume={131, Part 3},
       pages={379\ndash 398},
}

\bib{vDi72}{article}{
      author={van Dijk, G.},
       title={Computation of certain induced characters of $p$-adic groups},
        date={1972},
     journal={Math. Ann.},
      volume={199},
       pages={229\ndash 240},
}

\bib{Wal03}{article}{
      author={Waldspurger, J.-L.},
       title={La formule de {P}lancherel pour les groupes $p$-adiques
  d'apr\`{e}s {H}arish-{C}handra},
        date={2003},
     journal={J. Inst. Math. Jussieu},
      volume={2},
      number={2},
       pages={235\ndash 333},
}

\bib{Wall84}{book}{
      author={Wallach, N.},
       title={On the constant term of a square integrable automorphic form,
  {O}perator {A}lgebras and {G}roup {R}epresentations. {II}},
      series={Monog. Stud. Math.},
        date={1984},
      number={18},
}

\bib{Yu-fil}{article}{
      author={Yu, J.~K.},
       title={Smooth models associated to concave functions in {B}ruhat-{T}its
  theory},
     journal={Preprint},
}

\bib{Zimmert:regulator}{article}{
      author={Zimmert, Rainer},
       title={Ideale kleiner {N}orm in {I}dealklassen und eine
  {R}egulatorabsch\"atzung},
        date={1981},
        ISSN={0020-9910},
     journal={Invent. Math.},
      volume={62},
      number={3},
       pages={367\ndash 380},
         url={http://dx.doi.org/10.1007/BF01394249},
}

\end{biblist}
\end{bibdiv}

\end{document}